\newtheorem{theorem}{Theorem}[section]
\newtheorem{lemma}[theorem]{Lemma}
\newtheorem{proposition}[theorem]{Proposition}
\newtheorem{corollary}[theorem]{Corollary}
\newtheorem{question}[theorem]{Question}
\theoremstyle{definition}
\newtheorem{definition}[theorem]{Definition}
\theoremstyle{remark}
\newtheorem{remark}[theorem]{Remark}
\newtheorem{example}[theorem]{Example}
\newcommand{\B}{\mathcal{B}}
\newcommand{\C}{\mathcal{C}}
\newcommand{\G}{\mathcal{G}}
\newcommand{\K}{\mathcal{K}}
\newcommand{\F}{\mathbb{F}}
\newcommand{\N}{\mathbb{N}}
\newcommand{\R}{\mathbb{R}}
\newcommand{\Z}{\mathbb{Z}}
\newcommand{\tU}{\widetilde{U}}
\newcommand{\tV}{\widetilde{V}}
\newcommand{\A}{\mathcal{A}}
\newcommand{\curlyC}{\mathcal{C}}
\newcommand{\E}{\mathcal{E}}
\newcommand{\curlyG}{\mathcal{G}}
\newcommand{\curlyF}{\mathcal{F}}
\newcommand{\curlyH}{\mathcal{H}}
\newcommand{\link}{{\mbox{Link}}}
\newcommand{\bigcupdot}{\hspace{9pt}\cdot \hspace{-9pt}\bigcup}
\title{ JSJ decompositions of pro-$p$ groups}
\author{ Pavel Zalesskii\footnote{Partially supported by CNPq and FAPDF}}
\tikzset{
	-Latex,auto,node distance =1 cm and 1 cm,semithick,
	state/.style ={ellipse, draw, minimum width = 0.7 cm},
	point/.style = {circle, draw, inner sep=0.04cm,fill,node contents={}},
	bidirected/.style={Latex-Latex,dashed},
	el/.style = {inner sep=2pt, align=left, sloped}
}
\begin{document}

\maketitle

\begin{abstract}

\end{abstract}

\tableofcontents
\newpage
\section{Introduction}

The dramatic advance  of  classical combinatorial group theory happened
in the 1970's, when  the  Bass-Serre theory of groups acting on trees
changed completely the face of the theory. This theory allowed to treat uniformly basic constructions of combinatorial theory of  groups such as free products with amalgamation and HNN-extensions and obtain structural subgroup results of these constructions.

At the end of 20-th and in the first quarter of 21-st centuries Bass-Serre theory faced further development: the theory of JSJ decompositions of finitely generated groups. In fact JSJ decompositions first appeared in 3-dimensional topology with the theory of the characteristic submanifold by Jaco-Shalen and Johannson \cite{JS79, Joh79} (the terminology JSJ
was popularized by Sela). The topological ideas were carried over to group theory first by Kropholler \cite{K} for some Poincar\'e duality groups. Later constructions of JSJ decompositions were given in various
settings by Sela for torsion-free hyperbolic groups \cite{Sel97b}, and in various settings by Rips-Sela \cite{RS97}, Bowditch \cite{Bow98}, Dunwoody-Sageev \cite{DS99}, Fujiwara-Papasoglu \cite{FP06}, Dunwoody-Swenson \cite{DS00}, Guirardel and Levitt \cite{Gui08},  \cite{GL}, \cite{Gui11} ...  ). This has had a vast influence and range of applications in geometric and combinatorial group theory.
Roughly speaking, a JSJ-splitting of a group $G$ over a certain class of subgroups is a graph of groups decomposition of $G$ which describes all possible decompositions of G as an amalgamated free product or an HNN extension
over subgroups lying in the given class.

\smallskip
The profinite version of the Bass-Serre  theory was initiated by Dion Gildenhuys and Luis Ribes in \cite{GR} and then developed by Luis Ribes,
Oleg Melnikov and the  author because of the absence of the classical
methods of combinatorial group theory for profinite groups (see \cite{ZM-88, ZM-89, Zal90, Z-92, RZ-00, RZ-14, R}. However it
does not work in full strength even in the pro-$p$ case. The reason is
that if a pro-$p$ group $G$ acts on a pro-$p$ tree $T$ then  a maximal subtree of the quotient graph $T/G$ does not always exist and even if it exists it does not always lift to $T$. As a consequence the pro-$p$ version of the  Bass-Serre theory
does not give subgroup structure theorems the way it does in the
classical Bass-Serre theory. In fact,  for infinitely generated pro-$p$ subgroups there are counter examples. Nevertheless, the pro-$p$ version of the subgroups structure theorem works for a finitely generated pro-$p$ group $G$ acting on a pro-$p$ tree $T$ but not the same way as in the classical Bass-Serre theory way. What is proved in \cite{WZ18} that there exists another pro-$p$ $G$-tree  $S$ in the same deformation space  such that $G$ is the fundamental pro-$p$ group of the corresponding graph of groups $(\G, S/G)$ in the standard Bass-Serre theory maner. However, in contrast to the abstract situation  $(\G, S/G)$ can be reduced and infinite, and the topology of $S/G$ usually prevents removing an edge. So in general, the fundamental group of a profinite graph of pro-$p$ groups does not split as an amalgamated free pro-$p$ product or pro-$p$ HNN-extension over an edge group. Nevertheless, if $G$ is finitely generated then such a splitting of $G$ exists  over the edge stabilizer $G_e$ for any edge $e$ of $T$ by \cite[Theorem 4.2]{CZ} (obtained in differen manner). This shows that the pro-$p$ version of Bass-Serre theory is ready for JSJ-decomposition theory and to develop it is the obective of the present paper.

\medskip
In the exposition we follow a general strategy of the nice book on JSJ-decompositions of (abstract) groups by Guirardel and Levitt \cite{GL} stressing similarities and differences between the abstract and pro-$p$ cases. Namely, 
we give a  general definition of JSJ decompositions (or rather of their Bass-Serre pro-$p$ trees), as maximal universally elliptic pro-$p$ trees. As in the abstract case, in general, there is no preferred JSJ
decomposition, and the right object to consider is the whole set of JSJ decompositions: the JSJ deformation space. The importance of the notion of deformation space has complementary value in the pro-$p$ case explained in the preceding paragraph. However, one needs to mention that  the   pro-$p$  deformation space  is usually much biggger than in its abstract analog because the group of automorphisms of pro-$p$ groups usually are much larger: for example the automorphism group $Aut(F)$ of a free pro-$p$ group $F$ of finite rank is topologically infinitely generated in contrast to the abstract case.

We prove that JSJ decompositions exist for any finitely generated pro-$p$ group, without
any assumption on edge groups.

\begin{theorem}\label{existenceint} Let   $\E$ be a continuous family of subgroups of a pro-$p$ group $G$. If $G$ is finitely generated or  $\E$-accessible, then JSJ pro-$p$ $\E$-tree $T$ exists.

\end{theorem}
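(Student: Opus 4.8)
The plan is to follow, in the pro-$p$ category, the deformation-space strategy of Guirardel--Levitt \cite{GL}. Recall that a JSJ pro-$p$ $\E$-tree is a maximal element, for the domination order, among the \emph{universally elliptic} $\E$-trees, that is, the $\E$-trees all of whose edge stabilizers are elliptic in every $\E$-tree. This class is never empty: the trivial $\E$-tree (a single vertex with vertex group $G$) lies in it vacuously. Thus the task is to exhibit a universally elliptic $\E$-tree dominating all the others, and the two ingredients are a \emph{common refinement} construction for universally elliptic trees and an \emph{accessibility} bound; granting these, one concludes by the usual maximal-complexity argument.

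\emph{Common refinement.} First I would prove: if $S$ and $T$ are universally elliptic $\E$-trees, there is a universally elliptic $\E$-tree $\widehat T$ that refines $T$ and dominates both $S$ and $T$. One builds $\widehat T$ by blowing up each vertex $v$ of $T$ along the minimal $G_v$-invariant pro-$p$ subtree $S_v$ of $S$. Since $T$ is universally elliptic, every edge group of $T$ incident to $v$ is elliptic in $S$, hence fixes a point of $S_v$; this is precisely the compatibility needed for the blow-up to define an honest pro-$p$ $G$-tree, once one invokes the standard gluing results for profinite graphs of pro-$p$ groups (which absorb the familiar pro-$p$ pathologies---nonexistence or non-lifting of maximal subtrees of the quotient graph---by carrying the profinite topology of $\widehat T/G$ along). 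The edge stabilizers of $\widehat T$ are, up to conjugacy, contained in edge stabilizers of $T$ or of $S$; since $\E$ is a continuous family (in particular closed under conjugation and subgroups) this makes $\widehat T$ an $\E$-tree, and since a subgroup of an elliptic subgroup is elliptic and $S,T$ are universally elliptic, $\widehat T$ is again universally elliptic. Finally $\widehat T$ dominates $T$ because collapsing the subtrees $S_v$ recovers $T$, and it dominates $S$ because every vertex stabilizer of $\widehat T$ is contained in a vertex stabilizer of $S$. In particular the universally elliptic $\E$-trees form a directed family under domination.

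\emph{Accessibility.} Attach to each reduced universally elliptic $\E$-tree its complexity in the sense of \cite{GL}, an invariant of the deformation space that does not decrease under domination and strictly increases under strict domination. If $G$ is $\E$-accessible, this complexity is bounded by hypothesis. If $G$ is merely finitely generated one must still bound it, and \textbf{this is the main obstacle}: in contrast with abstract groups, where finite generation does not suffice, here finite generation forces $d(G)=\dim_{\F_p}H^1(G,\F_p)<\infty$, and one exploits the pro-$p$ Bass-Serre machinery available for finitely generated groups---the deformation-space results of \cite{WZ18} and the splitting of $G$ over an arbitrary edge group from \cite[Theorem~4.2]{CZ}---to bound the number of vertex orbits along which a reduced universally elliptic $\E$-tree can be refined, hence to bound its complexity. (An alternative, more intrinsically pro-$p$ route would be to pass to the inverse limit of the directed family of reduced universally elliptic $\E$-trees; finite generation would then be what guarantees this limit is still an $\E$-tree dominating every member of the family.)

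With a complexity bound at hand, the proof closes as in the abstract case. Choose a reduced universally elliptic $\E$-tree $T$ of maximal complexity. Given an arbitrary universally elliptic $\E$-tree $T'$, let $\widehat T$ be the common refinement of $T$ and $T'$; it is universally elliptic and dominates $T$, so a reduced representative of its deformation space has complexity at least that of $T$. By maximality the two deformation spaces coincide, whence $T$ dominates $\widehat T$, which in turn dominates $T'$. Hence $T$ dominates every universally elliptic $\E$-tree, that is, $T$ is a JSJ pro-$p$ $\E$-tree. The only genuinely pro-$p$ points are the two flagged above---the legitimacy of the blow-up as a pro-$p$ $G$-tree and the accessibility in the finitely generated case---the rest being the formal deformation-space bookkeeping of \cite{GL}.
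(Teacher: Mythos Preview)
Your argument for the $\E$-accessible case is essentially the paper's: take a maximal reduced universally elliptic $\E$-splitting (accessibility gives the bound on the number of edges) and use the standard refinement to rule out any strictly dominating universally elliptic tree. That part is fine.

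The genuine gap is in the finitely generated case. Your primary route---bound a Guirardel--Levitt complexity for reduced universally elliptic $\E$-trees and then pick one of maximal complexity---does not go through, because finitely generated pro-$p$ groups are \emph{not} in general $\E$-accessible. The paper exhibits exactly this failure (Wilkes' inaccessible finitely generated pro-$p$ group, Example~\ref{Wilkes}): there is no a priori bound on the number of edge orbits of reduced $\E$-trees, so the complexity you want to maximize is unbounded and the ``choose $T$ of maximal complexity'' step is vacuous. The sentence ``finite generation forces $d(G)<\infty$ \ldots\ to bound the number of vertex orbits'' is precisely the step that breaks; the results of \cite{WZ18} and \cite[Theorem~4.2]{CZ} give you a splitting and a graph-of-groups description, but they do \emph{not} bound the size of $T/G$. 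Indeed the paper stresses that in this case the JSJ tree may fail to have compact edge set.

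Your parenthetical alternative is the correct proof and is what the paper actually does (Theorem~\ref{existence for finitely generated G}): show the universally elliptic $\E$-trees form a \emph{filtered} family under domination and pass to the inverse limit. Two technical points you should not gloss over. First, the blow-up (standard refinement) in the pro-$p$ setting is only available directly when $T/G$ is finite (Proposition~\ref{refinement}); for general $T$ with $G$ finitely generated one must first replace $T$ by the standard tree $S$ of the graph of groups produced by Theorem~\ref{pro-pbass-serre} and then refine $S$ (Proposition~\ref{refinement for finitely generated G}). Second, it is this replacement that makes the family of trees genuinely filtered by \emph{collapse maps}, so that the inverse limit $\varprojlim S_i$ is a pro-$p$ tree; one then checks it is an $\E$-tree, universally elliptic, and dominates each $S_i$, hence each $T_i$. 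So: drop the complexity argument for the finitely generated case, promote the inverse-limit argument to the main line, and handle the refinement via Proposition~\ref{refinement for finitely generated G} rather than the naive blow-up.
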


In the presence of acylindricity, a pro-$p$ group $G$ is  accessible (cf. \cite{CAZ}) as well as for splittings
of  CSA groups over abelian groups (in particular, for splittings of pro-$p$ limit groups). We define  pro-$p$ trees of cylinders and obtain
canonical JSJ pro-$p$ trees which are invariant under automorphisms.

\begin{theorem} Let $G$ be a finitely generated CSA pro-$p$ group. Suppose that $G$ is indecomposable into a free pro-$p$ product and  $\E$ consists of  abelian subgroups. Then the pro-$p$ tree of cylinders $T_c$ is    $Aut(G)$-invariant JSJ $\E$-tree relative to not virtually cyclic abelian subgroups.
\end{theorem}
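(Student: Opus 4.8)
The plan is to identify $T_c$ with the pro-$p$ tree of cylinders of a relative JSJ tree and to run the Guirardel--Levitt strategy in the pro-$p$ setting. First I would fix the building block: since $G$ is CSA it is $\E$-accessible over abelian subgroups (the fact quoted just above, for splittings of CSA groups over abelian subgroups), so Theorem~\ref{existenceint}, applied relative to the family $\curlyH$ of abelian subgroups that are not virtually cyclic (i.e.\ not virtually procyclic), produces a JSJ pro-$p$ $\E$-tree $T$ relative to $\curlyH$. Because $G$ is indecomposable into a free pro-$p$ product, no $\E$-tree of $G$ has a trivial edge stabilizer; so all edge stabilizers of $T$ are nontrivial abelian pro-$p$ groups, and in particular each lies in a unique maximal abelian subgroup.

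Next I would set up the equivalence relation defining the cylinders. On the nontrivial closed abelian subgroups of $G$, declare $A\sim B$ if $[A,B]=1$. The CSA hypothesis --- the centralizer of a nontrivial element is abelian and malnormal --- shows that every nontrivial abelian $A$ lies in a unique maximal abelian subgroup $M(A)=C_G(a)$ (any $a\in A\setminus\{1\}$), that $A\sim B\iff M(A)=M(B)$, so that $\sim$ is a $G$-invariant equivalence relation, and that malnormality of the $M(A)$ is precisely what is needed to check that $\sim$ is admissible in the sense required by the pro-$p$ tree of cylinders construction. Hence in every pro-$p$ $\E$-tree of $G$ the edge set partitions into cylinders, each a pro-$p$ subtree, and the stabilizer of a cylinder $Y$ is the maximal abelian subgroup $M_Y$ containing its edge stabilizers: if $gY=Y$ and $e$ is an edge of $Y$ then $gG_eg^{-1}$ and $G_e$ both lie in $M_Y$, so $1\ne gG_eg^{-1}\le M_Y\cap gM_Yg^{-1}$, and malnormality of $M_Y$ forces $g\in M_Y$.

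Let $T_c=T_c(T)$ be the pro-$p$ tree of cylinders of $T$. Its edge stabilizers are the abelian groups $G_v\cap M_Y$, so $T_c$ is a pro-$p$ $\E$-tree; moreover $T$ dominates $T_c$ (a vertex stabilizer $G_v$ of $T$ either fixes a vertex of $T_c$ directly, when $v$ lies in at least two cylinders, or satisfies $G_v\le M_Y$ and so fixes the cylinder vertex $Y$, when $v$ lies in only the cylinder $Y$), whence $\curlyH$ --- being elliptic in $T$ --- is elliptic in $T_c$. Since the pro-$p$ tree of cylinders depends only on the deformation space of $T$, and the relative JSJ deformation space is canonical while any $\varphi\in Aut(G)$ preserves $\sim$ and $\curlyH$, the tree $T_c$ is $Aut(G)$-invariant. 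It then remains to see that $T_c$ is a JSJ pro-$p$ $\E$-tree relative to $\curlyH$. From the pro-$p$ analog of the Guirardel--Levitt theorem that the tree of cylinders of a JSJ tree is a JSJ tree relative to its cylinder stabilizers, $T_c$ is a JSJ pro-$p$ $\E$-tree relative to $\curlyH$ enlarged by all the maximal abelian cylinder stabilizers $M_Y$; so I must show the virtually procyclic $M_Y$ can be deleted from this family. For any $\E$-tree $S$ in which $\curlyH$ is elliptic: a finite $M_Y$ is a finite $p$-group and so fixes a vertex of $S$; an infinite virtually procyclic $M_Y$ either is elliptic in $T$, in which case it is elliptic in $S$ because $T$ dominates $S$, or it contains an edge stabilizer $G_e\le M_Y$ of its cylinder which is elliptic in $S$ by universal ellipticity of $T$, and one bootstraps from the ellipticity of $G_e$ to that of $M_Y$ (and of the edge stabilizer $G_v\cap M_Y$ of $T_c$) using that $M_Y$ normalizes $G_e$ and that a finite $p$-group acting on a pro-$p$ tree has a fixed point. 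This makes $T_c$ universally elliptic relative to $\curlyH$ and shows it dominates every relative universally elliptic $\E$-tree, so $T_c$ is the desired JSJ pro-$p$ $\E$-tree.

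I expect the CSA combinatorics above to be routine, and the main obstacle to lie in the pro-$p$ topology: verifying that the commutation relation is admissible in the profinite sense, that $T_c$ is genuinely a pro-$p$ tree (connectedness and exactness conditions for its quotient profinite graph), and that the deformation-space invariance of the tree of cylinders survives the passage to pro-$p$ groups; together with the torsion bookkeeping in the last step, where finite-index arguments from the abstract case must be replaced throughout by appeals to fixed points of finite $p$-groups on pro-$p$ trees.
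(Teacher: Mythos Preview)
Your overall architecture is the same as the paper's: build the tree of cylinders for the commutation relation on nontrivial abelian subgroups, observe the cylinder stabilizers are maximal abelian, and then check universal ellipticity and domination by splitting into the cases ``$M_Y$ not (virtually) procyclic'' (handled by relativeness to $\curlyH$) and ``$M_Y$ procyclic'' (handled via an edge stabilizer $G_e$ of $T$). The paper in fact starts from an \emph{absolute} abelian JSJ tree $T$ rather than a relative one, but this makes no essential difference.

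There are, however, two genuine soft spots in your write-up. First, the appeal to a ``pro-$p$ analog of the Guirardel--Levitt theorem that the tree of cylinders of a JSJ tree is a JSJ tree relative to its cylinder stabilizers'' is both unproved in this setting and unnecessary: you still have to show afterwards that every $M_Y$ is elliptic in every $(\E,\curlyH)$-tree, and once you know that you can verify universal ellipticity and domination of $T_c$ directly, exactly as the paper does, without the black box. Second, in your case analysis for virtually procyclic $M_Y$, the branch ``$M_Y$ is elliptic in $T$, hence elliptic in $S$ because $T$ dominates $S$'' is wrong as stated: $T$ only dominates \emph{universally elliptic} $(\E,\curlyH)$-trees, whereas for universal ellipticity of $T_c$ you must allow $S$ to be an arbitrary $(\E,\curlyH)$-tree. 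Fortunately that branch is superfluous, since every cylinder contains an edge of $T$; but your ``bootstrap'' in the remaining branch is also imprecise. The clean argument, which is what the paper does, is commensurability: when $G_Y$ is procyclic (hence isomorphic to $\Z_p$ or finite), every nontrivial closed subgroup is open, so the nontrivial edge stabilizer $G_e\le G_Y$ has finite index; since $G_e$ is elliptic in $S$ by universal ellipticity of $T$, Corollary~\ref{fixed vertex} gives that $G_Y$ is elliptic. Replace your normalizer/finite-$p$-group sketch by this finite-index observation and drop the black box, and your proof coincides with the paper's.
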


\begin{corollary} Let $G$ be a pro-$p$ limit group and $\E$ be the set of  abelian subgroups of $G$. Suppose that $G$ is indecomposable into a free pro-$p$ product. Then the pro-$p$ tree of cylinders $T_c$ is    $Aut(G)$-invariant JSJ $\E$-tree relative to not virtually cyclic abelian subgroups. 

\end{corollary}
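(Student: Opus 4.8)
The plan is to deduce this statement directly from the preceding Theorem, whose hypotheses are that $G$ be a finitely generated CSA pro-$p$ group, indecomposable into a free pro-$p$ product, with $\E$ a family of abelian subgroups. Indecomposability into a free pro-$p$ product and the choice of $\E$ are assumed verbatim in the Corollary, so the only work is to check that a pro-$p$ limit group is (i) finitely generated and (ii) CSA. Point (i) is part of the definition: pro-$p$ limit groups are by construction finitely generated pro-$p$ groups (the finitely generated fully residually free pro-$p$ groups, equivalently limits of free pro-$p$ groups of bounded rank in the space of marked pro-$p$ groups). Hence the substantive content is (ii).

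For the CSA property I would argue as follows. Free pro-$p$ groups are CSA: every nontrivial abelian subgroup lies in a unique maximal abelian subgroup, which is malnormal. This is inherited in the appropriate sense by the limiting construction defining pro-$p$ limit groups; alternatively one uses the known hierarchical structure of pro-$p$ limit groups as iterated free/amalgamated pro-$p$ extensions along a "limit group hierarchy" and proves malnormality of maximal abelian subgroups by induction, exploiting the acylindricity of the associated actions on pro-$p$ trees (the fact that abelian splittings of pro-$p$ limit groups are acylindrical, hence $\E$-accessible, is precisely what is invoked in the discussion preceding the Theorem). I would cite the existing development of pro-$p$ limit groups for the statement that they are CSA; indeed the paragraph before the Theorem already records that CSA groups over abelian subgroups are accessible "in particular, for splittings of pro-$p$ limit groups", so this is consistent with the framework.

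Granting CSA, it remains only to observe that the set $\E$ of all abelian (closed) subgroups of the finitely generated pro-$p$ group $G$ forms a continuous family closed under conjugation and under passing to closed subgroups, as demanded by the general JSJ set-up of the paper, and then to apply the Theorem verbatim: the pro-$p$ tree of cylinders $T_c$ is an $Aut(G)$-invariant JSJ $\E$-tree relative to the not virtually cyclic abelian subgroups. The main obstacle in this plan is entirely concentrated in step (ii): one must be certain that the pro-$p$ notion of limit group in force really does yield commutative transitivity together with malnormality of maximal abelian subgroups — a point that is more delicate in the pro-$p$ world than in the abstract one, since centralizers and maximal abelian subgroups need not be as well-behaved — and that the acylindricity needed even to \emph{form} the tree of cylinders (which the Theorem presupposes) is genuinely available. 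Both should be extracted from the established structure theory of pro-$p$ limit groups, which is what I would reference rather than reprove.
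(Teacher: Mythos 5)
Your proposal matches the paper's own argument: the Corollary is deduced verbatim from the preceding Theorem, using that pro-$p$ limit groups are finitely generated by definition and are CSA, which the paper simply cites from the literature (the reference \cite{KZ}) rather than reproving. Your only deviation is the optional sketch of proving CSA by induction along the hierarchy, but since you ultimately defer to the established structure theory, the route is essentially the same.
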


We introduce a variant in which the property of being universally elliptic is replaced
by the more restrictive and rigid property of being universally compatible. This yields
a canonical compatibility JSJ pro-$p$ tree, not just a deformation space. We show that it exists
for a  pro-$p$ group, without
any assumption on edge groups (in contrast to the abstract case when this is true only for finitely presented groups).

We give many examples, and we work from time to time with relative decompositions
(restricting to pro-$p$ trees where certain subgroups are elliptic).

\medskip
For the reader’s convenience we now describe the detailed content of each section.

\smallskip
In the preliminary section (Section 2), we collect basic facts about pro-$p$-groups acting on
pro-$p$ trees, and we define $(\E, \curlyH)$-trees (trees with edge stabilizers in $\E$ relative to $\curlyH$, collapse maps, graphs of pro-$p$ groups and frequently used results on pro-$p$ groups acting on pro-$p$ trees).  We also define deformation spaces here.

Section 3 describes all the instances of accessibility known in the pro-$p$ case.

 Section 4 introduces refinements, compatibility and 
 domination. It contains  a construction of standard refinement. Namely, let $G$ be a pro-$p$ group acting on pro-$p$ trees $T_1$ and $T_2$.  If all edge stabilizers of $T_1$ are elliptic  in $T_2$ and $|T_1/G|< \infty$,
there is a refinement of $T_1$ which dominates $T_2$. If one drops the condition  $|T_1/G|< \infty$ it does not work as in th abstract case: but under assumption that $G$ is finitely generated, one can choose a pro-$p$ tree $S$ from the deformation space of $T_1$ that admits a refinement $\widehat S$ of $S$ dominating $T_2$.

 Section 5 is dedicated to universal ellipticity and properties of it. 

In Section 6 we define
JSJ pro-$p$ trees and the JSJ deformation space.
We prove the existence of JSJ decompositions. 

Section 7 is devoted to examples. We first consider Grushko decompositions
(over the trivial group) and Stallings-Dunwoody decompositions (over finite $p$-groups), explaining how to interpret them as JSJ decompositions. We also consider small pro-$p$ groups, and
locally finite pro-$p$ trees (such as those associated to cyclic splittings of generalized Baumslag-
Solitar pro-$p$ groups). All these examples of JSJ decompositions  have only rigid vertices. At the
end of the section we work out an example where the JSJ decomposition has flexible
vertices. Moreover, we prove that flexible vertices of cyclic pro-$p$ trees split a s a free pro-$p$ product.

\begin{theorem} Let $G$ be a finitely generated pro-$p$ group and $\E$ be the family of infinite cyclic pro-$p$ groups. If $Q\neq \Z_p$ is a vertex stabilizer of a cyclic JSJ $G$-tree and  $Q$ does not split as a free pro-$p$ product then it is elliptic in any cyclic $G$-tree.\end{theorem}

Section 8 contains various useful technical results. Given a vertex of a graph of pro-$p$ 
groups or of a pro-$p$ tree,  we point out that any splitting
of the vertex group which is relative to the incident edge groups extends to a splitting of
$G$. Given a universally elliptic splitting of G, one may obtain a JSJ decomposition of $G$
from relative JSJ decompositions of vertex groups. In particular, one may usually restrict
to one-ended groups when studying JSJ decompositions (see Corollary \ref{refining Grushko}).
We also define and discuss relative finite generation and presentation (relative
finite presentation of vertex groups is studied in Subsection 8.3).

Section 9 is devoted to the pro-$p$ tree of cylinders. Given an admissible equivalence relation
on the set $\E$ of pro-$p$ groups in $\E$, one may associate a pro-$p$ tree of cylinders $T_c$ to any
pro-$p$ tree $T$ with compact set of edges and  with edge stabilizers in $\E$. This pro-$p$ tree only depends on the deformation space of $T$. We apply it to deduce the result on $Aut(G)$ in Section 9.2. Here is one of them, where for a subgroup $K\leq G$ we use notation $Aut_G(K)$ to be the group of all automorphisms of $G$ that leave $K$ invariant.

\begin{theorem}\label{amalgamint} Let $G=G_1\amalg_H G_2$ be a free pro-$p$ product with amalgamation. Suppose that $G_1$ and $G_2$ are rigid. Then $Aut(G)$ is either 

$$Aut(G)= Aut_G(G_1)\amalg_{ Aut_G(G_1)\cap Aut_G(N_G(H))}Aut_G(N_G(H))\amalg_{Aut_G(G_2)\cap Aut_G(N_G(H))} Aut_G(G_2)$$ 
or
$$Aut(G)= Aut_G(G_1)\amalg_{ Aut_G(G_1)\cap Aut_G(N_G(H))}Aut_G(N_G(H)),$$
where 1 can be replaced by 2.

	\end{theorem}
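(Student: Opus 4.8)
The plan is to express $\mathrm{Aut}(G)$ as the fundamental pro-$p$ group of a graph of groups coming from the action of a group of ``twisted automorphisms'' on a canonical pro-$p$ tree. Let $T$ denote the standard pro-$p$ Bass--Serre tree of the splitting $G=G_1\amalg_H G_2$, so that $T/G$ is the segment with vertices $v_1',v_2'$ and one edge, $G_{v_i'}=G_i$, and edge group $H$. First I would observe that, because $G_1$ and $G_2$ are rigid, the deformation space of $T$ is $\mathrm{Aut}(G)$-invariant, and then pass to its pro-$p$ tree of cylinders $T_c$ (\S9) for a suitable admissible relation on the edge groups, namely that of having the same stabilizer: the cylinder of the edge $1\cdot H$ is spanned by the edges $gH$ with $gHg^{-1}=H$, i.e.\ $g\in N_G(H)$, so its stabilizer is precisely $N_G(H)$. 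By \S9 the tree $T_c$ depends only on the deformation space, hence is $\mathrm{Aut}(G)$-invariant (canonical), and a direct inspection shows $T_c/G$ is the segment of length two with vertices $v_1,u,v_2$, vertex groups $G_1$, $N_G(H)$, $G_2$, and edge groups $G_1\cap N_G(H)$ and $G_2\cap N_G(H)$.

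Next I would introduce the group $A$ of all pairs $(\phi,\tilde\phi)$ with $\phi\in\mathrm{Aut}(G)$ and $\tilde\phi$ an automorphism of the pro-$p$ tree $T_c$ satisfying $\tilde\phi(g\cdot x)=\phi(g)\cdot\tilde\phi(x)$ for all $g\in G$ and $x\in T_c$. Since $T_c$ is canonical, the projection $A\to\mathrm{Aut}(G)$ is surjective; its kernel is the group of $G$-equivariant automorphisms of $T_c$, which I would argue is trivial (using that $G_1$ and $G_2$ are non-conjugate and self-normalizing in $G$, and the analogous fact for $N_G(H)$ read off from $T_c$), so that $A\cong\mathrm{Aut}(G)$. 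The advantage of $A$ is that it genuinely acts on the pro-$p$ tree $T_c$ through $\tilde\phi$, extending the $G$-action (with $g\in G$ acting as $\mathrm{conj}_g$ on $G$ and as translation by $g$ on $T_c$); hence $T_c/A$ is a quotient of the segment on $v_1,u,v_2$. Moreover $(\phi,\tilde\phi)$ fixes a vertex or edge $x$ of $T_c$ precisely when $\phi$ leaves $G_x$ invariant (the forward implication is immediate, and conversely one composes with a suitable inner automorphism, which lies in $A$); since $N_G(G_1)=G_1$, $N_G(G_2)=G_2$ and $N_G(N_G(H))=N_G(H)$, the stabilizers in $A$ of $v_1,u,v_2$ are exactly $\mathrm{Aut}_G(G_1)$, $\mathrm{Aut}_G(N_G(H))$, $\mathrm{Aut}_G(G_2)$, and, as fixing an edge fixes its endpoints, the stabilizers of the two edges are $\mathrm{Aut}_G(G_1)\cap\mathrm{Aut}_G(N_G(H))$ and $\mathrm{Aut}_G(G_2)\cap\mathrm{Aut}_G(N_G(H))$.

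Finally I would apply the structure theorem for a profinite group acting on a pro-$p$ tree with finite quotient graph (the ``easy direction'' of pro-$p$ Bass--Serre theory, available since $G$, and hence $T_c/G$ and $T_c/A$, are finite over the quotient) to $A\cong\mathrm{Aut}(G)$ acting on $T_c$: $\mathrm{Aut}(G)$ is the fundamental pro-$p$ group of the graph of groups carried by $T_c/A$ with the vertex and edge groups just identified. The dichotomy in the statement is exactly whether some element of $A$ induces the flip of the segment $T_c/G$. If no automorphism of $G$ carries $G_1$ to a conjugate of $G_2$, then $T_c/A$ is again the length-two segment on $v_1,u,v_2$ and one reads off the first displayed formula. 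Otherwise such an automorphism realizes the flip, which fixes $u$ and exchanges $v_1\leftrightarrow v_2$ and the two edges; then $T_c/A$ is the length-one segment obtained by folding $v_1$ onto $v_2$, giving $\mathrm{Aut}(G)=\mathrm{Aut}_G(G_1)\amalg_{\mathrm{Aut}_G(G_1)\cap\mathrm{Aut}_G(N_G(H))}\mathrm{Aut}_G(N_G(H))$, and conjugating the whole picture by the flip shows that here $G_1$ may be replaced by $G_2$.

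The steps I expect to be delicate are the pro-$p$-specific ones. Setting up the pro-$p$ tree of cylinders so that it has a finite quotient graph and cylinder stabilizer exactly $N_G(H)$ (rather than a larger commensurator), together with connectedness of cylinders as pro-$p$ subtrees, needs the usual care of pro-$p$ Bass--Serre theory; and the triviality of the kernel of $A\to\mathrm{Aut}(G)$ together with the self-normalizing facts above must be verified honestly. But the genuine obstacle is the last step: $A$, like $\mathrm{Aut}(G)$, need not be a pro-$p$ group, so one must use a version of the decomposition theorem valid for profinite groups acting on pro-$p$ trees, check that the subgroups $\mathrm{Aut}_G(\cdot)$ occurring are closed, and make sure the topology of $A$ transports correctly to $\mathrm{Aut}(G)$; this is precisely where the finite generation of $G$ — which forces $T_c/G$, and therefore $T_c/A$, to be finite — does the essential work.
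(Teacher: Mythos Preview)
Your approach is essentially the paper's: both pass to the tree of cylinders $T_c$ for the equality-of-stabilizers relation (so $T_c/G$ is the length-two segment with middle vertex group $N_G(H)$), use rigidity of $G_1,G_2$ to make the deformation space $\mathrm{Aut}(G)$-invariant and hence get an $\mathrm{Aut}(G)$-action on $T_c$, and then read off the two cases according to whether some automorphism realizes the flip of the segment. The only cosmetic difference is that the paper invokes its Lemma~\ref{vertex stabilizers invariant} directly to produce the $\mathrm{Aut}(G)$-action (using that after the equality-cylinder step vertices are uniquely determined by their stabilizers, cf.\ Remark~\ref{via equality}), whereas you package the same content in the auxiliary group $A$ of pairs $(\phi,\tilde\phi)$; note also that finiteness of $T_c/G$ comes simply from $|T/G|<\infty$ (Proposition~\ref{finite diameter}), not from finite generation of $G$.
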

	
	If $H$ is malnormal then formula for splitting simplifies.
	
	\begin{corollary}\label{malnormalamalgint} Let $G=G_1\amalg_H G_2$ be a non-fictitious free pro-$p$ product with malnormal amalgamation. Suppose that $G_1$ and $G_2$ are rigid. Then $Aut(G)$ is either 
$$Aut(G)= Aut_G(G_1)\amalg_{Aut_G(H)}Aut_G(G_2)$$ 
or
$$Aut(G)= Aut_G(G_1),$$
where 1 can be replaced by 2.
	 \end{corollary}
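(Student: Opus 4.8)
The plan is to deduce the Corollary from Theorem~\ref{amalgamint} by computing $N_G(H)$. First I would observe that a malnormal edge group of a non-fictitious amalgam is self-normalizing: if $g\in N_G(H)\setminus H$ then $gHg^{-1}=H$, so $gHg^{-1}\cap H=H$, which is nontrivial because the amalgam is non-fictitious, contradicting malnormality; hence $N_G(H)=H$ and $Aut_G(N_G(H))=Aut_G(H)$. Substituting this into the two options of Theorem~\ref{amalgamint} yields
\[
Aut(G)=Aut_G(G_1)\amalg_{Aut_G(G_1)\cap Aut_G(H)}Aut_G(H)\amalg_{Aut_G(G_2)\cap Aut_G(H)}Aut_G(G_2)
\]
or
\[
Aut(G)=Aut_G(G_1)\amalg_{Aut_G(G_1)\cap Aut_G(H)}Aut_G(H),
\]
with $1$ replaceable by $2$.

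To finish I would collapse these amalgams, for which the crucial claim is $Aut_G(H)\le Aut_G(G_1)\cap Aut_G(G_2)$. Granting it, $Aut_G(G_i)\cap Aut_G(H)=Aut_G(H)$, so by the elementary collapses $A\amalg_C C\amalg_C B=A\amalg_C B$ and $A\amalg_C C=A$ of (fictitious edges in) pro-$p$ graphs of groups, the first option becomes $Aut_G(G_1)\amalg_{Aut_G(H)}Aut_G(G_2)$ and the second becomes $Aut_G(G_1)$; together with the $1\leftrightarrow 2$ symmetry these are exactly the two formulas of the Corollary. For the claim I would work with the Bass--Serre pro-$p$ tree $T$ of $G=G_1\amalg_H G_2$, on which $Aut(G)$ acts because, by rigidity of $G_1$ and $G_2$ (as in the proof of Theorem~\ref{amalgamint}), $T$ is a JSJ tree; write $\widetilde{\phi}$ for the $\phi$-twisted automorphism of $T$ induced by $\phi\in Aut(G)$. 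Let $e_0$ be the edge with $G_{e_0}=H$, with endpoints $v_1,v_2$ and $G_{v_i}=G_i$. Since $N_G(H)=H$, distinct edges of $T$ have distinct stabilizers, so for $\phi\in Aut_G(H)$ the edge $\widetilde{\phi}(e_0)$, whose stabilizer is $\phi(H)=H$, must equal $e_0$; hence $\widetilde{\phi}$ permutes $\{v_1,v_2\}$ and $\phi(G_i)=G_{\widetilde{\phi}(v_i)}\in\{G_1,G_2\}$.

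The main obstacle is to exclude $\widetilde{\phi}(v_1)=v_2$, i.e. to rule out an automorphism of $G$ fixing $H$ and interchanging $G_1$ with $G_2$; this is precisely where the non-fictitiousness hypothesis has to be used, since a priori (for instance when $G_1\cong G_2$ through an isomorphism restricting to the identity on $H$) such a swap $\sigma$ exists with $\sigma\in Aut_G(H)$ but $\sigma\notin Aut_G(G_1)\cup Aut_G(G_2)$, and then the triple amalgam genuinely fails to collapse to $Aut_G(G_1)\amalg_{Aut_G(H)}Aut_G(G_2)$. I would therefore spend the effort showing that a non-fictitious malnormal amalgam admits no factor-interchanging automorphism (equivalently, that $v_1$ and $v_2$ lie in distinct orbits of $T$ under the action of the whole group $Aut(G)\ltimes G$). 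Once that configuration is excluded, $\widetilde{\phi}$ fixes each of $v_1,v_2$, so $\phi\in Aut_G(G_1)\cap Aut_G(G_2)$, the claim holds, and the Corollary follows.
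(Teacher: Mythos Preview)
Your strategy is exactly the paper's: deduce $N_G(H)=H$ from malnormality, establish $Aut_G(H)\le Aut_G(G_1)\cap Aut_G(G_2)$, and collapse the resulting fictitious edges in the two options of Theorem~\ref{amalgamint}. The paper justifies the key inclusion in a single line, from the fact that $G_1^{g_1}\cap G_2^{g_2}=H$ forces $g_1\in G_1$ and $g_2\in G_2$; it makes no attempt to exclude factor-interchanging automorphisms.

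The step you flag as the ``main obstacle'' cannot be carried out as you propose. You plan to show that a non-fictitious malnormal amalgam admits no factor-interchanging automorphism, but this is false: the very configuration you yourself describe---$G_1\cong G_2$ via an isomorphism restricting to the identity on $H$---yields an automorphism $\sigma$ of $G$ with $\sigma(G_1)=G_2$, $\sigma(G_2)=G_1$, $\sigma|_H=\mathrm{id}$, and non-fictitiousness (which says only $H\subsetneq G_i$) does nothing to prevent this. So the statement you intend to ``spend the effort showing'' is simply not true. What you are missing is that the dichotomy in Theorem~\ref{amalgamint} already separates the two situations: its first option is, by construction, the case $T_c/Aut(G)=T_c/G$, i.e.\ no swap exists, so there your inclusion holds for free and the triple amalgam collapses to $Aut_G(G_1)\amalg_{Aut_G(H)}Aut_G(G_2)$ without further work. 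Your difficulty is confined to the second option, where a swap necessarily exists; there $\sigma\in Aut_G(H)\setminus Aut_G(G_1)$, so $Aut_G(H)\not\subseteq Aut_G(G_1)$ and the amalgam $Aut_G(G_1)\amalg_{Aut_G(G_1)\cap Aut_G(H)}Aut_G(H)$ does not reduce to $Aut_G(G_1)$ by a fictitious-edge collapse. The paper's intersection argument also only handles the case where $\phi(G_1)$ is a $G$-conjugate of $G_1$ and $\phi(G_2)$ of $G_2$, so this issue is not resolved there either; in any event, trying to rule out swaps globally is the wrong plan.
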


In Subsection 9.3 we study tree of cylinders for torsion-free CSA groups:  pro-$p$ limit groups is the canonical example of them. 

In Section 10 we extend Corollary \ref{malnormalamalgint} to  pro-$\C$ groups assuming that the amalgamated subgroup is malnormal ( $\C$ is a class of finite groups closed for subgroups, quotients and extensions).

In Section 11 we define universal compatibility and we show Theorem  \ref{co} (existence of
the compatibility JSJ deformation space $D_{co}$). We then construct the compatibility JSJ
tree $T_{co}$ and we give examples.

In the last section we prove results on compatibility of free constructions (free pro-$p$ products with amalgamation and HNN-extensions) with free and virtually free pro-$p$ products. Here are two samples of them.

\begin{theorem}\label{one edgeint} Let $G=G_1\amalg_{C} G_2$ be a finitely generated  free pro-$p$ product with cyclic amalgamation. Then $G$  splits as a free pro-$p$ product if and only if $C$  belongs to a free factor of $G_1$  or $G_2$. 
\end{theorem}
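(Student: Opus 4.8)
The plan is to argue throughout with actions on pro-$p$ trees. Write $T$ for the standard pro-$p$ tree of the amalgam $G=G_1\amalg_C G_2$: one orbit of edges with stabiliser $C$, two orbits of vertices with stabilisers the conjugates of $G_1$ and $G_2$, and $|T/G|=1$. The ``if'' direction is formal: if $C$ is contained in a (proper) free factor of $G_1$, say $G_1=A\amalg B$ with $C\le A$ and $B\neq 1$, then by associativity of the free pro-$p$ product with amalgamation $G=(A\amalg B)\amalg_C G_2=(A\amalg_C G_2)\amalg B$, which is a non-trivial free pro-$p$ product; the case of $G_2$ is symmetric.

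For the converse, suppose $G=H_1\amalg H_2$ non-trivially, and let $S$ be its standard pro-$p$ tree (trivial edge stabilisers, vertex stabilisers the conjugates of $H_1,H_2$). We may assume $C\neq 1$. First I would show that $G_1$ or $G_2$ is not elliptic in $S$. If both were elliptic, then each $G_i$ would be conjugate into some $H_{k_i}$, so $C=G_1\cap G_2$ would lie in an intersection of two conjugates of the $H_j$; since $S$ has trivial edge stabilisers, the stabilisers of two distinct vertices of $S$ intersect trivially (their intersection fixes a geodesic containing an edge), so either $C=1$, contrary to assumption, or the two relevant vertices of $S$ coincide, forcing $G=\langle G_1,G_2\rangle$ into one vertex group and contradicting that both $H_1,H_2$ are non-trivial. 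After relabelling, assume $G_1$ is not elliptic in $S$.

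The core step is to extract from the $G_1$-action on $S$ a non-trivial free pro-$p$ splitting of $G_1$ in which $C$ is elliptic; then $C$ is conjugate into a vertex group and, conjugating the decomposition, $C$ belongs to a proper free factor of $G_1$. If $C$ is already elliptic in $S$, restrict $S$ to $G_1$ and pass to the minimal $G_1$-invariant subtree $S_1$: this is non-trivial because $G_1$ is not elliptic, and has trivial edge stabilisers, so by the pro-$p$ Bass--Serre subgroup structure theory for finitely generated groups (see \cite{WZ18} and \cite{CZ}; here $G$, hence $G_1$, is finitely generated) $G_1$ is a finite non-trivial free pro-$p$ product, with $C$ conjugate into one of the factors after projecting its fixed vertex onto $S_1$. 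If $C$ is not elliptic in $S$, I would first apply the standard refinement construction of Section~4 to the pair $(S,T)$: the edge stabilisers of $S$ are trivial, hence elliptic in $T$, and $|S/G|<\infty$, so there is a refinement $\widehat S$ of $S$ dominating $T$; as $\widehat S$ is obtained by blowing up the vertices of $S$ along the $T$-action, it carries an edge whose stabiliser is a conjugate of $C$, so $C$ becomes elliptic in $\widehat S$. Restricting $\widehat S$ to $G_1$, passing to the minimal $G_1$-subtree and collapsing every edge with non-trivial stabiliser then yields a $G_1$-tree with trivial edge stabilisers that is still non-trivial (it retains the edges lying over the non-trivial $G_1$-action on $S$) and in which $C$ stays elliptic, and one finishes as before.

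I expect this last case — $C$ not elliptic in $S$ — to be the main obstacle. One must run the blow-up machinery of Section~4 in the pro-$p$ category with control on the resulting edge stabilisers, rule out the degenerate possibility that the blow-up is trivial (which occurs only when both $H_1$ and $H_2$ are elliptic in $T$, to be excluded by an argument parallel to the one in the second paragraph, or analysed directly), and deal with the usual pro-$p$ subtleties: existence of nearest-point projections onto minimal subtrees, preservation of ellipticity under collapse, absence of inversions (subdividing if $p=2$), and finite generation of $G_1$ and $G_2$, which is what makes the free pro-$p$ decomposition of $G_1$ genuinely finite so that ``free factor'' carries its usual meaning.
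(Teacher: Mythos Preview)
Your ``if'' direction and the case where $C$ is elliptic in $S$ are essentially correct and match Case~1 of the paper's argument. The genuine gap is the case where $C$ is \emph{not} elliptic in $S$: your refinement does not make $C$ elliptic. Since $C$ is cyclic pro-$p$, any non-trivial closed subgroup is open in $C$, so if some $C\cap H_j^{\,g}$ were non-trivial then Corollary~\ref{fixed vertex} would force $C$ to be elliptic in $S$; hence here $C\cap H_j^{\,g}=1$ for all $g,j$. But then an edge of $\widehat S$ lying in a blown-up subtree $\tilde Y_v\subset T$ has stabiliser equal to the intersection of $G_v$ (a conjugate of some $H_j$) with the $T$-stabiliser of that edge (a conjugate of $C$), which is trivial. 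So \emph{all} edge stabilisers of $\widehat S$ are trivial, and by Proposition~\ref{refinement}(4) a subgroup is elliptic in $\widehat S$ only if it is elliptic in both $S$ and $T$; since $C$ is not elliptic in $S$, it stays hyperbolic in $\widehat S$ and in any $G_1$-invariant subtree or collapse thereof. You therefore cannot place $C$ in a free factor of $G_1$ by this construction.

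The paper's Case~2 (Theorem~\ref{one edge}) takes a completely different route: pass to the virtually free pro-$p$ quotients $G_U=G/\widetilde U$, apply the virtually-free case Theorem~\ref{over cyclic and free product} (itself resting on the Mayer--Vietoris argument behind Corollary~\ref{cyclic amalgamation of free}) to get that the image of $C$ is a free factor of $G_{1U}$ or $G_{2U}$ for every $U$, and then use the inverse-limit free-factor criterion of Lemma~\ref{free factor} (from \cite{GJ}) to conclude for $G_1$ or $G_2$. This homological and limit-theoretic input is the missing ingredient in your approach.
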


\begin{theorem}\label{one edgeHNN} Let $G=HNN(G_1,C, t)$  be a finitely generated  HNN-extension with cyclic associated subgroup. If $G$  splits as a free pro-$p$ product then $C$ or $C^t$  belongs to a free factor of $G_1$. 
\end{theorem}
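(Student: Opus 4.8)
My plan is to deduce this from the amalgamated case already established in Theorem~\ref{one edgeint}. One may assume $C\neq 1$ (for $C=1$ the statement is vacuous), and first reduce to the situation where $G_1$, hence every conjugate $tG_1t^{-1}$, is finitely generated, using accessibility of finitely generated pro-$p$ groups over cyclic edge groups (Section~3). Suppose $G$ splits as a nontrivial free pro-$p$ product $G=A\amalg B$, with Bass--Serre pro-$p$ tree $S$; so $S$ has trivial edge stabilizers and $|S/G|<\infty$. In the pro-$p$ tree $T$ of the HNN extension the vertices $v$ and $tv$ are the endpoints of the edge $e$, with $\mathrm{Stab}(v)=G_1$, $\mathrm{Stab}(tv)=tG_1t^{-1}$ and $\mathrm{Stab}(e)=G_1\cap tG_1t^{-1}=C$. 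The first step is the pro-$p$ Bass--Serre fact that the closed subgroup $H:=\langle G_1,\,tG_1t^{-1}\rangle$ is then the free pro-$p$ product with amalgamation $H=G_1\amalg_{C}(tG_1t^{-1})$; note $H$ is finitely generated.

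Next I would analyse $H$ as a closed subgroup of the free pro-$p$ product $G=A\amalg B$. By the pro-$p$ Kurosh subgroup theorem (Section~2), since $H$ is finitely generated, either $H$ is elliptic in $S$, or $H$ is a nontrivial free pro-$p$ product, or $H$ is procyclic or trivial; in the last case $H=G_1\amalg_{C}(tG_1t^{-1})$ procyclic forces $C=G_1$ or $C=tG_1t^{-1}$ (an amalgam of two procyclic groups over a proper subgroup of each has non-procyclic abelianization), so $G$ is a Baumslag--Solitar-type pro-$p$ group, hence not a nontrivial free pro-$p$ product unless $C=1$ --- excluded. If $H$ is elliptic in $S$ then $G_1\le H\le\mathrm{Stab}_S(w)$ for some vertex $w$, so $C$ and $C^{t}=t^{-1}Ct$ lie in $\mathrm{Stab}_S(w)$, whence $C=tC^{t}t^{-1}\le\mathrm{Stab}_S(tw)$ and $C$ fixes the geodesic $[w,tw]$; if $w\neq tw$ this geodesic carries an edge with trivial stabilizer, forcing $C=1$, excluded; hence $tw=w$, so $t\in\mathrm{Stab}_S(w)$ and $G=\langle G_1,t\rangle$ lies in a conjugate of a free factor, contradicting that $A\amalg B$ is nontrivial. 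Therefore $H=G_1\amalg_{C}(tG_1t^{-1})$ is a nontrivial free pro-$p$ product, and Theorem~\ref{one edgeint} applies to it: $C$ lies in a proper free pro-$p$ factor of $G_1$ --- and we are done --- or of $tG_1t^{-1}$, say $tG_1t^{-1}=E\amalg E'$ with $E'\neq 1$ and $C\le E$; conjugating by $t^{-1}$ gives $G_1=t^{-1}Et\amalg t^{-1}E't$ with $C^{t}\le t^{-1}Et$, so $C^{t}$ lies in a proper free pro-$p$ factor of $G_1$, as required.

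The main obstacle, as I see it, is not this reduction but its two inputs. The first is the pro-$p$ analogue of the elementary Bass--Serre identification $\langle G_v,G_{v'}\rangle=G_v\amalg_{G_e}G_{v'}$ for adjacent vertices of a tree: in the pro-$p$ setting one must check that the generated subgroup is closed and that there is genuinely no collapsing, and I would extract this carefully from the results of Section~2 on pro-$p$ groups acting on pro-$p$ trees. The second is the reduction to $G_1$ (hence $H$) finitely generated, needed both to form $H$ as a finitely generated amalgam and to invoke the pro-$p$ Kurosh theorem and Theorem~\ref{one edgeint}; this should come from the accessibility results collected in Section~3, but it is the point I would verify most carefully. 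Granting these, the HNN case transfers cleanly to the amalgamated case.
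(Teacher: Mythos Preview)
Your proposal is correct and follows essentially the same route as the paper: form $H=\langle G_1,\,tG_1t^{-1}\rangle=G_1\amalg_C (tG_1t^{-1})$, show it is not contained in a conjugate of a free factor of $G=A\amalg B$, apply Kurosh to get a nontrivial free pro-$p$ decomposition of $H$, and then invoke Theorem~\ref{one edgeint}. The only noteworthy difference is the non-ellipticity step: the paper argues via the normal closure of $G_1$ (which cannot sit inside a free factor since free factors are malnormal), while you argue directly that if $H$ fixed a vertex $w$ of $S$ then $C$ would fix both $w$ and $tw$, forcing $t\in\mathrm{Stab}_S(w)$ and hence $G$ elliptic. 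Both are short and valid; yours is arguably cleaner. Your two flagged inputs are handled in the paper exactly as you anticipate: the amalgam identification $\langle G_1,tG_1t^{-1}\rangle=G_1\amalg_C tG_1t^{-1}$ is used without further comment (it follows from the action on the Bass--Serre pro-$p$ tree of the HNN extension, cf.\ Lemma~\ref{restricted graph} and the discussion around it), and finite generation of $G_1$ is immediate from Proposition~\ref{reducing to trees} since the edge group $C$ is cyclic.
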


\bigskip
 {\bf Conventions.} Throughout the paper, unless otherwise stated, groups are pro-$p$, subgroups are closed, and homomorphisms are continuous. In particular, $\langle H\rangle$ will mean the topological generation in the paper and presentations are taking in the category of pro-$p$ groups; $a^g$ will stand for $g^{-1}ag$. $\Z_p, \F_p$ denote the p-adic integers and the field of $p$ elements respectively and $\Phi(G)$ denotes the Frattini subgroup of $G$.

 \bigskip
 
 {\it Acknowledgment.} The paper was started when the author was visiting Department of Mathematics of University of Cambridge. The author thanks Henry Wilton for fruitful discussions, especially for pointing out abstract versions of the result of the last section of the paper.

\section{Preliminaries}

 In this section, we recall the necessary elements of the theory of pro-$p$ trees. One can find more on this in \cite{RZ-00, R}.

A \emph{graph} $\Gamma$ is a disjoint  union $E(\Gamma) \cup V(\Gamma)$
with two maps $d_0, d_1 : \Gamma \to V(\Gamma)$ that are the
identity on the set of vertices $V(\Gamma)$.  For an element $e$ of
the set of edges  $E(\Gamma)$, $d_0(e) $ is called the \emph{initial} and
$d_1(e) $ the \emph{terminal} vertex of $e$.

\begin{definition}
A \emph{profinite graph} $\Gamma$ is a graph such that:
\begin{enumerate}
\item $\Gamma$ is a profinite space (i.e.\ an inverse limit of finite
discrete spaces);
\item $V(\Gamma)$ is closed; and
\item the maps $d_0$ and $d_1$
are continuous.
\end{enumerate}
Note that $E(\Gamma)$ is not necessary closed.
\end{definition}

A \emph{morphism} $\alpha:\Gamma\longrightarrow \Delta$ of profinite graphs is a continuous map with $\alpha d_i=d_i\alpha$ for $i=0,1$.

By \cite[Prop.~1.7]{ZM-88} every profinite
graph $\Gamma$ is an inverse limit of finite quotient graphs of
$\Gamma$. A {\it connected} profinite graph is by definition a profinite graph whose all finite quotient graphs are connected (in the usual sense).

\subsection{Trees}

  Let $\F_p$ be the field of $p$ elements. 
For a profinite space $X$  that is the inverse limit of finite
discrete spaces $X_j$, $[[\mathbb{F}_p X]]$ is the inverse
limit  of $ [\mathbb{F}_p X_j]$, where
$[\mathbb{F}_p X_j]$ is the free
$\mathbb{F}_p$-module with basis $X_j$. For a pointed
profinite space $(X, *)$ that is the inverse limit of pointed
finite discrete spaces $(X_j, *)$, $[[\mathbb{F}_p (X,
*)]]$ is the inverse limit  of $ [\mathbb{F}_p (X_j, *)]$,
where $[\mathbb{F}_p (X_j, *)]$ is the free
$\mathbb{F}_p$-module with basis $X_j \setminus \{ *
\}$ \cite[Chapter~5.2]{RZ-10}.

For a profinite graph $\Gamma$, define the pointed space
$(E^*(\Gamma), *)$ as  $\Gamma / V(\Gamma)$, with the image of
$V(\Gamma)$ as a distinguished point $*$, and denote the image of $e\in E(\Gamma)$ by $\bar{e}$.  
A profinite graph is called a \emph{pro-$p$ tree} if one has the following exact sequence:

$$
0 \to [[\mathbb{F}_p(E^*(\Gamma), *)]]
\stackrel{\delta}{\rightarrow} [[\mathbb{F}_p V(\Gamma)]]
\stackrel{\epsilon}{\rightarrow} \mathbb{F}_p \to 0
$$
where $\delta(\bar{e}) = d_1(e) - d_0(e)$ for every $e \in E(\Gamma)$ and $\epsilon(v) = 1$ for every $v \in V(\Gamma)$. 

 If $v$  and $w$ are elements  of a profinite tree   $T$, we denote by $[v,w]$ the smallest profinite subtree of $T$ containing $v$ and $w$ (i.e. the intersection of all pro-$p$ subtrees containing $v,w$) and call it geodesic. 

\subsection{G-trees}

By definition, a profinite group $G$ \emph{acts} on a profinite graph
$\Gamma$ if  we have a continuous action of $G$ on the profinite
space $\Gamma$ that commutes with the maps $d_0$ and $d_1$. We shall denote by $T^G$ the set of fixed points for $G$.

\begin{proposition}\label{mod tilde}(\cite[Proposition 3.5]{RZ-00}). Let $G$ be a pro-$p$ group acting on a pro-$p$ tree $T$ and $\widetilde G=\langle G_v\mid v\in V(T)\rangle$ be the subgroup generated by all vertex stabilizers. Then $\widetilde G$ is normal in $G$ and $T/\widetilde G$ is a pro-$p$ tree.\end{proposition}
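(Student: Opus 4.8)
The plan is to dispatch normality by a one-line orbit argument, then establish the tree property homologically, comparing the chain complex of $T/\widetilde G$ with the $\widetilde G$-coinvariants of the chain complex of $T$.

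Normality is immediate: for $g\in G$ one has $gG_vg^{-1}=G_{gv}$, so conjugation by $g$ permutes the family $\{G_v:v\in V(T)\}$, which topologically generates $\widetilde G$; hence $g\widetilde G g^{-1}=\widetilde G$. Since $\widetilde G$ is a closed subgroup acting on the profinite graph $T$, the quotient $T/\widetilde G$ is a profinite graph with $V(T/\widetilde G)=V(T)/\widetilde G$ and $E^*(T/\widetilde G)=E^*(T)/\widetilde G$, so only the defining exact sequence for $T/\widetilde G$ needs checking. Writing $A=[[\F_p(E^*(T),*)]]$ and $B=[[\F_p V(T)]]$, the fact that $T$ is a pro-$p$ tree says $0\to A\xrightarrow{\delta}B\xrightarrow{\epsilon}\F_p\to 0$ is exact, in fact an exact sequence of profinite $\F_p[[G]]$-modules. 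I would identify the $\widetilde G$-coinvariants $A_{\widetilde G}$ and $B_{\widetilde G}$ with $[[\F_p(E^*(T/\widetilde G),*)]]$ and $[[\F_p V(T/\widetilde G)]]$ respectively (coinvariants take $[[\F_p(X,*)]]$ to $[[\F_p(X/\widetilde G,*)]]$), carrying $\delta,\epsilon$ to the structure maps $\bar\delta,\bar\epsilon$ of $T/\widetilde G$. Right exactness of $(-)_{\widetilde G}$ then makes $A_{\widetilde G}\xrightarrow{\bar\delta}B_{\widetilde G}\xrightarrow{\bar\epsilon}\F_p\to 0$ exact, so everything reduces to showing $\bar\delta$ is injective.

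For this I would feed the sequence $0\to A\to B\to\F_p\to 0$ into the long exact sequence of $H_*(\widetilde G,-)$; its tail
\[
H_1(\widetilde G,B)\longrightarrow H_1(\widetilde G,\F_p)\xrightarrow{\ \partial\ }A_{\widetilde G}\xrightarrow{\ \bar\delta\ }B_{\widetilde G}
\]
shows $\bar\delta$ is injective exactly when $\partial=0$, i.e. when $H_1(\widetilde G,B)\to H_1(\widetilde G,\F_p)$ is surjective. This last step is the crux, and it is where the hypothesis $\widetilde G=\langle G_v:v\in V(T)\rangle$ enters. The decisive simplification is that, since $G_v\le\widetilde G$ for every $v$, the stabilizer of $v$ in $\widetilde G$ is the whole group $G_v$; hence the $\widetilde G$-orbit of $v$ is a closed subspace of $V(T)$ equivariantly homeomorphic to $\widetilde G/G_v$, and restriction of continuous functions yields a $\widetilde G$-equivariant embedding $\F_p[[\widetilde G/G_v]]\hookrightarrow B$ whose composite with $\epsilon$ is the augmentation. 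Applying $H_1(\widetilde G,-)$ together with Shapiro's lemma for profinite groups, the composite
\[
H_1(G_v,\F_p)\ \cong\ H_1\big(\widetilde G,\F_p[[\widetilde G/G_v]]\big)\longrightarrow H_1(\widetilde G,B)\longrightarrow H_1(\widetilde G,\F_p)
\]
is the inclusion-induced map $G_v/\Phi(G_v)\to\widetilde G/\Phi(\widetilde G)$, whose image is $G_v\Phi(\widetilde G)/\Phi(\widetilde G)$. Thus the image of $H_1(\widetilde G,B)\to H_1(\widetilde G,\F_p)=\widetilde G/\Phi(\widetilde G)$ contains $G_v\Phi(\widetilde G)/\Phi(\widetilde G)$ for every $v\in V(T)$; this image is compact (by standard properties of continuous homology with profinite coefficients), and the subgroups $G_v\Phi(\widetilde G)/\Phi(\widetilde G)$ topologically generate $\widetilde G/\Phi(\widetilde G)$ because $\widetilde G$ is topologically generated by the $G_v$. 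Hence the map is onto, $\bar\delta$ is injective, and $T/\widetilde G$ is a pro-$p$ tree.

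The routine inputs — that $T/\widetilde G$ is a profinite graph, right exactness of coinvariants, and Shapiro's lemma in the profinite category — are standard. The real obstacle is the surjectivity in the last paragraph; its engine is the equality $\mathrm{Stab}_{\widetilde G}(v)=G_v$ (which exhibits $[[\F_p V(T)]]$ as assembled from the induced modules $\F_p[[\widetilde G/G_v]]$ and makes Shapiro applicable) together with the topological generation of $\widetilde G$ by the $G_v$. The only delicate point is profinite bookkeeping around $[[\F_p V(T)]]$ and the behaviour of $H_1(\widetilde G,-)$ under inverse limits, but the argument is arranged to use only the individual equivariant embeddings $\F_p[[\widetilde G/G_v]]\hookrightarrow B$ rather than a decomposition of $B$, which minimises it.
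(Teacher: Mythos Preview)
The paper does not prove this proposition; it merely cites it from \cite[Proposition 3.5]{RZ-00} (the analogous statement also appears as Corollary~3.9.3 in \cite{R}), so there is no in-paper argument to compare against. Your homological strategy is essentially the one used in the cited reference: pass to coinvariants of the defining short exact sequence, and deduce injectivity of $\bar\delta$ from surjectivity of the corestriction-type map $H_1(\widetilde G,B)\to H_1(\widetilde G,\F_p)$ via Shapiro.

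Your argument is correct. Two small remarks. First, the phrase ``restriction of continuous functions'' is the wrong picture: $[[\F_p X]]$ is the free profinite $\F_p$-module on $X$ (measures), not the function space $C(X,\F_p)$; the embedding $\F_p[[\widetilde G/G_v]]\hookrightarrow[[\F_p V(T)]]$ comes from the inclusion of the closed orbit $\widetilde G\cdot v\hookrightarrow V(T)$ by taking the inverse limit of the finite-level inclusions $\F_p[Y_i]\hookrightarrow\F_p[X_i]$ (``extension by zero''), not by restricting functions. Second, the claim that the image of $H_1(\widetilde G,B)\to H_1(\widetilde G,\F_p)$ is closed deserves a word: in the profinite category these homology groups are profinite abelian groups and the induced map is continuous, so the image is compact and hence a closed subgroup. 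With those clarifications your proof goes through.
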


\begin{definition}
	If $G$ acts on a pro-$p$ tree $T$, any subgroup or element fixing a vertex will be called 
	{\it elliptic} for the pair $(G,T)$. Otherwise the element is called hyperbolic. We denote by $\widetilde G$ the subgroup of $G$ generated by all elliptic elements. \end{definition}

We prove here a pro-$p$ version of the statement proved by Tits \cite[3.4]{tits}; note that the Tits option of having a $G$-invariant end does not occur in the pro-$p$ case. So the two propositions below  are valid in abstract case only for finitely generated groups.

\begin{proposition}\label{ellipticity} Let $G$ be a pro-$p$ group acting on a pro-$p$ tree $T$. If every element of $G$ is elliptic then so is $G$.
	\end{proposition}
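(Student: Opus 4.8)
The plan is to mimic the classical Tits argument, but replacing the problematic step (finding a fixed end or fixed point by a limiting/minimality argument on the abstract tree) with the structural tool available in the pro-$p$ setting, namely Proposition \ref{mod tilde}. Suppose for contradiction that $G$ is not elliptic although every element of $G$ is. Since every element of $G$ is elliptic, $\widetilde G = \langle G_v \mid v \in V(T)\rangle = G$; hence by Proposition \ref{mod tilde} the quotient $T/\widetilde G = T/G$ is again a pro-$p$ tree. A pro-$p$ tree is in particular a connected profinite graph, and I will argue that $T/G$ being a pro-$p$ tree forces, via a cohomological or Euler-characteristic-type computation on the defining exact sequence, either a global fixed point for $G$ in $T$ or a structure that contradicts ``every element is elliptic.''

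More concretely, first I would reduce to the case where $T$ is a \emph{minimal} $G$-invariant pro-$p$ subtree, or at least pass to the subtree spanned by the union of all fixed-point sets; the set $T^G$ of global fixed points is closed, and the assertion to be proved is exactly that $T^G \neq \emptyset$. Next, I would invoke the dichotomy for a single element: for $g \in G$ elliptic, $T^g$ is a nonempty closed subtree. The heart of the matter is to promote ``$T^g \neq \emptyset$ for all $g$'' to ``$\bigcap_{g \in G} T^g \neq \emptyset$.'' In the abstract case this is the Helly-type/Serre argument, where the obstruction is a $G$-invariant end; here one shows no $G$-invariant end can exist for a pro-$p$ group acting on a pro-$p$ tree (this is flagged in the text just before the statement), so the argument closes. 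I would make the finite approximation: write $T = \varprojlim T_i$ as an inverse limit of finite $G$-quotient-compatible trees $T_i$ (using \cite[Prop.~1.7]{ZM-88} and the pro-$p$ tree structure), note that on each finite tree $T_i$ the image $\bar G$ still has all elements elliptic, apply the classical Serre/Tits result to get $T_i^{\bar G} \neq \emptyset$, and conclude $T^G = \varprojlim T_i^{\bar G} \neq \emptyset$ since it is an inverse limit of nonempty compact sets.

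I expect the main obstacle to be the finite-approximation step: one must check that the inverse system $\{T_i\}$ can be chosen so that each $T_i$ is a genuine finite tree (not merely a finite connected graph with cycles), so that Serre's ``every element elliptic $\Rightarrow$ group elliptic'' applies on each stage, and that the transition maps are $G$-equivariant surjections restricting to surjections $T_i^{\bar G} \twoheadrightarrow T_j^{\bar G}$ for $i \geq j$ — the latter is what guarantees the inverse limit is nonempty. Ensuring that the quotient maps behave well enough (in particular that ellipticity of each $g$ is preserved under $T \to T_i$, which uses that $T^g$ surjects onto $T_i^{\bar g}$) is the delicate point; the no-invariant-end remark is precisely what rules out the bad behaviour that would break this, and handling it cleanly — perhaps by instead arguing directly that if $G$ were hyperbolic it would act on a minimal subtree with no global fixed point, producing a hyperbolic element by the pro-$p$ analogue of Tits's lemma — is where the real work lies.
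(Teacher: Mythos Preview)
Your overall architecture is right --- an inverse-limit argument with Proposition~\ref{mod tilde} as the engine --- but you deploy the pieces in the wrong order, and the step you yourself flag as delicate is in fact the gap. Writing $T = \varprojlim T_i$ as a $G$-equivariant inverse limit of \emph{finite trees} is exactly what one cannot arrange in general: finite $G$-equivariant quotients of a pro-$p$ tree are only finite connected graphs, and forcing them to be trees is essentially as hard as the proposition itself. So the classical Serre/Tits lemma on each $T_i$ is unavailable.

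The fix, which is what the paper does, is to quotient the \emph{group} rather than the tree. You already observed that every element of $G$ elliptic forces $\widetilde G = G$; apply the same observation to every open normal subgroup $U\triangleleft_o G$: each element of $U$ is elliptic (it lies in $G$), hence $\widetilde U = U$, and Proposition~\ref{mod tilde} gives that $T/U$ is a pro-$p$ tree --- not a finite tree, but a pro-$p$ tree on which the \emph{finite $p$-group} $G/U$ acts. Now invoke the known fixed-point theorem for finite $p$-groups on pro-$p$ trees (\cite[Theorem~3.9]{RZ-00}) to get $(T/U)^{G/U}\neq\emptyset$. Finally $T^G = \varprojlim_U (T/U)^{G/U}$ is an inverse limit of nonempty compact sets, hence nonempty. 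No finite-tree approximation, no end arguments, and the ``no $G$-invariant end'' remark is a consequence rather than an ingredient.
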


\begin{proof} Let $U$ be an open subgroup of $G$. By  Proposition \ref{mod tilde} $G/U$ acts on pro-$p$ tree $T/ U$ and so by \cite[Theorem 3.9]{RZ-00} fixes a vertex, i.e. the subset of fixed points $T/U^{G/U}\neq \emptyset$.  Then $ T^G=\varprojlim_{U}T/U^{G/U}\neq \emptyset$ as required.\end{proof}

From the proof of Proposition \ref{ellipticity} we deduce the following

\begin{corollary}\label{fixed vertex} Let $G$ be a pro-$p$ group acting on a pro-$p$ tree $T$. Suppose that $G$ is virtually elliptic, i.e. possesses an open normal elliptic subgroup $U$.
Then $G$ is elliptic.

\end{corollary}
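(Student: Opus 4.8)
The plan is to reduce the statement to Proposition~\ref{ellipticity} by showing that, under the hypothesis, every element of $G$ is elliptic. So let $g\in G$ be arbitrary; I want to produce a vertex of $T$ fixed by $g$. Since $U$ is open and normal, the closed subgroup $\langle g\rangle U$ is open in $G$, and $\langle g\rangle U / U$ is a (pro)cyclic pro-$p$ group, being topologically generated by the image of $g$. The key point is that $U$ itself is elliptic, so $T^U\neq\emptyset$, and because $U$ is normal the set $T^U$ is invariant under the action of $G$ (if $u\cdot x=x$ for all $u\in U$ then $u\cdot(gx)=g\cdot(g^{-1}ug)x=gx$ since $g^{-1}ug\in U$). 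One then checks that $T^U$ is a pro-$p$ subtree of $T$: fixed-point sets of groups acting on pro-$p$ trees are again pro-$p$ trees (this is standard, e.g. it follows from the inverse-limit description $T^U=\varprojlim_V T/V^{U/V}$ over open normal $V\trianglelefteq G$ contained in $U$, combined with the fact that fixed subgraphs of finite group actions on finite trees are subtrees).

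Having established that $T^U$ is a nonempty $G$-invariant pro-$p$ subtree, the quotient group $\langle g\rangle U/U$ acts on it, or more simply, the element $g$ acts on the pro-$p$ tree $T^U$. Now $g$ acts on $T^U$ and the whole group $\langle g\rangle U$ acts on $T^U$ with $U$ acting trivially; hence the procyclic pro-$p$ group $\langle g\rangle U/U$ acts on the pro-$p$ tree $T^U$. By Corollary~\ref{fixed vertex}\,---\,or, more directly, by the fact that a procyclic pro-$p$ group acting on a pro-$p$ tree always fixes a vertex (this is precisely the content of Proposition~\ref{ellipticity} applied to a procyclic group, whose every element is trivially elliptic up to passing to the topological generator, or one simply invokes \cite[Theorem 3.9]{RZ-00}-type statements as in the proof of Proposition~\ref{ellipticity})\,---\,the image of $g$ fixes a vertex of $T^U$. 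That fixed vertex lies in $T^U\subseteq T$ and is therefore fixed by $g$ in $T$. Since $g$ was arbitrary, every element of $G$ is elliptic, and Proposition~\ref{ellipticity} gives that $G$ is elliptic.

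Alternatively, and perhaps more cleanly, one can bypass the element-by-element argument entirely: form the nonempty $G$-invariant pro-$p$ subtree $T^U$ as above, and observe that $G/U$ is a \emph{finite} $p$-group acting on the pro-$p$ tree $T^U$. A finite $p$-group acting on a pro-$p$ tree fixes a vertex by \cite[Theorem 3.9]{RZ-00} (this is exactly the ingredient used in the proof of Proposition~\ref{ellipticity}), so $T^U$ has a vertex fixed by all of $G/U$, i.e. $T$ has a vertex fixed by all of $G$, whence $G$ is elliptic.

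The main obstacle is the verification that $T^U$ is genuinely a pro-$p$ \emph{subtree} (not merely a closed $G$-invariant subgraph) so that the cited results on pro-$p$ groups acting on pro-$p$ trees apply to it. The $G$-invariance is an immediate consequence of the normality of $U$, and nonemptiness is the hypothesis that $U$ is elliptic; the subtree property is standard but deserves a one-line justification via the inverse-limit presentation of $T^U$. Everything else is a direct appeal to Proposition~\ref{ellipticity} (or to the finite-$p$-group fixed-point theorem used in its proof), and the corollary follows.
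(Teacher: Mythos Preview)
Your proposal is correct, and your ``alternative'' argument at the end is the cleanest: $T^U$ is nonempty (by hypothesis), $G$-invariant (by normality of $U$), and a pro-$p$ subtree; the finite $p$-group $G/U$ then fixes a vertex of $T^U$ by \cite[Theorem~3.9]{RZ-00}, and that vertex is fixed by $G$ in $T$.

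This differs from the paper's intended argument. The paper simply says the corollary follows from the \emph{proof} of Proposition~\ref{ellipticity}: one re-runs that inverse-limit argument, letting $V$ range over open normal subgroups contained in $U$. Each such $V$ is elliptic (being contained in the elliptic group $U$), hence $V=\widetilde V$, so $T/V$ is a pro-$p$ tree on which the finite $p$-group $G/V$ has a fixed vertex; then $T^G=\varprojlim_V (T/V)^{G/V}\neq\emptyset$. Your approach takes fixed points \emph{inside} $T$ rather than passing to quotients, which is more direct and avoids the inverse limit entirely. Two small clean-ups: your first approach briefly invokes ``Corollary~\ref{fixed vertex}'' itself, which is circular (you catch this and redirect, but it should be removed); and your inverse-limit description of $T^U$ is not quite right --- the fact that $T^U$ is a pro-$p$ subtree is a standard result, \cite[Theorem~2.10]{ZM-88} or \cite[Theorem~3.7]{RZ-00}, and the paper itself cites exactly these references later (e.g.\ in the proof of Proposition~\ref{finite accessibility}). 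Citing one of those directly is cleaner than the ad hoc justification you give.
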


\begin{theorem}\label{fixed geodesic} (\cite[Corollary 3.8]{RZ-00}).
Suppose that a pro-$p$ group $G$ acts on a pro-$p$ tree $T$, and let $v$ and
$w$ be two different vertices of $T$. Then the set of edges $E([v,w])$ of the geodesic $[v,w]$
is nonempty, and $G_v \cap G_w \leq G_e$ for every $e \in E([v,w])$.\end{theorem}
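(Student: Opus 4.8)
We sketch the proof in two parts.

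The first assertion is topological. If a pro-$p$ tree $\Gamma$ has no edges, then $E^*(\Gamma)=\Gamma/V(\Gamma)$ is a single point, so $[[\F_p(E^*(\Gamma),*)]]=0$, and the defining exact sequence yields $[[\F_p V(\Gamma)]]\cong\F_p$, which forces $|V(\Gamma)|=1$. Since a profinite subtree of $T$ is itself a pro-$p$ tree and $[v,w]$ contains the two distinct vertices $v$ and $w$, it must have at least one edge, i.e. $E([v,w])\neq\emptyset$.

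For the second assertion put $H=G_v\cap G_w$. The plan is first to reduce to the case $T=[v,w]$. For every $h\in H$ the subtree $h[v,w]$ contains $hv=v$ and $hw=w$, so $[v,w]\subseteq h[v,w]$ by minimality; applying the same to $h^{-1}$ gives $h[v,w]=[v,w]$. Hence $H$ acts on the pro-$p$ tree $D:=[v,w]$, fixing the distinct vertices $v,w$, and it suffices to show that a pro-$p$ group $H$ acting on a pro-$p$ tree $D=[v,w]$ with $v,w\in D^H$ fixes every edge of $D$, i.e. $D^H=D$.

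To prove this I would work through finite quotients. Write $D=\varprojlim_j D_j$ as an inverse limit of finite quotient graphs $D_j=D/R_j$; since the stabiliser of a clopen subset under a continuous profinite action is open, the stabiliser in $H$ of $R_j\subseteq D\times D$ is an open subgroup, so after replacing $R_j$ by the intersection of its finitely many $H$-translates we may assume each $D_j$ is $H$-invariant, $H$ acting on $D_j$ through a finite $p$-group $\bar H_j$. Next, $w-v\in\ker\epsilon_D=\operatorname{im}\delta_D$ and $\delta_D$ is injective, so there is a unique $m\in[[\F_p(E^*(D),*)]]$ with $\delta_D(m)=w-v$; $H$-equivariance of $\delta_D$ together with $h(w-v)=w-v$ forces $hm=m$ for all $h\in H$. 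Using that every edge $e$ of $D$ induces a clopen partition $V(D)=A_0\cup A_1$ of which $e$ is the unique ``crossing'' edge, together with the minimality of $D=[v,w]$ (which places $v$ and $w$ on opposite sides of $e$), one checks that the support of $m$ is all of $E(D)$. Projecting to $D_j$, the element $m_j$ is fixed by $\bar H_j$, and, since $D$ was minimal, the subgraph spanned by $\operatorname{supp}(m_j)$ carries the $\bar H_j$-fixed vertices $\bar v,\bar w$; one then invokes the edge-collapsing results of \cite{RZ-00} to conclude that $\bar H_j$ must fix each edge of $\operatorname{supp}(m_j)$. Passing to the limit over $j$ gives $H\leq H_e$ for every $e\in E(D)$, that is, $G_v\cap G_w\leq G_e$ for every $e\in E([v,w])$.

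The delicate point — and the place where the pro-$p$ setting genuinely differs from the classical one — is this last step. A finite quotient of the pro-$p$ tree $[v,w]$ need not be a finite tree, and $[v,w]$ itself is in general not an edge path, so one cannot simply invoke the elementary fact that an automorphism of a finite segment fixing its endpoints fixes it pointwise. Controlling how $\bar H_j$ permutes the edges occurring in $m_j$, via the collapsing machinery of \cite{RZ-00}, is the technical heart of the argument.
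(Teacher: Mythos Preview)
The paper does not supply its own proof of this theorem: it is stated as a citation of \cite[Corollary 3.8]{RZ-00} and used as a black box throughout. So there is no in-paper argument to compare against, only the original source.

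Your first part (nonemptiness of $E([v,w])$) is correct and is exactly the standard argument.

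Your second part, however, has a genuine gap at the step you yourself flag as ``the technical heart of the argument.'' After producing the $H$-fixed element $m\in[[\F_p(E^*(D),*)]]$ and pushing it down to finite quotients $D_j$, you write that one ``invokes the edge-collapsing results of \cite{RZ-00}'' to conclude that the finite $p$-group $\bar H_j$ fixes every edge in the support of $m_j$. But you never say which result, nor how it applies; this is precisely the statement you are trying to prove, phrased at the finite level, and it does not follow from anything you have established. A finite $p$-group can act nontrivially on a finite connected graph fixing two vertices while permuting edges; what rules this out here is the tree-like structure of the quotients, and you have not used it. Additionally, your claim that each edge $e$ of $D$ induces a \emph{clopen} partition $V(D)=A_0\cup A_1$ is not justified: a single edge need not be isolated in $E(D)$, so the putative evaluation map $[[\F_p(E^*(D),*)]]\to\F_p$ picking out the coefficient of $e$ need not be continuous, and the ``support of $m$ is all of $E(D)$'' argument as written does not go through.

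A much shorter route, and the one the paper itself implicitly relies on elsewhere (see the proof of Proposition~\ref{finite accessibility}, where it cites \cite[Theorem 2.10]{ZM-88} or \cite[Theorem 3.7]{RZ-00}), is to use the companion fact that the fixed-point set $T^H$ of a pro-$p$ group $H$ acting on a pro-$p$ tree $T$ is itself a pro-$p$ subtree. Granting that, $v,w\in T^H$ forces $[v,w]\subseteq T^H$ by minimality, and the conclusion is immediate. Whether this is circular depends on the logical order in \cite{RZ-00}; if you want a self-contained argument you must actually carry out the finite-quotient step, and the key input there is that a finite $p$-group acting on a pro-$p$ tree has nonempty (indeed connected) fixed-point set, proved in \cite{RZ-00} by an inductive collapsing argument you would need to reproduce rather than merely cite.
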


\begin{proposition} \label{hyperbolic in virtual abelianization} Let $G$ be a pro-$p$ group acting on a pro-$p$ tree $T$ and $g$ a hyperbolic element of $G$. Then there exists an open subgroup $U$ containing it such that for any $g\leq K\leq U$ the image of $g$ in $K^{ab}$ is not torsion.\end{proposition}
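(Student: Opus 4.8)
The plan is to reduce to finite quotients and use that a hyperbolic element, upon projection to a suitable finite-index piece, moves a point nontrivially along a geodesic, which forces its image to have infinite order in a metric (abelianized) sense. First I would recall that since $g$ is hyperbolic, the geodesic $D=[v, gv]$ for a suitable vertex $v$ is nondegenerate, and the sequence of translates $g^n v$ produces an infinite ``axis-like'' subtree; more usefully, I would invoke Proposition~\ref{mod tilde} together with the classification of actions on pro-$p$ trees to pass to an open subgroup $U$ containing $g$ such that $g$ remains hyperbolic in the action of $U$ on $T$, and in fact has a well-defined ``translation length'' surviving in all finite quotients $U/N$ with $N$ open normal in $U$ and $N$ acting freely enough. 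Concretely, by Theorem~\ref{fixed geodesic} and the structure of the minimal invariant subtree for $\langle g\rangle$, one can find an open subgroup $U\ni g$ and an open normal subgroup $N\trianglelefteq U$ with $N$ contained in no vertex stabilizer along the axis, so that in $T/N$ the image of $g$ is a hyperbolic element of the finite $p$-group $U/N$ acting on the finite tree $T/N$.

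Next I would observe that for any intermediate subgroup $g\le K\le U$, the image of $g$ in $K^{ab}$ being torsion would mean $g^{p^k}\in [K, K]$ for some $k$. The key step is to contradict this: I would find a homomorphism $\phi: K \to \Z_p$ (or at least to some finite cyclic $p$-group of unbounded exponent as we vary $K$ and $N$) that detects $g$, coming from the action on the pro-$p$ tree. Such a homomorphism arises naturally: when $g$ is hyperbolic, one gets a nontrivial ``translation length'' cocycle, and more precisely the action on the pro-$p$ tree $T$ restricted to the orbit of the axis yields, after suitable quotient, an action of $K$ on $\Z_p$ by translations through which $g$ acts as translation by a nonzero $p$-adic integer. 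This is exactly the pro-$p$ analogue of the classical fact that a hyperbolic isometry of a tree maps to an infinite-order element of the abelianization of any group generated by it and elliptic elements; the mechanism is that the axis of $g$ defines a ``line'' and the displacement function gives a homomorphism to $\Z_p$ on the stabilizer of that line (or a finite-index overgroup of $\langle g\rangle$), nonvanishing on $g$.

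The cleanest route, which I would pursue as the main line of argument, is: choose $U$ so that the minimal $\langle g\rangle$-invariant subtree $D_g$ (a pro-$p$ line on which $g$ acts by translation) has setwise stabilizer contained in $U$, and so that $U$ acts on $D_g$. The setwise stabilizer of a pro-$p$ line acting without inversions surjects onto $\Z_p$ via the ``end-to-end displacement'' map, and $g$ maps to a nonzero element. Then for any $g\le K\le U$, either $K$ does not stabilize $D_g$ — but it does, since $K$ contains $g$ and the stabilizer of $D_g$ contains every element commuting with $g$ appropriately; here I need to be a bit careful and may instead replace $K$ by $K\cap \mathrm{Stab}(D_g)$, or better, work with $K_0 = \langle g\rangle$ and note that $\langle g\rangle^{ab} = \Z_p$ already, then propagate: the displacement homomorphism $\mathrm{Stab}(D_g) \to \Z_p$ restricts to an injection on $\langle g \rangle$, and since it factors through $\mathrm{Stab}(D_g)^{ab}$, for any $K$ between $\langle g\rangle$ and $\mathrm{Stab}(D_g)$ the image of $g$ in $K^{ab}$ is nontorsion. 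To cover all $K$ up to $U$ and not just up to $\mathrm{Stab}(D_g)$, I would choose $U = \mathrm{Stab}(D_g)$ from the start (shrinking if necessary so that it is open, which holds because $D_g$ has compact — indeed closed — edge set and the stabilizer of a pro-$p$ line containing a given vertex is open when $g$ is hyperbolic, by the analysis in \cite{RZ-00}).

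\textbf{Main obstacle.} The delicate point is establishing that the setwise stabilizer $U$ of the pro-$p$ line $D_g$ is \emph{open} in $G$ and that the displacement map $U \to \Z_p$ is a well-defined continuous homomorphism nonvanishing on $g$ — in the abstract case the line is locally finite so this is transparent, but pro-$p$ lines can be ``thick'' and one must use the pro-$p$ tree structure (the exact sequence defining pro-$p$ trees, applied to $D_g$, identifies $[[\F_p E^*(D_g)]]$ with $[[\F_p(\Z_p, 0)]]$-type modules) to extract the $\Z_p$-valued cocycle. I expect to need the minimal subtree construction for $\langle g\rangle$ acting on $T$ from \cite{RZ-00} or \cite{R}, the fact that $g$ hyperbolic forces this minimal subtree to be a pro-$p$ line, and a continuity/compactness argument to see the stabilizer is open. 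Once the homomorphism $U\to\Z_p$ with $\phi(g)\neq 0$ is in hand, the conclusion for all intermediate $K$ is immediate since $\phi$ factors through every $K^{ab}$.
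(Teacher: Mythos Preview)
Your main line has a genuine gap: the setwise stabilizer of the minimal $\langle g\rangle$-invariant subtree $D_g$ is in general \emph{not} open in $G$, and ``shrinking if necessary so that it is open'' cannot help---a closed non-open subgroup contains no open subgroup. For a concrete failure, let $G$ be the free pro-$p$ group on $a,b$ acting on its Cayley pro-$p$ tree and take $g=a$: the minimal $\langle a\rangle$-invariant subtree has vertex set $\langle a\rangle$, and its setwise stabilizer is exactly $\langle a\rangle$, which is not open. Your earlier fallback, passing to a finite quotient $U/N$ acting on a finite tree and keeping $g$ hyperbolic, also cannot work: a finite $p$-group acting on a pro-$p$ tree always fixes a vertex (Corollary~\ref{fixed vertex}), so no element remains hyperbolic in such a quotient. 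Finally, the existence and $\Z_p$-valuedness of a ``displacement homomorphism'' on a pro-$p$ line is itself not established in the references you cite.

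The paper reaches the same endpoint you are aiming for---an open $U\ni g$ together with a homomorphism to a torsion-free abelian pro-$p$ group that does not kill $g$---but by a different and robust mechanism that sidesteps the axis entirely. For each open $U\ni g$ let $\widetilde U$ be the normal subgroup of $U$ generated by all vertex stabilizers $U_v$; then $U/\widetilde U$ is free pro-$p$ by \cite[Corollary~3.6]{RZ-00}, so $(U/\widetilde U)^{ab}$ is a free $\Z_p$-module. Using $\langle g\rangle=\bigcap_{g\in U\leq_o G}U$ one sees that the image of $g$ in $(U/\widetilde U)^{ab}$ is nontrivial for some $U$, hence non-torsion. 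For any $K$ with $g\in K\leq U$ the natural maps $K^{ab}\to (K/\widetilde K)^{ab}\to (U/\widetilde U)^{ab}$ then force the image of $g$ in $K^{ab}$ to be non-torsion. In short: rather than trying to stabilize a line, you should kill all elliptic elements of a suitable open subgroup and use that what remains is free pro-$p$.
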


\begin{proof} Since $\langle g\rangle=\bigcap \{U\mid g\in U\leq_o G\}$ we can view it as $\langle g\rangle=\varprojlim_{U\leq_o G} U$. It follows that $\langle g\rangle=\varprojlim_{U\leq_o G} U/\widetilde U$ and moreover $\langle g\rangle$ is the projective limit of the abelianizations $\langle g\rangle=\varprojlim_{U\leq_o G} (U/\widetilde U)^{ab}$, where $\widetilde U$ is the normal subgroup of $U$ generated by elliptic elements. Hence the image $g_U$ of $g$ in $(U/\widetilde U)^{ab}$ is non-trivial for some $U$. By \cite[Corollary 3.6]{RZ-00} $U/\widetilde U$ is free pro-$p$ and so its abelianization is torsion free and therefore $g_U$ is not torsion. Now for any $K\leq U$ containing $g$ one has the natural map $(K/\widetilde K)^{ab}\longrightarrow (U/\widetilde U)^{ab}$ that proves the result. 

\end{proof}

\begin{proposition}\label{unique G-invariant} (\cite[Lemma 3.11]{RZ-00}). Let $G$ be a pro-$p$ group acting on a pro-$p$ tree $T$. Then there exists a minimal $G$-invariant pro-$p$ subtree $D$ of $T$. If $|D|> 1$ then it is unique.\end{proposition}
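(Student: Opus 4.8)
The plan is to get existence from Zorn's lemma together with the compactness of $T$, and uniqueness from a ``bridge between disjoint subtrees'' argument. For existence, order the family $\curlyS$ of all nonempty $G$-invariant pro-$p$ subtrees of $T$ by reverse inclusion; it is nonempty since $T\in\curlyS$. I would check that every chain $\{\Delta_i\}_{i\in I}$ in $\curlyS$ has an upper bound, namely $D_0:=\bigcap_i\Delta_i$: it is nonempty because $T$ is a profinite (hence compact Hausdorff) space, the $\Delta_i$ are closed, and the chain has the finite intersection property; it is plainly $G$-invariant; and it is a pro-$p$ subtree because for $v,w\in D_0$ the geodesic $[v,w]$ lies in every $\Delta_i$ (each being a pro-$p$ subtree containing $v$ and $w$, while $[v,w]$ is by definition the intersection of all such), so $D_0$ is a nonempty intersection of pro-$p$ subtrees and hence a pro-$p$ subtree by the basic theory of pro-$p$ trees (the very fact that makes $[v,w]$ well defined; cf.\ \cite{RZ-00}). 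Zorn's lemma then yields a minimal element $D\in\curlyS$.

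For uniqueness, suppose $D$ is minimal with $|D|>1$; I claim $D\subseteq D'$ for \emph{every} nonempty $G$-invariant pro-$p$ subtree $D'$, which gives the statement at once, since then any other minimal such tree contains $D$ and hence equals it. First note $G$ fixes no vertex of $D$: a $G$-fixed vertex $a$ would make $\{a\}$ a proper (as $|D|>1$) nonempty $G$-invariant pro-$p$ subtree of $D$, contradicting minimality. Now suppose toward a contradiction that $D\cap D'=\emptyset$. Two disjoint pro-$p$ subtrees of $T$ are joined by a unique \emph{bridge}: there are unique vertices $a\in D$ and $b\in D'$ with $[u,u']=[u,a]\cup[a,b]\cup[b,u']$ for all $u\in D$, $u'\in D'$; equivalently the closest-point projection $\pi_D$ collapses all of $D'$ to the single vertex $a$. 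Since $D$ and $D'$ are $G$-invariant, $\pi_D(gw)=g\,\pi_D(w)$ for all $g\in G$ and $w\in D'$ (the action preserves geodesics, and $g^{-1}D=D$), whence $\{a\}=\pi_D(D')=\pi_D(gD')=g\,\pi_D(D')=\{ga\}$, so $ga=a$ for every $g$ — a $G$-fixed vertex of $D$, contradiction. Therefore $D\cap D'\neq\emptyset$; then $D\cap D'$ is a nonempty $G$-invariant pro-$p$ subtree contained in $D$, so $D\cap D'=D$ by minimality, i.e.\ $D\subseteq D'$.

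The step I expect to be the main obstacle is not the formal bookkeeping but marshalling the pro-$p$ tree inputs used above: that a nonempty intersection of pro-$p$ subtrees is again a pro-$p$ subtree, and that two disjoint pro-$p$ subtrees admit a well-defined bridge with a canonical endpoint on each side. In abstract Bass--Serre theory these are elementary, but pro-$p$ trees are not trees in the naive combinatorial sense, so both statements must be extracted from the structure theory of pro-$p$ trees in \cite{RZ-00} (the geodesic $[v,w]$ is already introduced there as an intersection of subtrees, which is precisely of this kind). Granting these, the equivariance of $\pi_D$ and the minimality argument go through as written.
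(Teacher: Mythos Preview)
The paper does not prove this proposition at all; it is stated with a bare citation to \cite[Lemma 3.11]{RZ-00}. So there is no in-paper argument to compare your proposal against. Your outline --- Zorn's lemma on the reverse-inclusion poset of nonempty $G$-invariant pro-$p$ subtrees for existence, then a bridge/projection argument for uniqueness --- is the standard route and matches what one finds in the cited source and in \cite{R}. You have also correctly isolated the two nontrivial pro-$p$ inputs: that a nonempty intersection of pro-$p$ subtrees is again a pro-$p$ subtree (this is exactly the fact underlying the definition of $[v,w]$ in the paper's Subsection~2.1, and is established in \cite{RZ-00}), and that two disjoint pro-$p$ subtrees admit a well-defined bridge endpoint. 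Both hold, so your sketch goes through; there is no gap.
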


\begin{theorem}\label{splitting} (\cite[Theorem 4.2]{CZ}). Let $G$ be a finitely generated pro-$p$ group acting on a pro-$p$ tree $T$ with no global fixed point. Then it splits as an amalgamated free pro-$p$ product or pro-$p$ HNN-extension over some  edge stabilizer $G_e$ in $T$.\end{theorem}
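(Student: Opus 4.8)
The plan is to reduce the statement, via a $G$-equivariant collapse of $T$, to the case of a single orbit of edges, where the classical Bass--Serre picture survives, and to use finite generation precisely where it is needed to make that collapse available in the profinite world.

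\emph{Normalisation.} First I would replace $T$ by its unique minimal $G$-invariant pro-$p$ subtree $D$, which satisfies $|D|>1$ because $G$ has no global fixed point (Proposition~\ref{unique G-invariant}); so one may assume the action is minimal. The goal then becomes: produce a $G$-equivariant collapse $T\to T'$ onto a pro-$p$ tree $T'$ on which $G$ acts with a single orbit of edges, the stabiliser of that edge being $G_e$ for some edge $e$ of $T$, and such that the two endpoints of $e$ are not identified by the collapse. Indeed, for such a $T'$ the quotient $T'/G$ is a single edge --- a segment or a loop --- which lifts to $T'$ (a single edge, unlike a maximal subtree of an infinite quotient graph, carries no lifting obstruction), so $G$ is the fundamental pro-$p$ group of the corresponding one-edge graph of pro-$p$ groups, i.e. $G=G_v\amalg_{G_e}G_w$ or $G=\mathrm{HNN}(G_v,G_e,t)$, according to whether the endpoints $v,w$ of $e$ lie in distinct $G$-orbits or in the same one.

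\emph{Building the collapse.} Fix an edge $e$ of $T$, with $v=d_0(e)$, $w=d_1(e)$. In the abstract setting one would simply delete the orbit $Ge$: the resulting $G$-forest has at most two orbits of connected components $T_0\ni v$, $T_1\ni w$, and collapsing each component to a point produces exactly the desired one-edge tree, with edge group $G_e$ and vertex groups the setwise stabilisers of $T_0$ and $T_1$. To run this in the pro-$p$ category one must make sense of ``$T$ with the orbit $Ge$ deleted'' as a pro-$p$ forest --- bearing in mind that $E(T)$ need not be closed and that ``connected components'' have to be read off from the defining $\F_p$-homology exact sequence rather than set-theoretically --- and then check that the collapsed quotient is again a pro-$p$ tree with the edge stabiliser unchanged. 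This is where finite generation is indispensable: for a non-finitely-generated pro-$p$ group it is perfectly possible that $T/G$ is an infinite rigid profinite graph from which no edge orbit can be deleted. A natural way to organise the finitely generated case is the dichotomy furnished by Proposition~\ref{mod tilde}. If $\widetilde G=\langle G_x\mid x\in V(T)\rangle\neq G$, then $G/\widetilde G$ acts freely on the pro-$p$ tree $T/\widetilde G$ and is a nontrivial finitely generated free pro-$p$ group; hence $G$ contains a hyperbolic element $t$, and, using Proposition~\ref{hyperbolic in virtual abelianization} to control the translates of an edge along the axis of $t$, one reads off an HNN-splitting of $G$ over an edge stabiliser on that axis. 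If $\widetilde G=G$, a Frattini argument --- finitely many elements of $\bigcup_x G_x$ generate $G$ modulo $\Phi(G)$, hence generate $G$ --- gives $G=\langle G_{v_1},\dots,G_{v_n}\rangle$ with $n$ minimal (and $n\ge 2$, since $n=1$ would force a global fixed point). One then takes $e$ on a geodesic $[v_i,v_j]$, which is nonempty and satisfies $G_{v_i}\cap G_{v_j}\le G_e$ by Theorem~\ref{fixed geodesic}, and collapses $T$ so as to confine the generating vertex stabilisers to the two sides of $e$, using Corollary~\ref{fixed vertex} to keep those sides elliptic where needed; this exhibits $G$ as an amalgam over $G_e$.

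\emph{Where the difficulty lies.} The genuinely hard step is the profinite-topological one inside ``Building the collapse'': verifying that deleting the orbit $Ge$ produces an honest pro-$p$ forest, that the collapse of its components is a pro-$p$ tree, that the surviving edge has stabiliser exactly $G_e$, and --- crucially for distinguishing HNN from amalgam --- that $v$ and $w$ are identified by the collapse if and only if they lie in the same $G$-orbit. Everything else (minimality, lifting a single edge, recognising a one-edge graph of pro-$p$ groups as an amalgam or HNN-extension) is routine pro-$p$ Bass--Serre theory, and the entire force of the hypothesis ``$G$ finitely generated'' is to guarantee, against the general phenomenon described in the introduction, the existence of a collapse onto a one-edge tree.
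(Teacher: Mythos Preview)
The paper does not prove this statement; it is quoted from \cite[Theorem~4.2]{CZ}. The argument there --- whose main tool the present paper records as Lemma~\ref{inverse limit of virtually free groups} (=\cite[Lemma~4.1]{CZ}) --- is essentially orthogonal to your plan. One writes $G=\varprojlim_{U} G/\widetilde U$, where each $G/\widetilde U$ is virtually free pro-$p$ and hence the fundamental group of a \emph{finite} reduced graph of finite $p$-groups $(\G_U,\Gamma_U)$, with the transition maps $\Gamma_V\to\Gamma_U$ being collapses of finite graphs. At each finite level the classical manoeuvre you describe --- pick an edge of $\Gamma_U$, collapse the rest --- is unproblematic and yields a one-edge splitting of $G/\widetilde U$; a compactness/K\"onig argument on the inverse system of finite graphs makes these edge choices coherent, and the inverse limit of the one-edge splittings is the desired splitting of $G$ over some $G_e$. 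Finite generation enters only to keep the $\Gamma_U$ finite.

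Your plan attempts the deletion-and-collapse directly on $T$, and you are right to flag that as the crux; but the sketch does not close it, and the two cases you offer do not circumvent it. In the case $\widetilde G\neq G$ you invoke an ``axis'' of a hyperbolic element together with Proposition~\ref{hyperbolic in virtual abelianization}; pro-$p$ trees do not carry axes in the simplicial sense, and that proposition only gives a non-torsion image of $t$ in some abelianisation --- it does not produce an HNN decomposition over a specified edge stabiliser of $T$. In the case $\widetilde G=G$ the Frattini step is fine, but ``collapse $T$ so as to confine the generating vertex stabilisers to the two sides of $e$'' is precisely the unresolved profinite-topological step: the orbit $Ge$ is closed but typically not open in $E(T)$, so $E(T)\setminus Ge$ is not a profinite subgraph and there is no direct way to speak of its connected components or to collapse them $G$-equivariantly. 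The inverse-limit route avoids all of this by pushing the combinatorics down to the finite graphs $\Gamma_U$.
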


\begin{definition}\label{def: Profinite acylindrical}
The action of a pro-$p$ group $G$ on a pro-$p$ tree $T$ is said to be \emph{$k$-acylindrical}, for $k\in \N$ a constant, if the set of fixed points of $g \in G$ has diameter at most $k$ whenever $ g\neq 1$.
\end{definition}

\begin{definition}\label{tree reduced} A $G$-tree is called reduced if for any $e\in E(T)$ with vertices $v,w$ one has $G_v\neq G_e\neq G_w$ unless $v,w$ are in the same $G$-orbit.\end{definition}

\subsection{$(\A,\K)$-trees}

Besides $G$, we usually also fix a (nonempty) family $\E$ of subgroups of $G$ which is stable under conjugation and under taking subgroups.  More precisely, for a pro-$p$ groups $G$, define $Subgp(G)$ to be the set of all closed subgroups of $G$ and the topology for which the subsets $\{Subgp(U)\mid U\leq_o G\}$ of subsets of $Subgp(G)$, for all open subgroups $U$ of $G$, is a base for the topology on $Subgp(G)$; it is called the {\it \'etale} topology on $Subgp(G)$.

\begin{definition} Let $G$ be a pro-$p$ group and $\E$ be a   family of subgroups of $G$ closed for  conjugation.  An $\E$-tree is a tree $T$ whose edge stabilizers belong to $\E$.
	
\end{definition}

\begin{definition} Let $X$ be a profinite space and $G$ a profinite group. A family of closed subgroups $\{G_x\mid x\in X\}$  of $G$ is said to be continuous if for any open subset $U$ of $G$ the subset $\{ x\in X\mid G_x\subseteq U\}$ is open in $X$. \end{definition}

Note that for a continuous family $\{G_x\mid x\in X\}$ and any closed subset $Y$ in $X$ the subfamily $\{G_y\mid y\in Y\}$ is continuous.

\begin{lemma}\label{Ribes 5.2.2}(\cite[Lemma 5.2.2]{R}). Let  $G$  be a profinite group  acting continuously on a profinite set $X$ is continuous. Then the family of a point stabilizers $\{G_x\mid x\in X\}$ is continuous.\end{lemma}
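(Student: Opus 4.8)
The plan is to unwind the definitions and reduce the statement to the continuity of the action map together with the compactness of $X$. Let $U$ be an open subset of $G$; I must show that $A_U := \{x \in X \mid G_x \subseteq U\}$ is open in $X$, i.e. that for each $x_0 \in A_U$ there is an open neighbourhood of $x_0$ contained in $A_U$. Since $G_{x_0}$ is compact and contained in the open set $U$, and since $G$ is profinite, I can find an open normal subgroup $N$ of $G$ with $G_{x_0} N \subseteq U$; so it suffices to produce an open neighbourhood $W$ of $x_0$ in $X$ such that $G_x \subseteq G_{x_0} N$ for all $x \in W$. Replacing $U$ by $G_{x_0}N$, we may as well assume $U$ is itself an open subgroup of $G$ containing $G_{x_0}$.

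Next I would use the continuity of the action $\mu \colon G \times X \to X$. For each $g \in G \setminus U$ we have $g x_0 \neq x_0$ (since $g \notin G_{x_0}$), so by continuity of $\mu$ and Hausdorffness of $X$ there are open sets $V_g \ni g$ in $G$ and $W_g \ni x_0$ in $X$ with $g' x \neq x$ for all $(g', x) \in V_g \times W_g$; in particular $V_g \cap G_x = \emptyset$ for every $x \in W_g$. The complement $G \setminus U$ is closed in the compact group $G$, hence compact, so finitely many of the $V_g$, say $V_{g_1}, \dots, V_{g_n}$, cover $G \setminus U$. Put $W := W_{g_1} \cap \dots \cap W_{g_n}$, an open neighbourhood of $x_0$ in $X$. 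For any $x \in W$ and any $g \in G \setminus U$ we have $g \in V_{g_i}$ for some $i$, and $x \in W_{g_i}$, so $g \notin G_x$; hence $G_x \subseteq U$, i.e. $W \subseteq A_U$. This shows $A_U$ is open and the family $\{G_x \mid x \in X\}$ is continuous.

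The only mild subtlety — and the step I would be most careful with — is the reduction in the first paragraph: one needs that an open set $U$ containing the compact subgroup $G_{x_0}$ in fact contains a whole coset-neighbourhood $G_{x_0} N$ for some open normal $N \trianglelefteq G$. This is standard for profinite groups: $G_{x_0} = \bigcap_{N} G_{x_0} N$ over open normal $N$, and by compactness of $G \setminus U$ one of these $G_{x_0} N$ already lies inside $U$. Everything else is a routine compactness argument of exactly the kind used to prove that stabilizers vary semicontinuously, adapted to the étale topology on $Subgp(G)$; no pro-$p$ hypothesis is actually needed, only that $G$ is profinite and $X$ is a profinite (in particular compact Hausdorff) $G$-space.
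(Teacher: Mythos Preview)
Your proof is correct. The paper does not supply its own argument for this lemma; it simply cites \cite[Lemma 5.2.2]{R}, so there is nothing in the paper to compare against beyond the reference. Your argument --- reducing to the case where $U$ is an open subgroup via compactness of $G_{x_0}$, then using continuity of the action and compactness of $G\setminus U$ to find a uniform neighbourhood of $x_0$ whose points all have stabilizer inside $U$ --- is exactly the standard proof and is essentially what one finds in the cited source.
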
 

In particular, if $G$ acts on a profinite graph $T$ the the family of stabilizers $\{G_t\mid t\in T\}$ and the family of the vertex stabilizers $\{G_v\mid v\in V(T)\}$
is continuous. 
By \cite[Lemma 5.2.1]{R} a family 
$\{G_x\mid x\in X\}$ is continuous if and only if  $\varphi:X\longrightarrow Subgr(G)$ is continuous, where $Subgr(G)$ is endowed with the 
\'etale topology.

 We often say that the corresponding splitting of $G$ is $\E$-splitting or  is the splitting over $\E$, or over groups in $\E$. We say cyclic pro-$p$ tree (abelian pro-$p$ tree,  . . . ) when $\E$ is the family of cyclic (abelian, . . . ) subgroups. We also fix an arbitrary set $\K$ of subgroups of $G$, and we restrict to $\E$-trees $T$ such that each $K\in  \K$ is elliptic in $T$ (in terms of graphs of groups, $K$ is contained in a conjugate of a vertex group; if $K$ is not finitely generated, then in the abstract case this is stronger than requiring that every $k\in K$ be elliptic, but it the pro-$p$ case it is the same by Proposition \ref{ellipticity}). We call such a pro-$p$ tree an $(\E,\K$)-tree, or a tree over $\E$ relative to $\K$. The set of $(\E,\K)$-trees does not change if we replace a group of $\K$ by a conjugate, or if we enlarge $\K$ by making it invariant under conjugation. If G acts non-trivially on an $(\E,\K)$-tree (i.e. without global fixed point), we say that $G$ splits over $\E$ (or over a group of $\E$) relatively to $\K$. The group $G$ is freely indecomposable relative to $\K$ if it does not split as a free pro-$p$ product  relatively to $\K$ (this means that all subgroups of $\K$ are in the free factor up to conjugation). If $G$ is second countable then equivalently (unless $G=\Z_p$ and $\K$ is trivial),  $G$ acts on a pro-$p$ tree with trivial edge stabilizers relative to $\K$  (see \cite[Theorem 9.6.1]{R}). 
 
\begin{definition}  Let $G$ be a pro-$p$ group acting on pro-$p$ $\E$-trees $T_1$ and $T_2$. We say that $T_1$ and $T_2$ belong to the same deformation space if  the set of elliptic elements for $(G,T_1)$ and $(G,T_2)$ are the same. Equivalently, if $(G,T_1)$ and $(G,T_2)$  have the same  maximal vertex stabilizers.
	
\end{definition}

Another way of saying this is the following: two pro-$p$ trees belonging to the same deformation
space over $\E$ have the same vertex stabilizers, provided one restricts to groups not in $\E$.

\subsection{Graph of pro-$p$ groups}

 When we say that ${\cal G}$ is a \emph{finite graph of pro-$p$ groups}, we mean that it contains the data of the
underlying finite graph, the edge pro-$p$ groups, the vertex pro-$p$ groups and the attaching continuous maps. More precisely,
let $\Delta$ be a connected finite graph.  The data of a graph of pro-$p$ groups $({\cal G},\Delta)$ over
$\Delta$ consists of a pro-$p$ group ${\cal G}(m)$ for each $m\in \Delta$, and continuous monomorphisms
$\partial_i: {\cal G}(e)\longrightarrow {\cal G}(d_i(e))$ for each edge $e\in E(\Delta)$.

The definition of the pro-$p$ fundamental  group of a connected
profinite graph of pro-$p$ groups is quite involved (see
\cite{ZM-89}). However, the pro-$p$ fundamental  group
$\Pi_1(\G,\Gamma)$ of a finite graph of finitely generated
pro-$p$ groups $(\G, \Gamma)$ can be defined as the pro-$p$ completion
of the abstract (usual) fundamental group $\Pi_1^{abs}(\G,\Gamma)$
(we use here that every subgroup of finite index in a finitely
generated pro-$p$ group is open (cf. \cite[Theorem 1.1]{NS-07}).  In general, we can define the fundamental pro-$p$ group 
$\Pi_1(\G,\Gamma)$ of finite graph of pro-$p$ groups by
 the following presentation:
\begin{eqnarray}\label{presentation}
\Pi_1(\mathcal{G}, \Gamma)&=&\langle \G(v), t_e\mid rel(\G(v)),
\partial_1(g)=\partial_0(g)^{t_e}, g\in \G(e),\nonumber\\
 & &t_e=1 \  {\rm for}\ e\in
T\rangle;
\end{eqnarray}
here $T$ is a maximal subtree of $\Gamma$ and
$$\partial_0:\G(e)\longrightarrow
\G(d_0(e)), \partial_1:\G(e)\longrightarrow \G(d_1(e))$$ are
monomorphisms. The elements $t_e$ will be called stable letters.

If all $\G(v)=1$ for $v\in V(\Gamma)$, then $\Pi_1(\mathcal{G}, \Gamma)$ is simply the fundamental pro-$p$ group of the graph $\Gamma$ denoted by $\pi_1(\Gamma)$. Clearly, $\pi_1(\Gamma)$ is free pro-$p$. 

\begin{lemma}\label{generation} Let $G=\Pi_1(\G, \Gamma)$ be the fundamental group of a finite graph of pro-$p$ groups and suppose $G$ is finitely generated. Then 
$G=\langle g_1,\dotso, g_n\rangle$, where either $g_i$ is in a vertex group of $(\mathcal{G},\Gamma)$, or $g_i$ is a stable letter.
\end{lemma}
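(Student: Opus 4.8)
The plan is to reduce the statement to the presentation \eqref{presentation} of $G=\Pi_1(\G,\Gamma)$ and then use a Frattini-quotient argument. Recall that $G$ is topologically generated by the vertex groups $\G(v)$ together with the stable letters $t_e$ (one for each edge not in the chosen maximal subtree $T$). Since $\Gamma$ is finite, there are finitely many stable letters; the only issue is that the vertex groups $\G(v)$ need not themselves be finitely generated a priori, so we cannot simply take generating sets of each $\G(v)$. First I would pass to the Frattini quotient $\overline{G}=G/\Phi(G)$, which is a finite-dimensional $\F_p$-vector space because $G$ is finitely generated; say $\dim_{\F_p}\overline{G}=n$. A subset $X\subseteq G$ topologically generates $G$ if and only if its image spans $\overline{G}$, by the standard property of the Frattini subgroup of a pro-$p$ group.

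Next I would observe that the images in $\overline{G}$ of the union of all $\G(v)$ ($v\in V(\Gamma)$) together with the stable letters $t_e$ span $\overline{G}$, since these elements topologically generate $G$. Now $\overline{G}$ is spanned by the (closed) subspaces $\overline{\G(v)}$ — the images of the compact subgroups $\G(v)$ — together with the finitely many vectors $\overline{t_e}$. From a spanning collection of subspaces of a finite-dimensional vector space one can extract a finite spanning set: choose finitely many $v_1,\dots,v_k\in V(\Gamma)$ (here $V(\Gamma)$ is already finite, so no compactness is even needed) and, within $\G(v_j)$, pick finitely many elements $g$ whose images span $\overline{\G(v_j)}$ — possible because this image is a subspace of the $n$-dimensional space $\overline{G}$, hence finite. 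Together with the stable letters we obtain a finite set $g_1,\dots,g_m$ of elements of $G$, each lying in some vertex group or being a stable letter, whose images span $\overline{G}$. By the Frattini property these $g_i$ topologically generate $G$.

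The only subtlety — and the step I would be most careful about — is purely bookkeeping: making sure that ``choosing finitely many elements of $\G(v)$ whose images span $\overline{\G(v)}$'' is legitimate, i.e. that $\overline{\G(v)}$ really is a finite set so that a finite preimage suffices. This is immediate since $\overline{\G(v)}$ is a subspace of the finite vector space $\overline{G}=G/\Phi(G)$; one just picks representatives in $\G(v)$ of an $\F_p$-basis of $\overline{\G(v)}$. (Equivalently, one can phrase this as: $\G(v)$ has finite image in $G/\Phi(G)$, hence $\G(v)\Phi(G)/\Phi(G)$ is spanned by finitely many elements of $\G(v)$.) There is no real obstacle here; the proof is short once one invokes the Frattini quotient. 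I would also remark that the number $m$ can be taken to be at most $n$ plus the number of stable letters, though the statement does not ask for this.

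\begin{proof}
Write $G=\Pi_1(\G,\Gamma)$ with presentation \eqref{presentation} relative to a maximal subtree $T$ of the finite graph $\Gamma$. Then $G$ is topologically generated by $\bigcup_{v\in V(\Gamma)}\G(v)$ together with the finitely many stable letters $\{t_e\mid e\in E(\Gamma)\setminus T\}$. Since $G$ is a finitely generated pro-$p$ group, $\overline{G}:=G/\Phi(G)$ is a finite-dimensional $\F_p$-vector space, and a subset of $G$ topologically generates $G$ if and only if its image spans $\overline{G}$.

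For each $v\in V(\Gamma)$ the image $\G(v)\Phi(G)/\Phi(G)$ is a subspace of the finite space $\overline{G}$, so we may choose finitely many elements of $\G(v)$ whose images form a basis of it; call this finite set $S_v\subseteq \G(v)$. As $V(\Gamma)$ is finite, $S:=\bigcup_{v\in V(\Gamma)}S_v$ is finite, and the image of $S$ spans $\sum_v \G(v)\Phi(G)/\Phi(G)$. Adjoining the finitely many stable letters, the finite set $S\cup\{t_e\mid e\in E(\Gamma)\setminus T\}$ has image spanning $\overline{G}$, because the whole collection $\bigcup_v\G(v)\cup\{t_e\}$ does. Hence, by the Frattini property, this finite set topologically generates $G$. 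Enumerating it as $g_1,\dots,g_n$, each $g_i$ lies in some vertex group $\G(v)$ or is a stable letter, as required.
\end{proof}
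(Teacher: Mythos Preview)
Your proof is correct and follows essentially the same approach as the paper: both pass to the Frattini quotient $G/\Phi(G)$, use that it is a finite $\F_p$-vector space, and then lift a finite spanning set consisting of images of vertex-group elements and stable letters back to $G$. The paper inserts the intermediate step of writing $G=\widetilde{G}\rtimes\pi_1(\Gamma)$ (with $\widetilde{G}$ the normal closure of the vertex groups) before taking the Frattini quotient, but this is only a cosmetic difference; your direct argument is a slight streamlining of the same idea.
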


\begin{proof} Consider $\widetilde{G}=\langle G(m)^g\mid m\in \Gamma, g\in G \rangle$. By \cite[Corollary 3.9.3]{R}  $$G/\widetilde{G}\cong \pi_{1}(G\backslash T)=\pi_{1}(\Gamma)$$ is a free pro-$p$ group. Then $G=\widetilde{G}\rtimes \pi_{1}(\Gamma)$. Let $f:G\longrightarrow G/\Phi(G)$ be the natural epimorphism. Then $G/\Phi(G)=f(\overline{G})\oplus f(\pi_{1}(\Gamma))$ is an  $\F_p$-vector space (cf. \cite[Lemma 2.8.7 (b)]{RZ-00}). Since $f(\overline{G})$ is finite (because  $G$ is finitely generated), there are $w_1,\ldots, w_l\in V(\Gamma)$, such that $f(\widetilde{G})=\langle f(G_{w_1}),\dotso , f(G_{w_l})\rangle$. Then $G/\Phi(G)=\{f(G_{w_1}),\dotso , f(G_{w_l}),f(\pi_{1}(\Gamma))\}$. Hence $G=\overline{\langle g_1,\dotso, g_n\rangle}$, where eihter $g_i$ is in a vertex group of $(\mathcal{G},\Gamma)$, or $g_i$ is a stable letter.

\end{proof}

In contrast to the abstract case, the vertex groups of $(\G,
\Gamma)$ do not always embed in $\Pi_1(\G,\Gamma)$, i.e.,
$\Pi_1(\G,\Gamma)$ is not always proper. However, the edge and
vertex groups can be replaced by their images in
$\Pi_1(\G,\Gamma)$ and after this replacement $\Pi_1(\G,\Gamma)$
becomes proper. Thus from now on we shall assume that
$\Pi_1(\G,\Gamma)$ is always proper, unless otherwise stated.

If all vertex and edge groups are trivial, we get the definition of the fundamental pro-$p$ group of a finite connected graph which is the pro-$p$ completion of the abstract fundamental group of $\Gamma$; hence it is free pro-$p$ on the basis $\Gamma\setminus V(\Gamma)$.

\begin{remark}\label{reduction}  A finite graph of  pro-$p$ groups $(\G,\Gamma)$ is said to be {\it
reduced}, if for every edge $e$ which is not a loop
neither $\partial_1\colon \G(e)\to \G(d_1(e))$ nor $\partial_0:
\G(e)\to \G(d_0(e))$ is an isomorphism; we call an edge $e$ which is not a loop and such that one of the edge maps $\partial_i$ is an isomorphism \emph{fictitious}. Any finite graph of  pro-$p$ groups
can be transformed into a reduced finite graph of pro-$p$ groups  by collapsing fictitious  edges using the
following procedure. If $e$ is a fictitious  edge, we can remove $\{e\}$ from the edge set of $\Gamma$, and
identify $d_0(e)$ and $d_1(e)$ to a new vertex $y$. Let
$\Gamma^\prime$ be the finite graph given by
$V(\Gamma^\prime)=\{y\}\sqcup V(\Gamma)\setminus\{d_0(e),d_1(e)\}$
and $E(\Gamma^\prime)=E(\Gamma)\setminus\{e\}$, and let
$(\G^\prime, \Gamma^\prime)$ denote the finite graph of  groups
based on $\Gamma^\prime$ given by $\G^\prime(y)=\G(d_1(e))$ if
$\partial_0(e)$ is an isomorphism, and $\G^\prime(y)=\G(d_0(e))$ if
$\partial_0$ is not an isomorphism. This procedure can be
continued until there are no fictitious edges. The resulting finite graph of groups
$(\overline \G,\overline \Gamma)$ is reduced. 

The reduction procedure just described does not change the fundamental group (as a group given by presentation), i.e. choosing a maximal subtree to contain the collapsing edge  the morphism $(\G,\Gamma)\longrightarrow (\G^\prime, \Gamma^\prime)$ induces the identity map on the fundamental group given by presentation by eliminating redundant relations associated with the fictitious edge that are just collapsed by reduction.

 \end{remark}

The  pro-$p$ fundamental  group $\Pi_1(\G,\Gamma)$ acts on the standard 
pro-$p$ tree $S$ (defined analogously to the abstract one)
associated to it, with vertex and edge stabilizers being conjugates
of vertex and edge groups, and such that
$S/\Pi_1(\G,\Gamma)=\Gamma$  \cite[Proposition 3.8]{ZM-88}. Namely, the  {\it standard (universal) pro-$p$ tree}\label{standard}
	associated with the finite graph of pro-$p$ groups $({\cal G}, \Gamma)$    (or universal covering graph) is 
	$S=S(G)=\bigcupdot_{m\in \Gamma}
	G/\G(m)$ (cf. \cite[Proposition 3.8]{ZM-88}).  The vertices of
	$S$ are those cosets of the form
	$g\G(v)$, with $v\in V(\Gamma)$
	and $g\in G$; its edges are the cosets of the form $g\G(e)$, with $e\in
	E(\Gamma)$; and the incidence maps of $T$ are given by the formulas:
	
	$$d_0 (g\G(e))= g\G(d_0(e)); \quad  d_1(g\G(e))=gt_e\G(d_1(e)) \ \ 
	(e\in E(\Gamma), t_e=1\hbox{ if }e\in D).  $$
	
	There is a natural  continuous action of
	$G$ on $S$, and clearly $  S/G= \Gamma$. 
	Remark also that since $\Gamma$ is finite, $E(S)$ is compact.

\bigskip

Note that $(\G,\Gamma)$ is reduced if and only if $\pi_1(\G,\Gamma)$-tree $S(G)$ is reduced.

\bigskip
\begin{example}\label{graph group completion} If $G=\pi_1(\G,\Gamma)$ is the fundamental group of a finite graph of (abstract) groups then one has the induced graph of pro-$p$ completions of edge  and vertex groups $(\widehat\G,\Gamma)$ (not necessarily proper) and  a natural homomorphism $G=\pi_1(\G,\Gamma)\longrightarrow \Pi_1(\widehat\G,\Gamma)$. It is an embedding if $\pi_1(\G,\Gamma)$ is residually $p$. In this case $\Pi_1(\G,\Gamma)=\widehat{G}$ is simply the pro-$p$ completion.
Moreover,
 the standard tree $S(G)$
  naturally embeds in $S(\widehat G)$ if and only if  the edge and vertex groups $\G(e)$, $\G(v)$     are closed in the pro-$p$ topology of  $\pi_1(\G,\Gamma)$, or equivalently $\G(e)$ are closed in $\G(d_0(e))$,   $\G(d_1(v))$    with
respect to the topology induced by the pro-$p$ topology on $G$
(\cite[Proposition 2.5]{CB-13}).  \end{example}

\begin{definition}\label{acylidrical graph of groups} A graph of groups $(\G,\Gamma)$ will be called $k$-acylindrical if the action of $\Pi_1(\G,\Gamma)$ on $S=S(\G,\Gamma)$ is $k$-acylindrical.\end{definition}

\medskip
In what follows, the notion of the fundamental pro-$p$ group of an infinite profinite graph of pro-$p$ groups is used only in Theorem \ref{pro-pbass-serre}, and in fact only in the form of the inverse limit of fundamental groups of finite graphs of finite $p$-groups.  We state this as a proposition that the reader can use as a definition. This all done in details in \cite{AZ}.

\begin{definition} An inverse limit of  finite graphs of finite $p$-groups is called a \emph{profinite graph of pro-$p$ groups}.\end{definition}

\begin{proposition}\label{induces}( \cite[Proposition 2.1, Definition 2.13 and Proposition 2.19]{AZ} )  An inverse limit $(\curlyG,\Gamma)=\varprojlim_i (\curlyG_i,\Gamma_i)$ of finite graphs of  pro-$p$ groups  induces an inverse  limit of the fundamental  pro-$p$ groups $\varprojlim_i  \Pi_1(\curlyG_i,\Gamma_i)$ and $ \Pi_1(\curlyG,\Gamma)=\varprojlim_i  \Pi_1(\curlyG_i,\Gamma_i)$.\end{proposition}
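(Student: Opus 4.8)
The plan is to split the assertion into two halves: \emph{functoriality}, i.e. that a morphism of finite graphs of finite $p$-groups induces a continuous homomorphism of fundamental pro-$p$ groups compatibly, so that the inverse system $(\curlyG_i,\Gamma_i)$ genuinely yields an inverse system $\{\Pi_1(\curlyG_i,\Gamma_i)\}$ of pro-$p$ groups; and the \emph{identification} of $\Pi_1(\curlyG,\Gamma)$ (as defined in \cite{AZ}) with $\varprojlim_i\Pi_1(\curlyG_i,\Gamma_i)$. Since $\varprojlim_i\Pi_1(\curlyG_i,\Gamma_i)$ is an inverse limit of pro-$p$ groups it is automatically pro-$p$, so once these two halves are in place the statement follows.

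For functoriality I would argue from the presentation (\ref{presentation}). Fix for each $i$ a maximal subtree $T_i\subseteq\Gamma_i$; the subtlety is that a bonding morphism $\Gamma_j\to\Gamma_i$ may collapse edges, so $T_j$ need not land in $T_i$ and generators of $\Pi_1(\curlyG_j,\Gamma_j)$ need not go to generators of $\Pi_1(\curlyG_i,\Gamma_i)$ on the nose. To bypass this I would first pass, by a cofinal refinement of the inverse system, to bonding morphisms that factor as compositions of two elementary types: (a) collapse of a single fictitious edge as in Remark \ref{reduction}, and (b) morphisms which are the identity on the underlying graph and a level-wise homomorphism $\curlyG_j(m)\to\curlyG_i(m)$ of structure groups. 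Type (b) sends the generators of (\ref{presentation}) to generators (vertex elements to vertex elements, stable letters to stable letters) and preserves the defining relations, hence induces a continuous homomorphism by the universal property of a pro-$p$ presentation; type (a) induces the identity on $\Pi_1$ by the last paragraph of Remark \ref{reduction}. Composing these and checking the obvious compatibilities gives the inverse system of pro-$p$ groups, together with canonical continuous maps $\varprojlim_i\Pi_1(\curlyG_i,\Gamma_i)\to\Pi_1(\curlyG_j,\Gamma_j)$; one must also note that passing to the proper quotient (replacing edge and vertex groups by their images in $\Pi_1$, as forced by the convention in the text) commutes with $\varprojlim$, since images of profinite groups under continuous maps behave well under inverse limits.

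For the identification, the construction of $\Pi_1(\curlyG,\Gamma)$ in \cite{AZ} provides compatible continuous homomorphisms $\Pi_1(\curlyG,\Gamma)\to\Pi_1(\curlyG_i,\Gamma_i)$, hence a continuous homomorphism $\varphi\colon\Pi_1(\curlyG,\Gamma)\to\varprojlim_i\Pi_1(\curlyG_i,\Gamma_i)$. Surjectivity: each $\Pi_1(\curlyG_i,\Gamma_i)$ is topologically generated by structure-group elements and stable letters (cf. the argument of Lemma \ref{generation}), and these lift compatibly through $\Pi_1(\curlyG,\Gamma)\to\Pi_1(\curlyG_i,\Gamma_i)$ because $\curlyG(m)=\varprojlim_i\curlyG_i(m)$ sits inside $\Pi_1(\curlyG,\Gamma)$ and surjects onto each $\curlyG_i(m)$; so $\varphi$ has dense image and, the domain being compact, is onto. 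Injectivity: if $1\neq g\in\ker\varphi$ then $g\notin K$ for some open normal $K\trianglelefteq\Pi_1(\curlyG,\Gamma)$, and the finite $p$-group $\Pi_1(\curlyG,\Gamma)/K$ — being generated by finitely many images of the $\curlyG(m)$ and the stable letters, each factoring through some $\curlyG_i(m)$ — is already a quotient of $\Pi_1(\curlyG_i,\Gamma_i)$ for $i$ large, whence $\varphi(g)=1$ forces $g\in K$, a contradiction. The main obstacle here is exactly the step that reduces an abstract ``morphism of graphs of groups'' (with collapses, and with the non-properness phenomenon that vertex groups need not embed in $\Pi_1$) to a genuine homomorphism of fundamental pro-$p$ groups; the cleanest way through it is the cofinal refinement to the two elementary morphism types above, combined with the compatibility of the standard pro-$p$ tree construction $S(\curlyG_i,\Gamma_i)$ with inverse limits, so that $\Pi_1(\curlyG,\Gamma)$ acts on $\varprojlim_i S(\curlyG_i,\Gamma_i)$ and the Bass--Serre bookkeeping transfers to the limit --- precisely the circle of facts developed in \cite{AZ}.
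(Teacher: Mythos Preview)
The paper does not give its own proof of this proposition: it is stated with a citation to \cite{AZ} and, more to the point, the paragraph immediately preceding it says explicitly that ``the reader can use [it] as a definition.'' So there is nothing to compare your argument against in the present paper; what you have written is an attempt to fill in what \cite{AZ} does.

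Your sketch is along the right lines, but the key technical step --- passing to a cofinal subsystem in which every bonding morphism factors as a composition of collapses of single fictitious edges (type~(a)) and morphisms that are the identity on the underlying graph (type~(b)) --- is not justified and is not true in general. A surjective morphism of finite graphs can identify two distinct vertices (or two edges with the same image endpoints), and this is neither a collapse of a fictitious edge nor the identity on the graph; so your dichotomy does not exhaust the possible bonding maps of an arbitrary inverse system of finite graphs of pro-$p$ groups. The inverse systems that actually occur later in the paper (e.g.\ Lemma~\ref{inverse limit of virtually free groups}) \emph{do} have bonding maps of this restricted shape, which may be why the factorization looked plausible, but the proposition as stated is about arbitrary inverse limits. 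The honest fix is to show directly, from the presentation~(\ref{presentation}) and a choice of maximal subtrees $T_i\subset\Gamma_i$, that any morphism of finite graphs of pro-$p$ groups induces a continuous homomorphism of fundamental groups (one has to track where stable letters go when the image of an edge outside $T_j$ lands in $T_i$, picking up a product of vertex-group elements along the way); this is exactly what \cite[Proposition~2.1]{AZ} carries out. Your injectivity argument for $\varphi$ also leans on a description of finite quotients of $\Pi_1(\curlyG,\Gamma)$ that presupposes the universal property established in \cite{AZ}, so it is close to circular unless you first fix that definition.
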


If $(\G,\Gamma)$ is a finite graph of pro-$p$ groups and $\Delta$ a connected subgraph of $\Gamma$ 
we shall denote by $(\G,\Delta)$ the graph of pro-$p$ groups restricted to $\Delta$.

\begin{lemma}\label{restricted graph} (\cite[Lemma 2.4]{SZ}) Let $(\G, \Gamma)$ be a proper finite graph   of pro-$p$ groups and $\Delta$ a connected subgraph of $\Gamma$. Then the natural homomorphism $\Pi_1(\G, \Delta)\longrightarrow \Pi_1(\G, \Gamma)$ is a monomorphism.
\end{lemma}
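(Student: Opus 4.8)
The plan is to reduce this to the standard pro-$p$ Bass–Serre correspondence between graphs of pro-$p$ groups and the action of the fundamental group on the standard pro-$p$ tree. The key point is that a connected subgraph $\Delta \subseteq \Gamma$ gives rise, via the standard tree construction, to a connected $\Pi_1(\G,\Delta)$-invariant pro-$p$ subtree of $S(\Pi_1(\G,\Gamma))$, and that the action of $\Pi_1(\G,\Delta)$ on that subtree is precisely the standard action. Once that is set up, faithfulness of the action (equivalently, that vertex and edge groups of $(\G,\Delta)$ embed in $\Pi_1(\G,\Delta)$, which holds after the standing properness assumption) forces the homomorphism to be injective.

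First I would fix a maximal subtree $D$ of $\Gamma$ whose restriction $D \cap \Delta$ is a maximal subtree of $\Delta$ (possible since $\Delta$ is connected). Writing $G = \Pi_1(\G,\Gamma)$ and $H$ for the image of $\Pi_1(\G,\Delta)$ in $G$, I would consider inside the standard tree $S = S(G) = \bigcupdot_{m \in \Gamma} G/\G(m)$ the subset $S_\Delta := \bigcupdot_{m \in \Delta} H/\G(m)$, where the edge groups $\G(m)$ for $m \in \Delta$ are viewed inside $H \leq G$ via the presentation maps. Using the incidence formulas for $S$ together with the compatible choice of maximal subtrees, one checks $S_\Delta$ is a subgraph of $S$ invariant under $H$, and that as an abstract graph with $H$-action it is isomorphic to the standard pro-$p$ tree $S(\Pi_1(\G,\Delta))$ — this uses Proposition~\ref{induces}-style descriptions of the standard tree, or more elementarily the explicit coset description. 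In particular $S_\Delta$ is a pro-$p$ tree and $S_\Delta/H = \Delta$. The map $\Pi_1(\G,\Delta) \to H \leq G$ is then identified with the map $\Pi_1(\G,\Delta) \to \mathrm{Aut}(S_\Delta)$ coming from the standard action.

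Next I would invoke that $(\G,\Gamma)$ being proper means the vertex and edge groups embed in $G$; since the same groups serve as vertex and edge groups of $(\G,\Delta)$, the restricted graph of groups $(\G,\Delta)$ is proper as well, so $\Pi_1(\G,\Delta)$ acts on $S_\Delta = S(\Pi_1(\G,\Delta))$ with vertex stabilizers exactly the conjugates of the (embedded) vertex groups, and in particular this action is faithful: any element acting trivially on the standard tree fixes every vertex, hence lies in the intersection of all vertex stabilizers, which by Theorem~\ref{fixed geodesic} is contained in every edge group along a geodesic and is in fact trivial for the standard tree since a nontrivial element would act nontrivially on it. Therefore the composite $\Pi_1(\G,\Delta) \to H \hookrightarrow \mathrm{Aut}(S_\Delta)$ is injective, so $\Pi_1(\G,\Delta) \to \Pi_1(\G,\Gamma)$ is a monomorphism.

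The main obstacle I anticipate is the bookkeeping in the first step: verifying carefully that $S_\Delta$, built from cosets of subgroups of the \emph{subgroup} $H$ rather than of $G$, really is a closed subgraph of $S$, that it is connected (i.e.\ a pro-$p$ subtree and not just a subgraph), and that its $H$-action is the standard one — this is where the compatible choice of maximal subtrees and the explicit incidence maps $d_0(g\G(e)) = g\G(d_0(e))$, $d_1(g\G(e)) = g t_e \G(d_1(e))$ do the work, and where one must be careful that stable letters $t_e$ for $e \in E(\Delta)$ lie in $H$. Everything else (properness of $(\G,\Delta)$, faithfulness of the standard action, the final injectivity deduction) is then formal. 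Since the cited source \cite[Lemma 2.4]{SZ} presumably does exactly this, I would follow its argument; if a self-contained proof is wanted, the coset-based verification above is the cleanest route.
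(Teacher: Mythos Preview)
The paper gives no proof of its own; it simply cites \cite[Lemma~2.4]{SZ}. So there is nothing to compare against, and your outline must stand on its own. The overall strategy---locate a pro-$p$ subtree $S_\Delta$ of the standard tree $S(G)$ on which the image $H$ of $\Pi_1(\G,\Delta)$ acts with quotient $\Delta$ and the correct stabilizers, then recover $\Pi_1(\G,\Delta)$ from that action---is the right one. But two steps as written are circular, and one of them is actually false.

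First, asserting that $S_\Delta=\bigcupdot_{m\in\Delta}H/\G(m)$ is ``isomorphic to $S(\Pi_1(\G,\Delta))$'' before you know the map is injective begs the question: if $\Pi_1(\G,\Delta)\to H$ had nontrivial kernel $K$, then $H/\G(m)$ would be a proper quotient of $\Pi_1(\G,\Delta)/\G(m)$, and $S_\Delta$ would be $S(\Pi_1(\G,\Delta))/K$ rather than $S(\Pi_1(\G,\Delta))$. Second, and more seriously, the claim that the action of $\Pi_1(\G,\Delta)$ on its standard tree is faithful is false in general. The kernel of that action is the largest normal subgroup contained in every vertex group, and this need not be trivial: for instance, in $\Z/p^2\amalg_{\Z/p}\Z/p^2$ the amalgamated $\Z/p$ is central in the whole group and acts trivially on the standard tree. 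Your justification (``a nontrivial element would act nontrivially on it'') is just a restatement of the claim.

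The repair is to drop faithfulness entirely and instead argue directly with the structure theorem. Show (using the compatible maximal subtrees and the explicit incidence maps, as you outline) that $S_\Delta$ is a connected closed $H$-invariant subgraph of $S$; being a connected profinite subgraph of a pro-$p$ tree, it is itself a pro-$p$ tree. Then $H$ acts on $S_\Delta$ with $S_\Delta/H=\Delta$ and with point stabilizers exactly the (embedded, by properness of $(\G,\Gamma)$) groups $\G(m)$, so by the cofinite case of the structure theorem (Remark~\ref{cofinite action}, i.e.\ \cite[Proposition~4.4]{ZM-89}) one has $H=\Pi_1(\G,\Delta)$, and the resulting isomorphism agrees with the natural map on vertex groups and stable letters, hence \emph{is} the natural map. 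That gives injectivity without ever appealing to faithfulness.
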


 \begin{lemma}\label{trivial edge} Let $G=(\G, \Delta)$ be  the fundamental group of a finite graph of pro-$p$ groups. Suppose   $G=H_1\amalg H_2$ such that all infinite vertex groups are contained in $H_1$. Then
$(\G, \Delta)$ possesses a trivial edge group.   
   \end{lemma}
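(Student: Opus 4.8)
The plan is to argue by contradiction: suppose every edge group of $(\G,\Delta)$ is nontrivial, and derive a contradiction with the free decomposition $G = H_1 \amalg H_2$. The strategy is to pass to the standard pro-$p$ tree $S = S(\G,\Delta)$, on which $G$ acts with all edge stabilizers nontrivial, and to compare this with the action of $G$ on the standard pro-$p$ tree $D$ associated to the free splitting $G = H_1 \amalg H_2$, which has \emph{trivial} edge stabilizers. First I would reduce to the case where $(\G,\Delta)$ is reduced (using Remark \ref{reduction}, which does not change $G$), so that the action of $G$ on $S$ is reduced and minimal. Since all infinite vertex groups of $(\G,\Delta)$ lie in $H_1$, every vertex stabilizer $G_v$ of $S$ that is infinite is conjugate into $H_1$; hence $G_v$, being elliptic in $S$, is also elliptic in $D$ (a conjugate of $H_1$ or of $H_2$ fixes the corresponding vertex of $D$). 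The finite vertex stabilizers of $S$ are automatically elliptic in $D$ as well, since a finite $p$-subgroup acting on the pro-$p$ tree $D$ fixes a vertex by Theorem \ref{fixed geodesic} (or directly since finite pro-$p$ groups acting on pro-$p$ trees are elliptic).

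So all vertex stabilizers of $S$ — equivalently, all the vertex groups $\G(v)$ — are elliptic in $D$. Now the key step is to use this to force an edge stabilizer of $S$ to be trivial. I would run the following argument: pick any edge $e$ of $S$ with endpoints $v,w$. Its stabilizer $G_e$ is contained in $G_v \cap G_w$. Both $G_v$ and $G_w$ are elliptic in $D$, say fixing vertices $x_v, x_w \in D$; by Theorem \ref{fixed geodesic} applied to $D$, if $x_v \neq x_w$ then $G_v \cap G_w$ is contained in an edge stabilizer of $D$, which is trivial, giving $G_e = 1$. If on the other hand $x_v = x_w$ for \emph{all} adjacent pairs, then by connectedness of $S$ all vertex stabilizers of $S$ fix a common vertex of $D$, so $\langle G_v \mid v \in V(S)\rangle = \widetilde G$ fixes that vertex of $D$; but by Proposition \ref{mod tilde} (or \cite[Corollary 3.9.3]{R}) $G/\widetilde G \cong \pi_1(\Delta)$ is free pro-$p$, and since $G = H_1 \amalg H_2$ acts on $D$ with $D/G$ a single edge and trivial edge stabilizer, $\widetilde G$ being elliptic in $D$ would force $G/\widetilde G$ to be the full fundamental group of the (one-edge) graph, i.e. $G = \widetilde G \rtimes F$ with $F$ free — and then the free splitting of $G$ combined with the fact that all infinite vertex groups lie in $H_1$ must be tracked carefully to produce the contradiction (e.g. $H_2$ would have to be built entirely from finite vertex groups and stable letters of $\Delta$, and one shows a Grushko-type argument then yields a trivial edge in $(\G,\Delta)$ after all, or directly contradicts reducedness).

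The main obstacle I expect is precisely this second case — ruling out, or rather exploiting, the situation where $\widetilde G$ is elliptic in $D$. In the first case the argument is essentially immediate from Theorem \ref{fixed geodesic}. The delicate point is that one cannot simply say "$\widetilde G$ elliptic in $D$ implies $D$ trivial", because $D$ has a nontrivial $G$-action. Instead one has to use that $G = H_1 \amalg H_2$ is a \emph{free} decomposition together with the hypothesis on infinite vertex groups to pin down where $H_2$ sits. I would handle it by choosing the vertex of $D$ fixed by $\widetilde G$ to be (a translate of) the vertex with stabilizer $H_1$; then $\widetilde G \leq H_1$, so every $\G(v)$ is conjugate into $H_1$, and $G/\widetilde G \cong \pi_1(\Delta)$ surjects onto $G/H_1^G$-type data. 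Comparing ranks via the Frattini quotient as in Lemma \ref{generation} (each generator of $G$ is a vertex element or a stable letter, and the vertex elements now all lie in $H_1$), one gets that $H_2$ is generated by stable letters, i.e. $H_2$ is free pro-$p$ and visible as a sub-fundamental-group of a subgraph; a subgraph carrying $H_2$ and meeting the rest of $\Delta$ then supplies an edge whose group, being contained in $H_1 \cap$ (a vertex group reachable from $H_2$ side), is forced to be trivial by malnormality-type separation of the free factors — contradiction with our assumption. I would present this last reduction cleanly using Lemma \ref{restricted graph} to split $\Delta$ along such an edge.
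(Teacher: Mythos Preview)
Your tree-based dichotomy has the right shape, and case~1 is correct: if some edge $e$ of $S$ has endpoints $v,w$ with $x_v\neq x_w$ in $D$, then Theorem~\ref{fixed geodesic} applied in $D$ forces $G_e\le G_v\cap G_w=1$. The genuine gap is your case~2. You correctly arrive at ``all vertex stabilizers of $S$ fix a common vertex $x_0\in D$'', i.e.\ $\widetilde G\le G_{x_0}$, but then you drift into rank comparisons, Frattini quotients, and a vague ``subgraph carrying $H_2$'' that does not close the argument (and does not obviously make sense: $H_2$ being generated by stable letters does not exhibit it as $\Pi_1$ of a subgraph of $\Delta$). The one-line finish you are missing is this: $\widetilde G$ is \emph{normal} in $G$, while $G_{x_0}$ is a conjugate of a free factor and hence malnormal in $G=H_1\amalg H_2$ (cf.\ \cite[Theorem~9.1.12]{RZ-10}); thus $\widetilde G\le\bigcap_{g\in G}(G_{x_0})^{g}=1$, so every vertex group of $(\G,\Delta)$ is trivial, contradicting the standing assumption that all edge groups are nontrivial. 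Even more directly, your map $v\mapsto x_v$ is $G$-equivariant, so if it is constant then $G$ itself fixes $x_0$, which is already absurd. You should also justify why ``$x_v=x_w$ for adjacent pairs'' propagates globally: since $S$ is only connected as a \emph{profinite} graph, it is cleanest to argue first along a maximal subtree of the finite graph $\Delta$ (getting all $\G(v)\le G_{x_0}$), then treat each non-tree edge via its stable letter.

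For comparison, the paper never passes to the trees $S$ and $D$. It conjugates each finite vertex group into $H_1$ or $H_2$ (finite subgroups of a free pro-$p$ product lie in a conjugate of a factor), takes a maximal subgraph $\Delta_1\subsetneq\Delta$ whose vertex groups all lie in $H_1$, and observes that any edge of $\Delta$ leaving $\Delta_1$ has its group contained in $H_1\cap H_i^{\,g}$ with $g\notin H_1$ or $i=2$, hence trivial by exactly the same malnormality fact. So both proofs hinge on the identical ingredient; the paper simply applies it directly at the level of the finite graph $\Delta$, avoiding the auxiliary trees and the connectedness issue altogether.
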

   
   \begin{proof} Let $\Delta_1$ be a maximal subgraph of $\Delta$ such that all vertex groups  of $\Delta_1$ are in $H_1$.  Since any finite subgroups of $G$ is conjugate into $H_1$ or $H_2$ (see \cite[Theorem 4.2]{RZ-00}) w.l.o.g. we may assume that $\Delta_1$ is non-empty. Then for every edge $e\not\in \Delta_1$ with vertex in $\Delta_1$ the edge group $G(e)=1$ since $H_1\cap H_i^g=1$ whenever $g\not\in H_1$ or $i\neq 1$ (cf. \cite[Theorem 9.1.12]{RZ-10}. As $\Delta_1$ is proper in $\Delta$ and $\Delta$ is connected, the result follows.  
     \end{proof}

\begin{corollary}\label{graph of groups free product decomposition} Let $G=(\G, \Delta)$  be  the fundamental group of a finite graph of finite 
$p$-groups. Then $G$ splits as a free pro-$p$ product $G=\coprod_{i=1}^n G_n\amalg F$ such that each $G_i$ is  the fundamental group  of a subgraph of groups $(\G, \Gamma_i)$ with non-trivial edge groups and $F$ is free pro-$p$.

\end{corollary}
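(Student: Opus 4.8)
The plan is to split the edges of $\Delta$ according to whether their edge group is trivial, and then to show that the presentation (\ref{presentation}) of $G$ decouples across the resulting pieces into a free pro-$p$ product.

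First I would let $E_1\subseteq E(\Delta)$ be the set of edges $e$ with $\G(e)\neq 1$, let $\Delta_1$ be the spanning subgraph of $\Delta$ with edge set $E_1$, and let $\Gamma_1,\dots,\Gamma_n$ be those connected components of $\Delta_1$ that are not a single vertex carrying the trivial group. Put $G_i=\Pi_1(\G,\Gamma_i)$; by construction every edge group of $(\G,\Gamma_i)$ is non-trivial (vacuously so if $\Gamma_i$ is a single vertex), and by Lemma \ref{restricted graph} the natural map $G_i\to G$ is injective. The elementary but crucial observation is that any edge of $\Delta$ not contained in a single $\Gamma_i$ — in particular any edge joining two distinct components of $\Delta_1$, or a component of $\Delta_1$ to a discarded vertex — must have trivial edge group, since otherwise it would lie in $E_1$ and hence inside one component of $\Delta_1$.

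Next I would choose the maximal subtree compatibly. Fix a maximal subtree $T_i$ of each $\Gamma_i$; contracting each $\Gamma_i$ (and deleting the resulting loops) leaves a connected graph all of whose edges have trivial edge group, so a spanning tree of it pulls back to a set of trivial-edge edges of $\Delta$, and adjoining these to $\bigcup_i T_i$ yields a maximal subtree $T$ of $\Delta$ with $T\cap E(\Gamma_i)=T_i$ for every $i$. Writing the presentation (\ref{presentation}) of $G$ relative to $T$, the relations $\partial_1(g)=\partial_0(g)^{t_e}$ are vacuous whenever $\G(e)=1$, while for $e\in E_1$ both ends of $e$ lie in the same $\Gamma_i$, so the relation involves only the generators attached to $\Gamma_i$. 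Hence the generators of $G$ fall into $n$ mutually non-interacting blocks — the $i$-th being precisely the presentation of $\Pi_1(\G,\Gamma_i)$ relative to $T_i$ — together with the stable letters $t_e$ for $e\notin T$ with $\G(e)=1$, which satisfy no relation. Since a pro-$p$ presentation whose generators and relators decompose into such blocks presents the free pro-$p$ product of the blocks, we conclude $G=G_1\amalg\cdots\amalg G_n\amalg F$, where $F$ is the free pro-$p$ group on those surviving stable letters.

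The one place that needs genuine care — the main, if modest, obstacle — is the bookkeeping at the last step: checking that the compatible subtree really gives $T\cap E(\Gamma_i)=T_i$, so that the $i$-th block of the global presentation is literally the presentation of $G_i=\Pi_1(\G,\Gamma_i)$, and confirming that the absence of cross-block relations does produce the free pro-$p$ product (via the universal property against the $G_i$ and $F$). A more structural alternative is induction on the number of trivial edges: with none, Lemma \ref{trivial edge} (whose hypothesis on infinite vertex groups is vacuous here) forces $G$ to be freely indecomposable, so $n=1$ and $F=1$; and a single trivial edge lets one peel off either a free factor $\Pi_1$ of a subgraph (separating case) or a free pro-$p$ factor $\Z_p$ (non-separating case), after which the inductive hypothesis applies and one uses that a free pro-$p$ product of free pro-$p$ groups is again free pro-$p$.
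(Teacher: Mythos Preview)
Your main argument is correct and complete: removing the trivial-edge set, taking the components of the remaining spanning subgraph, building a compatible maximal subtree as you describe, and then reading the free pro-$p$ product directly off the presentation~(\ref{presentation}) works exactly as claimed. The paper in fact states this corollary without proof, so there is no explicit argument to compare against; your direct presentation-based route is the natural one and the bookkeeping you flag (that $T\cap E(\Gamma_i)=T_i$ and that disjoint blocks of generators and relators yield the pro-$p$ free product) goes through without difficulty.

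One minor remark on your inductive alternative: the appeal to Lemma~\ref{trivial edge} in the base case is unnecessary for the corollary as stated—when there are no trivial edges you may simply take $n=1$, $\Gamma_1=\Delta$, $F=1$, and you are done, irrespective of whether $G$ is freely indecomposable. That said, your observation that the hypothesis on infinite vertex groups is vacuous here is correct and does yield the stronger fact that such a $G$ is freely indecomposable; this connection is presumably why the paper places the corollary immediately after Lemma~\ref{trivial edge}.
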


We shall use several times later the following lemma which is the main ingredient of the proof of subsequent theorem.

\begin{lemma}\label{inverse limit of virtually free groups} (\cite[Lemma 4.1]{CZ}) Let $G$ be a finitely generated pro-$p$ group acting on a pro-$p$ tree $T$. Then $G=\varprojlim_{U\triangleleft_o G} G/\tilde U$ and $G_U=G/\tilde U=\Pi_1(\G_U,\Gamma_U)$ is the fundamental group of a finite reduced graph of  finite $p$-groups. Moreover, the inverse system $\{G/\tilde U, \pi_{VU}\}$ can be chosen in such a way that it is linearly ordered and for each $\{G/\tilde V\}$ of the system with $V\leq U$  there exists a natural morphism $(\eta_{VU},\nu_{VU}):(\G_V, \Gamma_V)\longrightarrow (\G_U, \Gamma_U)$ where $\nu_{VU}$ is just a collapse of edges of $\Gamma_V$ and $\eta_{VU}(\G_V(m))=\pi_{VU}(\G_V(m))$; the induced  homomorphism of the
pro-$p$ fundamental groups coincides with the canonical projection
$\pi_{VU}\colon G/\tilde V\longrightarrow G/\tilde U$.\end{lemma}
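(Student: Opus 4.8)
The plan is to construct the inverse system directly from a countable descending chain of open normal subgroups, to recognize each quotient $G/\widetilde U$ as a finite graph of finite $p$-groups, and then to re-choose these decompositions so that consecutive ones are linked by edge collapses. First, for an open normal subgroup $U$ of $G$, Proposition~\ref{mod tilde} applied to the action of $U$ on $T$ shows that $\widetilde U=\langle U_v\mid v\in V(T)\rangle$ is normal in $U$ and that $T/\widetilde U$ is a pro-$p$ tree; moreover $\widetilde U$ is normal in $G$, since if $u\in U$ fixes a vertex $v$ then for $g\in G$ the conjugate $u^g\in U^g=U$ fixes $vg$, so conjugation by $G$ permutes a generating set of $\widetilde U$. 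If $V\subseteq U$ are open and normal in $G$, then every elliptic element of $V$ is an elliptic element of $U$, whence $\widetilde V\subseteq\widetilde U$ and we obtain an inverse system $\{G/\widetilde U\}$ with the canonical projections $\pi_{VU}\colon G/\widetilde V\twoheadrightarrow G/\widetilde U$. Since $G$ is pro-$p$ we have $\bigcap_{U\triangleleft_o G}U=1$, hence $\bigcap_U\widetilde U=1$ and $G=\varprojlim_U G/\widetilde U$. Finally, being finitely generated, $G$ is second countable, so we may fix a descending chain $G=U_0\supseteq U_1\supseteq\cdots$ of open normal subgroups with $\bigcap_i U_i=1$; restricting the system to this chain makes it linearly ordered, and still $G=\varprojlim_i G/\widetilde U_i$.

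Next I would recognize the quotients. Fix $i$ and put $N=\widetilde U_i$. The subgroup $U_i/N$ acts on the pro-$p$ tree $T/N$, and the action is free: if $uN$ fixes $Nv$ then $u\in N\,(U_i)_v=N$, because $(U_i)_v$ consists of elliptic elements of $U_i$ and hence lies in $N$. By \cite[Corollary~3.6]{RZ-00}, $U_i/N$ is free pro-$p$; as $U_i$ is open and normal in $G$, this is an open normal free pro-$p$ subgroup of the finitely generated pro-$p$ group $G/N$, so $G/N$ is finitely generated virtually free pro-$p$, and therefore equals the fundamental group $\Pi_1(\G_i,\Gamma_i)$ of a finite graph of finite $p$-groups by the structure theory of such groups (see \cite{R}); by Remark~\ref{reduction} we may take $(\G_i,\Gamma_i)$ reduced. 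The action of $G/N$ on $T/N$ has finite stabilizers, since $(G/N)_{Nv}\cong G_v/(G_v\cap N)$ is a quotient of the finite $p$-group $G_v/(G_v\cap U_i)$, and similarly for edges.

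Then I would build the collapse morphisms, and this is the step I expect to be the real obstacle. It is enough to construct $(\eta_{VU},\nu_{VU})$ for consecutive terms $V=U_{i+1}\subseteq U_i=U$, since edge collapses compose and so do the maps $\eta$ and $\pi$. The kernel of $\pi_{i+1,i}\colon G/\widetilde U_{i+1}\to G/\widetilde U_i$ is $K_i=\widetilde U_i/\widetilde U_{i+1}$, and by construction $K_i$ is generated by images of elements of $U_i$ that are elliptic in $T$, hence by elements elliptic for the action of $G/\widetilde U_{i+1}$ on $T/\widetilde U_{i+1}$. One chooses the decomposition $(\G_{i+1},\Gamma_{i+1})$ of $G/\widetilde U_{i+1}$ so that every subgroup of $G/\widetilde U_{i+1}$ that is elliptic in $T/\widetilde U_{i+1}$ is also elliptic in the standard tree $S_{i+1}=S(\G_{i+1},\Gamma_{i+1})$; then $K_i$ is generated by its vertex stabilizers in $S_{i+1}$, so $S_{i+1}/K_i$ is a pro-$p$ tree by Proposition~\ref{mod tilde}. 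Now $G/\widetilde U_i=(G/\widetilde U_{i+1})/K_i$ acts on $S_{i+1}/K_i$ with quotient graph $\Gamma_{i+1}$, which exhibits it as the fundamental group of the graph of pro-$p$ groups over $\Gamma_{i+1}$ whose vertex and edge groups are the $\pi_{i+1,i}$-images of those of $(\G_{i+1},\Gamma_{i+1})$; collapsing the fictitious edges of this graph of groups (Remark~\ref{reduction}) produces a reduced decomposition, which we take to be $(\G_i,\Gamma_i)$. The resulting collapse $\Gamma_{i+1}\to\Gamma_i$ is $\nu_{i+1,i}$, the map $\eta_{i+1,i}$ is $\pi_{i+1,i}$ restricted to the vertex and edge groups, and the induced homomorphism on fundamental pro-$p$ groups is $\pi_{i+1,i}$ because it agrees with it on vertex groups and stable letters, which generate $G/\widetilde U_{i+1}$ by Lemma~\ref{generation}.

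The hard part is the coherent choice of the decompositions $(\G_i,\Gamma_i)$: each must simultaneously be a \emph{finite} graph of \emph{finite} $p$-groups, have standard tree on which all $T/\widetilde U_i$-elliptic subgroups are elliptic (so that the successive kernels $K_i$ are generated by elliptic elements), and arise from the decomposition one level below by an edge collapse. I expect this to be carried out by induction on $i$: given $(\G_i,\Gamma_i)$, one lifts it along $\pi_{i+1,i}$ to an a priori non-reduced graph of pro-$p$ groups decomposition of $G/\widetilde U_{i+1}$, replaces each vertex group that fails to be finite by its own finite-graph-of-finite-$p$-groups decomposition, and reduces. Keeping the underlying graph finite at every stage is precisely where finite generation of $G$ — equivalently of each $G/\widetilde U_i$ — is used essentially, and ensuring that this refinement is compatible with the edge collapse onto $(\G_i,\Gamma_i)$ is the technical heart of the argument.
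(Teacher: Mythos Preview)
The paper does not prove this lemma; it is imported from \cite[Lemma~4.1]{CZ} without argument, so there is no in-paper proof to compare against and your sketch has to stand on its own.

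Your first two paragraphs are correct. One small simplification: the condition you impose in paragraph~3 (``choose $(\G_{i+1},\Gamma_{i+1})$ so that every subgroup elliptic in $T/\widetilde U_{i+1}$ is also elliptic in $S_{i+1}$'') is automatic and needs no special choice, since subgroups elliptic in $T/\widetilde U_{i+1}$ are finite, and every finite subgroup of a virtually free pro-$p$ group is elliptic in any pro-$p$ tree it acts on (Corollary~\ref{fixed vertex}).

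The genuine gap is at the step you yourself flag. Your third and fourth paragraphs run the induction in opposite directions and neither closes: paragraph~3 defines $(\G_i,\Gamma_i)$ from $(\G_{i+1},\Gamma_{i+1})$ by quotient-and-reduce, a backward recursion over an infinite chain with no base case; paragraph~4 proposes to build $(\G_{i+1},\Gamma_{i+1})$ from $(\G_i,\Gamma_i)$ by letting $G/\widetilde U_{i+1}$ act on $S_i$ through $\pi_{i+1,i}$ and then ``refining vertex groups''. That bottom-up move fails as written: the edge stabilizers for this action are the full preimages $\pi_{i+1,i}^{-1}(\G_i(e))$, each containing the infinite kernel $K_i=\widetilde U_i/\widetilde U_{i+1}$, so you do not get a graph of finite $p$-groups by refining at vertices alone.

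The standard resolution --- and the one used in \cite{CZ} --- is a compactness argument rather than an inductive construction. One first bounds, uniformly in $U$, the number of isomorphism classes of finite reduced graphs of finite $p$-groups realizing $G/\widetilde U$ (a Linnell--Wilkes type bound on $|E(\Gamma_U)|$ in terms of $d(G)$ suffices; compare \cite[Theorem~3.1]{wilkes}). Your paragraph-3 construction (quotient by $K_i$ over the same graph, then reduce) furnishes transition maps between these finite sets of isomorphism classes, and an element of the resulting non-empty inverse limit is exactly a compatible family $(\G_U,\Gamma_U)$ linked by edge collapses. Precisely this device is carried out later in the present paper, in Step~4 of the proof of Theorem~\ref{relative virtfreeprod}; you can model the missing step on it.
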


\begin{theorem} \label{pro-pbass-serre} (\cite[Theorem 1.1]{WZ18}) Let $G$ be a finitely generated pro-$p$ group acting on a pro-$p$ tree $T$. Then $G$ is the fundamental pro-$p$ group of a profinite graph of
pro-$p$ groups $(\G, \Gamma)$.
Moreover, the vertex and edge groups of $(\G, \Gamma)$ are  stabilizers of certain vertices and edges of $T$ respectively, and stabilizers of vertices and edges of $T$ in $G$ are conjugate to subgroups of vertex and edge groups of $(\G, \Gamma)$ respectively. 
\end{theorem}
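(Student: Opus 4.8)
The plan is to realize $G$ as an inverse limit of the fundamental groups of finite graphs of finite $p$-groups and then invoke the structural result of Proposition \ref{induces} to package the limit as the fundamental group of a single profinite graph of pro-$p$ groups. First I would apply Lemma \ref{inverse limit of virtually free groups}: since $G$ is finitely generated and acts on the pro-$p$ tree $T$, we may write $G = \varprojlim_{U\triangleleft_o G} G/\widetilde U$ along a \emph{linearly ordered} inverse system, where each $G_U := G/\widetilde U = \Pi_1(\G_U,\Gamma_U)$ is the fundamental group of a finite reduced graph of finite $p$-groups, and the connecting maps are induced by morphisms $(\eta_{VU},\nu_{VU})\colon(\G_V,\Gamma_V)\to(\G_U,\Gamma_U)$ with $\nu_{VU}$ a collapse of edges. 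Thus $(\G,\Gamma) := \varprojlim_U (\G_U,\Gamma_U)$ is an inverse limit of finite graphs of finite $p$-groups, hence a profinite graph of pro-$p$ groups in the sense of the Definition preceding Proposition \ref{induces}, and by that Proposition $\Pi_1(\G,\Gamma) = \varprojlim_U \Pi_1(\G_U,\Gamma_U) = \varprojlim_U G/\widetilde U = G$. This already gives the first assertion.

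Next I would track where the vertex and edge groups of $(\G,\Gamma)$ sit relative to $T$. For each open normal $U$, the graph of groups $(\G_U,\Gamma_U)$ is the quotient data coming from the action of $G_U = G/\widetilde U$ on $T/\widetilde U$ (this is how Lemma \ref{inverse limit of virtually free groups} is produced — $T/\widetilde U$ is a finite pro-$p$ tree since $G/\widetilde U$ is virtually free and finitely generated, and the vertex and edge groups of $(\G_U,\Gamma_U)$ are stabilizers of vertices and edges of $T/\widetilde U$). Passing to the inverse limit over the linearly ordered system, a compatible choice of vertices (resp. edges) of the $T/\widetilde U$ determines a vertex (resp. edge) of $T = \varprojlim_U T/\widetilde U$, and its $G$-stabilizer is the inverse limit of the corresponding stabilizers in $G_U$. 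Since $\eta_{VU}(\G_V(m)) = \pi_{VU}(\G_V(m))$, the vertex and edge groups of $(\G,\Gamma)$ are precisely these inverse limits, i.e. stabilizers of the corresponding vertices and edges of $T$. Conversely, given any vertex or edge $t$ of $T$, its image in each $T/\widetilde U$ lies in some $G_U$-orbit of a vertex/edge of $\Gamma_U$, so $G_t$ maps into a conjugate of the relevant vertex/edge group of $(\G_U,\Gamma_U)$ for every $U$; taking the limit shows $G_t$ is conjugate into a vertex or edge group of $(\G,\Gamma)$.

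The main obstacle I anticipate is the bookkeeping in the limit: making the choices of vertices and edges across the system $\{T/\widetilde U\}$ compatible so that they actually assemble into points of $T$, and checking that the inverse limit of stabilizers is the stabilizer of the limit point (rather than something larger). This uses that the system is linearly ordered (so no coherence conditions beyond a chain are needed) and that the $\nu_{VU}$ are edge-collapses, which guarantees that a vertex of $\Gamma_U$ lifts to a vertex of $\Gamma_V$ and an edge either lifts to an edge or is collapsed to a vertex — so cofinally the chosen cells stabilize in type. A secondary point is the standard fact that for a finitely generated pro-$p$ group acting on a pro-$p$ tree $T$ one has $T = \varprojlim_U T/\widetilde U$ compatibly with $G = \varprojlim_U G/\widetilde U$; this is implicit in the proof of Lemma \ref{inverse limit of virtually free groups} and I would cite it from \cite{CZ} rather than reprove it. Everything else is a routine passage to the limit.
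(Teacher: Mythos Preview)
The paper does not prove this theorem here; it is quoted from \cite{WZ18}. Your overall strategy---write $G=\varprojlim_U G/\widetilde U$ via Lemma~\ref{inverse limit of virtually free groups}, take $(\G,\Gamma)=\varprojlim_U(\G_U,\Gamma_U)$, and apply Proposition~\ref{induces}---is exactly the approach of \cite{WZ18} and is how the paper itself uses the construction later (e.g.\ in the proofs of Proposition~\ref{refinement for finitely generated G} and Theorem~\ref{canonical}).

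There is, however, a genuine conflation in your sketch. You write that ``$T/\widetilde U$ is a finite pro-$p$ tree'' and that $(\G_U,\Gamma_U)$ is ``the quotient data coming from the action of $G_U$ on $T/\widetilde U$''. Neither is correct: $T/\widetilde U$ is a pro-$p$ tree (Proposition~\ref{mod tilde}) but has no reason to be finite, and $(\G_U,\Gamma_U)$ is produced from the virtually-free structure of $G_U$ via \cite{HZ-13}, not by reading off $T/\widetilde U$ modulo $G_U$. Its standard tree $S_U$ and $T/\widetilde U$ lie in the same deformation space (both have finite vertex stabilizers), but they are different trees and $\Gamma_U$ is not $(T/\widetilde U)/G_U$. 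What you actually need for the ``moreover'' clause is: each vertex or edge group of $(\G_U,\Gamma_U)$, being a finite subgroup of $G_U$, fixes some vertex or edge of $T/\widetilde U$ (Corollary~\ref{fixed vertex} and \cite[Theorem~3.7]{RZ-00}); these fixed-point sets are closed and nonempty, and the transition maps $T/\widetilde V\to T/\widetilde U$ carry them into one another because $\eta_{VU}(\G_V(m))=\G_U(\nu_{VU}(m))$. A compactness argument along the linearly ordered system then produces a vertex or edge of $T=\varprojlim_U T/\widetilde U$ whose stabilizer contains $\G(m)$, and maximality of the $\G_U(m)$ (reducedness) gives equality. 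The converse direction (every $G_t$ is conjugate into some $\G(m)$) you handle correctly. So the skeleton is right, but the link between $(\G_U,\Gamma_U)$ and $T$ is indirect and must go through fixed points of finite subgroups, not through identifying $\Gamma_U$ with a quotient of $T/\widetilde U$.
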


\begin{remark} \label{cofinite action} If in Theorem \ref{pro-pbass-serre} $T/G$ is finite, then this theorem is subject of \cite[Proposition 4.4]{ZM-89} where finite generatedness of $G$ is not asked; in fact in this case theorem states exactly as the main theorem of Bass-Serre theory. Therefore, the reduction procedure can be applied directly to $T$, the obtained tree will also be called reduced.\end{remark}

We also going to use often the following

\begin{corollary}\label{open subgroup}(\cite[Corollary 4.5]{ZM-89}). Let $H$ be an open subgroup of the fundamental group $\Pi_1(\G,\Gamma)$ of a finite graph of pro-$p$ groups. Then $H$ is the fundamental group $\Pi_1(\curlyH,\Delta)$ of a finite graph of pro-$p$ groups that are intersections of $H$ with the vertex and edge groups of $(\G, \Gamma)$.

\end{corollary}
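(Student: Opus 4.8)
The plan is to let $H$ act on the standard pro-$p$ tree of $(\G,\Gamma)$ and to invoke the cofinite version of pro-$p$ Bass--Serre theory. Set $G:=\Pi_1(\G,\Gamma)$, which we may assume proper, and let $G$ act on its standard (universal) pro-$p$ tree $S=S(G)$ as recalled above. By \cite[Proposition 3.8]{ZM-88} this action has finite quotient graph $S/G=\Gamma$, compact edge set $E(S)$, and the stabilizer in $G$ of the vertex $g\G(v)$ is $g\G(v)g^{-1}$ and of the edge $g\G(e)$ is $g\G(e)g^{-1}$; in particular every vertex (resp.\ edge) stabilizer of $S$ is a conjugate of a vertex (resp.\ edge) group of $(\G,\Gamma)$.

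Next I would restrict the action to $H$. Since $H$ is open in $G$ it has finite index, so the natural continuous surjection $S/H\to S/G=\Gamma$ has fibres of cardinality at most $[G:H]$; hence $\Delta:=S/H$ is a finite set. Being a continuous quotient of the connected profinite graph $S$, it is a connected profinite graph, and since it is finite it is simply a finite connected graph. Thus $H$ acts on the pro-$p$ tree $S$ with finite quotient graph $\Delta=S/H$. Now apply the cofinite pro-$p$ Bass--Serre structure theorem \cite[Proposition 4.4]{ZM-89} (see Remark~\ref{cofinite action}), which in the cofinite case requires no finite-generation hypothesis: $H$ is the fundamental pro-$p$ group $\Pi_1(\curlyH,\Delta)$ of a finite graph of pro-$p$ groups over $\Delta$, where the vertex and edge groups $\curlyH(m)$ may be taken to be the $H$-stabilizers in $S$ of a chosen lift $\widetilde m\in S$ of each $m\in\Delta$.

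It remains to identify these groups. Writing a lift of a vertex of $\Delta$ in the form $g\G(v)\in V(S)$, its $H$-stabilizer is $H\cap g\G(v)g^{-1}$, and similarly an edge lift $g\G(e)$ has $H$-stabilizer $H\cap g\G(e)g^{-1}$; up to the choice of coset representatives these are exactly the intersections of $H$ with the vertex and edge groups of $(\G,\Gamma)$, which is the assertion. I do not expect a genuine obstacle here: the only points needing care are verifying that $S/H$ is finite (immediate from $[G:H]<\infty$ and finiteness of $S/G$) and that it remains a connected profinite, hence finite, graph, and citing the correct (cofinite, finite-generation-free) form of pro-$p$ Bass--Serre theory; the description of the edge and vertex groups as intersections then drops out of the explicit form of the stabilizers in the standard tree.
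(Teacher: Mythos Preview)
Your argument is correct and is the standard route: restrict the $G$-action on the standard pro-$p$ tree $S$ to the open subgroup $H$, use $[G:H]<\infty$ to get finiteness of $S/H$, and then apply the cofinite version of pro-$p$ Bass--Serre theory (\cite[Proposition~4.4]{ZM-89}, cf.\ Remark~\ref{cofinite action}) to read off $H=\Pi_1(\curlyH,\Delta)$ with the stabilizers $H\cap g\G(m)g^{-1}$ as vertex and edge groups. The paper does not give its own proof here---it simply quotes \cite[Corollary~4.5]{ZM-89}---and your sketch is essentially how that corollary is derived in the cited reference, so there is nothing to add.
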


\bigskip
Let $G$ be a finitely generated pro-$p$ group acting on a pro-$p$ tree with finite vertex stabilizers.
It is not known whether $\Gamma$ can be made finite in Theorem \ref{pro-pbass-serre}. In the abstract case it comes automatically. One could achieve it if one could  bound the size of a  reduced finite graph of finite $p$-groups in terms of the minimal number of generators of its pro-$p$ fundamental group \cite[Theorem 6.3]{CZ} (i.e. if one have certain accessibility, see Section \ref{accessibility} for details). In the case of cyclic edge groups this is possible.

\begin{corollary}(\cite[Theorem 6.6]{CZ}) \label{cyclic edge stabilizers} Let $G$ be a finitely generated pro-$p$ group acting on a pro-$p$ tree $T$ with cyclic edge stabilizers. Then $G$ is the fundamental pro-$p$ group of a finite graph of
pro-$p$ groups $(\G, \Gamma)$.
Moreover, the vertex and edge groups of $(\G, \Gamma)$ are  stabilizers of certain vertices and edges of $T$ respectively, and stabilizers of vertices and edges of $T$ in $G$ are conjugate to subgroups of vertex and edge groups of $(\G, \Gamma)$ respectively.\end{corollary}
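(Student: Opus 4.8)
The statement to prove is Corollary \ref{cyclic edge stabilizers}, which says that a finitely generated pro-$p$ group acting on a pro-$p$ tree with cyclic edge stabilizers is the fundamental pro-$p$ group of a \emph{finite} graph of pro-$p$ groups (with vertex/edge groups being stabilizers), i.e.\ that the infinite profinite graph $\Gamma$ in Theorem \ref{pro-pbass-serre} can be taken finite.

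The plan is to combine Theorem \ref{pro-pbass-serre} with an accessibility-type bound. By Theorem \ref{pro-pbass-serre}, $G = \Pi_1(\G,\Gamma)$ for a profinite graph of pro-$p$ groups, which by Lemma \ref{inverse limit of virtually free groups} (applied with the relevant quotients) and Proposition \ref{induces} is an inverse limit $\varprojlim_i \Pi_1(\G_i,\Gamma_i)$ of fundamental groups of \emph{finite} reduced graphs of finite $p$-groups, the transition maps being edge collapses. So the only issue is whether the sizes $|\Gamma_i|$ stay bounded; if they do, the inverse system stabilizes and $\Gamma$ is finite. First I would reduce to bounding the number of edges of a reduced finite graph of groups $(\G_i,\Gamma_i)$ whose fundamental group $G_i$ is a quotient $G/\widetilde U$ of $G$, hence generated by at most $d = d(G)$ elements (here $d(G) = \dim_{\F_p} G/\Phi(G) < \infty$ since $G$ is finitely generated).

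The heart is an accessibility estimate: for a reduced finite graph of finite $p$-groups with pro-$p$ fundamental group generated by $d$ elements, the number of edges is bounded in terms of $d$ alone. In the cyclic-edge-stabilizer case, the edge groups of $(\G_i,\Gamma_i)$ are finite cyclic $p$-groups (images of cyclic edge stabilizers of $T$), in fact of order dividing a fixed bound once one works in a single $G_i$ — but more to the point, one can use the structure from Corollary \ref{graph of groups free product decomposition}: $G_i$ splits as a free pro-$p$ product of a free pro-$p$ group and finitely many fundamental groups of finite graphs of finite $p$-groups with \emph{nontrivial} edge groups. For the nontrivial-edge pieces one invokes the Dunwoody-type accessibility for finite $p$-groups — this is exactly \cite[Theorem 6.6]{CZ}, which is being cited; so the argument is really: the profinite tree $T$ lies in the same deformation space as the standard tree of $(\G,\Gamma)$, pass to the quotients $G_i = G/\widetilde U$ acting on $T/\widetilde U$ with cyclic (finite) edge stabilizers, apply the finite accessibility bound uniformly in $i$, conclude $|\Gamma_i| \le N(d)$ for all $i$, hence the inverse system $\{\Gamma_i\}$ has bounded size and must stabilize, giving a finite $\Gamma$ with $G = \Pi_1(\G,\Gamma)$.

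Finally I would check that the stabilizer description passes through: since each $\Gamma_i$ is obtained from $\Gamma_{i+1}$ (for large $i$) by an isomorphism of the underlying graph once sizes stabilize, and the vertex/edge groups are realized as stabilizers in $T/\widetilde U$, taking the inverse limit identifies the vertex and edge groups of $(\G,\Gamma)$ with stabilizers of vertices and edges of $T$, and conversely every vertex/edge stabilizer in $T$ maps into a conjugate of one of these, as in Theorem \ref{pro-pbass-serre}. The main obstacle is the uniform accessibility bound $N(d)$ for reduced finite graphs of finite $p$-groups with $d$-generated fundamental group and cyclic edge groups; without control on edge-group \emph{orders} the general bound (needed for arbitrary finite edge groups, \cite[Theorem 6.3]{CZ}) is open, but in the cyclic case the linear bound from \cite[Theorem 6.6]{CZ} — essentially a pro-$p$ analogue of Dunwoody/Linnell accessibility counting that each reduction step strictly drops an invariant like $\sum_v (d(\G(v)) - 1)$ or uses the Euler-characteristic/rank bookkeeping — does the job, and invoking it is legitimate here.
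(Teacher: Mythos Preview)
Your proposal is correct and follows exactly the approach the paper indicates in the paragraph preceding the corollary: write $G$ via Lemma~\ref{inverse limit of virtually free groups} as an inverse limit of fundamental groups $G_U=\Pi_1(\G_U,\Gamma_U)$ of finite reduced graphs of finite $p$-groups with cyclic edge groups and $d(G_U)\le d(G)$, apply the cyclic accessibility bound of Theorem~\ref{cyclic accessibility} (which is precisely \cite[Theorem~6.6]{CZ}) uniformly in $U$, and conclude that the collapse-system $\{\Gamma_U\}$ stabilizes so that $\Gamma$ is finite. Your detour through Corollary~\ref{graph of groups free product decomposition} and a separate ``Dunwoody-type'' argument is unnecessary, since Theorem~\ref{cyclic accessibility} already gives the bound $|E(\Gamma_U)|\le 3d(G)-2$ directly; otherwise the argument is exactly what the paper intends.
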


We finish the section stating the pro-$p$ version of the Kurosh subgroup theorem.

\begin{theorem}\label{KST}\cite{Me}  Let
$G = \coprod_{t\in T} G_t$ be a second countable pro-$p$ group
decomposed as a free pro-$p$ product and let $H$ be a subgroup of
$G$. There exists a closed system $\{g_{t,\tau}\in H\backslash
G/G_t\mid t\in T\}$ of double coset representatives such that
$$H=(\coprod_{t\in T} \coprod_{g_{t,\tau}\in H\backslash G/G_t} H\cap G_t^{g_{t,\tau}})\amalg F,$$
where $F$ is a free pro-$p$ group.\end{theorem}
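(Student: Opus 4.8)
The plan is to realize the free pro-$p$ product decomposition geometrically and then restrict the action to $H$. Let $S=S(G)$ be the standard pro-$p$ tree associated with the graph of pro-$p$ groups realizing $G=\coprod_{t\in T}G_t$: a ``star'' consisting of a central vertex carrying the trivial group and, for each $t$, a vertex carrying $G_t$ joined to the centre by an edge with trivial group. Then $G$ acts on $S$ with $S/G$ this profinite star, the edge stabilizers trivial, and the vertex stabilizers the conjugates of the $G_t$ together with the trivial group (the central orbit). Restricting this action to $H\le G$, the edge stabilizers of the $H$-action are $H\cap 1=1$, so $H$ acts on the pro-$p$ tree $S$ with trivial edge stabilizers.

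The second step is to feed the $H$-action on $S$ into the pro-$p$ Bass--Serre machinery. Since $G$, and hence $H$, is second countable, $H$ is the fundamental pro-$p$ group of a profinite graph of pro-$p$ groups $(\mathcal H, S/H)$ whose vertex and edge groups are stabilizers of vertices and edges of $S$ (this is the second-countable companion of Theorem~\ref{pro-pbass-serre}; see \cite[Theorem 9.6.1]{R}). As every edge group is trivial, $\Pi_1(\mathcal H, S/H)$ is the free pro-$p$ product of its vertex groups, taken over a continuous transversal of the $H$-orbits on $V(S)$, amalgamated with the free pro-$p$ group $F:=\pi_1(S/H)$.

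The third step identifies the summands. The $H$-stabilizer of a vertex $gG_t$ of $S$ equals $H\cap gG_tg^{-1}=H\cap G_t^{g^{-1}}$, and the $H$-orbit of $gG_t$ is determined by the double coset $HgG_t\in H\backslash G/G_t$; the central-orbit vertices have trivial stabilizer and contribute nothing beyond $F$. Hence, choosing representatives $g_{t,\tau}$ of the double cosets (and relabelling by inverses to match the convention $a^g=g^{-1}ag$), the vertex groups are precisely the $H\cap G_t^{g_{t,\tau}}$, which gives $H=\bigl(\coprod_{t\in T}\coprod_{\tau} H\cap G_t^{g_{t,\tau}}\bigr)\amalg F$.

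The main obstacle is exactly the point where the pro-$p$ theory departs from the abstract one: $S/H$ need not be finite and need not admit a maximal subtree that lifts to $S$, so one cannot run the classical Bass--Serre argument and must instead work with the profinite graph of pro-$p$ groups $(\mathcal H, S/H)$ directly. Relatedly, the word ``closed'' in the statement is essential and nontrivial: the representatives must be chosen so that $\{g_{t,\tau}\}$ is a closed subset of $G$ projecting continuously onto $\bigsqcup_{t\in T} H\backslash G/G_t$, which is what guarantees that $\{H\cap G_t^{g_{t,\tau}}\}$ is a continuous family of subgroups and hence a legitimate family of free factors. This continuous choice of transversal is extracted from the continuity of the stabilizer map (Lemma~\ref{Ribes 5.2.2}) together with a continuous section of the profinite quotient map $V(S)\to V(S)/H$ on each orbit.
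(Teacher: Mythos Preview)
The paper does not supply its own proof of this theorem: it is stated with a citation to Mel'nikov \cite{Me} and used as a black box. Your sketch is the standard geometric proof and is correct in outline: realize $G=\coprod_t G_t$ as the fundamental group of a star of groups with trivial edge groups, restrict the action on the standard pro-$p$ tree $S$ to $H$ (so edge stabilizers remain trivial), and then invoke \cite[Theorem~9.6.1]{R} to write $H$ as a free pro-$p$ product of vertex stabilizers together with $\pi_1(S/H)$. The identification of the non-central $H$-orbits on $V(S)$ with the double coset spaces $H\backslash G/G_t$ is exactly right, and you have correctly isolated the one genuinely delicate point, namely that \cite[Theorem~9.6.1]{R} needs a continuous section $S/H\to S$; second countability of $G$ (hence of $S$) is precisely what guarantees such a section exists, and this is what makes the system $\{g_{t,\tau}\}$ closed and the family $\{H\cap G_t^{g_{t,\tau}}\}$ continuous.
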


We shall need later a description of the normalizers of a subgroup over which splittings occur. 

\begin{proposition}\label{normalizer} \begin{itemize} 

\item[(1)]
Let $G=G_1\amalg_HG_2$ be a free pro-$p$ product with amalgamation. Then $N_G(H)=N_{G_1}(H)\amalg_H N_{G_2}(H)$.  

\item[(2)] Let $G=HNN(G_1,H,t)$ be  a pro-$p$  HNN-extension. 

\begin{itemize}

\item If $H$ and $H^{t}$ are conjugate in $G_1$ then: $N_{G}(H)=HNN(N_{A}(H),H,t')$ and $G=HNN(G,H,t')$.

\item If $H$ and $H^{t}$ are not conjugate in $G_1$ then $N_{G}(H)=N_1\amalg_{H} N_{2}$, where $N_{1}=N_{G_1^{t}}(H)$ and $N_{2}=N_{G_1}(H)$.

\end{itemize}

\end{itemize}

\end{proposition}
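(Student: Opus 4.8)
The plan is to reduce each case to a standard pro-$p$ Bass--Serre argument on the action of the appropriate group on the standard pro-$p$ tree $S$ associated to the given free construction, and then to identify $N_G(H)$ as the minimal invariant subtree's stabilizer data. For part (1), let $S$ be the standard pro-$p$ tree of the splitting $G=G_1\amalg_H G_2$, so $S/G$ is a single edge $e$ with $d_0(e)=v_1$, $d_1(e)=v_2$, $G_{v_i}$ conjugate to $G_i$ and $G_e$ conjugate to $H$; fix the base edge $\tilde e$ with $G_{\tilde e}=H$, $G_{d_0(\tilde e)}=G_1$, $G_{d_1(\tilde e)}=G_2$. First I would observe that $N_G(H)$ acts on the fixed-point set $S^H$, which by Theorem~\ref{fixed geodesic} (and the structure of $S$) is a connected subtree; in fact $S^H$ is the union of those vertices $w$ with $H\leq G_w$ together with the edges between them, and because $H$ is its own stabilizer as an edge group one checks that $S^H$ is the subtree spanned by the vertices $gG_i$ ($i=1,2$) with $H\leq G_i^{?}$, which for the canonical edge is the "star" around $\tilde e$ consisting of $\tilde e$ and the two vertices $d_0(\tilde e), d_1(\tilde e)$ possibly enlarged by translates fixing $H$. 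Then $N_G(H)$ acts on $S^H$ with quotient again an edge (the image of $\tilde e$), edge stabilizer $N_G(H)\cap G_{\tilde e}=H$, and vertex stabilizers $N_G(H)\cap G_{d_i(\tilde e)}=N_{G_i}(H)$; applying Remark~\ref{cofinite action} (the cofinite Bass--Serre correspondence, \cite[Proposition 4.4]{ZM-89}) to this action yields $N_G(H)=N_{G_1}(H)\amalg_H N_{G_2}(H)$.

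For part (2), let $S$ now be the standard pro-$p$ tree of $G=HNN(G_1,H,t)$, so $S/G$ is a single loop at a vertex $v$ with $G_v$ conjugate to $G_1$ and the two edge maps $\partial_0,\partial_1\colon H\hookrightarrow G_1$ realized by the two ends of the loop. Again $N_G(H)$ acts on the connected fixed subtree $S^H$. The dichotomy is exactly the Bass--Serre dichotomy for whether $H$ is "elliptic in a unique way" up to $G_1$-conjugacy: if $\partial_0(H)$ and $\partial_1(H)$ are conjugate in $G_1$, then along $S^H$ one can close up a loop and $N_G(H)$ acts on $S^H$ with quotient a loop, edge stabilizer $H$, vertex stabilizer $N_{G_1}(H)$ (after conjugating so that the relevant identification is by the new stable letter $t'$), giving $N_G(H)=HNN(N_{G_1}(H),H,t')$; and here $\tilde e$ and its $t$-translate become the same edge, so one also gets the reindexed presentation $G=HNN(G,H,t')$ at the level of the ambient group. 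If $\partial_0(H)$ and $\partial_1(H)$ are not conjugate in $G_1$, then $S^H$ is a single edge (no loop can close), $N_G(H)$ acts with quotient an edge, and its two vertex stabilizers are $N_{G_1^t}(H)=N_1$ and $N_{G_1}(H)=N_2$ with edge stabilizer $H$, whence $N_G(H)=N_1\amalg_H N_2$.

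The main technical point — and the step I expect to be the real obstacle — is the precise identification of the fixed subtree $S^H$, i.e. showing that $S^H$ has the claimed finite quotient by $N_G(H)$ and that no extra translates of the base vertices contribute. In the abstract case this is handled by analyzing reduced words/normal forms for elements normalizing an edge group; in the pro-$p$ case one must instead argue via Theorem~\ref{fixed geodesic} together with the fact that $H$ being a \emph{maximal} edge-group fixer forces any $w\in S^H$ to lie on a geodesic whose edges all have stabilizer containing $H$, and then use that the only edges of $S$ with stabilizer containing $H$ are $\tilde e$ and (in the HNN case) its $t^{\pm}$-translates when the compatibility holds. Here I would also invoke that $N_G(H)$ is generated by its intersections with vertex and edge stabilizers along $S^H$ — which is exactly what the cofinite pro-$p$ Bass--Serre correspondence of Remark~\ref{cofinite action} delivers once the action on $S^H$ is pinned down — so that no separate "generation" lemma is needed beyond the structure theorem. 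Once $S^H$ is correctly described, both formulas drop out mechanically by reading off the quotient graph of groups.
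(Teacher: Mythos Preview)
Your approach is essentially the paper's: let $N_G(H)$ act on the fixed subtree $S^H$ and read off the quotient graph of groups via the cofinite pro-$p$ Bass--Serre correspondence. The paper's proof is very short because it immediately resolves the step you flag as the ``real obstacle'' with one observation: in a pro-$p$ (or profinite) group, any conjugate of $H$ that \emph{contains} $H$ must \emph{equal} $H$ (seen by passing to finite quotients). Since the edge stabilizers of $S$ are exactly the conjugates of $H$, an edge $g\tilde e$ lies in $S^H$ iff $H\le gHg^{-1}$ iff $g\in N_G(H)$; thus $N_G(H)$ acts \emph{transitively} on $E(S^H)$, and $S^H/N_G(H)$ has a single edge with the obvious vertex and edge stabilizers. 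No normal-form analysis or separate generation lemma is needed.

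Two small corrections. First, in the non-conjugate HNN case you write ``$S^H$ is a single edge''; it is $S^H/N_G(H)$ that is a single edge, while $S^H$ itself is the (possibly large) $N_G(H)$-orbit of $\tilde e$ with its vertices. Second, in the conjugate HNN case the paper makes explicit what your $t'$ is: if $H^{g_1}=H^t$ with $g_1\in G_1$, then $t'=tg_1^{-1}$ normalizes $H$, and with this stable letter both $G=HNN(G_1,H,t')$ and $N_G(H)=HNN(N_{G_1}(H),H,t')$. With the conjugate-equals-$H$ observation in hand, your outline goes through exactly as the paper's does.
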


\begin{proof} (1) Note that the fact that all conjugates of $H$ containing $H$ coincide with $H$ in the pro-$p$ case (as well as in the profinite) imply that $N_G(H)$ acts on the subtree of fixed points $S^H$ such that $S^H/N_G(H)=S/G$ and so   $N_G(H)={N_{G_1}(H)}\amalg_H {N_{G_2}(H)}$. 

\medskip
(2) Case 1. $H$ and $H^t$ are not conjugate in $G_1$. 

Like in (1) $N_G(H)$ acts on the subtree of fixed points $S^H$ such that $E(S^H)/N_G(H)=E(S)/G$ and  $V(S^H)/N_G(H)$ has two vertices.

In this case the normalizer splits as in (1): $N_G(H)={N_{G_1}(H)}\amalg_H {N_{G_1}(H^t)}$. 

\medskip
Case 2. 
$H$ and $H^t$ are  conjugate in $G_1$ say $H^{g_1}=H^t$ for some $g_1\in G_1$. Then replacing $t$ by $tg_1^{-1}$ we may assume that $t$ normalizes $H$.  
In this case $N_G(H)=HNN({N_{G_1}(H)},H,t') $ is an HNN-extension.

\end{proof}

\section{Accessibility }\label{accessibility}

Constructions of JSJ decompositions are based on accessibility theorems stating that, given suitable $G$ and $\E$, there is an a priori bound for the number of orbits of edges of reduced $(G,\E)$-trees, under the assumption that there is no redundant vertex (if $v$ has valence 2, it is the unique fixed point of some $g\in G$). 

\begin{definition} Let $\E$ be a family of pro-$p$ groups. A pro-$p$ group $G$ will be called 
	$\E$-{\em accessible} if there is a number $a=a(G)$ such that any finite,
	proper, reduced graph of pro-$p$ groups  with  edge groups in $\E$
	having fundamental group isomorphic to $ G$ has at most $a$
	edges. If $\E$ is the class of all finite $p$-groups, an $\E$-accessible pro-$p$ group will simply be called {\em accessible}. \end{definition}

The accessibility holds in the pro-$p$ case in the following cases:

(1)  $G$ is finitely generated and all groups in $\E$ are finite with bounded order \cite{wilk};

(2)  $G$ is finitely generated and all groups in $\E$ are cyclic \cite{CZ};

(3)  $ G$ is finitely generated and the graph of groups  are $k$-acylindrical for some $k$ \cite{CAZ}. 

\medskip

For completeness we state precisely these three results.

\begin{theorem}\cite[Theorem 3.1]{wilkes} Let $G$ be a finitely generated pro-$p$ group and let $k$ be an integer. Let $(\G,\Gamma)$ be a proper reduced finite graph of pro-$p$ groups with $G=\Pi_1(\G,\Gamma)$ such that each edge group $G_e$ has size at most $k$. Then $\Gamma$ has at most
	$\frac{pk}{p-1} (d(G)-1)+1$
	edges, where $d(G)$ is the minimal number of generators of $G$.\end{theorem}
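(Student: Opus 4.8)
The plan is to reduce to the virtually free pro-$p$ case and then run an Euler-characteristic argument. First I would replace $(\G,\Gamma)$ by a finite graph of finite $p$-groups over the same underlying graph $\Gamma$: for each vertex $v$ choose an open normal subgroup $N_v\trianglelefteq\G(v)$ deep enough that $N_v$ meets the image of every incident edge group trivially and $[\G(v):N_v]$ exceeds the order of each such image; put $\bar\G(v)=\G(v)/N_v$ and keep $\bar\G(e)=\G(e)$ with the induced edge maps. These maps are still injective, and since $[\G(v):N_v]$ is a power of $p$ strictly larger than $|\G(e)|$, hence at least $p|\G(e)|$, they are still non-isomorphisms onto their images; so $(\bar\G,\Gamma)$ is a reduced finite graph of finite $p$-groups with edge groups of order $\le k$, the group $\bar G:=\Pi_1(\bar\G,\Gamma)$ is virtually free pro-$p$, and the evident epimorphism $G\twoheadrightarrow\bar G$ gives $d(\bar G)\le d(G)$. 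Thus it suffices to bound $|E(\Gamma)|$ in terms of $d(\bar G)$, and from now on I work inside $\bar G$, disposing of the degenerate cases $|V(\Gamma)|=1$ and $|E(\Gamma)|=0$ directly.

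The group $\bar G$ has Serre's Euler characteristic $\chi(\bar G)=\sum_v|\bar\G(v)|^{-1}-\sum_e|\bar\G(e)|^{-1}$, which equals $1-r$ on a free pro-$p$ group of rank $r$ and is multiplicative under passage to open subgroups (using Corollary \ref{open subgroup} and orbit counting). Choosing a torsion-free open subgroup $U\le\bar G$ — any open subgroup meeting each of the finitely many vertex groups trivially acts freely on the standard tree, hence is free pro-$p$ — of index $N$, the pro-$p$ Nielsen--Schreier formula applied to the preimage of $U$ in a minimal free presentation of $\bar G$ gives $d(U)\le N(d(\bar G)-1)+1$; combined with $\chi(\bar G)=\chi(U)/N=(1-d(U))/N$ this yields the key inequality $-\chi(\bar G)\le d(\bar G)-1$.

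It remains to prove the lower bound $-\chi(\bar G)\ge\frac{p-1}{pk}\bigl(|E(\Gamma)|-1\bigr)$, and this is where reducedness enters. Fix a spanning tree $T$ of $\Gamma$ rooted at a vertex $v_0$; each vertex $v\ne v_0$ has a unique parent edge $e_v\in E(T)$, which is not a loop, so reducedness gives $[\bar\G(v):\bar\G(e_v)]\ge p$, i.e. $|\bar\G(v)|^{-1}\le\frac1p|\bar\G(e_v)|^{-1}$, and similarly $|\bar\G(v_0)|^{-1}\le\frac1p|\bar\G(e_0)|^{-1}$ for one fixed edge $e_0\in E(T)$ incident to $v_0$. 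Rewriting $-\chi(\bar G)=\sum_e|\bar\G(e)|^{-1}-\sum_v|\bar\G(v)|^{-1}$ and charging the weight $|\bar\G(v)|^{-1}$ against the edge $e_v$ (and $|\bar\G(v_0)|^{-1}$ against $e_0$): every edge $e\ne e_0$ is charged by at most one vertex and so retains at least $\frac{p-1}{p}|\bar\G(e)|^{-1}\ge\frac{p-1}{pk}$ (a non-tree edge is charged by nobody and retains $|\bar\G(e)|^{-1}\ge\frac1k\ge\frac{p-1}{pk}$), while $e_0$ is charged by at most two vertices and still retains at least $|\bar\G(e_0)|^{-1}-\frac2p|\bar\G(e_0)|^{-1}\ge0$. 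Summing over all edges gives the bound, and then $d(G)-1\ge d(\bar G)-1\ge-\chi(\bar G)\ge\frac{p-1}{pk}(|E(\Gamma)|-1)$, which rearranges to $|E(\Gamma)|\le\frac{pk}{p-1}(d(G)-1)+1$.

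The delicate point is the constant in the last step: the spanning-tree bookkeeping must be arranged so that only the single edge $e_0$ is ever charged twice — a symmetric distribution of vertex weights to both endpoints would leave only $\frac{p-2}{p}|\bar\G(e)|^{-1}$ per edge, which is worthless when $p=2$ and in any case not enough. The reduction in the first paragraph and the inequality $-\chi(\bar G)\le d(\bar G)-1$ in the second are routine given the standard pro-$p$ analogues of Nielsen--Schreier and of Serre's Euler-characteristic formula.
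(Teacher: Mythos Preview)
The paper does not prove this theorem; it merely quotes it from Wilkes's paper \cite{wilkes} as one of the known accessibility results. So there is no ``paper's own proof'' to compare against.

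Your argument is essentially the standard Linnell-style Euler-characteristic proof transplanted to the pro-$p$ setting, which is indeed how Wilkes proves it. The reduction in the first paragraph is sound, with two small remarks. First, your stated condition ``$[\G(v):N_v]$ exceeds the order of each such image'' is a touch stronger than you need and could fail if $\G(v)$ is finite and carries a loop $e$ with $\G(e)=\G(v)$; but you only ever use the index-$p$ gap for tree (hence non-loop) edges, where reducedness already supplies it, so taking $N_v=1$ when $\G(v)$ is finite is enough. Second, you silently use that $(\bar\G,\Gamma)$ is proper so that the formula $\chi(\bar G)=\sum_v|\bar\G(v)|^{-1}-\sum_e|\bar\G(e)|^{-1}$ is valid; this is true for finite graphs of finite $p$-groups (the abstract fundamental group is residually $p$, so the finite $p$-group vertex groups survive in the pro-$p$ completion), but it deserves a one-line citation. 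The Euler-characteristic computation, the Nielsen--Schreier bound $-\chi(\bar G)\le d(\bar G)-1$, and the spanning-tree bookkeeping (with only the single edge $e_0$ charged twice, which is exactly the right manoeuvre to get the sharp constant when $p=2$) are all correct.
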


\begin{theorem}\cite[Theorem 6.6]{CZ}\label{cyclic accessibility}     Let $G=\pi_1(\G,\Gamma)$ be the fundamental group of a
	finite reduced graph of pro-$p$-groups, with procyclic edge groups, and assume
	that $d(G)=d\geq 2$. Then  the vertex groups are finitely generated, the number
	of vertices of $\Gamma$ is $\leq 2d-1$, and the number of edges of $\Gamma$ is
	$\leq 3d-2$.\end{theorem}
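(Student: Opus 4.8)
The plan is to read everything off the Frattini quotient $G/\Phi(G)$, exploiting that a procyclic edge group contributes exactly one relation to the presentation~\eqref{presentation}. Fix a maximal subtree $D$ of $\Gamma$, set $n=|V(\Gamma)|$, $m=|E(\Gamma)|$ and $r=m-n+1$ (the rank of the free pro-$p$ group $\pi_1(\Gamma)$), and let $c_e$ topologically generate the procyclic group $\G(e)$. Abelianising~\eqref{presentation} and reducing modulo $p$ — and using that a continuous homomorphism from $\G(e)$ to the pro-$p$ group $G^{ab}$ is determined by its value on $c_e$, so that each block of relations $\partial_1(g)=\partial_0(g)^{t_e}$, $g\in\G(e)$, collapses to the single relation $\overline{\partial_1(c_e)}=\overline{\partial_0(c_e)}$ — one obtains a natural isomorphism
$$G/\Phi(G)\ \cong\ \Big(\bigoplus_{v\in V(\Gamma)}\G(v)/\Phi(\G(v))\ \oplus\ \F_p^{\,r}\Big)\big/\operatorname{im}\partial ,$$
where $\partial\colon\F_p^{E(\Gamma)}\to\bigoplus_v\G(v)/\Phi(\G(v))$ sends the basis vector of an edge $e$ to $\overline{\partial_1(c_e)}-\overline{\partial_0(c_e)}$. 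Since $\operatorname{im}\partial$ is spanned by at most $m$ vectors, this immediately gives $\sum_v d(\G(v))=\dim_{\F_p}\bigoplus_v\G(v)/\Phi(\G(v))\le d+m<\infty$, so every vertex group is finitely generated; and it gives $r=d(\pi_1(\Gamma))\le d$, hence $m=n+r-1\le n+d-1$, so the bound on edges follows from the bound on vertices.

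For the vertex bound I would split $V(\Gamma)=A\sqcup B$ with $A=\{v:d(\G(v))=1\}$ and $B=\{v:d(\G(v))\ge 2\}$, and assume $n\ge 2$ (the case $n=1$ is trivial); then every vertex meets a non-loop edge, so by reducedness $\G(v)\supsetneq\G(e)$, and in particular no vertex group is trivial. Two elementary estimates on the cokernel above now suffice. First, using $\operatorname{rk}\partial\le m$ and $r-m=-(n-1)$, the isomorphism gives $d\ge\sum_v d(\G(v))-(n-1)\ge(n+|B|)-(n-1)=|B|+1$, because $\sum_v d(\G(v))\ge |A|+2|B|=n+|B|$. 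Secondly, $\operatorname{im}\partial$ lies in $\bigoplus_v P_v$, where $P_v\le\G(v)/\Phi(\G(v))$ is the span of the images of the edge groups incident to $v$; hence $d\ge\sum_v\big(d(\G(v))-\dim P_v\big)+r\ge\sum_{v\in A}\big(1-\dim P_v\big)+r$. For $v\in A$ the group $\G(v)$ is procyclic, so $\dim P_v\le 1$, and $\dim P_v=1$ would force some edge group at $v$ to be mapped onto $\G(v)$, i.e.\ isomorphically; by reducedness such an edge is a loop, distinct such vertices carry distinct loops, and every loop lies outside $D$, so at most $r$ of the summands $1-\dim P_v$ vanish, giving $d\ge(|A|-r)+r=|A|$. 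Adding the two estimates, $2d\ge(|B|+1)+|A|=n+1$, i.e.\ $n\le 2d-1$; and then $m\le(2d-1)+d-1=3d-2$.

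The substantial step is the first paragraph — replacing the possibly infinitely generated and topologically delicate group $\pi_1(\G,\Gamma)$ by the cokernel of an explicit finite $\F_p$-matrix — and this is exactly where procyclicity of the edge groups enters (one relation per edge; for abelian edge groups of rank $2$ one would get two relations per edge and the bookkeeping would change). Granting that reduction, the two inequalities are immediate, and the one point needing genuine care is the contribution of loops: a loop one of whose edge maps is an isomorphism lets a $\Z_p$ vertex group be ``absorbed'', which is the mechanism behind the extremal examples — an alternating ``caterpillar'' chain of copies of $\Z_p$ and of free pro-$p$ groups of rank $2$ with both free generators carried by the two incident edges — realising $n=2d-1$; each such vertex is paid for against the rank $r$ of $\pi_1(\Gamma)$, and it is because this payment is available globally that the argument never needs to induct on the size of $\Gamma$ or localise near one vertex.
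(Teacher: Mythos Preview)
The present paper does not prove this statement; it is quoted from \cite{CZ} as one of the known accessibility inputs, so there is no in-paper proof to compare your attempt against.

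Your argument is nonetheless correct and is the natural one. Passing to the Frattini quotient converts presentation~\eqref{presentation} into the cokernel of an explicit $\F_p$-linear map with one column per edge (precisely because each edge group is procyclic, so the relation block at $e$ collapses to the single equation $\overline{\partial_1(c_e)}=\overline{\partial_0(c_e)}$), and the two rank bounds $\operatorname{rk}\partial\le m$ and $\operatorname{rk}\partial\le\sum_v\dim P_v$ yield $|B|\le d-1$ and $|A|\le d$ respectively. The one step requiring care---that a procyclic vertex group $v\in A$ with $\dim P_v=1$ forces some incident edge map to be surjective, hence an isomorphism, hence (by reducedness) a loop, and that distinct such vertices carry distinct loops so there are at most $r$ of them---is handled correctly and is exactly what makes the bound tight. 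The same reduction-to-$H_1(\,\cdot\,,\F_p)$ philosophy appears elsewhere in the paper (cf.\ Proposition~\ref{reducing to trees} and the Mayer--Vietoris arguments in Section~12), so your approach is entirely in its spirit.
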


\begin{theorem}\label{k-acylindrical accessibility}\cite[Theorem 1.1]{CAZ}  Let $G=\Pi_1(\G, \Gamma)$ be the fundamental group of a finite reduced $k$-acylindrical graph of pro-$p$ groups. Then $|E(\Gamma)|\leq d(G)(4k+1)-1 $, $|V(\Gamma)|\leq 4kd(G)$.  
\end{theorem}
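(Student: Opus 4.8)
The strategy is to split the bound on $|E(\Gamma)|$ into a ``cycle-rank'' part, obtained by mapping $G$ onto a free pro-$p$ group, and a ``vertex'' part, obtained by a Nielsen/folding count in the standard pro-$p$ tree in which $k$-acylindricity limits the reach of a single generator. Assume $d:=d(G)<\infty$, as otherwise both inequalities are vacuous. Let $G$ act on the standard pro-$p$ tree $S=S(\G,\Gamma)$; then $S/G=\Gamma$ is finite, we may assume $S$ is minimal (as $(\G,\Gamma)$ is reduced), and the action is $k$-acylindrical by hypothesis. The subgroup $\widetilde G=\langle \G(m)^{g}\mid m\in\Gamma,\ g\in G\rangle$ generated by the vertex stabilizers is normal (Proposition \ref{mod tilde}), and $G/\widetilde G\cong\pi_{1}(\Gamma)$ by \cite[Corollary 3.9.3]{R}, a free pro-$p$ group of rank $b_{1}(\Gamma)=|E(\Gamma)|-|V(\Gamma)|+1$. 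Being a quotient of the $d$-generated group $G$, it has rank $\le d$, so $|E(\Gamma)|\le|V(\Gamma)|+d-1$; hence it suffices to prove $|V(\Gamma)|\le 4kd$, which then yields $|E(\Gamma)|\le 4kd+d-1=d(4k+1)-1$.

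For the vertex count I would run a pro-$p$ analogue of Weidmann's Nielsen method for groups acting on trees. By Lemma \ref{generation} pick topological generators $g_{1},\dots,g_{d}$ of $G$, each either lying in a vertex group of $(\G,\Gamma)$ --- hence elliptic in $S$, fixing some vertex $x_{i}$ --- or equal to a stable letter of a loop edge, which moves a suitable vertex of $S$ to a neighbour. Starting from one vertex, build a nested chain of finite $G$-invariant subtrees $Y_{1}\subseteq\cdots\subseteq Y_{d}$ of $S$ and subgroups $H_{1}\le\cdots\le H_{d}=G$, where $H_{j}=\langle g_{1},\dots,g_{j},\ G_{v}\ (v\in Y_{j})\rangle$, passing from $Y_{j-1}$ to $Y_{j}$ by adjoining a geodesic joining $Y_{j-1}$ (or a translate of it) to $x_{j}$, resp.\ to $g_{j}Y_{j-1}$, and then folding so as to remain inside a subtree. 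The key point is that $k$-acylindricity caps how much is added at each step: in the reduced configuration the procedure converges to, a surviving segment of length $>2k$ between two $H_{j}$-translates of $Y_{j-1}$ would carry a nontrivial stabilizer, hence by Theorem \ref{fixed geodesic} a nontrivial element of $G$ would fix a geodesic of length $>k$ in $S$, contradicting $k$-acylindricity; a careful count then shows that each of the $d$ generators contributes at most $4k$ new orbits of vertices. Since $S$ is minimal and $g_{1},\dots,g_{d}$ generate $G$, we have $S=G\cdot Y_{d}$, so $\Gamma$ is the image of $Y_{d}$ and $|V(\Gamma)|\le 4kd$.

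The delicate part is exactly this last count. Unlike the abstract Nielsen method, in the pro-$p$ setting a maximal subtree of a quotient graph need not lift and one must manipulate profinite subtrees throughout, so the folding process must be arranged so that it terminates. For this I would first replace $G$ by the quotients $G/\widetilde U=\Pi_{1}(\G_{U},\Gamma_{U})$ of Lemma \ref{inverse limit of virtually free groups}, which are fundamental groups of finite reduced graphs of \emph{finite} $p$-groups with $\Gamma_{U}=\Gamma$ for $U$ sufficiently small and $d(G/\widetilde U)\le d$, transport the $k$-acylindricity data down along this inverse system, and carry out the now genuinely finite folding combinatorics there, using Proposition \ref{ellipticity} to pass freely between ``every element elliptic'' and ``elliptic''. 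This reduction --- forced because in the pro-$p$ world the finiteness of $\Gamma$ is not automatic but is the very content of the accessibility results collected in Section \ref{accessibility} --- together with the bookkeeping that produces the explicit constant $4k$, is where the real work lies.
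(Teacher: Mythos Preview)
The paper does not prove this theorem: it is stated with a citation to \cite[Theorem 1.1]{CAZ} and no argument is given here, so there is no in-paper proof to compare against. Your outline is in the spirit of the cited source (a pro-$p$ version of Sela's acylindrical accessibility via a Weidmann-type Nielsen argument), and your reduction $|E(\Gamma)|\le|V(\Gamma)|+d-1$ via $G/\widetilde G\cong\pi_1(\Gamma)$ is the standard first step. That said, your sketch of the vertex bound is only programmatic: the assertion that ``each of the $d$ generators contributes at most $4k$ new orbits of vertices'' is exactly the content of the theorem, and your appeal to Lemma~\ref{inverse limit of virtually free groups} does not by itself supply the combinatorics, since $k$-acylindricity of the action of $G$ on $S$ does not descend to $k$-acylindricity of $G/\widetilde U$ on $S_U$ (the vertex stabilizers there are finite, so fixed-point sets can be large). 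A genuine proof has to run the Nielsen/folding procedure directly in $S$ and control it using Proposition~\ref{ellipticity} and Theorem~\ref{fixed geodesic}; for the details you should consult \cite{CAZ}.
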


We shall state here an accessibility for CSA pro-$p$ groups. We say that a pro-$p$ group is ${\it SCA}$ if the maximal abelian subgroups of $G$ are malnormal; this is equivalent to transitive commutativity of $G$. The proof of the next theorem will be performed in Subsection 9.3, but we state it here for the completeness.

\begin{theorem}\label{CSA} Let $\E$ be a class of abelian pro-$p$ groups and $G$ a finitely generated pro-$p$ CSA-group. Then $G$ 
	is $\E$-accessible.
\end{theorem}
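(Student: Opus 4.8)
The plan is to reduce the CSA accessibility statement to the $k$-acylindrical accessibility theorem (Theorem \ref{k-acylindrical accessibility}) by finding, for a given finitely generated CSA pro-$p$ group $G$, a uniform acylindricity constant that applies to \emph{all} reduced $\E$-trees with abelian edge stabilizers. The CSA condition is exactly what makes this possible: in a CSA group two distinct maximal abelian subgroups intersect trivially, and every nontrivial abelian subgroup is contained in a unique maximal one; equivalently the centralizer of any nontrivial element is abelian and malnormal. First I would recall (or establish a pro-$p$ analogue of) the standard abstract fact that an action of a CSA group on a tree over abelian subgroups is $2$-acylindrical — more precisely, that if a nontrivial element $g$ fixes two vertices $v,w$ with $d(v,w)$ large, then by Theorem \ref{fixed geodesic} $g$ lies in every edge stabilizer along $[v,w]$, all of which are abelian; pushing this along a path of length $\geq 3$ forces two ``non-adjacent'' abelian edge groups to share the nontrivial element $g$, and malnormality of maximal abelian subgroups forces them to be equal, which (after tracking conjugators along the path, using that the path stabilizer is contained in each $G_e$) collapses the geodesic and contradicts reducedness.

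The key steps, in order, are: (i) formulate the right pro-$p$ CSA definition and prove the elementary malnormality/uniqueness-of-maximal-abelian facts in the pro-$p$ setting, being careful that ``abelian'' here allows non-finitely-generated abelian pro-$p$ groups and that intersections behave well (here one uses Theorem \ref{fixed geodesic} to control intersections of vertex and edge stabilizers); (ii) prove the uniform acylindricity bound: there is an absolute constant $k$ (morally $k=1$ or $k=2$) such that for \emph{every} reduced pro-$p$ $\E$-tree $T$ on which $G$ acts, the action is $k$-acylindrical — this is where CSA + Theorem \ref{fixed geodesic} are combined with reducedness (Definition \ref{tree reduced}) to rule out long fixed segments; (iii) invoke Theorem \ref{pro-pbass-serre} (or, since edge groups need not be cyclic, work with the approximating finite graphs of finite $p$-groups from Lemma \ref{inverse limit of virtually free groups}) to realize any reduced proper finite graph of pro-$p$ groups with fundamental group $G$ as arising from such a $k$-acylindrical action; (iv) apply Theorem \ref{k-acylindrical accessibility} to get $|E(\Gamma)| \leq d(G)(4k+1)-1$, which is the desired bound $a(G)$ depending only on $G$.

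There is a subtlety I should flag in step (iii): the accessibility definition quantifies over finite graphs of pro-$p$ groups, and the acylindricity in step (ii) is phrased for actions on pro-$p$ trees; to connect them I would take the standard pro-$p$ tree $S(\G,\Gamma)$ of Theorem \ref{pro-pbass-serre}/Remark \ref{cofinite action} associated to the finite graph of groups, note that it is reduced iff $(\G,\Gamma)$ is reduced, apply step (ii) to $S(\G,\Gamma)$, and then the graph of groups itself is $k$-acylindrical in the sense of Definition \ref{acylidrical graph of groups}. One must also make sure maximal abelian subgroups of $G$ genuinely are closed/honest pro-$p$ subgroups so that the malnormality statements make sense; since centralizers are closed, $C_G(g)$ is a closed abelian subgroup for $g\neq 1$ and one can take these as the maximal abelian subgroups, which sidesteps any Zorn's-lemma worry.

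The hard part will be step (ii): proving a clean, \emph{uniform} acylindricity constant in the pro-$p$ world, where geodesics $[v,w]$ in a pro-$p$ tree are closed profinite subtrees rather than finite paths, so ``the edge group containing $g$ along the path'' is really a continuous family of edge groups, and the malnormality argument that in the abstract case is a short finite induction along a path must be recast as a statement about this profinite family (using continuity of stabilizers, Lemma \ref{Ribes 5.2.2}, and the étale topology on $Subgp(G)$). I expect the argument to go through — one reduces modulo open normal subgroups $U$ with $\widetilde U$ acting trivially, uses the finite graph of finite $p$-groups structure of $G/\widetilde U$ from Lemma \ref{inverse limit of virtually free groups} together with CSA-ness passing to quotients appropriately (or, more safely, works directly in $G$ with the profinite-geodesic formulation) — but nailing down the bookkeeping of conjugators along a profinite geodesic while preserving the CSA malnormality input is the technical crux, and the right value of $k$ (whether the clean bound is $1$, $2$, or something slightly larger once reducedness rather than some stronger ``minimality'' hypothesis is all that is available) will only become clear once that bookkeeping is carried out.
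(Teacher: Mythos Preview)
Your step (ii) --- that every reduced abelian splitting of a CSA pro-$p$ group is $k$-acylindrical for some absolute small $k$ --- is false, and this is the genuine gap. A counterexample is visible already in the discrete setting: take free groups $F_1,\ldots,F_n$ each containing a common malnormal cyclic subgroup $C$, and form the iterated amalgam $G = F_1 \amalg_C F_2 \amalg_C \cdots \amalg_C F_n$ (a path of groups). This $G$ is CSA (one checks $C_G(c)=C$ for nontrivial $c\in C$, and all other maximal abelians are cyclic and malnormal), the graph of groups is reduced since $C$ is proper in each $F_i$, yet any nontrivial $c\in C$ fixes the entire lifted spine of length $n-1$ in the Bass-Serre tree. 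Your sketched argument ``equal edge stabilizers along the path collapse the geodesic, contradicting reducedness'' conflates two different things: reducedness forbids $G_e = G_{d_i(e)}$ for a non-loop edge $e$, but says nothing about two \emph{adjacent edges} having the same stabilizer. Malnormality does force all the $G_e$ along the fixed segment into a single maximal abelian $M$, and even forces distinct edges at a vertex with equal stabilizer into different $G_v$-orbits --- but neither of these yields a bound on the segment length. The profinite-geodesic bookkeeping you worry about in your final paragraph is not the obstruction; the underlying combinatorial claim already fails.

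The paper's proof circumvents exactly this obstacle by passing to the tree of cylinders $T_c$ of Section~9. For a CSA group with the commutation relation, cylinders correspond to maximal abelian subgroups, and malnormality makes $T_c$ genuinely $2$-acylindrical (Proposition~\ref{tree of cylindres acylindrical}): the fixed set of any nontrivial element lies in the star of the single $V_1$-vertex indexed by its maximal abelian. Acylindrical accessibility (Theorem~\ref{k-acylindrical accessibility}) is then applied to the \emph{reduced} $T_c$, not to the original $S(G)$; the bound on the original $\Gamma$ is recovered by observing that non-abelian vertex groups of $\Gamma$ survive as distinct $V_0$-vertices of $T_c/G$, while abelian vertices are controlled separately (in a reduced CSA splitting no two abelian vertex groups can be adjacent). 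So the acylindricity you hoped to find on $T$ is only available after replacing $T$ by $T_c$, and the real technical content of the proof is the Section~9 construction of $T_c$ rather than any direct geodesic argument on $T$.
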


Denote by
$\mathcal{G}_0$ the class of all free pro-$p$ groups of finite
rank. Following  \cite{KZ}  define inductively the class $\mathcal{G}_n$ of pro-$p$
groups $G_n$ in the following way: $G_n$ is a free 
amalgamated pro-$p$ product $G_{n-1}\amalg_{C}A$, where $G_{n-1}$ is any
group from the class $\mathcal{G}_{n-1}$, $C$ is any
self-centralizing procyclic pro-$p$ subgroup of $G_{n-1}$ and $A$
is any finite rank free abelian pro-$p$ group such that $C$ is a
direct summand of $A$. The class of pro-$p$ groups $\mathcal{L}$ ({\em pro-$p$ limit groups}) 
consists of all finitely generated pro-$p$ subgroups $H$ of some
$G_n\in \mathcal{G}_n$, where $n\geq 0$. 
Then $H$ is a  subgroup of a free
amalgamated pro-$p$ product $G_n=G_{n-1}\amalg_{C}A$, where
$G_{n-1}\in \mathcal{G}_{n-1}$, $C\cong \mathbb{Z}_p$ and
$A=C\times B\cong \mathbb{Z}_p^m$. By 
\cite[Theorem 3.2]{Ribes3}, this free amalgamated pro-$p$ product is proper. Thus $H$
acts naturally on the pro-$p$ tree $T$ associated to $G_n$  and its edge stabilizers are procyclic.

By \cite{KZ} $L$ is CSA and so  we deduce  

\begin{corollary} Let $G$ be a pro-$p$ limit group. Then $G$ is accessible over abelian subgroups.
\end{corollary}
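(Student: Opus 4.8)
The plan is to combine the structural description of pro-$p$ limit groups built into their definition with Theorem \ref{CSA}. First I would recall that, by construction, a pro-$p$ limit group $G$ is a finitely generated pro-$p$ subgroup of some $G_n\in\mathcal G_n$, and that by \cite{KZ} every pro-$p$ limit group is CSA (indeed the class $\mathcal L$ is closed under passing to finitely generated subgroups, and CSA passes to subgroups). Thus $G$ is a finitely generated CSA pro-$p$ group, and the hypotheses of Theorem \ref{CSA} are met with $\E$ the class of abelian pro-$p$ groups.

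Next I would invoke Theorem \ref{CSA} directly: it asserts that a finitely generated CSA pro-$p$ group is $\E$-accessible for $\E$ the class of abelian pro-$p$ groups. Since ``accessible over abelian subgroups'' is exactly the statement that $G$ is $\E$-accessible for this $\E$, the corollary follows immediately. The only point requiring a word is that the edge groups occurring in reduced graph-of-groups decompositions of $G$ over abelian subgroups are themselves abelian and hence lie in $\E$; this is part of the definition of an $\E$-tree/$\E$-splitting, so nothing extra is needed.

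There is essentially no obstacle here: the corollary is a formal consequence of two already-established facts (CSA-ness of pro-$p$ limit groups from \cite{KZ}, and $\E$-accessibility of finitely generated CSA pro-$p$ groups from Theorem \ref{CSA}). The only mild subtlety is logistical — Theorem \ref{CSA} is stated in the excerpt with its proof deferred to Subsection 9.3, so in a fully rigorous writeup one would note that the present corollary depends on that later argument; but for the purposes of this deduction we are entitled to assume Theorem \ref{CSA} as given. Hence the proof is a two-line citation:
\[
G \text{ f.g. CSA pro-}p \ \xRightarrow{\ \text{Thm.\ \ref{CSA}}\ } \ G \text{ is } \E\text{-accessible}, \quad \E = \{\text{abelian pro-}p\text{ groups}\}.
\]
The heart of the matter, the actual accessibility bound, is entirely contained in Theorem \ref{CSA} and its proof in Subsection 9.3.
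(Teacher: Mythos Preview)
Your proposal is correct and matches the paper's approach exactly: the paper simply notes that pro-$p$ limit groups are CSA by \cite{KZ} and then deduces the corollary from Theorem \ref{CSA}, precisely as you do.
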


Of course pro-$p$ limit pro-$p$ groups accessible also over cyclic groups by Theorem \ref{cyclic accessibility}. The accessibility number  $a(G)$ is $3d(G)-2$ by \cite[Corollary 6.7]{CZ}.

\begin{proposition}\label{finite accessibility} Suppose $\E$ is a finite set of finitely generated subgroups of a finitely generated pro-$p$  group up to conjugation. Then $G$ is $\E$-accessible. More precise, $|V(\Gamma)|\leq d(G)|E/G|$. 
\end{proposition}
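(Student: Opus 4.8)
The plan is to bound the number of vertices of a reduced $(\E,\K)$-graph of pro-$p$ groups $(\G,\Gamma)$ with $G = \Pi_1(\G,\Gamma)$ by running the argument through the Frattini quotient, exactly as in the proof of Lemma \ref{generation}. First I would pass to $\widetilde{G} = \langle \G(m)^g \mid m \in \Gamma, g \in G\rangle$, so that $G/\widetilde G \cong \pi_1(\Gamma)$ is free pro-$p$ and $G = \widetilde G \rtimes \pi_1(\Gamma)$, and let $f\colon G \to G/\Phi(G)$ be the natural epimorphism onto the $\F_p$-vector space $G/\Phi(G) = f(\widetilde G) \oplus f(\pi_1(\Gamma))$. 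Since $G$ is finitely generated, $\dim_{\F_p} f(\widetilde G) \leq d(G)$. The point is that $f(\widetilde G)$ is spanned by the images $f(\G(v)^g)$ of conjugates of vertex groups, and because each $\G(v)$ is finitely generated, each $f(\G(v)^g) = f(\G(v))^{f(g)}$; but $G/\Phi(G)$ is abelian, so conjugation is trivial and $f(\G(v)^g) = f(\G(v))$. Hence $f(\widetilde G) = \sum_{v \in V(\Gamma)} f(\G(v))$.

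The next step is to show that each vertex of $\Gamma$ contributes a nonzero summand, using reducedness together with the finiteness hypothesis on $\E$. Here is where I would use that the edge groups lie in a \emph{finite} set $\E$ of finitely generated subgroups up to conjugation: I want to argue that in a reduced graph of pro-$p$ groups, for each vertex $v$ the subgroup generated by the vertex group $\G(v)$ together with the (finitely many) incident edge contributions cannot have $\G(v) \subseteq \Phi$ of the span of the incident edge groups — more precisely I want $f(\G(v)) \not\subseteq \sum_{e \text{ incident to } v, e \neq v\text{-side loop}} f(\partial_e(\G(e)))$ in a way that forces at least one genuinely new Frattini-generator per vertex. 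Reducedness says no non-loop edge map is onto, and I would combine this with the structure of $G = \widetilde G \rtimes \pi_1(\Gamma)$ and an induction collapsing the graph one edge at a time (as in Lemma \ref{inverse limit of virtually free groups}), tracking how $d$ of the successive fundamental groups drops. The cleanest route: order the vertices, build $\Gamma$ up edge by edge from a spanning tree, and observe that each time a new vertex $w$ is attached along an edge $e$ with edge group in $\E$, the amalgamated product $\Pi_1(\G,\Gamma')\amalg_{\G(e)}\G(w)$ has $d \geq d(\Pi_1(\G,\Gamma')) + d(\G(w)) - d(\G(e)) \geq d(\Pi_1(\G,\Gamma')) + 1$ by reducedness (since $\partial_e(\G(e)) \neq \G(w)$ forces $d(\G(w)/\langle\partial_e\G(e)\rangle\Phi(\G(w))) \geq 1$); the loops only increase $d$ further by contributing to $\pi_1(\Gamma)$.

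Summing these inequalities across the $|V(\Gamma)| - 1$ edges of a spanning tree plus the loop edges gives $d(G) \geq |V(\Gamma)|$, which is the claimed $|V(\Gamma)| \leq d(G)$; combined with $|E(\Gamma)| \leq |V(\Gamma)|\cdot|E/G|$ — wait, I should instead phrase the edge bound directly: each vertex has boundedly many incident edges is \emph{not} what is asserted; rather the statement $|V(\Gamma)| \leq d(G)|E/G|$ as written must be a typo-adjacent shorthand, and I would state and prove $|V(\Gamma)| \leq d(G)$ (hence $|E(\Gamma)| \leq d(G) + |E(\Gamma)\setminus E(\text{tree})|$, with the loop count also bounded by $d(G)$ since loops inject into $\pi_1(\Gamma)$, giving $|E(\Gamma)| \leq 2d(G) - 1$, an accessibility bound).

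\textbf{Main obstacle.} The delicate point is justifying the per-vertex inequality $d(\G(w)) - d(\langle\partial_e(\G(e))\rangle) \geq 1$ uniformly: reducedness only gives $\partial_e(\G(e)) \neq \G(w)$ as subgroups, which for pro-$p$ groups does imply $\partial_e(\G(e))\Phi(\G(w)) \neq \G(w)$ (a proper closed subgroup of a pro-$p$ group is contained in a maximal, hence Frattini-containing, subgroup), so this step is in fact fine — the real subtlety is handling the general case where $\G(e)$ meets more than one incident edge at $w$ and ensuring the "new generator" contributions at different vertices are $\F_p$-linearly independent in $f(\widetilde G)$, rather than merely nonzero. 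I expect this linear-independence bookkeeping — equivalently, that collapsing edges strictly decreases $d$ each time — to be where the finiteness/finite-generation of $\E$ is genuinely used, and it is the step I would write out most carefully, modeling it on the collapse procedure of Lemma \ref{inverse limit of virtually free groups} and the Frattini argument of Lemma \ref{generation}.
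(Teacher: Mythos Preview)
Your approach has a genuine gap, and it stems from dismissing the factor $|\E/G|$ in the bound. The stronger inequality $|V(\Gamma)|\leq d(G)$ that you are heading towards is simply false, and your per-vertex Frattini argument breaks exactly where you suspected.

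The inequality you want at each attachment step is $d(H\amalg_C W)\geq d(H)+1$. From Mayer--Vietoris with $\F_p$-coefficients one gets
\[
d(H\amalg_C W)=d(H)+d(W)-\dim\bigl(\mathrm{im}\,(H_1(C)\to H_1(H)\oplus H_1(W))\bigr),
\]
so you need that image to have dimension at most $d(W)-1$. Reducedness only tells you the image of $H_1(C)$ in $H_1(W)$ is proper; it says nothing about $d(C)$ versus $d(W)$, and the combined image in the direct sum can still be all of $H_1(C)$. Concretely, take a chain of copies of $\Z_p^2=\langle a_i,b_i\rangle$ with edge group between the $i$-th and $(i{+}1)$-th copy equal to $\langle a_i^p,b_i\rangle=\langle a_{i+1},b_{i+1}^p\rangle\cong\Z_p^2$. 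Each edge map is a proper inclusion, so the graph is reduced, yet one computes $d(G)=2$ regardless of the length of the chain, while $|V(\Gamma)|$ is arbitrary. Your linear-independence bookkeeping cannot be salvaged here: adding a vertex genuinely contributes nothing to the Frattini quotient.

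The paper's argument is structurally different and uses $|\E/G|$ in an essential way: it inducts on the number of $G$-conjugacy classes in $\E$. After reducing to the case where $\Gamma$ is a tree, one fixes a minimal element $A\in\E$, observes that the edges whose group contains $A$ form a connected subgraph (fixed-point subtree), collapses the analogous subgraphs for the other minimal classes, and then kills the normal closure of $A$. The resulting quotient graph of groups is still reduced, has one fewer edge-group class, and the induction hypothesis applies. The finite-generation of groups in $\E$ is used to make this ``mod out by $A$'' step behave, not to feed a Frattini count.
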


\begin{proof} 
	
	Let $(\G,\Gamma)$ be a reduced graph of groups with edge groups in $\E$ such that $G=\Pi_1(\G,\Gamma)$.  First observe that, for every maximal subtree $T$ of $\Gamma$, there are at most $d(G)$ edges in $\Gamma\setminus T$. Therefore it suffices to bound the number of edges of $\Gamma$ whenever $\Gamma$ is a tree. We use induction on $|\E/G|$.

	Let $S$ be the standard pro-$p$ tree for $(\G,\Gamma)$. By \cite[Theorem 2.10]{ZM-88} or \cite[Theorem 3.7]{RZ-00} the subset of fixed points $S^A$ of $A$ is a pro-$p$ subtree of $S$. One deduces that the maximal  subgraph $\Delta_A$ of  $\Gamma$ such that $A$ is a subgroup of each edge group of $(\G,\Delta_A)$ is connected.  
	 Choose $A$ to be minimal in $\E$ (by inclusion). Collapse  $\Delta_B$ for each $B$ minimal distinct from $A$. Putting $\Pi_1(\G, \Delta_B)$ on top of the vertex $b$ to which a subgraph $\Delta_B$ was collapsed for every such $B$ we obtain the graph of groups $(\G_A,\Delta_A)$ such that $G=\Pi_1(\G_A,\Delta_A)$ and every edge group is $A$.   
	
	We show the result  for $G=\Pi_1(\G,\Delta_A)$.  Since $(\G,\Delta_A)$ is reduced, $G_A/(A)^{G_A}\cong \Pi_1(\overline\G,\Delta_A)$, where $ \overline G(v)=G(v)/A^{G(v)}$, and so $(\overline\G,\Delta_A)$ is reduced (since $A$ is conatined in every edge group of $(\G,\Delta_A)$ and the number of edge subgroups in $G/(A)^{G_A}cong \Pi_1(\overline\G,\Delta_A)$ is $|\E/G|-1$.   Therefore, $|V(\Delta_A)|\leq d(G)(|\E/G|-1)$ by inductive hypothesis. Hence $|V(\Delta_A)|\leq  d(G)(|\E/G|-1)\leq d(G)|\E/G|$ as required.
	
	Now $\Gamma$ is covered by $\Gamma_A$, where $A$  runs via minimal subgroups of $E$. Thus $V(\Gamma)\leq |E/G| d(G)$.

\end{proof}

\section{Standard refinements}

  A collapse map $f : T \longrightarrow T_
0$
is a map obtained by collapsing connected components of certain subgraph $D$ to vertices. If $T$ is a $G$-tree, then assuming that $D$ is $G$-invariant we have a $G$-collapse. Then 
by equivariance, the set of collapsed edges is $G$-invariant.
Equivalently, $f$ preserves alignment: the image of any geodesic $[a, b]$ is a vertex or the geodesic
$[f(a), f(b)]$. Another characterization is that the preimage of every subtree is a subtree (cf. \cite[Lemma 3.1]{CZ}).

\begin{remark}\label{collapsed graph of groups} If $T$ has a connected transversal of $T/G$ one can express it in terms of graphs of groups. Namely one passes from $(\G,\Gamma) = T /G$ to $\Gamma_0 = T_0/G$. Then the procedure of collapsing in the graph of pro-$p$ groups $(\G,\Gamma)$ can be generalized using Lemma \ref{restricted graph}. If $\Delta$ is a connected subgraph then we can collapse $\Delta$ to a vertex $v$ and put $G(v)=\Pi_1(\G,\Delta)$ leaving the rest of edge and vertex groups unchanged. The fundamental group $\Pi_1(\G_{\Delta},\Gamma/\Delta) =\Pi_1(\G,\Gamma)$. The graph of groups  $(\G_{\Delta},\Gamma/\Delta)$ will be called {\it collapsed}.
\end{remark}

\begin{definition} We say that a a pro-$p$ $G$-tree
$T$ refines a pro-$p$ $G$-tree $T_
0$ if there exists a $G$-collapse $f:T\longrightarrow  T_0$.\end{definition}

The inverse operation of the collapse is a refinement or the blowup. Suppose $(\G,\Gamma)$ is a finite graph of pro-$p$ groups and  $v$ is a vertex of  $\Gamma$. 
Let  $(\G_v,\Gamma_v)$ be a splitting of $G_v$ in
which incident edge groups to $v$ are elliptic. One may then refine $\Gamma$ at $v$ using $\Gamma_v$, so as to
obtain a pro-$p$ splitting $(\G,\Gamma_0)$ whose edges are those of $\Gamma$
together with those of $\Gamma_v$. Note that $\Gamma_0$ is not uniquely defined because there is flexibility in the way edges of
$\Gamma_
0 $ are attached to vertices of $\Gamma_v$; (abstract version  is discussed in [GL16, Section 4.2]).

\begin{definition} Two  $G$-trees $T_1, T_2$ are compatible if they have a common refinement: there exists a tree
$\hat T$ with collapse maps $g_i
: \hat T\longrightarrow T_i$ $i=1,2$.\end{definition}

 The next lemma shows that there is minimal such a $\hat T$.

 \begin{lemma} Let $T_1$ and $T_2$ be compatible trees with a common refinement. There is  a common refinement $T$ of $T_1$ and $T_2$ such that no edge of
 $T$ is collapsed in both $T_1$ and $T_2$.\end{lemma}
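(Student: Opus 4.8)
The plan is to start with any common refinement $\hat T$ of $T_1$ and $T_2$, with collapse maps $g_i\colon \hat T\to T_i$, and then to collapse away the "redundant" edges — those that are collapsed under both $g_1$ and $g_2$. Concretely, let $D\subseteq \hat T$ be the set of edges $e$ such that $g_1(e)$ is a vertex and $g_2(e)$ is a vertex. Since each $g_i$ is a $G$-collapse, the set of edges collapsed by $g_i$ is $G$-invariant and closed, hence $D$ is a $G$-invariant closed subset of edges. I would then let $T$ be the $G$-tree obtained from $\hat T$ by collapsing each connected component of the subgraph spanned by $D$ to a point; this is again a collapse, so $T$ is a pro-$p$ $G$-tree and the collapse map $\hat T\to T$ preserves alignment.

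The key point is that the maps $g_1,g_2$ factor through $\hat T\to T$. For this I would use the alignment characterization of collapse maps recalled just before the lemma: a $G$-equivariant map is a collapse iff it preserves alignment (equivalently, preimages of subtrees are subtrees). Since every edge of $D$ is collapsed by $g_1$, the map $g_1$ is constant on each connected component of the collapsed subgraph (a connected subgraph all of whose edges are killed by $g_1$ maps to a single vertex), so $g_1$ descends to a well-defined $G$-map $f_1\colon T\to T_1$; and $f_1$ still preserves alignment because $g_1$ does and the quotient $\hat T\to T$ does. Hence $f_1$ is a $G$-collapse, and similarly for $f_2$. Thus $T$ is a common refinement of $T_1$ and $T_2$. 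By construction no edge of $T$ is collapsed in both $T_1$ and $T_2$: an edge $\bar e$ of $T$ lifts to an edge $e$ of $\hat T$ with $e\notin D$, so at least one of $g_1(e),g_2(e)$ is an edge, and that edge is precisely $f_i(\bar e)$.

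The one genuinely pro-$p$ point to be careful about — and I expect this to be the main technical obstacle — is verifying that collapsing the subgraph spanned by $D$ really produces a pro-$p$ tree and that all the maps remain continuous. In the abstract case this is immediate combinatorics, but here one must check that $D$ (and hence the subgraph it spans, which is $G$-invariant) is closed, so that the quotient is again a profinite graph, and then invoke the characterization of collapse maps for profinite trees from \cite[Lemma 3.1]{CZ} to conclude the quotient is a pro-$p$ tree and the induced maps are collapses. Closedness of $D$ follows from continuity of $g_1,g_2$ together with the fact that $V(T_i)$ is closed in $T_i$: $D = g_1^{-1}(V(T_1))\cap g_2^{-1}(V(T_2))\cap E(\hat T)$ restricted appropriately, and the set of edges of $\hat T$ collapsed by a collapse map is closed. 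Once these continuity and closedness checks are in place, the factorization argument via alignment is formal, and the minimality statement is automatic.
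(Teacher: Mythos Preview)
Your proposal is correct and follows essentially the same approach as the paper: start from any common refinement, take the intersection of the two collapsed subgraphs, and collapse its connected components. The paper's proof is a terse three sentences that does exactly this (calling the collapsed subgraphs $\Delta_1,\Delta_2$ and setting $\Delta=\Delta_1\cap\Delta_2$), without spelling out the factorization or the continuity/closedness checks; your version is in fact more careful about the pro-$p$ technicalities than the paper itself.
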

 
 \begin{proof} Let $T$ be any common refinement. Let $\Delta_1$ and $\Delta_2$ be the subgraphs of $T$ whose connected components are collapsed to get $T_1$ and $T_2$ respectively. Put $\Delta=\Delta_1\cap\Delta_2$.  We collapse all connected components of $\Delta$.
   It is a common refinement of $T_1$ and $T_2$, and no edge is collapsed in both $T_1$
 and $T_2$.\end{proof}

Let $T_1$, $T_2$ be pro-$p$ $G$-trees.

\begin{definition} We say that a $G$-tree $T_1$ dominates a $G$-tree $T_2$ if every vertex stabilizer of $T_1$
fixes a vertex in $T_2$,  i.e. every subgroup which is elliptic in $T_1$ is also elliptic in $T_2$.\end{definition}

In particular,
every refinement of $T_1$ dominates $T_1$. Beware that domination is defined by considering
ellipticity of subgroups, not just of elements (this may make a difference in the abstract case if vertex stabilizers
are not finitely generated, but not in pro-$p$ by Proposition \ref{ellipticity}).

\begin{example} Let $G$ be a virtually free pro-$p$ group. Then by  \cite{Zperm} $G$ acts on a pro-$p$ tree $T$ with finite vertex stabilizers. So the   $G$-tree $T$  dominates any  $G$-tree by Corollary \ref{fixed vertex}.\end{example}

\begin{definition} (Ellipticity of trees) We say that a $G$-tree $T_1$ is elliptic with respect to $T_2$ if every edge stabilizer
of $T_1$ fixes a vertex in $T_2$.\end{definition}

\begin{example} If the edge stabilizers of $T_1$ are   finite (or more generally $FA$ pro-$p$, i.e.  pro-$p$ groups  that do not act on a pro-$p$ tree without global fixed point, then $T_1$ is elliptic with respect to any  $G$-tree.\end{example}

\begin{proposition}\label{refinement} Let $T_1$ and $T_2$ be pro-$p$ $G$-trees such that   $T_1/G$ is finite. Suppose $T_1$ is elliptic with respect to $T_2$.  Then there exists a $G$-tree $\hat T_1$ with  a morphism $p:\hat T_1\longrightarrow T_1$  such that
	
	\begin{enumerate}
		\item[(1)] $p$ is a collapse map;
		\item[(2)] $\hat T_1$ is a refinement of $T_1$ that dominates $T_2$;
		\item[(3)] the stabilizer of any edge of $\hat T_1$ fixes an edge in $T_1$ or in $T_2$;
		\item[(4)]  a subgroup of $G$ is elliptic in $\hat T_1$ if and only if it is elliptic in both $T_1$ and $T_2$.
		
		\end{enumerate}

\end{proposition}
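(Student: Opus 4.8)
The plan is to imitate the abstract construction of the standard refinement, replacing Bass--Serre arguments by their pro-$p$ analogues via the graph-of-groups machinery assembled in Section~2. Since $T_1/G$ is finite, by Remark~\ref{cofinite action} we may present $(G,T_1)$ as a reduced finite graph of pro-$p$ groups $(\G_1,\Gamma_1)$ with $\Gamma_1 = T_1/G$, and for each vertex $v$ of $\Gamma_1$ the vertex group $\G_1(v)$ is the stabilizer $G_v$ of a lift $\tilde v\in V(T_1)$, while the incident edge groups are stabilizers $G_e$ of incident edges. Because $T_1$ is elliptic with respect to $T_2$, every such edge group $G_e$ fixes a vertex of $T_2$; hence the action of the vertex group $G_v$ on (the minimal invariant subtree of) $T_2$ is an action in which all the finitely many incident edge groups are elliptic. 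First I would, for each $v$, pull this action back: set $\Gamma_v$ to be the graph of pro-$p$ groups describing the action of $G_v$ on $T_2$ — here we use Theorem~\ref{pro-pbass-serre} applied to $G_v$ acting on $T_2$ (or, when $G_v$ is not finitely generated, the version of Bass--Serre theory for $G_v$ acting on $T_2$ with the induced vertex/edge stabilizers), noting that the incident edge groups $G_e$, being elliptic, are conjugate into vertex groups of $\Gamma_v$.

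Next I would perform the blow-up at each vertex $v$ of $\Gamma_1$ using $\Gamma_v$, as described in the paragraph following the definition of refinement: attach the edge of $\Gamma_1$ incident to $v$ to the vertex of $\Gamma_v$ whose group contains (a conjugate of) the corresponding edge group $G_e$. This yields a finite graph of pro-$p$ groups $(\wh\G_1,\wh\Gamma_1)$ whose collapse back onto $\Gamma_1$ (collapsing each inserted $\Gamma_v$ to the point $v$) recovers $(\G_1,\Gamma_1)$; by Remark~\ref{collapsed graph of groups} and Lemma~\ref{restricted graph} the fundamental pro-$p$ group is unchanged, so $\Pi_1(\wh\G_1,\wh\Gamma_1)=G$ and its standard pro-$p$ tree $\hat T_1$ comes with a $G$-collapse $p:\hat T_1\to T_1$, giving (1) and the refinement part of (2). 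For (3): the edges of $\hat T_1$ are either (images of) edges of $\Gamma_1$ — whose stabilizers fix an edge of $T_1$ — or edges coming from some $\Gamma_v$, i.e. edges of $T_2$ — whose stabilizers fix an edge of $T_2$; here one uses that $\Gamma_v$ was built from the $G_v$-action on $T_2$, so its edge groups are $G_v$-conjugates of $T_2$-edge stabilizers, hence $G$-conjugates of $T_2$-edge stabilizers.

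For (4), which subsumes domination in (2): if a subgroup $H\leq G$ is elliptic in $\hat T_1$ then it fixes a vertex of $\hat T_1$, which maps under $p$ to a vertex of $T_1$ (so $H$ is elliptic in $T_1$), and the stabilizer of that vertex of $\hat T_1$ is exactly a vertex group of some $\Gamma_v$, which by construction fixes a vertex of $T_2$ (so $H$ is elliptic in $T_2$). Conversely, suppose $H$ is elliptic in both $T_1$ and $T_2$. Then $H$ fixes a vertex $x$ of $T_1$, i.e. $H\leq G_x$ with $G_x$ conjugate to some $\G_1(v)$; replacing $H$ by a conjugate, $H\leq G_v$. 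Since $H$ is also elliptic in $T_2$, and $\Gamma_v$ describes the $G_v$-action on $T_2$, $H$ is conjugate in $G_v$ into a vertex group of $\Gamma_v$, which is a vertex group of $(\wh\G_1,\wh\Gamma_1)$; hence $H$ is elliptic in $\hat T_1$. The argument that ``every element elliptic'' upgrades to ``the subgroup elliptic'' is exactly Proposition~\ref{ellipticity}, so there is no gap between the element-wise and subgroup-wise formulations of ellipticity in the pro-$p$ setting.

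\textbf{Main obstacle.} The delicate point is the attachment step and the verification that the blow-up is legitimate when the vertex group $G_v$ is infinitely generated or when its action on $T_2$ does not have finite quotient graph: then $\Gamma_v$ is a profinite (not finite) graph of pro-$p$ groups, $\wh\Gamma_1$ is likewise only profinite, and one must check that $\Pi_1(\wh\G_1,\wh\Gamma_1)$ is still $G$ and that a genuine $G$-collapse $\hat T_1\to T_1$ exists. The way to handle this is to realise everything as an inverse limit of the finite case using Lemma~\ref{inverse limit of virtually free groups} and Proposition~\ref{induces}: approximate the $G_v$-action on $T_2$ by finite graphs of finite $p$-groups, blow up at finite level where Remark~\ref{cofinite action} and ordinary Bass--Serre reduction apply, and pass to the limit, checking that the collapse maps are compatible with the inverse system. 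The finiteness of $T_1/G$ is used crucially here, precisely as in the hypothesis, to keep the ``outer'' graph $\Gamma_1$ finite so that only finitely many vertices need to be blown up.
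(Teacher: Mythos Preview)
Your approach via graph-of-groups blow-up is more indirect than the paper's and runs into a genuine obstacle that the paper's proof avoids entirely. The paper does \emph{not} pass through any graph-of-groups description of the $G_v$-action on $T_2$; instead, for each vertex $v$ of $T_1$ it takes the actual minimal $G_v$-invariant subtree $\tilde Y_v \subset T_2$ (which exists by Proposition~\ref{unique G-invariant}, with no finite-generation hypothesis on $G_v$), chooses $G$-equivariantly for each edge $e$ of $T_1$ an attaching point $p_v\in\tilde Y_v$ fixed by $G_e$, and then forms $\tilde T_1$ directly as the gluing $(\bigcup_v \tilde Y_v)\cup(\bigcup_e e)$ with the obvious identifications. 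The verification that $\tilde T_1$ is a pro-$p$ tree is done via the topological/homological criterion \cite[Proposition on p.~486]{Z-92} (or \cite[Prop.~3.9.1, Cor.~3.10.2, Prop.~3.10.4]{R}), not via graph-of-groups machinery. Then $\hat T_1$ is the minimal $G$-invariant subtree of $\tilde T_1$, and (2)--(4) follow almost immediately from the construction.

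Your ``main obstacle'' is real, and the fix you propose does not work in the stated generality: Theorem~\ref{pro-pbass-serre} and Lemma~\ref{inverse limit of virtually free groups} both require the acting group to be finitely generated, but $G_v$ need not be (only $T_1/G$ is finite, not $G$ or $G_v$). Your inverse-limit argument approximates the global $G$-action rather than the individual $G_v$-actions, and even where it applies the resulting tree is only in the same deformation space, not the original subtree of $T_2$, so you would lose the clean control needed for (3). Remark~\ref{cofinite second tree} in the paper confirms that the graph-of-groups picture you are aiming for is available only under the additional hypothesis $|T_2/G|<\infty$, which the Proposition does not assume. In short, the missing idea is to work at the tree level throughout, gluing genuine subtrees of $T_2$ rather than abstract graph-of-groups presentations.
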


\begin{proof}  We construct $\hat T_
	1$ as follows.
	For each vertex $v \in V (T_1)$, with stabilizer $G_v$ which is not elliptic in $T_2$, choose the minimal $G_v$-invariant subtree $\tilde Y_
	v$ of
	$T_2$ (or put $\tilde Y_v=T_2$). For each
	edge $e$ with vertices  $v,w \in E(T_1)$, choose vertices $p_v \in \tilde Y_v$ and $p_w \in \tilde Y_w$ fixed by $G_e$; this is possible
	because $G_e$ is elliptic in $T_2$ by assumption and so has a fixed points in  $\tilde Y_v$ and $ \tilde Y_w$ by \cite[Corollary 3.10]{RZ-00}.
	We can make these choices $G$-equivariantly,  since $T_1/G$ is finite.
	We can now define a tree $\widetilde T_1$ by blowing up each vertex $v$ of $T_1$ into $\widetilde Y_v$, and attaching
	edges of $T_1$ using the points $p_v$. Formally, we consider the disjoint union $(\bigcup_{v\in V (T_1)}\tilde Y_v) \cup (\bigcup_{e\in E(T_1)}e)$ (with $Y_v=v$ if $G_v$ is elliptic in $T_2$), and for each edge $e$ with vertices  $v,w$ of $T_1$ we identify $v$ with $p_v \in \tilde Y_v$ and $w$ with $p_w \in \tilde Y_w$. We define $\tilde p : \widetilde T_1 \longrightarrow T_1$ by sending $Y_v$ to $v$, and sending $e \in E(T_1)$ to itself. This map clearly
	satisfy the first  requirement. Then by \cite[Proposition on page 486]{Z-92} or\cite[Proposition 3.9.1 combined with Corollary 3.10.2 and Propositiuon 3.10.4]{R} 
	 $\widetilde T_1$ is a pro-$p$ tree.
	
	In general, $\widetilde T_
	1$ may fail to be minimal, so we define $\widehat T_
	1 \subset \widetilde T_1$ as the unique minimal $G$-invariant subtree  (the action of $G$ on $\widehat T_1$ is non-trivial unless $T_1$ and $T_2$ are both
	points). We then define $p$  as the restriction of $\tilde p$  to $\hat T_
	1$. 
	Let us check that the other properties follow. Assertion (2) is clear.
	
	If $e$ is an edge of $\hat T_
	1$ that is not collapsed by $p$, then $G_e$ fixes an edge of $T_1$. Otherwise, 
	 $e\in Y_v$ for some vertex $v\in V(T_1)$, so $G_e$ fixes an edge in $T_2$, and
	Assertion (3) holds.

	(2) implies one direction of (4). To prove the non-trivial direction of Assertion (4), assume that $H$ is elliptic in $T_1$ and
	$T_2$. Then $H \leq G_v$ for some $v \in T_1$, so $H$ preserves the subtree $Y_v \subset \widehat T_
	1$. So it is enough to prove that $H$ fixes a point in $Y_v$. This holds because
	$H$ is elliptic in $T_2$ (see \cite[Theorem 2.13(b)]{ZM-88}).

	\end{proof}
	
	\begin{remark}\label{cofinite second tree} If $T_2/G$ is finite, one may think of this construction in terms of graphs of pro-$p$ groups, as follows. Starting from the graph of pro-$p$ group $(\G,\Gamma_1=T_1/G)$, one replaces each vertex $v\in \Gamma_1$ by the graph of groups $(\G_v,\Gamma_v=Y_v/G)$ obtained from  the action of $G_v$ on its minimal subtree in $T_2$, and one attaches each edge $e$ of $\Gamma_1$ incident to $v$ to a vertex of $\Gamma_v$ whose group contains a conjugate of $G_e$. \end{remark}

	\begin{definition} (Standard refinement). Any tree $\widehat T_1$ as in Proposition \ref{refinement} will be called a standard refinement of $T_1$ dominating $T_2$.\end{definition}

In general, there is no uniqueness of standard refinements. However, by Assertion (4) of Proposition \ref{refinement}, all standard refinements belong to the same deformation space, which is the lowest deformation space dominating the deformation spaces containing $T_1$ and $T_2$ respectively. If $T_1$ dominates $T_2$ (resp. $T_2$ dominates $T_1$), then $\widehat T_1$ is in the same deformation space as $T_2$ (resp. $T_1$). Moreover, there is some symmetry: if $T_2$ also happens to be elliptic with respect to $T_1$, then any standard refinement $\widehat T_2$ of $T_2$ dominating $T_1$ is in the same deformation space as $T_1$.

\bigskip
In Proposition \ref{refinement} we assumed that $T_1/G$ is finite. If $G$ is finitely generated we can generalize the proposition for general case on the cost of changing $T_1$ within deformation space without changing the edge stabilizers.

\begin{proposition}\label{refinement for finitely generated G} Let $T_1$ and $T_2$ be pro-$p$ $G$-trees with $G$ finitely generated. Suppose $T_1$ is elliptic with respect to $T_2$.  Then there exists a $G$-tree $S$ from the  deformation space of $T_1$ and a pro-$p$ tree $\widehat S$ with  a morphism $p:\widehat S\longrightarrow S$  such that
	
	\begin{enumerate}
	\item[(0)] stabilizers of edges of $S$ are stabilizers of certain edges of $T_1$ and edge stabilizers of $T_1$ are subgroups of some edge stabilizers of $S$. 
		\item[(1)] $p$ is a collapse map and the subtree that collapsed by $p$ to $v$ is the minimal $G_v$-invariant subtree of $T_2$.
		\item[(2)] $\hat S$ is a refinement of $S$ that dominates $T_2$.
		\item[(3)] the stabilizer of any edge of $\hat S$ fixes an edge in $T_1$ or in $T_2$.
		\item[(4)]  a subgroup of $G$ is elliptic in $\hat S$ if and only if it is elliptic in both $T_1$ and $T_2$.
		\item[(5)] if $T_2$ is minimal, every edge stabilizer of $T_2$ contains an edge stabilizer of $\hat S$.
		\end{enumerate}

\end{proposition}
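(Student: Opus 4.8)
The plan is to reduce the general statement to the finite-quotient case of Proposition~\ref{refinement} by replacing $T_1$ with a suitable tree $S$ in its deformation space, using the machinery of Theorem~\ref{pro-pbass-serre} and Lemma~\ref{inverse limit of virtually free groups}. First I would invoke Theorem~\ref{pro-pbass-serre} to write $G=\Pi_1(\G,\Gamma)$ as the fundamental pro-$p$ group of a profinite graph of pro-$p$ groups realizing the action on $T_1$, and then appeal to \cite[Theorem 4.2]{CZ} (Theorem~\ref{splitting}) together with the refinement-within-a-deformation-space construction of \cite{WZ18}/\cite{CZ} that the introduction advertises: since $G$ is finitely generated, one can choose a pro-$p$ $G$-tree $S$ in the deformation space of $T_1$ that admits a genuine refinement dominating $T_2$. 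The point of passing to $S$ is that $S$ will have the edge-stabilizer compatibility recorded in assertion~(0): its edge stabilizers are edge stabilizers of $T_1$, and conversely each edge stabilizer of $T_1$ is contained in an edge stabilizer of $S$. Because $S$ is in the same deformation space as $T_1$, the vertex stabilizers of $S$ have the same maximal members as those of $T_1$, and since $T_1$ is elliptic with respect to $T_2$ (every edge stabilizer of $T_1$ fixes a vertex of $T_2$), assertion~(0) forces $S$ to be elliptic with respect to $T_2$ as well: an edge stabilizer $S_e$ of $S$ is an edge stabilizer $G_{e'}$ of $T_1$, hence elliptic in $T_2$.

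Next I would perform the blow-up exactly as in the proof of Proposition~\ref{refinement}, but carried out over $S$ rather than over $T_1$: for each vertex $v$ of $S$ with $S_v$ not elliptic in $T_2$, replace $v$ by the minimal $S_v$-invariant subtree $\tilde Y_v$ of $T_2$ (guaranteed by Proposition~\ref{unique G-invariant}), attach the edges of $S$ incident to $v$ at a point of $\tilde Y_v$ fixed by the relevant edge stabilizer (such a point exists by \cite[Corollary 3.10]{RZ-00} since edge stabilizers of $S$ are elliptic in $T_2$), and check that the resulting object $\widehat S$ is a pro-$p$ tree via \cite[Proposition on page 486]{Z-92} (or the corresponding statements in \cite{R}). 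The subtlety compared with Proposition~\ref{refinement} is equivariance of the choices: there the finiteness of $T_1/G$ let us choose attaching points on finitely many orbits; here I would instead use that $G$ is finitely generated together with the approximation of the $G$-action by actions on finite graphs of finite $p$-groups (Lemma~\ref{inverse limit of virtually free groups}, Proposition~\ref{induces}) to make the construction at a finite level and pass to the inverse limit, or alternatively use the continuity of the stabilizer families (Lemma~\ref{Ribes 5.2.2}) to select attaching points continuously. Assertion~(1) is then built into the construction: $p\colon\widehat S\to S$ collapses, over each vertex $v$, precisely the minimal $S_v$-invariant subtree of $T_2$.

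The remaining assertions follow as in Proposition~\ref{refinement}. Assertion~(2) is immediate since $\widehat S$ refines $S$ and the blown-up subtrees are copies of subtrees of $T_2$, so every vertex stabilizer of $\widehat S$ is elliptic in $T_2$; combined with $S$ dominating $T_2$ via the deformation-space choice this gives domination. For assertion~(3), an edge of $\widehat S$ not collapsed by $p$ has its stabilizer fixing an edge of $S$, hence (by assertion~(0)) an edge of $T_1$; an edge of $\widehat S$ lying inside some $\tilde Y_v$ has its stabilizer fixing an edge of $T_2$. Assertion~(4) is the two-way ellipticity statement: one direction is domination (assertion~(2) plus $S$, hence $\widehat S$, dominating $T_1$ since they share a deformation space with $T_1$); for the other, if $H$ is elliptic in both $T_1$ and $T_2$ then $H$ is elliptic in $S$ (same deformation space), so $H\leq S_v$ for some $v$, so $H$ preserves $\tilde Y_v\subset\widehat S$, and since $H$ is elliptic in $T_2$ it fixes a point of $\tilde Y_v$ by \cite[Theorem 2.13(b)]{ZM-88}. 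Finally, for assertion~(5): if $T_2$ is minimal then $T_2$ equals $\bigcup_v \tilde Y_v$ glued along the attaching points, so given an edge $f$ of $T_2$, pick $v$ with $f\in\tilde Y_v$; then the stabilizer $(T_2)_f$ acts on $\tilde Y_v$ inside $\widehat S$ and, tracing through the gluing, one sees that $(T_2)_f$ stabilizes an edge of $\widehat S$ lying over $f$. I expect the main obstacle to be the first paragraph: producing $S$ and the honest refinement $\widehat S$ over it with the precise edge-stabilizer bookkeeping of assertion~(0), since this is exactly where the profinite theory departs from the abstract one and where one must lean on the finitely-generated hypothesis and the approximation results of \cite{WZ18} and \cite{CZ} rather than on a direct blow-up.
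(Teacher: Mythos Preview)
Your overall strategy matches the paper's: pass from $T_1$ to the standard tree $S$ of the profinite graph of groups produced by Theorem~\ref{pro-pbass-serre}, then build $\widehat S$ by blowing up vertices of $S$ using minimal invariant subtrees in $T_2$. Assertions~(0), (1), (3), (4) are handled essentially as in the paper. Two points deserve correction.

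First, the direct blow-up on $S$ that you describe as your primary route does not go through: $S/G=\Gamma$ is a genuinely infinite profinite graph in general, so you cannot make the equivariant choice of attaching points ``on finitely many orbits'' as in Proposition~\ref{refinement}. You mention the finite-level-and-inverse-limit alternative only in passing, but that is exactly what the paper does and what makes the argument work. Concretely, the paper takes a cofinal system of open normal $U\triangleleft G$, lets $\widetilde U$ be generated by the $U$-stabilizers of vertices of $T_2$, and simultaneously approximates both trees: $T_2$ by $T_U=T_2/\widetilde U$ (a pro-$p$ tree for $G_U=G/\widetilde U$), and $S$ by the standard tree $S_U$ of the finite reduced graph of finite $p$-groups $(\G_U,\Gamma_U)$ coming from Lemma~\ref{inverse limit of virtually free groups}. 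Since each $S_U/G_U$ is finite and $S_U$ is elliptic with respect to $T_U$, Proposition~\ref{refinement} applies at every level to give $\widehat S_U$; one then arranges the attachments compatibly along the linearly ordered system and sets $\widehat S=\varprojlim_U \widehat S_U$. Your ``continuity of stabilizer families'' suggestion does not give enough control to perform the gluing globally.

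Second, your argument for (5) has a gap. The assertion ``if $T_2$ is minimal then $T_2$ equals $\bigcup_v \tilde Y_v$'' is unjustified: whenever $G_v$ is elliptic in $T_2$ its contribution is a single point, and there is no reason the minimal subtrees of the non-elliptic $G_v$ cover every edge of $T_2$. (Also, even when $f\in\tilde Y_v$, the stabilizer of $f$ in $\widehat S$ is $G_v\cap (T_2)_f$, which is \emph{contained} in $(T_2)_f$; your phrase ``$(T_2)_f$ stabilizes an edge of $\widehat S$'' says the opposite.) The paper proves (5) by a completely different device: given $e\in E(T_2)$, Theorem~\ref{splitting} gives a one-edge splitting $G=G_1\amalg_{G_e}G_2$ (or an HNN variant); since $\widehat S$ dominates $T_2$, each vertex stabilizer of $\widehat S$ lies in a conjugate of $G_1$ or $G_2$, and if no edge stabilizer of $\widehat S$ were contained in a conjugate of $G_e$ then \cite[Corollary~7.1.5]{R} forces all vertex stabilizers of $\widehat S$ into a single factor $G_1$, contradicting minimality of $T_2$.
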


\begin{proof} 
Let $U$ be a linearly ordered by inclusion system of open normal subgroups $U$ of $G$ with $\bigcap_U U=1$. Let $\widetilde U$ be the (normal) subgroup of $G$ generated by all $U$-stabilizers of vertices in $T_2$. Put $G_U=G/\widetilde U$ and $T_U=T_2/\widetilde U$. Then $$G=\varprojlim_{U\triangleleft_o G} G_U\  {\rm and}\   T_2=\varprojlim_{U\triangleleft_o G}T_U.$$ By Proposition \ref{mod tilde} $T_U$ is a pro-$p$ tree on which $G_U$ naturally acts with $T_U/G_U=T_2/G:=\Gamma$. 

By Theorem \ref{pro-pbass-serre}  $G$ is the fundamental pro-$p$ group of a reduced profinite graph of	pro-$p$ groups $(\G, \Delta)$ whose edge and vertex groups are stabilizers of edges and vertices of $T_1$ and stabilizers of vertices and edges in $T_1$ are contained in  stabilizers of some vertex and edge subgroups of $(\G,\Gamma)$ up to conjugation. Moreover (see the proof of Theorem \ref{pro-pbass-serre}), $(\G,\Delta)=\varprojlim_U (\G_U,\Gamma_U)$ is the inverse limit of the fnite reduced graphs of finite $p$-groups where the maps of the inverse system  $\{\Gamma_U\}$ are collapses.   Let $S=S(\G,\Delta)$ be the standard pro-$p$ tree. Then   $T_1$, $S$ are in the same deformation space, so  (0) holds.  

\medskip	
	Let $S_U$ be the standard pro-$p$ tree of $(\G_U,\Gamma_U)$. Then   $S=\varprojlim_{U\triangleleft_o G} S_U$  and the projection $S\longrightarrow S_U$ is a collapse of connected components of certain profinite subgraph of $S$.

	Note that  $S_U$ is elliptic with respect to $T_U$ since $T_1$ is elliptic with respect to $T_2$ and therefore  $S$ is elliptic with respect to $T_2$. Hence we can consider a standard refinement $\hat S_U$ of $S_U$ dominating $T_U$ as in Proposition \ref{refinement}. In particular, $p_U:\hat S_U\longrightarrow S_U$ is the collapse map and the pro-$p$ subtrees that are collapsed by $p_U$ to $v_U$ are the minimal invariant subtrees of $G_{v_U}$ for each non-elliptic $G_{v_U}$ in $T_U$. Since $U$ is linearly ordered, we can make  $\hat S_U$ to form an inverse system by attaching every edge $e_{v_V}$ incident to $v_V\in S_V$ (for $V\leq U$) to a vertex of $T_V$ whose image is exactly the vertex of $T_U$ to which the image $e_U$ of $e_{v_V}$ in $S_U$ was attached. Thus  $\hat S=\varprojlim_{U\triangleleft_o G} \hat S_U$ is a pro-$p$ tree (by \cite[Lemma 2.6 (b)]{RZ-00} that dominates $T_2$. Moreover, the natural map $p:\widehat S\longrightarrow S$ (the projective limit of the collapse maps $p_U:\widehat S_U\longrightarrow S_U$ (cf. Proposition \ref{refinement}) is the collapse map such that the subtrees that collapsed by $p$ to $v$ is the minimal $G_v$-invariant subtree of $T_2$). Thus (1) holds. 

\medskip	
	Let us check that the other properties follow. Assertion (2) is clear.

\medskip	
	If $e$ is an edge of $\hat S$ that is not collapsed by $p$, then $G_e$  fixes the edge in $S$ and therefore in $T_1$. Otherwise,  $e$ is collapsed by $p$ and so belongs to $T_2$. Hence  $G_{e}$ fixes an edge of $T_2$. Thus
	assertion (3) holds.

\medskip 	(2) implies one direction of (4). To prove the non-trivial direction of Assertion (4), assume that $H$ is elliptic in $T_1$ and
	$T_2$. Then $H \leq G_{v}$ for some $v \in T_1$, so  $H \leq G_{v}$ for some $v\in V(S)$ and therefore $H\leq G_{v_U}$ for every $U$, where $v_U$ is the image of $v$ in $S_U$. Hence $H$ preserves the subtree $Y_{Uv} \subset \widehat S_
	U$. So it is enough to prove that $H$ fixes a point in $Y_{Uv}$ for every $U$. This holds because
	$H$ is elliptic in $T_U=T_2/\widetilde U$.
	
	\medskip
	(5) Pick $e\in E(T_2)$. By \cite[Theorem 4.2]{CZ}  $G$ splits as an amalgamated free pro-$p$ product $G_1\amalg_{G_e} G_2$ or as HNN-extension $G=G_1\amalg_{G_e}$ over $G_e$. As  $\hat S$ dominates $T_2$ for any edge $e'\in E(\hat S)$  the stabilizers $G_v,G_w$ of its vertices $v,w$ stabilize a vertex in $T_2$ and hence are conjugate to $G_1$ or $G_2$, say $G_v$ is conjugate to $G_1$ and so we may assume that $G_v\leq G_1$. If $G_{e'}$ is not in a conjugate of $G_e$, then as $G_{e'}=G_v\cap G_w$, one has that $G_w$ also must be contained in $G_1$ by \cite[Corollary 7.1.5]{R}. If this is the case for all $e'$ then all vertex stabilizers of $\hat S$ are contained in $G_1$. This contradicts the minimality of $T_2$.
	
	\end{proof}


\section{Universally elliptic pro-$p$ trees}

\begin{definition}
 A  subgroup of  $G$ is called universally $\E$-elliptic if it is elliptic  in every $\E$-tree on which $G$ acts. In particular, the trivial group is universally elliptic.
\end{definition}

Note that by Corollary \ref{fixed vertex} every finite $p$-group is universally elliptic. Of course, if a pro-$p$ group  does not act on a pro-$p$ tree without global fixed point (such groups called FA-groups by Serre \cite{Serre} and so we are going to use this term in the pro-$p$ context) then it is universally elliptic. 
The class of FA pro-$p$ groups is quite large and includes many  example. All Fab pro-$p$ groups, i.e pro-$p$ groups whose open subgroups have finite abelianization are FA pro-$p$ groups. Note that Fab pro-$p$ groups include all just-infinite pro-$p$ groups  and all open pro-$p$ subgroups of $SL_n(\Z_p)$.  The pro-$p$ completion of Grigorchuk, Gupta-Sidki groups and other branch groups are FA pro-$p$ groups as well as the Nottingham pro-$p$ group.

\begin{definition} A $G$-tree is called universally $\E$-elliptic if it is elliptic  in every $\E$-tree on which $G$ acts
(i.e. every edge stabilizer  is elliptic in every $\E$-tree).\end{definition}

\begin{lemma}\label{refinement univer elliptic} Consider two $G$-trees $T_1$, $T_2$. Suppose that $T_1/G$ is finite (resp. $G$ is finitely generated).
\begin{enumerate}
\item[(1)] If $T_1$ is universally elliptic, then some refinement of $T_1$ (resp. some refinement of some $S$ from the deformation space of $T_1$) dominates $T_2$.
\item[(2)] If $T_1$ and $T_2$ are universally elliptic, any standard refinement $\hat T_1$ of $T_1$ (resp. $\hat S$ of some $S$ from the deformation space of $T_1$) dominating $T_2$
is universally elliptic. In particular, there is a universally elliptic tree $\hat T_
1$ ( resp. $\hat S$) dominating
both $T_1$ and $T_2$.\end{enumerate}\end{lemma}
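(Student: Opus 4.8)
The plan is to deduce both parts of Lemma \ref{refinement univer elliptic} directly from the two refinement propositions (Proposition \ref{refinement} and Proposition \ref{refinement for finitely generated G}), which already do all the heavy lifting; the only work left is to check that the universal ellipticity hypothesis propagates through the construction. First I would dispose of part (1): since $T_1$ is universally elliptic, its edge stabilizers are elliptic in \emph{every} $\E$-tree, in particular in $T_2$, so $T_1$ is elliptic with respect to $T_2$ in the sense of the ellipticity-of-trees definition. Thus the hypotheses of Proposition \ref{refinement} (when $T_1/G$ is finite) or of Proposition \ref{refinement for finitely generated G} (when $G$ is finitely generated) are met, and the $G$-tree $\hat T_1$ (resp. the tree $\hat S$ over some $S$ from the deformation space of $T_1$, which shares the edge stabilizers of $T_1$ by assertion (0)) is a refinement that dominates $T_2$ by assertion (2) of the relevant proposition. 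That settles (1).

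For part (2) the key point is assertion (3) of the refinement propositions: the stabilizer of any edge of $\hat T_1$ (resp. $\hat S$) fixes an edge of $T_1$ or an edge of $T_2$, hence in particular fixes a vertex of $T_1$ or a vertex of $T_2$. Now let $T$ be an arbitrary $\E$-tree on which $G$ acts, and let $e$ be an edge of $\hat T_1$. Its stabilizer $G_e$ is contained in an edge stabilizer of $T_1$ or in an edge stabilizer of $T_2$; since both $T_1$ and $T_2$ are universally elliptic, that edge stabilizer is elliptic in $T$, and therefore so is the subgroup $G_e$ (a subgroup of an elliptic subgroup is elliptic, using that fixed-point sets of subgroups on pro-$p$ trees are subtrees, e.g. \cite[Corollary 3.10]{RZ-00}). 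Hence every edge stabilizer of $\hat T_1$ is elliptic in every $\E$-tree, which is exactly the statement that $\hat T_1$ is universally $\E$-elliptic. The ``in particular'' clause is immediate: $\hat T_1$ is a refinement of $T_1$, so it dominates $T_1$, and by assertion (2) it dominates $T_2$, giving a universally elliptic tree dominating both. In the finitely generated case one uses $\hat S$ in place of $\hat T_1$ and notes that $\hat S$ dominates $T_1$ because $S$ lies in the deformation space of $T_1$ (so $S$ and $T_1$ dominate each other), while $\hat S$ refines $S$ and dominates $T_2$ by assertion (2).

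I do not anticipate a genuine obstacle here, since the propositions were stated with precisely these applications in mind; the one thing to be careful about is that in the finitely generated case the conclusion of part (2) is about $\hat S$, a refinement of a tree $S$ in the same deformation space as $T_1$ rather than of $T_1$ itself, so the phrasing of the lemma (``resp. $\hat S$ of some $S$ from the deformation space of $T_1$'') must be matched exactly, and one should invoke assertion (0) of Proposition \ref{refinement for finitely generated G} to know that $S$ has the same edge stabilizers as $T_1$ and hence is itself universally elliptic whenever $T_1$ is. With that bookkeeping in place the proof is a short verification rather than a construction.
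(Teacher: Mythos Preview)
Your proof is correct and follows exactly the same approach as the paper, which simply states that both assertions follow directly from Assertions (2) and (3) of Propositions \ref{refinement} and \ref{refinement for finitely generated G}. You have unpacked the one-line proof with appropriate care, including the bookkeeping about $S$ versus $T_1$ in the finitely generated case via assertion (0), which is exactly what the paper leaves implicit.
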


\begin{proof} Both assertions follow directly from Assertions (2) and (3) of Proposition \ref{refinement} (resp. \ref{refinement for finitely generated G}).  	\end{proof}



	For many purposes, it is enough to consider one-edge splittings, i.e. trees with only one
orbit of edges.

\begin{lemma}\label{one-edge splitting} Let $G$ be a finitely generated   pro-$p$ group acting on  a pro-$p$ tree $S$ with edge stabilizers in $\E$.
	
	\begin{enumerate}
\item[(1)] $S$ is universally elliptic if and only if it is elliptic with respect to every one-edge
splitting.

\item[(2)] $S$ dominates every universally elliptic tree if and only if it dominates every universally
elliptic one-edge splitting.
\end{enumerate}
\end{lemma}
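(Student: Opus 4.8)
\textbf{Proof plan for Lemma \ref{one-edge splitting}.}

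The plan is to reduce everything to one-edge splittings by collapsing. For Assertion (1), one direction is trivial: if $S$ is universally elliptic it is elliptic with respect to \emph{every} $\E$-tree, in particular every one-edge splitting. For the converse, suppose $S$ is elliptic with respect to every one-edge splitting and let $T$ be an arbitrary $\E$-tree on which $G$ acts. I want to show every edge stabilizer $S_e$ is elliptic in $T$. The point is that $G$ is finitely generated, so by Theorem \ref{splitting} (or rather by the machinery behind it, Lemma \ref{inverse limit of virtually free groups} and Theorem \ref{pro-pbass-serre}) the action of $G$ on $T$ can be ``approximated'' by finite graphs of finite $p$-groups, and more usefully, for each edge $e$ of $S$ with $S_e$ \emph{not} elliptic in $T$ one can extract from $T$ a one-edge splitting witnessing this. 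Concretely: if $S_e$ is hyperbolic in $T$, pick a hyperbolic element $g \in S_e$; collapsing $T$ appropriately (collapse all of $T$ except one orbit of edges lying on the axis-like geodesic moved by $g$, using the collapse/alignment formalism of Remark \ref{collapsed graph of groups} and the fact that $T/G$ can be taken with a connected transversal after passing to the $G_U$-quotients of Lemma \ref{inverse limit of virtually free groups}) yields a one-edge $\E$-splitting $T'$ in which $g$, hence $S_e$, is still hyperbolic. This contradicts $S$ being elliptic with respect to $T'$. Hence every $S_e$ is elliptic in $T$, i.e. $S$ is universally elliptic.

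For Assertion (2), again one direction is immediate: if $S$ dominates every universally elliptic tree, it dominates every universally elliptic one-edge splitting. For the converse, assume $S$ dominates every universally elliptic one-edge splitting and let $T$ be an arbitrary universally elliptic $G$-tree; I must show every vertex stabilizer $S_v$ is elliptic in $T$. The idea is to build up $T$ from one-edge pieces using the standard-refinement machinery. Since $T$ is universally elliptic and $G$ is finitely generated, by Theorem \ref{pro-pbass-serre} we may realize $G$ as $\Pi_1(\G,\Delta)$ with $\Delta = \varprojlim \Gamma_U$ an inverse limit of finite graphs of finite $p$-groups, the bonding maps being collapses; collapsing all but one orbit of edges in a suitable $\Gamma_U$ produces universally elliptic one-edge splittings $T_i$ of $G$ (each a collapse of $T$, hence with edge groups in $\E$ and universally elliptic since $T$ is), and $T$ is dominated by a common refinement of finitely many such $T_i$ up to passing to the limit. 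By hypothesis $S$ dominates each $T_i$; by Lemma \ref{refinement univer elliptic}(2) (applied iteratively, using that $G$ is finitely generated so we may move $S$ within its deformation space), $S$ dominates a common refinement of the $T_i$, and domination of the limit then follows by a compactness argument as in the proof of Proposition \ref{refinement for finitely generated G}, taking an inverse limit over $U$. Therefore $S$ dominates $T$.

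The main obstacle I expect is the passage from a general $\E$-tree $T$ to a \emph{single} one-edge splitting (or a cofinally indexed family of them) that genuinely witnesses the relevant hyperbolicity or non-domination, while staying inside the class $\E$. In the abstract case one simply collapses all but one orbit of edges of $T$; in the pro-$p$ setting $T/G$ need not have a connected transversal and a maximal subtree of $T/G$ need not lift, so the collapse must be performed at the level of the finite quotients $G_U = G/\widetilde U$ acting on $T_U$ furnished by Lemma \ref{inverse limit of virtually free groups} and Theorem \ref{pro-pbass-serre}, and one then takes inverse limits. Controlling that these collapses can be chosen compatibly across the linearly ordered system $\{U\}$ — so that the limit is again a pro-$p$ tree with the desired one-edge-ish structure, or at least that ellipticity/domination statements pass to the limit — is exactly the technical heart, and it mirrors the inverse-limit argument already carried out in Proposition \ref{refinement for finitely generated G}. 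Finite generation of $G$ is used crucially twice: to invoke Proposition \ref{ellipticity} (so that ellipticity of $S_e$ as a subgroup is equivalent to ellipticity of all its elements) and to have the finite-graph-of-finite-$p$-groups approximation available at all.
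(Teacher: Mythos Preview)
Your reduction to the virtually-free quotients $G_U = G/\widetilde U$ via Lemma \ref{inverse limit of virtually free groups} matches the paper's first move, but from there the paper takes a more direct and uniform route: after observing that $T_U=T/\widetilde U$ and the standard tree $S_U$ of $(\G_U,\Gamma_U)$ lie in the same deformation space, the paper reduces \emph{both} (1) and (2) to showing that $G_e$ (resp.\ $G_v$) is conjugate into a vertex group of each finite graph of groups $(\G_U,\Gamma_U)$, and proves that by a short induction on $|E(\Gamma_U)|$ --- peel off one edge $e_U$ to obtain a one-edge splitting $G_U = \Pi_1(\G_U,\Delta_U)\amalg_{(G_U)_{e_U}}$, apply the hypothesis to place $G_e$ (resp.\ $G_v$) inside $\Pi_1(\G_U,\Delta_U)$, and recurse on the smaller graph $\Delta_U$.

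Your separate arguments have real gaps. For (1), you want a \emph{single} one-edge collapse of $T$ in which a chosen hyperbolic $g \in S_e$ stays hyperbolic, appealing to an ``axis-like geodesic''; but pro-$p$ hyperbolic elements have no axis, and the statement you actually need --- that an element elliptic in \emph{every} one-edge collapse of a tree is already elliptic in the tree --- is precisely what the paper's induction proves at each finite level, not something you may take as given. For (2), your appeal to Lemma \ref{refinement univer elliptic}(2) is misplaced: that lemma produces a universally elliptic common dominant but says nothing about $S$ dominating it. The ingredient you need is assertion (4) of Proposition \ref{refinement} (or \ref{refinement for finitely generated G}): a subgroup is elliptic in a standard refinement iff it is elliptic in both inputs, so $S$ dominating each $T_i$ does imply $S$ dominates their standard refinement. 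With that correction your iterated-refinement-and-limit scheme can be made to work, but it remains more roundabout than the paper's direct induction, which at every finite stage shows in one stroke that $G_v$ lands in a vertex group and then passes to the projective limit.
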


\begin{proof} For the non-trivial direction, one needs to prove that $S$ is elliptic with respect to a $(G,\E)$-tree $T$ (resp.
dominates $T$). Let $e\in E(S)$ be an edge (resp. $v\in V(S)$ be a vertex). We want to prove that $G_e$  (resp. $G_v$) is elliptic in $T$.

Let $U$ be an open normal subgroup of $G$  and $\widetilde U=\langle U_v\mid v\in V(T)\rangle$. 
By Lemma \ref{inverse limit of virtually free groups} $G=\varprojlim_{U\triangleleft_o G} G/\tilde U$ and $G_U=G/\tilde U=\Pi_1(\G_U,\Gamma_U)$ is the fundamental group of a finite reduced graph of  finite $p$-groups. Moreover, the inverse system $\{G_U, \pi_{VU}\}$ can be chosen in such a way that it is linearly ordered and for each $\{G_V\}$ of the system with $V\leq U$  there exists a natural morphism $(\eta_{VU},\nu_{VU}):(\G_V, \Gamma_V)\longrightarrow (\G_U, \Gamma_U)$ where $\nu_{VU}$ is just a collapse of edges of $\Gamma_V$ and $\eta_{VU}(\G_V(m))=\pi_{VU}(\G_V(m))$; the induced  homomorphism of the
pro-$p$ fundamental groups coincides with the canonical projection
$\pi_{VU}\colon G_V\longrightarrow G_U$. Note also that $T_U=T/\widetilde U$ is a pro-$p$ tree by Proposition \ref{mod tilde} and the vertex stabilizers of $G_U$ acing $T_U$ are finite. So $T_U$ is in the same deformation space as the standard pro-$p$ tree $S_U$ of $\Pi_1(\G_U,\Gamma_U)$. Thus the projective limit argument reduces the proof to  $G_U=\Pi_1(\G_U,\Gamma_U)$ acting on $S_U$.

We argue by induction on the number of  edges of $\Gamma_U$ to deduce that $G_e$ (resp. $G_v$) is conjugate into a vertex group of $(\G_U,\Gamma_U)$. Choose a splitting of $G_U=\Pi_1(\G_U, \Delta_U)\amalg_{(G_U)_{e_U}}$ for some $e_U\in E(\Gamma_U)$, where $\Delta_U$ is a proper connected subgraph of $\Gamma_U$. As $G_e$ (resp. $G_v$) is elliptic with respect to this splitting (by hypothesis) we may assume w.l.o.g. that $G_e\leq \Pi_1(\G_U, \Delta_U)$ (resp. $G_v\leq \Pi_1(\G_U, \Delta_U)$. Then by the induction hypothesis $G_e$  (resp. $G_v$) is conjugate into some vertex group of $ \Pi_1(\G_U, \Delta_U)$ as needed.   \end{proof}

\begin{lemma}\label{dominant splitting} Let $G$ be a pro-$p$ group and $T_1$,$T_2$ be pro-$p$ trees on which $G$ acts.
	\begin{enumerate}
\item[(1)] If $T_1$ refines $T_2$ and does not belong to the same deformation space, some $g \in G$ is
hyperbolic in $T_1$ and elliptic in $T_2$.
\item[(2)] If every $g\in G$ which is elliptic in $T_1$ is also
elliptic in $T_2$, then $T_1$ dominates $T_2$.
\item[(3)] Suppose $G$ is finitely generated and $T_2$ is minimal. If $T_1$ is elliptic with respect to $T_2$, but $T_2$ is not elliptic with respect to $T_1$, then $G$
splits over an infinite index subgroup of  an edge stabilizer of $T_2$.
\end{enumerate}
\end{lemma}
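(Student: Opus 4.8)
The plan is to handle the three assertions in turn; (1) and (2) are immediate, and (3) is where the refinement machinery of Proposition~\ref{refinement for finitely generated G} does the work.

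\emph{Parts (1) and (2).} For (1), since $T_1$ refines $T_2$ there is a $G$-collapse $f\colon T_1\to T_2$; as $f$ is $G$-equivariant and carries vertices to vertices, any $g\in G$ fixing a vertex of $T_1$ fixes its image in $T_2$, so the set of elliptic elements of $(G,T_1)$ is contained in that of $(G,T_2)$ (a refinement always dominates the tree it refines). Because the two trees are not in the same deformation space this containment is proper, and any $g$ in the difference is hyperbolic in $T_1$ (``hyperbolic'' meaning precisely ``not elliptic'') and elliptic in $T_2$, as required. For (2), let $v$ be a vertex of $T_1$ with stabilizer $G_v$; every element of $G_v$ fixes $v$, hence is elliptic in $T_1$, hence by hypothesis elliptic in $T_2$, and Proposition~\ref{ellipticity} then forces $G_v$ itself to be elliptic in $T_2$. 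Since $v$ was arbitrary, $T_1$ dominates $T_2$.

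\emph{Part (3).} Since $T_2$ is not elliptic with respect to $T_1$, choose an edge $e\in E(T_2)$ whose stabilizer $G_e$ is not elliptic in $T_1$. Using that $G$ is finitely generated and $T_1$ is elliptic with respect to $T_2$, apply Proposition~\ref{refinement for finitely generated G} to obtain a $G$-tree $S$ in the deformation space of $T_1$ and a refinement $p\colon\widehat S\to S$ satisfying properties (0)--(5). As $T_2$ is minimal, property~(5) yields an edge $e'$ of $\widehat S$ with $\widehat S_{e'}\le G_e$. The crux is that $[G_e:\widehat S_{e'}]=\infty$: the group $\widehat S_{e'}$ fixes the edge $e'$, so it is elliptic in $\widehat S$, hence elliptic in $T_1$ by property~(4); if it had finite index in $G_e$ it would be an open subgroup of $G_e$ and would contain an open normal subgroup $N\trianglelefteq G_e$ (its normal core in $G_e$), and then $N\le\widehat S_{e'}$ is elliptic in $T_1$, so $G_e$ would be virtually elliptic and therefore elliptic in $T_1$ by Corollary~\ref{fixed vertex}, contradicting the choice of $e$. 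Finally, $G$ cannot fix a vertex of $\widehat S$: by property~(4) that would make $G$ (hence $G_e$) elliptic in $T_1$, again impossible; so $G$ acts on $\widehat S$ without a global fixed point, and by Theorem~\ref{splitting} — in the form, recalled in the introduction, giving a splitting over the stabilizer of a prescribed edge — $G$ splits over $\widehat S_{e'}$, which is an infinite-index subgroup of the edge stabilizer $G_e$ of $T_2$.

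The only genuinely delicate point is the infinite-index claim in (3): it is exactly where Corollary~\ref{fixed vertex} (a finite-index elliptic subgroup forces ellipticity of the whole group) is used to turn the qualitative failure of ellipticity of $G_e$ into the quantitative conclusion. A secondary subtlety worth flagging is that, in contrast to the abstract setting, one should not attempt to collapse $\widehat S$ to a one-edge tree retaining the orbit of $e'$; instead one cites \cite[Theorem 4.2]{CZ} directly, which already provides a splitting of $G$ over the stabilizer of any chosen edge of a pro-$p$ tree on which it acts without global fixed point.
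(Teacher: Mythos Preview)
Your proof is correct and follows essentially the same route as the paper's: parts (1) and (2) are handled identically, and for (3) you pick an edge $e$ of $T_2$ with $G_e$ not elliptic in $T_1$, invoke Proposition~\ref{refinement for finitely generated G}(5) to find an edge $e'$ of $\widehat S$ with $\widehat S_{e'}\le G_e$, use Corollary~\ref{fixed vertex} to force infinite index, and then split via Theorem~\ref{splitting}. Your write-up is in fact slightly more careful than the paper's in two spots --- you justify ellipticity of $\widehat S_{e'}$ in $T_1$ explicitly through property~(4), and you verify that $G$ has no global fixed point on $\widehat S$ before citing Theorem~\ref{splitting} --- but the underlying argument is the same.
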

\begin{proof}  (1) Since $T_1$ refines $T_2$, every elliptic element in $T_1$ is elliptic in $T_2$. If every elliptic element in $T_2$ is elliptic in $T_1$ then $T_1$ and $T_2$ have the same set of elliptic elements and so belong to the same deformation space. Therefore  some hyperbolic element of $T_1$ becomes elliptic in $T_2$. 

\smallskip
(2) Let $v$ be a vertex of $T_1$. We need to show that $G_v$ is elliptic in $T_2$, but this follows from Proposition \ref{ellipticity}.

For (3)  let $G_e$ be
an edge stabilizer of $T_2$ which is not elliptic in $T_1$ and let $\hat S$ be the standard refinement from Proposition \ref{refinement for finitely generated G}. By   Proposition \ref{refinement for finitely generated G}(5) $G_e$ contains an edge stabilizer $G_{e_1}$  of $\hat S$ that has infinite index in $G_e$ since it is elliptic in $T_1$ and $G_e$ is not (see Corollary \ref{fixed vertex}).   By  Theorem \ref{splitting}  $G$ splits as a free pro-$p$ product with amalgamation or as a pro-$p$  HNN-extension over $G_{e_1}$.  \end{proof}



\begin{example}\label{changing jsj-tree}  Let $G$ be a finitely generated pro-$p$ group acting on a pro-$p$ tree $T$.  By Theorem \ref{pro-pbass-serre}  $G$ is the fundamental pro-$p$ group of a profinite graph of
	pro-$p$ groups $(\G, \Gamma)$.  Let $S=S(\G,\Gamma)$ be the standard pro-$p$ tree. Then $T$ and $S$ are in the same deformation space. Moreover, $S$ is universally elliptic if $T$ is.  The set of edges of $S$ however  is not compact in general; it is compact if and only if $\Gamma$ is finite. This is the case if $(G,T)$ is an $\E$-splitting and $G$ is $\E$-accessible.

	Note that $(\G, \Gamma)$ is not unique. But if $\Gamma$ is finite, then there exists a finite sequence of elementary transformations: reductions and expansions (expansion is the inverse operation to the reduction), that transforms one reduced graph of groups into another.  To see this first note that we may assume that $(\G, \Gamma)$ is reduced and the vertex groups of $(\G,\Gamma)$ are exactly the maximal vertex stabilizers of $G$ acting on $T$ and so there are finitely many of them $G_{v_1},\ldots G_{v_n}$. Therefore there exists an open normal subgroup $U$  in $G$ such that $G_{v_1}\widetilde U/\widetilde U,\ldots,
	 G_{v_n}\widetilde U/\widetilde U$ are precisely maximal finite subgroups of $G/\widetilde U$ up to conjugation and hence the vertex groups of $(\G_U,\Gamma)$ for any choice of  $(\G_U,\Gamma)$.   By \cite[Theorem 4.2]{F} the exists a finite sequence of reductions and expansions that transform one choice to the other, and it does not depend on the chosen $U$, since the vertex groups are finite. Then the same finite sequence of reductions and expansions can be applied to $(\G, \Gamma)$ using \cite[Corollary 4.4]{CZ} and its proof.
	
	\end{example}

\section{JSJ splittings}

 \begin{definition} A JSJ decomposition (or JSJ tree) of a pro-$p$ group $G$ over $\E$ is an $\E$-tree T on which $G$ acts such that:
 	
 	$\bullet$ $ T$ is universally elliptic;
 
 $\bullet$ $T$ dominates any other universally elliptic  $\E$-tree $T'$ on which $G$ acts, i.e. the stabilizer of any vertex  $v$ of $T$ stabilizes some vertex of $T'$.
 
 \smallskip	
 	If $T/G$ is finite, then $G=(\G,T/G)$ is the fundamental pro-$p$ group of a finite graph of pro-$p$ groups (see Remark \ref{cofinite action}). 
 	We call the quotient graph of groups $(\G,T/G)$
 	a JSJ splitting, or a JSJ decomposition. Also, if the standard pro-$p$ tree of the fundamental group of a profinite graph of pro-$p$ groups $(\G, \Gamma)$  is a JSJ $G$-tree, then we call  $(\G, \Gamma)$  a JSJ splitting, or a JSJ decomposition.
 	
 	\end{definition}
 	
 	\begin{remark}\label{reduced JSJ} 
 \begin{enumerate}	
\item[(i)] The second condition in the definition is a
maximality condition expressing that vertex stabilizers of $T$ are as small as possible by Lemma \ref{refinement univer elliptic}.

\item[(ii)] 	
 	If $(\G,\Gamma)$ is a JSJ-splitting with $|\Gamma|< \infty$, then after the reduction from  procedure described in Remark \ref{reduction} we still have a JSJ-splitting. Indeed, the edge groups continue to be universally elliptic, and maximal vertex stabilizers do not change, so  the obtained graph of groups still dominates any other JSJ-splitting.\end{enumerate}
 	
 	\end{remark}

 	\begin{lemma}\label{universally elliptic vertex} Any  pro-$p$ $
	\E$-tree $T$ with universally elliptic vertex stabilizers is a JSJ tree.
\end{lemma}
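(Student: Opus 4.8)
The plan is to verify the two defining properties of a JSJ $\E$-tree directly from the hypothesis; the argument is essentially a matter of unwinding definitions, with nothing more than the formal remark that being elliptic (= fixing a vertex) is inherited by subgroups.

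First I would check that $T$ is universally elliptic. The tree $T$ is an $\E$-tree by assumption, so it remains to see that every edge stabilizer of $T$ is elliptic in every $\E$-tree on which $G$ acts. Given $e \in E(T)$, write $v = d_0(e)$, so that $G_e \leq G_v$. If $T'$ is any $\E$-tree on which $G$ acts, then $G_v$ fixes some vertex $w$ of $T'$ because $G_v$ is universally elliptic by hypothesis; since $G_e \leq G_v$, the subgroup $G_e$ fixes $w$ as well. Hence every edge stabilizer of $T$ is elliptic in every $\E$-tree, i.e.\ $T$ is universally elliptic.

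Next I would check the domination property. Let $T'$ be any universally elliptic $\E$-tree on which $G$ acts (in fact only the property of being an $\E$-tree is used). We must show that every vertex stabilizer $G_v$ of $T$ fixes a vertex of $T'$ — but this is exactly the assumption that $G_v$ is universally elliptic. Equivalently: any subgroup $H$ that is elliptic in $T$ lies in some vertex stabilizer $G_v$, which is universally elliptic, so $H$ is elliptic in $T'$; thus $T$ dominates $T'$. Combining the two paragraphs, $T$ satisfies the definition of a JSJ $\E$-tree.

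I do not expect any genuine obstacle: the result is an immediate consequence of the definitions, and no finiteness or finite-generation hypothesis on $G$ or on $T/G$ is needed. The only point deserving a word is purely formal — that ``elliptic'' means fixing a vertex and that this passes to subgroups, together with Proposition \ref{ellipticity}, which guarantees that in the pro-$p$ setting there is no discrepancy between elementwise and subgroup ellipticity. (It is worth noting in passing that this lemma yields an easy supply of JSJ trees: for instance, if $\E$ consists of $FA$ pro-$p$ groups, then every $\E$-tree has universally elliptic edge, hence vertex—no, more precisely one applies it to trees all of whose vertex stabilizers happen to be universally elliptic.)
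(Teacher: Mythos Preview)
Your proof is correct and follows essentially the same approach as the paper: since vertex stabilizers are universally elliptic, $T$ dominates every $\E$-tree, and since edge stabilizers sit inside vertex stabilizers this also gives universal ellipticity of $T$. The paper compresses these two observations into a single sentence, whereas you spell them out, but the content is identical.
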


\begin{proof}  By assumption,  $T $ dominates every pro-$p$ $\E$-tree. In particular, $T$ is universally elliptic and
dominates every universally elliptic $\E$-tree, so it is a JSJ tree.\end{proof}

\begin{definition} (Rigid and flexible vertices). Let $H = G_v$ be a vertex stabilizer of a JSJ
pro-$p$ tree $T$ (or a vertex group of the graph of groups $(\G, \Gamma = T /G)$). We say that $H$ is rigid if it is
universally elliptic, flexible if it is not. We also say that the vertex $v$ is rigid (flexible) in this case. If
$H$ is flexible, we say that it is a flexible subgroup of $G$ (over $\E$).
\end{definition}

If $T$ is a JSJ pro-$p$ $\E$-tree, another pro-$p$ $\E$-tree $T_
0$
is a JSJ pro-$p$ tree if and only if $T_
0$ 
is universally elliptic, $T$
dominates $T_
0$ 
, and $T_
0$ dominates $T$. In other words, $T_
0$
should belong to the deformation
space of $T$ over $\E_{ell}$, where $\E_{ell}$ is the family of universally elliptic groups in $\E$.

\begin{definition} If non-empty, the set of all JSJ pro-$p$ $\E$-trees is a deformation space over $\E_{ell}$  called
the JSJ deformation space (of $G$ over $\E$). We denote it by $D_{JSJ}$.\end{definition}

\begin{remark} Let $T$ be a JSJ $\E$-tree for $G$. Then the pro-$p$ tree $S$ from Example \ref{changing jsj-tree} belongs to $D_{JSJ}$. In fact, if $\E$ is closed for finite quotients then generalizing  \cite[Definition]{wilson} of ultra filter  we may see that $S$ is the JSJ-decomposition obtained from inverse limit of ultra filter of virtually free quotients of $G$. Indeed, for any open normal subgroup $U$ of $G$ the group $G/\widetilde U$ acts on $T/\widetilde U$. Moreover,  $T_1$ dominates $T_2$ if and only if for any $T_2/\widetilde U$ there exists open normal $V$ contained in $U$ such that $T_1/\widetilde V$ dominates $T_2/\widetilde U$ and this is a definition of $\{T_1/\widetilde U\mid U\triangleleft_o G\}$ being bigger than $\{T_2/\widetilde U\mid U\triangleleft_o G\}$. Thus for $\{D/\widetilde U\mid U\triangleleft_o G\}$ (where $D$ ranges via all pro-$p$ trees dominated by $T$), $\{T/\widetilde U\mid U\triangleleft_o G\}$ is a maximal filter, i.e. an ultra filter. 

\end{remark}

\bigskip
\begin{remark}\label{action on deformation space}
If $\E$ is invariant under a group $A$ of automorphisms of $G$ (in particular if $\E$ is defined
by restricting the isomorphism type), then so is the deformation space $D_{JSJ}$. Indeed, for $T\in D_{JSJ}$ and $\alpha\in A$ we can define  a $G$-tree $T_\alpha$ with $T_\alpha=T$ and the action $g\cdot t=\alpha(g) t$.  Then the stabilizer of  $e\in T_\alpha=T$  is $\alpha^{-1}(G_e)\in\E$.
\end{remark}
Using this we can deduce the following 

\begin{theorem}\label{subida} Let $G$ be a pro-$p$ group having an open normal $\E$-accessible subgroup $H$ acting on an infinite  JSJ pro-$p$ $\E$-tree $T$. Suppose $\E$ is $G$-invariant. Then $G$ acts on an infinite  pro-$p$ $\E'$-tree $S$, where $\E'$ consists of subgroups $A$ of $G$ such that $A\cap H\in\E$. Moreover, 
$S$ is universally elliptic pro-$p$ $\E'$-tree.

\end{theorem}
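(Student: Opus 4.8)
The plan is to build $S$ from the given $H$-tree $T$ by an induction/finite-limit argument, using $\E$-accessibility of $H$ to control the situation at each finite stage. First I would pass to finite quotients: let $\{U\}$ range over a linearly ordered cofinal system of open normal subgroups of $G$ with $\bigcap U = 1$, and for each $U$ set $\widetilde U_T = \langle U_v \mid v\in V(T)\rangle$ (the $U$-part generated by vertex stabilizers in $T$), so that by Proposition \ref{mod tilde} the quotient $T_U = T/\widetilde U_T$ is a pro-$p$ tree on which $H/\widetilde U_T$ acts with finite vertex stabilizers, and $T = \varprojlim_U T_U$, $H = \varprojlim_U H/\widetilde U_T$. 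Since $H$ is $\E$-accessible, the quotient graphs $T_U/H$ are finite with a uniform bound on the number of edge orbits, so that $(H/\widetilde U_T, T_U)$ is a finite graph of groups in the Bass-Serre sense and the edge groups still lie in (the image of) $\E$.

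Next I would construct, at each finite level, a $G$-tree by an induced-tree (``co-induction'') construction from the $H$-action, exploiting that $[G:H]$ is finite: starting from the $H$-tree $T_U$, form the pro-$p$ $G$-tree $S_U$ whose underlying graph is built from $G \times_{H} T_U$-type data; concretely, since $H\triangleleft G$, the $H$-tree $T_U$ already carries an action of $G$ by the twisting described in Remark \ref{action on deformation space} (replacing $g\cdot t$ by conjugating), and one checks that the stabilizer in $G$ of an edge $e$ is a subgroup $A$ with $A\cap H = H_e \in \E$, hence $A\in\E'$. The key point is that because $G/H$ is finite (so a finite $p$-group) and because $T_U/H$ is finite, $S_U/G$ is again finite, so $S_U$ is a genuine finite graph of pro-$p$ groups. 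Then I would set $S = \varprojlim_U S_U$, which is a pro-$p$ tree by Proposition \ref{induces} (or by the inverse-limit criterion for pro-$p$ trees used in the proof of Proposition \ref{refinement for finitely generated G}), and verify directly from the construction that edge stabilizers of $S$ intersect $H$ in $\E$, i.e. lie in $\E'$, and that $S$ is infinite (since $T$, hence some $T_U$ and some $S_U$, is infinite; equivalently $G$ does not fix a vertex of $S$ because $H$ does not fix a vertex of $T$, using Corollary \ref{fixed vertex}).

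Finally, for universal $\E'$-ellipticity of $S$: let $G$ act on any $\E'$-tree $T'$. I would restrict the action to $H$, obtaining an $H$-action on $T'$ whose edge stabilizers are of the form $A\cap H$ with $A\in\E'$, hence in $\E$; so $T'$ is an $\E$-tree for $H$. Since $T$ is a JSJ $\E$-tree for $H$, it is universally $\E$-elliptic, so every edge stabilizer $H_e$ of $T$ fixes a vertex in $T'$. Now an edge stabilizer $G_{\tilde e}$ of $S$ satisfies $G_{\tilde e}\cap H = H_e$ for a corresponding edge $e$ of $T$ (by construction), and $G_{\tilde e}$ is virtually $H_e$, i.e. contains $H_e$ with finite ($p$-power) index; since $H_e$ is elliptic in $T'$, Corollary \ref{fixed vertex} (``virtually elliptic implies elliptic'') forces $G_{\tilde e}$ to be elliptic in $T'$. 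As $T'$ was arbitrary, $S$ is universally $\E'$-elliptic.

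\textbf{Main obstacle.} The hard part will be the induced-tree construction at finite level and checking it assembles into an inverse system: one must make the choices (which vertex of the blown-up/induced graph each coset-edge attaches to) $G$-equivariantly and compatibly across the tower $\{U\}$, exactly the kind of bookkeeping done in Lemma \ref{inverse limit of virtually free groups} and Proposition \ref{refinement for finitely generated G}; in particular one must ensure $S_U/G$ stays finite and that the transition maps $S_V \to S_U$ are collapses, so that $\varprojlim S_U$ is a pro-$p$ tree and not merely a profinite graph. The identification $G_{\tilde e}\cap H = H_e$ and ``$G_{\tilde e}$ is virtually $H_e$'' is where $[G:H]<\infty$ and the normality of $H$ are essential, and this is the step I would write out most carefully.
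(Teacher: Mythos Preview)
Your universal-ellipticity argument is correct and is exactly what the paper does (edge stabilizers of $S$ contain an $\E$-edge stabilizer of $T$ with finite index, hence are elliptic in any $\E'$-tree by Corollary \ref{fixed vertex}).

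The construction of $S$, however, has a real gap. Remark \ref{action on deformation space} does \emph{not} say that $G$ acts on the tree $T$ (or on $T_U$); it says that conjugation by $g\in G$ sends $T$ to another tree $T_g$ in the same deformation space. Your sentence ``$T_U$ already carries an action of $G$'' is therefore unjustified, and the co-induction $G\times_H T_U$ you mention is a disjoint union of $[G:H]$ copies of $T_U$, not a tree. What you actually need is the consequence of Remark \ref{action on deformation space} that the set of vertex stabilizers $\{H_v\mid v\in V(T)\}$ is $G$-invariant under conjugation (since all JSJ trees have the same elliptic subgroups); once you have that, $\widetilde U_T$ is normal in $G$, $G/\widetilde U_T$ is virtually free pro-$p$, and you can build a $G$-tree at each finite level and pass to the limit. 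You gesture at this in your ``main obstacle'' paragraph, but the proposal as written does not isolate or use this invariance, and without it the construction does not start.

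The paper bypasses the whole inverse-limit construction by observing directly that $G$ preserves the conjugacy classes of vertex groups of the (finite, reduced) graph of groups $(\curlyH,\Delta)$ underlying $T$, and then invoking a black-box result (a generalized Stallings decomposition theorem for pro-$p$ groups, \cite[Theorem 8]{mattheus}) which produces a finite graph of groups $(\G,\Gamma)$ for $G$ whose vertex and edge groups intersect $H$ in subgroups of vertex and edge groups of $(\curlyH,\Delta)$; the standard tree $S$ of $(\G,\Gamma)$ is then the desired $\E'$-tree. Your approach would in effect be reproving that cited theorem via the tower $\{U\}$; this is feasible, but you must first extract the $G$-invariance of the vertex-stabilizer family from the JSJ property, and then carry out the finite-level construction (e.g.\ via \cite{HZ-13}) rather than co-induction.
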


\begin{proof} By Remark \ref{action on deformation space} $G$  acts on the deformation space $D_{JSJ}$ for $H$. Write $H=\Pi_1(\curlyH, \Delta)$ as the fundamental group of the corresponding reduced finite graph of pro-$p$ groups. It follows  then that $G$ leaves the conjugacy classes of all  vertex groups  of $(\curlyH, \Delta)$ invariant. By \cite[Theorem 8]{mattheus}   $G$ is the fundamental group of a reduced finite graph of pro-$p$ groups $(\G, \Gamma)$  such that its vertex and edge groups intersected with $H$ are subgroups of vertex and edge groups of $(\curlyH, \Delta)$. Thus   the standard pro-$p$ tree  $S$ of $(\G, \Gamma)$ is $(G, \E')$-tree. By Corollary \ref{fixed vertex} it is universally elliptic.



\end{proof}

Of course, if $\curlyH(v), v\in V(\Gamma)$  are universally elliptic, then Theorem \ref{amalgam} holds independently of $\E$ as it was used in the proof the $G$-invariancy of vertex stabilizers only. 

\begin{corollary}  Let $G$ be a pro-$p$ group having an open   subgroup $H$ acting on an infinite  JSJ pro-$p$ $\E$-tree $T$  whose vertex stabilizers are universally elliptic. Then $G$ acts on an infinite  JSJ pro-$p$ $\E'$-tree $S$, where $\E'$ consists of subgroups $A$ of $G$ such that $A\cap H\in\E$. 
\end{corollary}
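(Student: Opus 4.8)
The plan is to re-run the proof of Theorem~\ref{subida} almost verbatim, using the Remark that follows it to discard the hypotheses ``$\E$ is $G$-invariant'' and ``$H$ is $\E$-accessible'', and then to upgrade the conclusion from ``universally $\E'$-elliptic $G$-tree'' to ``JSJ $\E'$-tree'' by means of Lemma~\ref{universally elliptic vertex}. First I would reduce to the case $H\triangleleft G$ by replacing $H$ with its normal core $N=\bigcap_{g\in G}H^g$, which is open and normal in $G$; one checks that $N$ still acts on an infinite JSJ pro-$p$ $\E$-tree with universally elliptic vertex stabilizers by restricting the $H$-action on $T$ to $N$ (the edge stabilizers $H_e\cap N$ lie in $\E$ since $\E$ is closed under taking subgroups, the vertex stabilizers stay universally elliptic, and Lemma~\ref{universally elliptic vertex} applies; the tree is infinite since $[H:N]<\infty$). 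Next, writing $N=\Pi_1(\curlyH,\Delta)$ as the fundamental group of a reduced \emph{finite} graph of pro-$p$ groups realising this JSJ (here $N/\Delta$ may be taken finite, so Remark~\ref{cofinite action} and Example~\ref{changing jsj-tree} apply, because a universally elliptic, hence $FA$, vertex group forces accessibility of the relevant splittings), one observes that the vertex groups $\curlyH(v)$ are exactly the maximal universally $\E$-elliptic subgroups of $N$ up to conjugacy: any such subgroup is elliptic in the universally elliptic tree $T$, hence conjugate into some $\curlyH(v)$. This collection of conjugacy classes is therefore canonical and is permuted by every automorphism of $N$ fixing it, in particular by conjugation by elements of $G$ (which normalise $N$); thus $G$ leaves invariant the conjugacy classes of all vertex groups of $(\curlyH,\Delta)$ — which is precisely the input needed for \cite[Theorem~8]{mattheus}, the point at which Theorem~\ref{subida} used $G$-invariance of $\E$.

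Feeding this into \cite[Theorem~8]{mattheus} expresses $G$ as the fundamental group of a reduced finite graph of pro-$p$ groups $(\G,\Gamma)$ whose vertex and edge groups meet $N$ in subgroups of the corresponding groups of $(\curlyH,\Delta)$; let $S=S(\G,\Gamma)$ be the standard pro-$p$ tree. Every edge stabilizer $A$ of $S$ satisfies $A\cap N\in\E$ (it sits inside an edge group of $(\curlyH,\Delta)$), so $S$ is a $(G,\E')$-tree. To see it is a JSJ $\E'$-tree I would invoke Lemma~\ref{universally elliptic vertex}: it suffices that each vertex group $\G(v)$ be universally $\E'$-elliptic. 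Given any $\E'$-tree $T'$ for $G$, restricting the action to $N$ makes $T'$ an $\E$-tree for $N$ (an edge stabilizer $B\le G$ of $T'$ lies in $\E'$, i.e. $B\cap N\in\E$); since $\G(v)\cap N$ lies in a universally $\E$-elliptic vertex group $\curlyH(w)$ it is elliptic in $T'$, and as $\G(v)\cap N$ is open and normal in $\G(v)$ (using $N\triangleleft G$), Corollary~\ref{fixed vertex} gives that $\G(v)$ is elliptic in $T'$. Hence $\G(v)$ is universally $\E'$-elliptic, so $S$ is a JSJ $\E'$-tree by Lemma~\ref{universally elliptic vertex}, and it is infinite because $T$ is.

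The main obstacle, which is exactly what the Remark after Theorem~\ref{subida} is asserting, is the canonicity step in the middle of the argument: checking that the conjugacy classes of the universally elliptic vertex groups $\curlyH(v)$ are invariant under the conjugation automorphisms induced by $G$ \emph{without} assuming $\E$ itself is $G$-invariant. This is where the universal ellipticity of the vertex groups genuinely replaces the $G$-invariance hypothesis, and it has to be combined with the (routine but not entirely automatic) verification that the rigid-JSJ hypothesis on $H$ descends to the normal core $N$; once these are in place, everything else is a transcription of the proof of Theorem~\ref{subida} together with Lemma~\ref{universally elliptic vertex} and Corollary~\ref{fixed vertex}.
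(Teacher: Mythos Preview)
Your overall strategy --- use the remark preceding the corollary (universal ellipticity of the $\curlyH(v)$ replaces $G$-invariance of $\E$ in the proof of Theorem~\ref{subida}), then upgrade ``universally $\E'$-elliptic'' to ``JSJ'' via Lemma~\ref{universally elliptic vertex} and Corollary~\ref{fixed vertex} --- is exactly the line the paper has in mind; the paper gives no separate proof beyond that remark.

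There is, however, a genuine error in your execution. You write ``a universally elliptic, hence $FA$, vertex group'', but these notions are not the same: universally $\E$-elliptic means elliptic in every $\E$-tree, whereas $FA$ means elliptic in every pro-$p$ tree. (The paper in fact states a \emph{second} corollary immediately after this one, with the stronger hypothesis that vertex stabilizers are $FA$; you have conflated the two.) This matters because your justification for writing $N=\Pi_1(\curlyH,\Delta)$ with $\Delta$ \emph{finite} rests on this false implication and on the further unsupported claim that $FA$ vertex groups ``force accessibility''. In Theorem~\ref{subida} the finiteness of $\Delta$ comes from the hypothesis that $H$ is $\E$-accessible, which the corollary drops; nothing in your argument recovers it.

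A second point to watch is the descent to the normal core: when you restrict the $H$-action on $T$ to $N$, the vertex stabilizers become $N\cap H_v$, and you need these to be universally $\E$-elliptic \emph{as subgroups of $N$}, i.e.\ elliptic in every $\E$-tree on which $N$ (not $H$) acts. The hypothesis only says $H_v$ is elliptic in every $\E$-tree for $H$; an $\E$-tree for $N$ need not extend to one for $H$, so this step requires an argument you have not supplied. In fairness, the paper's statement also silently drops both normality and $\E$-accessibility from Theorem~\ref{subida} while the preceding remark addresses only the $G$-invariance of $\E$; it is reasonable to read those as implicit hypotheses the paper intended to retain.
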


\begin{corollary}  Let $G$ be a pro-$p$ group having an open   subgroup $H$ acting on an infinite  JSJ pro-$p$ $\E$-tree $T$  whose vertex stabilizers are FA. Then $G$ acts on an infinite  JSJ pro-$p$ $\E'$-tree $S$, where $\E'$ consists of subgroups $A$ of $G$ such that $A\cap H\in\E$. 
\end{corollary}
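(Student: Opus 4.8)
The plan is to obtain this statement as a direct specialization of the preceding corollary. The only input needed is the (elementary, and already recorded in the text) observation that every FA pro-$p$ subgroup $K$ of $G$ is universally $\E$-elliptic, for any conjugation-invariant family $\E$: if $G$ acts on a pro-$p$ tree $T'$, then the restriction of the action to $K$ has a global fixed point by the very definition of an FA pro-$p$ group, so $K$ is elliptic in $T'$, hence in particular in every $\E$-tree on which $G$ (or $H$) acts.

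Granting this, there is nothing left to do. By hypothesis every vertex stabilizer of the JSJ pro-$p$ $\E$-tree $T$ is FA, hence universally $\E$-elliptic, so the hypotheses of the preceding corollary (an open subgroup $H\le G$ acting on an infinite JSJ pro-$p$ $\E$-tree whose vertex stabilizers are universally elliptic) are met. That corollary then produces the desired infinite JSJ pro-$p$ $\E'$-tree $S$ on which $G$ acts, with $\E'=\{A\le G\mid A\cap H\in\E\}$.

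Thus the only real content lies in the preceding corollary (and, through it, in Theorem \ref{subida}); there is no separate obstacle here. The one caveat I would flag explicitly is that the conclusion cannot be upgraded to say that the vertex stabilizers of $S$ are again FA: in the construction underlying Theorem \ref{subida} a vertex group $\G(v)$ of $G=\Pi_1(\G,\Gamma)$ is controlled only via the fact that $\G(v)\cap H$ is a closed subgroup of some FA vertex group of the decomposition of $H$, and closed subgroups of FA pro-$p$ groups need not be FA (already $\Z_p$ occurs as such a subgroup). So the corollary claims, and can only claim, that $S$ is a JSJ $\E'$-tree.
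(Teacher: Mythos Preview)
Your proposal is correct and matches the paper's intended argument: the paper gives no separate proof for this corollary, and it is meant to follow immediately from the preceding one by the observation (already made in the text) that FA pro-$p$ groups are universally elliptic. Your added caveat about vertex stabilizers of $S$ not necessarily being FA is a reasonable side remark, though not part of the statement or its proof.
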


\begin{lemma} Assume $G$ is finitely generated and  all groups in $\E$ are universally elliptic. If $T$ is a JSJ-tree, then
	its finitely generated vertex stabilizers are universally elliptic.
	This applies in particular to splittings over finite groups.\end{lemma}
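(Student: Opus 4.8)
The plan is to reduce the general statement to the one-edge case and then use the accessibility/induction machinery already developed. Let $T$ be a JSJ $\E$-tree for $G$, and let $H=G_v$ be a finitely generated vertex stabilizer. To show $H$ is universally elliptic, by Lemma \ref{one-edge splitting} (applied to $H$, which is finitely generated) it suffices to show $H$ is elliptic in every one-edge $(G,\E)$-splitting, or rather in every $\E$-tree on which $G$ acts; so fix an $\E$-tree $T'$ on which $G$ acts and aim to prove $H$ fixes a vertex of $T'$.

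First I would pass to a standard refinement. Since $G$ is finitely generated, I would like to apply Proposition \ref{refinement for finitely generated G} to the pair $T$, $T'$; for this I need $T$ to be elliptic with respect to $T'$, i.e. every edge stabilizer of $T$ to be elliptic in $T'$. Here is where the hypothesis that all groups in $\E$ are universally elliptic enters: the edge stabilizers of $T$ lie in $\E$, hence are universally elliptic, hence elliptic in $T'$. So Proposition \ref{refinement for finitely generated G} gives a $G$-tree $S$ in the deformation space of $T$ together with a refinement $\widehat S$ dominating $T'$, with the key property (Assertion (4)) that a subgroup of $G$ is elliptic in $\widehat S$ if and only if it is elliptic in both $T$ (equivalently $S$) and $T'$.

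Now the crux: I claim $\widehat S$ is universally $\E$-elliptic, so that by maximality of the JSJ tree $T$ (it dominates every universally elliptic $\E$-tree), $T$ dominates $\widehat S$; combined with the fact that $\widehat S$ refines $S$ and so dominates $S$ (which is in the deformation space of $T$), this forces $\widehat S$ to lie in the deformation space of $T$ as well. To see $\widehat S$ is universally elliptic, use Assertion (3) of Proposition \ref{refinement for finitely generated G}: every edge stabilizer of $\widehat S$ fixes an edge in $T$ or in $T'$. Edge stabilizers of $T$ are in $\E$, hence universally elliptic; edge stabilizers of $T'$ are in $\E$, hence universally elliptic; a subgroup of a universally elliptic group is universally elliptic. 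So every edge stabilizer of $\widehat S$ is universally elliptic, i.e. $\widehat S$ is a universally elliptic $\E'$-tree — but one must be slightly careful that $\widehat S$ is genuinely an $\E$-tree (its edge groups lie in $\E$, since they are subgroups of edge groups of $T$ or of $T'$ and $\E$ is closed under taking subgroups). Hence $T$ dominates $\widehat S$, and since $\widehat S$ refines $S$ it dominates $S$ and hence $T$; so $\widehat S$ is in $D_{JSJ}$, and in particular in the deformation space of $T$. Therefore $H=G_v$, being elliptic in $T$, is elliptic in $\widehat S$ (same deformation space), and since $\widehat S$ dominates $T'$, $H$ is elliptic in $T'$. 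As $T'$ was arbitrary, $H$ is universally elliptic.

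The main obstacle I anticipate is the bookkeeping around deformation spaces and the precise hypotheses of Proposition \ref{refinement for finitely generated G}: one must confirm that $T$ (or a tree $S$ in its deformation space) is elliptic with respect to $T'$ — which is exactly where "all groups in $\E$ are universally elliptic" is used — and that the refinement $\widehat S$ still has edge groups in $\E$ so that it competes in the JSJ maximality game. A secondary subtlety is the reduction to finitely generated $H$: the invocation of Lemma \ref{one-edge splitting} is not strictly necessary for this argument as sketched (we handled a general $T'$ directly), but finite generation of $H$ is still what lets us conclude ellipticity of $H$ from membership in the JSJ deformation space without worrying about the element-versus-subgroup distinction — though by Proposition \ref{ellipticity} that distinction is harmless in the pro-$p$ setting anyway, so in fact the finite generation hypothesis is used mainly to invoke Proposition \ref{refinement for finitely generated G} and Theorem \ref{pro-pbass-serre}. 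The final remark about splittings over finite groups is immediate since finite $p$-groups are universally elliptic by Corollary \ref{fixed vertex}.
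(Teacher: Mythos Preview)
Your proof is correct and follows essentially the same strategy as the paper: construct a universally elliptic refinement (via Proposition~\ref{refinement for finitely generated G}, using that all groups in $\E$ are universally elliptic), invoke JSJ maximality to force it into the same deformation space as $T$, and conclude that $G_v$ is elliptic in the target tree. The paper phrases this by contradiction and passes through Theorem~\ref{splitting} to split a putatively flexible $G_v$ before refining at $v$, whereas you work directly with the full standard refinement $\widehat S$ dominating $T'$; as you yourself observe, your route never actually uses the finite generation of $H=G_v$ (only that of $G$), so it in fact establishes the conclusion for all vertex stabilizers.
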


\begin{proof}  By Theorem \ref{pro-pbass-serre} $G$ is the fundamental group $\Pi_1(\G,\Gamma)$ of a profinite graph of pro-$p$ groups over $\E$ and by Example \ref{changing jsj-tree} its standard pro-$p$ tree is universally elliptic and so replacing $T$ by the standard pro-$p$ tree $S$ of $(\G, \Gamma)$ we may assume that $T$ is the standard pro-$p$ tree for this graph of groups.  If a vertex group $G(v)$ of $v$ is flexible, then there exists  a pro-$p$ tree $T_
0$
such that $G(v)$ is not elliptic in
$T_0$. Then by Theorem \ref{splitting}  $G(v)$  splits either as $G_0\ast_A G_1$ or $G_0\ast_A$, where $A\in \E$. As all edge groups are universally elliptic by hypothesis,  the edge groups of all adjacent edges to $v$ are conjugate into either $G_0$ or $G_1$ (see Corollary \ref{fixed vertex}).  Let $(\G,\hat\Gamma)$ be the standard refinement of $(\G,\Gamma)$ at $v$ (see Proposition \ref{refinement for finitely generated G}). Its standard pro-$p$ tree $T'$,
by our assumption on $A$,  is universally
elliptic, so by definition of the JSJ deformation space $T$ dominates $T'$. This implies that
$G_v$ is elliptic in $T'$, hence in $T_0$ (see Proposition \ref{refinement for finitely generated G}(2)), a contradiction.\end{proof}

\begin{theorem}\label{existence} Let   $\E$ be a continuous family of subgroups of a pro-$p$ group $G$ and suppose that $G$  is $\E$-accessible. Then JSJ pro-$p$ tree $T$ exists.

\end{theorem}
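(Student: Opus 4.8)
The plan is to adapt the standard abstract-group construction of JSJ trees (as in Guirardel--Levitt) to the pro-$p$ setting, using $\E$-accessibility to control the combinatorial complexity and a projective limit argument to stay inside the category of pro-$p$ trees. First I would fix the deformation space $D_{ell}$ of universally $\E$-elliptic pro-$p$ $\E$-trees; by Corollary \ref{fixed vertex} this is non-empty since the trivial tree (a point) is universally elliptic, or more usefully any splitting over a finite $p$-group lands here. The goal is to produce a \emph{maximal} element of this collection with respect to domination, because Lemma \ref{refinement univer elliptic}(2) tells us that any two universally elliptic trees admit a common universally elliptic refinement dominating both (after possibly moving within the deformation space, using Proposition \ref{refinement for finitely generated G} when $T_1/G$ is not finite). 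Thus domination is a directed order on $D_{ell}$, and a maximal element is automatically a JSJ tree by definition.

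The key step is to show that the ascending chains of universally elliptic trees under refinement/domination must stabilize, and this is exactly where $\E$-accessibility enters. Given a universally elliptic pro-$p$ $\E$-tree $T$, by Theorem \ref{pro-pbass-serre} and Example \ref{changing jsj-tree} we may replace $T$ by the standard pro-$p$ tree $S(\G,\Gamma)$ of a reduced profinite graph of pro-$p$ groups $(\G,\Gamma)$, still universally elliptic. The accessibility hypothesis should be used to bound the number of edge orbits: writing $G = \varprojlim_U G/\widetilde U$ as in Lemma \ref{inverse limit of virtually free groups}, each finite quotient $G/\widetilde U = \Pi_1(\G_U,\Gamma_U)$ is the fundamental group of a finite reduced graph of finite $p$-groups, and $\E$-accessibility of $G$ (which passes to the relevant quotient graphs of groups since edge groups stay in $\E$) bounds $|E(\Gamma_U)|$ uniformly by $a(G)$. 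Since the $\Gamma_U$ form an inverse system of collapses, this forces $\Gamma$ itself to be finite with $|E(\Gamma)| \le a(G)$. So every universally elliptic tree is, up to deformation, given by a finite graph of pro-$p$ groups with a bounded number of edges.

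Now I would run the standard maximization argument. Among all universally elliptic pro-$p$ $\E$-trees $T$ (equivalently their reduced finite graphs of groups $(\G, T/G)$), the number of edge orbits is bounded by $a(G)$; pick one with the maximal number of edge orbits, and among those pick one whose vertex stabilizers are smallest in an appropriate sense (e.g. maximizing the total complexity, or using that successive proper refinements strictly increase edge count). Concretely: if $T$ is universally elliptic but does not dominate some universally elliptic $\E$-tree $T'$, then by Lemma \ref{refinement univer elliptic}(2) there is a universally elliptic common refinement $\widehat T$ dominating both $T$ and $T'$; since $\widehat T$ refines $T$ and $T$ does not dominate $T'$ (hence not $\widehat T$), $\widehat T$ does not lie in the deformation space of $T$, so by Lemma \ref{dominant splitting}(1) some element hyperbolic in $\widehat T$ is elliptic in $T$, which means $\widehat T$ has strictly more edge orbits than $T$ after reduction (a reduced refinement that is not a deformation must add edges). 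This contradicts maximality of the edge count. Hence the chosen $T$ dominates every universally elliptic $\E$-tree and is a JSJ tree. Then its standard pro-$p$ tree (over the finite quotient graph) is the desired $T$.

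The main obstacle I expect is the subtle interplay between "refinement that adds edges" in the pro-$p$ world and the possibility that refinements only exist after passing to another tree in the deformation space (since, as stressed in the introduction, maximal subtrees of $T/G$ need not lift). So the careful point is to phrase the maximization entirely in terms of the \emph{finite} reduced graphs of pro-$p$ groups that exist by accessibility, where reduction and the counting of edge orbits behave as in the abstract case, and only at the end pass back to a pro-$p$ tree via the standard-tree construction of Example \ref{changing jsj-tree}. A secondary technical point is verifying that $\E$-accessibility of $G$ genuinely bounds the edge count of \emph{every} reduced universally elliptic $\E$-tree (not merely those with finite quotient graph a priori), which is handled by the inverse limit argument of Lemma \ref{inverse limit of virtually free groups} together with the observation that collapses do not increase edge numbers. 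Continuity of the family $\E$ is needed to ensure that the limiting graph of groups $(\G,\Gamma)$ genuinely has edge groups in $\E$ and that the stabilizer families behave continuously along the inverse system.
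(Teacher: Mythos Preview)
Your overall strategy matches the paper's: pick a maximal (in edge count) reduced universally elliptic finite graph of pro-$p$ groups and show by contradiction that its standard tree dominates every universally elliptic $\E$-tree. The paper's proof is exactly this, in three lines.

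Where you diverge is in your second paragraph, and this is a genuine (though easily reparable) gap. You invoke Lemma \ref{inverse limit of virtually free groups} and Proposition \ref{refinement for finitely generated G} to argue that \emph{every} universally elliptic $\E$-tree is, up to deformation, given by a finite graph of groups with at most $a(G)$ edges. Both of those results require $G$ to be finitely generated, which is \emph{not} a hypothesis of this theorem (finite generation is the separate Theorem \ref{existence for finitely generated G}). The paper simply sidesteps this: it never claims that an arbitrary universally elliptic tree has finite quotient. Instead it just picks a maximal element among \emph{finite} reduced universally elliptic $\E$-splittings $(\G,\Gamma)$ --- such a maximum exists trivially since accessibility bounds $|E(\Gamma)|$ and the trivial splitting is a candidate --- and works with its standard tree $T$, which has $T/G=\Gamma$ finite by construction. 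In the contradiction step, if some vertex stabilizer $G_v$ of $T$ is not elliptic in another universally elliptic tree $T'$, the paper refines $(\G,\Gamma)$ \emph{at the single vertex $v$} using a one-edge splitting of $G$ coming from $T'$ (Proposition \ref{refinement}, Remark \ref{cofinite second tree}); this produces a strictly larger finite reduced universally elliptic splitting, contradicting maximality. Your Lemma \ref{refinement univer elliptic}/Lemma \ref{dominant splitting} route works too once $T/G$ is finite, but the local refinement at $v$ makes the ``strictly more edges after reduction'' step transparent rather than something that needs to be argued.
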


\begin{proof} Let $(G,\Gamma)$ be a maximal reduced universally elliptic $\E$-splitting  of $G$  and $T(G)$ its standard pro-$p$ tree (it might be trivial, i.e. be just a vertex). Note that $\Gamma=T/G$ is finite since $G$ is $\E$-accessible.
We show that  $(G,\Gamma)$ is JSJ-decomposition. 

  Let $T'$ be another universally elliptic pro-$p$ $\E$-tree for $G$. It suffices  to show that the stabilizer  $G_v$ of any vertex $v$ of  $T$ is elliptic in $T'$ (indeed this would mean that edge groups also stabilize a vertex in $T'$ and that $T$ dominates $T'$). 
  
  If not, then $G$ splits as an amalgamated free pro-$p$ product or pro-$p$ HNN-extension over some  edge stabilizer $A$ in $T'$ with not elliptic $G(v)$, say $G=G_1\amalg_A G_2$ (remember that $G$ is accessible, so $T'/G$ is finte).  Let $S(G)$ be a standard pro-$p$ tree with respect to this splitting. Since $T$ is universally elliptic every edge stabilizer in $T$ is elliptic in $S(G)$. It follows that the stabilizer of an edge incident to $v$
is in $G_1$ or in $G_2$ up to conjugation. Let $(\G, \hat\Gamma)$ be the standard refinement of $(\G,\Gamma)$ at $v$ (see Remark \ref{cofinite second tree}). Then it is an $\E$-splitting of $G$ dominating but not equal $(\G,\Gamma)$.  This contradicts the maximality of the decomposition. 	

\end{proof}

\begin{corollary}\label{finiteness} Let $G$ be a pro-$p$ group and $\E$ a continuous  family of subgroups of $G$. If $G$ is $\E$-accessible, then there exists JSJ pro-$p$ tree $T$ such that $T/G$ is finite. 
\end{corollary}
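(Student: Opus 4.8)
The plan is to deduce Corollary \ref{finiteness} directly from Theorem \ref{existence} together with the way its proof is set up. Theorem \ref{existence} already produces a JSJ pro-$p$ tree $T$ as the standard pro-$p$ tree $T(G)$ of a \emph{maximal reduced universally elliptic $\E$-splitting} $(G,\Gamma)$ of $G$. So the strategy is not to reprove existence, but simply to observe that the tree produced there automatically has finite quotient graph.

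First I would recall that $\Gamma = T/G$ in Theorem \ref{existence} is a finite graph: this is exactly where $\E$-accessibility is used. By definition of $\E$-accessibility, there is a bound $a=a(G)$ on the number of edges of any finite, proper, reduced graph of pro-$p$ groups with edge groups in $\E$ whose fundamental group is $G$; the maximal reduced universally elliptic $\E$-splitting is such a graph of groups, so it has at most $a$ edges and is in particular finite. Hence the standard pro-$p$ tree $T$ associated to $(G,\Gamma)$ satisfies $T/G = \Gamma$ finite, and by Theorem \ref{existence} itself $T$ is a JSJ pro-$p$ tree. This gives the corollary.

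One point I would make explicit is that the existence of a \emph{maximal} reduced universally elliptic $\E$-splitting (rather than merely a supremum) is guaranteed precisely because $\E$-accessibility bounds the number of edges: among all reduced universally elliptic $\E$-splittings, the number of edge orbits is bounded by $a(G)$, so one can choose one that is maximal with respect to domination (or, concretely, one with the maximal number of edges that cannot be refined further within the universally elliptic $\E$-trees). This is implicit in the proof of Theorem \ref{existence}, and I would just point back to it.

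The main (and essentially only) obstacle is purely expository: making sure the statement ``there exists a JSJ pro-$p$ tree $T$ with $T/G$ finite'' is read as a strengthening of Theorem \ref{existence} in which one keeps track of the quotient, rather than as a separate assertion requiring new work. Since every step of the argument in Theorem \ref{existence} already takes place over a finite graph of pro-$p$ groups, there is no genuine difficulty; the proof is a one-line appeal. Concretely: \emph{By the proof of Theorem \ref{existence}, the JSJ pro-$p$ tree $T$ constructed there is the standard pro-$p$ tree of a finite reduced graph of pro-$p$ groups $(\G,\Gamma)$, and $T/G=\Gamma$ is finite because $G$ is $\E$-accessible.}
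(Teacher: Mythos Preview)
Your proposal is correct and is exactly the paper's approach: the corollary has no separate proof in the paper because the proof of Theorem \ref{existence} already constructs the JSJ tree as the standard pro-$p$ tree of a maximal reduced universally elliptic $\E$-splitting $(\G,\Gamma)$ and explicitly notes that $\Gamma=T/G$ is finite by $\E$-accessibility. Your write-up simply makes this observation explicit, which is all that is required.
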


In particular, using $k$-acylindricity and cyclic accessibility (cf. Theorem \ref{k-acylindrical accessibility} and Theorem \ref{cyclic accessibility}) we deduce 

\begin{corollary} 
\begin{enumerate}

\item[(i)] A $k$-acylindrical JSJ-decomposition  of  a finitely generated pro-$p$ group exists.

\item[(ii)] Suppose $\E$ consists of cyclic groups. Then JSJ-decomposition exists.
\end{enumerate}

\end{corollary}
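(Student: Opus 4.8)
The plan is to deduce both parts of the corollary directly from Theorem~\ref{existence} together with the accessibility results recalled in Section~\ref{accessibility}. The key observation is that Theorem~\ref{existence} produces a JSJ pro-$p$ tree whenever $G$ is $\E$-accessible for a continuous family $\E$, and moreover the tree it constructs has finite quotient, which is exactly the content of Corollary~\ref{finiteness}. So the entire task reduces to checking, in each of the two cases, that (a) the relevant family $\E$ is continuous, and (b) $G$ is $\E$-accessible.

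For part (i), $k$-acylindricity is a property of the action rather than of a fixed family, so the statement should be read as: if $G$ is finitely generated and we restrict attention to $k$-acylindrical $\E$-trees, then a JSJ tree exists. Here I would first note that by Theorem~\ref{k-acylindrical accessibility} (via Definition~\ref{acylidrical graph of groups}) the number of edges and vertices of a reduced $k$-acylindrical graph of pro-$p$ groups with fundamental group $G$ is bounded by $d(G)(4k+1)-1$ and $4kd(G)$ respectively, so $G$ is accessible in the appropriate sense. One then feeds this bound into the proof of Theorem~\ref{existence}: take a maximal reduced $k$-acylindrical universally elliptic splitting (maximality makes sense because the number of edges is bounded), and run the same argument — if some vertex group failed to be elliptic in another universally elliptic tree, Theorem~\ref{splitting} would produce a one-edge splitting, and the standard refinement (Remark~\ref{cofinite second tree}) would yield a strictly larger $k$-acylindrical splitting, contradicting maximality. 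For part (ii), one applies Theorem~\ref{cyclic accessibility}: the family $\E$ of infinite cyclic (procyclic) subgroups is continuous, and a finitely generated pro-$p$ group is $\E$-accessible with accessibility number $3d(G)-2$. Hence Theorem~\ref{existence} applies verbatim.

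The one point requiring a little care is the continuity of the families involved. For cyclic groups this follows because the family of stabilizers appearing in the construction is continuous by Lemma~\ref{Ribes 5.2.2}, and procyclic subgroups form an $\E$-closed continuous family in the sense needed by Theorem~\ref{existence}; I would simply invoke that the standard pro-$p$ tree has compact edge set and the stabilizer map to $Subgr(G)$ is continuous. For the $k$-acylindrical case, continuity is not an issue in the same way since acylindricity is a constraint on the trees considered rather than on an ambient family; the accessibility bound is what does the work, and the proof of Theorem~\ref{existence} goes through because $k$-acylindricity is preserved under the collapse and refinement operations used there (edge groups only get smaller, and fixed-point diameters do not increase).

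The main obstacle — really the only one — is making precise that the operations used in the proof of Theorem~\ref{existence} (passing to a maximal splitting, collapsing fictitious edges as in Remark~\ref{reduction}, and performing the standard refinement of Remark~\ref{cofinite second tree}) all stay inside the class of $k$-acylindrical trees in case (i); once that is granted, both statements are immediate corollaries. Since refinement at a flexible vertex replaces an edge group by a subgroup and subdivides a vertex orbit without increasing the diameter of any non-trivial fixed-point set, $k$-acylindricity persists, and the accessibility bound of Theorem~\ref{k-acylindrical accessibility} guarantees the induction terminates. I therefore expect the proof to consist of two short paragraphs, one for each item, each of the form ``the family/class is as required, accessibility holds by Theorem~\ref{k-acylindrical accessibility} (resp.\ Theorem~\ref{cyclic accessibility}), hence Corollary~\ref{finiteness} applies.''
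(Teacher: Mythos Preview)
Your approach is essentially the paper's: the corollary is stated immediately after Corollary~\ref{finiteness} with the one-line justification ``using $k$-acylindricity and cyclic accessibility (cf.\ Theorem~\ref{k-acylindrical accessibility} and Theorem~\ref{cyclic accessibility}) we deduce\ldots'', so both you and the paper simply feed the relevant accessibility bound into Theorem~\ref{existence}.

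One caution on your added justification for (i): your claim that the standard refinement ``subdivides a vertex orbit without increasing the diameter of any non-trivial fixed-point set'' is not correct as stated. When you blow up a vertex $v$ into a $G_v$-subtree $Y_v$, an element $g$ that already fixed a path of length $k$ through $v$ in $T$ may in addition fix a nontrivial path inside $Y_v$, so its fixed-point set in $\widehat T$ can have diameter strictly larger than $k$. Thus $k$-acylindricity is \emph{not} automatically inherited by the refinement, and your argument that ``the induction terminates because the refinement stays $k$-acylindrical'' does not go through on its own. The paper sidesteps this by not spelling out the argument; if you want to make (i) rigorous you should either (a) interpret ``$k$-acylindrical JSJ'' as JSJ over a family $\E$ for which every $\E$-tree is $k$-acylindrical (so accessibility is a property of $\E$ and Theorem~\ref{existence} applies verbatim), or (b) argue maximality directly from the accessibility bound without requiring the intermediate refined tree to be $k$-acylindrical. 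Your treatment of (ii) is fine and matches the paper.
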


\medskip
We show now that in contrast to the abstract situation a JSJ-splitting exists for finitely generated pro-$p$ groups  regardless of $\E$-accessibility. However the JSJ pro-$p$ tree might not have compact set of edges.

\begin{theorem}\label{existence for finitely generated G} Let   $\E$ be a continuous family of subgroups of a finitely generated pro-$p$ group $G$. Then JSJ pro-$p$ $\E$-tree exists.

\end{theorem}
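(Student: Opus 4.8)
The plan is to prove the theorem by showing directly that the poset of deformation spaces of universally elliptic $\E$-trees on which $G$ acts, ordered by domination, has a maximal element, and that any tree lying in a maximal deformation space is a JSJ $\E$-tree. Write $\mathfrak{D}$ for the set of such deformation spaces; it is genuinely a set, since each of its elements is pinned down by its collection of maximal vertex stabilizers, hence by a subset of the set of closed subgroups of $G$. The trivial tree (a single vertex) is universally elliptic, so $\mathfrak{D}$ is nonempty, and domination induces a partial order on it, well defined because ellipticity of a subgroup in a tree depends only on the deformation space of that tree. Suppose $D_0$ is a maximal element and $T\in D_0$. Then $T$ is universally elliptic by construction; and if $T'$ is any universally elliptic $\E$-tree, then $T$, being universally elliptic, is elliptic with respect to $T'$, so by Lemma~\ref{refinement univer elliptic}(2) in its form valid for finitely generated $G$ there is a universally elliptic $\E$-tree $\widehat S$ dominating both $T$ and $T'$. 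Its deformation space dominates $D_0$, so by maximality it equals $D_0$; hence $T$ dominates $\widehat S$, and therefore $T$ dominates $T'$. Thus $T$ is universally elliptic and dominates every universally elliptic $\E$-tree, i.e. $T$ is a JSJ $\E$-tree.

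It remains to verify the hypothesis of Zorn's Lemma: every chain $\mathcal C\subseteq\mathfrak{D}$ has an upper bound. Choose a tree $T_D\in D$ for each $D\in\mathcal C$ and well-order $\mathcal C$ as $\{D_\xi:\xi<\kappa\}$. We build, by transfinite recursion, universally elliptic $\E$-trees $R_\xi$ whose deformation space dominates $D_\beta$ for every $\beta\le\xi$. Set $R_0=T_{D_0}$. At a successor stage $\xi=\eta+1$, both $R_\eta$ and $T_{D_\xi}$ are universally elliptic, so $R_\eta$ is elliptic with respect to $T_{D_\xi}$; Proposition~\ref{refinement for finitely generated G} then produces a tree $S_\eta$ in the deformation space of $R_\eta$ together with a refinement $\widehat S_\eta$ of $S_\eta$ dominating $T_{D_\xi}$, and by Lemma~\ref{refinement univer elliptic}(2) the tree $\widehat S_\eta$ is universally elliptic; put $R_\xi=\widehat S_\eta$. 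Since a refinement dominates the tree it refines, $R_\xi$ dominates $S_\eta$, which lies in the deformation space of $R_\eta$, so $R_\xi$ dominates $R_\eta$ and, inductively, every $D_\beta$ with $\beta\le\xi$ (it also dominates $T_{D_\xi}$, hence $D_\xi$). At a limit stage $\lambda$, take $R_\lambda=\varprojlim_{\xi<\lambda}R_\xi$; this is a pro-$p$ tree by \cite[Lemma 2.6 (b)]{RZ-00}, its edge stabilizers are subgroups of edge stabilizers of the $R_\xi$ and hence lie in $\E$ (closed under subgroups) and are universally elliptic (likewise), and $R_\lambda$ collapses onto each $R_\xi$, so it dominates every $D_\beta$ with $\beta<\lambda$. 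The tree obtained at the end of the recursion then dominates every $D\in\mathcal C$, and its deformation space is the required upper bound; Zorn's Lemma gives the maximal element, and the first paragraph finishes the proof.

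The one delicate point — and, I expect, the main obstacle — is the inverse limit at the limit stages: Proposition~\ref{refinement for finitely generated G} only yields a refinement of a tree $S_\eta$ lying in the deformation space of $R_\eta$, not of $R_\eta$ itself, so to obtain honest collapse maps $R_\zeta\twoheadrightarrow R_\xi$, which are needed before one can form $\varprojlim_{\xi<\lambda}R_\xi$, the recursion must be run inside a fixed coherent tower of models. This is done exactly where Proposition~\ref{refinement for finitely generated G} itself lives: by Lemma~\ref{inverse limit of virtually free groups} one writes $G=\varprojlim_{U\triangleleft_o G}G/\widetilde U$ with each $G/\widetilde U$ the fundamental group of a finite reduced graph of finite $p$-groups — where accessibility holds and the blow-up constructions of Proposition~\ref{refinement} and Remark~\ref{cofinite second tree} are available — and one performs the transfinite construction level by level over the linearly ordered system of the $U$'s, choosing the standard refinements at level $U$ compatibly, so that each $R_\xi$ is literally the inverse limit of a refining tower of level-$U$ trees and the collapse maps are built in. Once the models are arranged in this way, the checks that universal ellipticity, membership of edge stabilizers in $\E$, and domination are preserved at successor and limit stages are routine from the cited results; the genuine content is the bookkeeping of this nested double limit rather than the conceptual scheme above.
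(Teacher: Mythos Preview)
Your strategy is sound and close in spirit to the paper's, but the paper avoids Zorn's lemma and transfinite recursion entirely. It simply observes that the collection of universally elliptic $\E$-trees (normalized via Example~\ref{changing jsj-tree}) is filtered under domination---any two have a common universally elliptic dominator by Proposition~\ref{refinement for finitely generated G} together with Lemma~\ref{refinement univer elliptic}(2)---and takes the inverse limit $S=\varprojlim_i S_i$ in one stroke; the verification that $S$ is a JSJ tree is then essentially your first paragraph. Your well-ordering of a chain and the tower $(R_\xi)$ amount to extracting a cofinal subsystem of this same filtered system, so the detour through Zorn adds machinery without adding content: once the system is known to be directed, the single inverse limit already furnishes both the upper bound for every chain and the maximal element.

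The ``delicate point'' you flag---that Proposition~\ref{refinement for finitely generated G} refines some $S$ in the deformation space of the input rather than the input itself, so genuine collapse maps for the inverse limit are not automatic---is real, and it is precisely the technical content in both arguments. The paper handles it the way you sketch: the representatives $S_i$ are not arbitrary but are the standard trees built from the level-$U$ data of Lemma~\ref{inverse limit of virtually free groups}, and the standard refinements of Proposition~\ref{refinement for finitely generated G} are themselves constructed as inverse limits over the same system of $U$'s, so the collapse maps are present by construction. Your final paragraph correctly locates this; the paper simply asserts filteredness and passes to the limit, leaving the same bookkeeping implicit.
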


\begin{proof} 
Let $T_i$ be the set of all universally elliptic pro-$p$ $\E$-trees on which $G$ acts.  By Example \ref{changing jsj-tree} there exists a  universally elliptic pro-$p$ tree $S_i$ in the same deformation space as $T_i$.  We claim that $\{S_i\mid i\in I\}$ is filtered. Indeed, if say, $S_i$  does not dominate  $S_j$ then by Proposition \ref{refinement for finitely generated G} there exists a standard refinement $\hat S_i$ that dominates both $S_i$ and $S_j$. Now let $S=\varprojlim_i S_i$. We claim that $S$ dominates every universally elliptic $\E$-tree. But this is clear, since $S$ dominates every $S_i$ and hence every $T_i$.

  Now we proceed as in the proof of Theorem \ref{existence} to show that $S$ is a JSJ $\E$-tree. Let $T'$ be another universally elliptic pro-$p$ $\E$-tree for $G$. It suffices  to show that any vertex stabilizer  $G_v$ of $T$ is elliptic in $T'$. 
  
  If not, then $G_v$ acts on $T'$ non-trivially. Since $S$ is universally elliptic every edge stabilizer in $S$ is elliptic in $T'$. It follows that every edge group of an edge incident to $v$
is elliptic in $T'$. Let $ \hat S$ be the standard refinement of $S$ at $v$ (cf. Proposition \ref{refinement for finitely generated G}  and \ref{cofinite second tree}). By Proposition \ref{refinement for finitely generated G}  (3) $\hat S$ is universally elliptic, since $S$ and $T'$ are.  Then $\hat S$  dominates $S$ ( by Proposition \ref{refinement for finitely generated G}(2)), but $S$ does not dominate  $\hat S$.  This contradicts the definition of $S$. 	

\end{proof}

We illustrate this theorem in Example \ref{Wilkes} in the next section and 
 finish this section with the following simple facts for future reference.

\begin{lemma}\label{JSJ refinement} Let $G$ be a pro-$p$ group, $T$ be a JSJ pro-$p$ tree, and $S$ any $G$-tree. Suppose either $G$ is finitely generated or $T/G$ is finite.

\begin{enumerate}
\item[(i)] There is a tree $\hat T$  which refines $T$ and dominates $S$;

\item[(ii)] If $S$ is universally elliptic, it may be refined to a JSJ pro-$p$ tree.
\end{enumerate}
\end{lemma}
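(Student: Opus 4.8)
The plan is to derive both assertions from the standard–refinement machinery of Section~4 together with the defining properties of the given JSJ tree $T$, and I would use $T$ itself in each part. For (i), note that since $T$ is a JSJ tree it is universally elliptic, and because $S$ is an $\E$-tree this already means $T$ is elliptic with respect to $S$. I would then apply Lemma~\ref{refinement univer elliptic}(1) with $T_1=T$ and $T_2=S$: under the hypothesis $T/G<\infty$ it produces a genuine refinement $\hat T$ of $T$ dominating $S$, while under the hypothesis that $G$ is finitely generated it produces a refinement of some tree $S'$ in the deformation space of $T$. In the latter case $S'$ again lies in $D_{JSJ}$ and is therefore itself a JSJ tree, so $\hat T$ refines a JSJ tree and dominates $S$, which is what (i) asks for up to the usual replacement within the JSJ deformation space.

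For (ii), suppose $S$ is universally elliptic. Now both $S$ and $T$ are universally elliptic and $S$ is elliptic with respect to $T$, so Lemma~\ref{refinement univer elliptic}(2) (i.e.\ Proposition~\ref{refinement} or Proposition~\ref{refinement for finitely generated G}) yields a standard refinement $\hat S$ of $S$ (resp.\ of a tree in the deformation space of $S$) dominating $T$, and this $\hat S$ is again universally elliptic. I would first check that $\hat S$ is an $\E$-tree, so that it can qualify as a JSJ tree: by Assertion~(3) of Proposition~\ref{refinement} (resp.\ \ref{refinement for finitely generated G}) every edge stabilizer of $\hat S$ fixes an edge of $S$ or of $T$, hence is contained in an edge stabilizer of one of these $\E$-trees and so lies in $\E$ (which is closed under subgroups). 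It then remains to recognize $\hat S$ as a JSJ tree: it is universally elliptic, it dominates $T$ by construction, and $T$, being a JSJ tree, dominates every universally elliptic $\E$-tree. Since domination is transitive (a subgroup elliptic in $\hat S$ is elliptic in $T$, hence elliptic in any universally elliptic $\E$-tree), $\hat S$ dominates every universally elliptic $\E$-tree as well. Thus $\hat S$ is a universally elliptic tree dominating all universally elliptic trees, i.e.\ a JSJ tree refining $S$, as required.

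The main obstacle I anticipate is matching the applicability of the refinement construction to the stated hypotheses. The clean argument for (ii) refines $S$ (not $T$) in order to dominate $T$, and Proposition~\ref{refinement} requires $S/G$ to be finite rather than $T/G$; the finitely generated case is covered directly by Proposition~\ref{refinement for finitely generated G}, but in the branch where only $T/G<\infty$ is assumed one must either observe that $S/G$ is finite as well (or pass to a tree in the deformation space of $S$ having finite quotient) or read ``refined'' up to the deformation space, as is customary throughout the pro-$p$ theory. A secondary point to state explicitly is that in the cofinite case $\hat S$ is honestly a refinement of $S$ itself, whereas in the finitely generated case it is only a refinement of a tree in the deformation space of $S$; this is exactly the caveat already present in Proposition~\ref{refinement for finitely generated G} and should be flagged rather than hidden.
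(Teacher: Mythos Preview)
Your approach is essentially identical to the paper's: for (i) you use that the JSJ tree $T$ is universally elliptic, hence elliptic with respect to $S$, and invoke the standard refinement construction; for (ii) you reverse roles, build a standard refinement $\hat S$ of $S$ dominating $T$, observe it is universally elliptic by Lemma~\ref{refinement univer elliptic}(2), and conclude it is a JSJ tree because it dominates $T$. The paper's proof is terser (it does not spell out that $\hat S$ is an $\E$-tree or the transitivity of domination), but the skeleton is the same.

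The hypothesis-matching issue you flag in (ii) is real and is not addressed in the paper's proof either: to refine $S$ via Proposition~\ref{refinement} one needs $S/G$ finite, not $T/G$ finite, and the alternative route through Proposition~\ref{refinement for finitely generated G} needs $G$ finitely generated. So in the branch where only $T/G<\infty$ is assumed, both your argument and the paper's silently rely on being able to produce such a refinement; your suggestion to read ``refined'' up to deformation space, or to note that the finitely generated hypothesis is the operative one here, is the honest way to handle it.
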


\begin{proof} Since $T$ is elliptic with respect to $S$, one can construct a standard refinement $\hat T$ of
$T$ dominating $S$ (Propositions \ref{refinement}, \ref{refinement for finitely generated G}). It satisfies the first assertion.
For the second assertion, since $S$ is elliptic with respect to $T$, we can consider a standard
refinement $\hat S$ of $S$ dominating $T$. It is universally elliptic by the second assertion of Lemma
\ref{refinement univer elliptic}, and dominates $T$, so it is a JSJ tree.\end{proof}

\section{Examples of JSJ-splittings}

Recall that we have fixed $\E$ and $\K$, and we only consider $(\E,\K)$-trees. Unless otherwise indicated, $G$ is only assumed to be finitely generated in this section. At the end of this section we shall give two examples of JSJ decompositions having flexible vertices, but most examples here will have all vertices rigid. The fact that they are indeed JSJ decompositions will be a consequence of Lemma \ref{universally elliptic vertex}.

\subsection{  Free pro-$p$ groups} Let $G=F_n$ be a finitely generated free pro-$p$ group, let $\E$ be arbitrary, and $\K=\emptyset$. Then the JSJ deformation space of $F_n$ over $\E$ is the space of pro-$p$ trees on which $F_n$ acts freely. Note that if we assume that $T/F_n$ is reduced then it is just a bouquet and $T$ is the Cayley graph of $F_n$.

\bigskip

\subsection{Virtually free pro-$p$ groups}
 
 More generally, if $G$ is virtually free pro-$p$, by the main result of \cite{HZ-13} $G$ splits as a the fundamental pro-$p$ group of a finite graph of finite $p$-groups. Thus if  $\E$ consists of all finite subgroups, then $D_{JSJ}$ is the space of trees with finite vertex stabilizers.  
 
 Recall that there exists a finite sequence of elementary transformations: reductions and expansions (expansion is the inverse operation to the reduction), that transforms one JSJ splitting  into another (cf. Example \ref{changing jsj-tree}). 
 
 \medskip
 Note that there is the following open 
 
 \begin{question} Is a finitely generated pro-$p$ group $G$ acting on a pro-$p$ tree $T$ with finite vertex stabilizers  accessible  and hence is virtually free pro-$p$? \end{question}
 
 \subsection{Free splittings} This is the pro-$p$ version of the Grusko JSJ-decomposition.
 
 Suppose  $\E$ consists only of the trivial subgroup of $G$, and $\K=\emptyset$. Thus $\E$-trees are trees with trivial edge stabilizers, also called free splittings. Then the JSJ deformation space exists. We call it the Grushko deformation space. It consists of trees $T$ such that edge stabilizers are trivial, and vertex stabilizers are freely indecomposable and different from $\Z_p$ (one often considers $\Z_p$ as freely decomposable since it splits as an HNN-extension over the trivial group). Denoting by $G=G_1\amalg ···\amalg G_n\amalg F_m$  a decomposition of $ G$ such that  $G_i\neq \Z_p$ are indecomposable, and $F_m$ is free pro-$p$ (such splitting always exists for a finitely generated pro-$p$ group $G$), the quotient graph of groups $T/G$  has one vertex with group $G_i$ for each $i$, and all other vertex groups are trivial. Below is the picture of the Grushko JSJ-decomposition when $n=3$, $m=2$.
 
 $$\xymatrix{&&&& {\overset{G_2 }{\bullet}}\\
{\overset{G_1}{\bullet}}\ar[rr]&& {\overset{}{\bullet}}\ar[rru] \ar[rrd]\ar@(dl,dr)\ar@(ul,ur)\\  
&&&&{\overset{G_3}{\bullet}}}$$

Note that this graph of groups is not reduced, it admits a collapse of one edge; after this it becomes reduced. 

\medskip 
 If $\K\neq \emptyset$, the JSJ deformation space is the Grushko deformation space relative to $K$. Edge stabilizers of JSJ trees are trivial, groups in $\K$ fix a vertex, and vertex stabilizers are freely indecomposable relative to their subgroups which are conjugate to a group in $\K$.
 
 \begin{proposition}\label{relative Grushko} Let $G$ be a second countable pro-$p$ group and $G=G_1\amalg ···\amalg G_n\amalg F_m$ be a Grushko decomposition relative to $\K$. Any finitely generated $\amalg$-indecomposable relative to $\K$ subgroup A of H is conjugate to a
subgroup of $G_i$ for some $i$. Moreover, if $G = A \amalg B$ for some closed
subgroup $B$ of $G$, then $A$ is conjugate to some $G_i$.\end{proposition}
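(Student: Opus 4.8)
The plan is to derive both statements from the pro-$p$ Kurosh subgroup theorem (Theorem~\ref{KST}), which is available because $G$ is second countable. I would apply it to $A$ as a subgroup of $G=\coprod_{i=1}^{n}G_i\amalg F_m$, obtaining a closed system of double coset representatives and an isomorphism
$$A \;=\; \Big(\coprod_{i=1}^{n}\ \coprod_{g_{i,\tau}\,\in\, A\backslash G/G_i} A\cap G_i^{\,g_{i,\tau}}\Big)\amalg F ,$$
with $F$ free pro-$p$. Concretely this is the decomposition of $A$ coming from the action of $A$ on the standard pro-$p$ tree $S$ of the given splitting of $G$, so it is a free pro-$p$ splitting of $A$ (all edge groups trivial) whose vertex stabilizers are exactly the subgroups $A\cap G_i^{\,g_{i,\tau}}$ together with a free factor $F$.

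Next I would check that this splitting of $A$ is relative to $\K$. If $K\in\K$ and $K^{a}\leq A$ for some $a\in G$, then $K$ is elliptic in $S$ by the hypothesis on the relative Grushko decomposition (groups of $\K$ fix a vertex of the $G$-tree, i.e.\ are conjugate into some $G_i$), so $K^{a}$ fixes a vertex of $S$; since the displayed decomposition realizes $A\backslash S$, the subgroup $K^{a}$ is conjugate inside $A$ into one of the factors $A\cap G_i^{\,g_{i,\tau}}$. Thus every group of $\K$ meeting $A$ is elliptic in the Kurosh splitting of $A$. Now $A$ is finitely generated, so only finitely many factors $A\cap G_i^{\,g_{i,\tau}}$ are non-trivial and $F$ has finite rank; since moreover $A$ is $\amalg$-indecomposable relative to $\K$ (and, by the convention of this section, not isomorphic to $\Z_p$), this relative free splitting of $A$ must be trivial. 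Hence one factor $A\cap G_i^{\,g_{i,\tau}}$ equals $A$ and all the others, as well as $F$, are trivial; in particular $A\leq G_i^{\,g_{i,\tau}}$, which proves the first assertion.

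For the ``moreover'', recall that, as we only consider $(\E,\K)$-trees, the decomposition $G=A\amalg B$ is relative to $\K$, i.e.\ every group of $\K$ is conjugate into $A$ or into $B$. By the first assertion we may conjugate (conjugating $B$ accordingly) so that $A\leq G_i$ for some $i$. Applying Theorem~\ref{KST} to $G_i$ as a subgroup of $G=A\amalg B$, and using that $A\leq G_i$ forces the trivial double coset to contribute the factor $G_i\cap A=A$, we get $G_i=A\amalg R$ for a closed subgroup $R$. Arguing exactly as above — every group of $\K$ contained (up to conjugacy) in $G_i$ fixes a vertex of the $A\amalg B$-tree, since that splitting is relative to $\K$, hence is conjugate in $G_i$ into $A$ or into $R$ — this is a free splitting of $G_i$ relative to $\K$; as $G_i$ is freely indecomposable relative to $\K$ by the Grushko hypothesis, it is trivial, so $R=1$ and $A=G_i$. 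Undoing the conjugation, $A$ is conjugate to $G_i$.

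The step I expect to be the main obstacle is the passage ``elliptic in the ambient pro-$p$ tree $\Rightarrow$ conjugate inside $A$ (resp.\ $G_i$) into a Kurosh factor'', used twice: one must be careful that ellipticity in the $G$-tree genuinely descends, via the identification of the Kurosh decomposition with the quotient graph of groups of the restricted action, to ellipticity in the Bass--Serre tree of that quotient, and that the vertex orbits of the restricted action with non-trivial stabilizer correspond bijectively to the non-trivial Kurosh factors. Once this is in place, the finite generation of $A$ (to cut the a priori profinite family of Kurosh factors down to a finite one) together with the two indecomposability hypotheses finish the argument.
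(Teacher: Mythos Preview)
Your approach is correct and is exactly what the paper does: the paper's proof consists of the single sentence ``Follows from the pro-$p$ version of the Kurosh subgroup theorem (see Theorem~\ref{KST})'', and you have supplied the details of that deduction. Your worry about the descent of ellipticity is unfounded: a subgroup $K\leq A$ fixing a vertex of the ambient tree $S$ fixes that same vertex in the restricted $A$-action, and the Kurosh factors are precisely the $A$-vertex stabilizers, so $K$ lands in one of them up to $A$-conjugacy.
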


\begin{proof} Follows from the pro-$p$ version
of the Kurosh subgroup theorem (see Theorem \ref{KST}).

\end{proof}
 
\subsection{ Splittings over finite groups} 

This is a pro-$p$ version of Stallings-Dunwoody JSJ-space.

\bigskip
 If $\E$ is the set of finite subgroups of $G$, and $\K=\emptyset$, then the JSJ deformation space is the set of trees whose edge groups are finite and whose vertex groups either finite  or virtually freely indecomposable (one deduces from the main result of \cite{WEZ-16}  that a pro-$p$ tree is maximal for domination if and only if its vertex stabilizers are virtually  indecomposable as a free pro-$p$ product).  If $G$ is  finitely generated, JSJ deformation space exists by Theorem \ref{existence for finitely generated G}.  If $G$ is accessible there exists a cofinite JSJ pro-$p$ tree.

\begin{remark} There is a cofinite JSJ pro-$p$ tree over $\E$ if $\E$ is a family of finite subgroups of bounded order. The reason is Wilkes pro-$p$ version of   Linnell’s accessibility \cite[Theorem 3.1]{wilk}.
\end{remark}

 If $\K\neq \emptyset$, then we talk about JSJ deformation space relative to $\K$. Edge stabilizers are finite, groups in $\K$ fix a vertex, and vertex stabilizers are virtually freely indecomposable  relative to their subgroups which are conjugate to a group in $\K$: they do not split over finite subgroups relative to their subgroups which are conjugate to a group in $\K$. As above, the relative JSJ space exists if $G$ is finitely generated.  If $\E$ consists of finite groups of bounded order (and $K$ is arbitrary) then there exists a cofinite JSJ $\E$-tree. If $\E$ contains finite groups of arbitrary large order,  a cofinite JSJ $\E$-tree exists if relative accessibility holds.
 
\medskip  
 We finish the subsection with  the example of an inaccessible finitely generated pro-$p$ group presented by Wilkes in \cite[Section 4.2]{Wilkes} that illustrates Theorem \ref{existence for finitely generated G}. 

\bigskip	
	\begin{example}\label{Wilkes} First define the map $\mu_n:\{0, . . . , p^{n+1}-1\}\longrightarrow \{0, . . . , p^n-1\}$ by sending an integer to its remainder modulo $p^n$.  Define $H_n=\mathbb{F}_p[\{0, . . . , p^n-1\}]$ to be the $\mathbb{F}_p$-vector space with basis $\{h_0, . . . , h_{{p^n}-1}\}$. There are inclusions $H_n\subseteq H_{n+1}$ given by inclusions of bases, and retractions $\eta_n:H_{n+1}\longrightarrow H_n$ defined by $h_k\rightarrow h_{{\mu_n}(k)}$. Note also that there is a natural action of $\Z/p^n\Z$ on $H_n$ given by cyclic permutation of the basis elements, and that these actions are compatible with the retractions $\eta_i$. The inverse limit of the $H_n$ along these retractions is the completed group ring $H_{\infty}=\mathbb{F}_p[[\Z_p]]$ with multiplication ignored. The continuous action of $\Z_p$ on the given basis of $H_{\infty} $ allows  to form a sort of a pro-$p$ wreath product $H_\omega=\mathbb{F}_p[[\Z_p]]\rtimes \Z_p= \varprojlim (H_n\rtimes \Z/p^n)$ which is a pro-$p$ group into which $H_{\infty}$ embeds.
		
		Next set $K_n=\mathbb{F}_p\times H_n=\langle k_n\rangle\times H_n$. Set $G_1=K_1\times\mathbb{F}_p$. For $n >1$, let $G_n$ be a finite $p$-group with presentation $G_n=\langle k_{n-1}, k_n,h_0, \ldots , h_{p^n-1} \mid k_i^p=h_i^p= 1, h_i\leftrightarrow h_j,k_{n-1}\leftrightarrow h_i\  {\rm for\ all\ } i\neq p^{n-1}$, $k_n= [k_{n-1}, h_{p^{n-1}}]\  {\rm central}\rangle$ where $\leftrightarrow$ denotes the relation ‘commutes with’. 
		
		The choice of generator names describes maps $$H_n\longrightarrow G_n, K_{n-1}\longrightarrow G_n,  K_n\longrightarrow G_n.$$  One may easily see that all three of these maps are injections. Define a retraction map $\rho_n:G_n\longrightarrow K_{n-1}$ by killing $k_n$ and by sending $h_k\rightarrow h_{{\mu_{n-1}}(k)}$. Note that $\rho_n$ is compatible with $\eta_n:H_n\rightarrow H_{n-1}$ that is, there is a commuting diagram
		
		\begin{equation*}
			\begin{tikzcd}
				K_{n-1} \arrow[hook, shift left]{r} & G_n \arrow[shift left]{l}{\rho_n}\\
				H_{n-1} \arrow[hook]{u} \arrow[hook, shift left]{r} & H_n \arrow[hook]{u} \arrow[shift left]{l}{\eta_n}  
			\end{tikzcd}
		\end{equation*}
		
		Define $\Pi_1(\G_m,\Gamma_m,v_m)$ to be the pro-$p$ fundamental group of the following graph of groups:
		
		\begin{center}
			\begin{tikzpicture}[every edge/.append style={nodes={font=\scriptsize}}]
				\node (0) at (0,0) [label=above:{\scriptsize $G_1$},point];
				\node (1) at (2,0) [label=above:{\scriptsize $G_2$},point];
				\node (2) at (3,0) {$\cdots$};
				\node (3) at (4,0) [label=above:{\scriptsize $G_{m-1}$},point];
				\node (4) at (6,0) [label=above:{\scriptsize $G_m$},point];
				
				\draw[->] (0) edge node[above] {$K_1$} (1);
				\draw[->] (3) edge node[above] {$K_{m-1}$} (4);
			\end{tikzpicture}
		\end{center}
		
		Note that the retraction $\rho_n:G_n\longrightarrow K_{n-1}$ induces the retraction $P_{m+1}\longrightarrow P_m$  represented by the collapse the last right edge of the picture. 
		
		Then $P=\varprojlim_{m\in\N} \Pi_1(\G_m,\Gamma_m,v_m)$ is the fundamental group of the following profinite graph of pro-$p$ groups
		
		\begin{center}
			\begin{tikzpicture}[every edge/.append style={nodes={font=\scriptsize}}]
				\node (0) at (0,0) [label=above:{\scriptsize $G_1$},point];
				\node (1) at (2,0) [label=above:{\scriptsize $G_2$},point];
				\node (2) at (4,0) [label=above:{\scriptsize $G_{3}$},point];
				\node (3) at (5,0) {$\cdots$};
				\node (4) at (6,0) [label=above:{\scriptsize $G_{\infty}$},point];
				
				\draw[->] (0) edge node[above] {$K_1$} (1);
				\draw[->] (1) edge node[above] {$K_{2}$} (2);
			\end{tikzpicture}
		\end{center}
		where the vertex at infinity is a one point compactification of the edge set of the graph and so does not have an incident edge to it; thus the edge set is not compact. The vertex group $G_{\infty}$ of the vertex at infinity is $G_\infty=K_{\infty}=\varprojlim_{i\in \N} K_i=H_{\infty}$. Let $J=P\amalg_{H_{\infty}} H_{\omega}$.  Then $J$ is the fundamental group of the following profinite graph of groups
		
		\begin{center}
			\begin{tikzpicture}[every edge/.append style={nodes={font=\scriptsize}}]
				\node (0) at (0,0) [label=above:{\scriptsize $G_1$},point];
				\node (1) at (2,0) [label=above:{\scriptsize $G_2$},point];
				\node (2) at (3,0) {$\cdots$};
				\node (3) at (4,0) [label=above:{\scriptsize $H_{\infty}$},point];
				\node (4) at (6,0) [label=above:{\scriptsize $H_{\omega}$},point];
				
				\draw[->] (0) edge node[above] {$K_1$} (1);
				\draw[->] (3) edge node[above] {$H_{\infty}$} (4);
			\end{tikzpicture}
		\end{center}
		
		By \cite[Section 4.3]{Wilkes}, this graph of pro-$p$ groups is injective and by \cite[Section 4.4]{Wilkes} $J=\langle G_1, H_\omega\rangle$. Since $G_1$ is finite and $H_\omega$ is 2-generated, $J$ is finitely generated (in fact for $p=2$ the group $J$ is 3-generated).  Collapsing the right edge we shall get the reduced graph of pro-$p$ groups  since no vertex group  equals to an edge group of an incident edge. Note that the latter graph of groups has a unique vertex $\infty$ whose vertex group is infinite and isomorphic to $\F_p\wr \Z_p$ which does not split over a finite $p$-group.  This is a JSJ decomposition of $J$ over finite $p$-groups.

	\end{example}
 
   \subsection*{Small groups} Recall that $G$ is small in $(\E,\K)$-trees if  for any such tree $T$ the group $G$ has no free non-abelian pro-$p$ subgroup acting freely on $T$: there always is a fixed point, or the action on the minimal  subtree has cocyclic or codihedral kernel (see \cite[Theorem 3.15]{RZ-00}).This is in particular the case if $G$ is small, i.e. contains no non-abelian free pro-$p$ group. If $ G$ does not fix a vertex, then every vertex stabilizer in the minimal subtree has a subgroup of index at most $2$ fixing an edge (the index is $2$ if $p=2$).

   \begin{lemma} If $G$ is small in $(\E,\K)$-trees, and $T$ is a non-trivial universally elliptic $(\E,\K)$-tree, then $T$ dominates every $(\E,\K)$-tree.\end{lemma}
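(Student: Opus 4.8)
The plan is to use the smallness hypothesis on $T$ \emph{itself} rather than on the comparison tree, and then to combine the rigid structure of $T$ thus obtained with its universal ellipticity.

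Fix an $(\E,\K)$-tree $T'$; I must show that $T$ dominates it, i.e. that every vertex stabilizer of $T$ is elliptic in $T'$. Since $T$ is non-trivial, $G$ acts on $T$ without a global fixed point, so by Proposition \ref{unique G-invariant} there is a minimal invariant pro-$p$ subtree $D\subseteq T$ with $|D|>1$ (in particular $D$ has an edge). As $G$ is small in $(\E,\K)$-trees and $T$ is an $(\E,\K)$-tree, the action of $G$ on $D$ has cocyclic or codihedral kernel $N\trianglelefteq G$: either $G/N\cong\Z_p$ (case A), or $p=2$ and $G/N$ is the infinite pro-$2$ dihedral group $\Z_2\rtimes\Z/2$ (case B). In both cases $G/N$ acts faithfully, minimally and without global fixed point on $D$. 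Inspecting the closed subgroups of $\Z_p$, resp.\ of $\Z_2\rtimes\Z/2$, and using Corollary \ref{fixed vertex} (a virtually elliptic pro-$p$ group is elliptic) to rule out open point stabilizers, I would conclude: the stabilizer of every edge of $D$ equals $N$, and the stabilizer of every vertex of $D$ is either $N$ itself or — only in case B — an extension of $N$ of index $2$.

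Now the edges of $D$ are edges of $T$, so $N$ is an edge stabilizer of $T$; since $T$ is universally elliptic and $T'$ is an $\E$-tree, $N$ is elliptic in $T'$, and $Y:=T'^{\,N}$ is a non-empty pro-$p$ subtree of $T'$ (by \cite[Theorem 3.7]{RZ-00}). Let $v$ be a vertex of $D$. If $G_v=N$ it is elliptic in $T'$. Otherwise $G_v/N\cong\Z/2$, and because $N\trianglelefteq G_v$ the group $G_v$ preserves $Y$ and acts on it through the finite $2$-group $G_v/N$, which fixes a vertex of $Y$ by Corollary \ref{fixed vertex}; hence $G_v$ fixes a vertex of $T'$. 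So every vertex stabilizer of $D$ is elliptic in $T'$. Finally, for an arbitrary vertex $v$ of $T$, let $v_0$ be the projection of $v$ onto the $G$-invariant subtree $D$; then $G_v$ fixes $v_0$, so $G_v\subseteq G_{v_0}$ and $G_v$ is elliptic in $T'$. Thus $T$ dominates $T'$, as required.

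The one real idea is applying smallness to $T$ in the first step, after which the vertex stabilizers of $T$ along its minimal subtree are forced to be $N$ or an index-$2$ extension of $N$. The only genuine computation is the case $p=2$ above, where ellipticity of $N$ in $T'$ must be upgraded to ellipticity of $G_v$ via the $\Z/2$-action on $T'^{\,N}$; the stabilizer bookkeeping along $D$ and the closest-point projection at the end are routine. I would also note that the argument uses neither finite generation of $G$ nor $\E$-accessibility.
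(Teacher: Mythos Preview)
Your proof is correct and follows essentially the same approach as the paper: use smallness on $T$ to see that every vertex stabilizer of the minimal subtree contains an edge stabilizer with index at most $2$, then combine universal ellipticity of edge stabilizers with Corollary \ref{fixed vertex} to conclude that vertex stabilizers are elliptic in any $(\E,\K)$-tree. The paper compresses your case B analysis into a single invocation of Corollary \ref{fixed vertex} (virtually elliptic $\Rightarrow$ elliptic) rather than passing through the fixed subtree $T'^{\,N}$, and it also does not need your claim that every edge stabilizer of $D$ is exactly $N$ (which is slightly delicate in the dihedral case); it only uses the weaker fact, stated just before the lemma, that each vertex stabilizer contains some incident edge stabilizer with index $\le 2$.
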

   
   \begin{proof} Since the action of $G$ on $T$ is non-trivial, every vertex stabilizer contains an edge stabilizer with index at most 2 (by above). It follows that every vertex stabilizer is universally elliptic, since the stabilizer of the edge is (see Proposition \ref{fixed vertex}). So $T$ dominates every $(\E,\K)$-tree. \end{proof}

   \begin{corollary} If $G$ is small in $(\E,\K)$-trees, there is at most one non-trivial deformation space containing a universally elliptic tree.\end{corollary}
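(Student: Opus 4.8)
The plan is to read this off immediately from the preceding Lemma. Suppose $T_1$ and $T_2$ are two non-trivial universally elliptic $(\E,\K)$-trees; I want to show they lie in the same deformation space, which will give the claimed uniqueness. First I would apply the Lemma to $T_1$: since $T_1$ is non-trivial and universally elliptic, $T_1$ dominates every $(\E,\K)$-tree, in particular $T_1$ dominates $T_2$. Symmetrically, applying the Lemma to $T_2$ gives that $T_2$ dominates $T_1$.

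Next I would unwind what mutual domination yields. Domination of $T_2$ by $T_1$ means that every subgroup of $G$ which is elliptic in $T_1$ is elliptic in $T_2$; the reverse domination gives the opposite inclusion. Hence the families of elliptic subgroups of $(G,T_1)$ and of $(G,T_2)$ coincide, and in particular the sets of elliptic elements coincide. By the definition of the deformation space, this says precisely that $T_1$ and $T_2$ belong to the same deformation space. Consequently any two non-trivial universally elliptic $(\E,\K)$-trees lie in one and the same deformation space, so there is at most one non-trivial deformation space containing a universally elliptic tree.

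There is essentially no obstacle here: the corollary is little more than a restatement of the Lemma. The only mild point worth noting is that domination is phrased via ellipticity of subgroups rather than of single elements, but by Proposition \ref{ellipticity} ellipticity of every element of a (closed) subgroup already forces ellipticity of the subgroup, so the subgroup-level and element-level formulations of "same deformation space" agree and no extra argument is needed.
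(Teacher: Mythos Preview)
Your proof is correct and is exactly the argument the paper intends: the corollary is stated immediately after the Lemma with no proof, so it is meant to follow directly by applying the Lemma to each of two non-trivial universally elliptic trees and concluding mutual domination. Your remark on Proposition~\ref{ellipticity} reconciling subgroup-level and element-level ellipticity is apt but, as you note, not essential here.
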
 
   
   In this situation, the JSJ deformation space always exists: if there is a deformation space as in the corollary, it is the JSJ space; otherwise, the JSJ space is trivial.
   
   \bigskip
  \begin{example}\label{Baumslag-Solitarex} Consider for instance (non-relative) cyclic splittings of solvable pro-$p$ Baumslag-Solitar groups 
   
$$BS(1,n)=\langle a,t\mid tat^{-1}=a^{n},n\not\equiv 0 ({\rm mod}\  p)\rangle\cong \Z_p\rtimes \Z_p,$$
where $n$ is supernatural rather than natural number. 
   If $n= 1$ (so $G\cong \Z_p\times \Z_p$ ), 
    there are infinitely many deformation spaces (corresponding to epimorphisms $G\longrightarrow \Z_p$) and there is no non-trivial universally elliptic tree.
  
  \medskip  
    If $n\neq \pm 1$ there exists a unique deformation space containing  
the Bass-Serre tree $S$ of the HNN extension 
   $\langle a,t\mid tat^{-1}=a^{n}\rangle=HNN(\langle a\rangle,\langle a\rangle,t) $. This pro-$p$ tree is universally elliptic and therefore is JSJ-tree. Indeed, vertex  stabilizers of a non-trivial  $G$-tree $T$ have to be cyclic, since 2-generated subgroups are open in $G$ and if stabilize a vertex then so is $G$ by Corollary \ref{fixed vertex}.   By Theorem \ref{splitting} $G$ splits as a non-trivial (non-fictitious) free pro-$p$ product with amalgamation $G_1\amalg_{G_e} G_2$ or HNN-extension $HNN(G_1, G_e, t)$.  As $G$ is soluble, either  $G=G_1\amalg_{G_e} G_2$ with $[G_1:G_e]= 2=[G_2:G_e]$ or $HNN(G_e, G_e, t')$ . In the first case $G_1, G_2$ are cyclic and so $T/G$ has one edge only,  $p=2$ and  $G/G_e$ is infinite dihedral, which is impossible for $n\neq -1$. Thus  this case of the amalgamation $G=G_1\amalg_{G_e} G_2$ does not occur. 
    
   So $G=HNN(G_e, G_e, t')$ and  $T$ is the Bass-Serre tree of $HNN(\langle a\rangle,\langle a\rangle,t') $, where $t'$ is a unit of $\langle t\rangle\cong \Z_p$. As $\langle a \rangle$ is a unique normal subgroup $N$ of $G$ such that $G/N\cong \Z_p$, $\langle a\rangle=G_e$.  Therefore $a$ is elliptic in $T$. 
   
   Thus we proved that $S$ is universally elliptic in this case.

\medskip   
   If $n=-1$ ($p=2$, and so $G$ is pro-2 Klein bottle group), there are exactly two non-trivial deformation spaces: one contains the Bass-Serre tree of the HNN extension 
   $\langle a,t\mid tat^{-1}=a^{-1}\rangle$, the other 
    contains the tree associated to the amalgam $\langle t\rangle\amalg_{\langle t^2=v^2\rangle}\langle v\rangle$, with $v=ta$. None of these trees is universally elliptic ($t$ and $v$ are hyperbolic in the HNN extension, and $a$ is hyperbolic in the amalgam). Thus   the  JSJ deformation space of $BS(1,-1)$ is the trivial one.
   \end{example}
   
The essential feature of JSJ theory is the description of flexible
vertices, in particular the fact that flexible vertex stabilizers are
often “surface-like” (\cite{RS97, DS99, FP06} and \cite[Theorem 6.2]{GL}. The next theorem show that flexible vertices of cyclic pro-$p$ trees split as a free pro-$p$ product.   
   
   \begin{theorem} Let $G$ be a finitely generated pro-$p$ group and $\E$ be the family of infinite cyclic pro-$p$ groups. If $Q\neq \Z_p$ is a vertex stabilizer of a cyclic JSJ $G$-tree and  $Q$ does not split as a free pro-$p$ product then it is elliptic in any cyclic $G$-tree.\end{theorem}
   
   \begin{proof}  Let $T$ be a JSJ $G$-tree where $Q$ is not elliptic. By Lemma \ref{dominant splitting}(3) $Q$ splits as a free pro-$p$ product, contradicting the hypothesis.
   
   \end{proof}

  \subsection{Locally finite trees}\label{Baumslag-Solitar}

  One can generalize the previous example to locally finite trees with small edge stabilizers (i.e. $\E$ consists of small pro-$p$ groups). The example of it is a non-soluble Baumslag-Solitar pro-$p$ group, $$BS(m,n)=\langle a,t\mid ta^mt^{-1}=a^{n},)\rangle\cong \Z_p\amalg_{\Z_p} (\Z_p\rtimes \Z_p),$$
   where $m,n$ are supernatural numbers. Since $a^m$ and $a^n$ have the same order in every finite quotient, they generate the same group $\Z_p$ and  this $\Z_p$ is the amalgamated subgroup  identified with $n\Z_p=m\Z_p$ in the first factor and with the normal semidirect factor in the second factor; in particular this normal subgroup is normal in $G$.

\begin{definition}  Let be $G$ a pro-$p$ group, and $H\leq_{c} G$. The pro-$p$ commensurator of $H$ in $G$, denoted $Comm_{G}(H)$, is the set defined as:

$$Comm_{G}(H)=\overline{\{g\in G\mid H^{g}\cap H\leq_{o} H\, \textit{\rm and}\, H^{g}\cap H\leq_{o} H^{g}  \}}.$$
\end{definition}

Observe that the commensurator $Comm_{G}(H)$ is a (closed) subgroup of $G$ and $N_G(H)\leq Comm_{G}(H)$.   

\begin{theorem}\label{commensurator}(\cite[Theorem 3.4]{BZ})
Let $G$ be a pro-$p$ group acting on a pro-$p$ tree $T$ and $H$ a non-trivial  non-elliptic subgroup of $G$. Let $D$ be a minimal $C$-invariant pro-$p$ subtree of $T$. Then $Comm_{G}(H)$ acts on $D$.
\end{theorem}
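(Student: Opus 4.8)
The plan is to prove that every $g$ in the set $\{g\in G : H^g\cap H\leq_o H \text{ and } H^g\cap H\leq_o H^g\}$ maps $D$ onto itself, and then to pass to the closure; here the ``$C$'' in the statement is read as $H$, so that $D$ is a minimal $H$-invariant pro-$p$ subtree of $T$. Since $H$ is non-elliptic, $D$ has more than one point, so by Proposition \ref{unique G-invariant} it is the \emph{unique} minimal $H$-invariant pro-$p$ subtree of $T$. I will use repeatedly that for any non-elliptic subgroup $K\leq G$ there is a unique minimal $K$-invariant pro-$p$ subtree $D_K$, and that $D_K$ is contained in every $K$-invariant subtree of $T$ (a minimal $K$-invariant subtree of such a subtree is minimal in $T$, hence equals $D_K$ by uniqueness).

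The technical heart is the following claim: \emph{if $K\leq_o H$ is open, then $K$ is non-elliptic and $D_K=D$}. For non-ellipticity: by the contrapositive of Proposition \ref{ellipticity}, $H$ contains a hyperbolic element $h$; since $\langle h\rangle$ is procyclic and $\langle h\rangle\cap K$ is open in it, some power $h^{p^k}$ lies in $K$, and $h^{p^k}$ is again hyperbolic — otherwise $\langle h^{p^k}\rangle$ would be an open elliptic normal subgroup of $\langle h\rangle$ and then $h$ would be elliptic by Corollary \ref{fixed vertex}. The inclusion $D_K\subseteq D$ is immediate since $D$ is $K$-invariant. For the reverse inclusion I would pass to the normal core $N=\bigcap_{h\in H}K^h$ of $K$ in $H$, which is open (finitely many conjugates, each open) and normal in $H$, hence non-elliptic by the argument just given. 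Normality of $N$ forces $D_N$ to be $H$-invariant (for $h\in H$, $hD_N$ is a minimal $N$-invariant subtree of more than one point, so $hD_N=D_N$), and since $D_N\subseteq D$, minimality of $D$ yields $D_N=D$; as $N\leq K$ we also have $D_N\subseteq D_K$, so $D=D_N\subseteq D_K$, proving the claim.

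Given the claim, fix $g$ with $K:=H\cap H^g$ open in both $H$ and in $H^g=g^{-1}Hg$. Applying the claim to $K\leq_o H$ gives $D_K=D$. On the other hand $H^g$ is a conjugate of $H$, hence non-elliptic, and $g^{-1}D$ is its unique minimal invariant subtree (the homeomorphism $E\mapsto gE$ exchanges $H^g$-invariant and $H$-invariant subtrees); applying the claim with $H$ replaced by $H^g$ and $K\leq_o H^g$ gives $D_K=g^{-1}D$. Hence $g^{-1}D=D$, i.e.\ $gD=D$. It remains to note that $S_0:=\{g\in G: gD=D\}$ is a closed subgroup of $G$: for each $d\in D$ the maps $g\mapsto gd$ and $g\mapsto g^{-1}d$ are continuous, so $\{g: gD\subseteq D\}=\bigcap_{d\in D}\{g: gd\in D\}$ and $\{g: g^{-1}D\subseteq D\}$ are closed, and $S_0$ is their intersection. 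Since $S_0$ contains the defining set of $Comm_G(H)$, it contains its closure $Comm_G(H)$; thus $Comm_G(H)$ leaves $D$ invariant and therefore acts on the pro-$p$ tree $D$.

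I expect the claim about open subgroups, and within it the equality $D_N=D$, to be the main obstacle: one has to exclude the possibility that passing to an open, finite-index subgroup shrinks the minimal invariant subtree. The reduction to the \emph{normal} core $N$ is the crucial device, because then $H$-invariance of $D_N$ is automatic and minimality of $D$ closes the argument. The remaining ingredients are precisely the pro-$p$ facts recalled in the preliminaries — that a group all of whose elements are elliptic is elliptic (Proposition \ref{ellipticity}) and that a $p$-power of a hyperbolic element is hyperbolic (via Corollary \ref{fixed vertex}) — so no new machinery should be needed.
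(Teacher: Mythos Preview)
Your argument is correct. The paper does not give its own proof of this result---it is quoted verbatim from \cite[Theorem~3.4]{BZ}---so there is nothing to compare against; but the route you take (unique minimal subtree, invariance under the open normal core, then closure of the setwise stabilizer) is the natural one and all steps are justified by the preliminaries you cite. One cosmetic remark: when you say ``finitely many conjugates, each open'' for the core $N=\bigcap_{h\in H}K^h$, you are implicitly using that $H$ is closed (hence profinite) so that $[H:K]<\infty$; this is covered by the paper's standing convention that subgroups are closed, but it is worth making explicit since it is the only place compactness of $H$ enters.
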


  \begin{theorem}\label{theo.weak.conm}(\cite[Theorem 4.4]{BZ} Let $G=(\G,\Gamma)$ be the fundamental group of a reduced graph of pro-$p$ groups and suppose that $G$ is finitely generated. If $[G(d_0(e):G(e)]<\infty$, and $[G(d_1(e)):G(e)]<\infty$ for all $e\in E(\Gamma)$. Then $G=Comm_{G}(G(e))=N_{G}(J)$, for any $e\in E(\Gamma)$,  where $J\leq_{o} G(e)$.
\end{theorem}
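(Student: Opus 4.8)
\medskip\noindent\emph{Proof plan.}\ The plan is to work on the standard pro-$p$ tree $S=S(\G,\Gamma)$: here $G$ acts with $S/G=\Gamma$ finite, $E(S)$ compact, and the stabilizers of edges (resp.\ vertices) are exactly the conjugates of the $\G(e)$ (resp.\ $\G(v)$). Put $M:=\max_{e'\in E(\Gamma)}\max_i[\G(d_i(e')):\G(e')]$, finite by hypothesis together with finiteness of $\Gamma$, so that $[G_v:G_f]\le M$ whenever an edge $f$ of $S$ is incident to a vertex $v$. Two consequences are used throughout: (a) \emph{adjacent edge stabilizers of $S$ are commensurable}, since if $f,f'$ share a vertex $v$ then $G_f,G_{f'}\le G_v$ with index $\le M$, whence $[G_f:G_f\cap G_{f'}]\le M^2$; and (b) \emph{every vertex of $S$ has finite star}, the edges at $g\G(w)$ being indexed by $\bigsqcup_{e\ni w}\G(w)/\partial_i(\G(e))$, a finite set. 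Fix an edge $\tilde e$ of $S$ whose stabilizer $G_{\tilde e}$ is $G(e)$ and a vertex $v_0$ incident to $\tilde e$, and set $G_0:=\{g\in G:\ d(v_0,gv_0)<\infty\}$ (combinatorial distance in $S$), a subgroup of $G$. I would first record that $G_0$ is \emph{dense} in $G$: it contains every stable letter $t_e$, and, choosing for each $w\in V(\Gamma)$ a lift $\tilde w$ at finite distance from $v_0$, it contains the whole group $G(w)=G_{\tilde w}$ (an $x$ fixing $\tilde w$ moves $v_0$ a distance $\le 2d(v_0,\tilde w)<\infty$); and these topologically generate $G$. Hence the set $V_0$ of cells at finite distance from $v_0$ is dense in $S$.

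With this, $Comm_G(G(e))=G$ is immediate: for $g\in G_0$ the geodesic from $\tilde e$ to $g^{-1}\tilde e$ is a finite path of edges $\tilde e=f_0,f_1,\dots,f_k=g^{-1}\tilde e$ with consecutive edges adjacent, so by (a) and transitivity of commensurability $G(e)=G_{f_0}$ is commensurable with $G_{f_k}=G(e)^g$; thus $Comm_G(G(e))\supseteq G_0$, and since $Comm_G(G(e))$ is closed and $G_0$ is dense, $Comm_G(G(e))=G$.

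For the normalizer statement set $K:=\bigcap_{f\in E(S)}G_f$. Using $G_f=G_v\cap G_w$ for an edge $f=[v,w]$ and that every vertex is an endpoint of some edge, one gets $K=\bigcap_{v\in V(S)}G_v$, i.e.\ $K$ is the kernel of the action of $G$ on $S$; in particular $K\trianglelefteq G$ and $K\le G(e)$, so $J:=K$ already satisfies $N_G(J)=G$. The theorem therefore reduces to
\[
(\star)\qquad [G(e):K]<\infty
\]
(which also re-proves $Comm_G(G(e))=G$, since then $K\le G(e)\cap G(e)^g$ with finite index in both factors). To attack $(\star)$ I would introduce the ball-fixers $P^{(n)}:=\{g\in G(e):\ g\text{ fixes }B(v_0,n)\text{ pointwise}\}$. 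By (b) and induction on $n$ each $P^{(n)}$ is open in $G(e)$ (the group $P^{(n)}$ acts on the finite set of edges joining the sphere of radius $n$ to the sphere of radius $n{+}1$, with kernel $P^{(n+1)}$), the $P^{(n)}$ decrease, and $\bigcap_n P^{(n)}=K$ because an element fixing all finite balls fixes the dense set $V_0$, hence all of $S$ by continuity of the action. Thus $G(e)/K=\varprojlim_n G(e)/P^{(n)}$, and $(\star)$ is exactly the stabilization of this chain, equivalently the assertion that $G/K$ is virtually free pro-$p$ (it then acts on $S$ with finite vertex stabilizers).

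The main obstacle is $(\star)$, and this is precisely where finite generation of $G$, and properness of $(\G,\Gamma)$, become indispensable: it amounts to showing that a finitely generated pro-$p$ group acting faithfully on a pro-$p$ tree all of whose edge-to-vertex indices are finite is virtually free pro-$p$. My approach would be to present $G/K=\varprojlim_U(G/K)/\widetilde U$, via Lemma~\ref{inverse limit of virtually free groups}, as an inverse limit along collapse maps of fundamental groups of finite reduced graphs of finite $p$-groups $(\G_U,\Gamma_U)$ in which the edge-to-vertex indices remain $\le M$, and then to invoke an accessibility bound — a reduced finite graph of finite $p$-groups with $d$-generated fundamental group and all edge-to-vertex indices $\le M$ has boundedly many edges in terms of $d$ and $M$ — so that $|E(\Gamma_U)|$ is bounded uniformly and the linearly ordered system $\{\Gamma_U\}$ stabilizes; one then has to transfer this back, using compatibility with the $\widetilde U$ and faithfulness of $G/K$ on $S$, into the stabilization of the chain $\{P^{(n)}\}$. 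The delicate points — establishing that accessibility bound (which is not the Linnell--Wilkes estimate, phrased in terms of edge-group \emph{order} rather than index) and making the final transfer actually deliver $[G(e):K]<\infty$ — are the crux.
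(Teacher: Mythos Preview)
The paper does not prove this theorem; it is quoted verbatim from \cite[Theorem 4.4]{BZ}, so there is no ``paper's own proof'' to compare against. Let me therefore assess your argument on its merits.

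Your treatment of $Comm_G(G(e))=G$ is correct and clean: local finiteness of $S$ (from the bounded indices), density of the ``finite-distance'' subgroup $G_0$ (via generation by vertex groups and stable letters), commensurability along finite paths, and closure of the commensurator combine exactly as you say. Your reduction of the normalizer claim to $(\star)$ is also right: if $J\trianglelefteq G$ with $J\le G(e)$ then $J\le\bigcap_{g}G(e)^g$, and since the edge groups are all commensurable and $|E(\Gamma)|<\infty$, openness of $\bigcap_g G(e)^g$ in $G(e)$ is equivalent to openness of $K=\bigcap_{f\in E(S)}G_f$.

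One simplification you are missing: your worry about needing an accessibility bound ``in terms of $d$ and $M$'' is unnecessary. When you apply Lemma~\ref{inverse limit of virtually free groups} to $G/K$ acting on $S$, the quotient $S/(G/K)$ equals $S/G=\Gamma$, so each $(G/K)/\widetilde U$ is the fundamental group of a graph of finite $p$-groups over $\Gamma$ itself (see Remark~\ref{homo natural mod tilde}); after reduction, $|E(\Gamma_U)|\le |E(\Gamma)|$ automatically. So the graph stabilizes without any new accessibility lemma.

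The genuine gap is the one you flag as ``the final transfer'': stabilization of the underlying graphs $\Gamma_U$ does \emph{not} by itself imply that the vertex groups $\G_U(v)=(G_v/K)\widetilde U/\widetilde U$ have bounded order. Since $\widetilde U\le U$ one has $\bigcap_U\widetilde U=1$, so $G_v/K=\varprojlim_U \G_U(v)$; but this merely re-expresses the pro-$p$ group $G_v/K$ as an inverse limit of its finite quotients and says nothing about finiteness. Equivalently, your chain $P^{(0)}\ge P^{(1)}\ge\cdots$ has $\bigcap_n P^{(n)}=K$ with each index $[G(e):P^{(n)}]$ finite but a priori unbounded (bounded only by $M^{2n}$). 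Some further structural input---beyond local finiteness and finite generation---is required to force $[G(e):K]<\infty$, and your outline does not supply it. This is precisely the content that the citation to \cite{BZ} is carrying.
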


\subsubsection{Generalized Baumslag-Solitar groups} 

\bigskip
   
 \begin{definition} A generalized Baumslag-Solitar pro-$p$ group (GBS) is a finitely generated pro-$p$ group which acts on a pro-$p$ tree $T$ with infinite cyclic vertex and edge stabilizers. \end{definition}
 
 Let $\E$ be the set of cyclic subgroups of $G$ (including the trivial subgroup), and $\K=\emptyset$. Unless $G$ is metacyclic,  the deformation space of $T$ is the JSJ deformation space. To prove this we need the following theorem on commensurator.  
   
\begin{theorem}\label{teoae}\cite[Theorem 4.7]{BZ}) Let $G$ be a finitely generated pro-$p$ group acting  irreducibly on a pro-$p$ tree $T$, such that $G_v$ is infinite cyclic group and $G_e\neq 1$ for all $v\in V(T)$, $e\in E(T)$. Then $Comm_{G}(G_e)=G$, for any $e\in E(T)$.
	
\end{theorem}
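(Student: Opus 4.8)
The plan is to realise $G$ as the fundamental group of a finite reduced graph of pro-$p$ groups all of whose edge-to-vertex inclusions have finite index, and then to quote Theorem~\ref{theo.weak.conm}.

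First, since $G$ is finitely generated and the edge stabilizers of $T$ are (infinite cyclic, hence in particular) cyclic, Corollary~\ref{cyclic edge stabilizers} writes $G=\Pi_1(\G,\Gamma)$ as the fundamental group of a \emph{finite} graph of pro-$p$ groups whose vertex groups are vertex stabilizers of $T$, hence each $\cong\Z_p$, and whose edge groups are edge stabilizers of $T$, hence nontrivial closed subgroups of some $\Z_p$ and so again each $\cong\Z_p$. Running the reduction procedure of Remark~\ref{reduction} we may assume $(\G,\Gamma)$ is reduced: collapsing a fictitious edge replaces the groups at its two endpoints by one of the two original (infinite cyclic) vertex groups, so after reduction every vertex group is still $\cong\Z_p$ and every edge group is still nontrivial. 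Moreover $\Gamma$ has at least one edge, for otherwise $G=\Pi_1(\G,\Gamma)$ would be a single vertex group, hence procyclic; but $\Z_p$ is not a nontrivial amalgam or HNN-extension over a nontrivial subgroup, so by Theorem~\ref{splitting} a procyclic $G$ with all edge stabilizers of $T$ nontrivial would be forced to fix a vertex of $T$, contradicting irreducibility of the action.

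Now for each edge $e$ of $\Gamma$ the group $\G(e)$ is a nontrivial closed subgroup of the procyclic groups $\G(d_0(e))$ and $\G(d_1(e))$, and a nontrivial closed subgroup of $\Z_p$ is open; hence $[\G(d_0(e)):\G(e)]<\infty$ and $[\G(d_1(e)):\G(e)]<\infty$. The hypotheses of Theorem~\ref{theo.weak.conm} thus hold for $(\G,\Gamma)$, giving $Comm_G(\G(e))=G$ (and even $G=N_G(J)$ for any $J\leq_o\G(e)$) for every edge $e$ of $\Gamma$. To conclude for an arbitrary edge $e'$ of $T$: by the Bass--Serre correspondence of Corollary~\ref{cyclic edge stabilizers}, $G_{e'}$ is conjugate to a subgroup of some edge or vertex group of $(\G,\Gamma)$, and since $G_{e'}\neq 1$ while that group is $\cong\Z_p$ the inclusion has finite index; furthermore, because $\Gamma$ is connected and at each vertex the incident edge groups have finite index in the procyclic vertex group, all vertex and edge groups of $(\G,\Gamma)$ are commensurable up to conjugacy. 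Since $Comm_G(A^g)=Comm_G(A)^g$, since $Comm_G(A)=Comm_G(B)$ whenever $A$ and $B$ are commensurable, and since $G^g=G$, the identity ``$Comm_G(\,\cdot\,)=G$'', valid for the edge groups of $(\G,\Gamma)$ by Theorem~\ref{theo.weak.conm}, propagates to this whole commensurability-up-to-conjugacy class, and in particular $Comm_G(G_{e'})=G$.

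There is no hard analytic step in this argument; it is an assembly of the quoted results, and its substantive content — the equality $G=Comm_G(\G(e))=N_G(J)$ for a graph of groups with finite-index edge inclusions — is Theorem~\ref{theo.weak.conm}, used as a black box. The points requiring care are checking that the reduction of the graph of groups preserves both ``all vertex groups $\cong\Z_p$'' and ``all edge groups nontrivial'', so that the finite-index hypothesis of Theorem~\ref{theo.weak.conm} survives, and excluding the degenerate reduced graph with no edges — precisely the place where irreducibility of the action enters.
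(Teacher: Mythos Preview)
The paper does not give its own proof of this statement: Theorem~\ref{teoae} is simply quoted from \cite[Theorem~4.7]{BZ} without argument. So there is no in-paper proof to compare against.

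Your argument is correct and is in fact the natural derivation of Theorem~\ref{teoae} from the other results quoted in the paper. The structure --- realise $G$ via Corollary~\ref{cyclic edge stabilizers} as a finite reduced graph of pro-$p$ groups with procyclic vertex groups and nontrivial procyclic edge groups, observe that all edge-to-vertex inclusions are then of finite index, and invoke Theorem~\ref{theo.weak.conm} --- is exactly how one would expect \cite[Theorem~4.7]{BZ} to be obtained from \cite[Theorem~4.4]{BZ}. Your final step, propagating the conclusion from the edge groups of $(\G,\Gamma)$ to an arbitrary edge stabilizer $G_{e'}$ of $T$ via commensurability-up-to-conjugacy, is also sound: connectedness of $\Gamma$ together with the finite-index edge inclusions makes all edge and vertex groups of $(\G,\Gamma)$ commensurable up to conjugacy, and Corollary~\ref{cyclic edge stabilizers} guarantees $G_{e'}$ is (up to conjugacy) a nontrivial, hence finite-index, subgroup of one of them.

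One very minor remark: your handling of the degenerate case ``$\Gamma$ has no edges after reduction'' via irreducibility is fine, but you do not actually need to rule this case out. If the reduced graph is a single vertex then $G\cong\Z_p$, and then every $G_{e'}\neq 1$ is automatically open in $G$, so $Comm_G(G_{e'})=G$ holds trivially. This spares you the small detour through Theorem~\ref{splitting} and the claim that $\Z_p$ admits no nontrivial splitting over a nontrivial subgroup.
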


	\begin{theorem} Let $G$ be a finitely generated pro-$p$ group acting  irreducibly on a pro-$p$ tree $T$, such that $G_v$ is infinite cyclic pro-$p$ group for all $v\in V(T)$ and $G_e\neq 1$ for all  $e\in E(T)$. Unless $G$ is metacyclic-by $\Z/2$,  the deformation space of $T$ is the JSJ deformation space.\end{theorem}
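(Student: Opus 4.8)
The plan is to prove that \emph{every vertex stabiliser of $T$ is universally $\E$-elliptic}, i.e.\ elliptic in every cyclic $G$-tree. Once this is established, Lemma~\ref{universally elliptic vertex} gives that $T$ itself is a JSJ $\E$-tree, and hence (by definition of $D_{JSJ}$) the deformation space of $T$ is the JSJ deformation space; in particular $D_{JSJ}$ is non-empty. So everything reduces to an ellipticity statement about the infinite cyclic groups $G_v$.

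First I would produce a normal procyclic subgroup $J\trianglelefteq G$ sitting with finite index inside every vertex stabiliser of $T$. Since $G$ is finitely generated and acts on $T$ with procyclic edge stabilisers, Corollary~\ref{cyclic edge stabilizers} lets me write $G=\Pi_1(\G,\Gamma)$ with $\Gamma$ finite and all edge and vertex groups infinite cyclic; reducing $(\G,\Gamma)$ as in Remark~\ref{reduction} keeps all these groups isomorphic to $\Z_p$ (if it collapses to a single vertex then $G\cong\Z_p$ is already excluded by the hypothesis), so every edge monomorphism has finite-index image and Theorem~\ref{theo.weak.conm} applies, producing $J\leq_o G(e)$ with $N_G(J)=G$. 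Then $J\cong\Z_p$ is normal in $G$ and elliptic in $T$, so $T^J$ is a non-empty $G$-invariant subtree and therefore $T^J=T$ by irreducibility; hence $J\le G_v$ with $[G_v:J]<\infty$ for every vertex $v$.

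Next, arguing by contradiction, suppose some $G_v$ is not elliptic in a cyclic $G$-tree $S$. Then $J$, being open in $G_v$, is not elliptic in $S$ either (otherwise $G_v$ would be virtually elliptic, hence elliptic, by Corollary~\ref{fixed vertex}). I would pass to the minimal $J$-invariant subtree $D$ of $S$ (Proposition~\ref{unique G-invariant}); since $J\trianglelefteq G$ this $D$ is $G$-invariant, and $J\cong\Z_p$ acts on it minimally without global fixed point. A non-trivial closed subgroup of $J$ fixing a vertex or edge of $D$ would be an open normal elliptic subgroup of $J$, forcing $J$ to be elliptic by Corollary~\ref{fixed vertex}; so $J$ acts \emph{freely} on $D$, and the key point I would need is that such a $D$ must be the standard pro-$p$ tree $L$ of $\Z_p=\mathrm{HNN}(1,1,t)$ (the ``pro-$p$ line'', i.e.\ the inverse limit of the cycles $C_{p^n}$ under the degree-$p$ coverings), whose automorphism group is the pro-$p$-dihedral group $\Z_p\rtimes\Z/2$. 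Writing $\phi\colon G\to\mathrm{Aut}(L)$ for the action map, $\ker\phi$ fixes $L$ pointwise, hence lies in a cyclic edge stabiliser of $S$ and is procyclic, while $\ker\phi\cap J=1$ since $J$ acts freely, so $\phi(G)$ is infinite. For $p$ odd, $\phi(G)$ is pro-$p$ and so lies in the unique maximal pro-$p$ subgroup $\Z_p$ of $\Z_p\rtimes\Z/2$; for $p=2$ the whole group $\Z_2\rtimes\Z/2$ is pro-$2$. Either way $\phi(G)$ is an infinite subgroup of $\Z_p\rtimes\Z/2$, hence isomorphic to $\Z_p$ or to $\Z_p\rtimes\Z/2$; combined with $\ker\phi$ procyclic this makes $G$ metacyclic in the first case and metacyclic-by-$\Z/2$ in the second (take as the metacyclic index-$2$ normal subgroup the $\phi$-preimage of the translation subgroup). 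Both alternatives contradict the hypothesis, so every $G_v$ is universally elliptic and, by Lemma~\ref{universally elliptic vertex}, $T$ is a JSJ $\E$-tree.

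The hard part is the geometric input flagged above: classifying minimal \emph{free} actions of $\Z_p$ on pro-$p$ trees as the translation action on the pro-$p$ line $L=\varprojlim_n C_{p^n}$, and computing $\mathrm{Aut}(L)\cong\Z_p\rtimes\Z/2$ — this is where the genuinely pro-$p$ phenomena live (finite cycles assembling into a pro-$p$ tree in the inverse limit, absence of canonical maximal subtrees). Producing the normal procyclic $J$ is handled cleanly by Theorem~\ref{theo.weak.conm}, and once the line is identified the remainder is routine group theory with Corollary~\ref{fixed vertex} and the subgroup lattice of $\Z_p\rtimes\Z/2$.
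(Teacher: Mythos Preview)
Your overall strategy coincides with the paper's: reduce to showing every vertex stabiliser is universally elliptic (Lemma~\ref{universally elliptic vertex}), produce a normal open procyclic $J\trianglelefteq G$ inside each $G_v$, and derive a contradiction from $J$ acting hyperbolically on some cyclic $G$-tree. The paper obtains $J$ via Theorem~\ref{teoae} followed by Theorem~\ref{theo.weak.conm}; your route through Corollary~\ref{cyclic edge stabilizers} and reduction works equally well. The divergence is only at the last step: the paper simply invokes \cite[Proposition 8.2(2)]{CZ} to conclude that $G/J$ is cyclic or infinite dihedral, whereas you try to prove this by hand via the geometry of the minimal $J$-subtree $D$.

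That direct argument has a genuine gap. Your ``key point'' --- that a minimal free action of $\Z_p$ on a pro-$p$ tree must be the translation action on the pro-$p$ line $L$ --- is false. Take the graph of groups with two vertices and two parallel edges (a $2$-cycle), all groups trivial; its fundamental pro-$p$ group is $\Z_p$ and its standard pro-$p$ tree $D'$ is a minimal free $\Z_p$-tree, yet $D'\not\cong L$: in $D'$ every vertex has out-degree~$2$ and in-degree~$0$ or vice versa, while in $L$ every vertex has in-degree~$1$ and out-degree~$1$. (That $D'$ is indeed a pro-$p$ tree follows because the covering maps $C_{2p^{n+1}}\to C_{2p^n}$ induce the zero map on $H_1(-;\F_p)$.) For $p=2$ this $D'$ is exactly the Bass--Serre tree of $\Z/2*\Z/2$, and the full pro-$2$ dihedral group $\Z_2\rtimes\Z/2$ acts on it with $J=\Z_2$ free --- this is precisely where the ``metacyclic-by-$\Z/2$'' alternative comes from, not from inversions of $L$. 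Relatedly, in the paper's convention graph morphisms commute with $d_0,d_1$, so the map $x\mapsto -x$ on $L$ is \emph{not} an automorphism and $\mathrm{Aut}(L)=\Z_p$, not $\Z_p\rtimes\Z/2$. So even granting $D\cong L$ your computation of $\mathrm{Aut}(L)$ would miss the dihedral case; and since $D\cong L$ can fail, you would in any case need to analyse actions on all minimal free $\Z_p$-trees. That analysis is exactly the content of the external result the paper cites, and it is not a routine exercise --- you should either cite \cite[Proposition 8.2(2)]{CZ} as the paper does, or give a self-contained argument that bypasses identifying $D$ (for instance by working with $G/J$ acting on a suitable quotient and invoking \cite[Theorem 3.15]{RZ-00}).
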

   
  \begin{proof}  By Lemma \ref{universally elliptic vertex} it suffices to show that every vertex stabilizer $H$ of $T$ is universally elliptic.  The commensurator of $H$ is $G$ by Theorem \ref{teoae}. Then by Theorem \ref{theo.weak.conm} $H$ contains some open $J$ which is normal in $G$. If $H$ acts hyperbolically on a minimal pro-$p$ tree $T$, then so is $J$. Then by \cite[Proposition 8.2 (2)]{CZ}  $G/J$ is either cyclic or infinite dihedral.  This implies that $G$ is either metacyclic or $p=2$ and $G$ is metacyclic-by $\Z/2$. 
   
   \end{proof}
   
   \subsection{pro-$p$ RAAGs} Let $\Delta$ be a finite graph. The associated right-angled Artin pro-$p$ group $A_\Delta$ (RAAG, also called graph group, or partially commutative group) is the group presented as follows: there is one generator $a_v$ per vertex, and a relation $a_va_w=a_wa_v$ if there is an edge between $v$ and $w$ (see \cite{SZ}). The decomposition of $\Delta$ into connected components induces a decomposition of $A_{\Delta}$ as a free pro-$p$ product of freely indecomposable RAAGs (which may be infinite cyclic), so to study JSJ decompositions of $A_\Delta$ one may assume that $\Delta$ is connected. 
   
   \begin{definition}[Standard subgroups]\label{def: standard subgroup}
A subgroup of $G_\Gamma$ is called a \emph{standard subgroup} if it is the subgroup generated by a subset $V'\subseteq V(\Gamma)$. If $\Gamma = \varnothing$, by convention we set $G_\Gamma$ to be the trivial subgroup.
\end{definition}
   
   \begin{definition}[Support of an element]
Let $g$ be an element of a (pro-$p$) RAAG $G_{\Gamma}$.
The \emph{support} $\alpha(g)$ of $g$ is the set of canonical generators of the unique minimal standard subgroup of $G_{\Gamma}$ containing $g$.
\end{definition}
   
   \begin{definition}[Links and stars]
Let $g$ be an element of a (pro-$p$) RAAG $G_{\Gamma}$.
The link $\link(g)$ of $g$ is the set of vertices of $\Gamma \smallsetminus \alpha(g)$ that are adjacent to each of the vertices in $\alpha(g)$.
If $v$ is a canonical generator, we denote by $\mbox{Star}(v)$ the full subgraph generated by $\link(v)\cup v$.
\end{definition}

\begin{definition}[Hanging vertex]
We say that a vertex $v$ of $\Gamma$ is a \emph{hanging vertex} if $\mbox{Star}(v)$ is a complete graph and for every $w\in link(v)$, $\mbox{Star}(w)$ is not a complete graph.   
\end{definition}

Abusing the notation, if $S\subseteq V(\Gamma)$, we denote by $G_{S}$ the standard subgroup generated by the full subgraph generated by $S$.
   
   In  \cite{CPZ} is given a characterization of the splitting   of $G_\Gamma$ over an abelian pro-$p$ group. It is also given the following JSJ-decomposition of $G_\Gamma$ over abelian subgroups.

\begin{theorem} (\cite{CPZ})\label{thm: A JSJ decomposition}
Let $G=G_\Gamma$ be a pro-$p$ RAAG associated to a connected  finite graph $\Gamma$.

There is a (possibly trivial) decomposition of $G$ as a fundamental pro-$p$ group of a reduced finite graph of pro-$p$ groups $(\mathcal{G}_\Theta, \Theta)$ with the following properties:

\begin{itemize}
    \item the underlying graph $\Theta$ is either a tree or a tree with loops;
    \item vertex groups of $(\mathcal{G}_\Theta, \Theta)$ are standard subgroups which are either abelian or their underlying graph does not contain any disconnecting complete graph;
    \item each edge group of $(\mathcal{G}_\Theta, \Theta)$ is a standard subgroup associated with a disconnecting complete subgraph of $\Gamma$;
    \item hanging vertices do not belong to any vertex group.
\end{itemize}

Furthermore, the standard pro-$p$ tree associated to this decomposition is an abelian JSJ tree decomposition $(T_\Theta,G)$ of $G$.
\end{theorem}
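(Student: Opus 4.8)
The plan is to follow the strategy of the abstract right-angled Artin case, which is now available in the pro-$p$ world: one combines the explicit characterization of abelian splittings of pro-$p$ RAAGs established in \cite{CPZ} with the general pro-$p$ JSJ machinery of Sections~5--6, in particular Theorem~\ref{existence for finitely generated G}, Lemma~\ref{universally elliptic vertex} and Theorem~\ref{splitting}, all of which apply since $G_\Gamma$ is finitely generated.

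\emph{Step 1: construction of $(\mathcal{G}_\Theta,\Theta)$.} The connected-components remark preceding the statement reduces us to connected $\Gamma$ (the disconnected case is reassembled as a free pro-$p$ product via the pro-$p$ Kurosh theorem~\ref{KST}). For connected $\Gamma$ I would build $(\mathcal{G}_\Theta,\Theta)$ by induction on $|V(\Gamma)|$. If $\Gamma$ is complete then $G_\Gamma\cong\Z_p^{|V(\Gamma)|}$ and we take the one-vertex decomposition; if $\Gamma$ is not complete but has neither a disconnecting complete subgraph nor a hanging vertex, then by the characterization of \cite{CPZ} the group $G_\Gamma$ admits no abelian splitting, and again we take the one-vertex decomposition. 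Otherwise: if $K$ is a disconnecting complete subgraph, write $\Gamma=\Gamma_1\cup\Gamma_2$ with $\Gamma_1\cap\Gamma_2=K$ (full subgraphs), so $G_\Gamma=G_{\Gamma_1}\amalg_{G_K}G_{\Gamma_2}$, and splice the graphs of pro-$p$ groups produced by induction on $\Gamma_1$ and $\Gamma_2$ along a new edge with group $G_K$ (using Lemma~\ref{restricted graph} and Remark~\ref{collapsed graph of groups}); if $v$ is a hanging vertex then $\mbox{Star}(v)$ is complete, hence $a_v$ commutes with $G_{\link(v)}$ and $G_\Gamma=\mathrm{HNN}\bigl(G_{\Gamma\setminus v},G_{\link(v)},a_v\bigr)$, which we record as a loop with stable letter $a_v$ based at the vertex carrying the inductively obtained decomposition of $G_{\Gamma\setminus v}$. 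Properness of these amalgams and HNN-extensions holds because every standard subgroup of a pro-$p$ RAAG is a retract, hence closed, and the relevant inclusions of standard subgroups are closed (\cite{SZ}). Finally collapse fictitious edges (Remark~\ref{reduction}) to make $(\mathcal{G}_\Theta,\Theta)$ reduced. Since disconnecting cliques always become edge groups and hanging-vertex generators always become stable letters of loops, $\Theta$ is a tree with loops, the edge groups are exactly the standard subgroups on disconnecting complete subgraphs, the vertex groups are standard subgroups which are either abelian (the complete leaves) or contain no disconnecting complete subgraph, and no hanging vertex lies in a vertex group; the remaining bullet points are read off directly.

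\emph{Step 2: $T_\Theta$ is an abelian JSJ tree.} Let $T_\Theta$ be the standard pro-$p$ tree of $(\mathcal{G}_\Theta,\Theta)$; its edge stabilizers are conjugates of free abelian clique subgroups, so it is an $\E$-tree, and we must check it is universally elliptic and dominates every universally elliptic abelian $G$-tree. For \emph{domination}, let $T'$ be a universally elliptic abelian $G$-tree and $G_w$ a vertex stabilizer of $T_\Theta$, a conjugate of a vertex group $G_{\Gamma'}$. If $\Gamma'$ is not complete then by construction it has no disconnecting complete subgraph and no hanging vertex, so by \cite{CPZ} the finitely generated group $G_{\Gamma'}$ has no abelian splitting; were it non-elliptic in $T'$ it would split over an abelian edge stabilizer by Theorem~\ref{splitting}, a contradiction, so $G_w$ is elliptic in $T'$. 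If $\Gamma'$ is complete, $G_w\cong\Z_p^k$ is abelian, and one shows directly that $G_w$ is elliptic in $T'$ by controlling actions of free abelian pro-$p$ groups on pro-$p$ trees: for each clique generator $a_v$ the centralizer $C_G(a_v)=G_{\mbox{Star}(v)}$ contains an edge group of $T_\Theta$, and one combines the commensurator results (Theorems~\ref{commensurator} and~\ref{theo.weak.conm}) with the small-groups analysis to conclude. For \emph{universal ellipticity} one argues symmetrically that every edge stabilizer of $T_\Theta$ — a clique subgroup attached to a disconnecting complete subgraph, or the link subgroup of a hanging vertex — is elliptic in every abelian $G$-tree $T$: such a standard subgroup is free abelian and occurs as a centralizer-type subgroup, so a non-elliptic action would, via Theorems~\ref{commensurator} and~\ref{theo.weak.conm}, propagate to an abelian splitting of $G_\Gamma$ excluded by the characterization of \cite{CPZ}. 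Granting both, $T_\Theta$ is by definition a JSJ pro-$p$ $\E$-tree for $G$; and when no disconnecting clique occurs one may instead invoke Lemma~\ref{universally elliptic vertex}, the vertex stabilizers then being universally elliptic.

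\emph{Main obstacle.} The delicate point is the fine control of how a free abelian pro-$p$ subgroup $\Z_p^k$ arising as a clique subgroup can act on an abelian $G_\Gamma$-tree: ruling out hyperbolic behaviour of the edge clique subgroups (universal ellipticity) and proving ellipticity of the abelian vertex groups in universally elliptic trees. This is exactly where the absence of a full pro-$p$ Bass--Serre dictionary bites, forcing one to lean on the commensurator Theorems~\ref{commensurator}--\ref{theo.weak.conm} together with the explicit \cite{CPZ} description of abelian splittings of pro-$p$ RAAGs; once these are in hand, the remainder is a bookkeeping translation of the abstract RAAG JSJ construction.
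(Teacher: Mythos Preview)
The paper does not contain a proof of this statement: it is quoted as a theorem of \cite{CPZ} and left entirely to that reference, so there is no ``paper's own proof'' to compare against. Your sketch is therefore an attempt to reconstruct the argument of \cite{CPZ}, not an alternative to anything in the present paper.

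On the substance of your sketch, Step~1 is a reasonable inductive construction and matches the expected shape of the decomposition. Step~2, however, has a genuine gap in both directions. For universal ellipticity you claim that a hyperbolic action of an edge clique subgroup $G_K$ would, via Theorems~\ref{commensurator} and~\ref{theo.weak.conm}, ``propagate to an abelian splitting of $G_\Gamma$ excluded by the characterization of \cite{CPZ}''; but $G_\Gamma$ \emph{does} admit abelian splittings (that is precisely what $(\mathcal G_\Theta,\Theta)$ records), so nothing is excluded, and in any case those two theorems concern commensurators in GBS-type situations and do not obviously bear on ellipticity of a free abelian standard subgroup in an arbitrary abelian $G_\Gamma$-tree. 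Likewise, for domination in the abelian-vertex case you need that a free abelian vertex group $\Z_p^k$ is elliptic in every \emph{universally elliptic} abelian tree; invoking centralizers and commensurators does not by itself give this, and Example~\ref{Baumslag-Solitarex} already shows that $\Z_p^2$ can act nontrivially on many abelian trees. What one actually needs from \cite{CPZ} is the explicit classification of one-edge abelian splittings of $G_\Gamma$ (which edge groups can occur and which standard subgroups are forced to be elliptic), and then Lemma~\ref{one-edge splitting} reduces both universal ellipticity and domination to that finite check. Your write-up gestures at this but does not carry it out; as written, the JSJ verification is incomplete.
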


\subsection{Non-rigid example}

Unlike the previous subsections, we now consider an example with flexible vertices which is a pro-$p$ version of an example from \cite{GL}.  We consider cyclic splittings with $\K=\emptyset$.

Suppose that $G_\Sigma$ is the fundamental pro-$p$ group of a closed orientable hyperbolic surface $\Sigma$. Any simple closed geodesic $\gamma$ on $\Sigma$ defines a cyclic splitting (an amalgam or an HNN-extension, depending on whether $\gamma$ separates or not). A non-trivial element $g\in G_\Sigma$, represented by an immersed closed geodesic $\delta$ , is elliptic in the splitting defined by $\gamma$ if and only if $\delta\cap  \gamma=\emptyset$  or $\delta=\gamma$. Since any $\delta $ meets transversely some simple $\gamma$, this shows that $1$ is the only universally elliptic element of $G_\Sigma$, so the JSJ decomposition of $G_\Sigma$ is trivial and its vertex is flexible.

\begin{example}\label{flexible vertices} Suppose that $G$ is the pro-$p$ completion of the fundamental group of the space consisting of three punctured tori $\Sigma_i$ attached along their boundaries. A presentation of $G$ is
$\langle a_1, b_1, a_2, b_2, a_3, b_3 | [a_1, b_1] = [a_2, b_2] = [a_3, b_3]\rangle$.
It is the fundamental group of a graph of groups $(\G,\Gamma)$ with one central vertex $v$ and three
terminal vertices $v_i$ with $e_i$ connecting $v$ to $v_i$. More precisely, putting $c=[a_1, b_1] = [a_2, b_2] = [a_3, b_3]$ the tree of groups $(\G,\Gamma)$ is as follows:

$$\xymatrix{&&&& {\overset{\langle a_2,b_2\rangle }{\bullet_{v_2}}}\\
{\overset{\langle a_1,b_1\rangle}{\bullet_{v_1}}}\ar[rr]^{\langle c\rangle}&& {\overset{\langle c\rangle}{\bullet_v}}\ar[rru]^{\langle c\rangle} \ar[rrd]^{\langle c\rangle}\\  
&&&&{\overset{\langle a_3,b_3\rangle}{\bullet_{v_3}}}}$$

\bigskip
All edges, as well as $v$, carry the same cyclic group $C = \langle c\rangle$. We claim
that $(\G,\Gamma)$ is a JSJ decomposition of $G$, with 3 flexible vertices $v_i$.

\medskip
Let us first show that $(\G,\Gamma)$ is universally elliptic, i.e. that $C$ is universally elliptic. Using
Lemma \ref{one-edge splitting}, we consider a one-edge cyclic splitting   of $G$ over ${\langle a\rangle}$ in which $C$ is not elliptic,
and we argue towards a contradiction. The group $G_{12}$ generated by $\langle a_1,b_1,a_2,b_2\rangle$
is the
fundamental pro-$p$ group of a closed surface (of genus 2), in particular it is freely indecomposable. It follows
that  $\langle a\rangle $   has non-trivial intersection with some
conjugate of $G_{12}$, so some $a^k$
lies in a conjugate of $G_{12}$ (where $k$ is some power of $p$), i.e. w.l.o.g. we may assume that $a^k\in G_{12}$.
We now consider the action of $a^
k$ on the standard pro-$p$ tree $T$ of $(\G,\Gamma)$. If $a^k$ fixes an edge in
$T$, then $\langle a^k\rangle$
 has finite index in some conjugate of $C$, so $C$ is elliptic in $G\amalg_{\langle a\rangle}$ by Corollary \ref{fixed vertex}, a contradiction. Hence by Proposition \ref{unique G-invariant} there exists 
the unique  minimal $a^k$-invariant subtree $D(a^
k
)$
in $T$ (if $a^k$ is elliptic, $D(a^
k
)$ is unique because $a$ does not fix an edge).
Then $D(a^
k
)$ is contained in the minimal subtree of  $G_{12}$, i.e. in the subtree $G\{e'_1,e'_2, v'_1,v'_2\}$ of $G$-translations of some lift $\{e'_1,e'_2, v'_1,v'_2\}$ of the spanning tree $[v_1,v_2]$ of $v_1,v_2$. But $G\{e'_1,e'_2, v'_1,v'_2\}$  contains no lift
of the vertex $v_3$ and hence so does not $D(a^
k
)$. Permuting indices shows that all vertices in $D(a^
k
)$ are lifts of $v$, so $D(a^
k
)$
is a single point (a lift of $v$) (since $\Gamma$ is a tree). This implies that $\langle a^k\rangle$ has finite index in some conjugate of
$C$, which is again a contradiction. Thus $C$ is universally elliptic in all cyclic splittings.

To prove maximality of the splitting (cf. Remark \ref{reduced JSJ}(i)), consider a universally elliptic tree $S$ dominating $T$. If the domination is strict (i.e. $S$ and $T$ are in different deformation spaces), some $G(v_i)$
is non-elliptic
in $S$. Being universally elliptic, $C$ is elliptic in $S$, so by a standard fact combined with \cite[Theorem 
4.6 (3)]{wilkes} the action of $G_{v_i}$
on its minimal subtree in $S$ comes from the splitting of the abstract surface group $S_{12}$ whose pro-$p$ completion is $G_{12}$. The splitting of  $S_{12}$ is associated to an essential simple closed curve
$\gamma_i \subset \Sigma_{i}$. Let $\gamma$ be a curve intersecting $\gamma_i$ non-trivially.  Considering the splitting
of  $G$ defined by a curve $\gamma \subset \Sigma_i$
 shows that $S$ is not universally
elliptic, a contradiction.
\end{example}

\bigskip
\begin{remark} In Example \ref{flexible vertices} the element $c$ can be replaced by the standard Demushkin 
relator 
$c = x_1^{p^r}[x_1, x_2] \cdots [x_{n-1}, x_n]$ 
if $p>2$ and by 
$c=x_1^{2+2^f}
 [x_1, x_2] \cdots [x_{n-1}, x_n]$ or by
 $c=x_1^2
[x_1, x_2]x_3^{2^f} [x_3,x_4]\cdots [x_{n-1}, x_n]$, or by 
$c=x_1^2x_2^{2f}[x_2, x_3] \cdots [x_{n-1}, x_n]$ for $f\geq 2$ by \cite[Theorem 5.6]{wilkes19} combined with \cite[Theorem 3.3]{wilkes}.\end{remark}

\section{A few useful facts} In this section, we first describe the behavior of the JSJ deformation space when we change the class $\E$ of allowed edge groups. We then introduce the incidence structure inherited by a vertex group of a graph of groups, and we relate JSJ decompositions of $G$ to JSJ decompositions of vertex groups relative to their incidence structure.  We also discuss relative finite presentation of vertex groups. Finally, we give an alternative construction of relative JSJ decompositions, obtained by embedding $G$ into a larger group.

\subsection{Changing edge groups} We fix two families of subgroups $\E$ and $\curlyF$, with $\E \subset \curlyF$, and we compare JSJ splittings over $\E$ and over $\curlyF$ (universal ellipticity, and all JSJ decompositions, are relative to some fixed family $\K$). 

For example: 

$\bullet$. $\E$ consists of the finitely generated abelian subgroups of $G$ , and $\curlyF$ consists of the subgroups having no non-abelian free pro-$p$ subgroups. 

$\bullet$. Groups in $\curlyF$ are locally small (their finitely generated subgroups are small), and $\E$ is the family of small subgroups.

$\bullet$ $\E$ is the family of cyclic subgroups, and $\curlyF$ is the family of  virtually cyclic subgroup.

$\bullet$ $\E$ consists of the trivial group, or the finite subgroups of $G$ (see Corollary 4.16).

\bigskip
There are now two notions of universal ellipticity, so we shall distinguish between $\E$-universal ellipticity (being elliptic in all $(\E,\K)$-trees) and $\curlyF$-universal ellipticity (being elliptic in all 
$(\curlyF,\K)$-trees). Of course, $\curlyF$-universal ellipticity implies $\E$-universal ellipticity. Recall that two trees are compatible if they have a common refinement. 

\begin{proposition}\label{jsj refinement} Assume $\E\subset \curlyF$. Let $T_\curlyF$ be a JSJ tree over $\curlyF$ with $T_\curlyF/G$ finite.
\begin{enumerate}
\item[(1)] If there is a JSJ tree $T_\E$ over $\E$, then there is a pro-$p$ JSJ $\E$-tree $\widehat T_\curlyF$ which is compatible with $T_\curlyF$. It may be obtained by refining $T_\curlyF$, and then collapsing all edges whose stabilizer is not in $\E$.

\item[(2)] If every $\E$-universally elliptic $\E$-tree is $\curlyF$-universally elliptic, the tree
$\overline T_\curlyF$ obtained from $T_\curlyF$ by collapsing all edges whose stabilizer is not in $\E$ is a JSJ tree over $\E$.\end{enumerate}
\end{proposition}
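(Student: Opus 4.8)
The plan is to prove both parts by a similar strategy, relying heavily on the standard‐refinement machinery of Proposition~\ref{refinement}, Lemma~\ref{refinement univer elliptic}, and Lemma~\ref{JSJ refinement}, together with the observation that collapsing edges whose stabilizers are not in $\E$ turns an $\curlyF$-tree into an $\E$-tree. First I would fix the fixed relative family $\K$ throughout and record the elementary fact: if $T$ is any $(\curlyF,\K)$-tree and $\overline T$ is obtained by $G$-equivariantly collapsing all edges of $T$ whose stabilizer does not lie in $\E$, then $\overline T$ is an $(\E,\K)$-tree (edge stabilizers of $\overline T$ are among the remaining edge stabilizers, hence in $\E$; and groups in $\K$ remain elliptic since ellipticity is preserved under collapse). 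I would also note that collapse maps preserve ellipticity, so every subgroup elliptic in $T$ is elliptic in $\overline T$; in particular if $T$ is $\curlyF$-universally elliptic then $\overline T$ is at least $\E$-universally elliptic.

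For part (1), assume a JSJ tree $T_\E$ over $\E$ exists. Since $T_\E$ is $\E$-universally elliptic, its edge stabilizers lie in $\E\subset\curlyF$, so $T_\E$ is elliptic with respect to every $(\curlyF,\K)$-tree, in particular with respect to $T_\curlyF$; symmetrically $T_\curlyF$, being $\curlyF$-universally elliptic, has edge stabilizers elliptic in every $\curlyF$-tree, hence in $T_\E$. Thus $T_\curlyF$ is elliptic with respect to $T_\E$ and we may form a standard refinement $\widehat T$ of $T_\curlyF$ dominating $T_\E$ (Proposition~\ref{refinement}, using $T_\curlyF/G$ finite). By Lemma~\ref{refinement univer elliptic}(2) applied within the family $\curlyF$, $\widehat T$ is $\curlyF$-universally elliptic, and by construction it dominates both $T_\curlyF$ and $T_\E$. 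Now let $\widehat T_\curlyF=\overline{\widehat T}$ be obtained from $\widehat T$ by collapsing all edges whose stabilizer is not in $\E$; this is an $(\E,\K)$-tree, and it is compatible with $T_\curlyF$ because $\widehat T$ refines $T_\curlyF$ and collapsing is a further collapse, so $\widehat T$ is a common refinement. It remains to check $\widehat T_\curlyF$ is a JSJ tree over $\E$: its edge stabilizers are in $\E$ and (being stabilizers of non-collapsed edges of the $\curlyF$-universally-elliptic tree $\widehat T$) are $\curlyF$-universally elliptic, hence $\E$-universally elliptic, so $\widehat T_\curlyF$ is $\E$-universally elliptic; and it dominates $T_\E$ since $\widehat T$ does and domination passes to collapses (vertex stabilizers only grow under collapse), so it dominates every $\E$-universally elliptic tree because $T_\E$ does. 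Hence $\widehat T_\curlyF$ lies in $D_{JSJ}$ over $\E$.

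For part (2), the hypothesis is that $\E$-universal ellipticity and $\curlyF$-universal ellipticity coincide for $\E$-trees. Let $\overline T_\curlyF$ be $T_\curlyF$ with all edges whose stabilizer is not in $\E$ collapsed; as above this is an $(\E,\K)$-tree. Its edge stabilizers are $\curlyF$-universally elliptic (they are edge stabilizers of $T_\curlyF$), hence $\E$-universally elliptic, so $\overline T_\curlyF$ is $\E$-universally elliptic. For the domination property, let $T$ be any $\E$-universally elliptic $(\E,\K)$-tree; by hypothesis $T$ is then $\curlyF$-universally elliptic, hence an $(\curlyF,\K)$-tree which is $\curlyF$-universally elliptic, so the JSJ tree $T_\curlyF$ dominates $T$; since $\overline T_\curlyF$ is a collapse of $T_\curlyF$ its vertex stabilizers contain those of $T_\curlyF$, so $\overline T_\curlyF$ also dominates $T$. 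Therefore $\overline T_\curlyF$ is universally elliptic over $\E$ and dominates every universally elliptic $\E$-tree, i.e.\ it is a JSJ tree over $\E$.

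The main obstacle I anticipate is the bookkeeping in part (1) showing that after collapsing the non-$\E$ edges of $\widehat T$ one still dominates \emph{every} $\E$-universally elliptic tree, not merely $T_\E$: this needs the fact that $T_\E$, being JSJ over $\E$, already dominates all such trees, combined with the fact that domination is transitive and survives collapse — so the only genuinely delicate point is verifying that no essential edge-stabilizer smallness is lost, i.e.\ that the edges we keep really do have stabilizers in $\E$ and that those stabilizers are $\curlyF$-universally elliptic, which is where the finiteness of $T_\curlyF/G$ and Proposition~\ref{refinement}(3) (every non-collapsed edge stabilizer of the standard refinement fixes an edge in $T_\curlyF$ or in $T_\E$) are used. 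A secondary subtlety is ensuring the relative condition ($\K$-ellipticity) is preserved throughout; this follows because collapses and refinements in Proposition~\ref{refinement} are constructed $G$-equivariantly and do not destroy ellipticity of any subgroup already elliptic, in particular of the members of $\K$.
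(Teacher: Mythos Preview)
Your overall architecture matches the paper's, but there is a genuine gap in the step you flag only as ``bookkeeping'': the assertion that \emph{domination passes to collapses because vertex stabilizers only grow} is false in general. If $T$ dominates $S$ and you collapse edges of $T$, the new vertex stabilizers are generated by several old vertex stabilizers, and a group generated by elliptic subgroups of $S$ need not be elliptic in $S$ (e.g.\ collapse all edges of the Bass--Serre tree of a free product). So in both parts your domination argument, as written, does not go through.

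The paper closes this gap with a short but essential argument you omit. Consider an edge $e$ of the refined tree (or of $T_\curlyF$ in part (2)) with $G_e\notin\E$. Since $\widehat T$ (resp.\ $T_\curlyF$) dominates $T_\E$ (resp.\ $T'$), $G_e$ is elliptic there. If $G_e$ fixed two distinct vertices, by Theorem~\ref{fixed geodesic} it would fix an edge, hence lie in a group of $\E$; as $\E$ is closed under subgroups this forces $G_e\in\E$, a contradiction. So $G_e$ fixes a \emph{unique} vertex $v_e$ of $T_\E$ (resp.\ $T'$). Both endpoint stabilizers $G_{d_0(e)},G_{d_1(e)}$ contain $G_e$ and are elliptic, hence they must fix $v_e$ as well; therefore $\langle G_{d_0(e)},G_{d_1(e)}\rangle$ fixes $v_e$. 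Iterating over the (finitely many, since $T_\curlyF/G$ is finite) collapsed edges shows each new vertex stabilizer of the collapsed tree fixes a vertex of $T_\E$ (resp.\ $T'$), which is exactly the domination you need. Once you insert this argument, the rest of your proof is correct and coincides with the paper's.
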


Note that (2) applies if $\E$ consists of finite groups (more generally, of groups with the pro-$p$ version of Serre’s property (FA)).

\begin{proof} (1) Let $T_\E$ be a JSJ pro-$p$ tree over $\E$. Since $\curlyF \supset \E$, the tree $T_\curlyF$ is elliptic with respect to $T_\E$. Let $\widehat T_\curlyF$ be a standard refinement of $T_\curlyF$ dominating $T_\E$ (see Proposition \ref{refinement}). Consider an edge $e$ of $\widehat T_\curlyF$ whose stabilizer is not in $\E$. Then $G_e$ fixes a unique vertex $v_e$ of $T_\E$ (if more than one, say $v,w\in V(T_\E)$) then fixes an edge by Theorem \ref{fixed geodesic}). Hence $G_{d_0(e)}, G_{d_1(e)}$ fix $v_e$ and so the group generated by them fixes $v_e$. It follows that the tree $\overline T_\curlyF$  obtained from $\widehat T_\curlyF$ by collapsing all  edges whose stabilizer is not in $\E$ dominates $T_\E$: each new vertex stabilizer is the stabilizer of some vertex in $T_\E$ (we use here that the number of such edges is finite up to translation since $T_\curlyF/G$ is finite; the argument also works if $\{e\in E(\widehat T_\curlyF)\}$ is open in $\widehat T_\curlyF$). Being $\E$-universally elliptic by Lemma \ref{refinement univer elliptic} (2), it is a JSJ tree over $\E$.

\medskip
For (2), first note that 
$\overline T_\curlyF$ is 
an $\E$-universally elliptic pro-$p$ $\E$-tree. 
If $T'$ is another one, it is $\curlyF$-universally elliptic, hence dominated by $T_\curlyF$. As above, the stabilizer of  any edge $e$ of $\widehat T_\curlyF$ not in $ \E$ fixes a unique vertex of $T_\E$, so $\overline T_\curlyF$ dominates $T'$.
Thus $\overline T_\curlyF$ is a JSJ tree over $\E$.\end{proof}

\subsection{Finiteness properties} 

 In this subsection we show that $G_v$ is finitely presented relative to incident edge groups if $G$ is finitely presented and edge stabilizers are finitely generated (Proposition \ref{relatively finitely presented} below).
 
 The part on finite generatedness of the next Proposition is \cite[Proposition 2.21]{CAZ}.

\begin{proposition}\label{reducing to trees}Let $G=\Pi_1(\G,\Gamma)$ be a finite graph of pro-$p$ groups and $D$ is a maximal subtree of $\Gamma$. Suppose $G$ is finitely generated (resp. finitely presented).  If $\G(e)$ is finitely generated (resp. finitely presented) for every $e\in E(\Gamma)$, then $G(v)$ are finitely generated (resp. finitely presented ). Moreover,  $d(\Pi_1(\G, D))\leq d(G)+ \sum_{e\in \Gamma\setminus D} (d(\G(e)-1)$.
\end{proposition}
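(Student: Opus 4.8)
The plan is to reduce the statement to the finite graph of groups combinatorics, using the presentation (\ref{presentation}) of $\Pi_1(\G,\Gamma)$ together with the reduction procedure from Remark \ref{reduction} and the embedding Lemma \ref{restricted graph}. First I would handle the finite generation part, following the argument of \cite[Proposition 2.21]{CAZ}: write $G=\Pi_1(\G,D)\rtimes \pi_1(\Gamma/D)$ using \cite[Corollary 3.9.3]{R} (or Lemma \ref{generation}), where $\pi_1(\Gamma/D)$ is free pro-$p$ of rank $|E(\Gamma)\setminus D|$. Since $G$ is finitely generated, so is the retract $\Pi_1(\G,D)$, and the Frattini-quotient counting in the proof of Lemma \ref{generation} gives the bound $d(\Pi_1(\G,D))\leq d(G)+\sum_{e\in E(\Gamma)\setminus D}(d(\G(e))-1)$: each stable letter $t_e$ for $e\notin D$ contributes, and one removes the edges already accounted for. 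Then, because $D$ is a finite tree and the edge groups $\G(e)$ are finitely generated, an induction on $|E(D)|$ collapsing a leaf edge $e$ of $D$ (writing $\Pi_1(\G,D)=\Pi_1(\G,D')\amalg_{\G(e)}\G(v)$ with $v$ the leaf) shows that each vertex group $\G(v)$ is finitely generated: a free amalgamated pro-$p$ product with finitely generated total group and finitely generated amalgamated subgroup has finitely generated factors (this is where one uses that $\G(e)$ is finitely generated, and the base case where $D$ is a single vertex is trivial).

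For the finitely presented case I would run the same induction but keep track of relations. Using Lemma \ref{restricted graph}, $\Pi_1(\G,D)$ embeds in $G$, and for a finitely presented pro-$p$ group a retract is again finitely presented (a retract of a finitely presented pro-$p$ group is finitely presented, since one may complete a finite generating set of the retract to one of $G$ and pull back a finite set of relators, using that $\Pi_1(\G,D)\rtimes \pi_1(\Gamma/D)$ with $\pi_1(\Gamma/D)$ free makes $\Pi_1(\G,D)$ a retract). So it suffices to treat $\Pi_1(\G,D)$ with $D$ a finite tree. Induct on $|E(D)|$: pick a leaf edge $e$ with leaf vertex $v$, so $\Pi_1(\G,D)=\Pi_1(\G,D\setminus\{e\})\amalg_{\G(e)}\G(v)$, where $\G(e)$ is finitely generated by hypothesis. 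By induction $\Pi_1(\G,D\setminus\{e\})$ is finitely presented; now invoke the standard fact that if a free amalgamated pro-$p$ product $A\amalg_C B$ is finitely presented and $C$ is finitely generated, then both $A$ and $B$ are finitely presented. Applying this to the leaf gives $\G(v)$ finitely presented; applying the inductive conclusion gives the other vertex groups.

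The main obstacle is the amalgam fact just invoked: showing that finite presentability of $A\amalg_C B$ with $C$ finitely generated forces $A$ and $B$ to be finitely presented. In the abstract setting this is classical (one builds a presentation of $A$ from a presentation of $A\amalg_C B$ by adding the finitely many generators and relators of $C$ as a normal-form/Tietze argument, or cohomologically via the Mayer--Vietoris sequence $H^2(A\amalg_C B)\to H^2(A)\oplus H^2(B)\to H^2(C)$ together with $H^1(C)$ finite-dimensional to control $H^2(A)$). In the pro-$p$ case I would argue cohomologically: from the proper amalgam one has the Mayer--Vietoris exact sequence in continuous cohomology with $\F_p$ coefficients, and $H^1(C,\F_p)$, $H^2(A\amalg_C B,\F_p)$ finite-dimensional (the latter by finite presentability, \cite[Corollary 3.9.5 / standard]{RZ-00}) force $H^2(A,\F_p)$ and $H^2(B,\F_p)$ finite-dimensional; combined with $A,B$ finitely generated (already proved) this yields finite presentability. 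I would need to make sure the amalgam is proper so Mayer--Vietoris applies, which holds under our standing convention that $\Pi_1(\G,\Gamma)$ is proper (and hence so is each $\Pi_1(\G,\Delta)$ by Lemma \ref{restricted graph}). The HNN-type edges (those in $\Gamma\setminus D$) never enter the vertex-group analysis since they are absorbed into the free retract factor, so no separate HNN argument is needed; only the generation-count bound records their presence.
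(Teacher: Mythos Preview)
Your approach is more circuitous than the paper's and contains one genuine error plus one overclaim that matters.

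The paper's proof is a single line: it writes down the Mayer--Vietoris sequence for the whole graph of groups $(\G,\Gamma)$ with $\F_p$-coefficients,
\[
\bigoplus_{e} H_2(\G(e)) \to \bigoplus_{v} H_2(\G(v)) \to H_2(G) \to \bigoplus_{e} H_1(\G(e)) \to \bigoplus_{v} H_1(\G(v)) \to H_1(G),
\]
and reads off that finiteness of $H_1(G)$ and all $H_1(\G(e))$ (resp.\ of $H_2(G)$ and all $H_2(\G(e))$) forces finiteness of every $H_1(\G(v))$ (resp.\ $H_2(\G(v))$). The generator bound is then simply cited to \cite{CAZ}. No induction, no retracts.

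The error in your route: $G$ is \emph{not} in general a semidirect product $\Pi_1(\G,D)\rtimes \pi_1(\Gamma/D)$. What \cite[Corollary 3.9.3]{R} and Lemma~\ref{generation} actually give is $G=\widetilde G\rtimes\pi_1(\Gamma)$, where $\widetilde G$ is the \emph{normal closure} of the vertex groups; this strictly contains $\Pi_1(\G,D)$ whenever the stable letters conjugate edge groups nontrivially. Thus $\Pi_1(\G,D)$ is not a retract of $G$ (already for a single HNN-extension the base need not be a retract, e.g.\ pro-$p$ Baumslag--Solitar groups), so your argument that $\Pi_1(\G,D)$ is finitely presented collapses. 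You could salvage it by inducting on $|E(\Gamma)\setminus D|$ via the HNN Mayer--Vietoris sequence at each step---but then you have reproduced the paper's argument with extra bookkeeping.

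The overclaim: your ``standard fact'' that $A\amalg_C B$ finitely presented with $C$ merely finitely generated forces $A,B$ finitely presented does \emph{not} follow from the Mayer--Vietoris fragment you quote. From $H_2(C)\to H_2(A)\oplus H_2(B)\to H_2(A\amalg_C B)$ one bounds $\dim\bigl(H_2(A)\oplus H_2(B)\bigr)$ only by $\dim H_2(A\amalg_C B)+\dim(\operatorname{im}H_2(C))$, and the second term requires $H_2(C)$ finite, i.e.\ $C$ finitely presented. That is precisely the hypothesis the proposition hands you, so just use it. (Your induction is also ordered backwards: you must first apply the amalgam fact to get $\Pi_1(\G,D\setminus\{e\})$ finitely presented, and only then invoke the inductive hypothesis on the smaller tree.)
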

\begin{proof} Since $\Gamma$ is finite, we can think of $G$ as $G=HNN(\Pi_1(\G, D), \G(e), t_e), e \in \Gamma\setminus D$. 
	
	Consider the Mayer-Vietoris sequence associated to $(\G,\Gamma)$ with coefficients in $\F_p$:
	
	$$ \oplus_{e\in E(\Gamma)} H_2(G(e))\rightarrow \oplus_{v\in V(\Gamma)}H_2(G(v))\rightarrow H_2(G)\rightarrow $$
	$$\rightarrow	\oplus_{e\in E(\Gamma)} H_1(G(e))\rightarrow 
		\oplus_{v\in V(\Gamma)} H_1(G(v))\rightarrow H_1(G).$$ 
	
	Since $H_1(G), H_1(G(e))$ (resp. $H_2(G), H_2(G(e))$) are finite, so are $H_1(G(v))$ (resp. $H_2(G(v)))$. This proves the first statement.
	
	The last statement is the subject of \cite[Proposition 2.21]{CAZ}.
\end{proof}

  Proposition \ref{reducing to trees} says  that, if $\Pi_1(\G,\Gamma)$ and all edge groups are finitely generated (resp. finitely presented), then so are all vertex groups. The goal of this subsection is to extend these results to relative finite generation (resp. finite presentation), as we define below.  This is not needed if all groups in $\E$ are assumed to be finitely presented. 

\bigskip
{\bf Relative finite generation and presentation}

\bigskip
If we  assume that $G$ is finitely generated, or finitely presented, these properties are not always inherited by vertex groups. We therefore consider relative finite generation (or presentation), which behave better in that respect. 

Let $G$ be a pro-$p$ group and  $\K$ be a continuous family of subgroups of $G$. 

\begin{definition} One says that $G$ is finitely generated relative to $\K$ if there exists a finite set $\Omega\subset G$ such that $G$ is generated by $\Omega\cup \K$. Such a subset $\Omega$ is a relative generating set.\end{definition} 

 If $G$ is finitely generated, then clearly it is finitely generated relative to any $\K$. If the pro-$p$ groups in  $\K$ are finitely generated, relative finite generation is equivalent to finite generation. Note  that relative finite generation does not change if one replaces the subgroups in  $\K$ by their conjugates or closes $\K$ for conjugation. 
 
 Recall that a pro-$p$ $G$-tree $T$ is relative to $\K$ if every $K\in\K$ is elliptic in $T$. We now consider relative finite presentation (see [Osi06] for the abstract case). Note that, if $\Omega$ is a relative finite generating set, then the natural morphism $F(\Omega)\amalg \coprod_{K\in \K}K\longrightarrow G$ is an epimorphism (with $F(\Omega)$ the free pro-$p$ group on $\Omega$).

 The next proposition gives the homological characterization of relative generation.
 
 \begin{proposition}\label{homological} Suppose $\K$ is a continuous family of subgroups of a pro-$p$ group $G$ closed for conjugation. Then 
$G$  is finitely generated relative to $\K$ if and only if the image of the natural map $\bigoplus_{o\in \K/G} H_1(G,\F_p[[o]])\longrightarrow H_1(G,\F_p)$ is open.

 \end{proposition}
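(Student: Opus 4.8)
I plan to prove both implications simultaneously by passing to the Frattini quotient, which identifies $H_1(G,\F_p)$ with the $\F_p$-vector space $G/\Phi(G)$ (cf. \cite[Lemma 2.8.7(b)]{RZ-00}), and using the Frattini argument: a subset of $G$ topologically generates $G$ if and only if its image spans $G/\Phi(G)$. Thus $G$ is finitely generated relative to $\K$ if and only if there is a finite $\Omega\subset G$ whose image, together with the image of $\K$, spans $G/\Phi(G)$, and this happens if and only if the image of the closed subgroup $\langle\K\rangle$ in $G/\Phi(G)$ has finite codimension, i.e. is open. (For the ``if'' direction one picks $\Omega$ mapping to a basis of a complement of this image and applies the Frattini argument to $\langle\Omega\cup\K\rangle$; for the ``only if'' direction, $\langle\Omega\cup\K\rangle=G$ forces the images of $\Omega$ and $\K$ to span $G/\Phi(G)$ topologically, and since the image of the finite set $\Omega$ is finite-dimensional, the closed image of $\langle\K\rangle$ is of finite codimension.)

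The second step is to identify the image of $\langle\K\rangle$ in $G/\Phi(G)$ with the (closure of the) image of the map in the statement. Since $G/\Phi(G)$ is abelian, conjugation acts trivially on it, so the image of $\K$ coincides with the image of $\bigcup_{o\in\K/G}K_o$, where $K_o$ is any representative of the conjugacy class $o$; hence the image of $\langle\K\rangle$ is the closed $\F_p$-span of the subspaces $\mathrm{im}(K_o\to G/\Phi(G))$. For a single subgroup $K$, the inclusion $K\hookrightarrow G$ induces $H_1(K,\F_p)=K/\Phi(K)\to G/\Phi(G)=H_1(G,\F_p)$ whose image is exactly the image of $K$ in $G/\Phi(G)$; and by Shapiro's lemma $H_1(K,\F_p)\cong H_1(G,\F_p[[G/K]])$, under which this map becomes the one induced by the augmentation $\F_p[[G/K]]\to\F_p$. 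Writing $\F_p[[o]]$ for $\F_p[[G/K_o]]$, the natural map $\bigoplus_{o\in\K/G}H_1(G,\F_p[[o]])\to H_1(G,\F_p)$ therefore has image the $\F_p$-span of the $\mathrm{im}(K_o)$, whose closure is precisely the image of $\langle\K\rangle$. Combining with the first step gives the equivalence: the image of that map is open $\iff$ its closure, the image of $\langle\K\rangle$ in $G/\Phi(G)$, is open $\iff$ $G$ is finitely generated relative to $\K$.

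The main technical point to be handled carefully is the interaction of all of this with the topology, and this is where the hypothesis that $\K$ is \emph{continuous} is essential. One must realize $\bigoplus_{o\in\K/G}\F_p[[o]]$ as a genuine profinite $\F_p[[G]]$-module --- concretely, as $\F_p[[X]]$ for a suitable profinite $G$-space $X$ built from the coset spaces $G/K$, $K\in\K$ (the continuity of $\K$ is exactly what makes $X$ profinite and the family of coset spaces vary continuously) --- so that Shapiro's lemma is available in this continuous setting and the image of the induced map on $H_1$ is a closed subspace; this is what allows the word ``open'' to be read literally rather than as ``of finite codimension''. One also needs the elementary facts that a closed $\F_p$-subspace of a profinite $\F_p$-module is open precisely when it has finite codimension, and that adding a finite-dimensional (hence closed, ``small'') subspace commutes with taking closures. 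The purely algebraic content (Frattini argument plus Shapiro's lemma plus triviality of conjugation on $G/\Phi(G)$) is routine; it is the profinite bookkeeping making the possibly-infinite direct sum over $\K/G$ into a continuous module and keeping images closed that requires the continuity hypothesis and a little care.
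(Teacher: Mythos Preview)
Your proposal is correct and follows exactly the approach the paper intends: the paper's entire proof is the single line ``Just observe that $H_1(G)=G/\Phi(G)$,'' and what you have written is precisely the unpacking of that observation via the Frattini argument and Shapiro's lemma. You have supplied considerably more detail than the paper does, but the underlying idea is identical.
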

 
 \begin{proof} Just observe that $H_1(G)=G/\Phi(G)$. 
 \end{proof}

 \begin{definition} One says that a pro-$p$ group $G$  is finitely presented relative to a continuous family of subgroups $\K$ closed for conjugation  if  the image of the natural map $\bigoplus_{o\in \K/G} H_2(G,\F_p[[o]])\longrightarrow H_2(G,\F_p)$ is open.
   \end{definition} 
 
 \bigskip
\begin{remark}\label{section} Suppose w.l.o.g. $\K$ is closed for conjugation. The definition above of relative finite presentability is given in terms of the second homology because the map $\K\longrightarrow \K/G$ does not always admit a continuous section (see \cite[Section 5.6]{RZ-10}).  When such section $\sigma:\K/G\longrightarrow \K$  exists, we can reduce $\K$ to $\widetilde \K=\sigma(\K)$ and give the following equivalent similar to the abstract case definition: $G$ is finitely presented relative to $\widetilde K$  if  there exists a finite ({\it relative generating}) set $\Omega\subset G$, such that the natural epimorphism $f:F(\Omega)\amalg \coprod_{K\in \widetilde\K} K\longrightarrow G$ has finitely generated kernel as a normal subgroup. The equivalence of the definition in this case follows directly from 5-term Hoschild-Serre spectral sequence associated to the epimorphism $f$. In this case finite presentation  
is not affected if one replaces the subgroups $K\in \sigma(\K/G)$ by their conjugates.
\end{remark} 
 
 \bigskip
 One easily checks that relative finite presentation does not depend on the choice of $\Omega$. If $G$ is finitely presented, then it is finitely presented relative to any  collection of  subgroups. 
 Conversely, if $G$ is finitely presented relative to any finite collection of finitely presented subgroups, then $G$ is finitely presented.
 
 \medskip
 The following lemma will be used below.
 
 \begin{lemma} Suppose that $G$ is finitely presented relative to $\{K_1,\ldots,K_p\}$, 
 	 and $K_p\subset K_i$ for some $i < p$. 
 	 Then $G$ is finitely presented relative to 
 	 $\{K_1,
 	 \ldots,
 	 K_{p-1}\}$.
 	  
  \end{lemma}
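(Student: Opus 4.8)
The plan is to argue via the concrete description of relative finite presentation recorded in Remark~\ref{section}. The family $\{K_1,\dots,K_p\}$ being finite, the orbit space of its conjugation-closure $\K$ is finite and discrete, so a continuous section $\K/G\to\K$ exists; we may assume the $K_j$ are pairwise non-conjugate and take $\{K_1,\dots,K_p\}$ as the transversal $\widetilde\K$ (if $K_p$ were conjugate to some $K_j$ with $j<p$, the conclusion would be immediate). Thus $G$ being finitely presented relative to $\{K_1,\dots,K_p\}$ means: there is a finite $\Omega\subset G$ with $G=\langle\,\Omega\cup K_1\cup\dots\cup K_p\,\rangle$ such that the kernel $R$ of the natural epimorphism $f\colon P\longrightarrow G$, where $P:=F(\Omega)\amalg K_1\amalg\dots\amalg K_p$, is finitely generated as a closed normal subgroup of $P$. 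Fix such an $\Omega$. Since $K_p\subseteq K_i$ with $i<p$, we still have $G=\langle\,\Omega\cup K_1\cup\dots\cup K_{p-1}\,\rangle$, so $\Omega$ is a relative generating set for $\{K_1,\dots,K_{p-1}\}$ as well; write $P_0:=F(\Omega)\amalg K_1\amalg\dots\amalg K_{p-1}$, let $f_0\colon P_0\longrightarrow G$ be the corresponding epimorphism, and put $R_0:=\ker f_0$. The aim is to manufacture a finite normal generating set of $R_0$ in $P_0$ out of one of $R$ in $P$.

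By associativity of the free pro-$p$ product, $P=P_0\amalg K_p$, and under this identification $f$ restricts to $f_0$ on $P_0$ and to the inclusion $K_p\hookrightarrow G$ on $K_p$. Define a folding homomorphism $\psi\colon P=P_0\amalg K_p\longrightarrow P_0$ to be the identity on $P_0$ and, on $K_p$, the composite $K_p\hookrightarrow K_i\hookrightarrow P_0$ of the inclusion of subgroups (available precisely because $K_p\subseteq K_i$) with the canonical embedding of the free factor $K_i$. By the universal property of the free pro-$p$ product, $\psi$ is a continuous homomorphism; it is surjective, being the identity on $P_0$; and $f_0\circ\psi=f$, since on $P_0$ both sides equal $f_0$ while on $K_p$ both send $k$ to $k\in G$ (as $f_0$ restricts on $K_i$ to the inclusion $K_i\hookrightarrow G$).

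From $f_0\circ\psi=f$ and surjectivity of $\psi$ one reads off $\psi(R)=R_0$: indeed $f_0(\psi(R))=f(R)=1$ gives $\psi(R)\subseteq R_0$, and conversely any $y\in R_0$ has a $\psi$-preimage $x\in P$, for which $f(x)=f_0(y)=1$, whence $x\in R$ and $y\in\psi(R)$. Now pick $r_1,\dots,r_n$ generating $R$ as a closed normal subgroup of $P$. A continuous surjection of profinite groups is a closed map sending conjugates to conjugates, so $\psi$ maps $R$ (the closed normal subgroup of $P$ generated by $r_1,\dots,r_n$) onto the closed normal subgroup of $P_0$ generated by $\psi(r_1),\dots,\psi(r_n)$; the latter is therefore $\psi(R)=R_0$. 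Hence $R_0$ is finitely generated as a closed normal subgroup of $P_0$, that is, $G$ is finitely presented relative to $\{K_1,\dots,K_{p-1}\}$.

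The argument is soft and I expect no genuine obstacle; the one point demanding care is the compatibility $f_0\circ\psi=f$, which is precisely where the hypothesis $K_p\subseteq K_i$ enters and which is what forces $\psi$ to restrict to a surjection $R\to R_0$. If one wishes to stay with the purely homological definition instead, nothing more is needed: the required section $\K/G\to\K$ exists since $\K/G$ is finite, and the five-term exact sequence in homology of $f$ turns ``$R$ finitely generated as a closed normal subgroup'' into the openness of the image of $\bigoplus_{o\in\K/G} H_2(G,\F_p[[o]])$ in $H_2(G,\F_p)$, so the reasoning above transfers verbatim.
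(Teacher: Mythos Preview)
Your proof is correct and follows essentially the same approach as the paper: factor the presentation epimorphism $f\colon F(\Omega)\amalg K_1\amalg\cdots\amalg K_p\to G$ through the folding map $\psi$ onto $F(\Omega)\amalg K_1\amalg\cdots\amalg K_{p-1}$ that sends $K_p$ into its copy inside $K_i$, and conclude that $\ker f_0=\psi(\ker f)$ is finitely normally generated. The paper's argument is identical but terser; you supply the extra justification that $\psi(R)=R_0$ via surjectivity of $\psi$ and the identity $f_0\circ\psi=f$, and you are more careful about the existence of a section and the case of conjugate $K_j$'s.
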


\begin{proof} 

 Let $f:F(\Omega)\amalg K_1\amalg\cdots\amalg K_p\longrightarrow G$ be an epimorphism corresponding to the presentation of $G$ relative to $\{K_1,\ldots,K_p\}$ and $$s: F(\Omega)\amalg K_1\amalg\cdots\amalg K_{p-1}\longrightarrow G$$ be the restriction of $f$ to  $F(\Omega)\amalg K_1\amalg\cdots\amalg K_{p-1}$.
Then $f$ factors through the epimorphism $F(\Omega)\amalg K_1\amalg\cdots\amalg K_p\longrightarrow F(\Omega)\amalg K_1\amalg\cdots\amalg K_{p-1}$ that sends $K_p$ to its copy in $K_i$ and the rest of the factors identically to their copies.  

$$\xymatrix{F(\Omega)\amalg K_1\amalg\cdots\amalg K_p\ar[rr]\ar[rd]_f&&F(\Omega)\amalg K_1\amalg\cdots\amalg K_{p-1}\ar[dl]^s\\
&G&}$$

Therefore the kernel of $f$ is mapped onto the kernel of $s$ and so the kernel of $s$ is finitely generated as a normal subgroup.
\end{proof}

Suppose that a finitely generated group $G$ splits as a finite graph of pro-$p$ groups.  It  follows from Proposition \ref{reducing to trees}  that vertex groups are finitely generated if one assumes that edge groups are finitely generated, but this is false in general without this assumption. However, vertex groups are always finitely generated relative to the incident edge groups, and there is a similar statement for relative finite presentation (see below).

The abstract version of the following lemma can be found in \cite[Lemmas 1.11, 1.12]{Gui08}. Note that it gives another proof of Proposition \ref{reducing to trees}.

 \begin{lemma}\label{ref fin gen} If a finitely generated pro-$p$ group $G$ acts on a pro-$p$ tree $T$ cofinitely, then every vertex stabilizer $G_v$ is finitely generated relative to the incident edge groups. More generally, if $G$ is finitely generated relative to $\K$, and $T$ is a tree relative to $\K$, then $G_v$ is finitely generated relative to $Inc^{\K}_v$.\end{lemma}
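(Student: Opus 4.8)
The plan is to reduce to a finite graph of pro-$p$ groups and then extract the statement from the Mayer--Vietoris sequence in $\F_p$-homology, using Proposition \ref{homological} to convert relative finite generation into a finiteness statement about images in $H_1(-,\F_p)$. Throughout I assume the action is cofinite, also for the relative statement (this matches the abstract formulation \cite[Lemmas 1.11, 1.12]{Gui08}). By Remark \ref{cofinite action} we may then write $G=\Pi_1(\G,\Gamma)$ with $\Gamma=T/G$ finite, and replace $T$ by the standard pro-$p$ tree $S=S(\G,\Gamma)$; since every vertex stabiliser of $T$ is conjugate to some $\G(v)$, it suffices to show each $\G(v)$ is finitely generated relative to $Inc^{\K}_v$. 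The defining exact sequence of the pro-$p$ tree $S$, together with the orbit decompositions $[[\F_p(E^*(S),*)]]\cong\bigoplus_{e\in E(\Gamma)}\F_p[[G/\G(e)]]$ and $[[\F_p V(S)]]\cong\bigoplus_{v\in V(\Gamma)}\F_p[[G/\G(v)]]$ as $\F_p[[G]]$-modules (standard; cf. \cite{RZ-00}, and note $E(S)$ is compact since $\Gamma$ is finite), gives a short exact sequence of $\F_p[[G]]$-modules. Applying $H_*(G,-)$, Shapiro's lemma, and exactly the Mayer--Vietoris sequence already used in the proof of Proposition \ref{reducing to trees}, yields exactness of
$$\bigoplus_{e\in E(\Gamma)}H_1(\G(e),\F_p)\xrightarrow{d}\bigoplus_{v\in V(\Gamma)}H_1(\G(v),\F_p)\xrightarrow{\phi}H_1(G,\F_p)$$
at the middle term, where on the $e$-summand $d$ is given by the two edge maps $\partial_{0*},\partial_{1*}$ into the summands of the endpoints of $e$ (both contributing when $e$ is a loop).

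Since $T$ is relative to $\K$, every $K\in\K$ is elliptic, so after conjugating representatives each class $o\in\K/G$ has a representative inside some $\G(v)$, and conjugation acts trivially on $H_1(-,\F_p)$. Inside $\bigoplus_v H_1(\G(v),\F_p)$ put
$$\widetilde N=\bigoplus_{v\in V(\Gamma)}\Bigl(\ \sum_{e\ni v}\operatorname{im}\bigl(H_1(\G(e),\F_p)\to H_1(\G(v),\F_p)\bigr)\ +\ \sum_{o}\operatorname{im}\bigl(H_1(K^o_v,\F_p)\to H_1(\G(v),\F_p)\bigr)\Bigr),$$
where $e$ ranges over edges incident to $v$ (via each of its endpoint maps) and $o$ over those classes of $\K/G$ conjugate into $\G(v)$, with $K^o_v$ a chosen representative lying in $\G(v)$. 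In $H_1(G,\F_p)$ put $X=\sum_{e\in E(\Gamma)}\operatorname{im}\bigl(H_1(\G(e),\F_p)\to H_1(G,\F_p)\bigr)+\sum_{o\in\K/G}\operatorname{im}\bigl(H_1(o)\to H_1(G,\F_p)\bigr)$; by Proposition \ref{homological} the second sum alone is already open in $H_1(G,\F_p)$, so $X$ has finite codimension.

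By construction $\operatorname{im}d\subseteq\widetilde N$ and $\phi(\widetilde N)\subseteq X$, so $\phi$ descends to $\overline\phi:\bigoplus_v H_1(\G(v),\F_p)/\widetilde N\to H_1(G,\F_p)/X$; using exactness at the middle term ($\ker\phi=\operatorname{im}d$), a short diagram chase shows $\phi^{-1}(X)=\widetilde N$, i.e. $\overline\phi$ is injective. Since the target is finite-dimensional over $\F_p$, so is the source, and a fortiori so is each quotient $H_1(\G(v),\F_p)/\operatorname{pr}_v(\widetilde N)$. But $\widetilde N$ is a direct sum over vertices, so $\operatorname{pr}_v(\widetilde N)$ is precisely $\sum_{e\ni v}\operatorname{im}(H_1(\G(e),\F_p)\to H_1(\G(v),\F_p))+\sum_{o}\operatorname{im}(H_1(K^o_v,\F_p)\to H_1(\G(v),\F_p))$, which via Shapiro is exactly the image of $\bigoplus_{o'\in Inc^{\K}_v/\G(v)}H_1(\G(v),\F_p[[o']])\to H_1(\G(v),\F_p)$. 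Its finite codimension gives, by Proposition \ref{homological} once more, that $\G(v)$ is finitely generated relative to $Inc^{\K}_v$. The non-relative statement is the case $\K=\varnothing$.

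The genuinely delicate point, where I expect to spend the most care, is the incidence bookkeeping: since $\K$ is only given up to conjugacy, a single class $o$ may be elliptic at several vertices, so $\widetilde N$ must be built to contain a representative of $o$ in \emph{every} $\G(v)$ into which $o$ conjugates, and one must check that with this choice $\operatorname{pr}_v(\widetilde N)$ matches the subspace associated to $Inc^{\K}_v$ on the nose — and that including the edge-map images directly (rather than only $\operatorname{im}d$) is what makes both the descent $\phi(\widetilde N)\subseteq X$ and this identification work, while keeping $X$ of finite codimension. The rest is the (by now standard) homological formalism for pro-$p$ groups acting on pro-$p$ trees; the abstract model of the whole argument is \cite[Lemmas 1.11, 1.12]{Gui08}.
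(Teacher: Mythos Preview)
Your argument is correct, but it takes a different route from the paper. The paper proceeds group-theoretically: it forms the quotient graph of groups $(\overline\G,\Gamma)$ obtained by killing the normal closure of the incident edge groups in each vertex group, observes that $\Pi_1(\overline\G,\Gamma)$ is a free pro-$p$ product $\bigl(\coprod_v\overline\G(v)\bigr)\amalg\pi_1(\Gamma)$, and then uses that free factors of a (relatively) finitely generated pro-$p$ group are (relatively) finitely generated. You instead stay entirely in $\F_p$-homology, using exactness of the Mayer--Vietoris sequence at the middle term together with Proposition~\ref{homological}; the key diagram chase $\phi^{-1}(X)=\widetilde N$ is fine because every element of $X$ lifts through $\phi$ to an element of $\widetilde N$ (via one endpoint map for each edge contribution and one chosen vertex for each $\K$-class), and then $\ker\phi=\operatorname{im}d\subseteq\widetilde N$ finishes it. Your worry about a $\K$-class $o$ being elliptic at several vertices is harmless: if $K$ fixes $v$ and another vertex, then by Theorem~\ref{fixed geodesic} it lies in an edge group incident to $v$, so its contribution is already absorbed by the $Inc_v$-part of $\operatorname{pr}_v(\widetilde N)$ and the identification with the $Inc^{\K}_v$-image is exact. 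The trade-off: the paper's argument is more elementary (no homology), while yours is uniform with the proof of the relative finite presentation statement (Proposition~\ref{relatively finitely presented}), which the paper itself does via Mayer--Vietoris --- so your approach has the virtue of treating both finiteness conditions by the same machinery.
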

 
 \begin{proof}
 Let $(\G,\Gamma)$ be the corresponding graph of pro-$p$ groups with $\Gamma= T/G$. For every  vertex $v$ of $\Gamma$  put $\overline\G(v)=\G(v)/\langle \G(e)^{\G(v)}\mid e\in E(d_0^{-1}(v)\cup d_1^{-1}(e))\rangle$. Define the quotient graph of groups $(\overline \G, \Gamma)$
 by putting  $\overline\G(v)$ on top of $v$ and setting the edge groups to be trivial.  Then from the presentation \eqref{presentation} for
 $\Pi_1(\overline \G, \Gamma)$ (see Section 2.4) it follows   that $$\Pi_1(\overline \G,
 \Gamma)=\coprod_{v\in V(\Gamma)} \overline\G(v).$$ The natural morphism $(\G,\Gamma)\longrightarrow (\overline \G,\Gamma)$ induces then the epimorphism $$G=\Pi_1(\G,\Gamma)\longrightarrow \overline G=\Pi_1(\overline \G,\Gamma).$$ If $G$ is finitely generated, then so is $\overline{\G}(v)$ and therefore so is $\G(v)$ relative to $$\{\G(e)^{\G(v)}\mid e\in E(d_0^{-1}(v)\cup d_1^{-1}(e))\}.$$ 
 
 To show the second statement we use overline for the images of subgroups of $G$ in $\overline G$. Note that if $G$ is finitely generated relative to $\K$ then $\overline \G$ is finitely generated relative to $\overline \K$ and therefore so are $\overline{\G}(v)$.  Hence   $\overline\G_v$ are finitely generated  relative to $$\{\G(e)^{\G(v)}\mid e\in E(d_0^{-1}(v)\cup d_1^{-1}(e))\}\cup Inc^{\K}_v.$$
 \end{proof}
 
 \begin{remark} If $\K=\{K_1,...,K_p\}$, then the family $\K_{|G_v}$ consists of at most $p$ groups, each conjugate to some $K_i$. \end{remark}
 
 There is a similar statement for relative finite presentation.
 
 \begin{proposition}\label{relatively finitely presented} Let $G=\Pi_1(\G, \Gamma)$ be the fundamental pro-$p$ group of a finite graph of pro-$p$ groups with finitely generated edge groups. If $G$ is finitely presented, then every vertex group $G(v)$ is finitely presented relative to the incident edge groups. More generally, if $G$ is finitely presented relative to $\K$, and $G=\Pi_1(\G, \Gamma)$ is a splitting  relative to $\K$, then $G_v$ is finitely presented relative to $Inc_v^\K$.\end{proposition}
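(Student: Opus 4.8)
The plan is to mirror the proof of Lemma~\ref{ref fin gen}. First, it suffices to establish the second, relative assertion: the first follows from it by taking $\K$ to be the family of all conjugates of the edge groups $\G(e)$, each of which is elliptic (it fixes the two endpoints of a lift of $e$), using that a finitely presented $G$ is finitely presented relative to any family and noting that $Inc_v^\K$ then coincides with the set of incident edge groups.

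So assume $G=\Pi_1(\G,\Gamma)$ is finitely presented relative to $\K$, with $\Gamma$ finite, every $\G(e)$ finitely generated, and every member of $\K$ elliptic. As in Lemma~\ref{ref fin gen}, form the graph of pro-$p$ groups $(\overline\G,\Gamma)$ with $\overline\G(v)=\G(v)/\langle\langle \partial_i\G(e)\mid e\text{ incident to }v,\ i=0,1\rangle\rangle$ and all edge groups trivial; then $\overline G:=\Pi_1(\overline\G,\Gamma)$ is a free pro-$p$ product of the $\overline\G(v)$ together with a free pro-$p$ group of finite rank, and the natural epimorphism $q\colon G\twoheadrightarrow\overline G$ has kernel $N$ equal to the normal closure in $G$ of the (finitely many, finitely generated) subgroups $\partial_i\G(e)$. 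Since enlarging the relative family preserves relative finite presentation, $G$ is finitely presented relative to $\K$ together with the conjugates of all $\G(e)$; quotienting by the normal closure of these now-relative subgroups, one gets that $\overline G$ is finitely presented relative to $\overline\K:=q(\K)$. The precise bookkeeping here is carried out through the homological definition of relative finite presentation given above, applied via the five-term exact sequence of $1\to N\to G\to\overline G\to1$ with $\F_p$-coefficients: the cokernel of $H_2(G,\F_p)\to H_2(\overline G,\F_p)$ embeds into the coinvariants $H_1(N,\F_p)_{\overline G}$, which is a quotient of $\bigoplus_{e,i}H_1(\G(e),\F_p)$, hence finite because the $\G(e)$ are finitely generated — this is the only point at which that hypothesis is used.

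Next I would treat the free pro-$p$ product. The Mayer--Vietoris sequence associated to $(\overline\G,\Gamma)$ (cf. the displayed sequence in the proof of Proposition~\ref{reducing to trees}), whose edge groups are trivial, yields a natural isomorphism $H_2(\overline G,M)\cong\bigoplus_{v}H_2(\overline\G(v),M|_{\overline\G(v)})$ for every profinite $\F_p[[\overline G]]$-module $M$ (the free factor contributes nothing in degree $2$). Applying this with $M=\F_p$ and with the permutation modules attached to the orbits of $\overline\K$, and combining it with Shapiro's lemma and a Mackey-type decomposition of the restriction of such a module to each $\overline\G(v)$ — legitimate because $\Gamma$ is finite (so the relevant trees have compact edge sets) and $\overline\K$ is continuous — one sees that, each member of $\overline\K$ being elliptic and hence conjugate into the vertex groups, the ``relative $H_2$'' map for $\overline G$ is the direct sum over $v$ of the analogous maps for the pairs $(\overline\G(v),\overline\K_{|\overline\G(v)})$. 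Therefore its image is open if and only if each summand's image is open, so every $\overline\G(v)$ is finitely presented relative to $\overline\K_{|\overline\G(v)}$; relative finite generation of $\overline\G(v)$ is already provided by Lemma~\ref{ref fin gen}.

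Finally I would lift back along $\G(v)\to\overline\G(v)=\G(v)/\langle\langle\,\text{incident edge groups}\,\rangle\rangle$: running the same five-term/Hochschild--Serre analysis for this extension, whose kernel is normally generated by the finitely many finitely generated incident edge groups, transfers relative finite presentation of $\overline\G(v)$ (relative to the images of $\K_{|G_v}$) to $\G(v)$ relative to $\{\text{incident edge groups}\}\cup\K_{|G_v}=Inc_v^\K$. I expect the main difficulty to be technical rather than structural: making the module-level bookkeeping rigorous in the pro-$p$ category — the profinite Shapiro lemma, the Mackey decomposition of induced modules under restriction, and the verification that the subgroups these produce really match the incidence family $Inc_v^\K$ — and, alongside it, transporting the $H_2$-characterization of relative finite presentation across the quotient $G\to\overline G$ and the extension $\G(v)\to\overline\G(v)$ given that $\K\to\K/G$ need not admit a continuous section (Remark~\ref{section}).
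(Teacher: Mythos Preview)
Your proposal is correct and follows essentially the same route as the paper: pass to the quotient graph of groups $(\overline\G,\Gamma)$ with trivial edge groups, use the five-term Hochschild--Serre sequence (with the finiteness of $H_1$ of the kernel coming from finite generation of the $\G(e)$) to transfer the relative-$H_2$ openness condition between $G$ and $\overline G$ and between $\G(v)$ and $\overline\G(v)$, and then invoke the Mayer--Vietoris isomorphism $H_2(\overline G)\cong\bigoplus_v H_2(\overline\G(v))$ to split the problem vertexwise. Your write-up is in fact more careful than the paper's on a couple of points (you keep track of the free factor $\pi_1(\Gamma)$ in $\overline G$, and you flag the Shapiro/Mackey bookkeeping and the section issue from Remark~\ref{section}), but the architecture is the same.
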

 
 \begin{proof} W.l.o.g we assume that $\K$ is closed for conjugation in $G$.  Consider the Mayer-Vietoris sequence:
  $$\bigoplus_{e\in E(\Gamma)} H_2(G_e)\longrightarrow  \bigoplus_{v\in V(\Gamma))}H_2(G_v)\longrightarrow H_2(G)\longrightarrow \bigoplus_{e\in E(\Gamma)} H_1(G_e).$$
  
  As in the previous lemma for every  vertex $v$ of $\Gamma$  put $$\overline\G(v)=\G(v)/\langle \G(e)^{\G(v)}\mid e\in E(d_0^{-1}(v)\cup d_1^{-1}(e))\rangle.$$ Define the quotient graph of groups $(\overline \G, \Gamma)$
 by putting  $\overline\G(v)$ on top of $v$.  Then from the presentation \eqref{presentation} (Section 2.4) for
 $\Pi_1(\overline \G, \Gamma)$ it follows  that $$\Pi_1(\overline \G,
 \Gamma)=\coprod_{v\in V(\Gamma)} \overline\G(v).$$ The natural morphism $(\G,\Gamma)\longrightarrow (\overline \G,\Gamma)$ induces then the epimorphism $$G=\Pi_1(\G,\Gamma)\longrightarrow \overline G=\Pi_1(\overline \G,\Gamma)$$ with the kernel $\langle\langle G(e)\mid e\in E(\Gamma)\rangle \rangle$. Let 
$$H_2(G)\longrightarrow H_2(\overline G)\longrightarrow  \bigoplus_{e\in E(\Gamma)} H_1(G_e)$$ be the part of 
  the 5-term Hoschild-Serre sequence  for this epimorphism.  The right part of it is finite. So from the commutative diagram
$$\xymatrix{  \bigoplus_{K\in \K} H_2(K)\ar[d]\ar[r]&H_2(G)\ar[d]\\
\bigoplus_{K\in \K} H_2(\overline K)\ar[r]&H_2(\overline G)}$$
 it follows that 
   $\bigoplus_{K\in \K} H_2(K)\longrightarrow H_2(G)$ has open image if and only if $$\bigoplus_{ K\in \K} H_2(\overline K)\longrightarrow H_2(\overline G)$$ has open image. Similarly  $\bigoplus_{K\in \K_{|G(v)}} H_2(K)\longrightarrow H_2(G(v))$ has open image if and only if $\bigoplus_{ K\in \K_{|G(v)}} H_2(\overline K)\longrightarrow H_2(\overline G(v))$ has open image. Thus it suffices to prove the proposition when edge groups are trivial. 
  
  Assuming this the natural map $\bigoplus_{v\in V(\Gamma))}H_2(G_v)\longrightarrow H_2(G)$ is an isomorphism. As the image  of $\bigoplus_{ K\in \K} H_2(\overline K)$ is open in $H_2(\overline G)$, the image of  $\bigoplus_{ K\in \K_{|G(v)}} H_2(\overline K)$ must be open in $H_2(\overline G(v))$ for every $v$. This finishes the proof.

 \end{proof}
 
 \subsection{Incidence structures of vertex groups} Given a vertex stabilizer $G_v$ of a tree $T$, it is useful to consider splittings of $G_v$ relative to incident edge stabilizers, as they extend to splittings of $G$ if $T/G$ is finite (Lemma \ref{refinement at v} below). In this subsection we give definitions. We assume in this subsection until the rest of the section that $\K$ does not have subgroups contained in edge stabilizers.
  
  \bigskip
  {\bf Definitions.}  Let $T$ be a tree (minimal, relative to $\K$, with edge stabilizers in $\E$). Let $v$ be a vertex, with stabilizer $G_v$.
  
  \begin{definition} (Incident edge groups $Inc_v$). Given a vertex $v$ of a tree $T$ 
  we define $Inc_v$ (or $Inc_{G_v}$) as the family of stabilizers $G_{e}$ of edges incident to $v$. We call $Inc_v$ the set of incident edge groups. It is a continuous family of subgroups of $G_v$ closed for $G_v$-conjugation. \end{definition}
  
  If $T/G$ is finite, alternatively, one can define $Inc_v$ from the quotient graph of groups $\Gamma =T/G$ as the image in $G_v$ of the groups carried by all oriented edges  incident to $v$. 
  
  \begin{definition}  Given $v$, consider the family of conjugates of groups in $\K$ that fix $v$ and no other vertex of $T$. We define the restriction $\K_{|G_v}$. 
  \end{definition}
  
  \begin{definition} ($Inc^{\K}_v$). We define $Inc^{\K}_v=Inc_v\cup \K_{|G_v}$. We will sometimes write $Inc_{|Q}$ and $Inc_{\K|Q}$ rather than $Inc_v$ and $Inc_{G_v}$, with $Q=G_v$.\end{definition}
  
We also view $\K_{|G_v}$ and $Inc^{\K}_v$ as families of subgroups of $G_v$, each well-defined up to conjugacy.

\begin{remark} We emphasize that $\K_{|G_v}$ only contains groups having $v$ as their unique fixed point. Two such groups are conjugate in $G_v$ if and only if they are conjugate in $G$. In particular, the number of $G_v$-conjugacy classes of groups in $\K_{|G_v}$ is bounded by the number of $G$-conjugacy classes of groups in $\K$. Also note that any subgroup of $G_v$ which is conjugate to a group of $\K$ is contained (up to conjugacy in $G_v$) in a group belonging to $Inc^{\K}_v$, so is elliptic in any splitting of $G_v$ relative to $Inc^{\K}_v$. \end{remark}
 
 \bigskip
 \subsection{JSJ decompositions of vertex groups.}
 
 \bigskip 
 Given an $(\E,\K)$-tree $T$ with $T/G$ finite, we compare splittings of $G$ and relative splittings of vertex stabilizers $G_v$. Recall that $Inc^{\K}_v$ is the family of incident edge stabilizers together with $\K_{|G_v}$ (see Subsection 8.2), and assuming that $G$ is finitely generated one has that $G_v$ is finitely generated relative to $Inc^{\K}_v$, by Lemma \ref{ref fin gen}. In particular, whenever $G_v$ acts on an $(\E,\K)$-tree $S_v$ relative to $Inc_v$ with no global fixed point, there is a unique minimal $G_v$-invariant subtree $D(G_v)\subset S_v$ by Proposition \ref{unique G-invariant}. We view $D(G_v)$ as a tree with an action of $G_v$; if $G_v$ is elliptic in $S_v$, we let $D(G_v)$ be any fixed point. 
 
 \begin{definition} ($\E_v$). We denote by $\E_v$ the family consisting of subgroups of $G_v$ belonging to $\E$. All splittings of $G_v$ will be over groups in $\E_v$.\end{definition} 
 
 \begin{lemma}\label{refinement at v} Let $G_v$ be a vertex stabilizer of an $(\E,\K)$-tree $T$ and suppose that $T/G$ is finite.  Any splitting of $G_v$ relative to $Inc^{\K}_v$ extends (non-uniquely) to a splitting of $G$ relative to $\K$. More precisely, given an $(\E_v,Inc^{\K}_v)$-tree $S_v$, there exist an $(\E,\K)$-tree $\hat T$ and a collapse map $p:\hat T\longrightarrow T$ such that $p^{-1}(v)$ is $G_v$-equivariantly isomorphic to $S_v$.\end{lemma}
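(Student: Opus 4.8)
The plan is to imitate the blow-up construction of Proposition~\ref{refinement}, but applied to a single vertex $v$ of $T$ rather than to all vertices simultaneously. Start from the quotient graph of pro-$p$ groups $(\G,\Gamma)$ with $\Gamma=T/G$ finite, and consider the given $(\E_v,Inc^{\K}_v)$-tree $S_v$ together with the minimal $G_v$-invariant subtree $D(G_v)\subseteq S_v$ (or a fixed point if $G_v$ is elliptic). The key point is that, by hypothesis, every group in $Inc_v$ is elliptic in $S_v$; so for each edge $e$ of $T$ incident to $v$ we may choose a vertex $q_e\in D(G_v)$ fixed by $G_e$, using \cite[Corollary 3.10]{RZ-00}. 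Since $T/G$ is finite there are only finitely many $G$-orbits of such edges, so these choices can be made $G_v$-equivariantly (and then spread to all $G$-translates of $v$ by equivariance).

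Next I would perform the blow-up: replace the vertex $v$ (and each $G$-translate $gv$) by a copy of $D(G_v)$ (resp.\ $gD(G_v)$), and re-attach each incident edge $e$ by identifying its endpoint formerly equal to $v$ with the chosen point $q_e\in D(G_v)$. Formally this is the disjoint union $\big(\bigcup_{w\in V(T)}\tilde Y_w\big)\cup\big(\bigcup_{e\in E(T)}e\big)$ with $\tilde Y_w=w$ for $w$ not in the $G$-orbit of $v$ and $\tilde Y_{gv}=gD(G_v)$, glued along the $q_e$'s exactly as in the proof of Proposition~\ref{refinement}. The resulting graph $\widehat T$ is a pro-$p$ tree by \cite[Proposition on page 486]{Z-92} (equivalently \cite[Proposition 3.9.1, Corollary 3.10.2, Proposition 3.10.4]{R}), it carries a natural $G$-action, and the obvious map $p:\widehat T\to T$ collapsing each $\tilde Y_w$ to $w$ and sending each edge of $T$ to itself is a collapse map. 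By construction $p^{-1}(v)=D(G_v)$ with its $G_v$-action, which is $G_v$-equivariantly isomorphic to the minimal subtree sitting inside $S_v$; replacing $D(G_v)$ by $S_v$ itself (attaching incident edges at the same points, which lie in $D(G_v)\subseteq S_v$) gives $p^{-1}(v)\cong S_v$ as desired. One should also check that $\widehat T$ is an $(\E,\K)$-tree: new edges lying inside $\tilde Y_{gv}$ have stabilizers in $\E_v\subseteq\E$, old edges of $T$ keep their stabilizers (in $\E$), and every $K\in\K$ remains elliptic --- a $K$ conjugate into some $G_w$ with $w\neq v$ fixes the corresponding point of $\widehat T$, while a $K$ conjugate into $G_v$ is elliptic in $S_v$ because $S_v$ is relative to $Inc^{\K}_v$ and hence relative to $\K_{|G_v}$.

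The main obstacle I anticipate is the equivariance of the gluing together with the verification that $\widehat T$ is genuinely a pro-$p$ tree rather than just an abstract graph: the profinite topology on $\widehat T$ must be arranged so that the blow-up at the $G$-orbit of $v$ is continuous, which is precisely why finiteness of $T/G$ is needed (only finitely many orbits of attaching points $q_e$ must be selected) and why one invokes the cited pro-$p$ tree recognition results. A secondary, more routine, point is bookkeeping the families: confirming that $\K_{|G_v}$ and $Inc_v$ interact correctly with ellipticity in $S_v$, and that collapsing $p$ does not destroy the property that $\widehat T$ is relative to $\K$. Once the pro-$p$ tree structure is in place, assertions about $p$ being a collapse map and $p^{-1}(v)\cong S_v$ are immediate from the construction.
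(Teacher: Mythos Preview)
Your approach is essentially identical to the paper's: the paper's proof is a two-line invocation of the construction in Proposition~\ref{refinement} with $Y_v=S_v$, followed by the check that $\hat T$ is relative to $\K$. One small slip in your verification of relativity to $\K$: a $K$ conjugate into $G_v$ need not lie in $\K_{|G_v}$ (which by definition contains only those $\K$-conjugates fixing $v$ and \emph{no other} vertex of $T$), so ``$S_v$ relative to $\K_{|G_v}$'' does not immediately give what you want; the correct statement, used in the paper, is that any subgroup of $G_v$ conjugate to a group of $\K$ is contained up to $G_v$-conjugacy in a member of $Inc^{\K}_v$ (see the Remark preceding Subsection~8.4), and that is what makes it elliptic in $S_v$.
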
  
 
 We say that $\hat T$ is obtained by refining $T$ at $v$ using $S_v$. More generally, if $T/G$ is finite,  one may choose a splitting for each orbit of vertices of $T$, and refine $T$ using them. Any refinement of $T$ may be obtained by this construction (possibly with non-minimal trees $S_v$).
 
 \begin{proof} We construct $\hat T$ as in the proof of Proposition \ref{refinement} , with $Y_v=S_v$. It is relative to $\K$ because any group in $K$ which is conjugate to a subgroup of $G_v$ is conjugate to a subgroup of a group belonging to $Inc^{G_v}$. Non-uniqueness comes from the fact that there may be several ways of attaching edges of $T$ to $S_v$.\end{proof}

 \begin{lemma}\label{ellipticity in stabilizers} Let $G_v$ be a vertex stabilizer of a universally elliptic $(\E,\K)$-tree $T$ with $T/G$ finite. 
 
 $\bullet$ The groups in $Inc^{\K}_v$ are elliptic in every $(\E,\K)$-tree $S$.
 
 $\bullet$ A subgroup $K< G_v$ is $(\E,\K)$-universally elliptic (as a subgroup of $G$) if and only if it is $(\E_v,Inc^{\K}_v)$-universally elliptic (as a subgroup of $G_v$).\end{lemma}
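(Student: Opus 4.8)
The plan is to dispose of the first bullet directly and then prove the two implications of the second bullet separately, feeding the first bullet into the forward direction. For the first bullet, recall that $Inc^{\K}_v=Inc_v\cup\K_{|G_v}$. A group in $Inc_v$ is the stabilizer of an edge of $T$; since $T$ is universally elliptic it is elliptic in every $\E$-tree on which $G$ acts, in particular in every $(\E,\K)$-tree $S$. A group in $\K_{|G_v}$ is a $G$-conjugate of a group in $\K$, hence elliptic in every $(\E,\K)$-tree by the very definition of such a tree. So the first bullet requires no further work, and in particular does not use $T/G$ finite.

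For the direction $(\Leftarrow)$ of the second bullet, let $K\le G_v$ be $(\E_v,Inc^{\K}_v)$-universally elliptic inside $G_v$ and let $S$ be an arbitrary $(\E,\K)$-tree on which $G$ acts. Take $D\subseteq S$ to be a minimal $G_v$-invariant pro-$p$ subtree (Proposition \ref{unique G-invariant}), reduced to a point if $G_v$ is elliptic in $S$. Its edge stabilizers are subgroups of edge stabilizers of $S$, hence lie in $\E_v$; moreover $D$ is relative to $Inc^{\K}_v$, because by the first bullet every group of $Inc^{\K}_v$ is elliptic in $S$ and, being a subgroup of $G_v$, it is then elliptic in $D$ by the fact --- already used to prove Proposition \ref{refinement} via \cite[Theorem 2.13(b)]{ZM-88} --- that a subgroup of $G_v$ which is elliptic in a pro-$p$ tree fixes a point of the minimal $G_v$-invariant subtree. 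Thus $D$ is an $(\E_v,Inc^{\K}_v)$-tree, so $K$ is elliptic in $D$, hence in $S$; as $S$ was arbitrary, $K$ is $(\E,\K)$-universally elliptic in $G$.

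For the direction $(\Rightarrow)$, let $K\le G_v$ be $(\E,\K)$-universally elliptic in $G$ and let $S_v$ be an arbitrary $(\E_v,Inc^{\K}_v)$-tree on which $G_v$ acts. Since $T/G$ is finite, Lemma \ref{refinement at v} produces an $(\E,\K)$-tree $\hat T$ on which $G$ acts, together with a collapse map $p:\hat T\to T$ such that $p^{-1}(v)$ is $G_v$-equivariantly isomorphic to $S_v$. By hypothesis $K$ is elliptic in $\hat T$; since $p^{-1}(v)$ is a $G_v$-invariant pro-$p$ subtree of $\hat T$ containing the minimal $G_v$-invariant subtree, and $K\le G_v$, the same fact as above shows that $K$ fixes a point of $p^{-1}(v)\cong S_v$, so $K$ is elliptic in $S_v$. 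As $S_v$ was arbitrary, $K$ is $(\E_v,Inc^{\K}_v)$-universally elliptic in $G_v$.

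The only step requiring any care is the repeatedly used fact that a subgroup of $G_v$ elliptic in a pro-$p$ tree fixes a point already in the minimal $G_v$-invariant subtree: in a general pro-$p$ tree one cannot obtain this by a closest-point projection as in the simplicial case. But this is precisely \cite[Theorem 2.13(b)]{ZM-88}, which was already invoked in the proof of Proposition \ref{refinement}, so it introduces no new difficulty; everything else is formal manipulation of the definitions of $(\E,\K)$-tree, of $Inc^{\K}_v$, and of universal ellipticity, together with Lemma \ref{refinement at v}.
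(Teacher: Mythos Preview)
Your proof is correct and follows essentially the same approach as the paper: both prove the first bullet directly from the definitions, use the minimal $G_v$-invariant subtree of $S$ (together with the fact from \cite[Theorem 2.13(b)]{ZM-88} / \cite[Corollary 3.10]{RZ-00} that an elliptic subgroup of $G_v$ fixes a point there) for the $(\Leftarrow)$ direction, and invoke Lemma \ref{refinement at v} for the $(\Rightarrow)$ direction. You spell out the $(\Rightarrow)$ direction a bit more explicitly than the paper, which simply says ``the converse follows from Lemma \ref{refinement at v}'', but the underlying argument is identical.
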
 
 
  The second assertion says that $K$ is elliptic in every $(\E,\K)$-tree on which $G$ acts if and only if it is elliptic in every $(\E_v,Inc_v\cup K_{|G_v})$-tree on which $G_v$ acts. If this holds, we simply say that $K$ is universally elliptic.
  
  \begin{proof} The first assertion is clear: $S$ is relative to $\K_{|G_v}$, and also to $Inc_v$ because $T$ is universally elliptic. 
  
  Suppose that $K$ is $(\E_v,Inc^{\K}_v)$-universally elliptic, as a subgroup of $G_v$. Let $S$ be any $(\E,\K)$-tree. It is relative to $Inc^{\K}_v$ by the first assertion. Let  $S_v\subset S$  be  a minimal $G_v$-invariant  subtree. The action of $G_v$ on $S_v$ is an $(\E_v,Inc^{\K}_v)$-tree, so by assumption $K$ fixes a point in $S_v$, hence in $S$. We have proved the “if” direction in the second assertion, and the converse follows from Lemma \ref{refinement at v}.\end{proof}

  \begin{corollary}\label{flexible or rigid}  Let $G_v$ be a vertex stabilizer of a JSJ tree $T_J$ such that $T_J/G$ is finite.
  
  \begin{enumerate}
  \item[(1)] $G_v$ does not split over a universally elliptic subgroup relative to $Inc^{\K}_v$.
  \item[(2)] $G_v$ is flexible if it splits relative to $Inc^{\K}_v$, rigid otherwise.
\end{enumerate}
\end{corollary}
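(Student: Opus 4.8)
The statement has two parts; the plan is to derive (1) from Lemma \ref{refinement at v} together with the definition of a JSJ tree, and then obtain (2) as an immediate consequence of (1) and Lemma \ref{ellipticity in stabilizers}. Throughout I will use that $T_J/G$ is finite, so that the refinement machinery of Lemma \ref{refinement at v} applies, and that by Lemma \ref{ellipticity in stabilizers} ``universally elliptic as a subgroup of $G_v$'' (meaning $(\E_v, Inc^{\K}_v)$-universally elliptic) coincides with ``universally elliptic as a subgroup of $G$''.

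\emph{Proof of (1).} Suppose for contradiction that $G_v$ splits over a universally elliptic subgroup $A$ relative to $Inc^{\K}_v$; that is, $G_v$ acts on an $(\E_v, Inc^{\K}_v)$-tree $S_v$ with no global fixed point and with edge stabilizers universally elliptic. By Lemma \ref{refinement at v} this extends to an $(\E,\K)$-tree $\hat T$ with a collapse map $p : \hat T \longrightarrow T_J$ such that $p^{-1}(v)$ is $G_v$-equivariantly isomorphic to $S_v$. I claim $\hat T$ is universally elliptic: its edges are either edges of $T_J$ (whose stabilizers are universally elliptic since $T_J$ is a JSJ tree) or edges coming from $S_v$ (whose stabilizers are universally elliptic by hypothesis on the splitting); I should double-check that the edges of $\hat T$ not coming from $S_v$ have the same stabilizers as the corresponding edges of $T_J$, which is how the refinement in Lemma \ref{refinement at v} (built as in Proposition \ref{refinement}) is constructed. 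Since $\hat T$ is universally elliptic and $T_J$ is a JSJ tree, $T_J$ dominates $\hat T$, so $G_v$ fixes a vertex of $\hat T$. But $G_v$ acts on $p^{-1}(v) \cong S_v$, and a fixed point of $G_v$ in $\hat T$ projects via $p$ to the fixed vertex $v$, hence lies in $p^{-1}(v)$; therefore $G_v$ fixes a vertex of $S_v$, contradicting that $S_v$ had no global fixed point.

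\emph{Proof of (2).} By definition $G_v$ is flexible precisely when it is \emph{not} universally elliptic, i.e. when $G_v$ acts nontrivially on some $(\E,\K)$-tree $S$. If $G_v$ splits relative to $Inc^{\K}_v$, then composing with Lemma \ref{refinement at v} produces a nontrivial $(\E,\K)$-tree on which $G$ acts in which $G_v$ is not elliptic, so $G_v$ is not universally elliptic, hence flexible. Conversely, if $G_v$ is flexible, let $S$ be an $(\E,\K)$-tree in which $G_v$ is not elliptic. By the first bullet of Lemma \ref{ellipticity in stabilizers}, $S$ is relative to $Inc^{\K}_v$; restricting to the minimal $G_v$-invariant subtree $D(G_v) \subset S$ (which exists by Proposition \ref{unique G-invariant} and is nontrivial since $G_v$ is not elliptic), we get a nontrivial $(\E_v, Inc^{\K}_v)$-tree, i.e. a splitting of $G_v$ relative to $Inc^{\K}_v$. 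Thus $G_v$ is flexible iff it splits relative to $Inc^{\K}_v$, and rigid otherwise.

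\textbf{Main obstacle.} The only delicate point is verifying, in the proof of (1), that the refinement $\hat T$ produced by Lemma \ref{refinement at v} is genuinely universally elliptic — i.e. that its edge stabilizers are either edge stabilizers of $T_J$ (unchanged by the blow-up at $v$) or edge stabilizers of $S_v$ — and that a global fixed point of $G_v$ in $\hat T$ is forced to lie in the blown-up piece $p^{-1}(v)$. Both facts are built into the construction in Lemma \ref{refinement at v} / Proposition \ref{refinement} (edges not collapsed by $p$ have their stabilizers fixing an edge of $T_J$; the preimage of a subtree is a subtree), so the argument is routine once that construction is invoked carefully. Everything else is a direct unwinding of definitions together with Lemma \ref{ellipticity in stabilizers}.
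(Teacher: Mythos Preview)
Your proof is correct and follows essentially the same approach as the paper. For (1), the paper likewise refines $T_J$ at $v$ via Lemma \ref{refinement at v}, observes the resulting tree is universally elliptic, and concludes (phrasing it as ``same deformation space, so the splitting of $G_v$ must be trivial'') that $G_v$ is elliptic in the blown-up piece; for (2), the paper simply invokes Lemma \ref{ellipticity in stabilizers} with $K=G_v$, which is exactly what your two directions unpack. One small remark: in your proof of (1), the sentence ``a fixed point of $G_v$ in $\hat T$ projects via $p$ to the fixed vertex $v$'' is not literally justified (since $G_v$ might fix other vertices of $T_J$); the clean reason, which you correctly identify in your ``Main obstacle'' paragraph, is that $p^{-1}(v)$ is a $G_v$-invariant subtree and an elliptic subgroup acting on a pro-$p$ tree has a fixed point in any invariant subtree.
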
 
  
\begin{proof} If there is a splitting as in (1), we may use it to refine $T_J$ to a universally elliptic tree (see Lemma \ref{refinement at v}). This tree must be in the same deformation space as $T_J$, so the splitting of $G_v$ must be trivial. 

(2) follows from Lemma \ref{ellipticity in stabilizers} applied with $K=G_v$\end{proof}

\begin{proposition}\label{interconnection} Let $T$ be a universally elliptic $(\E,\K)$-tree with $T/G$ finite.

\begin{enumerate}

\item[(1)] Assume that every vertex stabilizer $G_v$ of $T$ has a JSJ tree $T_v$ relative to $Inc^{\K}_v$. One can then refine $T$ using these decompositions so as to obtain a JSJ tree of $G$ relative to $\K$.

\item[(2)] Conversely, if $T_J$ is a JSJ tree of $G$ relative to $\K$, and $G_v$ is a vertex stabilizer of $T$, one obtains a JSJ tree for $G_v$ relative to $Inc^{\K}_v$ by considering the action of $G_v$ on its minimal subtree $T_v=D_{T_J}(G_v)$ in $T_J$ (with $T_v$ a point if $G_v$ is elliptic in $T$).
\end{enumerate}
\end{proposition}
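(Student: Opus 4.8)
The plan is to reduce both statements to the blow-up construction of Lemma~\ref{refinement at v} together with Lemma~\ref{ellipticity in stabilizers}, which transfers universal ellipticity between $G$ and a vertex group $G_v$; the only finiteness used is $|T/G|<\infty$, never finiteness of $T_J/G$ or of $T_v/G_v$.

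For~(1), I would choose for each orbit of vertices of $T$ a representative $v$ with its given JSJ tree $T_v$ relative to $Inc^{\K}_v$, and apply Lemma~\ref{refinement at v} to refine $T$ at all orbits simultaneously, getting an $(\E,\K)$-tree $\hat T$ with a collapse $p\colon\hat T\to T$ such that each fiber $p^{-1}(v)$ is $G_v$-equivariantly isomorphic to $T_v$. I would then check $\hat T$ is a JSJ tree. Universal ellipticity: an edge stabilizer of $\hat T$ is either an edge stabilizer of $T$ (when the edge survives $p$), hence universally elliptic by hypothesis on $T$, or conjugate to an edge stabilizer of some $T_v$ (when it lies in a fiber), which is $(\E_v,Inc^{\K}_v)$-universally elliptic in $G_v$ since $T_v$ is a JSJ tree, hence $(\E,\K)$-universally elliptic in $G$ by Lemma~\ref{ellipticity in stabilizers}. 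Domination: given a universally elliptic $(\E,\K)$-tree $S$ and an orbit representative $v$, consider the minimal $G_v$-invariant subtree $D_S(G_v)\subseteq S$ (a point if $G_v$ is elliptic in $S$), which exists by Proposition~\ref{unique G-invariant}. Its edge stabilizers are subgroups $G_v\cap G_e$ with $G_e\in\E$, so they lie in $\E_v$; the groups in $Inc_v$ are elliptic in $S$ because $T$ is universally elliptic, and those in $\K_{|G_v}$ are elliptic in $S$ because $S$ is relative to $\K$, so all of them are elliptic in $D_S(G_v)$ by \cite[Corollary 3.10]{RZ-00}; hence $D_S(G_v)$ is an $(\E_v,Inc^{\K}_v)$-tree. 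It is universally elliptic: each $G_v\cap G_e$ is $(\E,\K)$-universally elliptic in $G$ (as $S$ is), hence $(\E_v,Inc^{\K}_v)$-universally elliptic in $G_v$ by Lemma~\ref{ellipticity in stabilizers}. Since $T_v$ is a JSJ tree for $G_v$, it dominates $D_S(G_v)$, so every vertex stabilizer of $T_v$ is elliptic in $D_S(G_v)\subseteq S$; as the vertex stabilizers of $\hat T$ are, up to conjugacy, exactly those of the chosen trees $T_v$, this shows $\hat T$ dominates $S$, so $\hat T$ is a JSJ tree of $G$ relative to $\K$.

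For~(2), I would put $T_v=D_{T_J}(G_v)$ and verify directly that it is a JSJ tree for $G_v$ relative to $Inc^{\K}_v$. As in~(1), $T_v$ is an $(\E_v,Inc^{\K}_v)$-tree --- its edge stabilizers are subgroups $G_v\cap G_e$ of edge stabilizers of $T_J$, so they lie in $\E_v$, and the groups in $Inc_v$, resp.\ $\K_{|G_v}$, are elliptic in $T_J$ because $T$ is universally elliptic, resp.\ $T_J$ is relative to $\K$, hence elliptic in $T_v$ by \cite[Corollary 3.10]{RZ-00} --- and it is universally elliptic because the $G_v\cap G_e$ are $(\E,\K)$-universally elliptic in $G$ (as $T_J$ is a JSJ tree) and so $(\E_v,Inc^{\K}_v)$-universally elliptic in $G_v$ by Lemma~\ref{ellipticity in stabilizers}. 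To show $T_v$ dominates a given universally elliptic $(\E_v,Inc^{\K}_v)$-tree $S_v$, refine $T$ at $v$ by $S_v$ via Lemma~\ref{refinement at v}, obtaining an $(\E,\K)$-tree $\hat T$ with a collapse $p\colon\hat T\to T$ and $p^{-1}(v)\cong S_v$ as $G_v$-trees; as in~(1), $\hat T$ is universally elliptic, so the JSJ tree $T_J$ dominates $\hat T$. For a vertex $x$ of $T_v$ one has $x\in V(T_J)$, and the $G$-stabilizer of $x$ in $T_J$, which contains the $G_v$-stabilizer $H$ of $x$, fixes some vertex $y$ of $\hat T$. If $p(y)=v$, then $y\in p^{-1}(v)\cong S_v$ and $H$ is elliptic in $S_v$. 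Otherwise $H$ fixes the two distinct vertices $v$ and $p(y)$ of $T$, so by Theorem~\ref{fixed geodesic} $H$ is contained in the stabilizer of an edge of the geodesic $[v,p(y)]$ incident to $v$; that stabilizer belongs to $Inc_v$, hence is elliptic in $S_v$ because $S_v$ is relative to $Inc^{\K}_v$, so again $H$ is elliptic in $S_v$. Thus every vertex stabilizer of $T_v$ is elliptic in $S_v$, i.e.\ $T_v$ dominates $S_v$, and $T_v$ is a JSJ tree for $G_v$ relative to $Inc^{\K}_v$.

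The points requiring care --- and which I expect to constitute the bulk of the work, rather than any isolated difficulty --- are the systematic use of Lemma~\ref{ellipticity in stabilizers} to move universal ellipticity back and forth between $G$ and $G_v$, where one must keep in mind that $\K_{|G_v}$ consists exactly of the $G$-conjugates of groups of $\K$ whose only fixed vertex is $v$ (so that relative $(\E_v,Inc^{\K}_v)$-trees for $G_v$ really arise by restriction from relative $(\E,\K)$-trees for $G$), and the descent of ellipticity from the blown-up tree $\hat T$ to the invariant subtree $p^{-1}(v)$, which above I reduce to Theorem~\ref{fixed geodesic} (equivalently, to the equivariant retraction onto a closed invariant pro-$p$ subtree used in the proof of Proposition~\ref{refinement}). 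Beyond checking in each instance that the tree in play genuinely satisfies the edge-group and relative-ellipticity conditions and survives refinement, no new ingredient seems to be required.
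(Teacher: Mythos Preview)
Your proposal is correct and follows essentially the same route as the paper: refine $T$ via Lemma~\ref{refinement at v}, transfer universal ellipticity through Lemma~\ref{ellipticity in stabilizers}, and for domination pass to the minimal $G_v$-invariant subtree in the competing tree. In part~(2) you actually supply more detail than the paper does: the paper writes ``elliptic in $T_J$, hence in $\hat T$, hence in $S_v$'' without justifying the last implication, whereas your case split on whether $p(y)=v$ (using Theorem~\ref{fixed geodesic} when $p(y)\neq v$) makes this descent from $\hat T$ to the fiber $S_v$ explicit.
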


\begin{proof} To prove (1), let $\hat T$ be the tree obtained by refining $T$ using the $T_v$’s as in Lemma \ref{refinement at v}. It is relative to $\K$, and universally elliptic by Lemma \ref{refinement at v} since its edge stabilizers are edge stabilizers of $T$ or of some $T_v$. To show maximality, we consider another universally elliptic $(\E,\K)$-tree $T'$, and we show that any vertex stabilizer $H$ of $\hat T$ is elliptic in $T'$. It is in a vertex stabilizer of some $T_v$, with $v$ a vertex of $T$. If $G_v$ is not elliptic in $T'$, its minimal subtree $Y_v$ is a universally elliptic $(\E_v,Inc^{\K}_v)$-tree (by Lemma \ref{ellipticity in stabilizers}). Since $T_v$ is a JSJ tree, it dominates $Y_v$ so $H$ is elliptic in $T'$. This proves (1).

Now let $T_J$ and $T_v\subset T_J$ be as in (2). By Lemma \ref{refinement at v} $T_v$ is relative to $Inc^{\K}_v$, and it is $(\K_v,Inc^{\K}_v)$-universally elliptic because its edge stabilizers are contained in edge stabilizers of $T_J$. To prove maximality of $T_v$, consider another universally elliptic pro-$p$ tree $S_v$ with an action of $G_v$ which is relative to $Inc^{\K}_v$ and universally elliptic. Use it to refine $T$ to a tree $\hat T$ as in Lemma \ref{refinement at v}.
As above, $\hat T$ is relative to $\K$, and universally elliptic by Lemma \ref{refinement at v}. Being a JSJ tree, $T_J$ dominates $\hat T$. Vertex stabilizers of $T_v$ are elliptic in $T_J$, hence in $\hat T$, hence in $S_v$, so $T_v$ dominates $S_v$. This proves (2).\end{proof}

The following corollary says that one may usually restrict to unsplittable pro-$p$ groups over a finite subgroup when studying JSJ decompositions.

\begin{corollary}\label{refining Grushko} Suppose that $\E$ contains all finite subgroups of $G$. If $\K=\emptyset$, refining a Grushko decomposition of $G$ using JSJ decompositions of free factors yields a JSJ decomposition of $G$. Similarly, refining a  decomposition of $G$ over finite groups using JSJ decompositions of the vertex groups $G_v$ yields a JSJ decomposition of $G$. If $\K\neq \emptyset$, one must use relative free product JSJ decompositions or  JSJ decompositions over finite groups, and JSJ decompositions of vertex groups $G_v$ relative to $\K_{|G_v}$. Every flexible subgroup of $G$ is a flexible subgroup of some $G_v$. As mentioned in Section 4, JSJ decompositions always exist under some accessibility assumption (see Section 4 for examples of accessibility).\end{corollary}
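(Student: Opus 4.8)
\emph{Proof sketch.} The plan is to reduce the whole statement to Proposition~\ref{interconnection}(1): a Grushko decomposition, a decomposition of $G$ over finite subgroups, and their relative versions are all universally $\E$-elliptic $(\E,\K)$-trees with finite quotient, so one may refine them at their vertices using JSJ trees of the vertex groups and land precisely in the setting of that proposition.

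First I would verify the hypotheses. Let $T$ be the given decomposition of $G$ (a Grushko decomposition relative to $\K$, or a decomposition over finite groups relative to $\K$). Its edge stabilizers are trivial or finite $p$-groups, hence universally elliptic by Corollary~\ref{fixed vertex}; since $\E$ contains all finite subgroups, $T$ is a universally $\E$-elliptic $(\E,\K)$-tree. Moreover $T/G$ is finite: for a relative Grushko decomposition of a finitely generated $G$ this is the pro-$p$ Kurosh theorem (Proposition~\ref{relative Grushko}; the Grushko decomposition of a finitely generated pro-$p$ group is finite), and for a decomposition over finite groups one invokes the relevant accessibility of Section~\ref{accessibility}. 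Since the incident edge groups are trivial or finite, they are finitely generated, so by Proposition~\ref{reducing to trees} each vertex group $G_v$ is finitely generated; hence $G_v$ admits a JSJ tree $T_v$ over $\E_v$ relative to $Inc^{\K}_v$ (by Theorems~\ref{existence} and~\ref{existence for finitely generated G}, in their relative form or under an accessibility assumption, cf.\ the closing sentence of the statement). Note that when $\K=\emptyset$ the family $Inc^{\K}_v$ consists only of trivial or finite, hence universally elliptic, groups, so a JSJ tree of $G_v$ relative to $Inc^{\K}_v$ is just a JSJ tree of $G_v$ over $\E_v$; in general it is a JSJ tree of $G_v$ relative to $\K_{|G_v}$, as in the statement.

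With this in place, Proposition~\ref{interconnection}(1) produces an $(\E,\K)$-tree $\widehat T$, obtained by refining $T$ at each orbit of vertices using the $T_v$ via Lemma~\ref{refinement at v}, which is a JSJ tree of $G$ relative to $\K$; this gives the first three assertions. For the last one: $\widehat T\in D_{JSJ}$, so the flexible subgroups of $G$ are exactly the flexible vertex stabilizers of $\widehat T$ (up to the standard caveat about maximal vertex stabilizers lying in $\E$), and each such stabilizer $H$ is a vertex stabilizer of some $T_v$. By Lemma~\ref{ellipticity in stabilizers}, $H$ is $(\E,\K)$-universally elliptic in $G$ if and only if it is $(\E_v,Inc^{\K}_v)$-universally elliptic in $G_v$; equivalently $H$ is flexible in $G$ if and only if it is flexible in $G_v$ relative to $Inc^{\K}_v$, which for $\K=\emptyset$ is the same as being a flexible subgroup of $G_v$ over $\E_v$. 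Hence every flexible subgroup of $G$ is a flexible subgroup of some $G_v$.

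I expect the genuine work to be not the formal deduction --- that is essentially an invocation of Proposition~\ref{interconnection} --- but the bookkeeping confirming its hypotheses: that the starting decomposition is simultaneously universally $\E$-elliptic and cofinite, and, most importantly, that the vertex groups do admit JSJ decompositions relative to their incidence structure. The latter is exactly where finite generation of $G_v$ (via Proposition~\ref{reducing to trees}) and, when the finite edge groups have unbounded order, an accessibility hypothesis must be brought in; this matches the closing remark of the statement.
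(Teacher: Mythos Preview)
Your proposal is correct and follows essentially the same route as the paper: both reduce everything to Proposition~\ref{interconnection}(1), using that finite (or trivial) edge groups are universally elliptic so that the starting tree is universally $\E$-elliptic and any splitting of $G_v$ is automatically relative to $Inc_v$. The only cosmetic difference is that for the flexible-subgroup assertion the paper cites Corollary~\ref{flexible or rigid} directly, whereas you unwind it via Lemma~\ref{ellipticity in stabilizers}; since Corollary~\ref{flexible or rigid}(2) is itself proved from that lemma, this is the same argument with one extra layer of detail.
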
 

 \begin{proof} This follows from Proposition \ref{interconnection} since $T$ is in the deformation space with $\E$ consisting of either trivial or finite groups: finite groups are universally elliptic, and every splitting of $G_v$ is relative to $Inc_v$. The assertion about flexible subgroups follows from Corollary \ref{flexible or rigid}.\end{proof}
 
 \begin{remark} The results of this subsection remain true if one replace the hypothesis $|T/G|<\infty$ by $G$ being finitely generated. Indeed, in the proof of Lemma \ref{refinement at v} one just needs to use the proof of Proposition \ref{refinement for finitely generated G} instead of the proof of Proposition \ref{refinement}. We conclude this section stating it.
 
 \end{remark}
 
 We begin restating Lemma \ref{refinement at v}
 
 \begin{lemma}\label{refinement at v for fintely generated G} Let $G_v$ be a vertex stabilizer of an $(\E,\K)$-tree $T$ and suppose that $G$ is finitely generated.  Any splitting of $G_v$ relative to $Inc^{\K}_v$ extends (non-uniquely) to a splitting of $G$ relative to $\K$. More precisely, given an $(\E_v,Inc^{\K}_v)$-tree $S_v$, there exist an $(\E,\K)$-trees $\hat S$ and a collapse map $p:\hat S\longrightarrow S$ to a pro-$p$ tree $S$ in the deformation space of $T$ such that $p^{-1}(v)$ is $G_v$-equivariantly isomorphic to $S_v$.\end{lemma}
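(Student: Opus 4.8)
The plan is to carry out the construction in the proof of Proposition \ref{refinement for finitely generated G} almost word for word, blowing up only the $G$-orbit of the vertex $v$ and using the given $G_v$-tree $S_v$ in the role that the minimal invariant subtrees play there. Since $G$ is finitely generated and acts on $T$, Theorem \ref{pro-pbass-serre} produces a reduced profinite graph of pro-$p$ groups $(\G,\Delta)$ with $G=\Pi_1(\G,\Delta)$ whose vertex and edge groups are stabilizers of vertices and edges of $T$; replacing $v$ by an appropriate vertex in its $G$-orbit I may assume $G_v=\G(\tilde v)$ for some $\tilde v\in V(\Delta)$. I take $S:=S(\G,\Delta)$, which by Example \ref{changing jsj-tree} lies in the deformation space of $T$; this is the tree $S$ of the statement, and the vertex of $S$ over $\tilde v$, still denoted $v$, has stabilizer $G_v$.

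The construction itself proceeds through finite quotients. By Lemma \ref{inverse limit of virtually free groups} I may write $(\G,\Delta)=\varprojlim_U(\G_U,\Gamma_U)$ over a cofinal linearly ordered family of open normal subgroups $U\triangleleft_o G$, each $(\G_U,\Gamma_U)$ a finite reduced graph of finite $p$-groups and the transition maps collapses of edges; correspondingly $S=\varprojlim_U S_U$ with $S_U=S(\G_U,\Gamma_U)$ and $S\to S_U$ a collapse of connected components of a profinite subgraph. On the other side I run Theorem \ref{pro-pbass-serre} on $G_v$ acting on $S_v$ to write $G_v=\Pi_1(\G_v,\Gamma_v)$ and, after passing to a subsystem, approximate $S_v$ by a compatible inverse system of finite-level trees $S_{v,U}$ on which the image of $G_v$ in $G_U$ acts (using Proposition \ref{mod tilde}); the groups in $Inc^{\K}_v$, being elliptic in $S_v$, remain elliptic in each $S_{v,U}$. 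At level $U$ the vertex $v_U$ of the finite graph of groups $(\G_U,\Gamma_U)$ has only finitely many incident edges, so I blow it up using $S_{v,U}$ exactly as in the proof of Proposition \ref{refinement} (or Remark \ref{cofinite second tree}), attaching each incident edge to a vertex whose group contains the relevant (elliptic) edge stabilizer; this gives a collapse $p_U\colon\hat S_U\to S_U$ whose only non-trivial fibre over the orbit of $v_U$ is $S_{v,U}$. Coordinating these choices along the linearly ordered family, the $\hat S_U$ form an inverse system; then $\hat S:=\varprojlim_U\hat S_U$ is a pro-$p$ tree by the criteria used in the proof of Proposition \ref{refinement} (cf.\ \cite[Lemma 2.6]{RZ-00}), the limit map $p\colon\hat S\to S$ is a collapse, and $p^{-1}(v)$ is $G_v$-equivariantly isomorphic to $S_v=\varprojlim_U S_{v,U}$. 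That $\hat S$ is an $(\E,\K)$-tree is then routine: each edge stabilizer of $\hat S$ is an edge stabilizer of $S$ (hence in $\E$) or of some $S_{v,U}$ (hence in $\E_v\subseteq\E$), and any subgroup conjugate to a member of $\K$ is, up to $G_v$-conjugacy, contained in a group of $Inc^{\K}_v$, hence elliptic in $S_v$ and therefore in $\hat S$.

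I expect the only genuine difficulty to be the bookkeeping of the two coordinated inverse systems: arranging that the finite-level approximations $S_{v,U}$ match the finite vertex groups $\G_U(v_U)$, that the finite-level attaching maps are chosen $G$-equivariantly and compatibly under the transition maps, and that the limit is genuinely a pro-$p$ tree. This is, however, exactly the difficulty already handled in the proof of Proposition \ref{refinement for finitely generated G}, so it can be quoted rather than reproved; in particular the possible need to adjust within the deformation space of $T$ — which is why the statement gives a tree $S$ in that space rather than $T$ itself — is the same adjustment made there. Finally, the non-uniqueness asserted in the lemma is, just as in Lemma \ref{refinement at v}, nothing but the freedom in how the edges incident to $v$ are attached to $S_v$ at each finite level.
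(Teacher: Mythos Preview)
Your approach is correct and is essentially identical to the paper's: the paper states (in the Remark immediately preceding this lemma) that the proof of Lemma \ref{refinement at v} goes through verbatim once one replaces the construction of Proposition \ref{refinement} by that of Proposition \ref{refinement for finitely generated G}, which is precisely what you do. One small point: you should not invoke Theorem \ref{pro-pbass-serre} on $G_v$ acting on $S_v$, since $G_v$ need not be finitely generated; however this step is redundant, as the finite-level trees $S_{v,U}$ you actually use come directly from Proposition \ref{mod tilde} (as in the proof of Proposition \ref{refinement for finitely generated G}), and you cite this correctly.
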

 
 Now we restate Proposition \ref{interconnection}.
 
 \begin{proposition}\label{interconnection for finitely generated G} Let $G$ be finitely generated and $T$  a universally elliptic $(\E,\K)$-tree.

\begin{enumerate}

\item[(1)] Assume that every vertex stabilizer $G_v$ of $T$ has a JSJ tree $T_v$ relative to $Inc^{\K}_v$. One can then refine some pro-$p$ tree $S$ from the deformation space of $T$ using these decompositions so as to obtain a JSJ tree of $G$ relative to $\K$.

\item[(2)] Conversely, if $T_J$ is a JSJ tree of $G$ relative to $\K$, and $G_v$ is a vertex stabilizer of $T$, one obtains a JSJ tree for $G_v$ relative to $Inc^{\K}_v$ by considering the action of $G_v$ on its minimal subtree $T_v=D_{T_J}(G_v)$ in $T_J$ (with $T_v$ a point if $G_v$ is elliptic).
\end{enumerate}
\end{proposition}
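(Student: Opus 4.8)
The plan is to follow the proof of Proposition \ref{interconnection} almost verbatim, replacing every appeal to Lemma \ref{refinement at v} (and, through it, to Proposition \ref{refinement}) by the corresponding appeal to Lemma \ref{refinement at v for fintely generated G} and Proposition \ref{refinement for finitely generated G}. As indicated in the Remark above, the finitely generated analogues of Lemma \ref{ellipticity in stabilizers} and Corollary \ref{flexible or rigid} are obtained in exactly the same way (substituting Lemma \ref{refinement at v for fintely generated G} for Lemma \ref{refinement at v}), so I would use them freely. The one genuinely new feature is that $T/G$ need no longer be finite, so one cannot blow up $T$ at all of its vertex orbits at once; instead I would work inside a tree $S$ in the deformation space of $T$ built from the inverse system of Theorem \ref{pro-pbass-serre}, and carry out the simultaneous blow-up level by level.

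For (1): by Example \ref{changing jsj-tree} and Theorem \ref{pro-pbass-serre} I would write $G=\Pi_1(\G,\Delta)=\varprojlim_{U\triangleleft_o G}(\G_U,\Gamma_U)$, with $S=S(\G,\Delta)$ a universally elliptic pro-$p$ tree in the deformation space of $T$, each $\Gamma_U$ a finite graph, and the bonding maps collapses. Since each $\Gamma_U$ has finitely many vertex orbits, one can, exactly as in the proof of Proposition \ref{refinement for finitely generated G}, blow up $S_U$ at every orbit of vertices using the $G_v$-actions on the minimal subtrees inside the given relative JSJ trees $T_v$, choosing the attaching points $G$-equivariantly and compatibly along the linearly ordered inverse system; passing to the limit via \cite[Lemma 2.6 (b)]{RZ-00} gives a pro-$p$ tree $\hat S$ with a collapse map $p:\hat S\to S$ such that $p^{-1}(v)$ is $G_v$-equivariantly isomorphic to $T_v$ for every $v$. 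Then every edge stabilizer of $\hat S$ is either an edge stabilizer of $S$, hence of $T$ by Proposition \ref{refinement for finitely generated G}(0), or an edge stabilizer of some $T_v$; the first is $(\E,\K)$-universally elliptic because $T$ is, and the second is $(\E_v,Inc^{\K}_v)$-universally elliptic in $G_v$, hence $(\E,\K)$-universally elliptic in $G$ by the finitely generated form of Lemma \ref{ellipticity in stabilizers}. So, being also relative to $\K$ as in Lemma \ref{refinement at v for fintely generated G}, $\hat S$ is a universally elliptic $(\E,\K)$-tree. For maximality I would take another universally elliptic $(\E,\K)$-tree $T'$ and a vertex stabilizer $H$ of $\hat S$, which lies in a vertex stabilizer of some $T_v$; if $G_v$ is elliptic in $T'$ then so is $H$, and otherwise the minimal $G_v$-invariant subtree $Y_v\subseteq T'$ (Proposition \ref{unique G-invariant}) is an $(\E_v,Inc^{\K}_v)$-tree, universally elliptic by Lemma \ref{ellipticity in stabilizers}, so $T_v$ dominates it and $H$ is elliptic in $Y_v\subseteq T'$. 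This makes $\hat S$ a JSJ tree of $G$ relative to $\K$.

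For (2): here no inverse limit is needed. I would set $T_v=D_{T_J}(G_v)\subseteq T_J$ (a point if $G_v$ is elliptic). By Theorem \ref{fixed geodesic} its edge stabilizers sit inside edge stabilizers of $T_J$, hence are $(\E,\K)$-universally elliptic, hence $(\E_v,Inc^{\K}_v)$-universally elliptic in $G_v$ by Lemma \ref{ellipticity in stabilizers}; and $T_v$ is relative to $Inc^{\K}_v$ by Lemma \ref{ref fin gen}. For maximality, given a universally elliptic $(\E_v,Inc^{\K}_v)$-tree $S_v$ for $G_v$, I would use Lemma \ref{refinement at v for fintely generated G} to produce an $(\E,\K)$-tree $\hat S$, a tree $S$ in the deformation space of $T$, and a collapse $p:\hat S\to S$ with $p^{-1}(v)\cong S_v$; then $\hat S$ is relative to $\K$, its edge stabilizers are edge stabilizers of $S$ (universally elliptic because $T$ is) or of $S_v$ (universally elliptic in $G$ by Lemma \ref{ellipticity in stabilizers}), so $\hat S$ is universally elliptic. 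Since $T_J$ is a JSJ tree it dominates $\hat S$; the vertex stabilizers of $T_v$ are vertex stabilizers of $\hat S$, hence elliptic in $T_J$, hence in $\hat S$, hence in $S_v$, so $T_v$ dominates $S_v$ and is therefore a JSJ tree of $G_v$ relative to $Inc^{\K}_v$.

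I expect the main obstacle to be precisely the simultaneous blow-up in (1): organising the refinements at the (possibly infinitely many) vertex orbits of $T$ into a coherent inverse system of finite-level blow-ups whose limit is genuinely a pro-$p$ tree, and checking that the attaching data can be chosen equivariantly and compatibly across levels. Once that construction is in place --- it is essentially the construction already carried out in the proof of Proposition \ref{refinement for finitely generated G} --- the rest of the argument is a line-by-line transcription of the finite-quotient proof of Proposition \ref{interconnection}.
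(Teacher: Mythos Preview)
Your proposal is correct and takes essentially the same approach as the paper: the paper gives no independent proof of this proposition but simply remarks that the proof of Proposition \ref{interconnection} goes through verbatim once Lemma \ref{refinement at v} and Proposition \ref{refinement} are replaced by their finitely generated analogues (Lemma \ref{refinement at v for fintely generated G} and Proposition \ref{refinement for finitely generated G}), which is exactly what you do, including correctly identifying the simultaneous inverse-limit blow-up as the only new ingredient. One small slip in (2): the vertex stabilizers of $T_v$ are vertex stabilizers of $T_J$ (not of $\hat S$), hence elliptic in $\hat S$ because $T_J$ dominates $\hat S$; the rest of your chain is fine.
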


\section{Trees of cylinders}

\begin{definition} Let $X$ be a profinite space. An equivalence relation on $X$ is called profinite if it is an intersection of clopen equivalence relations on $X$.\end{definition}

\begin{lemma}\label{closedness} Let $\E$ be a family of subgroups of a profinite group $G$ closed in the \'etale topology of $Subgr(G)$ (see Subsection 2.3) and  for conjugation. If $G$ acts on a profinite space  $X$, then $\{x\in X\mid G_x\in \E\}$ is a closed subset of $X$. 

\end{lemma}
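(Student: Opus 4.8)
The plan is to deduce the statement directly from the continuity of the point–stabilizer map. First I would note that, by our standing conventions, the action of $G$ on $X$ is continuous, so Lemma \ref{Ribes 5.2.2} applies and the family of stabilizers $\{G_x\mid x\in X\}$ is continuous. As recalled in the discussion following Lemma \ref{Ribes 5.2.2}, this is equivalent (by \cite[Lemma 5.2.1]{R}) to saying that the map $\varphi\colon X\longrightarrow Subgr(G)$ sending $x$ to $G_x$ is continuous when $Subgr(G)$ carries the \'etale topology.

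Granting this, the set in question is simply $\varphi^{-1}(\E)$. Since $\E$ is closed in the \'etale topology by hypothesis and $\varphi$ is continuous, $\varphi^{-1}(\E)=\{x\in X\mid G_x\in\E\}$ is closed in $X$, as claimed.

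If one prefers to argue without invoking \cite[Lemma 5.2.1]{R}, I would unwind the \'etale topology directly: a base for it is given by the sets of all closed subgroups contained in a fixed open subgroup $U\leq_o G$. Hence the complement of $\E$ in $Subgr(G)$ is a union $\bigcup_{i}\{H\leq G\mid H\subseteq U_i\}$ over some family of open subgroups $U_i$, and therefore
$$X\setminus\{x\in X\mid G_x\in\E\}=\bigcup_i\{x\in X\mid G_x\subseteq U_i\}.$$
Each set $\{x\in X\mid G_x\subseteq U_i\}$ is open in $X$ precisely because the stabilizer family is continuous, so the complement of $\{x\in X\mid G_x\in\E\}$ is open, giving the claim.

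There is no real obstacle here: the content is entirely in Lemma \ref{Ribes 5.2.2} (equivalently, Ribes's characterization of continuous families), and the remaining work is the bookkeeping of checking that the \'etale topology has the stated base and that the action is continuous. Note in particular that the conjugation-invariance of $\E$ is not used for this particular statement.
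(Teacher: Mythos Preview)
Your proof is correct and follows essentially the same route as the paper: invoke Lemma \ref{Ribes 5.2.2} to get continuity of the stabilizer family, pass via \cite[Lemma 5.2.1]{R} to continuity of $\varphi\colon X\to Subgr(G)$, and conclude that $\varphi^{-1}(\E)$ is closed. Your additional remark that conjugation-invariance of $\E$ plays no role here is also accurate.
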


\begin{proof} By Lemma \ref{Ribes 5.2.2} $\{G_x\mid x\in X\}$ is a continuous family. Then by  \cite[Lemma 5.2.1 (c)]{R} 
$\varphi:X\longrightarrow Subgr(G)$ is continuous. Hence $\varphi^{-1}(E)=\{x\in X\mid G_x\in \E\}$ is a closed subset of $X$. 

\end{proof}

\begin{remark}\label{strict topology} There is a natural topology 
 on $Subg(G)$ that we (following \cite{HJ}) shall call strict topology. Its base of open subsets is given by inverse images of elements in the discrete space $Subgr(G/U)$ for all open normal subgroups of $G$. This topology is stronger than   the \'etale topology and so the statement of Lemma \ref{closedness} holds also under assumption that $E$ is closed in the strict topology. \end{remark}

\begin{lemma}\label{inverse limit of relations} Let $G$ be a profinite group acting on a profinite space $X$  and $\sim$ a profinite $G$-invariant equivalence relation on $X$. Let $X=\varprojlim_i X_i$ be a decomposition as a surjective inverse limit of finite $G$-quotient spaces $X_i$ of $X$. Define an equivalence relation $\sim_i$ on $X_i$ putting $m_i\sim_i f_i$ if there are elements $m\sim f$ in the corresponding preimages  of $m_i$ and $f_i$ in $X$ respectively and then taking the transitive hull. Then $\sim=\varprojlim_i \sim_i$.
	
	\end{lemma}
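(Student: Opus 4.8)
The plan is to establish the equality of two equivalence relations on $X$ by showing mutual inclusion, working entirely through the finite quotients $X_i$. First I would set up notation: write $\pi_i : X \longrightarrow X_i$ for the projection maps and recall that because the inverse system is surjective, every point of $X_i$ lifts to a point of $X$, and every element of $G$ acts compatibly. The relation $\varprojlim_i \sim_i$ is by definition the set of pairs $(x,y) \in X \times X$ such that $\pi_i(x) \sim_i \pi_i(y)$ for all $i$; I need to check this is the same subset of $X \times X$ as $\sim$.

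For the inclusion $\sim \, \subseteq \, \varprojlim_i \sim_i$: if $x \sim y$, then for each $i$ we have $x \sim y$ with $\pi_i(x), \pi_i(y)$ being the images, so by the very definition of $\sim_i$ (pairs of images of $\sim$-related points, closed under transitivity) we get $\pi_i(x) \sim_i \pi_i(y)$. Hence $(x,y) \in \varprojlim_i \sim_i$. This direction is essentially formal.

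The harder inclusion is $\varprojlim_i \sim_i \, \subseteq \, \sim$, and this is where the hypothesis that $\sim$ is \emph{profinite} (an intersection of clopen equivalence relations) is essential — without it the statement can fail. The strategy here is: write $\sim \, = \, \bigcap_\lambda R_\lambda$ with each $R_\lambda$ a clopen $G$-invariant equivalence relation (one should check invariance can be arranged, or at least that the $G$-invariant clopen ones still intersect to $\sim$ since $\sim$ itself is $G$-invariant — intersecting each $R_\lambda$ over its finite $G$-orbit of translates keeps it clopen and makes it invariant while not enlarging the intersection below $\sim$). Each clopen $G$-invariant equivalence relation $R_\lambda$ factors through some finite quotient $X_i$ in the cofinal system, i.e. there is an index $i(\lambda)$ and an equivalence relation on $X_{i(\lambda)}$ pulling back to $R_\lambda$; this uses that a clopen subset of $X \times X$ depends on only finitely many coordinates, together with the fact that the $X_i$ are cofinal $G$-quotients. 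Now suppose $(x,y) \in \varprojlim_i \sim_i$; I must show $x \sim y$, i.e. $(x,y) \in R_\lambda$ for every $\lambda$. Fix $\lambda$; it suffices to show $\pi_{i(\lambda)}(x)$ and $\pi_{i(\lambda)}(y)$ are related in the quotient relation that $R_\lambda$ induces on $X_{i(\lambda)}$. Since $(x,y) \in \varprojlim_i \sim_i$ we know $\pi_{i(\lambda)}(x) \sim_{i(\lambda)} \pi_{i(\lambda)}(y)$, so there is a finite chain $\pi_{i(\lambda)}(x) = a_0, a_1, \dots, a_n = \pi_{i(\lambda)}(y)$ in $X_{i(\lambda)}$ where consecutive $a_j, a_{j+1}$ are images of points $u_j \sim v_j$ in $X$. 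For each such pair $u_j \sim v_j$ we certainly have $(u_j, v_j) \in R_\lambda$, hence $a_j$ and $a_{j+1}$ are related in the induced relation on $X_{i(\lambda)}$; transitivity of that induced relation then gives that $\pi_{i(\lambda)}(x)$ and $\pi_{i(\lambda)}(y)$ are related, i.e. $(x,y) \in R_\lambda$. As $\lambda$ was arbitrary, $x \sim y$.

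The main obstacle I anticipate is the bookkeeping in the second inclusion: carefully justifying that a $G$-invariant clopen equivalence relation on $X$ is the pullback of an equivalence relation on one of the given finite $G$-quotients $X_i$ (as opposed to some other finite quotient not in the system), and that the induced relation on $X_{i(\lambda)}$ is genuinely transitive so the chain argument closes up. One clean way to handle this is to observe that the finite $G$-quotients $X_i$ appearing in the hypothesis are cofinal among all finite $G$-quotients of $X$ (since $X = \varprojlim_i X_i$ as $G$-spaces forces this), so any clopen $G$-invariant relation, which factors through \emph{some} finite $G$-quotient, factors through some $X_i$ after enlarging $i$. I would also remark at the end that all the $\sim_i$ are genuine equivalence relations by construction (reflexive and symmetric are immediate, transitive by the explicit transitive-hull step), so $\varprojlim_i \sim_i$ is indeed a profinite equivalence relation, consistent with $\sim$.
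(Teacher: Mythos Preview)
Your proposal is correct and follows essentially the same approach as the paper: both directions are handled as you describe, and the hard inclusion is proved by writing $\sim$ as an intersection of clopen equivalence relations $R$, observing that each such $R$ factors through some $X_i$, and then running the finite-chain argument to show that $\sim_i$-related points in $X_i$ have $R$-related preimages. One minor simplification: your careful detour through $G$-invariance of the $R_\lambda$ is unnecessary, since the argument is purely topological --- $X = \varprojlim_i X_i$ as profinite spaces already guarantees that any clopen equivalence relation on $X$ (invariant or not) is refined by the fibre partition of some $\pi_i$, which is all that is needed.
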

	
	\begin{proof}  	As $X_i$ is finite, $\sim_i$ is clopen. Then its preimage $_i\sim$ in $X$ is also clopen.  We just need to show that for not equivalent  $e,f$ there exists $i$ such that $e_i\not\sim_i f_i$, where $e_i,f_i$ are the images of $e$ and $f$ in $X_i$. 

As  $\sim$ is profinite, it is the intersection of clopen equivalence relations $R$ on $X$.  Since $X=\varprojlim_i X_i$, for any such $R$ there exists $i$ such that for any element $m_i$ in $X_i$  and any elements $m,e$ of its preimage in $X$ one has  $e R m$. This means that two not $R$-equivalent  points of $X$ have distinct images in $X_i$.  Hence if $m_i\sim_i f_i$ then  for some elements $m,f$ of their preimages in $X$ one has $m R f$ (indeed, there exists a finite sequence of elements $m_i, m_{i1}, \ldots m_{in}, f_i$ such that any pair of consecutive elements have  a pair of equivalent elements in their preimages in $X$ and hence belong to the same $R$-equivalence class). Then by the choice of $i$, for any such elements $m,f$ of  preimages of $m_i, f_i$ in $X$ one has $m R f$. 
	\end{proof}

\begin{definition}\label{admissibleontree} (Admissible equivalence relation). Let $G$ be a pro-$p$ group acting on a pro-$p$ tree  $T$ with closed $E(T)$.  A profinite equivalence relation  $\sim$ on $E(T)$  is called
admissible  if the following axioms hold:

(1) If $e \sim e'$ and $g \in G$, then $ge \sim ge'$
(invariance under translation).

(2) If $G_e \subset G_{e'}$, then $e\sim e'$ (nesting of stabilizers implies equivalence).

(3)  If $f \sim f'$, 
 then for each edge $e \subset [f, f']$ one has $e \sim f \sim f'$.

\end{definition}

\begin{lemma}\label{cylinders of trees} Let $G$ be a pro-$p$ group acting on a profinite graph $T$. Let $\sim$ be a $G$-invariant profinite  equivalence relation on $E(T)$.  Then
	
	\begin{enumerate}

\item[(i)] The union of all elements of $E(T)$ in an
equivalence class of it  together with their vertices is a profinite subgraph $D$ of $T$; a connected component $Y$ of it is a connected profinite graph and $E(Y)$ is called a cylinder of $T$. 

\item[(ii)] If $T$ is a pro-$p$ tree  and $\sim$ is admissible then

\begin{enumerate}

\item[(a)]  $D$ is a pro-$p$ subtree of $T$ and so coincides with $Y$;

\item[(b)]  two
distinct cylinders meet in at most one point. 
\end{enumerate}
\end{enumerate}
\end{lemma}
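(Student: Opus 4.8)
For part (i) the plan is to verify by hand that $D$ is a profinite subgraph and then to quote a general fact about connected components of profinite graphs. The first step is that every $\sim$-class $C$ is closed in $E(T)$: since $\sim$ is an intersection of clopen equivalence relations it is a closed, hence compact, subset of $E(T)\times E(T)$, so for a fixed edge $e$ the class $C=\{e'\mid e\sim e'\}$ is the image of the compact set $\sim\,\cap\,(\{e\}\times E(T))$ under the second projection, and is therefore closed. Consequently $D=C\cup d_0(C)\cup d_1(C)$ is closed in $T$ (each $d_i(C)$ is a compact image of a compact set), $V(D)=d_0(C)\cup d_1(C)$ is closed, and $d_0,d_1$ map $E(D)=C$ into $V(D)$; thus $D$ is a profinite subgraph of $T$. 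Finally I would cite the standard fact that a connected component $Y$ of a profinite graph is again a connected profinite graph (obtained by writing $D$ as an inverse limit of finite quotient graphs and taking the inverse limit of the corresponding components), e.g.\ from \cite{R} or \cite{ZM-88}.

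For part (ii)(a) the plan is to use axiom (3) of admissibility to show that $D$ is geodesically convex, and then to apply the standard criterion recognising pro-$p$ subtrees inside a pro-$p$ tree. In detail: let $x,y\in V(D)$, so (by the very definition of $D$) $x$ is an endpoint of some $f\in C$ and $y$ an endpoint of some $g\in C$, and let $[f,g]$ be the smallest pro-$p$ subtree of $T$ containing the edges $f$ and $g$ (well defined, as with geodesics). Since $f\sim g$, axiom (3) gives $e\sim f$, hence $e\in C$, for every edge $e$ of $[f,g]$. Here I would invoke the elementary observation that in a pro-$p$ tree with more than one vertex every vertex is an endpoint of some edge — provable by passing to finite quotient graphs, where it is obvious, or directly from the defining exact sequence; applied to $[f,g]$, which contains the edge $f$ and hence has at least two vertices, it shows that every vertex of $[f,g]$ lies on an edge of $C$, so $[f,g]\subseteq D$. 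As $[f,g]$ is a subtree containing $x$ and $y$ we get $[x,y]\subseteq[f,g]\subseteq D$. Thus $D$ is a nonempty closed subgraph of the pro-$p$ tree $T$ that contains the geodesic between any two of its vertices, hence a pro-$p$ subtree, in particular connected, so $D$ is its own unique connected component $Y$. For the subtree criterion I would cite \cite{RZ-00} or \cite{R}.

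For part (ii)(b) the plan is a short contradiction built on (ii)(a). Let $D,D'$ be the pro-$p$ subtrees attached to distinct classes $C\neq C'$ and suppose $D\cap D'$ has at least two points. If $D\cap D'$ contains an edge $e$, then $e\in E(D)\cap E(D')=C\cap C'$, forcing $C=C'$ because the $\sim$-classes partition $E(T)$; contradiction. Otherwise $D\cap D'$ contains two distinct vertices $v\neq w$, and since $D$ and $D'$ are pro-$p$ subtrees each containing $v$ and $w$, each contains the geodesic $[v,w]$, so $[v,w]\subseteq D\cap D'$; but by Theorem \ref{fixed geodesic} the geodesic $[v,w]$ has nonempty edge set, contradicting that $D\cap D'$ has no edge. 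Hence $D\cap D'$ is at most a single point.

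\textbf{Main obstacle.} There is no hard idea; the real content is point-set and profinite-graph bookkeeping. The inputs I would want cleanly on hand are (a) connected components of a profinite graph are connected profinite graphs, (b) a nonempty closed geodesically convex subgraph of a pro-$p$ tree is a pro-$p$ subtree, together with the small lemma that a pro-$p$ tree with more than one vertex has an edge and that all its vertices are endpoints of edges. Once these are in place, (i) is formal, (ii)(a) is a two-line application of axiom (3), and (ii)(b) is immediate; the only genuine chore is to pin down (a), (b) and the small lemma with precise references inside \cite{RZ-00}, \cite{ZM-88}, \cite{R}.
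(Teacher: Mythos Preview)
Your proposal is correct and follows essentially the same line as the paper's proof: closedness of equivalence classes gives (i), axiom (3) yields geodesic convexity of $D$ for (ii)(a), and for (ii)(b) one observes that two distinct points in an intersection of cylinders would force a common edge via the geodesic $[v,w]$, contradicting that the classes partition $E(T)$. Your version is simply more explicit about the bookkeeping (why classes are closed, why every vertex of $[f,g]$ lies on an edge), while the paper's argument is terser and invokes Theorem~\ref{fixed geodesic} for the nonemptiness of $E([v,w])$.
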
 

\begin{proof}

	(i)  Let $G_m$ be  the stabilizer of an element  $m\in T$. Since  $\{ f\in E(T)\mid f\sim m \}$ is closed,  $Y=\{ f\cup d_0(f)\cup d_1(f)\mid f\in E(T),  f\sim m \}$ is a subgraph of $T$.

	(ii) Suppose $T$ is a pro-$p$ tree. 
	
	(a) Let $A$ and $B$ be subgroups in $\E$. Then  by axiom (3) for vertices $v,w$ of $D$ stabilized by $A$ and $B$ we have $[v,w]\subseteq D$ and so     $D$ is a pro-$p$ subtree.
	
	(b) If $v,w$ are in a cylinder $Y$, then by Theorem \ref{fixed geodesic}  so is $[v,w]$; therefore if they belong to any other cylinder  they can not be distinct, since otherwise any edge of $[v,w]$ belong to different equivalence classes, which is impossible.
	
	\end{proof}

\begin{definition} (Graph of cylinders). Let $G$ be a pro-$p$ group. Given a connected profinite $G$-graph $T$ with $E(T)$ compact  and a $G$-invariant profinite equivalence relation $\sim$ on $E(T)$, its graph of
	cylinders $T_c$ is the bipartite profinite graph with vertex set $V_0(T_c) \cup V_1(T_c)$, where $V_0(T_c)$ is the set of
	vertices $v$ of $T$, $V_1(T_c)= E(T)/\sim$ is the space of cylinders $E(Y)$ of
	$T$, and there is an edge $\epsilon = (v, E(Y) )$ between $v$ and $E(Y)$ in $T_c$ if and only if $v \in Y$ in $T$. Thus $V_1(T_c)$ is homeomorphic to $E(T)/\sim$ and the incident maps $d_0, d_1$ defined as $d_0(\epsilon)=v$, $d_1(\epsilon)=Y$ are continuous, because $E\longrightarrow E/\sim$ and $d_i$ are continuous.
	Equivalently, one obtains $T_c$ from $T$ by replacing each cylinder $Y$ by the cone over the
	set of vertices $v \in Y$.
	
	\end{definition}

\begin{remark}  If $v\in V_0(T)$ then its stabilizer equals $G_v$. If $v\in V_1(T_c)$, i.e. $v=Y$ then its stabilizer is the set stabilizer $G_Y$ of $Y$. Hence if $e=(v,Y)$ then its stabilizer is $G_v\cap G_Y$.

\end{remark}

\begin{proposition}\label{finite diameter}  Suppose $T/G$ is finite. Then $T_c/G$ is finite.

\end{proposition}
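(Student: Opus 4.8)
The plan is to show, one piece at a time, that each of the three constituent $G$-sets of $T_c$ --- namely $V_0(T_c)=V(T)$, $V_1(T_c)=E(T)/\!\sim$, and $E(T_c)$ --- has only finitely many $G$-orbits; since $T_c=V_0(T_c)\sqcup V_1(T_c)\sqcup E(T_c)$ as a $G$-space (with $V_0(T_c)\sqcup V_1(T_c)=V(T_c)$ closed), this gives that $T_c/G$ is a finite quotient graph.

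The two vertex parts are immediate. The set $V_0(T_c)=V(T)$ is a $G$-invariant subspace of $T$, so $V_0(T_c)/G\subseteq T/G$ is finite by hypothesis. For $V_1(T_c)=E(T)/\!\sim$, the quotient map $E(T)\to E(T)/\!\sim$ is $G$-equivariant and surjective, hence it induces a surjection $E(T)/G\to V_1(T_c)/G$; since $E(T)/G\subseteq T/G$ is finite, $V_1(T_c)/G$ is finite.

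The real content is $E(T_c)/G$. Here I would use the $G$-equivariant projection $\pi\colon E(T_c)\to V_0(T_c)=V(T)$ sending an edge $\epsilon=(v,E(Y))$ to $v$. It descends to a map $E(T_c)/G\to V(T)/G$ whose target is finite, so it is enough to bound each fibre: by the standard orbit-counting correspondence, the $G$-orbits of edges of $T_c$ lying over the orbit $Gv$ are in bijection with the $G_v$-orbits of $\pi^{-1}(v)$. Now, using Lemma \ref{cylinders of trees} to identify cylinders with $\sim$-classes, $\pi^{-1}(v)$ is exactly the set of $\sim$-classes $[e]$ such that $v$ is an endpoint of some edge $e'\sim e$; equivalently, $\pi^{-1}(v)$ is the image of the star $\mathrm{star}(v):=d_0^{-1}(v)\cup d_1^{-1}(v)$ under the $G_v$-equivariant surjection $E(T)\to E(T)/\!\sim$. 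Hence $\pi^{-1}(v)/G_v$ is a quotient of $\mathrm{star}(v)/G_v$, and the whole matter reduces to showing $\mathrm{star}(v)/G_v$ is finite.

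Finiteness of $\mathrm{star}(v)/G_v$ follows once more from $T/G$ being finite: the map $d_0\colon E(T)\to V(T)$ is continuous and $G$-equivariant, so the orbit-counting correspondence gives a bijection between the $G$-orbits of edges in $d_0^{-1}(Gv)$ and the $G_v$-orbits of $d_0^{-1}(v)$; the former set is a subset of the finite set $E(T)/G$, so $d_0^{-1}(v)/G_v$ is finite, and likewise $d_1^{-1}(v)/G_v$ is finite, whence $\mathrm{star}(v)/G_v$ --- a quotient of their disjoint union --- is finite. Summing over the finitely many orbits $[v]\in V(T)/G$ yields $E(T_c)/G=\bigsqcup_{[v]}\pi^{-1}(v)/G_v$ finite, completing the argument. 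I expect the only delicate point to be the careful handling of the orbit correspondence $d_i^{-1}(Gv)/G\cong d_i^{-1}(v)/G_v$ and its compatibility with passing to the $\sim$-quotient; no extra topological input is needed, since in this situation $E(T)$ is automatically compact (a finite union of closed $G$-orbits), which is anyway the standing hypothesis on the input of the graph of cylinders construction.
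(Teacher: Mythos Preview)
Your proof is correct and follows the same outline as the paper's: first bound $V_0(T_c)/G=V(T)/G$ and $V_1(T_c)/G$ (as a quotient of $E(T)/G$), then deduce finiteness of $E(T_c)/G$. The paper's proof is extremely terse---it simply asserts ``$V(T_c)/G$ is finite and hence so is $T_c/G$'' without justifying the edge step---whereas you supply the missing argument via the projection $\pi\colon E(T_c)\to V(T)$ and the orbit correspondence $\pi^{-1}(v)/G_v\hookleftarrow \mathrm{star}(v)/G_v\cong$ a subset of $E(T)/G$; this is exactly the detail one needs, since a bipartite simple graph with finitely many vertex orbits need not a priori have finitely many edge orbits after quotienting.
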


\begin{proof} Indeed,   $|V_1(T_c)/G|\leq |V(T)/G|=|(V_0(T_c/G)|$ and $V_0(T/G)=V(T/G)$.  So $V(T_c)/G$ is finite and hence so is $T_c/G$.
	
\end{proof}

\begin{lemma}\label{embedding}  Let $\Gamma$ be a finite connected graph and $\sim$ an  equivalence relation  on $E(\Gamma)$ such that for each equivalence class $Y$  of it $d_0(Y)\cup d_1(Y)$  is a connected subgraph of $\Gamma$.  Let $\Gamma_c$ be the graph of cylinders with respect to this relation. Then $\pi_1(\Gamma)\geq \pi_1(\Gamma_c)$.
\end{lemma}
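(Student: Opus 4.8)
The plan is to reduce the statement to a comparison of first Betti numbers. Since $\Gamma$ is a finite connected graph, and (as I will check first) so is $\Gamma_c$, the groups $\pi_1(\Gamma)$ and $\pi_1(\Gamma_c)$ are free pro-$p$ of ranks $b_1(\Gamma)=|E(\Gamma)|-|V(\Gamma)|+1$ and $b_1(\Gamma_c)=|E(\Gamma_c)|-|V(\Gamma_c)|+1$ respectively; and a free pro-$p$ group of rank $m$ embeds into one of rank $n$ as a free factor whenever $m\le n$. So it suffices to prove that $\Gamma_c$ is connected and that $b_1(\Gamma_c)\le b_1(\Gamma)$.

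I would first check connectedness of $\Gamma_c$. Every equivalence class $Y\subseteq E(\Gamma)$ is non-empty, so the cylinder vertex $Y$ of $\Gamma_c$ is incident to at least one vertex of $V_0(\Gamma_c)=V(\Gamma)$. Given $v,w\in V(\Gamma)$, choose a path $v=v_0,e_1,v_1,\dots,e_n,v_n=w$ in $\Gamma$; letting $Y_i$ be the class of $e_i$, both $v_{i-1}$ and $v_i$ lie in $d_0(Y_i)\cup d_1(Y_i)$, hence are joined in $\Gamma_c$ through the cylinder vertex $Y_i$. Concatenating shows $v$ and $w$ are joined in $\Gamma_c$; together with the previous remark this gives that $\Gamma_c$ is connected (and reduces to a single point when $E(\Gamma)=\varnothing$).

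Then I would carry out the Betti number estimate. Write $\mathcal{C}=E(\Gamma)/{\sim}$ for the set of cylinders, and for $Y\in\mathcal{C}$ let $\Gamma_Y$ be the subgraph of $\Gamma$ with edge set $Y$ and vertex set $V(Y):=d_0(Y)\cup d_1(Y)$; by hypothesis $\Gamma_Y$ is connected, so $b_1(\Gamma_Y)=|Y|-|V(Y)|+1\ge 0$. By the definition of the graph of cylinders, $V(\Gamma_c)=V(\Gamma)\sqcup\mathcal{C}$ and the edges of $\Gamma_c$ incident to a cylinder vertex $Y$ are precisely the pairs $(v,Y)$ with $v\in V(Y)$, whence $|E(\Gamma_c)|=\sum_{Y\in\mathcal{C}}|V(Y)|$. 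Therefore
\[
b_1(\Gamma_c)=\sum_{Y\in\mathcal{C}}\bigl(|V(Y)|-1\bigr)-|V(\Gamma)|+1,\qquad b_1(\Gamma)=\sum_{Y\in\mathcal{C}}|Y|-|V(\Gamma)|+1,
\]
and subtracting gives
\[
b_1(\Gamma)-b_1(\Gamma_c)=\sum_{Y\in\mathcal{C}}\bigl(|Y|-|V(Y)|+1\bigr)=\sum_{Y\in\mathcal{C}}b_1(\Gamma_Y)\ \ge\ 0.
\]
Hence $b_1(\Gamma_c)\le b_1(\Gamma)$, so $\pi_1(\Gamma_c)$ embeds into $\pi_1(\Gamma)$, i.e. $\pi_1(\Gamma)\ge\pi_1(\Gamma_c)$.

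I do not expect a genuine obstacle: the argument is pure Euler-characteristic bookkeeping. The only points needing a little care are the degenerate cases — a class $Y$ consisting of a single loop (then $|V(Y)|=1$, $b_1(\Gamma_Y)=1$, and the cylinder vertex $Y$ becomes a leaf of $\Gamma_c$), or $E(\Gamma)=\varnothing$ — for which the displayed identities remain valid, and reading $E(\Gamma_c)$ correctly off the incidence rule ``$(v,Y)$ is an edge iff $v\in Y$'' rather than off the edges of $\Gamma$.
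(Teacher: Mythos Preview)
Your proof is correct. It takes a genuinely different route from the paper's argument, which proceeds iteratively: for each cylinder $Y$, the paper invokes \cite[Proposition 3.5.7]{R} to split $\pi_1(\Gamma)=\pi_1(Y)\amalg L$, observes that replacing $Y$ by its cone kills the factor $\pi_1(Y)$ and yields a graph with fundamental group $L$, and then repeats over all cylinders. Your approach instead does a single Euler-characteristic count, directly obtaining the identity $b_1(\Gamma)-b_1(\Gamma_c)=\sum_Y b_1(\Gamma_Y)$. This is more elementary and self-contained (no external citation needed), and it immediately yields at the level of ranks the content of the subsequent Corollary~\ref{free product of cylinders}. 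The paper's approach, on the other hand, produces the free-product decomposition $\pi_1(\Gamma)=\coprod_Y\pi_1(Y)\amalg\pi_1(\Gamma_c)$ as a structural statement rather than just a rank equality, and its step-by-step replacement picture is what gets reused in the profinite inverse-limit argument of Theorem~\ref{tree of cylinders}.
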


\begin{proof} Let $Y$ be a cylinder of $\Gamma$ with respect to $\sim$.  By \cite[Proposition 3.5.7]{R} $$\pi_1(\Gamma)=\pi_1(Y)\amalg L.$$  Observe that replacing the cylinder by its cone $\Gamma_Y$ we replace   $\pi_1(Y)$ by the fundamental group of the cone which is trivial and so the fundamental group of the resulting graph is $L$. Doing this for every cylinder and taking into account that $\Gamma$ is finite we deduce the result. 

\end{proof}

\begin{corollary}\label{free product of cylinders}
$$\pi_1(\Gamma)=\coprod_Y(\pi_1(Y))\amalg \pi_1(\Gamma_c).$$\end{corollary}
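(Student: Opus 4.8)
The plan is to iterate Lemma~\ref{embedding} (or rather its proof) cylinder by cylinder, keeping careful track not just of the free factor that is killed but of the one that survives. Fix an enumeration $Y_1,\dots,Y_r$ of the (finitely many, since $\Gamma$ is finite) cylinders of $\Gamma$ with respect to $\sim$. I would argue by induction on $r$, at each stage producing a graph $\Gamma^{(j)}$ obtained from $\Gamma$ by coning off $Y_1,\dots,Y_j$, together with an isomorphism
$$\pi_1(\Gamma)\;=\;\Bigl(\coprod_{i=1}^{j}\pi_1(Y_i)\Bigr)\amalg \pi_1(\Gamma^{(j)}).$$
The base case $j=0$ is trivial ($\Gamma^{(0)}=\Gamma$). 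For the inductive step I would apply \cite[Proposition 3.5.7]{R} to the connected subgraph $Y_{j+1}$ of $\Gamma^{(j)}$ — here one uses that coning off disjoint connected subgraphs leaves $Y_{j+1}$ a connected subgraph of $\Gamma^{(j)}$, and that its image is still a cylinder — to write $\pi_1(\Gamma^{(j)})=\pi_1(Y_{j+1})\amalg L_{j+1}$, and then observe, exactly as in the proof of Lemma~\ref{embedding}, that replacing $Y_{j+1}$ by the cone $\Gamma_{Y_{j+1}}$ replaces the free factor $\pi_1(Y_{j+1})$ (which is free pro-$p$ on the non-tree edges of $Y_{j+1}$) by the fundamental group of a cone, which is trivial; hence $\pi_1(\Gamma^{(j+1)})=L_{j+1}$ and $\pi_1(\Gamma^{(j)})=\pi_1(Y_{j+1})\amalg \pi_1(\Gamma^{(j+1)})$. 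Substituting into the inductive hypothesis and using associativity of the free pro-$p$ product gives the statement at level $j+1$. After $r$ steps, $\Gamma^{(r)}$ is precisely the graph with every cylinder coned off, i.e.\ $\Gamma_c$, and the displayed formula becomes the claimed identity
$$\pi_1(\Gamma)=\coprod_Y\pi_1(Y)\amalg \pi_1(\Gamma_c).$$

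The one point that needs a little care — and which I expect to be the main (minor) obstacle — is bookkeeping of the underlying graph: I must check that coning off the already-processed cylinders does not destroy the hypothesis of Lemma~\ref{embedding} for the next cylinder, namely that $d_0(Y_{j+1})\cup d_1(Y_{j+1})$ remains a connected subgraph and that $Y_{j+1}$ remains a single equivalence class / cylinder in $\Gamma^{(j)}$. This holds because distinct cylinders of $\Gamma$ are edge-disjoint by construction (each edge lies in exactly one $\sim$-class), so coning off $Y_1,\dots,Y_j$ only identifies vertices lying in those cylinders and removes their edges, leaving the edge set of $Y_{j+1}$ and the incidence pattern of $Y_{j+1}$ untouched; connectivity of $d_0(Y_{j+1})\cup d_1(Y_{j+1})$ is inherited, and its image in $\Gamma^{(j)}$ is still connected since quotient maps preserve connectivity. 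One also notes that $\pi_1$ of a finite connected graph is free pro-$p$ of rank $|E|-|V|+1$, so all the free factors appearing are free pro-$p$ and the free pro-$p$ product decompositions are genuine (proper). With that in hand the induction goes through and the corollary follows; in fact one could equally phrase the argument non-inductively by applying \cite[Proposition 3.5.7]{R} simultaneously to all the $Y_i$, since they are pairwise edge-disjoint connected subgraphs, but the inductive presentation keeps the reference to Lemma~\ref{embedding} transparent.
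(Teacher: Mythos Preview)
Your proposal is correct and follows essentially the same approach as the paper: the corollary is immediate from the proof of Lemma~\ref{embedding}, which already iterates the coning-off procedure over all cylinders (``Doing this for every cylinder and taking into account that $\Gamma$ is finite\dots''); you have simply made the induction and the identification $\Gamma^{(r)}=\Gamma_c$ explicit. One minor inaccuracy: coning off a cylinder adds a new cone vertex and new edges to it rather than ``identifying vertices'', but your conclusion that the edge set and incidence pattern of $Y_{j+1}$ are untouched in $\Gamma^{(j)}$ is correct, since distinct cylinders are edge-disjoint and no original vertices are removed.
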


\bigskip
Given an admissible equivalence relation $\sim$ on $E(T)$, we now associate a tree of cylinders
$T_c$ to any $(G,\E)$-tree $T$ with compact $E(T)$.

\begin{definition} (Tree of cylinders). Let $G$ be a pro-$p$ group. Given a pro-$p$ $(G,\E)$-tree $T$  with $E(T)$ compact and an admissible  equivalence relation $\sim$ on $E(T)$, its pro-$p$ tree of
	cylinders (see Theorem \ref{tree of cylinders}) $T_c$ is the bipartite tree with vertex set $V_0(T_c) \cup V_1(T_c)$, where $V_0(T_c)$ is the set of
	vertices $v$ of $T$, $V_1(T_c)=E(T)/\sim$ is the space of cylinders $E(Y)$ of
	$T$, and there is an edge $\epsilon = (v, E(Y) )$ between $v$ and $E(Y)$ in $T_c$ if and only if $v \in Y$ in $T$.
	Equivalently, one obtains $T_c$ from $T$ by replacing each cylinder $E(Y)$ by the cone over the
	set of vertices $v \in Y$.
	 (Warning: $T_c$ is not always an $\E$-tree).
	\end{definition}

\begin{theorem}\label{tree of cylinders}    $T_c$ is a  pro-$p$ tree.
	
\end{theorem}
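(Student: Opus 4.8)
The plan is to realize $T_c$ as an inverse limit of graphs of cylinders of finite quotient trees, and to check directly that a graph of cylinders inherits the tree property. First I would pass to a presentation $T=\varprojlim_i T_i$ as a surjective inverse limit of finite quotient graphs $T_i$ of $T$ over some cofinal family of open normal subgroups of $G$; since $T$ is a pro-$p$ tree one may arrange the $T_i$ to be actual \emph{finite trees} (by \cite[Prop.~1.7]{ZM-88} together with the characterisation of pro-$p$ trees). Using Lemma \ref{inverse limit of relations} the admissible equivalence relation $\sim$ on $E(T)$ descends to equivalence relations $\sim_i$ on $E(T_i)$ with $\sim=\varprojlim_i\sim_i$, and by Lemma \ref{cylinders of trees} each cylinder of $T_i$ has connected (in fact subtree) span $d_0(Y)\cup d_1(Y)$, so the finite graph of cylinders $(T_i)_c$ is defined. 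The key point of this reduction is that $T_c$ is the inverse limit $\varprojlim_i (T_i)_c$: the vertex set $V_0(T_c)=V(T)=\varprojlim V(T_i)$ and $V_1(T_c)=E(T)/\sim=\varprojlim E(T_i)/\sim_i$ by compactness of $E(T)$ and Lemma \ref{inverse limit of relations}, and incidence $v\in Y$ is a closed condition passing correctly to the limit. So it suffices to show each finite $(T_i)_c$ is a tree, because an inverse limit of finite trees is a pro-$p$ tree (the defining exact sequence is preserved under inverse limits of the relevant $\F_p$-modules).

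Next I would prove the finite case: if $\Delta$ is a finite tree and $\sim$ an equivalence relation on $E(\Delta)$ such that the span $d_0(Y)\cup d_1(Y)$ of every cylinder $Y$ is a connected subgraph (hence a subtree), then the graph of cylinders $\Delta_c$ is a finite tree. Here I would invoke Lemma \ref{embedding} and Corollary \ref{free product of cylinders}: since $\pi_1(\Delta)$ is free of rank $|E(\Delta)|-|V(\Delta)|+1=0$, each cylinder span $Y$ is itself a tree, and Corollary \ref{free product of cylinders} gives $\pi_1(\Delta)=\coprod_Y\pi_1(Y)\amalg\pi_1(\Delta_c)$, forcing $\pi_1(\Delta_c)=1$. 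A finite connected graph with trivial fundamental group is a tree, so I still need connectedness of $\Delta_c$: this follows because $\Delta$ is connected and replacing each cylinder-subtree $Y$ by a cone over its vertex set neither disconnects the graph nor changes the set of vertices $V_0$, and any path in $\Delta$ between two vertices can be rerouted through the cone vertices. Thus $\Delta_c$ is a finite tree. (Admissibility of $\sim$ is exactly what guarantees, via Lemma \ref{cylinders of trees}(ii), that the hypothesis on spans holds for each $T_i$, using that axioms (1)--(3) descend along the quotient maps.)

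Finally I would assemble: $T_c=\varprojlim_i (T_i)_c$ is an inverse limit of finite trees over a directed system whose connecting maps are graph morphisms (induced by the $T_i\to T_j$ and the compatibility of the $\sim_i$); an inverse limit of finite discrete trees is a pro-$p$ tree, since $E^*((T_i)_c)$, $V((T_i)_c)$ and the boundary map $\delta$ form inverse systems of finite $\F_p$-modules whose limit is the required short exact sequence (exactness is preserved because the systems satisfy the Mittag--Leffler condition, being finite). Hence $T_c$ is a pro-$p$ tree.

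The main obstacle I anticipate is the bookkeeping in the reduction step: verifying carefully that $T_c$ really is the inverse limit of the $(T_i)_c$ as \emph{profinite graphs} — in particular that the incidence maps $d_0,d_1$ of $T_c$ are the inverse limits of those of $(T_i)_c$, and that the identification $V_1(T_c)=\varprojlim E(T_i)/\sim_i$ is a homeomorphism and not merely a bijection. This uses compactness of $E(T)$ and Lemma \ref{inverse limit of relations} in an essential way, and one must be slightly careful that the quotient maps $T\to T_i$ send cylinders into cylinders (which is why the transitive-hull definition of $\sim_i$ in Lemma \ref{inverse limit of relations} is the right one). Everything downstream of that reduction is then routine.
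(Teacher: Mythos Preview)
Your overall strategy coincides with the paper's --- decompose $T$ as an inverse limit of finite quotient graphs, descend $\sim$ via Lemma \ref{inverse limit of relations}, apply Corollary \ref{free product of cylinders} at each finite stage, and pass to the limit --- but the first reduction step contains a genuine error. A pro-$p$ tree need \emph{not} be an inverse limit of finite trees. Take $T$ to be the Cayley graph of $\Z_p$ (the standard pro-$p$ tree for $\Z_p$ viewed as an HNN-extension of the trivial group): its $G$-quotient graphs are the cycles $C_{p^n}$, and in fact $T$ admits no surjective graph morphism onto any finite tree with more than one vertex. Indeed, a surjection onto the one-edge tree $a\xrightarrow{e} b$ would give a clopen partition $V(T)=\Z_p=A\sqcup B$ in which no edge runs from $B$ to $A$; since $d_0(f)=f$ and $d_1(f)=f+1$, this forces $f\in B\Rightarrow f+1\in B$, and as $\N$ is dense in $\Z_p$ and $B$ is closed one gets $B=\Z_p$, a contradiction. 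The reference you cite only yields finite quotient \emph{graphs}; the pro-$p$ tree condition says $\varprojlim_i \pi_1(\Gamma_i)=1$, not that each $\pi_1(\Gamma_i)$ vanishes.

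The paper's proof repairs exactly this point. Working with arbitrary finite quotient graphs $\Gamma_i$, Corollary \ref{free product of cylinders} still gives $\pi_1((\Gamma_i)_c)$ as a free factor of $\pi_1(\Gamma_i)$, hence there are compatible surjections $\pi_1(\Gamma_i)\twoheadrightarrow\pi_1((\Gamma_i)_c)$. Since inverse limits of surjections of profinite groups are surjective, one obtains a surjection $1=\pi_1(T)=\varprojlim_i\pi_1(\Gamma_i)\twoheadrightarrow\varprojlim_i\pi_1((\Gamma_i)_c)=\pi_1(T_c)$, whence $\pi_1(T_c)=1$. So your ``finite case'' computation is correct and is precisely what the paper uses, but it has to be fed into an inverse-limit argument on fundamental groups rather than into a direct inverse limit of trees.
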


\begin{proof}  Let $T=\varprojlim_i \Gamma_i$ be a decomposition as an inverse limit of finite $G$-quotient graphs $\Gamma_i$ of $T$. Define an equivalence relation $\sim_i$ on $E(\Gamma_i)$ putting $m_i\sim_i f_i$ if there are elements $m\sim f$ in the corresponding preimages of $m_i$ and $f_i$ respectively and then taking the transitive hull.  By Lemma \ref{inverse limit of relations} $\sim=\varprojlim_i \sim_i$.

  \medskip
  We deduce that $T/\sim=\varprojlim_i \Gamma_i/\sim_i$ and so $\pi_1(T)=\varprojlim_i \pi_1(\Gamma_i)$. Note that morphisms of the inverse system  $\{\Gamma_i\}$ map cylinders into cylinders and so  the homomorphisms of the inverse system $\{\pi_1(\Gamma_i)\}$ send the fundamental groups of cylinders into the fundameltal groups of the cylinders. It follows that free factors corresponding to cylinders in Corollary \ref{free product of cylinders} form the inverse system obtained by restricting inverse system $\{\pi_1(\Gamma_i)\}$ restricted to them.  Then   by Corollary \ref{free product of cylinders}  $\pi_1((\Gamma_i)_{c}) $ is a free factor of  $\pi_1(\Gamma_i)$ and  $\pi_1(T_c)=\varprojlim_i \pi_1((\Gamma_i)_{c})$ is a surjective inverse limit. Thus  we have the following commutative diagram
 
 $$\xymatrix{1=\pi_1(T)\ar[r]\ar[d]&\pi_1(\Gamma_i)\ar[d]\\
        \pi_1(T_c)\ar[r]&\pi_1((\Gamma_i)_{c})}$$
        Since $1=\pi_1(T)=\varprojlim_i \pi_1(\Gamma_i)$ we deduce that the the right vertical maps are surjective, so is the left vertical map. Hence $\pi_1(T_c)=1$, i.e. $T_c$ is a pro-$p$ tree.
\end{proof}

\begin{definition} If the set of edges of $T_c$ with edge stabilizers not in $\E$ is closed, then we construct the  collapsed tree of cylinders $T_c^{*}$ which 
	is the pro-$p$ tree obtained from $T_c$ by collapsing all edges
	whose stabilizer does not belong to $\E$.

\end{definition}

\begin{proposition} Let $T$ be a pro-$p$  $(G,\E)$-tree.
	
	\begin{enumerate}
	\item[(1)] $T$ dominates $T_c$;
	\item[(2)] If $H\le G$ is a vertex stabilizer of $T_c$ which is not elliptic in $T$, it is the stabilizer of
	the equivalence class $[Y ]$ associated to some cylinder $Y \subseteq T$. If $e$ is an edge of $Y$ ,
	the equivalence class of $G_e$ is $H$-invariant
	 ($hG_e
	 h^{-1} 
	\sim G_e$ 
	 for $h \in H$);
	
	\item[(3)] Suppose that the stabilizer of every equivalence class $[Y]$ belongs to $\E$. Then edge
	stabilizers of $T_c$ belong to $\E$.
	\end{enumerate}
	
	\end{proposition}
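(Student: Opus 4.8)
The plan is to prove the three assertions in sequence, using the structure of $T_c$ as the bipartite tree obtained from $T$ by coning off each cylinder, together with the admissibility axioms and the basic facts about pro-$p$ trees recalled in Section~2.

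For (1), I would argue that every vertex stabilizer of $T$ is elliptic in $T_c$. A vertex $v\in V_0(T_c)$ is, by construction, literally a vertex of $T$, and its stabilizer in $T_c$ is $G_v$; so $G_v$ fixes the vertex $v$ of $T_c$. Since $V_0(T_c)$ already contains all vertices of $T$, this shows that every subgroup elliptic in $T$ is elliptic in $T_c$, which is exactly the statement that $T$ dominates $T_c$. (One should note here that there is nothing to check for $V_1(T_c)$: domination only concerns vertex stabilizers of the dominating tree.)

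For (2), let $H$ be the stabilizer in $T_c$ of a vertex $w$ which is not elliptic in $T$. Since $V_0(T_c)=V(T)$, if $w$ were in $V_0(T_c)$ then $H=G_w$ would fix a vertex of $T$, contrary to hypothesis; hence $w\in V_1(T_c)$, i.e. $w=[Y]$ is the equivalence class of a cylinder $Y\subseteq T$, and $H=G_{[Y]}=G_Y$ is the setwise stabilizer of $Y$. Now fix an edge $e$ of $Y$ and $h\in H$. Then $he$ is again an edge of $Y$ (because $h$ stabilizes $Y$ setwise and the action is a graph action), so $he\sim e$ by definition of the cylinder; equivalently, $hG_eh^{-1}=G_{he}$ lies in the equivalence class of $G_e$ (identifying edges with the classes they determine), i.e. the equivalence class of $G_e$ is $H$-invariant. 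This is the assertion to be proved; it is essentially a matter of unwinding the definitions plus $G$-invariance of $\sim$ (axiom (1)).

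For (3), let $\epsilon=(v,[Y])$ be an edge of $T_c$; its stabilizer is $G_v\cap G_{[Y]}=G_v\cap G_Y$. If $[Y]$ is elliptic in $T$ then $G_Y$ fixes a vertex $u$ of $T$; but $G_Y$ permutes the vertices of $Y$, and by Proposition~\ref{unique G-invariant} (uniqueness of the minimal invariant subtree, or directly because $Y$ is a pro-$p$ subtree by Lemma~\ref{cylinders of trees}(ii)(a)) this forces $G_Y$ to fix a vertex of $Y$, hence $G_v\cap G_Y$ is contained in a vertex stabilizer of $T$ and is itself elliptic in $T$; since $T$ is an $(G,\E)$-tree one still needs to deduce membership in $\E$, which follows because $G_v\cap G_Y$ is then contained in an edge stabilizer of $T$ whenever $\epsilon$ is a genuine (non-degenerate) edge, using Theorem~\ref{fixed geodesic}, or more simply because the hypothesis of (3) will be used to handle the remaining case. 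If instead $[Y]$ is not elliptic in $T$, then by assertion (2) its stabilizer $G_Y$ belongs to $\E$ by hypothesis, and then $G_v\cap G_Y\leq G_Y\in\E$; since $\E$ is closed under taking subgroups, $G_v\cap G_Y\in\E$, as required. The main obstacle I anticipate is the first (elliptic) case of (3): one must argue carefully that $G_v\cap G_Y$ lands in an $\E$-group of $T$, and the cleanest route is to observe that $v\in Y$, that $G_Y$ fixes some vertex $v'\in Y$, and to apply Theorem~\ref{fixed geodesic} to the geodesic $[v,v']$ inside the pro-$p$ subtree $Y$, so that $G_v\cap G_{v'}$ — which contains $G_v\cap G_Y$ — lies in $G_f$ for every edge $f$ of $[v,v']$; choosing such an $f$ in $Y$ (hence with $G_f\in\E$) gives $G_v\cap G_Y\in\E$. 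The degenerate case $v=v'$ should be excluded or handled separately by noting that then $G_v\cap G_Y$ stabilizes the edge $\epsilon$ and one simply invokes the hypothesis of (3) directly; I would phrase the write-up so that the hypothesis "the stabilizer of every equivalence class belongs to $\E$" is what ultimately guarantees the conclusion in all cases.
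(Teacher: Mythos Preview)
Your proofs of (1) and (2) are correct and match the paper's approach essentially verbatim.

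For (3) you have overcomplicated matters. The hypothesis says that the stabilizer of \emph{every} equivalence class $[Y]$ lies in $\E$ --- not only those that happen to be non-elliptic in $T$. Hence no case distinction is needed: for any edge $\epsilon=(v,Y)$ of $T_c$ one has $G_\epsilon=G_v\cap G_Y\leq G_{[Y]}\in\E$, and since $\E$ is closed under subgroups, $G_\epsilon\in\E$. This is exactly the paper's one-line argument. Your ``elliptic case'' detour (forcing a fixed point of $G_Y$ into $Y$, then invoking Theorem~\ref{fixed geodesic} on a geodesic $[v,v']$) is not wrong in spirit, but it is unnecessary and, as you yourself note, runs into the awkward degenerate subcase $v=v'$; you then fall back on the hypothesis of (3) anyway, which is all that was ever required.
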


\begin{proof} Consider a vertex $v$ of $T$. Then it defines a vertex in $V_0(T_c)$,
	and this vertex is fixed by $G_v$. This shows that $T$ dominates $T_c$.
	The second assertion follows from the previous sentence: the stabilizer of a vertex in
	$V_0(T_c)$ is a vertex stabilizer of $T$, the stabilizer of a vertex in $V_1(T_c)$ is the stabilizer of an
	equivalence class $[Y ]$. Also note that (3) is clear since the stabilizer of an edge $e = (v, Y )$
	of $T_c$ is contained in the stabilizer of $[Y ]$.
	
\end{proof}

\begin{example} \label{equality} Suppose $\sim$ is defined by equality stabilizers, i.e.  $e\sim f$ iff $G_e=G_f$ for $e,f\in E(T)$.
	\begin{enumerate}
	\item[(a)] Let $G=G_1\amalg_H G_2$ be a free pro-$p$ product with amalgamation. Hence $G$ is the fundamental group of a tree with one edge. 
	$$\xymatrix{
{\overset{G_1}{\bullet}}\ar[rr]^H&& {\overset{G_2}{\bullet}}}$$

Then $T_c/G$ is a tree with 3 vertices.
$$\xymatrix{
\bullet\ar[rr]&& \bullet&&\bullet\ar[ll]}$$
 The underlying graph of pro-$p$ groups of the $G$-tree $T_c$ is 
 
$$\xymatrix{
{\overset{G_1}{\bullet}}\ar[rrr]^{N_{G_1}(H)}&&& {\overset{N_G(H)}{\bullet}}&&&{\overset{G_2}{\bullet}}\ar[lll]_{N_{G_2}(H)}}$$ 
  Indeed, denoting by $S=S(G)$ the standard pro-$p$ tree of $G=G_1\amalg_H G_2$, we see that the cylinder for $H$ is $\{e\in E(S)\mid G_e=H\}$. But $G_e$ is a conjugate $gHg^{-1}$ of $H$ and so if it contains $H$ it equals $H$ (one sees it looking at finite quotients).  Thus the splitting of $G$ acting on $T_c$ is 
  \begin{equation}\label{cylindrical splitting}G=G_1\amalg_{N_{G_1}(H)} N_G(H)\amalg_{N_{G_2}(H)} G_2.
  \end{equation}
  
  By Proposition \ref{normalizer} $N_G(H)=N_{G_1}(H)\amalg_H N_{G_2}(H)$.  
  Hence  $G$ is the fundamental group of the following tree of pro-$p$ groups    

$$\xymatrix{
{\overset{G_1}{\bullet}}\ar[rr]^{N_{G_1}(H)}&& {\overset{N_{G_1}(H)}{\bullet}}\ar[rr]^H&&{\overset{N_{G_2}(H)}{\bullet}}&&{\overset{G_2}{\bullet}}\ar[ll]_{N_{G_2}(H)}}$$ 
and then collapsing the left and right edges we arrive at the original decomposition. 

\bigskip
If $G_1, G_2$ are finite then the last and original splittings are JSJ splittings. This illustrates Subsection 7.2.  Note also that if $H$ is normal in $G_1$ and $G_2$ then $T_c$ is totally reduced, i.e. after reduction we get the trivial graph of groups. This shows that if $G_1,G_2$ are finite, then the sequence $((T_c)_c)_{\ldots}$ gives eventually graph of  groups that admits complete reduction to the trivial graph of groups.
	
	\bigskip
	\item[(b)] Suppose now that $G=G_1\amalg_H$ is a pro-$p$ HNN-extension. Then   $G$ is the fundamental pro-$p$ group of the following graph of pro-$p$ groups:
	
$$\xymatrix{
{\overset{G_1}{\bullet}}\ar@(dl,dr)_{H}}$$	

	In this case $T_c/G$ is a graph with two vertices but the number of edges can be 1   and two. Denote by $t$ the stable letter of $G$.
	
	Case 1. $H$ and $H^t$ are not conjugate in $G_1$. Then $T_c/G$ has two edges.
	$$\xymatrix{
\bullet\ar@/^/[rr]\ar@/_/[rr]&& \bullet}$$
The underlying graph of pro-$p$ groups of the $G$-tree $T_c$ then  is 

$$\xymatrix{
{\overset{G_1}{\bullet}}\ar@/^/[rrr]^{N_{G_1}(H)^t}\ar@/_/[rrr]_{N_{G_1}(H)}&&& {\overset{N_G(H)}{\bullet}}}$$
where upper arrow embedding is the conjugation by the stable letter $t$. 

By Proposition \ref{normalizer},
in this case the normalizer splits as in the previous case: $N_G(H)={N_{G_1}(H)}\amalg_H {N_{G_1}(H^t)}$ and so we can decompose the graph of groups as follows:
$$\xymatrix{&&{\overset{N_{G_1}(H^t)}{\bullet}}\ar[dd]^H\\
{\overset{G_1}{\bullet}}\ar[rru]^{N_{G_1}(H)^t}\ar[rrd]_{N_{G_1}(H)}&&\\
&&{\overset{N_{G_1}(H)}{\bullet}}}$$
and then collapsing the upper and lower edges we arrive at the original decomposition.

\bigskip

Case 2. $H$ and $H^t$ are  conjugate in $G_1$ say $H^{g_1}=H^t$ for some $g_1\in G_1$. Then $tg_1^{-1}$  normalizes $H$.  In this case $T_c/G$ has one edge.
	$$\xymatrix{
\bullet\ar[rr]&& \bullet}$$
The underlying graph of pro-$p$ groups of the $G$-tree $T_c$ then  is 

$$\xymatrix{
{\overset{G_1}{\bullet}}\ar[rrr]_{N_{G_1}(H)}&&& {\overset{N_G(H)}{\bullet}}},$$

By Proposition \ref{normalizer}
in this case $N_G(H)={N_{G_1}(H)}\amalg_H $ is an HNN-extension. So the graph of groups can be transformed as follows:
$$\xymatrix{{\overset{G_1}{\bullet}}\ar[rrr]_{N_{G_1}(H)}&&& {\overset{N_{G_1}(H)}{\bullet}}\ar@(dl,dr)_{H}}$$
Collapsing the horizontal arrow we can get the original decomposition.

\bigskip
If $G_1$ is finite then in both cases the original and the last splitting is JSJ. This is another illustration of Subsection 7.2.

\item[(c)] In general, if we have the following graph of groups  

$$\xymatrix{&&&& {\overset{G_2 }{\bullet}}\\
{\overset{G_1}{\bullet}}\ar[rr]_H&& {\overset{G_0 }{\bullet}}\ar[rru]_H \ar[rrd]_{H_2}\ar@(dl,dr)_{H_0}\\  
&&&&{\overset{G_3}{\bullet}}}$$
and $G$ is the fundamental group of it and if the images of $H_0$ in $G_0$ are not conjugate, 
then $T_c/G$ is as follows:
$$\xymatrix{&&\bullet&& {\overset{v_2 }{\bullet}}\ar[ll]
\\
{\overset{v_1}{\bullet}}\ar[rru]&& {\overset{v_0 }{\bullet}}\ar[u] \ar[rr]\ar@/^/[d]\ar@/_/[d]&&\bullet\\  
&&\bullet&&{\overset{v_3}{\bullet}}\ar[u]}$$
where $v_i$ belong to $V_0(T_c)$ and are the vertices that correspond to vertices under $G_i$ in the original graph above.

If the images of $H_0$ in $G_0$ are conjugate then two vertical edges from $v_0$ down are replaced by one edge.

\bigskip The vertex groups of $v_i$ are $G(v_i)$, the edge group of an  edge $e$ incident to $v_i$ is $N_{G(v_i)}(G(e))$ the vertex group of the cyclinder of $e$ is $N_G(G(e))$.  

 Similarly as in case (a) that if all vetrex groups are finite, then the sequence $((T_c)_c)_{\ldots}$ gives eventually graph  of groups that admits complete reduction to the trivial graph of groups.
	
	\end{enumerate}
	
	\end{example}

\subsection{Algebraic Interpretation}

Fix $\E$ now closed in the \'etale  topology of $subgr(G)$ and we as usual restrict to $\E$-trees.

 It  is sandwich closed: if $A, B \in \E$  and $A < H < B$, then $H \in \E$.

\begin{definition}\label{admissible} (Admissible equivalence relation). An  equivalence relation  $\sim$ on $\E$ (relative to $\mathcal H$) is
admissible  if the following axioms hold for any $A, B \in \E$:

(0) $\sim$ is an intersection of clopen equivalence relations.

(1) If $A \sim B$ and $g \in G$, then $gAg^{-1} \sim gBg^{-1}$
(invariance under conjugation).

(2) If $A \subset B$, then $A\sim B$ (nesting implies equivalence).

(3) Let $T$ be a $(\E,\mathcal H)$-tree. If $A \sim B$, and $A, B$ fix
$v, w \in  T$ respectively, then for each edge $e \subset [v, w]$ one has $G_e \sim A \sim B$.

The equivalence class of $A \in \E$ will be denoted by $[A]$. We let $G$ act on $\E/\sim$ by
conjugation, and the stabilizer of $[A]$ will be denoted by $G_{[A]}$.\end{definition}

\begin{lemma}\label{3'} If $\sim$ satisfies 1,2 and axiom (3’) below, then it is admissible.

(3’). Let $T$ be an $\E$-tree. If $A\sim B$, and $A, B$ are elliptic in $T$, then $\langle A, B\rangle$ is also elliptic in $T$.\end{lemma}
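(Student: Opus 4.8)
\textbf{Proof proposal for Lemma \ref{3'}.}

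The plan is to verify that axiom (3') together with axioms (1), (2) and the clopen-intersection condition (0) forces axiom (3) of Definition \ref{admissible}. Axioms (0), (1), (2) are shared between the two lists, so the only thing to check is the implication $(3')\Rightarrow(3)$. So fix an $(\E,\mathcal H)$-tree $T$, groups $A\sim B$ in $\E$ with $A$ fixing a vertex $v$ and $B$ fixing a vertex $w$, and an edge $e\subset[v,w]$; I must show $G_e\sim A\sim B$.

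First I would reduce to the case $v\neq w$: if $v=w$ there is no edge on $[v,w]$ and there is nothing to prove, so assume $v\neq w$, in which case $E([v,w])\neq\emptyset$ by Theorem \ref{fixed geodesic}. The key input is $(3')$: since $A$ and $B$ are elliptic in $T$ and $A\sim B$, the subgroup $\langle A,B\rangle$ is elliptic in $T$, say it fixes a vertex $u$. Then $A$ fixes both $v$ and $u$, so by Theorem \ref{fixed geodesic} we have $A\le G_v\cap G_u\le G_{e'}$ for every edge $e'\in E([v,u])$; similarly $B\le G_{e''}$ for every $e''\in E([u,w])$. Now the geodesic $[v,w]$ is contained in $[v,u]\cup[u,w]$ (a geodesic between two points of a pro-$p$ tree lies in the union of the geodesics to any third point — this is the ``preserves alignment''/tripod property of pro-$p$ trees, which follows from the geodesic being the intersection of all subtrees through the two points, cf. the discussion after Proposition \ref{unique G-invariant}), so the chosen edge $e\in E([v,w])$ lies on $[v,u]$ or on $[u,w]$. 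In the first case $A\le G_e$, and then $A\subset G_e$ forces $A\sim G_e$ by axiom (2), hence $G_e\sim A\sim B$; in the second case $B\le G_e$, and axiom (2) gives $B\sim G_e$, hence again $G_e\sim A\sim B$. This completes the verification that $\sim$ satisfies axiom (3), so $\sim$ is admissible.

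The one point that needs a little care — and which I expect to be the main (minor) obstacle — is the claim that $[v,w]\subseteq[v,u]\cup[u,w]$ in a pro-$p$ tree, i.e. that $e$ necessarily lies on one of the two subgeodesics. In the abstract case this is the tripod lemma; in the pro-$p$ setting one argues as follows: the subgraph $[v,u]\cup[u,w]$ is a connected profinite subgraph of $T$ containing $v$ and $w$, and any connected profinite subgraph of a pro-$p$ tree containing two points contains the geodesic between them (this is exactly the defining minimality of $[v,w]$ as the intersection of all pro-$p$ subtrees through $v,w$, once one knows a connected profinite subgraph of a pro-$p$ tree is a pro-$p$ subtree, cf. \cite[Corollary 3.8]{RZ-00} and the remarks in Subsection 2.1). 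Hence $e\in E([v,w])\subseteq E([v,u])\cup E([u,w])$, which is all that is used. Everything else is a direct appeal to axiom (2) and Theorem \ref{fixed geodesic}, so no further computation is required.
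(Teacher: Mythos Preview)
Your proof is correct and follows essentially the same approach as the paper's: use (3') to find a vertex $u$ fixed by $\langle A,B\rangle$, invoke the tripod inclusion $[v,w]\subset[v,u]\cup[u,w]$, and then apply Theorem \ref{fixed geodesic} together with axiom (2). The paper's version is terser --- it simply asserts $[v,w]\subset[v,u]\cup[u,w]$ without comment --- so your added justification of that inclusion via the minimality of geodesics is a welcome expansion rather than a deviation.
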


\begin{proof} We show axiom 3. Let $v,w$ be vertices fixed by $A$ and $B$ respectively and let $u$ be a vertex fixed by $\langle A, B\rangle$. Since $[v, w]\subset [v, u]\cup [u, w]$, one may assume $e\in [v,u ]$. One has $A\leq G_e$ because $A$ fixes $[v, u]$ (see Theorem \ref{fixed geodesic}), so that $G_e\sim A$ by axiom 2. \end{proof}

Here are some examples. 

(1) If $G$ is a  CSA pro-$p$ group (for instance a pro-$p$ limit group, see Section 3 for definitions), we can take for $\E$ the set of  abelian subgroups,
and for $\sim$ the commutation relation: $A \sim B$ if $\langle A, B\rangle$ is abelian). Note that this relation is the intersection of open equivalence relations containing it, since the space of subgroups $$subgr(G)=\varprojlim_{N\triangleleft_o G}subgr(G/N)$$ with {\it strict} topology (i.e. the topology coming from this inverse limit which is stronger than \'etale topology, see Remark \ref{strict topology}) is profinite, and it is preserved by finite quotients. The set $\E$ in this case is closed in the  strict topology of $subgr(G)$. The group $G_{[A]}$
is
the maximal abelian subgroup containing $A$.

\bigskip
(2) $\E$ is a set of infinite  cyclic-by bounded order subgroups, and $\sim$ is the commensurability
relation ($A \sim B$ if and only if $A \cap B$ has finite index in $A$ and $B$). Note that $\E$ is compact. Here again it is the intersection of clopen equivalence relations as one can check directly. The group $G_{[A]}$
is the closure of the commensurator of $A$. Note that the commensurator $G_{[A]}$ is closed in $G$ if $G$ is finitely generated (see Theorem \ref{commensurator}) and otherwise it might not be.

\bigskip
(3) $\E$ is a set of finite subgroups, and $\sim$ is the equality
relation ($A \sim B$ if and only if $A=B$). Suppose $\E$ is compact (for example  $G$ is virtually torsion free). Here again it is the intersection of clopen equivalence relations as one can check directly. The group $G_{[A]}$
is the normalizer of $A$. 

\bigskip
In fact, we are interested in restricting the equivalence relation on edge stabilizers, or more to the point to induce this relation on edges of $T$. 

\bigskip
Given an admissible equivalence relation $\sim$ on $\E$, we now associate a tree of cylinders
$T_c$ to any $(G,\E)$-tree $T$ with compact $E(T)$.
We declare elements $e, f\in E(T)$ to be equivalent (and write $e\approx f$) if $G_e \sim G_f$.

\begin{lemma}\label{cylinders} Let $G$ be a pro-$p$ group acting on a profinite graph $T$ with closed $E(T)$ and $\E$ be a family of subgroups of $G$ closed in the \'etale topology of $Subgr(G)$ and  for conjugation. Let $\sim$ be an admissible equivalence relation on $\E$.  Then
	
	\begin{enumerate}
		\item[(i)] $\{e\in E(T)\mid G_e\in \E\}$ is a closed subset of $E(T)$.

\item[(ii)] The equivalence relation $\approx$ on $E(T)$ is an admissible profinite equivalence relation.

\end{enumerate}
\end{lemma}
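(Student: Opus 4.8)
The plan is to prove (i) and (ii) separately, deducing everything from the admissibility axioms for $\sim$ on $\E$ together with the general machinery already set up in the excerpt. Assertion (i) is the easy part: the family $\{G_e \mid e \in E(T)\}$ is continuous by Lemma \ref{Ribes 5.2.2}, and $\E$ is closed in the étale topology and closed under conjugation, so Lemma \ref{closedness} applies verbatim to the $G$-space $E(T)$ (we may restrict to $E(T)$ since it is closed, and the closedness statement of Lemma \ref{closedness} is for actions on profinite spaces — here $E(T)$ is closed in the profinite space $T$, hence profinite). This gives that $\{e \in E(T) \mid G_e \in \E\}$ is closed in $E(T)$.

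For (ii), I would first check that $\approx$ is profinite, i.e. an intersection of clopen equivalence relations on $E(T)$. The key point is axiom (0): $\sim$ is an intersection of clopen equivalence relations on $\E$ (with respect to the étale — or a fortiori strict — topology). The map $e \mapsto G_e$ from $E(T)$ to $Subgr(G)$ is continuous, so it pulls back each clopen equivalence relation witnessing $\sim$ to a clopen equivalence relation on $E(T)$; their intersection is exactly $\approx$. Invariance of $\approx$ under the $G$-action (the analogue of axiom (1) in Definition \ref{admissibleontree}) is immediate from $G_{ge} = gG_eg^{-1}$ and conjugation-invariance of $\sim$. Axiom (2) of Definition \ref{admissibleontree} — nesting of stabilizers implies equivalence — follows from axiom (2) for $\sim$ on $\E$: if $G_e \subseteq G_{e'}$ then $G_e \sim G_{e'}$ by nesting, so $e \approx e'$.

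The main obstacle, and the place where Definition \ref{admissible}(3) is genuinely used, is axiom (3) of Definition \ref{admissibleontree}: if $f \approx f'$ then every edge $e \subseteq [f,f']$ satisfies $e \approx f \approx f'$. Here I would argue as follows. Set $A = G_f$ and $B = G_{f'}$; then $A \sim B$ and $A, B \in \E$. Pick a vertex $v$ of $f$ and a vertex $w$ of $f'$; then $A$ fixes $v$ and $B$ fixes $w$, and $A,B$ are elliptic in the $(\E,\mathcal H)$-tree $T$ (every edge stabilizer fixes its endpoints). Applying Definition \ref{admissible}(3) to the pair $A \sim B$ fixing $v,w$ respectively, we get $G_e \sim A$ for every edge $e$ of the geodesic $[v,w]$; and since the edge $f$ lies between $v$ and $w$ (choosing the endpoints appropriately, or noting $[v,w]$ contains $[f,f']$ up to the two terminal edges — one handles the two boundary edges $f, f'$ separately, where the conclusion is trivial), this yields $G_e \sim G_f$, i.e. $e \approx f$, for every $e \subseteq [f,f']$. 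The one technical subtlety to handle carefully is the relationship between the geodesic $[f,f']$ in the graph $T$ and the geodesic $[v,w]$ between chosen endpoints, together with the fact that Theorem \ref{fixed geodesic} guarantees $E([v,w])$ is nonempty and behaves well under the stabilizer intersections; this is routine once the endpoints are chosen so that $[f,f'] \subseteq f \cup [v,w] \cup f'$. Having verified (0)–(3) of Definition \ref{admissibleontree}, we conclude that $\approx$ is an admissible profinite equivalence relation on $E(T)$, completing the proof.
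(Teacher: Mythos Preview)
Your proposal is correct and follows essentially the same route as the paper's proof: part (i) via Lemma \ref{closedness}, and part (ii) by pulling back the clopen equivalence relations witnessing $\sim$ along the continuous stabilizer map to show $\approx$ is profinite, then checking axioms (1)--(3) of Definition \ref{admissibleontree} against the corresponding axioms of Definition \ref{admissible}. The paper is much terser (it dispatches axiom (3) in a single line ``follows from Axiom \ref{admissible}(3)''), whereas you spell out the passage from edges $f,f'$ to endpoint vertices $v,w$ and back; your added care about $[f,f']\subseteq f\cup[v,w]\cup f'$ is exactly what is implicit in the paper's one-line reduction.
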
 

\begin{proof}

(i) Follows from Lemma \ref{closedness}.
	
	(ii)  By hypothesis $\sim$ is the intersection of clopen equivalence relations $R$ on $Subgr(G)$. Then $R$ determines on $E$ the equivalence relation $\sim_R$ by saying $e\sim_R f$ if $G_e R G_f$. Since $\{G_e\mid e\in E(T)\}$ is a continuous family, every equivalence class of $\sim_R$ is clopen.  Clearly $\approx=\bigcap_R \sim_R$. This shows that $\approx$ is profinite.

	Axiom \ref{admissibleontree}(1) is clear. 
	
	 
	Axiom \ref{admissibleontree}(2) follows from  
	Axiom \ref{admissible}(2) as well as Axiom \ref{admissibleontree}(3) follows from  
	Axiom \ref{admissible}(3.)
	\end{proof}

The equivalence class in $\E/\sim$ containing $Y$
 will be denoted by $[Y ]$.


\begin{corollary}  $ E(T)/\approx$ is a profinite space.
\end{corollary}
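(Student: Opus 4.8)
The statement to prove is the Corollary that $E(T)/\approx$ is a profinite space, where $\approx$ is the equivalence relation on $E(T)$ induced by an admissible equivalence relation $\sim$ on $\E$.

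The plan is to deduce this directly from the two preceding results: Lemma \ref{cylinders}(ii), which tells us that $\approx$ is an admissible \emph{profinite} equivalence relation on $E(T)$, together with the general fact that the quotient of a profinite space by a profinite equivalence relation is again a profinite space. First I would recall that by hypothesis $E(T)$ is compact, hence a profinite space, being a closed subset of the profinite graph $T$. Then, since $\approx$ is by definition the intersection $\bigcap_R \sim_R$ of clopen equivalence relations $\sim_R$ on $E(T)$ (as established in the proof of Lemma \ref{cylinders}), the quotient $E(T)/\approx$ is the inverse limit $\varprojlim_R E(T)/\sim_R$ of the quotients by these clopen relations.

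The key observation is that each $E(T)/\sim_R$ is a \emph{finite} discrete space: a clopen equivalence relation on a compact space has only finitely many equivalence classes, each of which is clopen, so the quotient is finite and discrete. Therefore $E(T)/\approx = \varprojlim_R E(T)/\sim_R$ is an inverse limit of finite discrete spaces, which is precisely the definition of a profinite space. I would also remark that, strictly speaking, one should note that the natural projection $E(T) \longrightarrow E(T)/\approx$ is continuous (it is the inverse limit of the continuous projections $E(T) \to E(T)/\sim_R$, each continuous because the classes of $\sim_R$ are clopen), and that $E(T)/\approx$ carries the quotient topology, which here coincides with the inverse limit topology because $E(T)$ is compact.

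I do not expect any real obstacle here: the content has essentially already been extracted in Lemma \ref{cylinders}(ii), and the Corollary is a formal consequence. The only point requiring a word of care is the passage from ``$\approx$ is an intersection of clopen equivalence relations'' to ``$E(T)/\approx$ is profinite'', which uses that $E(T)$ is compact so that clopen equivalence relations have finitely many classes; this is standard (cf. \cite[Chapter~1]{RZ-10}) but worth mentioning explicitly. So the proof is short: invoke Lemma \ref{cylinders}(ii), write $E(T)/\approx = \varprojlim_R E(T)/\sim_R$ with each $\sim_R$ clopen, observe each quotient is finite discrete, and conclude.

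\begin{proof} By hypothesis $E(T)$ is compact, hence a profinite space. By Lemma \ref{cylinders}(ii) the equivalence relation $\approx$ on $E(T)$ is profinite, i.e. $\approx=\bigcap_R \sim_R$ where the $\sim_R$ range over the clopen equivalence relations on $E(T)$ obtained as in the proof of Lemma \ref{cylinders}. For each such $R$, the relation $\sim_R$ is a clopen equivalence relation on the compact space $E(T)$, so it has only finitely many equivalence classes, each of them clopen; hence $E(T)/\sim_R$ is a finite discrete space and the projection $E(T)\longrightarrow E(T)/\sim_R$ is continuous. Since $\approx=\bigcap_R \sim_R$, we have $E(T)/\approx=\varprojlim_R E(T)/\sim_R$, an inverse limit of finite discrete spaces, which is therefore a profinite space.\end{proof}
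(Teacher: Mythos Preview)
Your proof is correct and follows essentially the same approach as the paper's own proof: invoke Lemma \ref{cylinders} to write $\approx$ as an intersection of clopen equivalence relations $R$, and then identify $E(T)/\approx$ with $\varprojlim_R E(T)/R$. The paper's version is terser (it omits the justification that each $E(T)/R$ is finite discrete and that the quotient coincides with the inverse limit), so your added remarks about compactness of $E(T)$ and the finiteness of classes under a clopen relation are welcome elaborations rather than a different argument.
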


\begin{proof} As $\approx$ is profinite by Lemma \ref{cylinders}, $\approx$ is the intersection of clopen equivalence relations $R$.  Hence $E(T)/\approx=\varprojlim_R E(T)/R$ is profinite.
	
\end{proof}

 We record the following for future references.

\begin{remark}\label{edges vs subgroups} Given an edge $\epsilon= (x, Y)$ of $T_c$, let $e$ be an edge of $Y$ adjacent to $x$ in $T$. Then $G_e$ is a representative of the class $C$ of groups which are contained in $G_\epsilon=G_x\cap G_Y$. If $G_\epsilon$ belongs to $\E$, then it is in $C$ by axiom 2 of admissible relations. Also note that $G_Y$ represents $C$ if $G_Y\in \E$. \end{remark}

\begin{theorem}\label{trees of cylinders}  If $T, T'$ are reduced  $\E$-trees  belonging to the same deformation space such that $T/G$ and $T'/G$ are finite, then there is a canonical equivariant isomorphism between their trees of cylinders.\end{theorem}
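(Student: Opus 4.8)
The plan is to mimic the abstract argument (as in Guirardel--Levitt) but to deal with the profinite topology with care, using the finiteness of $T/G$ and $T'/G$ together with the inverse-limit description of pro-$p$ trees. Recall that the tree of cylinders $T_c$ is built from $T$ purely from the data of the $G$-action on the vertex set of $T$ together with the equivalence classes of edge stabilizers; the point of the theorem is that this data can be recovered from the deformation space alone. Since $T$ and $T'$ lie in the same deformation space, they have the same elliptic subgroups, hence the same maximal vertex stabilizers (up to conjugacy); the first step is to record that for a reduced $\E$-tree with finite quotient, the non-elliptic-in-$\E$ vertex stabilizers of $T$ are precisely the maximal ones, so $V_0(T_c)$ can be identified with the set of ``relevant'' vertex stabilizers, which depends only on the deformation space. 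I would also note that by Proposition~\ref{finite diameter} both $T_c/G$ and $T_c'/G$ are finite, so we are comparing two finite graphs of pro-$p$ groups and an equivariant isomorphism amounts to matching vertex orbits, edge orbits, stabilizers and incidence.

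Next I would establish the key comparison at the level of each cylinder. Fix $A\in\E$ representing an equivalence class $[A]$ under $\sim$; the stabilizer $G_{[A]}$ is intrinsic to $G$ and $\sim$. In $T$, the cylinder $Y_A$ associated to $[A]$ is $\{e\in E(T)\mid G_e\sim A\}$, a nonempty profinite subtree by Lemma~\ref{cylinders of trees}(ii), and $G_{Y_A}=G_{[A]}$; similarly in $T'$. The heart of the matter is to show that $Y_A$ and $Y_A'$, as $G_{[A]}$-trees, lie in the same deformation space, so that they have the same set of vertices with respect to the $G_{[A]}$-action, giving a bijection between $V_0$-vertices of $T_c$ lying in $Y_A$ and those of $T_c'$ lying in $Y_A'$. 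For this I would use axiom (3) of admissibility: if $g\in G_{[A]}$ is hyperbolic in $T$ restricted to $Y_A$ then, since $Y_A$ is exactly the union of the edges with stabilizer $\sim A$, one derives that $g$ cannot be elliptic in $T$ with its fixed vertex outside $Y_A$ while some edge of $[v,gv]$ stays in $Y_A$; combined with the equal-elliptic-subgroups property of the common deformation space and Theorem~\ref{fixed geodesic}, one concludes that the elliptic subgroups of $G_{[A]}$ acting on $Y_A$ and on $Y_A'$ coincide. Because $T/G$ and $T'/G$ are finite there are only finitely many $G$-orbits of cylinders, and these orbits correspond bijectively to the finitely many $G$-orbits of classes $[A]$ that actually occur as edge stabilizers, which again is deformation-space data; so we get a $G$-equivariant bijection $V_1(T_c)\to V_1(T_c')$, $[Y_A]\mapsto[Y_A']$, matching stabilizers $G_{[A]}$.

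It then remains to check that incidence and edge stabilizers match. An edge $\epsilon=(v,[Y])$ of $T_c$ exists iff $v\in Y$ in $T$; by the previous paragraph the bijection on $V_0$ and on $V_1$ is compatible, vertex by vertex inside each cylinder, with this incidence relation, so the edge sets correspond $G$-equivariantly. The stabilizer of $\epsilon=(v,Y)$ is $G_v\cap G_Y=G_v\cap G_{[A]}$, which is determined by the matched data. Finally one must verify continuity: the identifications $V_0(T_c)\cong V_0(T_c')$, $V_1(T_c)=E(T)/{\approx}\cong E(T')/{\approx}=V_1(T_c')$ are homeomorphisms. Here I would pass to finite quotients: write $T=\varprojlim_i\Gamma_i$, $T'=\varprojlim_i\Gamma_i'$ as inverse limits of finite $G$-quotient graphs, note by Lemma~\ref{inverse limit of relations} that $\approx=\varprojlim\approx_i$, and observe that for $i$ large the quotients $\Gamma_i$ and $\Gamma_i'$ carry enough information to detect the (finitely many) cylinder classes and their incidences; the bijection constructed above is then the inverse limit of bijections of finite graphs, hence a homeomorphism, and it is the identity on the common index set, so canonical.

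\medskip
The main obstacle I anticipate is the step showing that the restricted trees $Y_A$ and $Y_A'$ lie in the same deformation space for $G_{[A]}$ --- i.e.\ that an element of $G_{[A]}$ which is elliptic in $T$ with a fixed point lying \emph{inside} the cylinder $Y_A$ is also elliptic with a fixed point in $Y_A'$, and conversely. In the abstract case this is a clean consequence of axiom (3) and the fact that cylinders are convex; in the pro-$p$ setting one has to be careful because ``fixed point inside a given subtree'' is a closed but delicate condition, and because $G_{[A]}$ need not be finitely generated even when $G$ is. I would handle this by again reducing modulo open normal subgroups $U\triangleleft_o G$ (using Proposition~\ref{mod tilde} and the fact that $T/\widetilde U$, $T'/\widetilde U$ are finite pro-$p$ trees in the same deformation space for the image of $G_{[A]}$), checking the statement in the resulting finite graphs of finite $p$-groups where Bass--Serre theory applies verbatim, and then taking the inverse limit.
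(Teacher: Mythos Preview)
Your approach diverges from the paper's and carries a real gap. You try to build the isomorphism cylinder-by-cylinder: for each class $[A]$ you want $Y_A\subset T$ and $Y_A'\subset T'$ to lie in the same deformation space as $G_{[A]}$-trees, and to extract from this a bijection of the $V_0$-vertices lying in each. But being in the same deformation space means having the same \emph{elliptic subgroups}, not the same vertex sets; even when it holds it does not hand you a canonical bijection $V(Y_A)\to V(Y_A')$ unless both are reduced as $G_{[A]}$-trees, which you never establish. Worse, you never produce a \emph{global} bijection $V_0(T_c)=V(T)\to V(T')=V_0(T_c')$: a vertex lying in two cylinders $Y_A$ and $Y_B$ could, under your scheme, be sent to different vertices of $T'$ by the two local matchings. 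Your proposed fix for the main obstacle---reducing modulo $\widetilde U$---does not address either issue: it does not explain why a fixed point of $H\le G_{[A]}$ in $T'$ must lie in the cylinder $Y_A'$, nor why the local matchings glue.

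The paper's proof is shorter and avoids all of this by working globally from the start. First it matches $V(T)$ with $V(T')$ directly: reducedness plus the equality of elliptic subgroups forces each $G_v$ to fix a vertex of $T'$ that is unique up to $G$-translation, giving a bijection of (finitely many) orbits with matching stabilizers, hence a $G$-equivariant homeomorphism $V(T)\cong V(T')$. Second, it matches cylinders by a one-line geodesic argument: for $e\in E(T)$ with endpoints $v,w$, the stabilizers $G_v,G_w$ fix vertices $v',w'$ of $T'$, so $G_e$ fixes $[v',w']$ by Theorem~\ref{fixed geodesic}; every edge $e'$ on that geodesic has $G_e\subset G_{e'}$, whence $G_{e'}\sim G_e$ by axiom~(2), so $e'$ lies in the corresponding cylinder of $T'$. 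This gives the bijection on $V_1$ and, combined with the $V_0$-bijection, the incidence relation follows immediately. The geodesic trick replaces your entire ``same deformation space for $Y_A$'' programme.
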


\begin{proof}  If $v\in V(T)$ is a  vertex of $T$ fixed by $G_v$, then $G_v$ fixes a unique vertex $v'\in V(T')$ up to conjugation. Indeed, if $w'\neq v'$ is another vertex fixed by $G_v$ then $G_{v'}$ and $G_{w'}$ must fix $gv$ and $hv$, since  $T$ is reduced; but then  $ G_{v'}^g,G_{w'}^h=G_v$ so $v'=kw'$ for some $k\in G$ since $T'$ is reduced.

Thus we have a bijection $\bar f:V(T)/G\longrightarrow V(T')/G$ between finite sets that sends the orbit $vG$ of $v$  to the orbit $v'G$ of a vertex in $T'$ fixed by $G_v$. It induces $G$-equivariant continuous map $V(T)\longrightarrow V(T')$. 

We show now one-to-one correspondence between $E(T)/\sim$ and $E(T')/\sim$.
If $e$ is an edge of $T$ with vertices $v,w$ then $G_v, G_w$ fix  unique distinct vertices $v',w'$ of $T'$ and so $G_e$ fixes $[v',w']$ (see Theorem \ref{fixed geodesic}). But similarly, any edge $e'$ of the geodesic $[v',w']$ fixes an edge $e_0$ in $T$, so  by condition (2) of Definition \ref{admissible} $e\sim e_0$.  It follows that the correspondence $e$ to an edge in $T'$ stabilized by $G_e$ defines a bijection $\varphi$ on the spaces of cylinders $\overline E(T)/\sim$ and $\overline E(T')/\sim$.

Now let $v\in V_0(T_c), v'\in V_0(T'_c)$ such that $G_v=G_{v'}$. Let $Y\in V_1(T_c)$ connected to $v$ by $\epsilon$. Let $Y'$ be the corresponding cylinder of $V_1(T'_c)$, i.e. corresponding to the same class in $\E$. Then the map $(f,\varphi):(v, Y)\longrightarrow (v',Y')$ is a homeomorphism. This gives the required isomorphisms between $T_c$ and $T'_c$ which is clearly $G$-equivariant.\end{proof}

Theorem \ref{trees of cylinders} says that if $T$ is reduced then $T_c$    depends only on the deformation space $D$ containing $T$ (we sometimes say
	that $T_c$ is the tree of cylinders of $D$).
	
	\medskip
	Theorem \ref{trees of cylinders} also shows that the graph $T_c$ can be defined purely in terms of the maximal elliptic subgroups (the  vertex stabilizers), which are the same for $T$ and $T'$. 

\medskip	
If $G$ is finitely generated then by Example \ref{changing jsj-tree} there  is a reduced $\E$-tree  in the same deformation space (but possibly we lose compactness of the space of edges).  To keep compactness of the space of edges we need to assume that $G$ is $\E$-accessible. In this case $T_c/G$ is finite as we record in the following

\begin{corollary}\label{cofiniteness of tree of cylinders} Suppose $G$ is $\E$-accessible. Then $T_c/G$ is finite.

\end{corollary}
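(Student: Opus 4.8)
The plan is to deduce this from Theorem~\ref{trees of cylinders} (invariance of $T_c$ under change of reduced tree in the deformation space) together with the $\E$-accessibility hypothesis and Proposition~\ref{finite diameter}. The point is that $\E$-accessibility gives an absolute bound on the number of edge orbits of a reduced $\E$-tree, and then $T_c/G$ is finite because $V_0(T_c)=V(T)$ and $V_1(T_c)=E(T)/{\approx}$ both have finitely many $G$-orbits.

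Concretely, first I would produce a reduced $\E$-tree in the given deformation space with finite quotient graph. By Theorem~\ref{pro-pbass-serre} (or Example~\ref{changing jsj-tree}) $G$ is the fundamental pro-$p$ group of a profinite graph of pro-$p$ groups $(\G,\Gamma)$ whose standard pro-$p$ tree $S$ lies in the deformation space of $T$; after reducing $(\G,\Gamma)$ as in Remark~\ref{reduction} we may assume it is reduced, so $S$ is reduced. Since $G$ is $\E$-accessible, the reduced graph of pro-$p$ groups $(\G,\Gamma)$ has at most $a(G)$ edges, hence $\Gamma=S/G$ is finite; in particular $E(S)$ is compact, so the tree of cylinders $S_c$ is defined. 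By Theorem~\ref{trees of cylinders} the $G$-trees $T_c$ and $S_c$ are canonically $G$-equivariantly isomorphic (both $T$ and $S$ being reduced with finite quotient), so it suffices to show $S_c/G$ is finite.

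Now apply Proposition~\ref{finite diameter}: since $S/G=\Gamma$ is finite, $S_c/G$ is finite. Tracing through the proof of that proposition, $|V_1(S_c)/G|\le |V(S)/G|$ and $V_0(S_c/G)=V(S/G)$, so $V(S_c)/G$ is finite, and then $E(S_c)/G$ is finite as well because each edge of $S_c$ joins a vertex of $V_0$ to a vertex of $V_1$ and the graph $S_c/G$ has finitely many vertices. Transporting back along the isomorphism of Theorem~\ref{trees of cylinders}, we conclude $T_c/G$ is finite.

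The only mild subtlety — and the step I would be most careful about — is verifying that the reduced tree $S$ I extract from Example~\ref{changing jsj-tree} genuinely has compact edge set, i.e. that $\E$-accessibility forces $\Gamma$ to be finite rather than merely bounding the orbits of edges of \emph{some} reduced model. This is exactly the content of the definition of $\E$-accessibility combined with the fact that $(\G,\Gamma)$ can be taken reduced, so there is no real obstacle; everything else is a direct citation of Theorem~\ref{trees of cylinders} and Proposition~\ref{finite diameter}.
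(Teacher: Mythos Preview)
Your approach and the paper's share the same endgame—exhibit a tree with finite quotient and invoke Proposition~\ref{finite diameter}—but the paper's proof is a two-line argument: it cites Corollary~\ref{finiteness} (which, from $\E$-accessibility alone and with no finite-generation hypothesis, produces a JSJ tree $T$ with $T/G$ finite) and then Proposition~\ref{finite diameter}. There is no passage through Theorem~\ref{pro-pbass-serre}, Example~\ref{changing jsj-tree}, or Theorem~\ref{trees of cylinders}.

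Your route has two soft spots. First, Theorem~\ref{pro-pbass-serre} and Example~\ref{changing jsj-tree} require $G$ to be finitely generated, which the corollary does not assume; Corollary~\ref{finiteness}, by contrast, needs only $\E$-accessibility, so the paper's choice of input is genuinely stronger here. Second, your appeal to Theorem~\ref{trees of cylinders} to identify $T_c$ with $S_c$ presupposes that the \emph{original} $T$ is reduced with $T/G$ finite—exactly what you have not established (your parenthetical ``both $T$ and $S$ being reduced with finite quotient'' is an assertion, not a deduction). If one reads the corollary, as the paper does, as a statement about the tree of cylinders of the deformation space, then this transport step is unnecessary: once $S/G$ is finite, $S_c/G$ is finite by Proposition~\ref{finite diameter} and that is the conclusion. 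So your argument is salvageable by dropping the back-transport via Theorem~\ref{trees of cylinders} and, more cleanly, by replacing the construction of $S$ with a direct citation of Corollary~\ref{finiteness}.
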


\begin{proof} By Corollary \ref{finiteness} there exists a JSJ-tree $T$ such that $T/G$ is finite. Then so is $T_c/G$ by Proposition \ref{finite diameter}.

\end{proof}

\begin{remark}\label{via equality} Note that any admissable equivalence relation contains diagonal, i.e. the equality relation. Therefore $T_c$ always factors through the pro-$p$ tree  of cyliders of the equality relation $T_=$. Thus costructing $T_c$ we may assume that   $T/G$ has no loops (see Example \ref{equality}(b), Case 2). This shows that the canonical $G$-isomophism constructed in the proof of  Theorem \ref{trees of cylinders} is unique.\end{remark}

\bigskip
Depending on the context, it may be convenient to think of a cylinder either as set of edges $E(Y)$ of a subtree $Y$ of $T$, or a vertex of $T_c$, or an equivalence class $C$. Similarly, there are several ways to think of $x\in V_0(T_c)$: as a vertex of $T$, a vertex of $T_c$, or an elliptic subgroup $G_x$. If $x$ is a vertex of $T$ belonging to two cylinders, then its stabilizer $G_x$ is not contained in a group of $\E$: otherwise all edges of $T$ incident to $x$ would have equivalent stabilizers by axiom 2, and $x$ would belong to only one cylinder. More generally, let $v$ be any vertex of $T$ whose stabilizer is not contained in a group of $\E$. Then $G_v$ fixes $v$ only, and is a maximal elliptic subgroup.

Conversely, let $H$ be a subgroup of $G$ which is elliptic in $T$,  not contained in a group of $\E$, and  maximal for these properties. Then $H$ fixes a unique vertex $v$ and equals $G_v$.
 
 \bigskip
 In the next theorem we are going to make the construction of  the pro-$p$ tree of cylinders in the case of non-compact space of edges assuming that $G$ is finitely generated. We shall call it also the tree of cylinders as it will coincide with the tree of cylinders defined in this section for the compact set of edges.  It has the same features in the sense that the space of vertices of it consists of vertices of original tree and the profinite space of cylinders in which  the set of original cylinders is dense. Recall again that in this case there is always a reduced pro-$p$ tree in the same deformation space with possibly not compact set of edges. 
 
 \begin{theorem}\label{canonical} Let $G$ be a finitely generated  pro-$p$ group acting on a pro-$p$ $\E$-tree $T$ (with not obligatory compact $E(T)$). Then there exists a canonical  pro-$p$ tree of cylinders $T_{cc}$ that depends on   the deformation space of $T$ only.  
 
 \end{theorem}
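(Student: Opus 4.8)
The plan is to reduce to the compact-edge case already treated in this section by approximating $T$ from within its deformation space by a filtered inverse system of trees, each obtained from a finite-index quotient construction, and then to pass to the limit of the associated trees of cylinders. First I would invoke Lemma \ref{inverse limit of virtually free groups}: since $G$ is finitely generated, $G=\varprojlim_{U\triangleleft_o G} G/\widetilde U$, and for each $U$ the quotient $G_U=G/\widetilde U$ acts on the pro-$p$ tree $T_U=T/\widetilde U$ (Proposition \ref{mod tilde}) with finite vertex stabilizers, hence $T_U$ lies in the same deformation space as the standard pro-$p$ tree of a finite reduced graph of finite $p$-groups; in particular $T_U/G_U$ is finite and $E(T_U)$ is compact. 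The admissible equivalence relation $\sim$ on $\E$ descends: on $G_U$ one takes the image family $\E_U$ (note $\E$ is closed in the étale topology of $Subgr(G)$, so its images behave well) and the induced relation $\sim_U$, and one checks axioms (0)--(3) of Definition \ref{admissible} survive the quotient because geodesics in $T_U$ are images of geodesics in $T$ and edge stabilizers in $G_U$ are images of edge stabilizers in $G$. Then Theorem \ref{tree of cylinders} gives a pro-$p$ tree of cylinders $(T_U)_c$ for each $U$, and Theorem \ref{trees of cylinders} guarantees it depends only on the deformation space of $T_U$ in $G_U$.

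Next I would assemble the $(T_U)_c$ into an inverse system. For $V\leq U$ the collapse map $T_V\to T_U$ (from Lemma \ref{inverse limit of virtually free groups}, where the maps $\nu_{VU}$ are collapses of edges) sends cylinders into cylinders, since collapsing edges can only merge equivalence classes — this is the content already used in the proof of Theorem \ref{tree of cylinders}, where morphisms of the inverse system $\{\Gamma_i\}$ map cylinders to cylinders. Hence there are induced morphisms $(T_V)_c\to (T_U)_c$, and I set
$$T_{cc}=\varprojlim_{U\triangleleft_o G}(T_U)_c.$$
That $T_{cc}$ is a pro-$p$ tree follows exactly as in Theorem \ref{tree of cylinders}: using Corollary \ref{free product of cylinders} the group $\pi_1((\Gamma_i)_c)$ is a free factor of $\pi_1(\Gamma_i)$ for the finite quotient graphs, the restricted inverse system of fundamental groups of cylinders is surjective, and the commutative square forces $\pi_1(T_{cc})=1$. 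The vertex set of $T_{cc}$ is $\varprojlim V(T_U)=V(T)$ together with the profinite space $\varprojlim E(T_U)/\sim_U$ of cylinders, inside which the genuine cylinders of $T$ (when $E(T)$ is compact) form a dense subset; and when $E(T)$ is compact this construction agrees with the $T_c$ of the earlier definition, by the universal property of the inverse limit combined with Theorem \ref{trees of cylinders}.

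Finally, for canonicity and dependence only on the deformation space: if $T'$ is another pro-$p$ $\E$-tree in the same deformation space as $T$, then by Example \ref{changing jsj-tree} I may assume both $T$ and $T'$ are reduced (standard pro-$p$ trees of graphs of pro-$p$ groups), and then $T_U$ and $T'_U$ are reduced $\E_U$-trees in the same deformation space over $G_U$ with $T_U/G_U$, $T'_U/G_U$ finite, so Theorem \ref{trees of cylinders} supplies a canonical $G_U$-equivariant isomorphism $(T_U)_c\cong (T'_U)_c$; by Remark \ref{via equality} this isomorphism is unique, hence the isomorphisms are compatible with the inverse system and pass to the limit to give a canonical $G$-equivariant isomorphism $T_{cc}\cong T'_{cc}$. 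The main obstacle I anticipate is verifying that $\sim$ really does descend to a well-behaved admissible relation $\sim_U$ on each $G_U$ — in particular that axiom (3) of Definition \ref{admissible} (the geodesic condition) is preserved under the quotient $T\to T_U$, and that the étale/strict-topology closedness of $\E$ is inherited by $\E_U$ so that Lemma \ref{cylinders} applies at each finite level; everything downstream is then a routine inverse-limit argument mirroring the proof of Theorem \ref{tree of cylinders}.
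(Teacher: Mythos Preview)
Your overall architecture matches the paper's: approximate by the inverse system coming from Lemma~\ref{inverse limit of virtually free groups}, build a tree of cylinders at each finite level, and take the inverse limit; then use Theorem~\ref{trees of cylinders} levelwise for canonicity. But there is a real gap at the very first step.

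You write that $T_U=T/\widetilde U$ has $T_U/G_U$ finite and $E(T_U)$ compact, inferring this from the fact that $T_U$ lies in the same deformation space as the standard tree of a finite graph of finite $p$-groups. That inference is false: being in the same deformation space says nothing about the size of the quotient, and in fact $T_U/G_U=(T/\widetilde U)/(G/\widetilde U)=T/G$, which is exactly the object you are not assuming to be finite. So Theorem~\ref{tree of cylinders} is not available for $T_U$, and the construction of $(T_U)_c$ as you describe it does not get off the ground. The paper fixes this by working not with $T_U$ but with the \emph{standard} pro-$p$ tree $S_U$ of the finite reduced graph of finite $p$-groups $(\G_U,\Gamma_U)$ produced by Lemma~\ref{inverse limit of virtually free groups}; since $\Gamma_U$ is finite, $E(S_U)$ is genuinely compact and the tree-of-cylinders machinery applies. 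The inverse system is then $\{S_U\}$, with the transition maps being collapses, and $T_{cc}=\varprojlim_U (S_U)_c$ (which the paper calls $(T_U)_c$, using $S_U$ as the representative).

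A second point you flagged yourself but underestimated: defining $\sim_U$ is not just a matter of pushing $\E$ and $\sim$ forward to $G_U$. The paper defines $\sim_U$ directly on $E(S_U)$ by declaring $m_U\sim_U f_U$ when either one stabilizer contains the other \emph{or} there exist edges $m,f$ in the preimages with $G_m\sim G_f$, and then checks axiom~(3) of Definition~\ref{admissibleontree} using that $S\to S_U$ is a collapse map, so an edge on a geodesic $[f_U,f'_U]$ lifts to an edge on $[f,f']$. Your proposed verification (``geodesics in $T_U$ are images of geodesics in $T$'') goes the wrong direction for this argument: you need to lift, not project, and collapse maps are what make lifting of edges on geodesics work. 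Once you switch from $T_U$ to $S_U$ and adopt this explicit definition of $\sim_U$, the rest of your outline (inverse system, limit, levelwise canonicity via Theorem~\ref{trees of cylinders} and Remark~\ref{via equality}) is essentially the paper's argument.
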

 
 \begin{proof} By Theorem \ref{pro-pbass-serre}  $G$ is the fundamental pro-$p$ group of a profinite graph of
	pro-$p$ groups $(\G, \Gamma)$ with vertex groups and edge groups  being conjugates of vertex and edge stabilizers in $G$.   By Lemma \ref{inverse limit of virtually free groups} $G=\varprojlim_{U\triangleleft_o G} G/\widetilde U$ and $G_U:=G/\widetilde U=\Pi_1(\G_U,\Gamma_U)$ is the fundamental group of a finite reduced graph of  finite $p$-groups. Moreover, the inverse system $\{G/\widetilde U, \pi_{VU}\}$ can be chosen in such a way that it is linearly ordered and for each $\{G/\widetilde V\}$ of the system with $V\leq U$  there exists a natural morphism $$(\eta_{VU},\nu_{VU}):(\G_V, \Gamma_V)\longrightarrow (\G_U, \Gamma_U)$$ where $\nu_{VU}$ is just a collapse of edges of $\Gamma_V$ and $\eta_{VU}(\G_V(m))=\pi_{VU}(\G_V(m))$; the induced  homomorphism of the
pro-$p$ fundamental groups coincides with the canonical projection
$\pi_{VU}\colon G/\widetilde V\longrightarrow G/\widetilde U$ and $(\G,\Gamma)=\varprojlim_{U\triangleleft_o G}(\G_U,\Gamma_U)$.  Let $S_U$ be the standard pro-$p$ tree for $G_U=\Pi_1(\G_U,\Gamma_U)$ and recall that $E(S_U)$ is compact (since $S_U/G_U$ is finite). Then $S=\varprojlim_{U\triangleleft_o G} S_U$ and the projection $S\longrightarrow S_U$ is a collapse of connected components of certain profinite subgraph of $S$.

Note that $G_U$ has only finitely many edge stabilizers up to conjugation. 
Define an equivalence relation $\sim_U$ on $E(S_U)$ putting $m_U\sim_U f_U$ if either the stabilizer of one of them contains the other or there are edges $m, f\in E(T)$ in the preimages of $m_U,f_U$ respectively such that $G_m\sim G_f$. We show that this is admissible equivalence relation on $E(S_U)$. First observe that it is obviously $G$-invariant and so (i) of Definition \ref{admissibleontree} holds. Since $S_U/G_U$ is finite, this implies that $\sim_U$ is profinite. The nesting property (2) follows from the definition of $\sim_U$.  To prove (iii) suppose $f_U\sim f'_U$  and $e_U\in [f_U,f'_U]$.  If the stabilizer of $f_U$ contains the stabilizer of $f'_U$ or vice versa, then either $ (G_U)_{f'_U}\leq  (G_U)_{e_U}\leq  (G_U)_{f_U}$  or $ (G_U)_{f_U}\leq  (G_U)_{e_U}\leq  (G_U)_{f'_U}$   by Theorem \ref{fixed geodesic}, so that $e_U\sim f_U\sim f'_U$ by the nesting property; otherwise there are edges $f, f'\in E(T) $ such that $G_{f}\sim G_{f'}$. In the latter case, since $S\longrightarrow S_U$ is a collapse map, there exists an edge $e\in [f,f']$  mapped to $e_U$ and so $G_{f}\sim G_{e}\sim G_{f'}$; in this case then      $f_U\sim e_U\sim f'_U$ by the definition of $\sim_U$. Thus $\sim_U$ is admissible.

Let $(T_{U})_c$ be the  pro-$p$ tree of cylinders for $S_U$ (it exists as $E(T_U)$ is compact). Then  for each $V\leq_o U$ normal in $G$, $e\sim_V e'$ implies $\pi_{VU}(e)\sim_U \pi_{VU}(e')$ and so we have a unique $G$-epimorphism $(T_{V})_c\longrightarrow (T_{U})_c$ induced by a morphism $S(G_V)\longrightarrow S(G_U)$ . This means that  $\{(T_{U})_c\mid U\triangleleft_o G\}$ is an inverse system. The pro-$p$ tree $T_{cc}=\varprojlim_{U\triangleleft_o G} (T_{U})_c$ is the desired   pro-$p$ tree of cylinders.

Now we shall prove the uniqueness. Suppose $T'$ is from the same deformation space. Then similarly to the above  $G=\varprojlim_{U\triangleleft_o G} G/\overline U$, where $\overline U$ means the subgroup of $U$ generated by the  stabilizers of vertices of $T'$,  and $G/\overline U=\Pi_1(\G'_U,\Gamma'_U)$ is the fundamental group of a finite reduced graph of  finite $p$-groups. But $G/\widetilde U$ and $G/\overline U$ have the same set of vertex stabilizers and so $T/\widetilde U$ and $T/\overline U$ belong to the same deformation space.
Hence by Theorem \ref{trees of cylinders} there exists a canonical isomorphism $(T_c)_U\longrightarrow (T'_C)_U$ (as vertex stabilizer determines uniquely the vertex by Remark \ref{via equality}) for every such $U$. This gives the isomorphism $T_{cc}\cong T'_{cc}$.
	\end{proof}
	
	
	
	\begin{example} 
	 If $G$ is a torsion free CSA pro-$p$ group (for instance a pro-$p$ limit group, we can take for $\E$ the set of  abelian subgroups,
and for $\sim$ the commutation relation: $A \sim B$ if $\langle A, B\rangle$ is abelian.  The group $G_{[A]}$
is
the maximal abelian subgroup containing $A$. The $T_c$ has maximal abelian subgroups as its edge stabilizers.

	\end{example} 
	
\begin{example}\label{of Wilkes} We shall illustrate Theorem \ref{canonical} recalling Example \ref{Wilkes}.  

There was constructed the following JSJ graph of pro-$p$ groups

\begin{center}
			\begin{tikzpicture}[every edge/.append style={nodes={font=\scriptsize}}]
				\node (0) at (0,0) [label=above:{\scriptsize $G_1$},point];
				\node (1) at (2,0) [label=above:{\scriptsize $G_2$},point];
				\node (2) at (4,0) [label=above:{\scriptsize $G_{3}$},point];
				\node (3) at (5,0) {$\cdots$};
				\node (4) at (6,0) [label=above:{\scriptsize $H_{\omega}$},point];
				
				\draw[->] (0) edge node[below] {$K_1$} (1);
\draw[->] (1) edge node[below] {$K_2$} (2);				
				
			\end{tikzpicture}
		\end{center}

Let $\sim$ be the equality. Then the underlying profinite graph of pro-$p$ groups for $T_{cc}$ is 

\begin{center}
			\begin{tikzpicture}[every edge/.append style={nodes={font=\scriptsize}}]
				\node (0) at (0,0) [label=above:{\scriptsize $G_1$},point];
				\node (1) at (3,0) [label=above:{\scriptsize $N_{\langle G_1,G_2\rangle}(K_1)$},point];
				\node (2) at (5,0) [label=above:{\scriptsize $G_{2}$},point];
				\node (3) at (6,0) {$\cdots$};
				\node (4) at (7,0) [label=above:{\scriptsize $H_{\omega}$},point];
				
				\draw[->] (0) edge node[below] {$N_{G_1}(K_1)$} (1);
\draw[->] (1) edge node[below] {$N_{G_2}(K_1)$} (2);				
				
			\end{tikzpicture}
		\end{center}
The situation is similar as in Example \ref{equality}.
But in this case $N_{G_i}(K_i)$ coincides with $G_i$ since $K_i$ is normal in $G_i$. Hence we can reduce this graph of pro-$p$ groups

\begin{center}
			\begin{tikzpicture}[every edge/.append style={nodes={font=\scriptsize}}]
				\node (0) at (0,0) [label=above:{\scriptsize $N_{\langle G_1,G_2\rangle}(K_1)$},point];
				\node (1) at (3,0) [label=above:{\scriptsize $N_{\langle G_2,G_3\rangle}(K_2)$},point];
				\node (2) at (6,0) [label=above:{\scriptsize $N_{\langle G_3,G_4\rangle}(K_1)$},point];
				\node (3) at (7,0) {$\cdots$};
				\node (4) at (8,0) [label=above:{\scriptsize $H_{\omega}$},point];
				
				\draw[->] (0) edge node[below] {$N_{G_2}(K_1)$} (1);
\draw[->] (1) edge node[below] {$N_{G_3}(K_2)$} (2);				
				
			\end{tikzpicture}
		\end{center}

 Note that all vertex groups except the limit vertex group were finite in the original graph of groups, but in the reduced graph of cylinders all vertex groups are infinite. All edge groups remain finite.

\end{example}
	
	\begin{proposition}\label{properties of tree of cylinders} Let $G$ be a finitely generated or $\E$-accessible pro-$p$ group acting on a pro-$p$ tree $G$. Then 
	
	\begin{enumerate}
	\item[(1)] $T$ dominates $T_c$;
	\item[(2)] If $H\le G$ is a vertex stabilizer of $T_c$ which is not elliptic in $T$, it is the stabilizer of
	a vertex in $V_1(T_c)$ associated to an equivalence class $[Y ]$ of $\E$. If $e$ is an edge of $Y$ ,
	the equivalence class of $G_e$ is $H$-invariant
	 ($hG_e
	 h^{-1} 
	\sim G_e$ 
	 for $h \in H$);
	\item[(3)]  $T_{cc}$ constructed in Theorem \ref{canonical}  depends only on the deformation space $D$ containing $T$ (we sometimes say
	that $T_{cc}$ is the canonical pro-$p$ tree of cylinders of $D$);
	\item[(4)] Suppose that the stabilizer of every equivalence class $[A]$ belongs to $\E$. Then edge
	stabilizers of $T_c$ belong to $\E$.
	\end{enumerate}
	
	\end{proposition}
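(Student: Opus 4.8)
The plan is to prove the four assertions of Proposition~\ref{properties of tree of cylinders} by reducing to the already-established facts about trees of cylinders with compact edge set (Theorem~\ref{tree of cylinders}, Theorem~\ref{trees of cylinders}, Theorem~\ref{canonical} and the unnumbered proposition preceding this subsection). The key observation organizing everything is that, whether $G$ is $\E$-accessible or merely finitely generated, the pro-$p$ tree of cylinders of $T$ is built as an inverse limit $T_{cc}=\varprojlim_U (T_U)_c$ of the trees of cylinders of the finite reduced graphs of $p$-groups $G_U=G/\widetilde U=\Pi_1(\G_U,\Gamma_U)$ constructed via Lemma~\ref{inverse limit of virtually free groups}, exactly as in the proof of Theorem~\ref{canonical}; and in the $\E$-accessible case Corollary~\ref{cofiniteness of tree of cylinders} lets us additionally take $T/G$ and hence $T_c/G$ finite.

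For assertion (1), I would argue that $T$ dominates each $S_U$-tree of cylinders $(T_U)_c$ at the finite level: by the unnumbered proposition preceding this subsection (the compact-$E(T)$ version), $S_U$ dominates $(T_U)_c$, and since every vertex stabilizer of $T$ is contained (up to conjugacy) in a vertex stabilizer of $S_U$, every elliptic subgroup of $T$ is elliptic in $(T_U)_c$. Passing to the inverse limit using Proposition~\ref{ellipticity} (ellipticity of a subgroup can be checked at every finite quotient) gives that $T$ dominates $T_{cc}$. For assertion (2), I would directly transcribe the proof of part (2) of the compact-$E(T)$ proposition: a vertex of $T_c$ is either a vertex of $T$ (its stabilizer is then a vertex stabilizer of $T$, hence elliptic in $T$) or lies in $V_1(T_c)$ and is an equivalence class $[Y]$ with stabilizer $G_{[Y]}$; so a vertex stabilizer of $T_c$ that is not elliptic in $T$ must be of the latter type. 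The $H$-invariance of the equivalence class of $G_e$ for $e\in E(Y)$ is then immediate from $H=G_{[Y]}$ acting on $\E/\sim$ by conjugation and fixing $[Y]$, combined with the nesting axiom~2 of Definition~\ref{admissible}. Assertion (3) is essentially a restatement of Theorem~\ref{canonical} (uniqueness up to canonical $G$-isomorphism), which I would simply cite, noting also Theorem~\ref{trees of cylinders} and Remark~\ref{via equality} for why the isomorphism at each finite level is canonical and unique. Assertion (4) is again the verbatim argument of part (3) of the compact-$E(T)$ proposition: the stabilizer of an edge $\epsilon=(v,Y)$ of $T_c$ is $G_v\cap G_Y\leq G_{[Y]}=G_Y$, so if every $G_{[Y]}\in\E$ then so is $G_\epsilon$, since $\E$ is closed under taking subgroups.

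The main obstacle I expect is the interplay between the two hypotheses in the statement: the proof must work simultaneously for $\E$-accessible $G$ (where the clean tool is Corollary~\ref{cofiniteness of tree of cylinders}, giving a finite $T_c/G$ and letting us quote the compact-$E$ proposition outright) and for finitely generated $G$ (where $E(T_{cc})$ may fail to be compact and one must run the inverse-limit construction of Theorem~\ref{canonical} and check that the three properties survive the limit). The domination statement (1) is the one place where the limit passage genuinely requires an argument rather than a quotation: one needs that a subgroup elliptic in every $(T_U)_c$ is elliptic in $\varprojlim_U (T_U)_c$, which is exactly $T_{cc}^H=\varprojlim_U (T_U)_c^H\neq\emptyset$, and this is the content of (the proof of) Proposition~\ref{ellipticity}/Corollary~\ref{fixed vertex}. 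I would make this explicit. Assertions (2)--(4) are, by contrast, local in nature — they concern a single vertex or edge of $T_c$ (equivalently of some $(T_U)_c$) and its stabilizer — so they transfer to the limit without any extra work beyond noting that stabilizers in $T_{cc}$ are inverse limits of stabilizers in the $(T_U)_c$.
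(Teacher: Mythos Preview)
Your proposal is correct and follows essentially the same route as the paper. The paper's argument for (1) is marginally more direct---it observes outright that $V(S)=V_0(T_c)$ (so $S$ dominates $T_c$ by definition of $V_0$) rather than passing domination through the inverse limit via Proposition~\ref{ellipticity}---and for (2) the paper adds a short technical step (invoking \cite[Construction 4.3]{HZ} and Lemma~\ref{inverse limit of relations}) to justify that the equivalence relation $\sim$ on $\E$ is genuinely the inverse limit of the $\sim_U$, which underpins your claim that the $V_1$-description transfers to the limit; but these are refinements of presentation, not differences of strategy.
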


\begin{proof} Consider a vertex $v$ of $T$. By Theorem \ref{pro-pbass-serre} $G=\Pi_1(\G, \Gamma)$ is the fundamental group of a profinite graph of pro-$p$ groups and the standard pro-$p$ tree $S=S(G)$ lie in the same deformation space as $T$. Moreover, according to Lemma \ref{inverse limit of virtually free groups} $(\G,\Gamma)=\varprojlim_{U\triangleleft_o G}(\G_U,\Gamma_U)$, that induces $\Pi_1((\G,\Gamma)=\varprojlim_{U\triangleleft_o G}\Pi_1(\G_U,\Gamma_U)$ and $S=\varprojlim_{U\triangleleft_o G} S_U$, where $S_U$ is the  standard pro-$p$ tree of $\Pi_1(\G_U,\Gamma_U)$.  
So $V(S)=V_0(T_c)$ and so $S$ and therefore $T$   dominates $T_c$.

	The second assertion follows from remarks made above: the stabilizer of a vertex in
	$V_0(T_c)$ is a vertex stabilizer of $T$, the stabilizer of a vertex $v$ in $V_1(T_c)$ fixes the vertex $v_U$ in $V_1(T_{cU})$ that corresponds to the equivalence class of  $\sim_U$ in $G_U$ (see the proof of Theorem \ref{canonical}). Therefore to deduce the assertion about $V_1(T_c)$ we just need to show that $\sim$ is the inverse limit of $\sim_U$. By \cite[Construction 4.3]{HZ} there exists a $G$-space  $Y_\E$ such that the set of point stabilizers coincides with $\E$ and similarly $G_U$-space $Y_U$ whose set of point stabilizers coincides  with the image $\E_U$ of $\E$ in $Subgr(G_U)$ and those spaces are functorial. This means that $Y_\E=\varprojlim Y_U$. Then the result follows from Lemma \ref{inverse limit of relations}.

	 Also note that (4) is clear since the stabilizer of an edge $e = (v, Y )$
	of $T_c$ is contained in the stabilizer of $[Y ]$.
	
	(3) is the subject of Theorem \ref{canonical}
\end{proof}

\subsection{Automorphisms}
	
	\begin{lemma}\label{vertex stabilizers invariant} Let $G$ be a pro-$p$ group acting on a reduced pro-$p$ $\E$-tree $T$. Suppose the set $\{G_v\mid v\in V(T)\}$  and $\sim$ are invariant for any automorphism of $G$. 
\begin{enumerate}
\item[(i)]  If $G$ is $\E$-accessible, then $Aut(G)$ acts on $T_c$.	
\item[(ii)] If $G$ is a finitely generated, then  $Aut(G)$ acts on $T_{cc}$.
\end{enumerate}	
	\end{lemma}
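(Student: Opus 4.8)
The plan is to show that an automorphism $\alpha$ of $G$ induces a homeomorphism of the tree of cylinders that is compatible with the $G$-action twisted by $\alpha$, and then to use the canonicity results already established (Theorem \ref{trees of cylinders} and Theorem \ref{canonical}) to conclude. First I would recall from Remark \ref{action on deformation space} the twisting construction: for $\alpha\in Aut(G)$ and a $G$-tree $T$, one forms $T_\alpha$ which equals $T$ as a profinite graph but carries the action $g\cdot t=\alpha(g)t$. The hypothesis that $\{G_v\mid v\in V(T)\}$ is $\alpha$-invariant means exactly that the stabilizers of $T_\alpha$ (which are the $\alpha^{-1}(G_v)$) form the same set of subgroups as those of $T$; hence $T_\alpha$ lies in the same deformation space as $T$. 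Since $T$ is reduced, $T_\alpha$ can be taken reduced too (or one passes to a reduced representative in the deformation space without affecting the tree of cylinders, by Theorem \ref{trees of cylinders} or Example \ref{changing jsj-tree}).

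For part (i), where $G$ is $\E$-accessible, I would argue as follows. Because $\E$ and $\sim$ are $\alpha$-invariant, the admissible equivalence relation $\approx$ on $E(T)$ defined by $G_e\sim G_f$ is carried, under the identity map $E(T_\alpha)=E(T)$, to the relation $\approx_\alpha$ defined by $\alpha^{-1}(G_e)\sim\alpha^{-1}(G_f)$; the conjugation-invariance and admissibility axioms guarantee this is again an admissible relation, and invariance of $\sim$ as a subset of $Subgr(G)\times Subgr(G)$ under $\alpha$ forces $\approx_\alpha=\approx$ as relations on the underlying set $E(T)$. Consequently the tree of cylinders of $T_\alpha$ is, as a profinite graph, literally equal to $T_c$; and by Theorem \ref{trees of cylinders} there is a canonical $G$-equivariant isomorphism between the tree of cylinders of $T$ and that of any reduced tree in its deformation space, in particular of $T_\alpha$. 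Composing, one gets for each $\alpha$ a homeomorphism $\alpha_*\colon T_c\to T_c$ satisfying $\alpha_*(gt)=\alpha(g)\alpha_*(t)$; uniqueness of the canonical isomorphism (Remark \ref{via equality}) gives $(\alpha\beta)_*=\alpha_*\beta_*$, so $Aut(G)$ acts on $T_c$. Here $\E$-accessibility is used to ensure $T_c/G$ is finite (Corollary \ref{cofiniteness of tree of cylinders}) and that the compact-edge construction of $T_c$ applies.

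Part (ii) is the same argument with $T_{cc}$ in place of $T_c$: when $G$ is merely finitely generated, $E(T)$ need not be compact, so one uses the construction of Theorem \ref{canonical}, which produces $T_{cc}=\varprojlim_{U\triangleleft_o G}(T_U)_c$ as an inverse limit over the open normal subgroups of $G$. An automorphism $\alpha$ permutes the open normal subgroups $U$ and, because it preserves the set of vertex stabilizers of $T$, preserves the associated subgroups $\widetilde U$ (generated by the $U$-stabilizers of vertices), hence descends to compatible isomorphisms $G/\widetilde U\to G/\widetilde{\alpha(U)}$ of the finite reduced graphs of finite $p$-groups appearing in Lemma \ref{inverse limit of virtually free groups}; invariance of $\sim$ then makes these compatible with the relations $\sim_U$, so by part (i) applied level by level (each $(T_U)_c/G_U$ is finite) one obtains an action of $Aut(G)$ on the inverse system $\{(T_U)_c\}$, and passing to the limit gives the action on $T_{cc}$. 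The uniqueness statement in Theorem \ref{canonical} (the canonical isomorphism between trees of cylinders in the same deformation space is unique) again upgrades this from "acts up to twisting on each tree" to an honest homomorphism $Aut(G)\to \mathrm{Homeo}_G(T_{cc})$.

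The main obstacle I anticipate is the bookkeeping in part (ii): verifying carefully that $\alpha$ preserves the normal subgroups $\widetilde U$ and that the induced maps on the finite quotients $G_U=G/\widetilde U$ are compatible with both the collapse maps of the inverse system and the equivalence relations $\sim_U$, so that they assemble into a morphism of the inverse system $\{(T_U)_c\}$ rather than just a collection of unrelated isomorphisms. This requires knowing that the inverse system in Lemma \ref{inverse limit of virtually free groups} can be chosen in a way that is itself $\alpha$-stable, which follows because $\alpha$ merely permutes the cofinal family of $U$'s and the construction is functorial; but the details of this functoriality, together with checking that the canonical isomorphisms of Theorem \ref{trees of cylinders} at finite level are natural in $U$, is where the real work lies. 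Everything else reduces to the invariance hypotheses on $\{G_v\}$ and $\sim$ plus the canonicity theorems already proved.
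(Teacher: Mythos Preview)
Your approach is essentially correct and rests on the same core idea as the paper's: the invariance of $\{G_v\}$ and of $\sim$ under $\alpha$ lets one transport the action by matching stabilizers. Where the paper differs is in execution, and two of those differences are worth knowing.

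First, the paper defines the action directly rather than invoking the twisted tree and Theorem \ref{trees of cylinders}: after reducing via Remark \ref{via equality} so that each vertex stabilizer determines its vertex uniquely, one simply sets $\alpha(v)$ to be the vertex with stabilizer $\alpha(G_v)$, and similarly for cylinders and edges. The payoff is that continuity of the \emph{joint} action $Aut(G)\times T_c\to T_c$ (not just continuity of each $\alpha_*$) becomes transparent: the paper shows the map $v\mapsto G_v$ is a homeomorphism from $V_0(T_c)$ onto its image in $Subgr(G)$ with the strict topology, and since $Aut(G)$ acts continuously on $Subgr(G)$ this pulls back. Your argument yields each $\alpha_*$ as a homeomorphism and the group law via uniqueness, but you do not address joint continuity; this is a small gap, and the paper's homeomorphism $v\mapsto G_v$ is exactly the missing ingredient.

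Second, and more substantively for part (ii): instead of tracking how $\alpha$ permutes arbitrary open normal $U$ and chasing compatibility through the inverse system (the bookkeeping you flag as the main obstacle), the paper restricts to the cofinal family of open \emph{characteristic} subgroups $U$. Then each $\widetilde U$ is itself characteristic, so $Aut(G)$ acts on every $G_U=G/\widetilde U$ and one has $Aut(G)=\varprojlim_U Aut(G_U)$; part (i) at each finite level then assembles into a continuous action on $T_{cc}=\varprojlim_U (T_U)_c$ with no permutation of indices to manage. This trick sidesteps precisely the functoriality verification you anticipated as the hard part.
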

	
	\begin{proof}

	\medskip
	(i) By Remark \ref{reduction} we may assume that  $T$ is reduced. Then by Remark \ref{via equality} we may assume that $T/G$ have no loops and so a vertex stabilizer determines uniquely the vertex fixed by it.  It follows that for $\alpha\in Aut(G)$, defining $\alpha(v)$ to be the unique vertex whose stabilizer is $\alpha(G_v)$ if $v\in V_0(T_c)$ we get the action of $Aut(G)$ on $V_0(T_c)$. Let $e$ be an edge of a  cylinder $Y$, i.e. $G_e$ belongs to the equivalence class $[Y]$ corresponding to $Y$ (see Proposition \ref{properties of tree of cylinders}). Then $\alpha(G_e)\in \alpha([Y])$ and so this defines the action of $Aut(G)$ on $V_1(T_c)$. Moreover, $\alpha(G_e)\in \alpha([Y])$ means that if the edge $\epsilon$ connects $v$ and $Y$ then there is an edge that connects $\alpha(v)$ with $\alpha(Y)$ that we define as $\alpha(\epsilon)$. Thus  we get an  action of $Aut(G)$ on $T_c$ and we need to show that it is continuous.
	
	   We show first that it is continuous on $V_0(T_c)$.  It suffices to show that for given $v\in V(T)$ the map $v\longrightarrow G_v$ is a homeomorphism  from $V_0(T_c)$ to 
	   $\{G_v\mid v\in V_0(T_c)\}$ considered as a subspace in the strict topology of $Subgr(G)$. It is clearly a bijection, so one needs to show continuity only. Taking an open normal subgroup $U$ of $G$ we deduce that the preimage of $\{G_w\mid G_w\leq G_vU\}$ is open in $V_0(T_c)$ since $\{G_v\mid v\in V_0(T_c)\}$ is a continuous family by Lemma \ref{Ribes 5.2.2}. This shows continuity of the map $v\longrightarrow G_v$. 
	 
	 Considering now $E(T)$ as a subspace of $V(T)\times V(T)$  and so $\sim$ as the $Aut(G)$-invariant equivalence relation on $V(T)\times V(T)$  we deduce that $Aut(G)$ acts continuously on $(V(T)\times V(T))/\sim$ and hence on the subspace of cylinders $E(T)/\sim$.  
	
\medskip
(ii) 	Suppose now that $G$ is finitely generated. Let $\mathcal U=\{U\triangleleft_o G\}$ be the family of all open characteristic subgroups of $G$. Then $\widetilde U=\{U\cap G_v\mid v\in V(T)\}$ is characterisrtic in $G$ and we can consider the action of $G_U=G/\widetilde U$ on a pro-$p$ tree $T/\widetilde U$ (see Proposition \ref{mod tilde}). Note that in Theorem \ref{canonical} $T_{cc}$ was defined as the inverse limit of $(T_U)_c$ arised from th action of $G_U$ on standard pro-$p$ trees $S(G_U)$ from the same deformation space as $T_U$. Since $S(G_U)/G_U$ are finite,  we deduce from (i) that $Aut(G_U)$ acts continuously on   $(T_U)_c$.

	 But $Aut(G)=\varprojlim_{U\in\mathcal U}(Aut(G_U)$ and $T_{cc}=\varprojlim_{U\in\mathcal U} (T_U)_c$. Thus one has a continuous  action of $Aut(G)$ on $T_{cc}$.

	\end{proof}

	\begin{theorem}\label{Aut} Let $G$ be a finitely generated (resp. $\E$-accessible) pro-$p$ group acting on a JSJ pro-$p$ $\E$-tree $T$. Suppose $\E$  is invariant for any automorphism of $G$. Then $Aut(G)$ acts on $T_{cc}$ (resp. on $T_c$).
	
	\end{theorem}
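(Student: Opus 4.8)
The plan is to reduce the statement to \textbf{Lemma \ref{vertex stabilizers invariant}} by verifying its hypotheses. That lemma already gives exactly the desired conclusion ($Aut(G)$ acts on $T_{cc}$ in the finitely generated case, on $T_c$ in the $\E$-accessible case), provided we know two things: first, that we may work with a \emph{reduced} pro-$p$ $\E$-tree in the deformation space of $T$ on which the family of vertex stabilizers $\{G_v\mid v\in V(T)\}$ is $Aut(G)$-invariant; and second, that the admissible equivalence relation $\sim$ on $\E$ is $Aut(G)$-invariant. So the proof has essentially two moving parts, both of which I expect to be short given the machinery already in place.

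For the first part, I would argue as follows. Since $T$ is a JSJ pro-$p$ $\E$-tree, its JSJ deformation space $D_{JSJ}$ is well-defined, and by Remark \ref{action on deformation space} (combined with the observation in Remark \ref{action on deformation space} that $\E$ being $Aut(G)$-invariant forces $D_{JSJ}$ to be $Aut(G)$-invariant), $Aut(G)$ acts on $D_{JSJ}$. In the $\E$-accessible case, by Corollary \ref{finiteness} there is a JSJ tree $T'$ with $T'/G$ finite, which after the reduction of Remark \ref{reduction} (and Remark \ref{reduced JSJ}) may be assumed reduced; its maximal vertex stabilizers depend only on the deformation space, hence are permuted by $Aut(G)$. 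In the finitely generated case one uses Example \ref{changing jsj-tree} to pass to a reduced pro-$p$ tree $S=S(\G,\Gamma)$ in the same deformation space (losing compactness of the edge set, but that is exactly why $T_{cc}$ rather than $T_c$ appears). In either case the maximal vertex stabilizers of the reduced tree are intrinsic to the deformation space, so the set $\{G_v\}$ is $Aut(G)$-invariant, which is the first hypothesis of Lemma \ref{vertex stabilizers invariant}.

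For the second part — invariance of $\sim$ — this is where I expect the only genuine content to lie, and I would phrase it as an explicit hypothesis check rather than a universal claim: in each of the concrete admissible relations of interest (the commutation relation on abelian subgroups of a CSA group, the commensurability relation, the equality relation), $\sim$ is \emph{defined} purely in terms of the group-theoretic structure of $G$ (namely $\langle A,B\rangle$ being abelian, $A\cap B$ having finite index in both, or $A=B$), so any automorphism of $G$ preserves it; this is immediate from the definitions in Definition \ref{admissible} and the list of examples following Lemma \ref{3'}. In the statement of Theorem \ref{Aut} the hypothesis is only that $\E$ is $Aut(G)$-invariant, so strictly one should also remark that $\sim$ being $Aut(G)$-invariant is implicitly part of the setup (it is assumed in Lemma \ref{vertex stabilizers invariant}); I would simply invoke Lemma \ref{vertex stabilizers invariant} with both invariance hypotheses now verified.

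With both hypotheses of Lemma \ref{vertex stabilizers invariant} in hand, the conclusion is immediate: part (i) of that lemma gives the action of $Aut(G)$ on $T_c$ when $G$ is $\E$-accessible (using that $T_c/G$ is finite by Corollary \ref{cofiniteness of tree of cylinders}, so $T_c$ is genuinely the tree of cylinders with compact edge set), and part (ii) gives the action on $T_{cc}$ when $G$ is merely finitely generated, via the inverse limit $T_{cc}=\varprojlim_{U\in\mathcal U}(T_U)_c$ over characteristic open subgroups constructed in Theorem \ref{canonical} together with the identification $Aut(G)=\varprojlim_{U\in\mathcal U} Aut(G_U)$. The main obstacle, such as it is, is purely bookkeeping: making sure that the reduced tree one passes to is canonically attached to the deformation space (so that its vertex-stabilizer set really is $Aut(G)$-stable and not merely stable up to a choice), and that the continuity of the $Aut(G)$-action — already the heart of Lemma \ref{vertex stabilizers invariant}'s proof via the homeomorphism $v\mapsto G_v$ onto a subspace of $Subgr(G)$ in the strict topology — is invoked correctly in the inverse-limit step. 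No new estimates or tree-theoretic arguments are needed beyond what is assembled in Section 9.
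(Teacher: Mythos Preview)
Your proposal is correct and follows essentially the same route as the paper: both reduce to Lemma~\ref{vertex stabilizers invariant} by checking that the set of vertex stabilizers is $Aut(G)$-invariant, the paper doing this in one line via the twisted $G$-tree $T_\alpha$ (i.e.\ $g\cdot m=\alpha(g)m$, which is again a JSJ $\E$-tree by $Aut(G)$-invariance of $\E$, so $\alpha(G_v)$ is elliptic in $T$), while you phrase the same thing through Remark~\ref{action on deformation space} and the fact that maximal vertex stabilizers are an invariant of the deformation space. Your explicit attention to the reduction step and to the $\sim$-invariance hypothesis of Lemma~\ref{vertex stabilizers invariant} is in fact more careful than the paper's own proof, which tacitly assumes both.
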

	
	\begin{proof} Let $\alpha\in Aut(G)$ be an automorphism. We can define another action of $G$ on $T$ by setting $g\cdot m=\alpha(g)m$ for $g\in G$, $m\in T$. We call the new $G$-tree $T'$ and observe that $T'$ is JSJ $\E$-tree.  Hence $\alpha(G_v)$ must stabilize a vertex of $T$. Thus $Aut(G)$ leave vertex stabilizers invariant and so the result follows from Lemma \ref{vertex stabilizers invariant}.
	
	\end{proof} 
	
	\begin{corollary}  If $G$ is $\E$-accessible (resp. finitely generated) and $T$ 
is a JSJ $(\K,\E)$-tree, then $T_c$ (resp. $T_{cc}$) and $T^*_c$ (resp. $T^*_{cc}$) are invariant under any automorphism of $G$ which preserves $\E$ and $\K$.\end{corollary}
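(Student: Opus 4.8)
The plan is to deduce the statement directly from Theorem \ref{Aut}, together with the fact that both assignments $T\mapsto T_c$ and $T\mapsto T_{cc}$ depend only on the deformation space of $T$, and that the passage to the collapsed trees $T_c^*$, $T_{cc}^*$ is a canonical operation. So the proof will essentially be a matter of assembling already-established pieces.

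First I would fix an automorphism $\alpha\in Aut(G)$ preserving $\E$ and $\K$ and record that, as in the proof of Theorem \ref{Aut} (cf. Remark \ref{action on deformation space}), the $\alpha$-twisted $G$-tree $T'$ (same underlying graph as $T$, action $g\cdot m=\alpha(g)m$) is again a $(\K,\E)$-tree: its edge stabilizers are $\alpha^{-1}(G_e)\in\E$, and every $K\in\K$ is still elliptic since $\alpha(\K)=\K$. Universal ellipticity and domination being intrinsic, $T'$ is again a JSJ $(\K,\E)$-tree, hence lies in the same JSJ deformation space $D_{JSJ}$ as $T$. By Theorem \ref{Aut}, $Aut(G)$ acts on $T_{cc}$ in the finitely generated case and on $T_c$ in the $\E$-accessible case; concretely, the canonical equivariant isomorphism between the trees of cylinders of two reduced trees in $D_{JSJ}$ provided by Theorem \ref{trees of cylinders} (resp. Theorem \ref{canonical}) applied to $T$ and $T'$ yields the $\alpha$-twist of $T_c$ (resp. $T_{cc}$). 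Uniqueness of this canonical isomorphism (Remark \ref{via equality}) guarantees that these twists compose correctly, so what one obtains is a genuine (continuous, by Lemma \ref{vertex stabilizers invariant}) $Aut(G)$-action, which is precisely the asserted invariance of $T_c$ (resp. $T_{cc}$).

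It then remains to pass to the collapsed trees. Recall $T_c^*$ is obtained from $T_c$ by collapsing the set $E_0=\{\epsilon\in E(T_c)\mid G_\epsilon\notin\E\}$, which is assumed closed in the definition of $T_c^*$. Since $G_{\alpha(\epsilon)}=\alpha(G_\epsilon)$ and membership in $\E$ is $\alpha$-invariant by hypothesis, $E_0$ is $\alpha$-invariant; hence the collapse map $T_c\to T_c^*$ is equivariant for the $\alpha$-twisted action, and $\alpha$ descends to an automorphism of $T_c^*$. The same argument applies verbatim to $T_{cc}^*$, using the inverse-limit description $T_{cc}=\varprojlim_U (T_U)_c$ from Theorem \ref{canonical} and noting that the collapsed subgraphs are compatible with the maps of the inverse system.

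The step I expect to require the most care is not any single deduction but the bookkeeping showing that the family of $\alpha$-twists $\{T_c\xrightarrow{\sim}T'_c\}_{\alpha\in Aut(G)}$ genuinely assembles into a continuous group action rather than a mere collection of isomorphisms; here uniqueness of the canonical isomorphism (Remark \ref{via equality}) and the continuity statement of Lemma \ref{vertex stabilizers invariant} do the work, and in the non-accessible case one must additionally check that twisting commutes with the transition maps $(T_V)_c\to(T_U)_c$ in the projective system defining $T_{cc}$ — which follows since the admissible relations $\sim_U$ are themselves $Aut(G_U)$-invariant and $Aut(G)=\varprojlim_U Aut(G_U)$.
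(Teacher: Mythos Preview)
Your proposal is correct and follows essentially the same approach as the paper: the corollary is stated immediately after Theorem \ref{Aut} with no separate proof, so it is intended as a direct consequence of that theorem together with the canonical nature of the collapse $T_c\to T_c^*$. Your write-up simply makes explicit the details the paper leaves implicit, in particular the role of the hypothesis that $\alpha$ preserves $\K$ (needed so that the twisted tree $T'$ is again a $(\K,\E)$-tree) and the $\alpha$-invariance of the collapsed edge set.
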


In the next theorem for a subgroup $K\leq G$ we use notation $Aut_G(K)$ to be the group of all automorphisms of $G$ that leave $K$ invariant.	
	
	\begin{theorem}\label{amalgam} Let $G=G_1\amalg_H G_2$ be a non-fictitious free pro-$p$ product with amalgamation. Suppose that $G_1$ and $G_2$ are rigid. Then $Aut(G)$ is either 
$$Aut(G)= Aut_G(G_1)\amalg_{ Aut_G(G_1)\cap Aut_G(N_G(H))}Aut_G(N_G(H))\amalg_{Aut_G(G_2)\cap Aut_G(N_G(H))} Aut_G(G_2)$$ 
or
$$Aut(G)= Aut_G(G_1)\amalg_{ Aut_G(G_1)\cap Aut_G(N_G(H))}Aut_G(N_G(H)),$$
where 1 can be replaced by 2.

	\end{theorem}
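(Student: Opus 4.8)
The plan is to identify $Aut(G)$ by letting it act on the pro-$p$ tree of cylinders $T_c$ of the one-edge splitting $T$ coming from $G=G_1\amalg_H G_2$, with $\sim$ the equality relation $e\sim f\iff G_e=G_f$, and then reading off $Aut(G)$ as the fundamental group of the quotient graph of groups of that action.

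First I would record that $T$ is a JSJ $\E$-tree: since $G_1$ and $G_2$ are rigid, $T$ has universally elliptic vertex stabilizers, hence is JSJ by Lemma \ref{universally elliptic vertex}. As $T/G$ is finite and, the amalgam being non-fictitious, $(\G,T/G)$ is reduced, the tree of cylinders $T_c$ exists; by Example \ref{equality}(a) the quotient $T_c/G$ is the segment $v_1-v_0-v_2$ with vertex groups $G_1$, $N_G(H)$, $G_2$ (the middle, cylinder, vertex $v_0\in V_1(T_c)$ carrying $N_G(H)$) and edge groups $N_{G_1}(H)$, $N_{G_2}(H)$. By Theorem \ref{Aut} the group $Aut(G)$ acts on $T_c$; note that the $T_c$-version is legitimate here because $T/G$ is finite and $T$ is reduced, so the proof of Lemma \ref{vertex stabilizers invariant}(i) applies once one checks (exactly as in the proof of Theorem \ref{Aut}) that rigidity of $G_1,G_2$ forces the set of vertex stabilizers of $T$ to be $Aut(G)$-invariant. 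Moreover every central element of $G$ is elliptic in $T$ and lies in a conjugate of the edge group $H$, so $Z(G)$ fixes $T_c$ pointwise and the $G$-action on $T_c$ is induced by $Inn(G)\le Aut(G)$.

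Next I would analyse the finite quotient graph $T_c/Aut(G)$. The bipartition $V_0(T_c)\cup V_1(T_c)$ is $Aut(G)$-invariant and $v_0$ represents the only $G$-orbit in $V_1(T_c)$, so the $Aut(G)$-orbit of $v_0$ is distinct from those of $v_1,v_2$; there are no inversions, since an inversion would interchange $V_0(T_c)$ and $V_1(T_c)$. Hence $T_c/Aut(G)$ is either the segment $w_1-w_0-w_2$ (when $v_1,v_2$ lie in distinct $Aut(G)$-orbits) or the segment $w_1-w_0$ of length one (when some $\alpha\in Aut(G)$, which after composing with an inner automorphism may be taken to send $v_1$ to $v_2$, interchanges the two ends; such an $\alpha$ fixes $v_0$ and interchanges the two edges at $v_0$). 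Now I would pin down the stabilizers. Since $G_i$ is self-normalizing in $G$ and, by Remark \ref{via equality}, $T_c/G$ has no loops so that a vertex of $T_c$ is determined by its stabilizer, $v_i$ is the unique vertex of $T_c$ with stabilizer $G_i$; hence $Stab_{Aut(G)}(v_i)=Aut_G(G_i)$. Likewise $v_0$ is the unique cylinder with stabilizer $N_G(H)$, so $Stab_{Aut(G)}(v_0)=Aut_G(N_G(H))$, and the stabilizer of the edge $[v_i,v_0]$ is $Aut_G(G_i)\cap Aut_G(N_G(H))$. Applying the Bass--Serre structure theorem to the action of $Aut(G)$ on $T_c$ — its quotient being finite, this is the classical situation of Remark \ref{cofinite action} — gives the first decomposition when $T_c/Aut(G)$ is the length-two segment and the second when it is the length-one segment; the symmetric roles of $v_1,v_2$ in the latter case yield the clause ``$1$ can be replaced by $2$''.

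The step I expect to be the main obstacle is the last one: the Bass--Serre structure theorem has to be applied to $Aut(G)$, which need not be pro-$p$, acting on the profinite tree $T_c$. I would handle this by exploiting that $T_c/Aut(G)$ is finite — so one is in the cofinite, ``classical'', regime — passing if necessary through the finite quotients $G/\widetilde U$ and the corresponding trees $(T_U)_c$ and taking inverse limits, as in the construction of $T_c$ and of $T_{cc}$. The second delicate point, folded into the argument above, is the rigidity-plus-malnormality input needed to recover vertices of $T_c$ from their stabilizers, namely $N_G(G_i)=G_i$ and $N_G(N_G(H))=N_G(H)$; this is precisely where the hypotheses that the amalgam is non-fictitious and that $G_1,G_2$ are rigid enter essentially.
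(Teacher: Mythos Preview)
Your proposal is correct and follows essentially the same route as the paper: build the tree of cylinders $T_c$ for the equality relation as in Example~\ref{equality}(a), use Lemma~\ref{vertex stabilizers invariant}/Theorem~\ref{Aut} to let $Aut(G)$ act on $T_c$, observe that $T_c/Aut(G)$ is either the full length-two segment or collapses to a single edge, and read off the two amalgam decompositions from the stabilizers $Aut_G(G_i)$, $Aut_G(N_G(H))$. Your explicit flagging of the issue that $Aut(G)$ need not be pro-$p$ and that one must justify applying the Bass--Serre structure theorem in the cofinite regime is a point the paper's proof passes over in silence, so your version is if anything more careful on that score.
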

	
	\begin{proof} Let $S=S(G)$ be the standard pro-$p$ tree for $G=G_1\amalg_H G_2$. We define $\sim$ as in Example \ref{equality}(a):  edges are equivalent if their stabilizers are equal. Then $T_c/G$ is a tree with 3 vertices. The underlying graph of pro-$p$ groups of the $G$-tree $T_c$ is 
 
$$\xymatrix{
{\overset{G_1}{\bullet}}\ar[rrr]^{N_{G_1}(H)}&&& {\overset{N_G(H)}{\bullet}}\ar[rrr]^{N_{G_2}(H)}&&&{\overset{G_2}{\bullet}}}$$ (Example \ref{equality}(a)).  By Lemma \ref{vertex stabilizers invariant} $Aut(G)$ acts on $T_c$.  The outer automorphism group $Out(G)$ acts either trivially on $V(T_c)/G$ or by swapping two vertices of $V_0(T_c)/G$  (leaving the middle vertex $V_1(T_c)/G$ fixed).  It follows that $T_c/Aut(G)$ is either $T_c/G$ or a tree with two vertices only. 

\medskip
Case 1. 	$T_c/Aut(G)=T_c/G$. Lift it to a subtree $T'=\{v_1,v_2,w\in V(T_c), e_1, e_2\in E(T_c)\}$ of $T_c$  with $G_{v_1}=G_1$, $G_{v_2}=G_2, G_w=N_G(H)$ ($w$ is the cylinder of $H$) $e_1$ connecting $v_1$ and $w$, $e_2$ connecting $v_2$ and $w$.  Since $G=G_1\amalg_H G_2$ is  non-fictitious, $S$ is reduced and so  $Aut(G)_{v_1}=Aut_G(G_1)$, $Aut(G)_{v_2}=Aut_G(G_2)$. Moreover, the cyclinder is $N_G(H)$  and its stabilizer $Aut(G)_w=Aut_G(N_G(H))$. Hence $ Aut(G)_{e_1}= Aut_G(G_1)\cap Aut_G(N_G(H))$, $ Aut(G)_{e_2}= Aut_G(G_2)\cap Aut_G(N_G(H))$. So $$Aut(G)= Aut_G(G_1)\amalg_{ Aut_G(G_1)\cap Aut_G(N_G(H))}Aut_G(N_G(H))\amalg_{Aut_G(G_2)\cap Aut_G(N_G(H))} Aut_G(G_2).$$

\medskip

Case 2. $T_c/Aut(G)\neq T_c/G$ (in this case $G_1$ and $G_2$ are isomorphic). Lift it to a subtree $T'=\{v_1,w\in V(T_c), e\in E(T_c)\}$ of $T_c$  with $G_{v_1}=G_1$, $ G_w=N_G(H)$ ($w$ is the cylinder of $H$) $e$ connecting $v_1$ and $w$. Then $Aut(G)_{v_1}=Aut_G(G_1)$,  $Aut(G)_w=Aut_G(N_G(H))$, $ Aut(G)_{e_1}= Aut_G(G_1)\cap Aut_G(N_G(H))$. Thus 
$Aut(G)= Aut_G(G_1)\amalg_{ Aut_G(G_1)\cap Aut_G(N_G(H))}Aut_G(N_G(H)).$

	\end{proof}
	
	\begin{remark} 
	If $H$ is normal in $G$ then the splitting of the previous theorem is fictitious, since $Aut_G(N_G(H))=Aut(G)$. 
	
	\end{remark}
	
	\begin{corollary} Let $G=G_1\amalg_H G_2$ be a free pro-$p$ product of finite $p$-groups with amalgamation.  Then $Aut(G)$ is either 
$$Aut(G)= Aut_G(G_1)\amalg_{ Aut_G(G_1)\cap Aut_G(N_G(H))}Aut_G(N_G(H))\amalg_{Aut_G(G_2)\cap Aut_G(N_G(H))} Aut_G(G_2)$$ 
or
$$Aut(G)= Aut_G(G_1)\amalg_{ Aut_G(G_1)\cap Aut_G(N_G(H))}Aut_G(N_G(H)).$$	The second possibility occurs only if there exists an automorphism of $G$ that swaps $G_1$ and a conjugate of $G_2$.

	\end{corollary}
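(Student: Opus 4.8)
The plan is to deduce the corollary from Theorem~\ref{amalgam} by verifying its two hypotheses in the situation at hand: namely that $G = G_1 \amalg_H G_2$ with $G_1, G_2$ finite $p$-groups is a non-fictitious amalgamated free pro-$p$ product, and that $G_1$ and $G_2$ are rigid. The first point is almost automatic: if one of the edge maps $H \hookrightarrow G_i$ were an isomorphism, then $G$ would simply be the other factor $G_j$, hence finite, and the amalgam would be fictitious; so unless $G$ is itself finite (a degenerate case one excludes, or handles separately since then $\mathrm{Aut}(G)$ is just $\mathrm{Aut}(G_i)$) we may take $(\G,\Gamma)$ reduced, and then it is non-fictitious by Remark~\ref{reduction} combined with the observation (stated just before Example~\ref{graph group completion}) that $(\G,\Gamma)$ is reduced iff the standard pro-$p$ tree $S(G)$ is reduced.

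The key step is rigidity of $G_1$ and $G_2$. First I would fix $\E$ to be the family of finite subgroups of $G$; then $S = S(G)$, the standard pro-$p$ tree of $G = G_1\amalg_H G_2$, is an $\E$-tree, and by Corollary~\ref{fixed vertex} every finite $p$-group — in particular every edge stabilizer of $S$, which is a conjugate of $H$ — is universally elliptic, so $S$ is a universally elliptic $\E$-tree. Moreover $S$ has universally elliptic vertex stabilizers $G_1, G_2$ (again by Corollary~\ref{fixed vertex}, since they are finite), so by Lemma~\ref{universally elliptic vertex} $S$ is a JSJ $\E$-tree and its vertices are rigid by definition. This is exactly the hypothesis ``$G_1$ and $G_2$ are rigid'' needed to invoke Theorem~\ref{amalgam}, which then yields the two displayed formulas for $\mathrm{Aut}(G)$ verbatim — here one also uses that $\E$ (the family of finite subgroups) is characteristic, hence invariant under every automorphism of $G$, so that $\mathrm{Aut}(G)$ acts on $T_c$ as in the proof of Theorem~\ref{amalgam}.

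It remains to pin down the last sentence: the second possibility occurs only if some automorphism of $G$ swaps $G_1$ and a conjugate of $G_2$. Looking back at the proof of Theorem~\ref{amalgam}, the dichotomy between the two formulas is precisely the dichotomy $T_c/\mathrm{Aut}(G) = T_c/G$ versus $T_c/\mathrm{Aut}(G) \neq T_c/G$, and the second case arises exactly when $\mathrm{Out}(G)$ acts on $V_0(T_c)/G$ by swapping the two vertices carrying $G_1$ and $G_2$. Unwinding this through the correspondence between vertices of $T_c$ in $V_0$ and maximal finite (elliptic) subgroups — using that by Remark~\ref{via equality} a vertex stabilizer determines its vertex uniquely — an outer automorphism realizing the swap carries $G_1$ to a $G$-conjugate of $G_2$; conversely such an automorphism visibly induces the nontrivial permutation of the two $V_0$-orbits. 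The main obstacle I anticipate is purely bookkeeping: making sure the degenerate case $G$ finite (equivalently, $H$ equal to $G_1$ or $G_2$, i.e. the fictitious case) is explicitly set aside so that ``non-fictitious'' genuinely holds, and checking that rigidity of $G_1,G_2$ is stable as one passes between $S$ and a reduced model in its deformation space — but both are immediate from Corollary~\ref{fixed vertex} and Lemma~\ref{universally elliptic vertex} together with Remark~\ref{reduced JSJ}(ii).
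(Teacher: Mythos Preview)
Your proposal is correct and follows essentially the same route as the paper: rigidity of the finite vertex groups (immediate from Corollary~\ref{fixed vertex}) feeds directly into Theorem~\ref{amalgam} to produce the two displayed formulas. The only difference is in the handling of the final sentence: the paper simply cites \cite[Proposition~3.3]{BPZ}, whereas you extract the conclusion internally from the Case~1/Case~2 dichotomy in the proof of Theorem~\ref{amalgam}; your argument there is valid and in fact just rephrases what that external reference provides.
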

	
	\begin{proof} We need to prove only the last statement of the corollary, because finite groups are rigid. But this is the subject of \cite[Proposition 3.3]{BPZ}. 
	
	\end{proof}

Next  we refine Theorem \ref{amalgam} assuming that the amalgamated subgroup is malnormal. 

\medskip
Suppose first that $H$ is malnormal, say in $G_2$, and so $N_{G_2}(H)=H$. Then $$N_G(H)=N_{G_1}(H)$$ and so  the statement of Theorem \ref{amalgam} reads as follows:
$Aut(G)$ is either 
$$Aut(G)= Aut_G(G_1)\amalg_{ Aut_G(G_1)\cap Aut_G(N_{G_1}(H))}Aut_G(N_{G_1}(H))\amalg_{Aut_G(G_2)\cap Aut_G(N_{G_1}(H))} Aut_G(G_2)$$ 
or
$$Aut(G)= Aut_G(G_1)\amalg_{ Aut_G(G_1)\cap Aut_G(N_{G_1}(H))}Aut_G(N_{G_1}(H)),$$
where 1 can be replaced by 2.	

However if $H$ is malnormal in both $G_1$ and $G_2$ then $N_G(H)=H$. Moreover, $Aut_G(H)\leq Aut_G(G_1)\cap Aut_G(G_1)$ since $G_1^{g_1}\cap G_2^{g_2}=H$ if and only if $g_1\in G_1,g_2\in G_2$. On the other hand if for $\alpha\in Aut(G)$ one has $\alpha(G_i)=G_i$ for $i=1,2$,  then $\alpha(H)=\alpha(G_1\cap G_2)=\alpha(G_1)\cap \alpha(G_2)=G_1\cap G_2=H$. Thus  $Aut_G(H)= Aut_G(G_1)\cap Aut_G(G_1)$ and   the statement in this case reduces to the following

\begin{corollary}\label{malnormal amalgam}
Let $G=G_1\amalg_H G_2$ be a non-fictitious free pro-$p$ product with malnormal amalgamation. Suppose that $G_1$ and $G_2$ are rigid. Then $Aut(G)$ is either 
$$Aut(G)= Aut_G(G_1)\amalg_{Aut_G(H)}Aut_G(G_2)$$ 
or
$$Aut(G)= Aut_G(G_1),$$
where 1 can be replaced by 2.
\end{corollary}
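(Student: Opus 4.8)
The proof of Corollary \ref{malnormal amalgam} is meant to be a direct specialization of Theorem \ref{amalgam}, so the plan is to track what the extra hypothesis of malnormality does to the three groups appearing in the general formula, namely $N_G(H)$, $N_{G_1}(H)\cap N_{G_2}(H)$-type intersections, and the edge-stabilizer automorphism groups $\mathrm{Aut}_G(G_i)\cap \mathrm{Aut}_G(N_G(H))$. As observed in the paragraph immediately preceding the corollary, malnormality of $H$ in both $G_1$ and $G_2$ forces $N_{G_i}(H)=H$ for $i=1,2$, and by Proposition \ref{normalizer}(1) this gives $N_G(H)=N_{G_1}(H)\amalg_H N_{G_2}(H)=H\amalg_H H=H$. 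So the middle vertex group $\mathrm{Aut}_G(N_G(H))$ in Theorem \ref{amalgam} collapses to $\mathrm{Aut}_G(H)$.

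The next step is to identify $\mathrm{Aut}_G(H)$ with $\mathrm{Aut}_G(G_1)\cap \mathrm{Aut}_G(G_2)$, which is exactly what is needed to rewrite the two amalgamated edge groups in the Theorem \ref{amalgam} formula as a single copy of $\mathrm{Aut}_G(H)$. The inclusion $\mathrm{Aut}_G(G_1)\cap \mathrm{Aut}_G(G_2)\subseteq \mathrm{Aut}_G(H)$ follows because any $\alpha$ fixing both factors setwise fixes $G_1\cap G_2=H$ setwise: $\alpha(H)=\alpha(G_1)\cap\alpha(G_2)=G_1\cap G_2=H$ (here one uses that in the standard pro-$p$ tree $S(G)$ the two base vertices $G_1$, $G_2$ are distinct and adjacent, so their stabilizers intersect in the edge stabilizer $H$ by Theorem \ref{fixed geodesic}). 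For the reverse inclusion one must argue that $\mathrm{Aut}_G(H)\subseteq \mathrm{Aut}_G(G_1)\cap\mathrm{Aut}_G(G_2)$; this is the point where malnormality is essential, and it is the step I expect to require the most care. The claim rests on the fact that under double malnormality, $G_1^{g_1}\cap G_2^{g_2}=H$ forces $g_1\in G_1$ and $g_2\in G_2$, so that $G_1$ and $G_2$ are recoverable from $H$ as, roughly, the vertex groups of the two vertices of $S(G)$ adjacent to the edge fixed by $H$; an automorphism preserving $H$ permutes this pair of vertices, hence preserves each $G_i$ up to swapping, and in Case~1 (where $\mathrm{Out}(G)$ does not swap the two $V_0(T_c)/G$-vertices) it preserves each individually.

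Finally I would assemble the two cases exactly as in the proof of Theorem \ref{amalgam}. In Case~1 the tree $T_c/\mathrm{Aut}(G)$ equals $T_c/G$, a path with vertex stabilizers $\mathrm{Aut}_G(G_1)$, $\mathrm{Aut}_G(N_G(H))=\mathrm{Aut}_G(H)$, $\mathrm{Aut}_G(G_2)$ and edge stabilizers $\mathrm{Aut}_G(G_i)\cap\mathrm{Aut}_G(H)$; but by the identification above $\mathrm{Aut}_G(G_1)\cap\mathrm{Aut}_G(H)=\mathrm{Aut}_G(G_2)\cap\mathrm{Aut}_G(H)=\mathrm{Aut}_G(H)$, and moreover the middle vertex group $\mathrm{Aut}_G(H)$ coincides with these edge groups, so the middle vertex together with its two edges is a fictitious segment that collapses, leaving $\mathrm{Aut}(G)=\mathrm{Aut}_G(G_1)\amalg_{\mathrm{Aut}_G(H)}\mathrm{Aut}_G(G_2)$. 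In Case~2 ($T_c/\mathrm{Aut}(G)\ne T_c/G$, forcing $G_1\cong G_2$) the subtree of $T_c$ has the single vertex $v_1$ with stabilizer $\mathrm{Aut}_G(G_1)$ and the cylinder vertex with stabilizer $\mathrm{Aut}_G(H)$ joined by one edge with stabilizer $\mathrm{Aut}_G(G_1)\cap\mathrm{Aut}_G(H)=\mathrm{Aut}_G(H)$; again this edge is fictitious and collapses, giving $\mathrm{Aut}(G)=\mathrm{Aut}_G(G_1)$. The role of $1$ and $2$ being interchangeable is inherited verbatim from Theorem \ref{amalgam}. The main obstacle, as noted, is justifying the equality $\mathrm{Aut}_G(H)=\mathrm{Aut}_G(G_1)\cap\mathrm{Aut}_G(G_2)$ cleanly using malnormality and the action on $S(G)$; everything else is bookkeeping on the graph of cylinders already set up in Theorem \ref{amalgam}.
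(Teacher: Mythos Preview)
Your proposal is correct and follows essentially the same route as the paper. The paper's ``proof'' is really the paragraph immediately preceding the corollary: it records that malnormality gives $N_G(H)=H$, then states the key identity $\mathrm{Aut}_G(H)=\mathrm{Aut}_G(G_1)\cap\mathrm{Aut}_G(G_2)$ with the one-line justification ``since $G_1^{g_1}\cap G_2^{g_2}=H$ if and only if $g_1\in G_1,\,g_2\in G_2$'', and declares that Theorem~\ref{amalgam} then reduces to the corollary. You have unpacked exactly this, including the collapse of the fictitious middle segment in both cases, and you correctly flagged the inclusion $\mathrm{Aut}_G(H)\subseteq\mathrm{Aut}_G(G_1)\cap\mathrm{Aut}_G(G_2)$ as the only substantive step.
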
 

\bigskip

Note that since $H$ is malnormal, $G$ has trivial center and so $Inn(G)$ is naturally isomorphic to $G$, i.e. we can identify them in $Aut(G)$. Thus $G\cap Aut_G(G_i)=G_i$. Hence $G\cap Aut_G(H)=G\cap Aut_G(G_1)\cap Aut_G(G_1)=G_1\cap G_2=H$.

Thus we can deduce the following 

\begin{corollary} If $H$ is malnormal and $G_1,G_2$, then $$Out(G)=Aut_G(G_1)/G_1\amalg_{ Aut_G(H)/H} Aut_G(G_2)/G_2$$ or  $$Out(G)=Aut_{G}(G_1)/G_1$$
	
\end{corollary}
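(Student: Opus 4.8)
The plan is to pass from the automorphism-group splitting over $\mathbb{Z}$ in the previous corollary to the outer automorphism group by quotienting out inner automorphisms. First I would observe that, since $H$ is malnormal (in both $G_1$ and $G_2$), the group $G=G_1\amalg_H G_2$ has trivial center: a central element would lie in every conjugate of $G_1$ and of $G_2$, hence in $H$, and then malnormality of $H$ would force it to be trivial. Consequently the natural map $G\to\mathrm{Inn}(G)$ is an isomorphism, and I will identify $G$ with $\mathrm{Inn}(G)\trianglelefteq\mathrm{Aut}(G)$.

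Next I would compute the intersections of $G=\mathrm{Inn}(G)$ with the vertex and edge groups of the splitting from Corollary~\ref{malnormal amalgam}. The key point is $G\cap \mathrm{Aut}_G(G_i)=G_i$: an inner automorphism $c_g$ preserves $G_i$ iff $g^{-1}G_ig=G_i$, and by malnormality (combined with the structure of $G$ as an amalgam, i.e. using that the $G$-stabilizer of the vertex carrying $G_i$ in the standard tree is exactly $G_i$ since the decomposition is non-fictitious and reduced) this happens iff $g\in G_i$. Similarly $G\cap \mathrm{Aut}_G(H)=G\cap\mathrm{Aut}_G(G_1)\cap\mathrm{Aut}_G(G_2)=G_1\cap G_2=H$, using the identity $\mathrm{Aut}_G(H)=\mathrm{Aut}_G(G_1)\cap\mathrm{Aut}_G(G_2)$ already established in the text preceding Corollary~\ref{malnormal amalgam}.

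Then I would quotient the amalgamated free pro-$p$ product $\mathrm{Aut}(G)=\mathrm{Aut}_G(G_1)\amalg_{\mathrm{Aut}_G(H)}\mathrm{Aut}_G(G_2)$ by the normal subgroup $\mathrm{Inn}(G)=G$. Since $G$ acts on the standard pro-$p$ tree $S$ of this amalgam with $G\cap \mathrm{Aut}_G(G_1)=G_1$, $G\cap\mathrm{Aut}_G(G_2)=G_2$, $G\cap\mathrm{Aut}_G(H)=H$, and $S/G=S/\mathrm{Aut}(G)$ is a single edge (the action on the quotient graph is trivial because $G$ lies in the kernel of the action on the one-edge quotient graph), the quotient $\mathrm{Aut}(G)/G$ acts on $S/\widetilde{G}$-type quotient and is itself the fundamental group of the corresponding quotient graph of groups, namely $\mathrm{Aut}_G(G_1)/G_1\amalg_{\mathrm{Aut}_G(H)/H}\mathrm{Aut}_G(G_2)/G_2$. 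Here I would cite Proposition~\ref{mod tilde} together with the Bass--Serre description (Remark~\ref{cofinite action}) to justify that collapsing the inner part of the graph of groups yields exactly this amalgam. In the degenerate (fictitious) case $\mathrm{Aut}(G)=\mathrm{Aut}_G(G_1)$, the same argument with one vertex gives $\mathrm{Out}(G)=\mathrm{Aut}_G(G_1)/G_1$.

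The main obstacle I anticipate is the bookkeeping in the quotient-of-graph-of-groups step: one must verify that the amalgam structure descends cleanly, i.e. that $\mathrm{Aut}_G(H)/H$ really embeds as the common edge group and that no extra identifications occur when passing to the quotient. This is exactly the place where the computation $G\cap\mathrm{Aut}_G(H)=H$ is used, and where one needs that the action of $\mathrm{Aut}(G)$ on the one-edge quotient graph $S/G$ is trivial (so that $\mathrm{Out}(G)$ does not pick up an HNN contribution or a fold). Once those intersection computations are in hand, the descent is formal, so the proof should be short: state the center-triviality, identify $\mathrm{Inn}(G)\cong G$, compute the three intersections, and invoke the standard fact that quotienting a graph of groups by a normal subgroup meeting the vertex and edge groups in prescribed subgroups produces the quotient graph of groups.
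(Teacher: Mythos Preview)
Your proposal is correct and follows essentially the same route as the paper: the paper simply records (in the paragraph preceding the corollary) that malnormality of $H$ forces the center of $G$ to be trivial, identifies $G$ with $\mathrm{Inn}(G)$, computes $G\cap\mathrm{Aut}_G(G_i)=G_i$ and $G\cap\mathrm{Aut}_G(H)=H$, and then passes to the quotient by $G$. Your write-up is somewhat more explicit about the descent of the amalgam structure to the quotient, but the argument is the same.
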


\bigskip
\begin{remark} All results for amalgamated free product above hold the same way also for abstract amalgamated free product and profinite amalgamated free products (see the next section for details).\end{remark}

\subsection{Applications}

\begin{proposition}\label{tree of cylindres acylindrical}
Let $G$ be a CSA pro-$p$ group and suppose that $\E$ consists of abelian subgroups. Let $\sim$ be the commutation relation. Then $T_c$ is 2-acylindrical.\end{proposition}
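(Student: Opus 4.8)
The plan is to show that the pro-$p$ tree of cylinders $T_c$ of a CSA pro-$p$ group $G$ acting on an abelian $\E$-tree $T$, with $\sim$ the commutation relation, has the property that any nontrivial element fixes a subtree of diameter at most $2$. First I would recall the structure of $T_c$: its vertices split into $V_0(T_c)$, the vertices of $T$, and $V_1(T_c)$, the cylinders, i.e. the equivalence classes $[A]$ of abelian subgroups under commutation; an edge $\epsilon=(v,[Y])$ has stabilizer $G_v\cap G_{[Y]}$, where by the CSA hypothesis $G_{[A]}$ is the unique maximal abelian subgroup containing $A$ (see the list of examples after Lemma \ref{3'}). The key structural point to extract is that in a $2$-acylindrical action the relevant quantity is the diameter of $\Fix(g)$ for $1\neq g\in G$, so I would analyze separately the cases where $g$ fixes a vertex of $V_0$, a vertex of $V_1$, or vertices of both types, bounding the distance between any two fixed vertices.

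The heart of the argument is the malnormality built into CSA. Suppose $g$ fixes two vertices $x,y$ of $T_c$ with $d_{T_c}(x,y)\geq 3$. By the bipartite structure the geodesic $[x,y]$ in $T_c$ alternates between $V_0$ and $V_1$, so it contains two distinct cylinder-vertices $[Y_1]\neq[Y_2]$ with a $V_0$-vertex $v$ between them, and $g\in G_{[Y_1]}\cap G_v\cap G_{[Y_2]}$ (using Theorem \ref{fixed geodesic}, which gives that $g$ fixes every edge, hence every vertex, on the geodesic, and that the stabilizers along the geodesic are nested into each edge group). Now $G_{[Y_i]}$ is a maximal abelian subgroup $M_i$ of $G$ and $M_1\neq M_2$; but $g\neq 1$ lies in $M_1\cap M_2$, contradicting malnormality of maximal abelian subgroups in a CSA group (two distinct maximal abelian subgroups intersect trivially). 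Hence no element can fix two $V_1$-vertices, which forces $\Fix(g)$ to meet $V_1(T_c)$ in at most one point. Since $T_c$ is bipartite, a subtree containing at most one $V_1$-vertex has diameter at most $2$ (it is a star centered at that cylinder-vertex, or a single edge, or a single $V_0$-vertex, or — if it lies entirely in $V_0$ — it is a single point because no two $V_0$-vertices are adjacent). This gives $\mathrm{diam}(\Fix(g))\leq 2$, i.e. $2$-acylindricity.

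The step I expect to be the main obstacle is making the reduction to the two-cylinders situation fully rigorous in the pro-$p$ setting: one must argue that if $g$ fixes $x$ and $y$ then it fixes the geodesic $[x,y]$ in $T_c$ (Theorem \ref{fixed geodesic} applies, but one should be careful that $T_c$ is genuinely a pro-$p$ tree — this is Theorem \ref{tree of cylinders} — and that "diameter at most $k$" for a profinite subtree is the natural inverse-limit notion), and that the edge stabilizers along the geodesic are contained in the stabilizers of the adjacent cylinder-vertices, which requires the description of edge stabilizers of $T_c$ together with axiom (2) of admissibility (Remark \ref{edges vs subgroups}). A secondary subtlety is handling the case where $\Fix(g)$ is a single point in $V_1(T_c)$, where diameter $0\leq 2$ trivially, and the degenerate case where the geodesic $[x,y]$ is too short to contain two cylinder vertices — but in that case $d_{T_c}(x,y)\leq 2$ already, so there is nothing to prove. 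I would also note that if $g$ fixes two $V_0$-vertices $v,w$ then $g\in G_v\cap G_w$, which is contained in each edge group along $[v,w]$ in the original tree $T$, so $g$ lies in an abelian edge group; pushing this through $T_c$ one again finds $g$ in the maximal abelian subgroups corresponding to the two cylinders flanking $v$ on $[v,w]_{T_c}$, reducing to the previous contradiction.
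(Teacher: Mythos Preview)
Your proposal is correct and follows essentially the same approach as the paper: the paper's proof simply observes that cylinder stabilizers $G_{[A]}$ are maximal abelian (hence malnormal by CSA), so the fixed-point set of any nontrivial $g$ is the star of the unique cylinder vertex $v_{A_{\max}}$ corresponding to the maximal abelian subgroup containing $g$. Your argument is the contrapositive form of the same observation, spelled out with more care about the bipartite structure and the use of Theorem~\ref{fixed geodesic}; the paper's three-line proof leaves all of these details implicit.
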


\begin{proof} As it is observed in Example (1) of Section 9.1 the cylinders are maximal abelian subgroups which are malnormal by hypothesis. Hence for any abelian group $A$ containing in the maximal $A_{max}$ the pro-$p$ subtree fixed by $A$ is the star of the vertex $v_{A_{max}}$ where $A_{max}$ is the cylinder.

\end{proof}

Proof of Theorem \ref{CSA}. Let    $G=\Pi_1(\G,\Gamma)$ be a splitting of $G$ as the fundamental group of a finite reduced graph of pro-$p$ groups with abelian edge groups. Let $T$ be a maximal subtree of $\Gamma$.  By \cite[Proposition 3.4]{CZ} $|\Gamma\setminus T|\leq d(G)$ and so  w.l.o.g. we may assume that $\Gamma=T$ is a tree. As $|E(T)|=|G(T)|-1$ it suffices to bound $|V(T)|$. As $G$ is SCA, every centralizer is abelian and so any abelian vertex group of  $(\G, T)$ has non-abelian neighbouring vertex group. Hence it suffices to bound the number of vertices of $T$ whose vertex groups are non-abelian.  By Proposition \ref{tree of cylindres acylindrical} the tree of cylinders $T_c$ of the standard pro-$p$ tree $S(G)$ is 2-acylindrical.  Recall that  $V(\Gamma)=V_0(T_c/G)$. Let $T_{cr}$ be the reduced tree obtained from $T_c$. By Corollary \ref{finiteness} $T_c/G$ is a finite and hence so is $T_{cr}/G$. As edge stabilizers of $T_c$ are abelian the set of vertices $V_{NA}$ of $T$ whose vertex stabilizers are not abelian embeds into $T_{cr}$, so it suffices to bound   $|V(T_{cr}/G)|$.   But this  follows from Theorem \ref{k-acylindrical accessibility}.

\begin{theorem} Let $G$ be a finitely generated  CSA pro-$p$ group. Suppose $G$ is indecomposable into a free pro-$p$ product and  $\E$ consists of  abelian subgroups. Let $T$ be a JSJ $\E$-tree. Then  $T_c$ is    $Aut(G)$-invariant JSJ $\E$-tree relative to not virtually cyclic abelian subgroups.
\end{theorem}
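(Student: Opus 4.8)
The plan is to show that $T_c$ satisfies the three properties in the statement: (a) it is an $\E$-tree relative to not virtually cyclic abelian subgroups, (b) it is $Aut(G)$-invariant, and (c) it is a JSJ $\E$-tree among trees in which the not virtually cyclic abelian subgroups are elliptic. The starting point is that, since $G$ is finitely generated CSA and $\E$ consists of abelian subgroups, $G$ is $\E$-accessible by Theorem \ref{CSA}; hence by Corollary \ref{finiteness} there is a JSJ $\E$-tree $T$ with $T/G$ finite, so $E(T)$ is compact and the tree of cylinders $T_c$ of $T$ (for the commutation relation $\sim$, as in Example (1) of Section 9.1) is defined and by Proposition \ref{finite diameter} $T_c/G$ is finite. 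The cylinders are exactly the maximal abelian subgroups, which are malnormal by the CSA hypothesis, and $G_{[A]}$ is the maximal abelian subgroup containing $A$.

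First I would check (a). By the last part of Proposition \ref{properties of tree of cylinders} (item (4)), since the stabilizer $G_{[A]}$ of every equivalence class is abelian, hence in $\E$, the edge stabilizers of $T_c$ lie in $\E$. For the relative claim: a not virtually cyclic abelian subgroup $M$ is contained in its unique maximal abelian subgroup $M_{\max}$, which is a cylinder $Y$; by malnormality (Proposition \ref{tree of cylindres acylindrical} and its proof) the fixed subtree of $M$ in $T_c$ is the star of the vertex $v_{M_{\max}}\in V_1(T_c)$, so $M$ is elliptic in $T_c$. Next, $Aut(G)$-invariance (b): because $T$ is a JSJ $\E$-tree and $\E$ is $Aut(G)$-invariant (abelianness is an isomorphism-type condition), Remark \ref{action on deformation space} gives that $Aut(G)$ preserves the JSJ deformation space, so it preserves the set of maximal vertex stabilizers of (reduced trees in) that deformation space; the commutation relation $\sim$ is visibly $Aut(G)$-invariant. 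Then Theorem \ref{Aut} (equivalently Lemma \ref{vertex stabilizers invariant}) applies and $Aut(G)$ acts on $T_c$.

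The substantive part is (c): $T_c$ is a JSJ tree relative to the family $\K$ of not virtually cyclic abelian subgroups. I would argue that $T_c$ lies in the JSJ deformation space relative to $\K$. Since $T$ is universally $\E$-elliptic and $T$ dominates $T_c$ (Proposition \ref{properties of tree of cylinders}(1)), and the edge stabilizers of $T_c$ are abelian and (being maximal abelian, hence malnormal) universally elliptic in all $\E$-trees relative to $\K$, $T_c$ is itself universally $(\E,\K)$-elliptic. For maximality (domination of every universally $(\E,\K)$-elliptic tree), the natural route is: since $G$ is freely indecomposable, it is $\amalg$-indecomposable, and I would show a vertex stabilizer $G_x$ of $T_c$ is universally elliptic relative to $\K$ --- using Corollary \ref{flexible or rigid} and Proposition \ref{interconnection} to reduce to relative JSJ's of vertex groups of $T$, together with the key fact that a vertex $x\in V_0(T_c)$ whose stabilizer is not abelian fixes only $x$ and is a maximal elliptic subgroup (the remarks after Remark \ref{via equality}), while vertices in $V_1(T_c)$ carry maximal abelian groups which split relative to their incident edge groups only over virtually cyclic abelian subgroups, hence are elliptic in every $(\E,\K)$-tree by definition of $\K$. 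Combining, every vertex stabilizer of $T_c$ is $(\E,\K)$-universally elliptic, so by Lemma \ref{universally elliptic vertex} $T_c$ is a JSJ $(\E,\K)$-tree. The main obstacle I anticipate is this last point: cleanly verifying that the non-abelian (``rigid'') vertex groups of $T_c$ remain universally elliptic relative to $\K$ --- this is where one must carefully invoke that $T$ was already a JSJ tree and that passing to the tree of cylinders does not enlarge the (abelian) edge groups beyond maximal abelian ones, combined with the acylindricity of $T_c$ from Proposition \ref{tree of cylindres acylindrical} to control the interaction between cylinder vertices and the rest.
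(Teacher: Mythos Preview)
Your setup for (a) and (b) is fine and matches the paper. The problems are in (c), and they are not just the ``main obstacle'' you flag at the end --- there are two concrete gaps earlier.

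First, you write that edge stabilizers of $T_c$ are ``maximal abelian, hence malnormal''. They are not: the stabilizer of an edge $(x,Y)$ is $G_x\cap G_Y$, which sits inside the maximal abelian $G_Y$ but need not equal it. This is harmless for showing edge stabilizers lie in $\E$, but it matters for your ellipticity argument, which should really be run for the cylinder stabilizer $G_Y$ and then passed down. More seriously, your treatment of the case where $G_Y$ is (virtually) cyclic is wrong: you say such $G_Y$ ``split relative to their incident edge groups only over virtually cyclic abelian subgroups, hence are elliptic in every $(\E,\K)$-tree by definition of $\K$'', but a cyclic $G_Y$ is precisely \emph{not} in $\K$, so this gives nothing. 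The paper's argument here uses the hypothesis you never invoke: since $G$ is freely indecomposable, edge stabilizers of $T$ are non-trivial, so a cyclic $G_Y$ is commensurable with some $G_e$ for $e\in E(T)$; since $T$ is universally $\E$-elliptic, $G_e$ is elliptic in every $\E$-tree, and then Corollary~\ref{fixed vertex} gives that $G_Y$ is too.

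Second, your strategy for $V_0(T_c)$ via Lemma~\ref{universally elliptic vertex} asks for too much. You want every non-abelian vertex stabilizer of $T$ to be universally $(\E,\K)$-elliptic, but $T$ may have flexible vertices, so this need not hold, and the machinery you cite (Corollary~\ref{flexible or rigid}, Proposition~\ref{interconnection}) does not produce it. The paper avoids this entirely: it does not show vertex stabilizers of $T_c$ are universally elliptic, only that they are elliptic in every \emph{universally elliptic} $(\E,\K)$-tree $T'$. For $v\in V_0(T_c)=V(T)$ this is immediate from $T$ being a JSJ $\E$-tree (so $T$ dominates $T'$), and for $v\in V_1(T_c)$ it is the same dichotomy on $G_Y$ as above. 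So the correct route is to verify the two defining properties of a JSJ tree directly rather than to force everything through Lemma~\ref{universally elliptic vertex}.
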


\begin{proof} By Theorem \ref{CSA} $G$ is $\E$-accessible and so by Corollary \ref{finiteness}  $T/G$ is finite.  Then by Corollary \ref{cofiniteness of tree of cylinders} $T_c/G$ is finite. 

 Let us first show that $T_c$ is universally elliptic.  Let $A$ be an edge stabilizer of $T_c$.  We note that in this case $G_Y$ is a maximal abelian subgroup. 
		Let $T'$ be a universally elliptic $\E$-tree for $G$ relative to non cyclic abelian subgroups. If $G_Y$ is not cyclic,  then it stabilizes a vertex of $T'$ since $T'$ is relative to non-cyclic abelian subgroups and therefore so is $A$.
		
		If $G_Y$ is cyclic, then it is commensurable with an edge stabilizer of $T$ (edge stabilizers of $T$ cannot be trivial since $G$ does not split as a free pro-$p$ product) and so  by Proposition \ref{ellipticity} is elliptic in $T'$ since $T$ is universally elliptic.  Thus $T_c$ is universally elliptic.

		 We need to show now that $T_c$ dominates $T'$. If $v\in V_0(T_c)$ then $v\in V(T)$ and so $G_v$ is elliptic in $T'$ since $T$ is a JSJ pro-$p$  $\E$-tree. 
		  
	Suppose $v\in V_1(T_c)$.  Then $v=T_Y$ and $G_v$ is the set-stabilizer $G_Y$ of the cylinder $Y$. But $G_Y$ in this case is  maximal abelian. So as in the preceding paragraph if it is  cyclic it is commensurable with the stabilizer of an edge in $T$ and so is elliptic. If it is abelian 
	non-cyclic subgroup of $G$, then it stabilizers a vertex of $T'$ by hypothesis.

\end{proof}

As limit pro-$p$ groups are CSA we deduce the following 

\begin{corollary} Let $G$ be a pro-$p$ limit group   and $\E$ be the set of  abelian subgroups of $G$. Suppose that $G$ is indecomposable into a free pro-$p$ product. Then the pro-$p$ tree of cylinders $T_c$ is    $Aut(G)$-invariant JSJ $\E$-tree relative to not virtually cyclic abelian subgroups. 

\end{corollary}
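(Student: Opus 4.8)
The statement to prove is the corollary that for a pro-$p$ limit group $G$ which is indecomposable into a free pro-$p$ product, the pro-$p$ tree of cylinders $T_c$ (for the commutation relation on abelian subgroups) is an $\mathrm{Aut}(G)$-invariant JSJ $\E$-tree relative to not virtually cyclic abelian subgroups.

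\medskip

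The plan is to deduce this directly from the immediately preceding theorem, which proves exactly the same conclusion under the hypotheses that $G$ is a finitely generated CSA pro-$p$ group, indecomposable into a free pro-$p$ product, with $\E$ the family of abelian subgroups. So the only thing that needs to be checked is that a pro-$p$ limit group satisfies all the hypotheses of that theorem. First I would recall from Section 3 (following \cite{KZ}) that pro-$p$ limit groups are by definition finitely generated, so finite generation is immediate. Second, I would invoke the result (cited in the excerpt as ``By \cite{KZ} $L$ is CSA'') that every pro-$p$ limit group is CSA; this is stated explicitly in Section 3 just before the corollary ``Let $G$ be a pro-$p$ limit group. Then $G$ is accessible over abelian subgroups.'' Third, the hypothesis that $G$ is indecomposable into a free pro-$p$ product is carried over verbatim as an assumption of the corollary. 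Thus all hypotheses of the preceding theorem are met, and the conclusion follows word for word.

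\medskip

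Concretely, the proof is one sentence: ``As pro-$p$ limit groups are CSA (by \cite{KZ}) and finitely generated by definition, the claim is a special case of the preceding theorem applied to $\E$ the family of abelian subgroups of $G$.'' One might optionally add a remark recording that, by Theorem~\ref{CSA} (accessibility of finitely generated CSA pro-$p$ groups over abelian subgroups), the JSJ $\E$-tree $T$ used in the theorem has $T/G$ finite, so $T_c/G$ is finite as well by Corollary~\ref{cofiniteness of tree of cylinders}; but this is already part of the proof of the preceding theorem and need not be repeated.

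\medskip

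There is essentially no obstacle here: the corollary is a pure specialization. The only point requiring minimal care is making sure that the notion ``$\E$ = set of abelian subgroups of $G$'' in the corollary's statement matches the $\E$ used in the theorem (it does), and that ``not virtually cyclic abelian subgroups'' in the relative clause is the same family in both statements (it is, since in the torsion-free limit group setting virtually cyclic abelian just means procyclic-by-finite, and the cylinders $G_Y$ are maximal abelian by the CSA property as noted in Example (1) of Section 9.1 and in Proposition~\ref{tree of cylindres acylindrical}). Hence the corollary follows immediately.

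\begin{proof} By definition pro-$p$ limit groups are finitely generated, and by \cite{KZ} they are CSA. Since $G$ is assumed indecomposable into a free pro-$p$ product, all hypotheses of the preceding theorem are satisfied with $\E$ the family of abelian subgroups of $G$. Hence, taking $T$ to be a JSJ $\E$-tree of $G$ (which exists and has $T/G$ finite by Theorem \ref{CSA} and Corollary \ref{finiteness}), the preceding theorem gives that $T_c$ is an $Aut(G)$-invariant JSJ $\E$-tree relative to not virtually cyclic abelian subgroups.
\end{proof}
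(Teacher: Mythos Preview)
Your proof is correct and matches the paper's approach exactly: the paper simply writes ``As limit pro-$p$ groups are CSA we deduce the following'' before stating the corollary, which is precisely the specialization argument you give. If anything, your version is more explicit, spelling out the finite generation (from the definition) and the CSA property (from \cite{KZ}) as the two ingredients needed to invoke the preceding theorem.
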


\begin{example} Let $H$ be an  infinite $FA$ pro-$p$ group and $C$ is a malnormal infinite cyclic subgroup of $H$. Let $G$ be a free pro-$p$ product $H\amalg_C C\times \Z_p$.  Note that there are even analytic pro-$p$ $CSA$ such groups $H$ (see \cite{MWZ}; also open pro-$p$ subgroups of $SL_2(\Z_p)$  are FA and CA). 

Then $G$ is the pro-$p$ fundamental group of the 
following graph of pro-$p$ groups.

$$\xymatrix{{\overset{H}{\bullet}}\ar[rrrr]^{C}&&&& {\overset{C}{\bullet}}\ar@(rd,ru)_{t}}$$
where $t$ is a generator of $\Z_p$.

The quotient graph $T_c/G$ then is the following graph of pro-$p$ groups.

$$\xymatrix{{\overset{H}{\bullet}}\ar[rrrr]^{C}&&&& {\overset{C\times\Z_p}{\bullet}}\ar[rrrr]^{C}&&&&{\overset{C}{\bullet}}}$$

Of course, the right edge of this graph of groups is fictitious and so  can be  collapsed.

\bigskip
We also can consider the extension of the centralizer above as HNN-extension $HNN(H,C,t)$. Then it is the fundamental pro-$p$ group of the following graph of pro-$p$ groups:

$$\xymatrix{{\overset{H}{\bullet}}\ar@(rd,ru)_{C}}$$

It follows that $T_c/G$ is the following graph of pro-$p$ groups:

$$\xymatrix{{\overset{H}{\bullet}}\ar[rrrr]^{C}&&&& {\overset{C\times\Z_p}{\bullet}}}$$

As $H$ is FA, $T_c$ is universally elliptic, and it is JSJ pro-$p$ tree relative to non-cyclic abelian subgroups. Note that this tree of groups coincides with the previous tree of groups after collapsing the right edge.

\end{example}

\section{Automorphisms}

In this section we extend Corollary \ref{malnormal amalgam}   to the pro-$\C$ case, where $\C$ is a class of finite groups closed for subgroups, quotients and extensions, assuming that the amalgamated subgroup is finite.

A pro-$\C$ group will be called an $OE$-group if  whenever it acts   on a  pro-$\C$ tree  with finite edge stabilizers, then it fixes a vertex. 

We shall reformulate here Proposition 3.4 and Corollary 3.5 \cite{BPZ} for our purpose adapting the proofs.

\begin{proposition} \label{compart1} Let $G = G_{1}\amalg_{H}\,G_2$  be a proper   pro-$\C$  amalgamated free products of  pro-$\C$ groups  
with finite amalgamation.  Suppose  $G_1$ is an OE-group.   Then  for any  automorphism $\psi\in Aut(G)$ there exists an inner automorphism $\tau$ such that  $\tau\psi(G_1) =G_{1}$ (up to possibly interchanging $G_1$ and $G_2$ in $G$ if $G_1\cong G_2$).
	\end{proposition}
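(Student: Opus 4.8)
The plan is to exploit the action of $G$ on the standard pro-$\C$ tree $S = S(G)$ associated with the splitting $G = G_1 \amalg_H G_2$, which has finite edge stabilizers (the conjugates of $H$). First I would fix $\psi \in \mathrm{Aut}(G)$ and consider the twisted $G$-tree $S^\psi$, namely the same underlying tree with the action $g \cdot s = \psi(g)\, s$; its edge stabilizers are $\psi^{-1}$ of the conjugates of $H$, hence still finite. The key point is that $\psi(G_1)$ is then a vertex stabilizer of $S^\psi$ (or equivalently $G_1$ acts on $S^\psi$ via $\psi$, i.e. on $S$ via the new action), so it suffices to show $\psi(G_1)$ fixes a vertex of the original tree $S$. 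This is exactly where the hypothesis that $G_1$ is an $OE$-group is used: $\psi(G_1) \cong G_1$ acts on the pro-$\C$ tree $S$ with finite edge stabilizers, hence by the $OE$ property it fixes a vertex $v \in V(S)$, so $\psi(G_1) \le G_v$, a conjugate of $G_1$ or of $G_2$.

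Next I would analyze the two cases. Since $S/G$ is the single-edge graph with vertex groups $G_1, G_2$, the vertex $v$ satisfies $G_v = G_i^{\,g}$ for some $i \in \{1,2\}$ and some $g \in G$; letting $\tau$ be conjugation by $g^{-1}$ gives $\tau\psi(G_1) \le G_i$. To upgrade the inclusion to an equality I would apply the same argument symmetrically to $\psi^{-1}$ (or to $\tau\psi$ and its inverse): $(\tau\psi)^{-1}$ is again an automorphism of $G$, so $(\tau\psi)^{-1}(G_i) = (\tau\psi)^{-1}(G_i)$ — wait, more carefully, I would use that $\tau\psi(G_1)$, being isomorphic to $G_1$, is itself an $OE$-group and a vertex stabilizer, and then invoke a cardinality/indecomposability comparison: in the pro-$\C$ Bass–Serre setup a proper subgroup of $G_i$ which is an $OE$-group and surjects appropriately cannot be proper, because applying $(\tau\psi)^{-1}$ sends $G_i$ into a conjugate of $G_1$ or $G_2$ and composing the two inclusions forces both to be isomorphisms. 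Concretely: $\tau\psi(G_1)\le G_i$ and $(\tau\psi)^{-1}(G_i) \le G_j^{\,h}$ for some $j,h$; then $G_1 \le (\tau\psi)^{-1}(G_i) \le G_j^h$, and since $G_1$ is a free factor it is not conjugate into a proper subgroup of itself or of the other factor unless equality holds throughout, giving $\tau\psi(G_1) = G_i$ (with $j=1$, $h$ trivial after adjusting $\tau$), or $G_1 \cong G_2$ and we land in $G_2$, i.e. the interchange case.

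The main obstacle I anticipate is making rigorous the step that turns the inclusion $\tau\psi(G_1)\le G_i$ into an equality, i.e. ruling out that an automorphism of $G$ maps $G_1$ \emph{properly} into a conjugate of $G_i$. In the abstract and profinite settings this is handled by a Kurosh-type argument or by comparing the induced free-product decompositions; here I would either cite the pro-$\C$ Kurosh subgroup theorem (the analogue of Theorem \ref{KST}) applied to the subgroup $\tau\psi(G_1)$ of $G$ — which, being a vertex stabilizer of a tree with finite edge stabilizers and an $OE$-group, must be a free factor-type subgroup — or argue via the finiteness of $H$ and a counting/ellipticity argument that $[G_i : \tau\psi(G_1)]$ must be $1$. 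A secondary, more bookkeeping obstacle is the symmetry of the roles of $G_1$ and $G_2$: when $G_1 \not\cong G_2$ one must check that $v$ cannot be a translate of the $G_2$-vertex, which again follows from $\tau\psi(G_1) \cong G_1 \not\cong G_2$ combined with the equality step; and the adjustment of $\tau$ by a further inner automorphism to arrange $\tau\psi(G_1) = G_1$ on the nose (rather than a conjugate) is routine once the index is known to be $1$. Throughout, I would lean on the results already in the excerpt: the existence and properties of the standard pro-$\C$ tree (the pro-$\C$ analogue of \cite[Proposition 3.8]{ZM-88}), Theorem \ref{fixed geodesic} for controlling intersections of vertex stabilizers, and Proposition \ref{normalizer} if normalizer computations are needed to pin down $\tau$.
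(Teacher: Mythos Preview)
Your approach is essentially the same as the paper's: use that $\psi(G_1)\cong G_1$ is an $OE$-group acting on the standard pro-$\C$ tree $S$ with finite edge stabilizers, so it fixes a vertex and hence lies in some $G_i^{\,w}$; then apply the same to $\psi^{-1}$ to upgrade the inclusion to equality. The paper's proof is exactly this two-line argument, citing \cite[Example 6.3.1]{R} for the ellipticity step.

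Where you are overcomplicating things is the ``main obstacle'' you flag. You propose invoking Kurosh or an index/counting argument to pass from $\tau\psi(G_1)\le G_i$ to equality, but the paper's argument is much shorter and you already have the pieces: from $\psi(G_1)\le G_1^{\,w}$ you get $G_1\le \psi^{-1}(G_1^{\,w})$; applying ellipticity to $\psi^{-1}(G_1^{\,w})$ (which is again isomorphic to $G_1$, hence $OE$) gives $\psi^{-1}(G_1^{\,w})\le G_1^{\,g}$ for some $g$ (the $G_2$ case being excluded by properness of the amalgam or leading to $G_1\cong G_2$). Now you have $G_1\le G_1^{\,g}$, and in the profinite/pro-$\C$ world this forces $G_1=G_1^{\,g}$ (look at any finite quotient: a finite group cannot properly contain a conjugate of itself). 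This collapses the chain of inclusions to equalities, giving $\psi(G_1)=G_1^{\,w}$. No Kurosh, no index computation needed.
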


\begin{proof}  By \cite[Example 6.3.1]{R}   $\psi(G_1)$ is conjugate into $G_1$ or $G_2$, say $\psi(G_1)\leqslant {G_{1}}^{w}$ for some $w\in G$. Thus denoting by    $\tau_{w^{-1}}$ the inner automorphism of conjugation by $w^{-1}$, if necessary,  we get $\tau_{w^{-1}}\psi(G_1) \leqslant G_1$.  But symmetrically  $G_1=\psi^{-1}(G_1^w)$ is in $G_1^g$ for some $g\in G$ and so $G_1=G_1^g$ and hence $\psi(G_1)={G_{1}}^{w}$.  Thus $\tau_{w^{-1}}\psi(G_1) = G_1$. 
\end{proof}

\begin{corollary} \label{2b}  Suppose in addition that   $G_2$ is a pro-$\C$ $OE$-group. Then for any automorphism $\psi\in Aut(G)$ there exists an inner automorphism $\tau$ such that $\tau\psi(H) = H$, $\tau\psi(G_1) =G_{1}$ and $\tau\psi(G_2) ={G_{2}}^{g}$ for some $g\in G$ (up to possibly interchanging $G_1$ and $G_2$ in $G$ if $G_1\cong G_2$).
	
\end{corollary}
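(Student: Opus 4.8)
The plan is to bootstrap from Proposition~\ref{compart1}, which already gives an inner automorphism $\tau_1$ with $\tau_1\psi(G_1)=G_1$ (possibly after swapping $G_1$ and $G_2$). First I would apply Proposition~\ref{compart1} to get $\tau_1\psi(G_1)=G_1$. Then, since $G_2$ is also an $OE$-group, the element $\tau_1\psi(G_2)$ is conjugate into $G_1$ or $G_2$ by \cite[Example 6.3.1]{R}; a symmetry argument as in the proof of Proposition~\ref{compart1} shows it cannot be conjugate into $G_1$ (because $\tau_1\psi$ is an automorphism and $G_1\neq G$, e.g.\ comparing the standard trees or using that $G_1$, $G_2$ are the maximal infinite — or more precisely non-conjugate-into-a-proper-factor — subgroups up to the $G_1\cong G_2$ ambiguity already accounted for). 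Hence $\tau_1\psi(G_2)=G_2^{g}$ for some $g\in G$. This already yields the last assertion of the corollary with $\tau=\tau_1$, except that we must still arrange $\tau\psi(H)=H$.

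For the statement $\tau\psi(H)=H$: note $H=G_1\cap G_2$ is the amalgamated subgroup. Once we have $\tau_1\psi(G_1)=G_1$ and $\tau_1\psi(G_2)=G_2^{g}$, we get $\tau_1\psi(H)=\tau_1\psi(G_1\cap G_2)=\tau_1\psi(G_1)\cap\tau_1\psi(G_2)=G_1\cap G_2^{g}$. Now I would invoke the structure of intersections of conjugates of factors in a proper amalgam over a malnormal-type subgroup: in a proper amalgamated free pro-$\C$ product, $G_1\cap G_2^{g}$ is conjugate (inside $G_1$, or trivial) to a subgroup of $H$; more precisely, using \cite[Theorem 9.1.12]{RZ-10} (cited earlier in the excerpt) together with the action on the standard tree, $G_1\cap G_2^g$ fixes the geodesic between the base vertex and its $g$-translate, so it lies in an edge stabilizer, i.e.\ in a conjugate $H^{g_1}$ with $g_1\in G_1$. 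Replacing $g$ by $g g_1$ (which does not change that $\tau_1\psi(G_2)$ is a conjugate of $G_2$) and composing $\tau_1$ with the inner automorphism $\tau_{g_1^{-1}}$ — being careful that conjugation by $g_1\in G_1$ fixes $G_1$ setwise — I can assume $\tau_1\psi(H)\le H$. By the symmetric argument applied to $\psi^{-1}$, or by a finiteness/cardinality count since $H$ is finite, the inclusion $\tau\psi(H)\le H$ forces equality $\tau\psi(H)=H$.

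The main obstacle I anticipate is the bookkeeping in the last paragraph: adjusting the conjugating element so that simultaneously $\tau\psi(G_1)=G_1$, $\tau\psi(H)=H$, and $\tau\psi(G_2)$ stays a conjugate of $G_2$, without the three adjustments interfering. The key point that makes this work is that the correction element $g_1$ lies in $G_1$, so post-composing with $\tau_{g_1^{-1}}$ preserves the already-achieved condition $\tau_1\psi(G_1)=G_1$ while moving $\tau_1\psi(H)$ into $H$; and it only modifies $g$ (the conjugator for $G_2$) by left multiplication, which is harmless. One should also double-check the $OE$-hypothesis is used correctly: $OE$ for $G_2$ guarantees that $\tau_1\psi(G_2)$, acting on the standard pro-$\C$ tree with finite (hence, in an $OE$-group, "globally-fixing") edge stabilizers, is elliptic, which is what lets us conclude it is conjugate into a vertex group $G_1$ or $G_2$ rather than acting hyperbolically. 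I would also remark that, as in the cited \cite[Corollary 3.5]{BPZ}, the interchange of $G_1$ and $G_2$ is only needed when $G_1\cong G_2$ and some automorphism realizes a swap, exactly as flagged in the statement.
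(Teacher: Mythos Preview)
Your proposal is correct and follows essentially the same route as the paper: apply Proposition~\ref{compart1} for $G_1$, use the $OE$ property for $G_2$ to land $\psi(G_2)$ in a conjugate of $G_2$, then use the intersection formula $G_1\cap G_2^{\,b}\le H^{\,b_1}$ with $b_1\in G_1$ (the paper cites \cite[Corollary 7.1.5(b)]{R} rather than \cite[Theorem 9.1.12]{RZ-10}) and conjugate by $b_1^{-1}\in G_1$ to get $\tau\psi(H)=H$, with equality coming from $|H|<\infty$. The one place where you are vaguer than the paper is the exclusion of $\tau_1\psi(G_2)\le G_1^{w}$: the ``symmetry argument'' of Proposition~\ref{compart1} only upgrades inclusion to equality, it does not by itself rule out the wrong factor; the paper's clean reason is that in a non-fictitious amalgam $G_1$ and $G_2$ are not conjugate (again \cite[Corollary 7.1.5(b)]{R}), which together with the symmetry argument forces $\psi(G_2)=G_2^{\,b}$.
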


\begin{proof} By  Proposition \ref{compart1} we may assume that $\psi(G_1)=G_1$.  Similarly as in the proof of  Proposition \ref{compart1} $\psi(G_2)$ is conjugate into $G_1$ or $G_2$.  But $ G_1$ and $ G_2$ are not conjugate,  since otherwise by \cite[Corollary 7.1.5 (b)]{R} or \cite[Corollary 3.13]{ZM-88} $G_1$ is conjugate to $H$ and so $G$ is fictitious. Thus  $\psi(G_2)\leqslant {G_{2}}^{b}$ for some $b\in G$ (cf. \cite[Example 6.3.1]{R}).  Then symmetrically  $\psi^{-1}(G_2^b)$ is in $G_2^g$ for some $g\in G$.  But $\psi^{-1}\psi(G_2)=G_2$ and so we deduce as in the previous proposition that 
	  $\psi(G_2)=G_2^b$.

	By \cite[Corollary 7.1.5 (b)]{R} or \cite[Corollary 3.13]{ZM-88} we have: $$\psi(H) = G_{1} \cap  G_{2}^{b} \leqslant H^{b_1} \,(b_1\in G_1)\eqno{(1)}$$ Similarly we have $\psi^{-1}(H)\leqslant H^{g_1}$ for some $g_1 \in G_1.$ Since $H$ is finite we have $\psi(H)=H^{b_1},\, b_1\in G_1.$ Thus we get $\tau_{{b_1}^{-1}} \circ \psi(H)=H$.
	
\end{proof}

\begin{corollary}\label{malnormal amalgamation} Suppose $H$ is malnormal. Then $\tau$ can be chosen such that in addition $\tau\psi(G_2)=G_2$ (up to possibly interchanging $G_1$ and $G_2$ in $G$ if $G_1\cong G_2$).
\end{corollary}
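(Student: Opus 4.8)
The plan is to show that the inner automorphism $\tau$ furnished by Corollary \ref{2b} already satisfies $\tau\psi(G_2)=G_2$, with no further adjustment needed: malnormality of $H$ will force the conjugating element produced there to lie inside $G_2$.

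Concretely, I would start from Corollary \ref{2b}: up to replacing $\psi$ by $\tau\psi$ for a suitable inner $\tau$ (and, if $G_1\cong G_2$, possibly interchanging the two factors — by symmetry I assume not), we have $\tau\psi(H)=H$, $\tau\psi(G_1)=G_1$ and $\tau\psi(G_2)=G_2^{g}$ for some $g\in G$. Write $\phi=\tau\psi$. Since the amalgam is proper, $G_1\cap G_2=H$, and since $\phi$ is bijective it preserves intersections, so
$$H=\phi(H)=\phi(G_1\cap G_2)=\phi(G_1)\cap\phi(G_2)=G_1\cap G_2^{g}.$$
Thus everything reduces to the algebraic assertion: $G_1\cap G_2^{g}=H$ implies $g\in G_2$ (this is the nontrivial direction of the fact recorded just before Corollary \ref{malnormal amalgam} in the pro-$p$ setting, and the argument is identical over pro-$\C$).

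To prove it I would pass to the standard pro-$\C$ tree $S$ of $G=G_1\amalg_H G_2$, in which $H=G_e$ for the edge $e=[v_1,v_2]$ with $G_{v_i}=G_i$. Malnormality of $H$ in $G$ yields $N_{G_i}(H)=H$ for $i=1,2$ (assuming $H\neq 1$, the only case of interest), whence distinct edges of $S$ incident to a common vertex have distinct stabilizers; since any two distinct edge stabilizers are conjugates $H^{a}\neq H^{b}$ with $H^{a}\cap H^{b}=1$, no non-trivial element of $G$ can fix two distinct edges that meet at a vertex. Now $H=\phi(H)\leq\phi(G_2)=G_2^{g}$ gives $H^{g^{-1}}=gHg^{-1}\leq G_2=G_{v_2}$, so the non-trivial subgroup $H^{g^{-1}}$ fixes both $v_2$ and the edge $g\cdot e$; if $g\notin G_2$ then $gv_2\neq v_2$, and since $gv_1$ has the wrong vertex type also $v_2\notin g\cdot e$, so $H^{g^{-1}}$ would fix the geodesic joining $v_2$ to $g\cdot e$, a path containing two distinct edges sharing a vertex — a contradiction. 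Hence $g\in G_2$, $G_2^{g}=G_2$, and the $\tau$ from Corollary \ref{2b} satisfies $\tau\psi(G_1)=G_1$ and $\tau\psi(G_2)=G_2$, as required.

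The one genuine point to be careful about is that malnormality \emph{in $G$} — not merely in $G_1$ and in $G_2$ — is what is used: it is needed both to kill $H^{a}\cap H^{b}$ for distinct conjugates and to get $N_{G_i}(H)=H$, and it is what makes the action on $S$ acylindrical enough for the geodesic argument. The bookkeeping with stabilizers (conjugates of the finite group $H$, orders compared in finite quotients to upgrade containment to equality) is routine, as is the trivial-amalgamation case $H=1$, which is excluded here since it falls under the free-product JSJ theory of Section 7.
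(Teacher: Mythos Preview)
Your proof is correct and follows essentially the same route as the paper: start from Corollary~\ref{2b}, deduce $H=G_1\cap G_2^{g}$, and use the action on the standard tree together with malnormality to force $g\in G_2$. The only cosmetic difference is that the paper works with $H$ itself fixing the geodesic $[v_1,g^{-1}v_2]$ and notes that malnormality makes $e$ the unique edge fixed by $H$, whereas you conjugate first and argue with $gHg^{-1}$ fixing $v_2$ and $g\cdot e$; these are the same argument up to translation by $g$.
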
 

\begin{proof} By Corollary \ref{2b} we may assume that $\psi(H) = H$, $\psi(G_1) =G_{1}$ and $\psi(G_2) ={G_{2}}^{g}$ for some $g\in G$. Then Equation (1) reads as $$H=\psi(H) = G_{1} \cap  G_{2}^{g} \leqslant H^{b_1} \,(b_1\in G_1, g\in G).$$ From malnormality of $H$ we deduce $b_1\in H$. So denoting by $e$ the edge of the standard profinite tree $S(G)$ stabilized by $H$ and by $v_1,v_2$ its vertices stabilized by $G_1$ and $G_2$ respectively we have that $H$ fixes $[v_1,gv_2]$ and so this geodesic contains only one edge $e$, i.e. $gv_2=v_2$ and $g\in G_2$. 
	
	\end{proof}

 We denote by  $Aut_{G_i}(H)$ and $Aut_{\widehat G_i}(H)$ the subgroups that leave  $H$ invariant  and by $\overline{Aut}_{G_i}(H)$, $\overline{Aut}_{\widehat G_i}(H)$ their images in $Aut(H)$. 

\begin{theorem}\label{Aut(G) for amalgam} Let $G=G_1\amalg_H G_2$ be a free pro-$\C$ product of OE pro-$C$ groups $G_1\not\cong G_2$ with finite malnormal amalgamation.  Then $$Aut(G)=Aut_{G}(G_1)\amalg_{Aut_G(H)} Aut_{G}(G_2).$$
\end{theorem}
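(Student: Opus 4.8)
The plan is to realize $Aut(G)$ as the fundamental group of a graph of groups by exhibiting an action of $Aut(G)$ on a suitable pro-$\C$ tree with two orbits of vertices and one orbit of edges, and then applying the pro-$\C$ Bass--Serre machinery. First I would observe that, since $H$ is finite and malnormal, $G$ has trivial center (a nontrivial central element would lie in every conjugate of $G_1$, hence in $H$, and then in every conjugate of $H$, forcing $H$ to be normal, hence the amalgam fictitious or $H=1$, excluded). Thus $Inn(G)\cong G$ and we may regard $G\leq Aut(G)$. Let $S=S(G)$ be the standard pro-$\C$ tree of the one-edge splitting $G=G_1\amalg_H G_2$, with edge $e$ stabilized by $H$ and vertices $v_1,v_2$ stabilized by $G_1,G_2$. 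Corollary~\ref{malnormal amalgamation} tells us that for each $\psi\in Aut(G)$ there is an inner automorphism $\tau$ with $\tau\psi(G_1)=G_1$, $\tau\psi(G_2)=G_2$, $\tau\psi(H)=H$; this is the key input, and it says exactly that $Aut(G)$ acts on the vertex and edge set of $S/G$ without permuting the two vertex orbits (here we use $G_1\not\cong G_2$ so no swap is possible).

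Next I would make this into an honest action of $Aut(G)$ on $S$. The cleanest route is the tree-of-cylinders viewpoint of Section~9: take $\E$ to be the finite subgroups of $G$ and $\sim$ the equality relation, so that $T_c=S$ already (for a malnormal one-edge amalgam the cylinder of $H$ is just $\{e\}$ since $N_G(H)=H$, and collapsing changes nothing), and then Lemma~\ref{vertex stabilizers invariant}(i) gives a continuous $Aut(G)$-action on $T_c=S$ provided the set of vertex stabilizers and $\sim$ are $Aut(G)$-invariant --- which is precisely Corollary~\ref{malnormal amalgamation}. Alternatively one argues directly: since $S=G/G_1\ \dot\cup\ G/G_2\ \dot\cup\ G/H$, an automorphism $\psi$ normalized (by composing with an inner automorphism) to preserve $G_1,G_2,H$ sends $gG_i\mapsto \psi(g)G_i$ and $gH\mapsto\psi(g)H$, which is well-defined and compatible with the incidence maps; different choices of the normalizing inner automorphism differ by an element of $G=Inn(G)$, so one gets a genuine action of $Aut(G)$ (not just $Out(G)$) on $S$, extending the given $G$-action, and continuity follows from continuity of the $G$-action together with the profiniteness of $Aut(G)=\varprojlim Aut(G/U)$ over characteristic $U$.

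With the $Aut(G)$-action on $S$ in hand, I would compute the quotient and the stabilizers. The action is transitive on each of the three $G$-orbits of cells (because $Aut(G)\supseteq Inn(G)=G$ acts transitively there), so $S/Aut(G)=S/G$ is the single edge $\overline e$ with endpoints $\overline v_1,\overline v_2$. The stabilizer in $Aut(G)$ of $v_1$ is exactly $Aut_G(G_1)=\{\psi:\psi(G_1)=G_1\}$: indeed $\psi$ fixes $v_1=1\cdot G_1$ iff $\psi(1)G_1=G_1$ under the action, which after unwinding the normalization means $\psi(G_1)=G_1$. Likewise $\mathrm{Stab}(v_2)=Aut_G(G_2)$, and $\mathrm{Stab}(e)=Aut_G(G_1)\cap Aut_G(G_2)$; but by malnormality $G_1\cap G_2=H$ and any $\psi$ preserving both $G_1$ and $G_2$ preserves $H=G_1\cap G_2$, while conversely (as in the discussion preceding Corollary~\ref{malnormal amalgam}) $Aut_G(H)\leq Aut_G(G_1)\cap Aut_G(G_2)$ --- so $\mathrm{Stab}(e)=Aut_G(H)$. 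Now $S$ is a pro-$\C$ tree on which $Aut(G)$ acts with finite quotient graph a single edge, so by the pro-$\C$ Bass--Serre theory (the exact analogue of Theorem~\ref{pro-pbass-serre}/Remark~\ref{cofinite action} for pro-$\C$ groups, which applies since the action is cofinite) $Aut(G)$ is the fundamental group of the corresponding graph of groups, namely $Aut(G)=Aut_G(G_1)\amalg_{Aut_G(H)}Aut_G(G_2)$.

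The main obstacle I anticipate is purely the bookkeeping in the second paragraph: one must check that the $Aut(G)$-action on $S$ is genuinely well-defined as an action (not merely up to inner automorphism), continuous, and that it is an action \emph{by graph automorphisms} respecting $d_0,d_1$. The subtlety is that Corollary~\ref{malnormal amalgamation} only produces the normalizing inner automorphism non-canonically, so one has to verify that the induced map on $S$ is independent of that choice --- which works because two valid choices differ by conjugation by an element of $N_G(G_1)\cap N_G(G_2)\cap N_G(H)$, and conjugation by such an element (in fact by any element of $G$) is already encoded in the $G$-action on $S$, so the formula $gX\mapsto\psi(g)X$ is unambiguous on cosets. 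Once this and the cofiniteness hypothesis for the pro-$\C$ Bass--Serre theorem are checked, the stabilizer computation and the identification of the graph of groups are routine. One should also remark that the splitting is automatically proper in the pro-$\C$ category since $H$ is finite, so no subtlety about $\Pi_1$ failing to be proper arises.
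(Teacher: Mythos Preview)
Your direct route (b) is essentially the paper's approach: define the $Aut(G)$-action on $S$ by sending a vertex to the unique vertex with the image stabilizer, check continuity, compute stabilizers, and read off the amalgam from the one-edge quotient. Two points deserve tightening.

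First, the tree-of-cylinders shortcut (a) is not quite available as stated. Lemma~\ref{vertex stabilizers invariant} and the surrounding machinery of Section~9 are developed for pro-$p$ groups, whereas this theorem is in the pro-$\C$ setting, so you cannot simply invoke them. Moreover, even in the pro-$p$ case $T_c$ is not equal to $S$: with the equality relation and $H$ malnormal, each cylinder is a single edge, so $T_c$ is the barycentric subdivision of $S$ (three $G$-orbits of vertices, two of edges), which only after reduction collapses back to $S$. This is harmless for the conclusion but contradicts your claim that ``$T_c=S$ already''. The paper avoids all of this by working directly on $S$.

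Second, your continuity sketch (``profiniteness of $Aut(G)=\varprojlim Aut(G/U)$'') is vaguer than what is needed. The paper's argument is cleaner and worth adopting: since $Inn(G)$ already acts transitively on $E(S)$, so does $Aut(G)$, and the stabilizer of the base edge is $Aut_G(H)$; hence $E(S)$ is $Aut(G)$-equivariantly identified with the coset space $Aut(G)/Aut_G(H)$, and continuity is checked by passing to the finite quotients $Aut(G)/U$ for open $U\supseteq Aut_G(H)$. Continuity on vertices then follows from continuity of $d_0,d_1$. Your well-definedness worry in the last paragraph is handled automatically by this viewpoint, since the action is given by ``send a cell to the unique cell with stabilizer $\psi(\text{old stabilizer})$'', with no normalization choices involved.
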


\begin{proof} Since $G_1\not\cong G_2$ an automorphism interchanging $G_1$ and $G_2$ does not exist.  So by  Corollary \ref{malnormal amalgamation}   for any $\alpha\in Aut(G)$ one has $\alpha(G_1)=G_1^{g_1},  \alpha(G_2)=G_2^{g_2}$ and $\alpha(H)=H^{g_3}$ for some $g_1,g_2,g_3\in G$.  But $G_1^{g_1},  \G_2^{g_2}$ stabilize unique vertices $v_1, v_2$ in the standard pro-$\C$ tree $S=S(G)$ for 
$G=G_1\amalg_H G_2$ and so $\alpha(H)=G_1^{g_1}\cap  \G_2^{g_2}$ fixes the geodesic $[v_1,v_2]$.   Since $H$ is malnormal in $G$ this geodesic has only one edge $e'$ that connects  $v_1$ and $v_2$. Moreover if for $\alpha\in Aut(G)$ one has $\alpha(G_i)=G_i$ for $i=1,2$,  then $\alpha(H)=\alpha(G_1\cap G_2)=\alpha(G_1)\cap \alpha(G_2)=G_1\cap G_2=H$ and so $\overline{Aut}_{G}(G_1)\cap \overline{Aut}_{G}(G_2)=H$.

 Thus if $w_1,w_2,$ are vertices stabilized by $G_1$ and $G_2$ respectively, we can define the action of $Aut(G)$ on $S$ by setting $\alpha(w_1)=v_1$, $\alpha(w_2)=v_2$ and $\alpha(e_0)=e'$ where $e_0$ is an edge connecting $w_1$ and $w_2$ (stabilized by $H$).  We need to show that this action is continuous.  

Since $d_1,d_2$ are continuous, it suffices to show that the action of $Aut(G)$ on $E(S)$ is continuous.  Since $Inn(G)$ acts  transitively on $E(S)$, so is $Aut(G)$. If $e$ is the edge stabilized by $H$ then the stabilizer of it in $Aut(G)$ is  $Aut_G(H)$.  Hence $Aut(G)$-space $E(S)$ is naturally equivariantly homeomorphic to $Aut(G)$-space $Aut(G)/Aut_G(H)$. This can be checked by factoring both spaces modulo open subgroups $U$ of $Aut(G)$ containing $Aut_G(H)$ and then taking the inverse limit of the natural equivariant bijections. This proves that the action of $Aut(G)$ on $E(S)$ is continuous as needed.

Finally observing as in the previous section that the vertex stabilizer of $w_i$ in $Aut(G)$ is $Aut_G(G_i)$ for $i=1,2$ we get the result. 

\end{proof}

Note that since $H$ is malnormal then $H$ is self-normalized and so $N_{G_i}(H)=H$. Since $H$ is malnormal, $G$ has trivial center and so $Inn(G)$ is naturally isomorphic to $G$, i.e. we can identify them in $Aut(G)$. Thus $G\cap Aut_G(G_i)=G_i$.   Thus  we can deduce the following 

\begin{corollary}   $$Out(G)=Aut_{G}(G_1)/G_1\amalg_{Aut_G(H)/H} Aut_{G}(G_2)/G_2.$$
	
\end{corollary}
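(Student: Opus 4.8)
The plan is to read off this corollary from Theorem \ref{Aut(G) for amalgam} by passing to the quotient by $Inn(G)$, using the very pro-$\C$ tree $S=S(G)$ for $G=G_1\amalg_H G_2$ that was used to prove that theorem. Recall that in that proof $Aut(G)$ was shown to act (continuously) on $S$ with quotient graph the segment $\overline e$ joining $\overline v_1,\overline v_2$, with $Aut(G)_{\overline v_i}=Aut_G(G_i)$ and $Aut(G)_{\overline e}=Aut_G(H)$. At the same time $Inn(G)$ acts on $S$ through the natural faithful action of $G$ (faithful because $Z(G)=1$, since $H$ is malnormal), with quotient a segment, vertex stabilizers the $G$-conjugates of $G_1,G_2$, and edge stabilizer a conjugate of $H$. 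So I want to quotient $Aut(G)=Aut_G(G_1)\amalg_{Aut_G(H)}Aut_G(G_2)$ by the normal subgroup $Inn(G)$ and identify the quotient as a graph of groups using the $Out(G)$-action on $S/Inn(G)$.

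First I would pin down the intersections with $Inn(G)\cong G$. Since the amalgam $G=G_1\amalg_H G_2$ is non-degenerate, $S$ is reduced, hence each vertex group of $S$ is self-normalizing and $N_G(G_i)=G_i$; under the identification $Inn(G)=G$ this gives $Inn(G)\cap Aut_G(G_i)=N_G(G_i)=G_i$. Likewise malnormality of $H$ gives $N_G(H)=H$, so $Inn(G)\cap Aut_G(H)=H$; these are exactly the identifications recorded just above the statement. Next I would observe that the vertex stabilizers of the $Inn(G)$-action on $S$ are precisely the $G$-conjugates of $Inn(G)\cap Aut_G(G_1)=G_1$ and of $Inn(G)\cap Aut_G(G_2)=G_2$, and since $\langle G_1,G_2\rangle=G$ the subgroup $\widetilde{Inn(G)}$ of $Inn(G)$ generated by all its vertex stabilizers equals $Inn(G)$. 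By the pro-$\C$ analogue of Proposition \ref{mod tilde}, $S/Inn(G)$ is therefore a pro-$\C$ tree, and $Out(G)=Aut(G)/Inn(G)$ acts on it with quotient graph $(S/Inn(G))/Out(G)=S/Aut(G)=\overline e$, still a genuine segment because $\overline v_1\neq\overline v_2$ (an automorphism interchanging $G_1$ and $G_2$ would contradict $G_1\not\cong G_2$).

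Finally I would compute the stabilizers for the $Out(G)$-action on $S/Inn(G)$: the image of $\overline v_i$ has stabilizer $Aut_G(G_i)\,Inn(G)/Inn(G)\cong Aut_G(G_i)/(Aut_G(G_i)\cap Inn(G))=Aut_G(G_i)/G_i$, and the image of $\overline e$ has stabilizer $Aut_G(H)/(Aut_G(H)\cap Inn(G))=Aut_G(H)/H$ (here $Aut_G(H)\cap Inn(G)=H$ since $Aut_G(H)\le Aut_G(G_i)$ and so $Aut_G(H)\cap Inn(G)=Aut_G(H)\cap G_i=H$, which also shows $Aut_G(H)/H$ embeds into each $Aut_G(G_i)/G_i$). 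Applying the pro-$\C$ Bass--Serre structure theorem to the $Out(G)$-action on the pro-$\C$ tree $S/Inn(G)$ with a segment as quotient graph then yields
$$Out(G)=Aut_{G}(G_1)/G_1\amalg_{Aut_G(H)/H} Aut_{G}(G_2)/G_2,$$
as claimed. The one step that deserves care is the verification that $\widetilde{Inn(G)}=Inn(G)$, which is what licenses Proposition \ref{mod tilde} in this setting; everything else is bookkeeping inside $Aut(G)$ together with facts already established before the statement.
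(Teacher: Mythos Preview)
Your proof is correct and follows exactly the route the paper has in mind: the paper records the identifications $Inn(G)\cap Aut_G(G_i)=G_i$ and $Inn(G)\cap Aut_G(H)=H$ and then simply says ``Thus we can deduce the following,'' leaving to the reader precisely the tree-quotient argument you have written out (quotienting the $Aut(G)$-action on $S$ by the normal subgroup $Inn(G)=\widetilde{Inn(G)}$ via the pro-$\C$ version of Proposition \ref{mod tilde}). Your proof is therefore a faithful and more detailed expansion of the paper's one-line deduction.
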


\bigskip

Observe that all results above hold the same way also for abstract amalgamated free product. Hence we deduce the following 

\begin{corollary} Suppose $G_1\not\cong G_2$ are finite and $H$ is malnormal. Let $G^{abs}=G_1*_H G_2$ be a free amalgamated product. Then $Aut(\widehat G^{abs})=\widehat{Aut(G^{abs})}$ and $Out(\widehat G^{abs})=\widehat{Out(G^{abs})}$.
\end{corollary}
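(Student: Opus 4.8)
The plan is to deduce this corollary from Theorem \ref{Aut(G) for amalgam} together with the fact that its proof "holds the same way also for abstract amalgamated free product," so that both $Aut(G^{abs})$ and $Aut(\widehat{G}^{abs})$ (where $\C$ is taken to be the class of all finite groups, or indeed all finite $p$-groups when $G$ is a $p$-group — more generally we just need $\widehat{G}^{abs}$ to be a pro-$\C$ amalgamated product) admit analogous splittings as amalgamated products. Concretely, I would first record the abstract version: since $G_1, G_2$ are finite they are (trivially) $OE$-groups both in the abstract and pro-$\C$ sense (a finite group acting on a tree/pro-$\C$ tree fixes a vertex), and $H$ is malnormal by hypothesis with $G_1 \not\cong G_2$, so Theorem \ref{Aut(G) for amalgam} and its abstract counterpart give
$$Aut(G^{abs}) = Aut_{G^{abs}}(G_1) *_{Aut_{G^{abs}}(H)} Aut_{G^{abs}}(G_2), \qquad Aut(\widehat{G}^{abs}) = Aut_{\widehat{G}^{abs}}(G_1) \amalg_{Aut_{\widehat{G}^{abs}}(H)} Aut_{\widehat{G}^{abs}}(G_2).$$
Here one uses that the amalgamated product $G^{abs} = G_1 *_H G_2$ of finite groups is residually $\C$ (in fact residually finite, and residually $p$ in the $p$-group case), so $\widehat{G}^{abs} = G_1 \amalg_H G_2$ is a proper pro-$\C$ amalgam and $G^{abs}$ embeds in it; this is needed to apply Theorem \ref{Aut(G) for amalgam}.

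Next I would identify the three "piece" groups on the pro-$\C$ side with the pro-$\C$ completions of the corresponding abstract pieces. Because $G_1, G_2, H$ are finite and $G^{abs} \hookrightarrow \widehat{G}^{abs}$ sends $G_i$ and $H$ to their images (still finite, unchanged), an automorphism of $G^{abs}$ preserving $G_1$ (resp. $G_2$, resp. $H$) extends uniquely by continuity to an automorphism of $\widehat{G}^{abs}$ preserving the same subgroup, giving natural maps $Aut_{G^{abs}}(G_i) \to Aut_{\widehat{G}^{abs}}(G_i)$ and $Aut_{G^{abs}}(H) \to Aut_{\widehat{G}^{abs}}(H)$. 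The key point is that each of these stabilizer subgroups is itself a group whose pro-$\C$ completion is the pro-$\C$ stabilizer: one checks that $Aut_{G^{abs}}(G_i)$ is commensurable with (an extension of) $G^{abs}$-conjugation data together with $Aut(G_i)$, so it is a finite extension of a residually-$\C$ group and its completion is computed piecewise; more efficiently, one observes that $Aut_{G^{abs}}(G_i)$ acts on the Bass–Serre tree with the same combinatorial structure as $Aut_{\widehat{G}^{abs}}(G_i)$ acts on $S(\widehat{G}^{abs})$, and that the former is dense in the latter. The cleanest route is probably: $G^{abs}$ is open... no — rather, use that $Aut(G^{abs})$ itself is the fundamental group of a finite graph of groups with finite edge group $Aut_{G^{abs}}(H)$ (malnormality forces $H$ self-normalized, hence $Aut_{G^{abs}}(H)$ contains $H = Inn$-part as a finite-index-controlled piece), so $Aut(G^{abs})$ is residually $\C$, and taking pro-$\C$ completions of the graph-of-groups splitting commutes with completion because all vertex and edge groups have the property that their pro-$\C$ topology is induced (Example \ref{graph group completion}).

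The main obstacle — the step I expect to require real care — is precisely justifying that $\widehat{Aut(G^{abs})}$ equals $Aut(\widehat{G}^{abs})$ at the level of the amalgamated-product decompositions, i.e. that pro-$\C$ completion takes the abstract splitting $Aut(G^{abs}) = Aut_{G^{abs}}(G_1) *_{Aut_{G^{abs}}(H)} Aut_{G^{abs}}(G_2)$ to the pro-$\C$ splitting of $Aut(\widehat{G}^{abs})$, together with the identifications $\widehat{Aut_{G^{abs}}(G_i)} = Aut_{\widehat{G}^{abs}}(G_i)$ and $\widehat{Aut_{G^{abs}}(H)} = Aut_{\widehat{G}^{abs}}(H)$. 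For this I would show (a) $Aut_{G^{abs}}(G_i)$ and $Aut_{G^{abs}}(H)$ are themselves residually $\C$ and the edge group is $\C$-closed in each vertex group, so that by Example \ref{graph group completion} the completed graph of groups is proper and $\widehat{Aut(G^{abs})} = \widehat{Aut_{G^{abs}}(G_1)} \amalg_{\widehat{Aut_{G^{abs}}(H)}} \widehat{Aut_{G^{abs}}(G_2)}$; and (b) a standard density/universal-property argument that $Aut_{G^{abs}}(G_i)$ is dense in $Aut_{\widehat{G}^{abs}}(G_i)$ and the latter is complete, hence equals the completion — using here that every continuous automorphism of the pro-$\C$ completion preserving the finite subgroup $G_i$ restricts to an automorphism of the dense abstract subgroup $G^{abs}$ preserving $G_i$ (since $G^{abs}$ is characteristic? no — rather since $G^{abs}$ is dense and the automorphism is continuous, and one checks it maps $G^{abs}$ into itself using that $G^{abs}$ is generated by $G_1 \cup G_2$ and both are preserved up to conjugacy by elements of $\widehat{G}^{abs}$ which can be $\C$-approximated by elements of $G^{abs}$). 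Once (a) and (b) are in place, comparing the two amalgamated-product presentations factor-by-factor yields $Aut(\widehat{G}^{abs}) = \widehat{Aut(G^{abs})}$, and the statement $Out(\widehat{G}^{abs}) = \widehat{Out(G^{abs})}$ follows by the same argument applied to the quotients $Aut_{G^{abs}}(G_i)/G_i$ and $Aut_{G^{abs}}(H)/H$ (noting $G^{abs}$ has trivial center by malnormality, so $Inn(G^{abs}) \cong G^{abs}$ sits inside as above), using the preceding Corollary's formula $Out(G) = Aut_G(G_1)/G_1 \amalg_{Aut_G(H)/H} Aut_G(G_2)/G_2$ in both settings.
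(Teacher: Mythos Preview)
Your approach is essentially the same as the paper's: derive parallel amalgamated-product decompositions of $Aut(G^{abs})$ and $Aut(\widehat{G}^{abs})$ from Theorem~\ref{Aut(G) for amalgam} and its abstract analogue, then conclude by matching the pieces. The paper's own argument is in fact just the single sentence ``Observe that all results above hold the same way also for abstract amalgamated free product. Hence we deduce the following,'' so your proposal is considerably more detailed than what appears in the paper. Your identification of step (b) --- showing $\widehat{Aut_{G^{abs}}(G_i)} = Aut_{\widehat{G}^{abs}}(G_i)$ --- as the point requiring care is accurate; the paper does not spell this out either, treating the match-up of the two splittings as immediate once both decompositions are in hand.
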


\begin{remark}  The congruence problem for the automorphism group of a finitely generated residually finite group asks whether $\widehat{Aut(G)}\longrightarrow Aut(\widehat G)$ is injective and if it is not to describe the kernel (the congruence kernel).  The previous corollary solves it for free product of finite groups with malnormal amalgamation, i.e. this homomorphism is injective. 

In general, the subgroup theorem for normal subgroups (cf. \cite{Za}) gives the structure of the congruence kernel for $Aut(G)$ as in Theorem \ref{Aut(G) for amalgam} \end{remark}

\section{Compatibility}

Recall  that two $G$-trees $T_1$ and $T_2$ are compatible if they have a common refinement. In other words, there exists a tree $T$ with collapse maps $T\longrightarrow T_i$, $i=1,2$.

\begin{definition} (Universally compatible). A pro-$p$ tree $T$ is universally compatible (over $\E$ relative to $\K$) if it is compatible with every $(\E,\K)$-tree. \end{definition}

\medskip
In particular, this means that any pro-$p$ $(\E,\K)$-tree $T'$ can be obtained from $T$ by refining and collapsing. When $T'$ is a one-edge splitting, either $T'$ coincides with the splitting associated to one of the edges of $T/G$, or one can obtain $T'/G$ by refining $T/G$ at some vertex $v$ using an one-edge splitting of $G_v$ relative to the incident edge groups, and collapsing all the original edges of $T/G$.

\begin{definition} We say that a $G$-tree $T$ is maximal for domination if any $G$-tree dominating it belongs to the same deformation space as $T$.\end{definition}

\begin{definition} (Compatibility JSJ deformation space). If,  among deformation spaces containing a universally compatible tree, there is one which is maximal for domination, we denote it by $D_{co}$ and  called it the compatibility JSJ deformation space of $G$ over $\E$ relative to $\K$.  \end{definition}

\begin{theorem}\label{co}  $D_{co}$ exists and unique.

\end{theorem}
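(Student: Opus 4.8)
\textbf{Proof plan for Theorem \ref{co} (existence and uniqueness of $D_{co}$).}

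The plan is to follow the abstract strategy of Guirardel--Levitt: among the deformation spaces containing a universally compatible tree, show that the set of such deformation spaces is directed under domination, and that a maximal one exists. Uniqueness will be essentially formal once existence is established, since two deformation spaces that are both maximal for domination and both contain universally compatible trees will be forced to dominate each other. The key point to establish first is a \emph{compatibility version} of the standard-refinement construction: if $T_1$ and $T_2$ are both universally compatible, then there is a universally compatible tree dominating both. This is the analogue, in the compatibility setting, of Lemma \ref{refinement univer elliptic}(2), and it is where the pro-$p$ machinery of Proposition \ref{refinement for finitely generated G} (or Proposition \ref{refinement} if $T_i/G$ is finite) does the work.

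First I would record the relevant closure property. Given universally compatible $T_1$, $T_2$, both are in particular universally elliptic (compatibility with every $(\E,\K)$-tree $T'$ gives a common refinement, which collapses onto $T'$, so edge stabilizers of $T_i$ are elliptic in $T'$), hence $T_1$ is elliptic with respect to $T_2$. By Proposition \ref{refinement for finitely generated G} (resp.\ Proposition \ref{refinement}) there is a standard refinement $\widehat S$ of some $S$ in the deformation space of $T_1$ dominating $T_2$, and by Lemma \ref{refinement univer elliptic}(2) it is universally elliptic. The nontrivial step is to upgrade ``universally elliptic'' to ``universally compatible'' for $\widehat S$. For this I would take an arbitrary one-edge $(\E,\K)$-tree $T'$ (Lemma \ref{one-edge splitting} reduces compatibility checks to one-edge splittings, exactly as universal ellipticity was reduced there), use the common refinements $R_i$ of $T_i$ with $T'$ provided by universal compatibility of $T_i$, and glue them over $\widehat S$: the subtree of $\widehat S$ collapsed to each vertex $v$ is the minimal $G_v$-invariant subtree of $T_2$ by Proposition \ref{refinement for finitely generated G}(1), and on each such piece one can import the refinement coming from $R_2$, while the edges of $\widehat S$ not collapsed by $p$ carry edge stabilizers that fix an edge of $T_1$ (Proposition \ref{refinement for finitely generated G}(3)), so $R_1$ supplies a refinement there. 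Assembling these $G$-equivariantly, using the projective-limit argument and \cite[Lemma 2.6 (b)]{RZ-00} to see the result is a pro-$p$ tree, produces a common refinement of $\widehat S$ and $T'$.

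Once this closure property is in hand, the collection of deformation spaces containing a universally compatible tree is directed: for any two such, $\widehat S$ as above dominates both and lies in a deformation space containing a universally compatible tree. Taking the inverse limit $S_\infty=\varprojlim_i S_i$ over a cofinal system of universally compatible trees $S_i$ (each chosen, via Example \ref{changing jsj-tree}, as a standard pro-$p$ tree in its deformation space so that the system is genuinely filtered by the previous paragraph), one gets a pro-$p$ tree by Proposition \ref{induces}/\cite[Lemma 2.6(b)]{RZ-00} which dominates every $S_i$; an argument parallel to the proof of Theorem \ref{existence for finitely generated G}, refining at a hypothetical flexible vertex and invoking the closure property to stay universally compatible, shows $S_\infty$ is itself universally compatible and maximal for domination. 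This gives existence of $D_{co}$. For uniqueness: if $D$ and $D'$ are both maximal for domination and both contain universally compatible trees $T\in D$, $T'\in D'$, then the closure property yields a universally compatible $\widehat S$ dominating both $T$ and $T'$; maximality of $D$ forces $\widehat S\in D$, so $D$ dominates $T'$, hence $D$ dominates $D'$, and symmetrically $D'$ dominates $D$, so $D=D'$.

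The main obstacle I expect is precisely the gluing step in paragraph two: one must check that the common refinements $R_1$ (on the ``non-collapsed'' edges of $\widehat S$) and $R_2$ (on the minimal-subtree pieces) can be chosen compatibly and glued $G$-equivariantly into a single pro-$p$ tree, and that the resulting tree really collapses onto both $\widehat S$ and $T'$ — this requires care with the attaching maps exactly as in the proof of Proposition \ref{refinement for finitely generated G}, and with the profiniteness/compactness issues when $\widehat S/G$ is infinite, where the linearly-ordered inverse-limit presentation of $G$ from Lemma \ref{inverse limit of virtually free groups} is essential. The rest of the argument is a formal repackaging of the existence proof for the ordinary JSJ deformation space.
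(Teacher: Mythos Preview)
Your plan is far more elaborate than the paper's actual proof, which occupies two short sentences. The paper does not pass through standard refinements or any gluing construction. For existence it simply observes that the universally compatible $(\E,\K)$-trees form an inverse system under collapse maps --- any two such trees are, in particular, compatible with \emph{each other}, so a common refinement (and hence the collapse maps) comes for free --- and takes the inverse limit; this limit is declared to be the tree maximal for domination. For uniqueness it notes that if $T_1,T_2$ are both universally compatible and maximal for domination, their common refinement $T$ (which exists by compatibility) dominates both, and maximality forces $T_1,T_2$ into the same deformation space.

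So the machinery you propose to build --- manufacturing a dominating tree via Proposition~\ref{refinement for finitely generated G} and then upgrading it to universal compatibility by gluing the refinements $R_1,R_2$ over the pieces of $\widehat S$ --- is bypassed entirely: the common refinement is handed to you by the definition of compatibility, not constructed from standard refinements. Two further remarks on your plan: first, your appeal to Lemma~\ref{one-edge splitting} to ``reduce compatibility checks to one-edge splittings'' is unjustified --- that lemma is about universal \emph{ellipticity}, and no one-edge reduction for compatibility is stated or proved in the paper; second, the gluing step you flag as the main obstacle is genuinely delicate (the attaching data for $R_1$ and $R_2$ live on different pieces of $\widehat S$ and there is no a priori reason they can be made $G$-equivariantly coherent), which is another reason to prefer the direct route. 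Your uniqueness argument, once stripped of the detour through the closure property, is the same as the paper's.
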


\begin{proof} 
Existence. Compatible $(\E,\K)$-trees form naturally an inverse system. Hence inverse limit of them is the $(\E,\K)$-tree maximal for domination. 

Uniqueness.
Let $T_1$, $T_2$ be  maximal for domination universally compatible $(\E,\K)$-trees.  As they are compatible there exists a tree $T$ with collapse maps $T\longrightarrow T_i$, $i=1,2$. As they are maximal for domination they belong to the same deformation space. 

\end{proof}

The uniqueness of an inverse limit  shows that a maximal for domination universally compatible $(\E,\K)$-tree is unique. We state it as a 

\begin{corollary}\label{T_{co}} A maximal for domination universally compatible $(\E,\K)$-tree $T_{co}$ is unique. It is fixed under any automorphism of $G$ that leaves $D_{co}$ invariant. In particular, any pro-$p$  subgroup of $Aut(G)$ that preserves $\E$ and $\K$  acts on $T_{co}$.  

\end{corollary}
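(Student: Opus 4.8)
The plan is to deduce Corollary~\ref{T_{co}} directly from Theorem~\ref{co} together with the argument used to prove its uniqueness. The key observation is that the proof of Theorem~\ref{co} actually establishes something slightly stronger than the mere existence and uniqueness of the deformation space $D_{co}$: the maximal-for-domination universally compatible $(\E,\K)$-tree is itself an inverse limit of the directed system of universally compatible $(\E,\K)$-trees, and inverse limits are unique up to canonical isomorphism. So the first step is to spell out that $T_{co}$, being such an inverse limit, is determined up to a unique $G$-equivariant isomorphism by the class of all universally compatible $(\E,\K)$-trees. This is where I expect the only real subtlety: I must make sure that ``maximal for domination universally compatible'' pins down the tree on the nose and not merely its deformation space, i.e. that among the trees in $D_{co}$ there is a genuinely canonical one; this follows because the inverse limit of all compatible trees is simultaneously maximal for domination (it dominates every compatible tree) and compatible with every $(\E,\K)$-tree, and any two such must map onto each other by collapses and hence, being in the same deformation space with no edge to collapse, coincide.

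Next I would invoke the $Aut(G)$-action on trees already described in Remark~\ref{action on deformation space}: for $\alpha\in Aut(G)$ preserving $\E$ and $\K$, and for any $(\E,\K)$-tree $T$, the twisted tree $T_\alpha$ (same underlying pro-$p$ tree, action $g\cdot t = \alpha(g)t$) is again an $(\E,\K)$-tree, since the stabilizer of an edge $e$ in $T_\alpha$ is $\alpha^{-1}(G_e)\in\E$, and likewise groups in $\K$ remain elliptic. The operation $T\mapsto T_\alpha$ takes collapse maps to collapse maps, hence preserves compatibility and domination. Therefore $\alpha$ permutes the directed system of universally compatible $(\E,\K)$-trees, and consequently sends the (unique) maximal-for-domination universally compatible tree to a tree with the same three properties. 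By the uniqueness just established, $(T_{co})_\alpha$ is $G$-equivariantly isomorphic to $T_{co}$, which is exactly the statement that $\alpha$ acts on $T_{co}$, i.e. $T_{co}$ is fixed by $\alpha$ in the action of $Aut(G)$ on the set of isomorphism classes.

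Finally, for the last sentence of the corollary, I would note that the collection of automorphisms preserving $\E$ and $\K$ forms a subgroup $A\le Aut(G)$, and the assignment $\alpha\mapsto$ (the induced automorphism of $T_{co}$) is a homomorphism; if this subgroup happens to carry a pro-$p$ topology and acts continuously on $G$, then the induced action on $T_{co}=\varprojlim_{U} (T_U)_c$ is continuous, by the same projective-limit argument used in the proof of Lemma~\ref{vertex stabilizers invariant}(ii) --- factor through the finite levels $(T_U)_c$ where continuity is automatic, then pass to the limit. The main obstacle, as noted above, is purely the bookkeeping point that ``maximal for domination universally compatible'' isolates a single tree rather than a whole deformation space; once that is granted, invariance under $A$ is a formal consequence of the uniqueness in Theorem~\ref{co}, and no new tree-theoretic input is required.
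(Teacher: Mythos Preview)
Your approach is essentially the paper's: the paper's entire justification for the corollary is the one sentence preceding it, ``The uniqueness of an inverse limit shows that a maximal for domination universally compatible $(\E,\K)$-tree is unique,'' and the $Aut(G)$-invariance is left implicit via the twisting of Remark~\ref{action on deformation space}, exactly as you invoke it. Your write-up spells out considerably more detail, but the substance matches.

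One slip worth flagging: in your final paragraph you write $T_{co}=\varprojlim_{U} (T_U)_c$ and appeal to Lemma~\ref{vertex stabilizers invariant}(ii). That lemma and the notation $(T_U)_c$ belong to the \emph{tree of cylinders} machinery of Section~9, which is a different construction from $T_{co}$. The compatibility JSJ tree is the inverse limit of the directed system of universally compatible $(\E,\K)$-trees (as in the proof of Theorem~\ref{co}), not a limit of trees of cylinders. If you want a continuity argument for a pro-$p$ subgroup of $Aut(G)$, work with that inverse system directly; the paper itself does not spell out continuity here, so you would be going slightly beyond what is written in any case.
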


Note that if $T_{co}/G$ is finite or $G$ is finitely generated then $T_{co}$ may be refined to a JSJ $(\E,\K)$-tree (Proposition  \ref{JSJ refinement}).

\medskip
Clearly, a universally compatible tree is universally elliptic.  This implies that $D_{co}$ is dominated by $D_{JSJ}$. Also note that, if $T$ is universally compatible and $J$ is an edge stabilizer in an arbitrary $(\E,\K)$-tree, then $J$ is elliptic in $T$ (i.e. any $(\E,\K)$-tree is elliptic with respect to $T$).

\medskip
Examples in the next subsection show that in general it  is  difficult to find non-trivial example of a splitting having only one reduced  JSJ tree. We give below several examples where $T_{co}$ does not exists  contrasting corresponding   abstract examples, where such a tree exists. In the pro-$p$ case, for such examples one should look at $k$-acylindrical splittings. 

We shall need the following 

\begin{proposition}\label{cofinite} Let $T_1$, $T_2$ be compatible $G$-trees with $|T_i/G|< \infty$ for $i=1,2$. Then there exists a finite refinement $T$ of $T_i$ such that $|T/G|<\infty$. \end{proposition}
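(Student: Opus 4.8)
The plan is to produce a common refinement $T$ of $T_1$ and $T_2$ with $|T/G|<\infty$ directly from a given common refinement. First I would invoke the existence of a common refinement $\widehat T$ (which exists by hypothesis, since $T_1$ and $T_2$ are compatible) and then reduce to the minimal common refinement via the lemma already proved in Section~4: there is a common refinement $T$ of $T_1$ and $T_2$ such that no edge of $T$ is collapsed in both $T_1$ and $T_2$. The point of using this minimal refinement is that every edge-orbit of $T$ survives (as an edge-orbit) in at least one of $T_1$, $T_2$; consequently the collapse maps $T\longrightarrow T_i$ together induce an injection from $E(T)/G$ into the disjoint union $E(T_1)/G\;\dotcup\;E(T_2)/G$, which is finite by hypothesis. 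This bounds the number of edge-orbits of $T$.

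Next I would bound the number of vertex-orbits. The collapse map $p_1:T\longrightarrow T_1$ has the property that $p_1^{-1}(w)$ is a connected subgraph (subtree) of $T$ for each vertex $w\in V(T_1)$, and these preimages are permuted by $G$ compatibly with the $G$-action on $V(T_1)$; since $|V(T_1)/G|<\infty$ there are finitely many $G$-orbits of such subtrees. It therefore suffices to see that each of these subtrees $Y_w=p_1^{-1}(w)$ has finitely many $G_w$-orbits of vertices. The edges of $Y_w$ are exactly the edges of $T$ collapsed by $p_1$; by the minimality property above these edges are \emph{not} collapsed by $p_2$, so $p_2$ restricts to an injection (on $G_w$-orbits) of $E(Y_w)$ into $E(T_2)/G_w$, and hence $E(Y_w)/G_w$ is finite since $E(T_2)/G$ is finite. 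A finite-edge profinite subtree $Y_w$ with cofinitely many edges under the $G_w$-action has cofinitely many vertices as well (each vertex of $Y_w$ is incident to an edge of $Y_w$ unless $Y_w$ is a single point, in which case there is nothing to prove), so $V(Y_w)/G_w$ is finite. Combining, $|V(T)/G|<\infty$, and together with the edge bound this gives $|T/G|<\infty$.

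The main obstacle I anticipate is the profinite/topological bookkeeping: one must be careful that the collapse maps are genuinely $G$-equivariant morphisms of profinite graphs, that the preimages $p_i^{-1}(w)$ are closed connected subgraphs (this is exactly the ``preimage of a subtree is a subtree'' characterization of collapse maps recalled at the start of Section~4, cf. \cite[Lemma 3.1]{CZ}), and that ``finitely many $G$-orbits'' is the right finiteness notion to propagate through the two collapses. There is also a minor subtlety in that $E(T)$ need not be closed, so one argues with $G$-orbits of edges rather than with the edge space as a whole; but since all the quotients $E(T_i)/G$ and $V(T_i)/G$ are assumed finite, the orbit-counting argument above is purely combinatorial once the set-up is in place. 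I do not expect the passage from a finite-edge subtree to a finite-vertex subtree to cause trouble, since a profinite tree with only finitely many edge-orbits and no isolated vertices automatically has finitely many vertex-orbits.
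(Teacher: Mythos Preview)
Your approach is correct and, in spirit, does the same thing as the paper: both proofs pass to the minimal common refinement $T$ (collapsing edges collapsed by both maps) and then check $|T/G|<\infty$. The difference is in how finiteness is established. The paper works on the quotient $S/G$ and argues topologically: the collapse maps $\bar f_i:S/G\to T_i/G$ to finite graphs give clopen ``collapsed'' subgraphs $\Gamma_i\subset S/G$, and collapsing the components of $\Gamma_1\cap\Gamma_2$ yields a finite graph $\Gamma=T/G$. You argue combinatorially by orbit-counting on edges, which is arguably cleaner and sidesteps the question of why $\Gamma_1\cap\Gamma_2$ has only finitely many components.

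Two small points to tighten. First, your Step~4 is more elaborate than needed: once $|E(T)/G|<\infty$, the quotient $T/G$ is a connected profinite graph with finitely many edges, and any such graph has finitely many vertices (write it as an inverse limit of connected finite quotient graphs; each has at most $|E(T)/G|+1$ vertices, so the vertex sets stabilize). Second, in your Step~4 as written there is a slip: you map $E(Y_w)/G_w$ into $E(T_2)/G_w$ and then invoke finiteness of $E(T_2)/G$, which does not immediately follow. What you want is that the composite $E(Y_w)/G_w\to E(T_2)/G$ is itself injective --- this holds because $p_2$ is a $G$-equivariant bijection on non-collapsed edges, so two edges of $Y_w$ with $G$-conjugate images in $T_2$ are already $G$-conjugate in $T$, hence $G_w$-conjugate since the fibres $Y_w$ are permuted by $G$. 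Finally, the claim ``each vertex of $Y_w$ is incident to an edge of $Y_w$'' can fail for profinite trees (limit vertices need not touch an edge; cf.\ the ray in Example~\ref{Wilkes}), so it is safer to use the connectedness argument above rather than the incidence argument.
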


\begin{proof} Let $S$ be a common refinement of $T_1$ and $T_2$. Then the corresponding  collapse maps $f_i:S\longrightarrow T_i$ induce the collapse maps $\bar f_i:S/G\longrightarrow T_i/G$ on the quotinets. Since $T_i/G$ is finite the corresponding equivalence relations are clopen, i.e. there are  clopen subgraphs $\Gamma_i$  of $S/G$  with finitely many connected componente such that $\bar f_i$ is the collapse of these finitely many connected components of $\Gamma_i$. Then $\Gamma_1\cap \Gamma_2$ is a clopen subgraph of $S/G$  with finitely many connected component. Thus the collapse of these connected components  $S/G\longrightarrow \Gamma$ gives a finite graph $\Gamma$. Let  $\Delta$ be a preimage of $\Gamma_1\cap \Gamma_2$ in $S$. Then collapse of all connected components of $\Delta$ is the $G$-equivariant collapse map $S\longrightarrow T$ with $T/G=\Gamma$. Thus $T$ is the desired refinement.

\end{proof}

\subsection{Examples}

\subsubsection{Free pro-$p$ groups}

When $\E$ is $Aut(G)$-invariant, the compatibility JSJ tree $T_{co}$ is $Aut(G)$-invariant (cf. Corollary \ref{T_{co}}).  This sometimes forces it to be trivial.

\begin{proposition}\label{free pro-p} If $G$ is a free pro-$p$ group and $\E$ is $Aut(G)$-invariant, then $T_{co}$ is trivial. \end{proposition}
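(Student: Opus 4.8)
The plan is to show that $T_{co}$ must be a single point by using the $Aut(G)$-invariance together with the fact that $Aut(F)$ acts highly transitively on the natural building blocks of $F$. First I would recall that, by Corollary \ref{T_{co}}, the compatibility JSJ tree $T_{co}$ (if it exists as a genuine tree, i.e.\ if $D_{co}$ contains a maximal-for-domination universally compatible tree) is unique, hence fixed setwise by every automorphism of $G$ preserving $\E$. Since $\E$ is $Aut(G)$-invariant by hypothesis, the whole group $Aut(F)$ acts on $T_{co}$. The goal is then to argue that no non-trivial pro-$p$ $F$-tree admits such a large group of extra symmetries.

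The key step is the following. Suppose $T_{co}$ is non-trivial, so $F$ acts on it without global fixed point, and hence (replacing $T_{co}$ by its minimal subtree, which is still $Aut(F)$-invariant by uniqueness) we may assume $T_{co}$ is minimal. By Theorem \ref{pro-pbass-serre} $F$ is the fundamental pro-$p$ group of a profinite graph of pro-$p$ groups with vertex and edge groups being stabilizers in $T_{co}$; since $F$ is free pro-$p$, every subgroup is free pro-$p$, so in particular the edge stabilizers are free pro-$p$ — but more importantly I want to locate a canonical conjugacy class of subgroups that $Aut(F)$ must preserve, and derive a contradiction. The cleanest route: pick a basis element $x$ of $F$ (or, to be basis-free, pick any $1 \neq g \in F$ that is part of a basis). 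Either $x$ is elliptic in $T_{co}$ or hyperbolic. If some primitive element is hyperbolic, then since $Aut(F)$ acts transitively on primitive elements up to conjugacy and preserves $T_{co}$, \emph{every} primitive element is hyperbolic; but $F = \langle x_1,\dots,x_n\rangle$ and the $x_i$ generate $F$, so by Proposition \ref{ellipticity} not all of them can be elliptic — that is consistent — however, consider $x_1 x_2$: this is again primitive (it extends to a basis $x_1x_2, x_2, \dots, x_n$), hence hyperbolic, and one shows using the tree axioms (the cocycle relation defining a pro-$p$ tree, cf.\ the structure in Section 2.1) that $x_1, x_2$ hyperbolic with $x_1x_2$ hyperbolic forces constraints incompatible with $x_1^{-1}x_2$ also being primitive and hyperbolic. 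I would make this precise by passing to a finite quotient graph of groups $(\G_U, \Gamma_U)$ via Lemma \ref{inverse limit of virtually free groups}, where the action is on a standard tree of a finite graph of finite $p$-groups — but $F/\widetilde U$ is free pro-$p$, so $\Gamma_U$ is just a graph with trivial vertex groups, i.e.\ $F$ acts freely on $T_{co}/\widetilde U$-type trees, and then the classical fact that no nonabelian free group acts freely and vertex-transitively-symmetrically applies.

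Actually the most economical argument runs through the Grushko/free-splitting description given in Subsection 7.3: for $G = F$ free, $\E$ arbitrary, the JSJ deformation space is the space of trees on which $F$ acts freely, and a reduced such tree is the Cayley graph of $F$ with respect to a basis. Now $D_{co}$ is dominated by $D_{JSJ}$ and a universally compatible tree is universally elliptic, so $T_{co}$ lies in a deformation space dominated by the "free action" space; in particular all edge stabilizers of $T_{co}$ are trivial (any non-trivial subgroup of $F$ acts non-trivially on the Cayley graph, hence cannot be universally elliptic over $\E$ containing it — and here $\E$ must contain the trivial group, and one checks edge groups of $T_{co}$ must be elliptic in every free splitting, forcing them trivial). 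So $T_{co}$ itself is a free splitting, its reduced form is a Cayley graph $X$ of $F$ on some basis $\{x_1,\dots,x_n\}$, and $Aut(F)$ acts on $X$ extending the $F$-action. But an automorphism sending $x_1 \mapsto x_1 x_2$ does not preserve the edge-orbit structure of $X$ (it is not a graph automorphism of the Cayley graph compatible with the deformation space being a single point unless $n=1$), so the only way $T_{co}$ can be $Aut(F)$-invariant as a deformation space \emph{and} maximal for domination is $n \le 1$, and even for $n=1$ ($F = \Z_p$) the relevant tree is trivial since $\Z_p$ is freely decomposable. Hence $T_{co}$ is trivial.

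The main obstacle I anticipate is making rigorous the step "$Aut(F)$-invariance of $T_{co}$ forbids the non-trivial free splitting": one must show that the basis-change automorphism genuinely fails to preserve the deformation space $D_{co}$, which amounts to showing that two Cayley graphs of $F$ on inequivalent bases (equivalently, related by a non-inner automorphism) are \emph{not} in the same deformation space — but in the free case all free actions are in the same deformation space (they all have trivial stabilizers, hence the same — trivial — maximal vertex stabilizers, except the vertices themselves which have stabilizer $F$)... so in fact the subtlety is the opposite: \emph{all} minimal free $F$-trees lie in one deformation space (the Grushko space), and $Aut(F)$ permutes them within it, so invariance of the \emph{deformation space} is automatic and gives no contradiction. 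The real point must therefore be that $T_{co}$, being the unique maximal-for-domination universally \emph{compatible} tree, would have to be invariant as an actual tree (Corollary \ref{T_{co}}), and no single minimal free $F$-tree is invariant under all of $Aut(F)$ when $n \ge 2$ — this is where I would invoke that the orbit of a basis under $Aut(F)$ acting on the set of such trees is infinite (indeed $Aut(F)$ is topologically infinitely generated, as remarked in the Introduction), so a fixed tree is impossible; the only fixed point in the deformation space is the trivial tree. I would write the argument to conclude: $T_{co}$ is fixed by $Aut(F)$, the only $Aut(F)$-fixed tree up to the deformation space is the trivial one (a single vertex with group $F$), hence $T_{co}$ is trivial.
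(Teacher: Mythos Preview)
Your proposal eventually lands on the right idea --- that $T_{co}$ is $Aut(F)$-invariant as an actual tree (Corollary~\ref{T_{co}}) and that no non-trivial $F$-tree can enjoy this invariance --- and this is exactly the mechanism the paper uses. However, the paper's proof is two lines: it simply names the Nielsen automorphisms $x_i\mapsto x_ix_j$ (leaving the other generators fixed) and asserts that the family of $F$-trees they produce admits no non-trivial tree compatible with all of them. All of your first three paragraphs (the primitive-element dichotomy, the attempt to pass to $(\G_U,\Gamma_U)$, the discussion of whether edge stabilizers of $T_{co}$ are trivial) are detours that should be cut; the paper does not argue that $T_{co}$ is a free tree first, it goes straight to the Nielsen moves.

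There is a genuine gap in your final paragraph. The inference ``the orbit of a basis under $Aut(F)$ is infinite, hence a fixed tree is impossible'' is not valid: an action with an infinite orbit can certainly have fixed points outside that orbit, and invoking that $Aut(F)$ is topologically infinitely generated is beside the point. What is actually needed is to show that \emph{this particular} tree $T_{co}$ (or any non-trivial candidate) cannot be fixed by a specific Nielsen automorphism. The paper's own proof is terse on this step too, but the intended argument is parallel to the one written out in detail in the very next result (Proposition~\ref{free product}): given a non-trivial $T_{co}$, pass to a one-edge collapse, apply a Nielsen move to get a second one-edge tree, use Proposition~\ref{cofinite} to find a common refinement with finite quotient, reduce it, and observe that a reduced free $F$-tree with the prescribed vertex stabilizers has only one edge orbit --- so it cannot collapse to two distinct one-edge trees. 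That is the missing ingredient you should supply rather than the orbit-size remark.
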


\begin{proof} If $F=F(x_1\, \ldots x_n)$ is a free pro-$p$ group on the basis $x_1, \ldots x_n$ then the automorphisms that send $x_i$ to $x_ix_j$ ($i\neq j$) and  leave the rest of generators give the set of $F$-trees such that the only compatible tree for all of them is the trivial one. 

\end{proof}

\subsubsection{Free pro-$p$ products}

 Here we assume that $G$ is finitely generated. In this case a Grushko decomposition always exists and $Aut(G)$ is profinite and virtually pro-$p$ (cf. \cite[Corollary 4.4.4 and Lemma 4.5.5]{RZ-10}.

\begin{proposition}\label{free product} Let $\E$ consist only of the trivial group. Let $$G=G_1\amalg \cdots ···\amalg G_p\amalg F_q$$ be a Grushko decomposition ($G_i$ is non-trivial, not $\Z_p$, and freely indecomposable, $F_q$ is free pro-$p$ of rank $q$).
Then $T_{co}$ is trivial.

\end{proposition}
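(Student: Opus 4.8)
The plan is to show that any universally compatible $(\E,\K)$-tree for $G$ with $\E$ the trivial family must be trivial, by producing for each proper free factor decomposition a collection of incompatible free splittings obtained from the automorphisms of $G$. Since $\E$ is clearly $Aut(G)$-invariant (it is the trivial family), Corollary \ref{T_{co}} tells us that if $T_{co}$ exists then it is $Aut(G)$-invariant, so its deformation space is preserved by every automorphism of $G$. The strategy is then to exhibit enough automorphisms moving the relevant free splittings around so that no non-trivial tree can be compatible with all of them simultaneously. This is the pro-$p$ analogue of the argument in Proposition \ref{free pro-p}, but now one must also account for the non-free factors $G_i$.

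\textbf{Key steps.} First I would reduce to the case where the Grushko decomposition is non-trivial, i.e. $p+q\geq 2$ (if $G=G_1$ is freely indecomposable and not $\Z_p$, then the only free splitting is the trivial one and $T_{co}$ is automatically trivial; if $G=\Z_p$ the statement is also clear). Next, assume $T_{co}$ is a non-trivial universally compatible free splitting; by Proposition \ref{relative Grushko} (applied with $\K=\emptyset$) and the remarks in Subsection 7.3, any free splitting of $G$ has vertex groups that are conjugates of the $G_i$'s together with trivial (free) pieces, and $T_{co}$ lies in the Grushko deformation space $D_{JSJ}$ by Lemma \ref{refinement univer elliptic} or directly since a universally compatible tree is universally elliptic. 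So the quotient graph of groups $(\G, T_{co}/G)$ has one vertex with group (a conjugate of) each $G_i$, all other vertex groups trivial, and trivial edge groups. Now for each pair of indices, or each pair consisting of a $G_i$ and a free generator of $F_q$, I would use the transvection-type automorphism of $G$ (sending one free factor generator $x_j\mapsto x_j x_k$ in the abstract sense, or more precisely the "partial conjugation" / Whitehead-type automorphisms of a free pro-$p$ product, which exist and are continuous) to produce another free splitting $T'$ in the same deformation space, and check that $T_{co}$ and $T'$ have no common refinement unless $T_{co}$ is already trivial. The key computation is the familiar one: two one-edge free splittings of $G$ obtained by a "sliding" automorphism fail to be compatible because collapsing a common refinement to each of them forces an edge to be collapsed on one side and not the other in an inconsistent way — one isolates an element that is hyperbolic in one and elliptic in the other, using Lemma \ref{dominant splitting}(1), contradicting the existence of a common refinement. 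Finally, $Aut(G)$-invariance of $D_{co}$ (Corollary \ref{T_{co}}) forces $T_{co}$ to be compatible with all its $Aut(G)$-translates, and the incompatibilities just exhibited rule out everything except the trivial tree.

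\textbf{Main obstacle.} The hard part will be handling the non-free factors $G_i$ correctly: in the abstract free-product case one has a clean supply of Whitehead automorphisms, and one must verify that the pro-$p$ analogues (partial conjugations $g\mapsto g$ on $G_i$, $h\mapsto h^{a}$ on $G_j$ for a suitable $a$, extended continuously) genuinely exist as continuous automorphisms of $G$ and that they move the Grushko splitting within its deformation space in a way that destroys compatibility. The cleanest route is probably to localize: by Remark after Proposition \ref{free product} (or directly), it suffices to treat a two-factor decomposition $G=A\amalg B$ and show that the splitting coming from $A\amalg B$ is incompatible with the splitting coming from $A\amalg B^{a}$-style reattachments for $a$ ranging over a set of elements, and then observe that the only tree compatible with all of these is the point. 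A second, subtler point is that the inverse limit description of $T_{co}$ from Theorem \ref{co} must be squared with the Grushko picture; but since $G$ is finitely generated its Grushko decomposition is finite, so $T_{co}/G$ being dominated by a finite tree keeps everything in the cofinite realm and one may argue with honest finite graphs of pro-$p$ groups throughout, invoking Proposition \ref{cofinite} where a common refinement needs to be taken cofinitely.
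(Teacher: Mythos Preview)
Your overall strategy matches the paper's: use a partial conjugation automorphism of a two-factor decomposition $G = A \amalg B$ (the paper takes $A = G_1$) to produce from the one-edge tree $T$ a second one-edge tree $S = \alpha_g(T)$, and show these are incompatible; since any non-trivial universally compatible tree has a one-edge collapse which is again universally compatible, this forces $T_{co}$ to be trivial. Your ``cleanest route'' paragraph and your invocation of Proposition~\ref{cofinite} are exactly what the paper does.

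There is, however, a gap in your incompatibility step. You propose to show $T$ and $S$ incompatible via Lemma~\ref{dominant splitting}(1), isolating an element hyperbolic in one and elliptic in the other. This cannot work: $T$ and $S$ lie in the \emph{same} deformation space --- their elliptic subgroups are identical (conjugates of $A$ together with conjugates of $B$) --- so no such element exists. The paper's argument is structural instead: take a common refinement $D$, make it cofinite via Proposition~\ref{cofinite}, and reduce it; the resulting reduced tree still has vertex stabilizers conjugate to $A$ or $B$, so $D/G$ has two vertices and, since $G = A \amalg B$, exactly one edge. Thus $D$ is a one-edge tree and cannot properly refine both of the distinct one-edge trees $T$ and $S$ (for $g$ chosen outside $BA$). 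You need this rigidity-of-reduced-trees argument rather than an elliptic/hyperbolic comparison. Also note a small slip earlier in your outline: universal compatibility only gives universal ellipticity, so $T_{co}$ is \emph{dominated by} the Grushko space but need not lie in it; passing to a one-edge collapse, as you suggest later, avoids this issue.
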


\begin{proof} 
 Note that  $T_{co}$ is, of course, trivial also if $G$ is freely indecomposable or free of rank $\geq 2$ (cf. Proposition \ref{free pro-p}). 
 
 Write $G=A\amalg B$, where $A,B$ are non-trivial and  $A=G_1$. Let $T$ be a standard pro-$p$ tree of this decomposition. Given a non-trivial $g\in G$, the subgroup $g Ag^{-1}\amalg B$ is the image of $A\amalg B$ by an automorphism $\alpha_g$ that sends $A$ to $gAg^{-1}$ and fixes $B$. Let $S$ be the stadard pro-$p$ tree of this decomposition. 
 If there exists a common refinement of $T$ and $S$, then  by Proposition \ref{cofinite}  there exists a common refinement $D$ of $T$ and $S$ with $D/G$ finite. Then collapsing fictitious edges we may assume that $D$ is reduced. But reduced $G$-tree whose vertex groups are two free factors up to conjugation is a pro-$p$ tree with one edge and two vertices up to translation.  This contradicts that $D/G$ is a refinement.

\end{proof}

This contrasts with the abstract situation where $T_{co}$ exists for Grusko decompositions whose tree have one edge only up to translation (see \cite[Section 11.2]{GL}).

\begin{example} Let $G\cong \Z_p$ and $\E$ consists of the trivial group. The unique reduced $G$-tree is the Cayley graph $\Gamma(G)$ with respect to a generator. However $Aut(G)=Z_p\times C_{p-1}$ for $p$ odd and $\Z_2\times C_2$ for $p=2$ (the group of units). This group can not act on $\Gamma(G)$ because any automorphism fixes trivial element of $G$  and  sends the generator to another unit of $\Z_p$. \end{example}

\subsubsection{ Free product with amalgamation} 

\begin{example}\label{example amalgamation} Let $G=G_1\amalg_H G_2$ be a non-fictitious free pro-$p$ product of FA pro-$p$ groups with  amalgamated subgroup $H$. By Proposition \ref{normalizer} $N_G(H)=N_{G_1}(H)\amalg_HN_{G_2}(H)$. Assume that $H$ is not self-normalized in each factor (this is always the case if $G_i$ are finite for example).  
For any $g\in  N_G(H)\setminus G_2G_1$ putting $G'_2=G_2^g$ we get the splitting $G=G_1\amalg_H G'_2$ 
(cf. \cite[Proposition 5.1]{BPZ}). To see this just observe that denoting by $H_1\leq G_1, H_2\leq G_2$ the identified copies of $H$ we can change identification $h_1=h_2$ by $h_1=h_2^g\in G_2^g$ to obtain the equivalent presentation for $G$. We show that the standard pro-$p$ tree $T'$ for the splitting $G=G_1\amalg_H G'_2$  is not compatible with $T$. As in the case of  free pro-$p$ products if there exists a common refinement, then  by Proposition \ref{cofinite}  there exists a common refinement $D$ of $T$ and $T'$ with $D/G$ finite. Then collapsing fictitious edges we may assume that $D$ is reduced. But reduced $G$-tree whose vertex groups are two free factors (of an amalgamated free pro-$p$ product) up to conjugation is a pro-$p$ tree with one edge and two vertices up to translation.  This contradicts that $D/G$ is a refinement.

Next we show that if on the contrary   $H$ is malnormal in, say, $G_2$,  then $T_{co}$ is actually the standard pro-$p$ tree $S(G)$ for $G=G_1\amalg_H G_2$. Indeed, as in preceding paragraph one shows that any non-trivial reduced pro-$p$ $G$-tree  $T$ can be idetified with standard pro-$p$ tree of the splitting $G$ as a free product with amalgamation $G'_1\amalg_{H'}G'_2$, where $G'_i$ are conjugates of $G_1$ and $G_2$, and so we may assume w.l.o.g that $G_1=G'_1$. As  $G'_2=G_2^g$ for some $g\in G$,  $G'_2$ fixes a vertex  $gG_2$ in $S(G)$. But $S(G)$ is 2-acylidrical, because $H$ is malnormal in $G_2$. Therefore,   if $g\notin G_2G_1$ then $G_1\cap G'_2=1$ and so $G\cong G_1\amalg G_2'$, a contradiction. Hence $g=g_1g_2\in G_1G_2$ and so $T$ is $G$-isomorphic to $S(G)$. Indeed, the $G$-isomorphism $S(G)\longrightarrow T$ is  defined by sending $1H$ to    to the edge that connects the vertex stabilized by $G_1$ and the vertex  stabilized by $g_1G'_2g_1^{-1}$.

\end{example}

  \subsubsection{Poincar\'e duality pro-$p$ groups} 
  
  Let $G$ be a Poincar\'e duality group of dimension $n$. Recall that a pro-$p$ group is $VPC_n$ (resp. $VPC_{\leq n}$) if it is virtually polycyclic of Hirsch length $n$ (resp $\leq n$). We  consider the family $\E$ consisting of malnormal $VPC_{\leq n-1}$-subgroups. By \cite[Theorem 1.2]{CAZ} there exists a unique JSJ $\E$-tree for $G$. So by Corollary \ref{T_{co}} it coincides with $T_{co}$.
  
  Examples of JSJ-decompositions of $PD^3$ pro-$p$ groups can be obtained by the pro-$p$ completion of   abstract JSJ-decomposition of some 3-manifolds (see \cite{wilkes17}). The pro-$p$ completion of $PD^n$ groups in general were studied in  \cite{KZpoincare,W, HK,HKL}.
  
    Note that by \cite[Lemma 1.6 and Theorem 1.7] {WZ}, 
    if $PD^n$ pro-$p$ group $G$ splits over a virtually solvable subgroup $H$,
     then 
     $H$ is 
     $VPC_{n-1}$ 
     (solvable pro-$p$ groups of finite cohomological dimension are polycyclic). 
    This  is a pro-$p$ version of the Kropholler \cite[Theorem A2]{K}. The abstract analogue of existence and uniqueness of the JSJ-decomposition for $PD^n$ group is the Kropholler theorem  \cite[Theorem A2]{K} that gives also information  on vertex groups of a JSJ-splitting.

\section{Compatibility and free constructions of free pro-$p$ groups}

\subsection{ Free pro-$p$ groups}

We start this section by showing that any splitting of a free pro-$p$ group admits a  free pro-$p$ product refinement. The next proposition is a pro-$p$ version of the result of Shenitzer \cite{She}.

\begin{definition} Let $F$ be a free pro-$p$ group and $\overline A\leq H_1(F,\F_p)=F/\Phi(F)$ be a subgroup of  its Frattini quotient. Let $B_A$ be a basis of $\overline A$ and $s:F/\Phi(F)\longrightarrow F$ a continuous section. Then $A=\langle s(B_A)\rangle$ is a free pro-$p$ group on $s(B_A)$ and it is a free factor of $F$ that we shall call a lift of $\overline A$ to $F$. 

\end{definition}

\begin{proposition} \label{free}  Let $G=F_1\amalg_C F_2$ be a free amalgamated product  of free pro-$p$ groups and suppose $G$ is free pro-$p$. Then $C$ splits as a free pro-$p$ product $C=C_1\amalg C'_2=C'_1\amalg C_2=C'_1\amalg C'_2\amalg C'$ such that $C_i$ is a free factor of  $F_i$ and $C'_i$ is a (possibly trivial) free factor of $C_i$.
	
\end{proposition}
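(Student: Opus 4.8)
The plan is to combine the Bass–Serre pro-$p$ theory for the amalgam with the Kurosh subgroup theorem (Theorem~\ref{KST}) applied inside the free pro-$p$ group $G$. First I would recall that $G=F_1\amalg_C F_2$ acts on its standard pro-$p$ tree $S=S(G)$, with $F_1$, $F_2$ the vertex stabilizers of an edge $e$ stabilized by $C$. Since $G$ is free pro-$p$, every subgroup is free pro-$p$; in particular $C$, $F_1$, $F_2$ are free pro-$p$. The key structural input is that because $G$ is free pro-$p$ it acts freely on a pro-$p$ tree $\widetilde S$ (its Cayley graph), and simultaneously it is the fundamental group of the one-edge graph of groups $(\G,\Gamma)$ above. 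The strategy is to produce a common refinement of $S$ and $\widetilde S$ — equivalently, to blow up the amalgam decomposition by a free splitting — and read off the claimed free-product decomposition of $C$ from how the edge group $C$ meets the free factors.

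Concretely, the first real step is to show that $C$ is a free factor of each $F_i$. Here I would argue as follows. Consider the retraction-type map: since $G=F_1\amalg_C F_2$ is free pro-$p$, restrict the "abelianization mod $\Phi$" picture. Better, I would use the standard refinement machinery of Section~4: the free splitting of $G$ (trivial edge groups) and the splitting over $C$ are compatible trees in an appropriate sense because edge stabilizers of the free splitting are trivial hence elliptic everywhere (see the Example after Definition of ellipticity of trees), so by Proposition~\ref{refinement} there is a common refinement $\widehat S$, which one may take with $\widehat S/G$ finite. Collapsing $\widehat S$ down to $S$ exhibits each vertex group $F_i$ as the fundamental group of a graph of groups $(\G_i,\Gamma_i)$ with trivial edge groups whose total group is $F_i$ and in which $C$ is one of the vertex groups (this is exactly the "refinement at a vertex" construction, Lemma~\ref{refinement at v}, applied with a free splitting of $F_i$ relative to the incident edge group $C$). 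A graph of pro-$p$ groups with trivial edge groups and total group free pro-$p$ realizes each vertex group as a free factor of the total group (by the Kurosh theorem, Theorem~\ref{KST}, or directly by the presentation~\eqref{presentation}), so $C$ is a free factor of $F_i$, say $F_i=C\amalg D_i$ after adjusting bases.

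Once $C$ is a free factor of both $F_1$ and $F_2$, the remaining combinatorics is a Frattini-quotient / rank count. Write $F_1=C\amalg D_1$ and $F_2=C\amalg D_2$; then one checks, e.g. via the Mayer–Vietoris sequence in $\F_p$-homology for $(\G,\Gamma)$ (used already in Proposition~\ref{reducing to trees}), that $G$ is free pro-$p$ forces the amalgamation to be "as free as possible": $H_1(G,\F_p)=H_1(F_1,\F_p)\oplus_{H_1(C,\F_p)}H_1(F_2,\F_p)$ and the hypothesis that this is free of the right rank pins down the image of $H_1(C)$. Translating back, this yields the three claimed decompositions: taking $C_1=C$ inside $F_1$ (so $C'_2$ trivial) recovers $C=C_1\amalg C'_2$ with $C_1=C$ a free factor of $F_1$; symmetrically $C=C'_1\amalg C_2$; and the "balanced" version $C=C'_1\amalg C'_2\amalg C'$ comes from choosing a common basis refinement, splitting a basis of $C$ into the part that extends to a basis of $D_1$-complement, the part for $D_2$, and a leftover $C'$. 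I would make these choices explicit using lifts of subspaces of the Frattini quotient $H_1(C,\F_p)$ as in the Definition preceding the proposition.

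\textbf{Main obstacle.} The delicate point is passing from "$C$ is a free factor of $F_i$" to the \emph{simultaneous} statement with a single decomposition of $C$ serving both factors, i.e.\ getting the three compatible decompositions from one common basis. In the abstract case this is Shenitzer's theorem and is handled by a careful normal-form / Nielsen argument on the amalgam; in the pro-$p$ setting the absence of Nielsen methods means I must instead run everything through the standard pro-$p$ tree and the finite-quotient approximation $G=\varprojlim_U G/\widetilde U$ (Lemma~\ref{inverse limit of virtually free groups}), reducing to finite $p$-groups where graph-of-groups combinatorics is classical, and then take inverse limits — checking that the free-factor decompositions of $C$ obtained at each finite level are compatible and converge. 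Verifying this compatibility across the inverse system, and that the limit decomposition is genuinely a free pro-$p$ product (not merely a generating set), is where the real work lies; I expect to lean on Proposition~\ref{induces} and the continuity of the family of edge stabilizers to control the limit.
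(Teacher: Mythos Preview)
Your central claim---that $C$ is a free factor of each $F_i$---is false, and once it fails the rest of your scheme collapses. Take $F_1=\langle a,b\rangle$, $F_2=\langle c,d\rangle$, and $C=\langle x,y\rangle$ free of rank $2$, embedded via $x\mapsto a$, $y\mapsto b^p$ in $F_1$ and $x\mapsto c^p$, $y\mapsto d$ in $F_2$. Then $G=\langle a,b,c,d\mid a=c^p,\ b^p=d\rangle=\langle b,c\rangle$ is free pro-$p$ of rank $2$, yet $cor_1:H_1(C,\F_p)\to H_1(F_1,\F_p)$ kills $\bar y$ and $cor_2$ kills $\bar x$, so $C$ is not a free factor of either $F_i$. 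The proposition only promises that suitable \emph{pieces} $C_i$ of $C$ are free factors of $F_i$, not $C$ itself; in this example $C_1=\langle x\rangle$ and $C_2=\langle y\rangle$.

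Your tree argument for that false claim is also misdirected. To apply Proposition~\ref{refinement} with the amalgam tree $S$ as $T_1$ and the Cayley tree $\widetilde S$ as $T_2$ you need $S$ elliptic with respect to $\widetilde S$, i.e.\ $C$ must fix a vertex of $\widetilde S$; but $G$ acts freely on $\widetilde S$, so this fails whenever $C\neq 1$. Reversing the roles (which is what ``trivial edge stabilizers are elliptic everywhere'' actually gives) produces a refinement of $\widetilde S$, not of $S$, and yields no splitting of $F_i$ relative to $C$.

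The paper's proof avoids trees entirely and is a two-line homological computation: since $G$ is free pro-$p$, $H_2(G,\F_p)=0$, so the Mayer--Vietoris sequence forces $cor_1\oplus cor_2:H_1(C,\F_p)\to H_1(F_1,\F_p)\oplus H_1(F_2,\F_p)$ to be injective, i.e.\ $\ker(cor_1)\cap\ker(cor_2)=0$. One then does linear algebra on $H_1(C,\F_p)$, choosing complements of the $\ker(cor_i)$ and lifting them to free factors of $C$ via the Frattini-quotient lift construction given just before the proposition; the pieces $C_i$, $C'_i$ are these lifts. No Bass--Serre theory, no inverse limits, no Kurosh theorem are needed.
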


\begin{proof} Denote by  $cor_i:H_1(C,\F_p)\longrightarrow H_1(F_i,\F_p)$ the corestriction, $i=1,2$. Since $G$ is free, $H_2(G,\F_p)=0$ and so one deduces from the Mayer-Vietoris sequence (cf. \cite[Proposition 9.2.13]{RZ-10}) that  the natural map $$cor_1\oplus cor_2:H_1(C,\F_p)\longrightarrow H_1(F_1,\F_p)\oplus H_1(F_2,\F_p)$$ is injective. Then  one has   $ker(cor_1)\cap ker(cor_2)=0$. Therefore $(cor_2)_{|ker(cor_1)}$ and $(cor_1)_{|ker(cor_2)}$  are  injections. It follows that  $$H_1(C,\F_p)=ker(cor_1)\oplus \overline C_2\  {\rm and}\  H_1(C,\F_p)=ker(cor_2)\oplus \overline C_1,$$ where $\overline C_2\cong im(cor_1)$ is the complement of $ker(cor_1)$ in $H_1(C, \F_p)$ containing $ker(cor_2)$ and $\overline C_1\cong im(cor_2)$ is the complement of $ker(cor_2)$ in $H_1(C, \F_p)$ containing $ker(cor_1)$.   Let $ C_i$ be  a lift of $\overline C_i$ to $C$ and $C'_i$ be a lift  of $ker(cor_i)$ to $C_i$. Note that $C'_i$ is a free factor of $C_i$ and   $C_i$ is a free factor of $F_i$.  
Then $$C=C_1\amalg C'_2=C'_1\amalg C_2=C'_1\amalg C'_2\amalg C'$$ as 
$$H_1(C,\F_p)=\overline C_1\oplus ker(cor_2)=\overline C_2\oplus ker(cor_1)=ker(cor_1)\oplus ker(cor_2)\oplus \overline C'$$ for some $\overline C'$.

\end{proof}

\begin{corollary}\label{cyclic amalgamation of free} Let $G=F_1\amalg_C F_2$ be a free pro-$p$ product  of free pro-$p$ groups $F_1,F_2$ with cyclic amalgamation $C$. Then  $G$ is free pro-$p$ if and only if  $C$  is a free factor of  $F_1$ or $F_2$.

\end{corollary}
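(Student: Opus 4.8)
\textbf{Proof proposal for Corollary \ref{cyclic amalgamation of free}.}

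The plan is to deduce this immediately from Proposition \ref{free}. First suppose $G=F_1\amalg_C F_2$ is free pro-$p$ with $C$ procyclic. By Proposition \ref{free}, $C$ splits as $C=C_1\amalg C_2'=C_1'\amalg C_2=C_1'\amalg C_2'\amalg C'$, where $C_i$ is a free factor of $F_i$ and $C_i'$ is a (possibly trivial) free factor of $C_i$. Since $C$ is procyclic, it is freely indecomposable: any nontrivial free pro-$p$ product of nontrivial groups has Frattini quotient of $\F_p$-dimension at least $2$, whereas $\dim_{\F_p}H_1(C,\F_p)\le 1$. Hence in the decomposition $C=C_1\amalg C_2'$ one of the two factors is trivial. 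If $C_2'=1$ then $C=C_1$ is a free factor of $F_1$; if $C_1=1$ then from $C=C_1'\amalg C_2$ (and $C_1'\le C_1=1$) we get $C=C_2$, a free factor of $F_2$. Either way $C$ is a free factor of $F_1$ or of $F_2$.

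For the converse, suppose $C$ is a free factor of, say, $F_1$, so $F_1=C\amalg D$ for some free pro-$p$ group $D$. Then using the standard manipulation of graph of groups presentations (collapsing the fictitious structure), $G=F_1\amalg_C F_2=(C\amalg D)\amalg_C F_2=D\amalg F_2$, since amalgamating $F_2$ to $F_1$ along the free factor $C$ of $F_1$ simply replaces that free factor: concretely the presentation $\langle F_1, F_2\mid C\hookrightarrow F_1, C\hookrightarrow F_2\rangle$ is the same as $\langle D, F_2\rangle$ with no relations, i.e. $D\amalg F_2$. A free pro-$p$ product of free pro-$p$ groups is free pro-$p$, so $G$ is free pro-$p$. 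The case where $C$ is a free factor of $F_2$ is symmetric.

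The only point requiring a little care is the converse direction, where one must justify that amalgamating along a free factor yields a genuine free pro-$p$ product; but this is exactly the elementary reduction of the graph of pro-$p$ groups $F_1 \xrightarrow{C} F_2$ along the ``fictitious'' structure coming from the splitting $F_1=C\amalg D$, passing to the equivalent presentation. No deep input beyond Proposition \ref{free} and the freeness of free pro-$p$ products of free pro-$p$ groups is needed.
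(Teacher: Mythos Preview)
Your proof is correct and follows essentially the same approach as the paper: both directions invoke Proposition \ref{free} for the forward implication (using that a procyclic group is freely indecomposable to force $C=C_1$ or $C=C_2$) and the elementary reduction $F_1\amalg_C F_2=(C\amalg D)\amalg_C F_2=D\amalg F_2$ for the converse. Your write-up is slightly more detailed than the paper's, but the argument is the same.
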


\begin{proof} 'only if '. In this case  either $C_1=C$ or $C_2=C$ in Proposition \ref{free} and so the result follows from this proposition.

\medskip
'if'. If $C$ is a free factor of, say, $F_1$, i.e. $F_1=L\amalg C$,  then $G= L\amalg F_2$ is a free pro-$p$ product of free pro-$p$ groups and therefore is free pro-$p$. 
 \end{proof}

The next proposition is the pro-$p$ analogue of \cite[Theorem 1]{Swa}.

\begin{proposition} \label{HNNfree}  Let $G=HNN(F,C, t)$ be a pro-$p$ HNN-extension of  a free pro-$p$ group and suppose $G$ is free pro-$p$. Then $F$ and $C$ split as a free pro-$p$ product $F=F_1\amalg F_2$,  $C=C_1\amalg C_2$ such that $C_1$ is a free factor of  $F_1$ and $C_2^t$ is a free factor of $F_2$ (where one of $C_1,C_2$ can be trivial).
	
\end{proposition}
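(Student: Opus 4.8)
The plan is to mimic the proof of Proposition~\ref{free} (the amalgamated-product case), replacing the Mayer--Vietoris sequence for the amalgam by the one for the HNN-extension. Writing $G=HNN(F,C,t)$ with monomorphisms $\partial_0,\partial_1\colon C\to F$ (so that $C$ sits in $F$ as $\partial_0(C)$ and $C^t$ corresponds to $\partial_1(C)$), the Mayer--Vietoris sequence with $\F_p$-coefficients reads
\[
H_2(G,\F_p)\longrightarrow H_1(C,\F_p)\xrightarrow{\ \partial_{0*}-\partial_{1*}\ }H_1(F,\F_p)\longrightarrow H_1(G,\F_p).
\]
Since $G$ is free pro-$p$, $H_2(G,\F_p)=0$, so the map $\phi:=\partial_{0*}-\partial_{1*}\colon H_1(C,\F_p)\to H_1(F,\F_p)$ is injective. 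First I would unwind what this injectivity says about the two corestrictions $cor_0=\partial_{0*}$ and $cor_1=\partial_{1*}$: it gives $\ker(cor_0)\cap\ker(cor_1)=0$, hence $(cor_1)_{|\ker(cor_0)}$ and $(cor_0)_{|\ker(cor_1)}$ are injective. As in Proposition~\ref{free} one then writes $H_1(C,\F_p)=\ker(cor_0)\oplus\overline C_1$ where $\overline C_1$ is chosen to contain $\ker(cor_1)$, so that $cor_0$ restricted to $\overline C_1$ is injective with image a subspace of $H_1(F,\F_p)$ that one completes to a complement; symmetrically $H_1(C,\F_p)=\ker(cor_1)\oplus \overline C_2$ with $\overline C_2\supseteq\ker(cor_0)$.

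Next I would lift these Frattini-level decompositions to the groups. Take $C_1$ a lift of $\overline C_2$ to $C$ (so $cor_0$ is injective on it at the Frattini level, meaning $\partial_0$ embeds $C_1$ as a free factor of $F$) and $C_2$ a lift of $\overline C_1$ to $C$; since $\ker(cor_0)\subseteq\overline C_2$ and $\ker(cor_1)\subseteq\overline C_1$ we can arrange $C=C_1\amalg C_2$. The point is that $C_1\hookrightarrow F$ via $\partial_0$ realizes $C_1$ as a free factor of $F$ — here one uses that a closed subgroup of a free pro-$p$ group whose generating set maps to a linearly independent (equivalently, extendable to a basis) subset of the Frattini quotient is a free factor, which is exactly the ``lift'' construction in the Definition preceding Proposition~\ref{free}, applied inside $F$ rather than inside $C$. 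Similarly $\partial_1$ embeds $C_2$ as a free factor of $F$, i.e.\ $C_2^t$ is a free factor of $F$ in the HNN picture. Writing $F=F_1\amalg F_2$ by splitting off a free-factor complement to $\partial_0(C_1)$ that itself contains $\partial_1(C_2)$ as a free factor — this requires a little care so that the single decomposition $F=F_1\amalg F_2$ simultaneously has $C_1$ ($=\partial_0(C_1)$) a free factor of $F_1$ and $C_2^t$ ($=\partial_1(C_2)$) a free factor of $F_2$ — one obtains the statement. One of $C_1,C_2$ is allowed to be trivial, which covers the degenerate cases $\ker(cor_0)=0$ or $\ker(cor_1)=0$.

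The main obstacle I anticipate is the last bookkeeping step: producing a \emph{single} free product decomposition $F=F_1\amalg F_2$ that accommodates both $\partial_0(C_1)$ and $\partial_1(C_2)$ as free factors of the respective factors, rather than two unrelated decompositions of $F$. In the amalgamated case of Proposition~\ref{free} this issue does not arise because there are two distinct free groups $F_1,F_2$, one for each corestriction; here both maps land in the same $F$, so one must check that the images $\partial_0(C_1)$ and $\partial_1(C_2)$ can be chosen (by a suitable choice of sections and of the complements $\overline C_i$) so that their union extends to a basis of $F$ modulo $\Phi(F)$ and the two pieces are ``disjoint'' at the Frattini level. This is plausible because $\partial_0(C_1)$ corresponds to $im(cor_0|_{\overline C_2})$ and $\partial_1(C_2)$ to $im(cor_1|_{\overline C_1})$ inside $H_1(F,\F_p)$, and one has freedom in choosing $\overline C_1,\overline C_2$; but verifying that these two images are in direct sum (or can be made so) is the real content and will need the injectivity of $\phi$ once more, together with the observation that $\phi(\overline C_1)$ and $\phi(\overline C_2)$ together span $\phi(H_1(C,\F_p))$. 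Everything else — the Mayer--Vietoris input, the lifting lemma for free factors, and the HNN bookkeeping $C^t\leftrightarrow\partial_1(C)$ — is routine and parallels Proposition~\ref{free} and Corollary~\ref{cyclic amalgamation of free}.
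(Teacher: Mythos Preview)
Your approach is exactly the paper's: use the Mayer--Vietoris sequence for the HNN-extension to get injectivity of $\partial_{0*}-\partial_{1*}$, set $\overline C_2=\ker(\partial_{0*})$ and $\overline C_1$ a complement, lift to $C=C_1\amalg C_2$, and then choose a decomposition $H_1(F,\F_p)=\overline F_1\oplus\overline F_2$ with $\partial_{0*}(\overline C_1)\subseteq\overline F_1$ and $\partial_{1*}(\overline C_2)\subseteq\overline F_2$ and lift. Two remarks. First, your labelling slips: you write ``$C_1$ a lift of $\overline C_2$ \dots\ so $cor_0$ is injective on it'', but $\overline C_2$ \emph{contains} $\ker(cor_0)$ in your setup; you mean $C_1$ a lift of $\overline C_1$ (and $C_2$ a lift of $\ker(cor_0)$), exactly as the paper does. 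Second, the ``main obstacle'' you flag --- arranging a \emph{single} splitting $F=F_1\amalg F_2$ so that $\partial_0(C_1)$ and $\partial_1(C_2)$ land in different factors --- is precisely the step the paper handles by simply declaring ``let $H_1(F,\F_p)=\overline F_1\oplus\overline F_2$ be a direct sum decomposition with $cor(\overline C_1)\leq\overline F_1$ and $cor(\overline C_2^t)\leq\overline F_2$''. So you have not diverged from the paper; you have merely been more explicit that this assertion (equivalently, that $\mathrm{im}(\partial_{0*})\cap\partial_{1*}(\ker\partial_{0*})=0$) is where the argument's weight lies.
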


\begin{proof} Let $cor:H_1(C,\F_p)\longrightarrow H_1(F,\F_p)$, $cor_t:H_1(C^t,\F_p)\longrightarrow H_1(F,\F_p)$ be the corestrictions. Since $G$ is free, $H_2(G,\F_p)=0$ and so one deduces from Mayer-Vietoris sequence (\cite[Proposition 9.4.2]{RZ-10}) that the natural map $f:H_1(C,\F_p)\longrightarrow H_1(F,\F_p)$ given by $cor(c)-cor_t(c^t)$ is injective and so $ker(cor)\cap ker(cor_t\circ \tau)=0$, where $\tau$ is the conjugation by $t$. Hence $H_1(C,\F_p)=\overline C_1\oplus \overline C_2$, where $\overline C_2$ is the kernel of   the corestriction $cor:H_1(C,\F_p)\longrightarrow H_1(F,\F_p)$  and $\overline C_1$ is its complement. As $C$ is free pro-$p$,  $C=C_1\amalg C_2$ is a free pro-$p$ product of lifts  of  $\overline C_i$ in $C$.

Let $H_1(F,\F_p)=\overline F_1\oplus \overline F_2$ be a direct sum decomposition with $cor(\overline C_1)\leq \overline F_1$ and $cor(\overline C_2^t)\leq \overline F_2$. Then one can choose lifting  of $\overline F_i$ containing $C_i$. So $F=F_1\amalg F_2$ is a free pro-$p$ product   and $C_1$ is a free factor of $F_1$ and $C_2^t$ is a free factor of 
$F_2$   (possibly trivial). 
\end{proof}

\begin{corollary}\label{cyclic free} Let $G=HNN(F,C, t)$ be a pro-$p$ HNN-extension of  a free pro-$p$ group $F$ with cyclic associated subgroup $C$. Then $G$ is free pro-$p$ if and only if either $C$ or $C^t$ is a free factor of $F$ and $C\Phi(F)\neq C^t\Phi(F)$.
\end{corollary}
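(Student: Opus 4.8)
The plan is to deduce this from Proposition \ref{HNNfree} in the same spirit as Corollary \ref{cyclic amalgamation of free} was deduced from Proposition \ref{free}, but now one has to keep track of the extra Frattini condition $C\Phi(F)\neq C^t\Phi(F)$, which records the fact that a free pro-$p$ HNN-extension with "fictitious" stable letter behaves differently from the amalgamated case. So there are two implications to establish.

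\textbf{The "if" direction.} Suppose $C$ (the argument for $C^t$ is symmetric after replacing $t$ by $t^{-1}$) is a free factor of $F$, say $F=L\amalg C$. Then $G=HNN(F,C,t)=HNN(L\amalg C, C, t)$. I would argue that conjugation by $t$ identifies the distinguished copy $C$ with $C^t\leq F$; writing the HNN presentation, $G$ is generated by $L$, $C$ and $t$ subject only to $c^t=\varphi(c)$ for $c\in C$, where $\varphi\colon C\to F$ is the inclusion of $C^t$. Since $C$ is free on a basis $X$ which extends to a basis of $F$, the generators $t$ and the image $\varphi(X)=C^t$ together generate a free pro-$p$ subgroup: indeed $G\cong L\amalg M$ where $M=\langle C^t, t\rangle$, and $M$ is a pro-$p$ HNN-extension $HNN(C^t,C^t,t)$ which is free pro-$p$ precisely when $C^t\not\leq \Phi(M)$, i.e. when $C\Phi(F)\neq C^t\Phi(F)$ after tracking the embedding $M\hookrightarrow F\amalg\langle t\rangle$. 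I expect the cleanest way is: after a basis change absorbing the relation, $G$ is generated by $(X\setminus\{x_0\})\cup (\text{basis of }L)\cup\{t\}$ subject to no relations once $x_0\Phi(F)\neq x_0^t\Phi(F)$ guarantees $t$ is not redundant; then count $d(G)$ and $d(G^{ab}\otimes\F_p)$ and invoke that a pro-$p$ group with $H_2(G,\F_p)=0$ and the right number of generators is free. The Frattini hypothesis is exactly what prevents the stable letter from being killed, and without it $G$ would be (a free factor times) $C$ itself, which is free anyway, so I should double-check whether the condition is really needed in the "if" direction or only sharpens the "only if".

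\textbf{The "only if" direction.} Assume $G$ is free pro-$p$. Apply Proposition \ref{HNNfree}: $F=F_1\amalg F_2$, $C=C_1\amalg C_2$ with $C_1$ a free factor of $F_1$ and $C_2^t$ a free factor of $F_2$, one of $C_1,C_2$ possibly trivial. Now $C$ is procyclic, so $C=C_1\amalg C_2$ with $C_1,C_2$ procyclic forces one of them to be trivial (a nontrivial free pro-$p$ product of two nontrivial groups is not procyclic). If $C_2=1$ then $C=C_1$ is a free factor of $F_1$, hence of $F=F_1\amalg F_2$; if $C_1=1$ then $C=C_2$ and $C^t$ is a free factor of $F_2$, hence of $F$. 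It remains to extract $C\Phi(F)\neq C^t\Phi(F)$: this should follow by examining the Mayer–Vietoris/Frattini-quotient computation inside the proof of Proposition \ref{HNNfree}. Concretely, the map $f\colon H_1(C,\F_p)\to H_1(F,\F_p)$, $c\mapsto cor(c)-cor_t(c^t)$, is injective; since $C$ is procyclic, $H_1(C,\F_p)$ is one-dimensional, and injectivity of $f$ says $cor(c)\neq cor_t(c^t)$ for a generator $c$, which is precisely the statement $C\Phi(F)\neq C^t\Phi(F)$ once one notes $cor$ and $cor_t$ are just the maps induced by the two embeddings $C,C^t\hookrightarrow F$ on Frattini quotients. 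So the needed inequality is already in hand from the injectivity used in Proposition \ref{HNNfree}.

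\textbf{Main obstacle.} The delicate point is the "if" direction: turning "$C$ is a free factor of $F$ together with $C\Phi(F)\neq C^t\Phi(F)$" into an honest proof that $G$ is free pro-$p$, because one must show the HNN relation does not create any hidden torsion or relation in the pro-$p$ completion — the pro-$p$ subtlety is that $HNN(C,C,t)$ over a procyclic $C$ is only free when the two embeddings are Frattini-independent. I would handle this either by a direct $H_2(G,\F_p)=0$ plus generator-count argument via the Mayer–Vietoris sequence \cite[Proposition 9.4.2]{RZ-10} (the same tool used throughout this section), or, more conceptually, by exhibiting $G$ explicitly as $L\amalg \langle C^t,t\rangle$ and recognizing the second factor as free pro-$p$ of rank $d(C)+1$ using the Frattini condition. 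Everything else is routine bookkeeping with free factors and Frattini quotients.
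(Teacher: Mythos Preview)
Your ``only if'' direction is essentially identical to the paper's: apply Proposition \ref{HNNfree}, use that a procyclic group cannot split nontrivially as a free pro-$p$ product to force $C_1=1$ or $C_2=1$, and read off $C\Phi(F)\neq C^t\Phi(F)$ from the injectivity of $f$ in the Mayer--Vietoris sequence. This is exactly what the paper does.

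For the ``if'' direction there is a gap in your primary approach. The claimed decomposition $G\cong L\amalg M$ with $M=\langle C^t,t\rangle$ is not justified: once you write $F=L\amalg C$, the element $c^t$ lies somewhere in $F=L\amalg C$, not necessarily in any predictable relationship to $L$, so there is no reason $\langle C^t,t\rangle$ should be a free factor complementing $L$. (Your description of $M$ as ``$HNN(C^t,C^t,t)$'' also does not match the actual relation.) The paper instead does a short case split: if both $C,C^t\not\leq\Phi(F)$, the Frattini hypothesis lets one \emph{choose} the complement $L$ so that $C^t\leq L$, whence the relation $c^t\in L$ makes $c$ redundant and $G=L\amalg\langle t\rangle$; if $C^t\leq\Phi(F)$ (so necessarily $C\not\leq\Phi(F)$), one observes directly that the single relator $c^tc'^{-1}$ is primitive in the free pro-$p$ group on $\{c\}\cup X\cup\{t\}$, giving $G=L\amalg\langle t\rangle$ again.

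That said, your \emph{alternative} suggestion---compute $H_2(G,\F_p)$ via the HNN Mayer--Vietoris sequence---is valid and in fact cleaner than the paper's case analysis. Since $H_2(F,\F_p)=H_2(C,\F_p)=0$, one gets $H_2(G,\F_p)=\ker f$, and $f$ is injective precisely when $C\Phi(F)\neq C^t\Phi(F)$; then $H_2(G,\F_p)=0$ forces $G$ free pro-$p$. This also explains your doubt about whether the free-factor condition is needed: it is in fact redundant, because $C\Phi(F)\neq C^t\Phi(F)$ already forces one of $C,C^t$ to lie outside $\Phi(F)$, and any procyclic subgroup not contained in $\Phi(F)$ is automatically a free factor of $F$. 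One only needs to check properness of the HNN-extension for the Mayer--Vietoris sequence to apply, which under the stated hypotheses is straightforward.
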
 

\begin{proof} 'Only if'. In this case  either $C_1=C$ or $C_2=C$ in Proposition \ref{HNNfree} say $C=C_1$. Then by Proposition  \ref{HNNfree} $C_1$ is a free factor of $F$.  As was already mentioned in the proof of Proposition \ref{HNNfree} the natural map $f:H_1(C,\F_p)\longrightarrow H_1(F,\F_p)$ given by $cor(c)-cor_t(c^t)$ is injective and so $C\Phi(F)\neq C^t\Phi(F)$ as required.

\medskip
'if'. Suppose say $C$ is a free factor of $F$, i.e. $F$ decomposes as a free pro-$p$ product $F=C\amalg L$.

Case 1. Both $C, C^t \not\leq \Phi(F)$. Since $C\Phi(F)\neq C^t\Phi(F)$, $L$ can be chosen such that $C^t\leq L$.  In this case  $G=L\amalg \langle t\rangle$ and  the result follows.

\medskip

Case 2.  One of $C, C^t$ is contained in $\Phi(F)$, say $C^t\leq \Phi(F)$.  Then $G=\langle L, t\rangle$. Denote by $c$ a generator of $C$, so that $c^t=c'\in \Phi(F)$ and let $X$ be a basis of $L$. Since   $[t,c]\not\in \Phi(F)$ the presentation $G=\langle C, X,t\mid c^t=c'\rangle$ just identifies the free generator $c^t c'^{-1}=[t,c]$ with the trivial element. So $G$ is a free pro-$p$ group  $L\amalg \langle t\rangle$ again.  

\end{proof}

The next example shows that \cite[Theorem 18]{Henry} as well as \cite[Corollary 1.5]{Toi} does not hold in the pro-$p$ case.

\begin{example}\label{cyclicHNNfree} Let $G=HNN(F(a,b),\langle b\rangle, t)$ with $b^t=a^p[a,b]$.

1.  $G$ is a two generated free pro-$p$ group generated by $a$ and $t$. This follows from Corollary \ref{cyclic free}. 

2. $G=HNN(F(a,b),\langle b\rangle, t)$ is proper. Indeed, if $G$ were not proper, then the image of $F(a,b)$ in $F(a,t)$  would be cyclic, equal to $\langle a\rangle$ and so $b$ would be a super natural power  $a^{\alpha}$ of $a$. Putting it into the relation we get  $(a^{-\alpha}) ^ta^p[a,a^{\alpha}]=(a^{-\alpha})^ta^p=1$. It follows then that $G$ is the Bamslag-Solitar pro-$p$ group (see Section \ref{Baumslag-Solitar}) that have a non-trivial normal subgroup, contradicting that $G$ is non-abelian free pro-$p$. 

3. $F(a,b)$ does not split relative to   $\langle a^p[a,b]\rangle$. Indeed, if $\langle a^p[a,b]\rangle$ is contained in a free factor $C$ of $F$, then $F/\langle\langle a^p[a,b]\rangle\rangle \cong C/\langle a^p[a,b]\rangle\amalg \Z_p$. But $F/\langle\langle a^p[a,b]\rangle\rangle=\Z_p\rtimes \Z_p$ is a soluble Demushkin group that is not cyclic and  torsion free, a contradiction. 
\end{example}

\begin{definition} Suppose $\E$ consist of the trivial subgroup and $\K$ a set of subgroups of a second countable pro-$p$ group $G$. If $T$ is an $(\E,\K)$-tree, and there exists a continuous section $V(T)/G\longrightarrow V(T)$ then by \cite[Theorem 9.6.1]{R} $G$ splits as a free pro-$p$ product  of its vertex stabilizers. In this case we shall say that $G$ splits as a free pro-$p$ product relative to $\K$.\end{definition}

\begin{proposition}\label{star general} Let $F$ be a free pro-$p$ group written as the fundamental group $\Pi_1(\curlyF, \Gamma)$ of a finite star  of pro-$p$ groups, i.e. $d_0(\Gamma)=v_0$ is one vertex.    Then each edge group $F(e)$ splits as a free pro-$p$ product $F(e)=F_0(e)\amalg F_1(e)$ (with one of them possibly trivial as well as possibly $F_0(e)=F_1(e)$) such that  each pending vertex group $F(v)$ splits as a free pro-$p$ product relative to  $F_1(e)$ for $e$ incident to $v$ and   $F(v_0)$ splits as a free pro-$p$ product relative to  $F_0(e)$ for all $e\in E(\Gamma)$ and to $F_1(e)$ for all $e$ that are loops. 
$$\xymatrix{&&&& {\overset{F(e') }{\bullet}}\\
&& {\overset{ F(v_0)}{\bullet}}\ar[rru]^{F_0(e')\amalg F_1(e')} \ar[rrd]^{F_0(e)\amalg F_1(e)}\ar@(ld,lu)^{ F_0(e'')\amalg F_1(e'')}\\  
&&&&{\overset{F(e)}{\bullet}}}$$

\end{proposition}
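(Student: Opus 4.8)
The plan is to mimic the proof of the amalgamated-product case (Proposition~\ref{free}) but to run the Mayer--Vietoris argument for a general finite star of free pro-$p$ groups, using that $H_2(F,\F_p)=0$ since $F$ is free pro-$p$. First I would write $\Gamma$ as a star with central vertex $v_0$, pending vertices $v_1,\dots,v_k$ joined to $v_0$ by non-loop edges $e_1,\dots,e_k$, and loops $\ell_1,\dots,\ell_m$ at $v_0$. The Mayer--Vietoris sequence in $\F_p$-homology associated to $(\curlyF,\Gamma)$ (combining \cite[Proposition 9.2.13]{RZ-10} and \cite[Proposition 9.4.2]{RZ-10}, or the version for a finite graph of pro-$p$ groups) gives, because $H_2(F,\F_p)=0$, an injection
$$
\Big(\bigoplus_{i=1}^k H_1(F(e_i),\F_p)\Big)\oplus\Big(\bigoplus_{j=1}^m H_1(F(\ell_j),\F_p)\Big)
\;\hookrightarrow\;
H_1(F(v_0),\F_p)\oplus\bigoplus_{i=1}^k H_1(F(v_i),\F_p),
$$
where on each summand $H_1(F(e_i),\F_p)$ the map is $cor^{v_0}_{e_i}-cor^{v_i}_{e_i}$ and on each $H_1(F(\ell_j),\F_p)$ it is $cor(c)-cor_t(c^{t})$ with $\tau$ conjugation by the stable letter of $\ell_j$. (This is exactly the shape used in Propositions~\ref{free} and \ref{HNNfree}, just with more edges.)

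Next, for each edge $e$ (loop or not) I would extract the decomposition of $H_1(F(e),\F_p)$. For a non-loop edge $e=e_i$ with pending vertex $v_i$: injectivity forces $\ker(cor^{v_0}_{e_i})\cap\ker(cor^{v_i}_{e_i})=0$, so I can split $H_1(F(e_i),\F_p)=\overline{F_0(e_i)}\oplus\overline{F_1(e_i)}$ with $\overline{F_1(e_i)}=\ker(cor^{v_0}_{e_i})$ and $\overline{F_0(e_i)}$ a complement containing $\ker(cor^{v_i}_{e_i})$. For a loop $\ell_j$: as in Proposition~\ref{HNNfree}, set $\overline{F_1(\ell_j)}=\ker(cor^{v_0}_{\ell_j})$ and $\overline{F_0(\ell_j)}$ a complement (here the stable-letter twist just means the second corestriction is $cor\circ\tau$, which does not affect the argument). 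Since $F(e)$ is free pro-$p$, lifting these Frattini-quotient subgroups via a continuous section (Definition of ``lift of $\overline A$'') gives a free pro-$p$ product $F(e)=F_0(e)\amalg F_1(e)$, with either factor possibly trivial, and possibly $F_0(e)=F_1(e)$ when one of the kernels is everything.

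Then I would transfer these to the vertex groups. For a pending vertex $v=v_i$: by construction $cor^{v_i}_{e_i}$ kills $\overline{F_1(e_i)}$, so the image of $F_1(e_i)$ in $H_1(F(v_i),\F_p)$ is trivial; hence there is a complement to $\langle$ image of $F_1(e_i)\rangle$, and choosing a lift shows $F(v_i)=F_1(e_i)\amalg(\text{something})$, i.e.\ $F(v_i)$ splits as a free pro-$p$ product relative to $F_1(e_i)$ — here I use that $F(v_i)$ is free pro-$p$ and the relative free-product criterion \cite[Theorem 9.6.1]{R} (a continuous section of vertices exists because the relevant graph is a finite bouquet). For $v_0$: by construction $cor^{v_0}_{e}$ kills $\overline{F_0(e)}$ for every edge $e$, and additionally kills $\overline{F_1(\ell_j)}$ for every loop $\ell_j$; since the various kernels together with their complements are all inside $H_1(F(v_0),\F_p)$, I need the images of $\{F_0(e):e\in E(\Gamma)\}\cup\{F_1(\ell_j):\ell_j\text{ a loop}\}$ in $H_1(F(v_0),\F_p)$ to span a subgroup admitting a complement, which is automatic in an $\F_p$-vector space; lifting gives the asserted relative free pro-$p$ product decomposition of $F(v_0)$.

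\textbf{Main obstacle.} The routine linear algebra over $\F_p$ is harmless; the delicate point is bookkeeping the \emph{compatibility of the chosen complements and lifts across all edges simultaneously}, so that one and the same decomposition $F(e)=F_0(e)\amalg F_1(e)$ serves both the relative splitting of the pending vertex $F(v)$ (through $F_1(e)$) and the relative splitting of $F(v_0)$ (through $F_0(e)$), and so that the images $F_0(e)$ (for all $e$) and $F_1(\ell)$ (for loops $\ell$) inside $F(v_0)$ can be simultaneously realised as free factors. For the pending vertices this is local and independent edge-by-edge, so it is clean; for $v_0$ one must order the edges and lift successively, at each stage choosing the lift of the next $\overline{F_0(e)}$ inside a free factor complementary to the span of the previously lifted ones — this works because $H_1$ of a free pro-$p$ group is a vector space and free factors correspond to direct summands of $H_1$ via continuous sections, but it requires care to phrase correctly, and is where I expect to spend the real effort.
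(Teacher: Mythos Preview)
Your overall architecture matches the paper's proof: Mayer--Vietoris with $H_2(F,\F_p)=0$, splitting each $H_1(F(e),\F_p)$ along $\ker(cor^{v_0})$, lifting to $F(e)=F_0(e)\amalg F_1(e)$, then pushing the pieces into the vertex groups. Two points need correction.

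\textbf{The direction of the corestriction argument is reversed.} You write that $cor^{v_i}_{e_i}$ \emph{kills} $\overline{F_1(e_i)}$ and hence the image of $F_1(e_i)$ in $H_1(F(v_i),\F_p)$ is trivial. This is backwards: you defined $\overline{F_1(e_i)}=\ker(cor^{v_0}_{e_i})$, so it is $cor^{v_0}$ that kills it, while $cor^{v_i}$ is \emph{injective} on it (because $\ker(cor^{v_0})\cap\ker(cor^{v_i})=0$). Injectivity is exactly what you need: it says the image of $F_1(e_i)$ in $F(v_i)/\Phi(F(v_i))$ is a direct summand, so a lift makes $F_1(e_i)$ a free factor of $F(v_i)$. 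If the image were trivial you would have $F_1(e_i)\subset\Phi(F(v_i))$ and no such conclusion. The same reversal occurs at $v_0$: it is $cor^{v_0}$ that is injective on each $\overline{F_0(e)}$, not that kills it.

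\textbf{The simultaneous free-factor problem at $v_0$.} Your ``main obstacle'' is the right one, but successive lifting is not enough by itself: you must first know that the images of all the $\overline{F_0(e)}$ (and of the $\overline{F_1(\ell)}$ for loops $\ell$, via the twisted embedding $\partial_1$) are \emph{jointly} independent inside $H_1(F(v_0),\F_p)$. The paper obtains this in one stroke: compose the Mayer--Vietoris map $f$ with the projection $\bigoplus_v H_1(F(v))\to H_1(F(v_0))$ to get $\alpha$; since $f$ is injective, $\ker\alpha$ is exactly $\bigoplus_{e\text{ non-loop}}\ker(cor^{v_0}_e)=\bigoplus_{e\text{ non-loop}}\overline{F_1(e)}$, so $\alpha$ is injective on the complementary summand $\bigoplus_e \overline{F_0(e)}\oplus\bigoplus_{\ell}\overline{F_1(\ell)}$. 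One then packages the pieces into a single free pro-$p$ product $G=\coprod_{e\text{ non-loop}} F_0(e)\amalg\coprod_{\ell} F(\ell)$ and defines $\beta:G\to F(v_0)$ using $\partial_0$ on each $F_0(e)$ and $\partial_1$ (conjugation by the stable letter) on each $F_1(\ell)$; the commutative square with $\alpha$ on $H_1$ shows $\beta$ is split injective, hence $G$ is a free factor of $F(v_0)$. This replaces your inductive lifting with a single clean step and is what you should do.
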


\begin{proof} Let $cor_i:H_1(F(e),\F_p)\longrightarrow H_1(G(d_i(e)),\F_p)$ be the corestriction.  Since $F$ is free, $H_2(F,\F_p)=0$ and so the natural map $$f:\bigoplus_{e\in E(\Gamma)} H_1(F(e),\F_p)\longrightarrow \bigoplus_{v\in V(\Gamma)} H_1(F(v),\F_p), f=cor_1(g)-cor_0(g)$$ from the Mayer-Vietoris sequence (cf. \cite[Section 9.4]{R})  is injective;  so $$ker(cor_0)\cap ker(cor_1)=0$$ for each $e$.   Then $H_1(F(e), \F_p)$ splits as a direct sum $H_1(F(e),\F_p)=\overline C_0(e)\oplus \overline C_1(e)$ with $\overline C_1(e)=ker(cor_0)$ and $\overline C_0(e)$ its complement. Since   $ker(cor_0)\cap ker(cor_1)=0$ for each $e$, $cor_1$ restricts injectively on $\overline C_1(e)$. Lift $\overline C_i(e)$ to $F_i(e)\leq F(e)$. Then $F(e)=F_0(e)\amalg F_1(e)$.

Consider the composite  $$\alpha:\bigoplus_{e\in E(\Gamma)} H_1(F(e),\F_p)\longrightarrow \bigoplus_{v\in V(\Gamma)} H_1(F(v),\F_p)\longrightarrow H_1(F(v_0),\F_p)$$ of $f$ with the natural projection of $\bigoplus_{v\in V(\Gamma)} H_1(F(v),\F_p)$ onto $H_1(F(v_0),\F_p)$.

Let $E(\Gamma)=E_n\cup E_l$ be a union, where $E_n$ is the set of not loops and $E_l$ is the set of loops.  
Note that $\bigoplus_{e\in E_n} H_1(F_1(e),\F_p)$ is the kernel of $\alpha$ and so $\alpha$  is injective on $$\bigoplus_{e\in E(\Gamma)} H_1(F_0(e),\F_p)\oplus \bigoplus_{e\in E_l} H_1(F_1(e),\F_p).$$

   Consider a free pro-$p$ product $$G= \coprod _{e\in E_n} F_0(e)\amalg \coprod_{e\in E_l} F(e)$$ and 
note that $$H_1(G,\F_p)= \bigoplus_{e\in E_n} H_1(F_0(e),\F_p)\oplus \bigoplus_{e\in E_l} H_1(F_1(e),\F_p).$$  
    Define $\beta_e:F_0(e)\longrightarrow F$ to be the natural embedding. Let $\beta:G\longrightarrow F(v_0)$  be a homomorphism induced by $\beta_{e}, e\in E(\Gamma)$,  and $\beta_e(f)^{t_e}$ for $f\in F_1(e)$ if $e\in E_l$. Then the following diagram 

$$\xymatrix{G\ar[rr]^{\beta}\ar[d]&& F(v_0)\ar[d]\\
  H_1(G,\F_p)\ar[rr]^{\alpha}&&H_1(F(v_0),\F_p)}$$
 is commutative. As $\alpha$ is split injective and $G$, $F(v_0)$ are free pro-$p$, $\beta$ is spit injective and so we may consider $G$ as a free factor of $F(v_0)$, unless $\beta$ is surjective. This finishes the proof for $F(v_0)$.
 
 Similarly $F_1(e)$ embeds as a free factor to its pending vertex group $F(v)$ if $e$ is not a loop. This finishes the proof.

\end{proof}

\begin{remark}\label{triviality of free factors} Note that in Proposition \ref{star general}  $F_i(e)$ is trivial if and only if $\partial_i(F(e))\leq \Phi(F_{d_i(e)})$.  

\end{remark}

\begin{corollary}\label{star} Let $F$ be a free pro-$p$ group written as the fundamental group $\Pi_1(\curlyF, \Gamma)$ of a star  of pro-$p$ groups, i.e. $d_0(\Gamma)=v_0$ is one vertex. Suppose all edge groups $F(e)$ are infinite cyclic.   Then $F(v_0)$ splits as a free pro-$p$ product relative to $\{\partial_i(G(e)\mid d_i(e)=v_0, \partial_i(G(e)\not\leq \Phi(G(v)), i=0,1\}$.

\end{corollary}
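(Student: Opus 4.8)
The plan is to derive Corollary~\ref{star} as the special case of Proposition~\ref{star general} in which every edge group $F(e)$ is infinite procyclic, simply by keeping track of which of the free factors $F_0(e)$, $F_1(e)$ can be nontrivial. First I would invoke Proposition~\ref{star general} directly: it gives a splitting $F(e)=F_0(e)\amalg F_1(e)$ of each edge group and tells us that $F(v_0)$ splits as a free pro-$p$ product relative to the family consisting of $F_0(e)$ for all $e\in E(\Gamma)$ together with $F_1(e)$ for all loops $e$. So the only work is to identify those $F_i(e)$ with the data $\{\partial_i(F(e))\mid d_i(e)=v_0,\ \partial_i(F(e))\not\leq\Phi(F(v_0)),\ i=0,1\}$.

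The key observation is Remark~\ref{triviality of free factors}: $F_i(e)$ is trivial precisely when $\partial_i(F(e))\leq\Phi(F_{d_i(e)})$. Since each $F(e)\cong\Z_p$ is procyclic, its Frattini quotient $H_1(F(e),\F_p)$ is one-dimensional over $\F_p$, hence the direct sum decomposition $H_1(F(e),\F_p)=\overline C_0(e)\oplus\overline C_1(e)$ produced in the proof of Proposition~\ref{star general} has at most one nonzero summand. Therefore at most one of $F_0(e)$, $F_1(e)$ is nontrivial, and when $F_i(e)$ is nontrivial it is all of $F(e)$ (being a free factor of the procyclic group $F(e)$ with the same Frattini quotient). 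Consequently, up to replacing $F_i(e)$ by the conjugate that actually embeds in $F(v_0)$, the relative free factor $F_i(e)$ contributing to the splitting of $F(v_0)$ is either trivial or equal to $\partial_i(F(e))$, and it is nontrivial exactly when $\partial_i(F(e))\not\leq\Phi(F(v_0))$, which by Remark~\ref{triviality of free factors} is the stated condition. For the edge groups $F_1(e)$ with $e$ a loop, the incident vertex is $v_0$ as well (since $\Gamma$ is a star with $d_0(\Gamma)=v_0$, a loop at $v_0$ has both endpoints equal to $v_0$), so these also fall into the family $\{\partial_i(F(e))\mid d_i(e)=v_0\}$; and the $F_0(e)$ for non-loop edges have $d_0(e)=v_0$ by the star hypothesis. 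Collecting the nontrivial contributions yields exactly the family $\{\partial_i(F(e))\mid d_i(e)=v_0,\ \partial_i(F(e))\not\leq\Phi(F(v_0)),\ i=0,1\}$, which is the assertion.

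I expect the main (minor) obstacle to be bookkeeping rather than anything substantive: one must be careful that in Proposition~\ref{star general} the relative free factors of $F(v_0)$ coming from loops are the $F_1(e)$ while those from non-loop edges are the $F_0(e)$, and then check that in the procyclic case both of these collapse to the single condition $\partial_i(F(e))\not\leq\Phi(F(v_0))$. Since the pending-vertex part of Proposition~\ref{star general} plays no role here (we only state the conclusion for $F(v_0)$), the argument is genuinely just a translation of Remark~\ref{triviality of free factors} through the one-dimensionality of $H_1(\Z_p,\F_p)$, and no further tree-theoretic input is needed.
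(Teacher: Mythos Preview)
Your proposal is correct and follows exactly the approach of the paper, whose proof reads in its entirety ``Follows from Proposition~\ref{star general} and Remark~\ref{triviality of free factors}''. You have simply spelled out the bookkeeping: since $F(e)\cong\Z_p$ the free product $F(e)=F_0(e)\amalg F_1(e)$ forces exactly one factor to be nontrivial (and equal to $F(e)$), and Remark~\ref{triviality of free factors} identifies which one via the Frattini condition, matching the family in the statement.
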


\begin{proof} Follows from Proposition \ref{star general} and Remark \ref{triviality of free factors}

\end{proof}

\begin{theorem}\label{free pro-p general} Let $F$ be a non-abelian free pro-$p$ group written as the fundamental group $\Pi_1(\curlyF, \Gamma)$ of a finite injective graph of pro-$p$ groups.  Then for a vertex $v$ of $\Gamma$  each incident edge group $F(e)$ splits as a free pro-$p$ product $F(e)=F_0(e)\amalg F_1(e)$  and   $F(v)$ splits as a free pro-$p$ product relative to  $F_0(e)$ for all $e\in E(\Gamma)$ with $v=d_0(e)$ and to $F_1(e)$ for all $e$ with $d_1(e)=v$.  (Here $F_0(e)$ and $F_1(e)$ can coincide or be trivial).

\end{theorem}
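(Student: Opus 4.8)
The plan is to reduce the general statement about an arbitrary finite injective graph of pro-$p$ groups to the case of a star, which is already handled by Proposition \ref{star general}. The key observation is that, given a vertex $v$ of $\Gamma$, all one needs to understand is the interaction of $F(v)$ with its incident edge groups, and this is governed by the Mayer--Vietoris sequence; the presence of the rest of the graph away from $v$ only contributes extra free factors which can be absorbed harmlessly.

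First I would fix the vertex $v$ and consider the star subgraph $\Gamma_v$ of $\Gamma$ consisting of $v$ together with all edges incident to $v$ (and their other endpoints), treating $v$ as the central vertex $v_0$ in the terminology of Proposition \ref{star general}. By Lemma \ref{restricted graph} the natural map $\Pi_1(\curlyF,\Gamma_v)\longrightarrow \Pi_1(\curlyF,\Gamma)=F$ is a monomorphism onto a subgroup which is a free pro-$p$ factor of $F$: indeed, collapsing a maximal subtree of $\Gamma$ that contains a maximal subtree of $\Gamma_v$ and using Proposition \ref{reducing to trees} (or directly the presentation \eqref{presentation}) one sees that $F$ is a free pro-$p$ product of $\Pi_1(\curlyF,\Gamma_v)$ with a free pro-$p$ group together with possibly the edge groups of the remaining edges; the point is that a subgroup of a free pro-$p$ group that is itself a free pro-$p$ factor remains free pro-$p$. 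Hence $\Pi_1(\curlyF,\Gamma_v)$ is a free pro-$p$ group and $(\curlyF,\Gamma_v)$ is an injective (hence proper) finite star of pro-$p$ groups with central vertex group $F(v)$ and edge groups $F(e)$ for $e$ incident to $v$.

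Next I would simply apply Proposition \ref{star general} to $\Pi_1(\curlyF,\Gamma_v)$. It yields, for each edge $e$ incident to $v$, a free pro-$p$ decomposition $F(e)=F_0(e)\amalg F_1(e)$ (with the convention that $F_0(e)$ records the part mapping nontrivially into $F(d_0(e))$ and $F_1(e)$ the part mapping nontrivially into $F(d_1(e))$, either possibly trivial, and for a loop at $v$ both $F_0(e)$ and $F_1(e)$ are factors of $F(v)$), and it gives that $F(v)=F(v_0)$ splits as a free pro-$p$ product relative to the family $\{F_0(e): d_0(e)=v\}\cup\{F_1(e): d_1(e)=v,\ e\ \text{a loop}\}$. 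Finally, to get the statement as phrased — splitting relative to $F_0(e)$ for all $e$ with $d_0(e)=v$ and $F_1(e)$ for all $e$ with $d_1(e)=v$ — one notes that an edge $e$ with $d_1(e)=v$ but $d_0(e)\neq v$ has its $v$-side group $F_1(e)$ which by the star statement (reorienting, this is the ``pending vertex'' case) is a free pro-$p$ factor of $F(v)$, and a free factor contained in a free pro-$p$ group is automatically elliptic in (indeed a free factor compatible with) any free pro-$p$ product decomposition, so adding it to the relative family changes nothing. Collecting these observations gives the theorem.

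The main obstacle I anticipate is bookkeeping around orientation and the distinction between the roles $F_0(e)$ versus $F_1(e)$, especially for loops, and making precise the claim that a free pro-$p$ factor of $F(v)$ can always be incorporated into a relative free product decomposition without loss — this is intuitively clear (one refines the free splitting using the Kurosh-type argument of Theorem \ref{KST}, or notes that in a free pro-$p$ group every free factor is a vertex group of a suitable free-product decomposition), but it needs a clean statement. A secondary technical point is verifying that $\Pi_1(\curlyF,\Gamma_v)$ is genuinely proper (injective) so that Proposition \ref{star general} applies: this follows since $(\curlyF,\Gamma)$ is injective and the inclusion $\Pi_1(\curlyF,\Gamma_v)\hookrightarrow F$ restricts to the identity on each edge and vertex group of $\Gamma_v$, but it is worth spelling out via Lemma \ref{restricted graph}.
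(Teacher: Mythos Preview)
Your approach is essentially the paper's: restrict to the star $\Sigma$ of $v$, observe that $\Pi_1(\curlyF,\Sigma)$ is free pro-$p$, and invoke Proposition \ref{star general}. Two simplifications are worth noting. First, you do not need $\Pi_1(\curlyF,\Sigma)$ to be a free \emph{factor} of $F$ (and in general it is not); Lemma \ref{restricted graph} gives an embedding into $F$, and any closed subgroup of a free pro-$p$ group is free pro-$p$, which is all that Proposition \ref{star general} requires. Second, your orientation bookkeeping at the end is tangled: Proposition \ref{star general} is stated for a star with $d_0(e)=v_0$ for every edge, so rather than applying it and then arguing separately about edges with $d_1(e)=v$, the paper simply reorients all edges of $\Sigma$ so that $d_0(e)=v$ and applies Proposition \ref{star general} once; the resulting decomposition $F(e)=F_0(e)\amalg F_1(e)$ is then \emph{defined} with respect to this orientation, and the conclusion of the theorem follows verbatim (loops are exactly the edges with $d_1(e)=v$ after reorientation). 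Your attempted post-hoc incorporation of the extra $F_1(e)$'s via ``a free factor is elliptic in any free product decomposition'' is not quite the right statement and is unnecessary once you reorient.
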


\begin{proof} 
 Let $\Sigma$ be the star of $v$ and $(\curlyF, \Sigma)$ the subgraph of pro-$p$ groups restricted to $\Sigma$. By changing orientation if necessary we may assume that $d_0(e)=v$ for every edge $e$ incident to $v$. Then by  Proposition \ref{star general}  each incident edge group $F(e)$ to $v$  splits as a free pro-$p$ product $F(e)=F_0(e)\amalg F_1(e)$  and   $F(v_0)$ splits as a free pro-$p$ product relative to  $F_0(e)$ for all $e\in E(\Gamma)$ and to $F_1(e)$ for all $e$ that are loops.  

$$\xymatrix{&&&& {\overset{F(v') }{\bullet}}\\
&& {\overset{ F(v)}{\bullet}}\ar[rru]^{F_0(e')\amalg F_1(e')} \ar[rrd]^{F_0(e)\amalg F_1(e)}\ar@(ld,lu)^{ F_0(e'')\amalg F_1(e'')}\\  
&&&&{\overset{F(w)}{\bullet}}}$$

\end{proof}

\begin{corollary} Any decomposition of a free pro-$p$ group as the fundamental group of a finite graph of pro-$p$ groups $\Pi_1(\curlyF, \Gamma)$ is compatible with some nontrivial free pro-$p$ product decomposition.

\end{corollary}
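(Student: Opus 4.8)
The plan is to use Theorem \ref{free pro-p general} to build a refinement of the given decomposition that simultaneously collapses onto a free pro-$p$ splitting. Let $T$ be the standard pro-$p$ tree of $\Pi_1(\curlyF,\Gamma)$, so that $T/F=\Gamma$ is finite; we may assume $F$ is non-abelian, since a non-trivial decomposition of $\Z_p$ is already an HNN-extension over the trivial group, hence itself a free pro-$p$ splitting.

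By Theorem \ref{free pro-p general}, for every vertex $v$ the group $F(v)$ admits a free pro-$p$ product decomposition $(\curlyF_v,\Gamma_v)$ relative to the incident pieces $F_0(e)$ (for $d_0(e)=v$) and $F_1(e)$ (for $d_1(e)=v$), and each edge group factors as $F(e)=F_0(e)\amalg F_1(e)$. I would then form the refinement $(\hat\curlyF,\hat\Gamma)$ of $(\curlyF,\Gamma)$ obtained by blowing up each vertex $v$ along $(\curlyF_v,\Gamma_v)$ and unfolding each edge $e$ along the factorisation $F(e)=F_0(e)\amalg F_1(e)$: the factor $F_0(e)$ becomes an edge group joining the $F_0(e)$-vertex of $\Gamma_{d_0(e)}$ to a new vertex carrying $F(e)$, and $F_1(e)$ joins that vertex to the $F_1(e)$-vertex of $\Gamma_{d_1(e)}$. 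Then $\Pi_1(\hat\curlyF,\hat\Gamma)=F$ (an iterated application of the blow-up construction of Section 4, cf. Lemma \ref{refinement at v}), and the standard tree $\hat T$ of $(\hat\curlyF,\hat\Gamma)$ admits a collapse map onto $T$, obtained by undoing all blow-ups and unfoldings.

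Next I would collapse in $\hat\Gamma$ every edge whose edge group is non-trivial; such edges occur only among the $F_i(e)$-edges, while every edge coming from some $\Gamma_v$ has trivial group. This yields a finite graph of pro-$p$ groups $\bar\Gamma$ all of whose edge groups are trivial, i.e.\ a free pro-$p$ product decomposition of $F$ with standard tree $\bar T$, together with a collapse map $\hat T\to\bar T$. Hence $\hat T$ is a common refinement of $T$ and $\bar T$, so the given decomposition is compatible with $\bar T$. The remaining point — and I expect it to be the only delicate one — is to ensure $\bar T$ is non-trivial. If all $F(v)$ are trivial this is immediate, since then $T$ is itself a free splitting with trivial edge groups, non-trivial because $d(F)\ge 2$ forces $b_1(\Gamma)\ge 1$, and we may take $\bar T=T$. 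Otherwise, were $\bar T$ trivial, every $F(v)$ would be freely indecomposable relative to its incident edge pieces; tracking the corestriction maps in the Mayer--Vietoris sequence behind Theorem \ref{free pro-p general} (together with Remark \ref{triviality of free factors}), this would force every non-trivial $F(v)$ to be carried isomorphically by an incident edge group, hence, after reducing $\Gamma$, would make $F$ procyclic via the HNN Mayer--Vietoris sequence, contrary to hypothesis. The care needed lies precisely in making this last reduction-and-bookkeeping step precise.
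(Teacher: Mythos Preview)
Your approach is essentially the paper's: use Theorem \ref{free pro-p general} to factor each edge group as $F_0(e)\amalg F_1(e)$, blow up vertices into the corresponding relative free products, insert intermediate edges carrying the $F_i(e)$, and observe that the resulting refinement collapses one way to $(\curlyF,\Gamma)$ and the other way (killing all edges with non-trivial group) to a free pro-$p$ product. The paper's proof is terser---it explicitly blows up a single vertex $v$ and presents the construction via a diagram---and does not address the non-triviality of the resulting free splitting at all, so your added discussion of that point goes beyond what the paper does rather than falling short of it.
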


\begin{proof} We first insert artificial edges for each $F_i(e)$ with $e$ incident to $v$ and then   using Theorem \ref{free pro-p general} and its notation, we can 
 blowup  the vertex $v$ and vertex group $F(v)$ into a free pro-$p$ product.

$$\xymatrix{&{\overset{F_1(e'')}{\bullet}}\ar[rd]&&{\overset{F_0(e')}{\bullet}}\ar[r]^{ F_0(e')}& {\overset{F(e') }{\bullet}}\ar[r]^{F(e')}&{\overset{F(v') }{\bullet}}\\
{\overset{F(e'')}{\bullet}}\ar[rd]_{ F_0(e'')}\ar[ru]^{ F_1(e'')}&& {\overset{ F_0}{\bullet}}\ar[ru] \ar[rd]\\  
&{\overset{F_0(e'')}{\bullet}}\ar[ru]&&{\overset{F_0(e)}{\bullet}}\ar[r]^{ F_0(e)}&{\overset{F(e)}{\bullet}}\ar[r]^{F(e')}&{\overset{F(w) }{\bullet}}}$$
Here $F_0$ is the remaining free factor that can be trivial.





Then the collapse of new edges gives back the graph of groups $(\curlyF, \Gamma)$ and the collapse of edges with non-trivial edge groups gives a free pro-$p$ product decomposition.

\end{proof}

Using Corollary \ref{star} we deduce

\begin{corollary}\label{cor:free pro-p} Let $F$ be a non-abelian free pro-$p$ group written as the fundamental group $\Pi_1(\curlyF, \Gamma)$ of a finite injective graph of pro-$p$ groups with  cyclic edge groups. Then  for any vertex $v$ of $\Gamma$  the vertex group $G(v)$ splits as a free pro-$p$ product relative to $\{\partial_i(G(e)\mid d_i(e)=v, \partial_i(G(e)\not\leq \Phi(G(v))\}$.

\end{corollary}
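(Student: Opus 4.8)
The plan is to derive this from Theorem~\ref{free pro-p general} together with Corollary~\ref{star}, exactly in the way Proposition~\ref{star general} specializes to Corollary~\ref{star}. The statement of Theorem~\ref{free pro-p general} produces, for a fixed vertex $v$ of $\Gamma$, a free pro-$p$ product decomposition $F(e)=F_0(e)\amalg F_1(e)$ of each incident edge group, such that $F(v)$ splits as a free pro-$p$ product relative to the family of the appropriate factors $F_i(e)$ (the factor $F_0(e)$ when $v=d_0(e)$, the factor $F_1(e)$ when $v=d_1(e)$). So the only work left is to identify, in the cyclic case, which of these factors $F_i(e)$ are trivial and which equal the whole edge group $\partial_i(F(e))$, and to see that the non-trivial ones are precisely the images $\partial_i(F(e))$ that are not contained in $\Phi(F(v))$.

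First I would invoke Theorem~\ref{free pro-p general} at the vertex $v$: after reorienting the edges (as in the proof of that theorem, which reduces to the star $\Sigma$ of $v$ via Proposition~\ref{star general}), every edge group $F(e)$ incident to $v$ is written as $F(e)=F_0(e)\amalg F_1(e)$, and $F(v)$ splits relative to the collection of the relevant factors. Now I would use Remark~\ref{triviality of free factors}, which says that in the notation of Proposition~\ref{star general} the free factor $F_i(e)$ is trivial if and only if $\partial_i(F(e))\leq \Phi(F_{d_i(e)})$. When $F(e)$ is procyclic, $H_1(F(e),\F_p)$ is one-dimensional over $\F_p$, and the decomposition $H_1(F(e),\F_p)=\overline C_0(e)\oplus\overline C_1(e)$ used in the proof of Proposition~\ref{star general} forces one of the two summands to be zero and the other to be all of $H_1(F(e),\F_p)$. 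Hence one of $F_0(e),F_1(e)$ is trivial and the other equals $F(e)$; in particular the non-trivial factor incident to $v$ is exactly $\partial_i(F(e))$ itself, and by Remark~\ref{triviality of free factors} it is non-trivial precisely when $\partial_i(F(e))\not\leq\Phi(F(v))$. This is exactly the passage from Proposition~\ref{star general} to Corollary~\ref{star}, now carried out for a general finite injective graph rather than a star.

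Putting these together: the family relative to which $F(v)$ splits, after discarding the trivial factors, becomes $\{\partial_i(F(e))\mid d_i(e)=v,\ \partial_i(F(e))\not\leq\Phi(F(v))\}$, which is the claimed conclusion. I would phrase the short proof simply as: apply Theorem~\ref{free pro-p general} to the vertex $v$; since the edge groups are procyclic, each $F_i(e)$ is either trivial or the whole of $\partial_i(F(e))$, and by Remark~\ref{triviality of free factors} it is trivial exactly when $\partial_i(F(e))\leq\Phi(F(v))$; therefore the relative free product decomposition of $F(v)$ provided by Theorem~\ref{free pro-p general} is precisely a decomposition relative to $\{\partial_i(F(e))\mid d_i(e)=v,\ \partial_i(F(e))\not\leq\Phi(F(v))\}$.

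I do not expect a genuine obstacle here, since all the substantive content (the Mayer--Vietoris/homology argument producing the splittings of edge and vertex groups, and the triviality criterion) is already contained in Theorem~\ref{free pro-p general}, Proposition~\ref{star general}, and Remark~\ref{triviality of free factors}. The only point requiring a sentence of care is the observation that for a procyclic group a direct-sum decomposition of its one-dimensional $\F_p$-homology is degenerate, so that the pair $(F_0(e),F_1(e))$ consists of a trivial group and a copy of $\partial_i(F(e))$; this is immediate but should be stated explicitly so that the cited family of subgroups is seen to be exactly the images $\partial_i(F(e))$ and not some proper free factor of them.
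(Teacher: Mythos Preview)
Your proposal is correct and follows essentially the same route as the paper. The paper's proof is even terser: it simply says the corollary follows from Corollary~\ref{star} (applied to the star of $v$, as in the proof of Theorem~\ref{free pro-p general}), whereas you unpack this by going through Theorem~\ref{free pro-p general} together with Remark~\ref{triviality of free factors} and the observation that a procyclic edge group forces one of $F_0(e),F_1(e)$ to be trivial---but this is exactly how Corollary~\ref{star} itself is obtained from Proposition~\ref{star general}, so the two arguments coincide.
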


Example \ref{cyclicHNNfree} shows that to get a  splitting of a vertex group relative to incident edge groups we need to put the hypothesis that underline graph has no cycles. We do it in the next

\begin{corollary}\label{free pro-p over tree} Let $F$ be a non-abelian free pro-$p$ group written as the fundamental group $\Pi_1(\curlyF, \Gamma)$ of a finite injective tree of pro-$p$ groups with  cyclic edge groups. Then   there exists a vertex $v$ of $\Gamma$ such that the vertex group $G(v)$ splits as a free pro-$p$ product relative to the incident edge groups of $v$.

\end{corollary}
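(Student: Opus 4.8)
The plan is to reduce the statement about a finite injective tree of pro-$p$ groups $(\curlyF,\Gamma)$ with cyclic edge groups to the star case already handled in Corollary \ref{star}, by a downward induction on the structure of $\Gamma$. The point is that in a general graph Corollary \ref{cor:free pro-p} gives a splitting of each vertex group $G(v)$ relative only to the proper subgroups $\partial_i(F(e))$ that are not contained in $\Phi(G(v))$; those $\partial_i(F(e))$ that \emph{are} contained in the Frattini subgroup are exactly the ones Corollary \ref{star}/Remark \ref{triviality of free factors} tell us contribute a trivial free factor, so they are \emph{not} automatically handled. For a tree, however, we can always find a vertex where this pathology does not occur.

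First I would pick a leaf $v_1$ of $\Gamma$ with incident edge $e_1$, and more generally consider the partial order on $V(\Gamma)$ obtained by rooting $\Gamma$; the key combinatorial observation is that along a maximal descending path the inclusions $\partial_i(F(e))\hookrightarrow F(d_i(e))$ cannot all land in Frattini subgroups indefinitely, because $F$ is free (hence each $F(v)$ is free by Lemma \ref{restricted graph} together with the fact that subgroups of free pro-$p$ groups are free), and a chain of procyclic subgroups each contained in the Frattini of the next would force infinite descent in the abelianization. Concretely: if $\partial_i(F(e))\le \Phi(F(v))$ for the edge $e$ joining $v$ to its parent $w$, then in $H_1(F(v),\F_p)$ the image of $F(e)$ is zero; running this down a root-to-leaf path and invoking the Mayer--Vietoris injectivity from the proof of Proposition \ref{star general} (the map $f=cor_1-cor_0$ is injective since $H_2(F,\F_p)=0$) one gets that at the leaf the edge group would have to be simultaneously contained in the Frattini subgroup and map injectively, which is impossible unless the edge group is trivial --- but cyclic edge groups of an injective graph of pro-$p$ groups are nontrivial (injectivity of the graph of groups is part of the hypothesis via "injective"). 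Hence there is a vertex $v$ such that for every incident edge $e$, $\partial_i(F(e))\not\le \Phi(F(v))$.

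At such a vertex $v$, Corollary \ref{cor:free pro-p} gives a free pro-$p$ product decomposition of $F(v)$ relative to $\{\partial_i(F(e))\mid d_i(e)=v,\ \partial_i(F(e))\not\le\Phi(F(v))\}$, and by the choice of $v$ that relative set is \emph{all} the incident edge groups. So $F(v)$ splits as a free pro-$p$ product relative to its incident edge groups, which is exactly the claim. I would phrase the induction cleanly by taking $\Gamma$ rooted, letting $v$ be a deepest vertex for which the "non-Frattini" condition holds for the edge to its parent, and checking that deepness forces it to hold for the child edges too (otherwise one could descend further, contradicting maximal depth, or reach a leaf and contradict injectivity as above).

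The main obstacle is the combinatorial/homological argument that such a vertex $v$ exists --- i.e. ruling out the scenario where \emph{every} vertex has some incident edge group sitting inside its Frattini subgroup. I expect this to require a careful bookkeeping of the Mayer--Vietoris sequence restricted to subtrees (using Lemma \ref{restricted graph} to know that $\Pi_1(\curlyF,\Delta)$ is free for every subtree $\Delta$, hence $H_2=0$ there too), so that one can localize the injectivity statement to any leaf-path and derive the contradiction. Everything after locating $v$ is an immediate appeal to Corollary \ref{cor:free pro-p}, so the entire content of the proof is this existence lemma; the rest is routine.
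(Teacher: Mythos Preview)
Your overall strategy is correct and is essentially the paper's approach: locate a vertex $v$ at which every incident edge group lies outside $\Phi(G(v))$, and then invoke Corollary~\ref{cor:free pro-p}. The paper packages this as an induction on $|\Gamma|$ (remove a pending vertex $w$ with edge $e$; by induction find a good vertex $v$ in $\Gamma_1$; if $v=d_0(e)$, Mayer--Vietoris shows that either $w$ works or $\partial_0(G(e))\not\le\Phi(G(v))$ and Corollary~\ref{cor:free pro-p} applies), but the content is the same existence statement.

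The problem is your argument for existence. The first version --- ``infinite descent in the abelianization'' obtained by ``running down a root-to-leaf path'' --- is not a proof: the tree is finite, and Mayer--Vietoris injectivity tells you only that each single edge cannot be in the Frattini subgroup on \emph{both} sides; nothing chains along a path. Your second version (root the tree, take a deepest $v$ whose parent edge is good at $v$, and argue child edges must also be good at $v$ lest a deeper vertex qualify) is the right idea, but it is incomplete: you must also treat the case where \emph{no} non-root vertex has its parent edge good at it. In that case every edge is bad at its child, hence by Mayer--Vietoris good at its parent, so the root has all incident edges good and is the required vertex. With that case added the argument is complete. A slicker phrasing that avoids the case split: orient each nontrivial edge towards an endpoint at which it is good (possible by Mayer--Vietoris) and note that an oriented tree has a vertex of out-degree~$0$. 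Finally, ``injective'' in ``injective graph of pro-$p$ groups'' means the edge maps $\partial_i$ are monomorphisms, not that edge groups are nontrivial; trivial edge groups are harmless anyway, since the trivial group lies in every free factor.
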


\begin{proof} 

We use induction on $\Gamma$. 
If $\Gamma$ has one edge only the result is the subject of Corollary  \ref{cyclic amalgamation of free}.

Note that  $G$ splits  as a free amalgamated pro-$p$ product  $G=G_1\amalg_{G(e)} G(w)$ over an edge group $G(e)$, where  $e$ is an edge incident to a pending vertex $w$ and we assume w.l.o.g that $d_1(e)=w$. Let $\Gamma_1=\Gamma\setminus \{e,w\}$. By the induction hypothesis there exists $v\in V(\Gamma_1)$  such that $G(v)$ splits as a free pro-$p$ product relative to incident edge groups of $v$ in $\Gamma_1$.  If $v$ is not incident to $e$ we are done. 

Suppose $e$ is incident to $v$. If $\partial_1(G(e))\not \leq \Phi(G(w)$ then  $\partial_1(G(e))$ is a free factor of $\Phi(G(w))$ and so $w$ is the needed vertex.

Since $H_2(F)=0$ the map $$f:\bigoplus_{e\in E(\Gamma)} H_1(F(e),\F_p)\longrightarrow \bigoplus_{v\in V(\Gamma)} H_1(F(v),\F_p), f=cor_1(g)-cor_0(g)$$ from the Mayer-Vietoris sequence (cf. \cite[Section 9.4]{R})  is injective and so if $\partial_1(G(e)) \leq \Phi(G(w)$ then $\partial_0(G(e)) \not\leq \Phi(G(w)$. Then the result follows from Corollary \ref{cor:free pro-p}.

\end{proof}

\subsection{Cyclic amalgamations or HNN-extensions of free pro-$p$ groups decomposable as free pro-$p$ products}

\medskip
If we drop the assumption of freeness of $G$ but assume that $C$ is cyclic in Corollary \ref{cyclic amalgamation of free}  we get the same conclusion. 

\begin{proposition}\label{pro-p} Let $G=F_1\amalg_C F_2$ be a free amalgamated pro-$p$ product of free pro-$p$ groups over infinite cyclic subgroup $C$.  Then $G$    splits as a non-trivial free pro-$p$ product if and only if  $C$  belongs to a free factor of $F_1$  or $F_2$.
\end{proposition}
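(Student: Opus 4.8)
The statement is an if-and-only-if, and the ``if'' direction is the easy one: if $C$ is a free factor of, say, $F_1$, write $F_1 = L \amalg C$, so that $G = F_1 \amalg_C F_2 = L \amalg (C \amalg_C F_2) = L \amalg F_2$, which is a non-trivial free pro-$p$ product (non-trivial because $F_2$ is non-trivial and, if $L$ is trivial, because $G = F_2$ is itself free of rank $\geq 2$ unless degenerate cases occur; one should check the degenerate possibilities $F_1 = C$ or $F_2 = C$ separately, but a free amalgamated product $C \amalg_C F_2$ is just $F_2$, and we may assume the amalgamation is proper and non-fictitious). So the real content is the ``only if'' direction, and the plan is to mimic the proof of Corollary~\ref{cyclic amalgamation of free}, replacing the hypothesis ``$G$ free'' by ``$G$ splits as a free pro-$p$ product'', and using the structural results of the section plus the one-edge JSJ/compatibility machinery. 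Concretely: suppose $G = A \amalg B$ non-trivially. I want to produce a refinement of the one-edge splitting $G = F_1 \amalg_C F_2$ that is compatible with the free product decomposition, and then read off from the combinatorics that $C$ must be conjugate into a free factor of $F_1$ or $F_2$.

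\textbf{Key steps.} First I would pass to a standard pro-$p$ tree: let $T$ be the standard pro-$p$ tree of $G = F_1 \amalg_C F_2$ (one edge, cyclic edge group $C$), and let $S$ be the standard pro-$p$ tree of $G = A \amalg B$ (trivial edge group). The tree $S$ is elliptic with respect to $T$ trivially (its edge stabilizer is trivial), and conversely $T$ is elliptic with respect to $S$ if and only if $C$ is elliptic in $S$, i.e. (by Theorem~\ref{KST}, Kurosh) if and only if $C$ is conjugate into a free factor --- but that is exactly what we are trying to prove, so I cannot assume it. Instead I would argue: by the pro-$p$ Kurosh subgroup theorem applied to $F_i \leq G = A \amalg B$, each $F_i$ decomposes compatibly with the free product; but $F_i$ is freely indecomposable or free, so $F_i$ is conjugate into a free factor of $G$, OR $F_i$ is itself free and its free-product structure interacts with $C$. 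The cleaner route is the homological one, imitating Proposition~\ref{free} and Corollary~\ref{cyclic amalgamation of free}: collapse in $G = A \amalg B$ to reduce to a two-factor free decomposition, and compute the Mayer--Vietoris sequence of $G = F_1 \amalg_C F_2$ with $\mathbb{F}_p$ coefficients. Since $G = A \amalg B$ we have $H_2(G, \F_p) = H_2(A,\F_p) \oplus H_2(B, \F_p)$ and, more importantly, we can use that the free-product structure forces certain corestriction maps to behave well. The key point is to show that the map $cor_1 \oplus cor_2 : H_1(C, \F_p) \to H_1(F_1,\F_p) \oplus H_1(F_2, \F_p)$ has image large enough that the single generator of $C$ (note $C \cong \Z_p$, so $H_1(C,\F_p) \cong \F_p$ is one-dimensional) maps injectively into $H_1(F_1,\F_p)$ or $H_1(F_2,\F_p)$, and then upgrade this Frattini-level statement to the conclusion that $C$ is an actual free factor of $F_1$ or $F_2$ using that $F_i$ is free pro-$p$ (a procyclic subgroup of a free pro-$p$ group whose image in the Frattini quotient is a basis element of a free factor is itself a free factor --- this is the ``lift'' construction of Proposition~\ref{free}).

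\textbf{Main obstacle.} The delicate point is bridging the gap between the homological / Frattini-quotient statement and the genuine subgroup statement, and handling the case where $C$ maps into $\Phi(F_1) \cap$ (image). In the free-$G$ case (Corollary~\ref{cyclic amalgamation of free}) one had $H_2(G,\F_p)=0$, which made the Mayer--Vietoris map $f : H_1(C,\F_p) \to H_1(F_1,\F_p)\oplus H_1(F_2,\F_p)$ injective outright; here $H_2(G,\F_p)$ need not vanish, so I instead need to exploit that $G$ splits as a free product to control the relevant part of $H_2$. The plan is: from $G = A\amalg B$, refine to a graph of groups where $F_1, F_2$ sit as vertex groups of a graph with one more vertex carrying $A$ (or $B$), and apply Corollary~\ref{free product of cylinders} or the restricted-graph lemma (Lemma~\ref{restricted graph}) to see that the edge group $C$ must be a free factor of one side --- essentially because a reduced $G$-tree in the common deformation space, if the amalgamated $C$ were \emph{not} a free factor, would have to have an edge with cyclic stabilizer, contradicting that $G = A \amalg B$ means $G$ acts on a tree with \emph{trivial} edge stabilizers in the Grushko deformation space and, by the one-edge compatibility argument (Proposition~\ref{refinement for finitely generated G}(5), or directly Example~\ref{example amalgamation}), any common refinement collapses to the one-edge amalgam, forcing $C$ elliptic in $S$, hence conjugate into a free factor by Kurosh. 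I expect the cleanest write-up to combine: (i) reduce to $G = A \amalg B$ two factors; (ii) apply Kurosh to $F_1, F_2$; (iii) use freeness of $F_i$ together with the observation that $C = F_1 \cap F_2^g$ type intersections are controlled --- the subtlety being exactly when $C$ lands in the free part $F_m$ of the Kurosh decomposition of $F_i$ rather than a conjugate of $A$ or $B$, and showing that in all cases $C$ ends up a free factor of $F_1$ or $F_2$. This last bookkeeping, distinguishing whether $C$ is conjugate into $A$, into $B$, or generated together with the free part, is where the real work lies.
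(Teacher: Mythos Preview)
Your ``if'' direction is fine (though note the statement says $C$ \emph{belongs to} a proper free factor, not that $C$ \emph{is} one; the same collapse works: if $F_1=K\amalg L$ with $C\leq L$ then $G=K\amalg(L\amalg_C F_2)$). For the ``only if'' direction, however, you circle around the right ingredients without landing, and the homological route you attempt genuinely stalls at the obstruction you yourself name: when $G$ is not free, $H_2(G,\F_p)$ need not vanish, so the Mayer--Vietoris injectivity that drives Corollary~\ref{cyclic amalgamation of free} is simply unavailable. Your compatibility/refinement sketch does not repair this, because it amounts to trying to show $C$ is elliptic in the free-product tree $S$, which is exactly the conclusion you want.

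The paper's argument bypasses all of this with a short dichotomy. If $G$ is free pro-$p$, Corollary~\ref{cyclic amalgamation of free} applies directly and you are done. If $G$ is not free, write $G=\coprod_i H_i$ and pick a factor $H_1$ that is \emph{not} free pro-$p$. Now look at the action of $H_1$ on the Bass--Serre tree of $F_1\amalg_C F_2$: if $H_1$ met every conjugate of $C$ trivially it would act with trivial edge stabilizers and hence be free pro-$p$ (cf.\ \cite[2.7]{ZM-88}), contradiction. So after conjugating we may take $H_1\cap C\neq 1$; since $C$ is procyclic and centralizers of elements of a free factor lie in that factor (\cite[Theorem~9.1.12]{RZ-00}), in fact $C\leq H_1$. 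Then $F_i\cap H_1$ is a free factor of $F_i$ by \cite[Corollary~9.1.10]{RZ-00}, and it contains $C$. If $F_i\cap H_1=F_i$ for both $i$ then $F_1,F_2\leq H_1$ and $G\leq H_1$, contradicting non-triviality; hence for some $i$ the group $F_i\cap H_1$ is a proper free factor of $F_i$ containing $C$.

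The single idea you are missing is that the existence of a \emph{non-free} free factor forces a non-trivial intersection with the cyclic edge group via the action on the amalgam's tree; this is what replaces the vanishing of $H_2$ and makes the Kurosh step immediate, with no homology, no refinements, and no compatibility machinery.
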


\begin{proof}
Suppose $G$    splits as a non-trivial free pro-$p$ product $G=\coprod_{i=1}^n H_i$.  If $G$ is  free pro-$p$ then the result follows from Corollary \ref{cyclic amalgamation of free}. Otherwise,  at least one $H_i$, say $H_1$, is not free pro-$p$ and so  by \cite[2.7]{ZM-88} $H_1$  intersects a conjugate of $C$ non-trivially (since if $H_1$ acts on  the standard pro-$p$ tree $S(G)$ of $G=F_1\amalg_C F_2$ with trivial edge stabilizers it is free pro-$p$). Thus we may assume that $H_1\cap C\neq 1$.  But the centralizer of any element of $H_1$ is in $H_1$ (see \cite[Theorem 9.1.12]{RZ-00}), so $C\leq H_1$. Note that $F_i\cap H_1$ is a free factor of $F_i$, $i=0,1$ by \cite[Corollary 9.1.10]{RZ-00}. Write $F_i=(F_i\cap H_1)\amalg L_i$, $i=1,2$. If  $L_1=L_2=1$, then $G\leq H_1$, a contradiction. Thus $C$ is in a non-trivial free factor $F_i\cap H_1$ of $F_i$ for $i=1$ or 2.  
	
Conversely,	if $C$ is in a non-trivial free factor $L_1$ of $F_1$, i.e. $F_1=K_1\amalg L_1$ then   $G=K_1\amalg  (L_1\amalg_C F_2)$ is a free pro-$p$ product.   
\end{proof}

An element $g$ of a pro-$p$ group $G$ is called a test element if  any endomorphism $\alpha$ of $G$ with $\alpha(g)=g$ is an automorphism. It is proved by Snop\c{c}e and Tanushevski \cite{ST} that an element of a free pro-$p$ group is test if and only if it does not belong to any proper free factor. Thus we can deduce the following

\begin{corollary} Let $F_1, F_2$ be free pro-$p$ groups and $c_1\in F_1, c_2\in F_2$ be test elements. Then 
$G=F_1\amalg_{c_1=c_2} F_2$ does not split as a free pro-$p$ product.
\end{corollary}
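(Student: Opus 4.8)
The plan is to combine Proposition~\ref{pro-p} with the characterization of test elements of free pro-$p$ groups obtained by Snop\c{c}e and Tanushevski in \cite{ST}. Write $C=\langle c_1\rangle=\langle c_2\rangle$ for the amalgamated subgroup. Since $c_1\neq 1$ and $F_1$ is free pro-$p$, hence torsion-free, $C$ is infinite procyclic, so $G=F_1\amalg_C F_2$ is exactly the type of amalgam treated in Proposition~\ref{pro-p}. I will assume $F_i\neq\langle c_i\rangle$ for $i=1,2$; otherwise the amalgam is fictitious and $G\cong F_{3-i}$, a degenerate case one excludes.

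The first thing I would record is the following reformulation of the hypothesis: by \cite{ST}, $c_i$ is a test element of $F_i$ if and only if $c_i$ lies in no proper free pro-$p$ free factor of $F_i$. Consequently $C=\langle c_i\rangle$ is contained in no proper free factor of $F_i$: if $C\leq L$ with $L$ a free factor of $F_i$, then $c_i\in L$ forces $L=F_i$. This holds for both $i=1$ and $i=2$.

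Then I would argue by contradiction: suppose $G$ splits as a non-trivial free pro-$p$ product. Inspecting the proof of Proposition~\ref{pro-p}, this forces $C$ to be contained in a \emph{proper} free factor of $F_1$ or of $F_2$. Indeed, if $G$ were free pro-$p$ then by Corollary~\ref{cyclic amalgamation of free} the group $C$ would be a free factor of $F_1$ or $F_2$, hence, by the previous paragraph applied with $L=C$, equal to $F_1$ or $F_2$, contrary to non-fictitiousness; and if $G$ is not free pro-$p$, the proof of Proposition~\ref{pro-p} produces a free factor $H$ of $G$ and an index $i$ with $C\leq F_i\cap H$ and $F_i=(F_i\cap H)\amalg L_i$ where $L_i\neq 1$, so that $F_i\cap H$ is a proper free factor of $F_i$ containing $C$. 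Either way we contradict the second paragraph, so $G$ does not split as a free pro-$p$ product.

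The only delicate point is the extraction, from the proof of Proposition~\ref{pro-p}, of the sharper statement that the free factor of $F_i$ containing $C$ can be taken proper; this is precisely the content of the observation in that proof that $L_1=L_2=1$ would give $G\leq H$, a contradiction. All the remaining ingredients — the Kurosh-type description of free pro-$p$ factors of the vertex groups of an amalgam, the fact that centralizers of non-trivial elements lie in the relevant free factor, and the properness of the standard pro-$p$ tree of $G$ — are already available from \cite{RZ-00} and the earlier sections, so no further work is needed there.
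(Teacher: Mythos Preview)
Your proof is correct and follows essentially the same approach as the paper: the paper simply invokes the Snop\c{c}e--Tanushevski characterization of test elements (not lying in any proper free factor) and reads off the corollary from Proposition~\ref{pro-p}. Your only addition is to make explicit the ``proper'' qualifier by going into the proof of Proposition~\ref{pro-p}, which is a reasonable piece of care but not a different argument.
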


\begin{proposition}\label{Hnn pro-p} Let $G=HNN(F,C,t)$ be an $HNN$-extension of a free pro-$p$ group over infinite cyclic subgroup $C$.  If $G$    splits as a non-trivial free pro-$p$ product, then  $C$ or $C^t$ belongs to a free factor of $F$.
\end{proposition}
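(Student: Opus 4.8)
The statement to prove is Proposition~\ref{Hnn pro-p}: for $G=HNN(F,C,t)$ with $F$ free pro-$p$ and $C$ infinite cyclic, if $G$ splits as a non-trivial free pro-$p$ product then $C$ or $C^t$ belongs to a free factor of $F$. The strategy is to follow the same pattern as in the proof of Proposition~\ref{pro-p} (the amalgamated case just proved), replacing ``amalgamated product'' by ``HNN-extension'' and invoking Corollary~\ref{cyclic free} in place of Corollary~\ref{cyclic amalgamation of free}. First I would dispose of the case where $G$ is itself free pro-$p$: then Corollary~\ref{cyclic free} gives directly that $C$ or $C^t$ is a free factor of $F$ (and in fact more, with the Frattini condition). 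So I may assume $G$ is not free pro-$p$.

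Write $G=\coprod_{i=1}^n H_i$ as a non-trivial free pro-$p$ product. Since $G$ is not free, at least one factor, say $H_1$, is not free pro-$p$. Now consider the standard pro-$p$ tree $S=S(G)$ of the HNN-splitting $G=HNN(F,C,t)$, on which all edge stabilizers are conjugates of $C$. If $H_1$ acted on $S$ with trivial edge stabilizers it would be free pro-$p$ (by the argument using \cite[Section 9.6]{R} or Proposition \ref{ellipticity} together with \cite[Corollary 3.6]{RZ-00} as in the previous proof); hence $H_1$ intersects some conjugate of $C$ non-trivially, i.e.\ $H_1\cap C^g\neq 1$ for some $g\in G$. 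Since $C^g$ is infinite procyclic and centralizers of non-trivial elements of $H_1$ lie in $H_1$ (by \cite[Theorem 9.1.12]{RZ-00}), we get $C^g\leq H_1$, so after renaming we may take $C\leq H_1$ (absorbing $g$ into the choice of decomposition, which only permutes and conjugates the $H_i$). The vertex stabilizer of $S$ at the base vertex is $F$; the tree $S$ restricted to a minimal $H_1$-invariant subtree, by the pro-$p$ Kurosh subgroup theorem (Theorem~\ref{KST}) applied to $H_1\leq G$ viewed through the free factor structure and the Bass--Serre structure of $G$, yields that $F\cap H_1$ is a free factor of $F$: more directly, $F\cap H_1$ is a free factor of $F$ by \cite[Corollary 9.1.10]{RZ-00} (free factors of free pro-$p$ groups are inherited by intersections with free factors / subgroups arising this way). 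Write $F=(F\cap H_1)\amalg L$. Since $C\leq H_1\cap F=F\cap H_1$, the element $C$ lies in the free factor $F\cap H_1$ of $F$, and if $F\cap H_1$ is a proper free factor we are done.

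The one point needing care is the possibility that $F\cap H_1=F$, i.e.\ $L=1$, so that $F\leq H_1$ and the free factor we found is improper. In the amalgamated case the analogous degenerate case was excluded because $L_1=L_2=1$ forced $G\le H_1$; here the argument must instead use the stable letter $t$. If $F\leq H_1$, then since $G=\langle F,t\rangle$ the factor $H_1$ has index controlled by the image of $t$, and $G/\langle\langle H_1\rangle\rangle$ is a quotient of $\langle t\rangle\cong\Z_p$; combined with $n\geq 2$ and each $H_i$ non-trivial, this is impossible unless $G=H_1$, contradicting non-triviality of the free product decomposition — more precisely, $G=H_1\amalg H_2\amalg\cdots$ with $F\leq H_1$ implies $t\notin H_1$ (else $G=H_1$), so $t$ lies in a free factor, and then $C^t=t^{-1}Ct$ must be considered: either $C^t\leq H_1$ as well, in which case looking at the action on $S$ and the position of the edge $tC$ shows $t$ normalizes or realigns into $H_1$, forcing $G=H_1$ again, or $C^t\not\leq H_1$, and then the symmetric argument applied with $C^t$ in place of $C$ places $C^t$ in a proper free factor of $F$. \textbf{The main obstacle} is precisely making this last dichotomy watertight: ruling out $F\leq H_1$ cleanly, or alternatively showing that if $F\leq H_1$ then the HNN-extension degenerates, using the structure of $G$ as $HNN(F,C,t)$ and the Kurosh-type control of $H_1\cap$(conjugates of $C$). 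I expect this to require a short but careful tree-theoretic argument on $S(G)$ in the spirit of Proposition~\ref{normalizer}, tracking how the stable letter moves the edge stabilized by $C$.
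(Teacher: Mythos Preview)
Your overall strategy matches the paper's exactly: dispose of the free case via Corollary~\ref{cyclic free}, pick a non-free factor $H_1$, get $C\leq H_1$ via the centralizer argument, and show $F\cap H_1$ is a free factor of $F$ containing $C$. The only issue is the degenerate case $F\cap H_1=F$, which you correctly flag as the main obstacle but do not resolve. Your attempted workarounds (looking at $G/\langle\langle H_1\rangle\rangle$, or trying to locate $t$ in a free factor) are either incomplete or incorrect: for instance, if $n=2$ and $H_2\cong\Z_p$ the quotient argument gives no contradiction, and there is no reason $t$ itself should lie in any $H_i$.

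The paper closes this gap with a single clean observation you are missing: in the HNN-extension $HNN(F,C,t)$, the second associated subgroup $C^t=t^{-1}Ct$ is by definition a subgroup of $F$. So if $F\leq H_1$ then $C^t\leq H_1$; but also $C\leq H_1$ gives $C^t\leq H_1^t$. Hence $1\neq C^t\leq H_1\cap H_1^t$, and malnormality of free factors (\cite[Theorem~9.1.12]{RZ-10}) forces $t\in H_1$. Then $G=\langle F,t\rangle\leq H_1$, contradicting non-triviality of the free product. This is exactly the HNN analogue of the step ``$L_1=L_2=1\Rightarrow G\leq H_1$'' in the amalgamated case, and it requires no tree-theoretic analysis beyond remembering that both $C$ and $C^t$ live in $F$.
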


\begin{proof} Suppose $G$    splits as a non-trivial free pro-$p$ product $G=\coprod_{i=1}^n H_i$.  If $G$ is  free pro-$p$ then the result follows from Corollary \ref{cyclic free}. Thus we may assume that some of $H_i$, say $H_1$ is not free. Let $S(G)$ be a standard pro-$p$ tree for $G=HNN(F,C,t)$. If $H_1$ intersects all conjugates of $C$ trivially then by \cite[2.7]{ZM-88} $H_1$ is free pro-$p$, so   $H_1$  intersects a conjugate of $C$ non-trivially. Therefore  we may assume that $H_1\cap C\neq 1$ and prove that statement for $C$.  But the centralizer of any element of $H_1$ is in $H_1$ (see \cite[Theorem 9.1.12]{RZ-00}, so $C\leq H_1$. 
Note that $F\cap H_1$ is a free factor of $F$,  by \cite[Corollary 9.1.10]{RZ-00}. Write $F=(F\cap H_1)\amalg L$. If  $L=1$, then $F\leq H_1$ and $C=F\cap F^{t}\leq H_1\cap H_1^t\neq 1$ implying that $t\leq H_1$ by \cite[Theorem 9.1.12]{RZ-10}. So  $G\leq H_1$, a contradiction. Thus $C$ is in a non-trivial free factor $F\cap H_1$ of $F$. 

 \end{proof}

\subsection{ Virtually free pro-$p$ products}

We shall extend the  situation above to virtually free pro-$p$ products of pro-$p$ groups. To do this we need the  generalization of \cite[Theorem 3.6]{WEZ-16}. We shall make before the following

\begin{remark}\label{homo natural mod tilde} Let $G=\Pi_1(\G,\Gamma,v_0)$ be the pro-$p$ fundamental group of a finite graph of finite
$p$-groups, and let $U$ be an open and normal subgroup of $G$. Then 
$\tU=\langle\,U\cap {}\G(v)^g\mid g\in G,\ v\in
V(\Gamma)\,\rangle$ is a closed normal subgroup of $G$. By
\cite[Prop.~1.10]{RZ-14}, one has a natural decomposition of
$G/\tU$ as the pro-$p$ fundamental group $G/\tU=\Pi_1(\G_{U},\Gamma, v_0)$
of a finite graph of finite $p$-groups $(\G_{U},\Gamma)$, where the vertex and edge groups satisfy
$\G_{U}(x)=\G(x)\tU/\tU$, $x\in V(\Gamma)\sqcup E(\Gamma)$. Thus we have a
morphism $\eta\colon (\G,\Gamma)\longrightarrow (\G_{U},\Gamma)$
of graphs of groups such that the induced homomorphism on the
pro-$p$ fundamental groups coincides with the canonical projection
$\varphi_{U}\colon G\longrightarrow G/\tU$.

\end{remark}

\begin{theorem} \label{relative virtfreeprod} Let $G$ be a finitely generated pro-$p$ group containing an open normal subgroup
$H$ and a $G$-invariant collection $\K$ of subgroups of $H$ such that $H$ has a non-trivial decomposition as a free pro-$p$ product $H=F\amalg H_1\amalg\cdots\amalg H_s,$
 where $F$ is a free pro-$p$ subgroup of rank $r$  and all subgroups of $\K$ are conjugate into $H_i$ for some $i$. Suppose  $\{ H_i^h\mid h\in H, i=1, \ldots,k\}$ is $G$-invariant.   
Then $G$ splits as the pro-$p$ fundamental group of
a finite graph $(\G, \Gamma)$ of pro-$p$ groups relative to $\K\cup \{ H_i^h\mid h\in H, i=1, \ldots,k\}$ with finite edge stabilizers of order not exceeding $[G:H]$.
\end{theorem}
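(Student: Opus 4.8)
The plan is to reduce the statement to the finite quotients of $G$ and then invoke the known finite-quotient version of the theorem, which is essentially \cite[Theorem 3.6]{WEZ-16}. So the first step is to write $G$ as an inverse limit of finite quotients in a way compatible with the free product decomposition of $H$. Since $G$ is finitely generated, every finite-index subgroup is open, so we may choose a cofinal system of open normal subgroups $U\triangleleft_o G$ with $U\leq H$ and $\bigcap_U U=1$. For each such $U$ set $\tilde U=\langle U\cap H_i^h, U\cap F^h\mid h\in H\rangle$ as in Remark~\ref{homo natural mod tilde}; more precisely, using the standard pro-$p$ tree $S$ of the splitting $H=F\amalg H_1\amalg\cdots\amalg H_s$ (on which $H$ acts with trivial edge stabilizers and vertex stabilizers the conjugates of the $H_i$, the quotient being a bouquet with $s$ loopless edges emanating plus $r$ loops for $F$), let $\tilde U$ be the subgroup generated by the $U$-stabilizers of vertices of $S$. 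Then $H/\tilde U$ is a free pro-$p$ product of finite groups $\bar H_i=H_i\tilde U/\tilde U$ and a finite rank free pro-$p$ group $\bar F$, by the pro-$p$ Kurosh theorem (Theorem~\ref{KST}) applied to the open subgroup, or directly from Proposition~\ref{mod tilde} since $S/\tilde U$ is a pro-$p$ tree with finite vertex and trivial edge stabilizers.

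The second step is to transfer the $G$-action. The collection $\{H_i^h\mid h\in H,\ i\}$ is $G$-invariant by hypothesis, and $\tilde U$, being defined from it and from $U\triangleleft G$, is normal in $G$; hence $G_U:=G/\tilde U$ is a finite-by-(virtually free) pro-$p$ group containing the open normal subgroup $H_U:=H/\tilde U$ with $[G_U:H_U]\le[G:H]$, and $H_U$ is a free pro-$p$ product of the finite groups $\bar H_i$ and a free pro-$p$ group $\bar F$, with the $G_U$-invariant conjugacy classes of the $\bar H_i$ preserved. Moreover the images of the groups in $\K$ land inside conjugates of the $\bar H_i$. Now \cite[Theorem 3.6]{WEZ-16} (or its relative variant already quoted as Theorem~\ref{relative virtfreeprod}'s finite-quotient analogue) applies to $G_U$: it gives a decomposition of $G_U$ as $\Pi_1(\G_U,\Gamma_U)$, a finite graph of finite $p$-groups, with edge groups of order at most $[G_U:H_U]\le[G:H]$, relative to the $\bar H_i$ and the images of $\K$. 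The bound on $|E(\Gamma_U)|$ (and $|V(\Gamma_U)|$) follows from the accessibility theorem of Wilkes, Theorem~\cite[Theorem 3.1]{wilkes}, since edge groups have order bounded by $[G:H]$ and $d(G_U)\le d(G)$ — so these graphs have uniformly bounded size.

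The third step is to pass to the inverse limit. Because the $\Gamma_U$ are uniformly bounded finite reduced graphs, by a pigeonhole/compactness argument (refining the system $U$ to a cofinal subsystem) we may assume all $\Gamma_U=\Gamma$ are the same finite graph and the bonding maps $G_V\to G_U$ are compatible with the graph-of-groups structure, i.e. send $\G_V(m)$ onto $\G_U(m)$, as in the proof of Lemma~\ref{inverse limit of virtually free groups} and Proposition~\ref{induces}. Then $(\G,\Gamma):=\varprojlim_U(\G_U,\Gamma)$ is a finite graph of pro-$p$ groups with $\G(m)=\varprojlim_U\G_U(m)$, edge groups of order $\le[G:H]$ (inverse limits of groups of bounded order), and $G=\varprojlim_U G_U=\varprojlim_U\Pi_1(\G_U,\Gamma)=\Pi_1(\G,\Gamma)$ by Proposition~\ref{induces}. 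Finally one checks relativity: each group in $\K\cup\{H_i^h\}$ is elliptic in the standard tree of $(\G,\Gamma)$ because its image in every $G_U$ is conjugate into a vertex group, so by Proposition~\ref{ellipticity} (every element elliptic implies the subgroup elliptic) it fixes a vertex of the pro-$p$ tree.

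The main obstacle I expect is making the inverse system of graphs of groups genuinely coherent — one must be careful that the decompositions of the $G_U$ produced by \cite[Theorem 3.6]{WEZ-16} are not canonical, so after stabilizing the underlying graph by pigeonhole one still has to arrange that the vertex and edge groups form an inverse system with surjective bonding maps compatible with the incidence structure. This is handled exactly as in Lemma~\ref{inverse limit of virtually free groups}: by passing to a cofinal linearly ordered subsystem and using that at each finite stage there are only finitely many possibilities, together with the fact that a composition $G_W\to G_V\to G_U$ of the chosen maps agrees with the canonical projection, so coherence can be forced step by step. A secondary technical point is verifying the edge-order bound survives the limit and that $\tilde U$ is really normal in all of $G$ (not just in $H$), which uses the $G$-invariance hypothesis on $\{H_i^h\}$ crucially — without it $S/\tilde U$ need not carry a $G_U$-action.
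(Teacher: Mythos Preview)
Your proposal follows essentially the same strategy as the paper's proof: pass to quotients $G/\tilde U$ that are virtually free pro-$p$, decompose each as a finite reduced graph of finite $p$-groups with edge groups of order at most $[G:H]$, use a uniform bound on the graphs (the paper invokes \cite[Proposition~3.2]{WEZ-16} rather than Wilkes' accessibility, but either works) to stabilise the underlying graph along a cofinal subsystem, and then take the inverse limit via the $\Omega_U$ compactness argument you describe. One small correction: at the finite-quotient level the paper appeals directly to the virtually free structure theorem \cite[Theorem~1.1]{HZ-13} rather than to \cite[Theorem~3.6]{WEZ-16} (the latter is the non-relative predecessor of the present theorem, so citing it for $G_U$ is a bit circular), and your initial definition of $\tilde U$ should drop the $U\cap F^h$ terms (these vanish anyway since $F$ acts freely on $S$, as your ``more precisely'' clause acknowledges).
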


\begin{proof}
 By hypothesis,  $s+r\geq 2$. By
construction, one has for all $g\in G$ and for all
$i\in\{1,\ldots,s\}$ that $H_i^g$ is a free factor of $H$. 

\noindent
{\bf Step 1:} Let $\B$ be a basis of neighbourhoods of $1_G\in G$ consisting of open normal
subgroups $U$ of $G$ which are contained in $H$ with $H_i\not\subseteq U$ for every $i=1,\ldots s$. 
For $U\in\B$ put
\begin{equation}
\label{eq:tU}
\widetilde U=\langle\,U\cap {}H_i^g\mid g\in G,\ 1\leq i\leq s\,\rangle=
\langle\,U\cap {}H_i^h\mid h\in H,\ 1\leq i\leq s\,\rangle.
\end{equation}
Then $\widetilde U$ is a closed normal subgroup of $H$, and
\begin{equation}
\label{eq:defHtU}
H/\widetilde U= F\amalg H_1\widetilde U/\widetilde U\amalg\cdots\amalg H_s\widetilde U/\widetilde U
\end{equation}
(cf. \cite[Prop.~1.18]{Me}). The group $G/\tU$ contains the
open normal subgroup $H/\widetilde U$ which is a finitely generated,
virtually free pro-$p$ group (since $U/\widetilde U$ is free pro-$p$ by
 in \cite[Theorem 2.6]{ZM-88}), and thus $G/\widetilde U$ is a
finitely generated, virtually free pro-$p$ group.

\noindent
{\bf Step 2:} By \cite[Theorem 1.1]{HZ-13}, $G/\tU$  is isomorphic to the pro-$p$ fundamental group
$\Pi_1(\G_U,\Gamma_U, v_U)$ of a finite graph
of finite $p$-groups. Although neither the finite graph $\Gamma_U$ nor the finite graph of finite 
$p$-groups $\G_U$ are uniquely determined by $U$ (resp. $\tU$), the index $U$ in the notation
shall express that both these objects are depending on $U$. 
By Remark \ref{reduction} 
we may assume that $(\G_U,\Gamma_U)$ is
reduced. Hence from now on we may assume that for every $U\in \B$ the vertex groups of
$G/\tU=\Pi_1(\G_U,\Gamma_U, v_U)$ are representatives of the
$G/\tU$-conjugacy classes of maximal finite subgroups.
Note that by \cite[Proposition 3.2 (a)]{WEZ-16}, one has $\G_U(e)\cap H/\tilde U=1$ and so $|\G_U(e)|\leq [G:H]$. Moreover, as all subgroups $K$ of $H$ from $\K$ are elliptic in $H$, $K\tU/\tU$ are elliptic (and so are finite) in $H/\tU$. As virtually elliptic group is elliptic, for any subgroup $K$ of $G$ from $\K$ the quotient group $K\tU/\tU$ is elliptic (and so is finite) in $G/\tU$.

\noindent {\bf Step 3:} By Remark \ref{homo natural mod tilde},
for $V\subseteq U$ both open and normal in
$G$ the decomposition $G/\tV=\Pi_1(\G_V,\Gamma_V, v_V)$ gives rise
to a natural decomposition of $G/\tU$ as the fundamental group
$G/\tU=\Pi_1(\G_{V,U},\Gamma_V, v_V)$ of a finite graph of pro-$p$ groups
$(\G_{V,U},\Gamma_V)$, where vertex and edge groups are natural images of the vertex and edge groups of $G/\tV=\Pi_1(\G_V,\Gamma_V, v_V)$ in $G/\widetilde U$. Moreover, by \cite[Lemma 3.4]{WEZ-16},
if $(\G_V,\Gamma_V)$ is reduced, then $(\G_{V,U},\Gamma_V)$ is reduced. Thus in this case one has 
a morphism
$\eta\colon (\G_{V},\Gamma_V)\longrightarrow (\G_{V,U},\Gamma_V)$
of reduced graph of groups such that the induced homomorphism on
the pro-$p$ fundamental groups coincides with the canonical
projection $\varphi_{UV}\colon G/\tV\longrightarrow G/\tU$.

\noindent {\bf Step 4:}  By \cite[Proposition~3.2]{WEZ-16}, the
number $|V(\Gamma_U)|+|E(\Gamma_U)|$ is bounded by $4(r+s)-1$. So we have only finitely many graphs $\Gamma_U$ up to isomorphism, when $U$ runs. It follows that there is a finite graph $\Gamma$ such that $\Gamma_U$ is isomorphic to $\Gamma$ for infinitely many $U$'s.
Therefore, by passing to a cofinal system $\curlyC$ of $\B$ if
necessary, we may assume that $\Gamma_U=\Gamma$ for each $U\in
\curlyC$. Then, by \cite[Corollary 3.3]{WEZ-16},  the number of
isomorphism classes of finite reduced graphs of finite $p$-groups
$(\G^\prime_U,\Gamma)$ which are based on $\Gamma$ satisfying
$G/\tU\simeq\Pi_1(\G^\prime,\Gamma,v_0)$ is finite. Suppose that
$\Omega_U$ is a set containing a copy of every such isomorphism
class. For $V\in\curlyC$, $V\subseteq U$, one has a map
$\omega_{V,U}\colon \Omega_V\to\Omega_U$ (cf. Step~3). Hence
$\Omega=\varprojlim_{U\in\curlyC}\Omega_U$ is non-empty. Let
$(\G^\prime_U,\Gamma)_{U\in\curlyC}\in\Omega$. Then
$(\G^\prime,\Gamma)$ given by $\G^\prime(x)=\varprojlim
\G^\prime_U(x)$ if $x$ is either a vertex or an edge of $\Gamma$,
is a reduced finite graph of finitely generated pro-$p$ groups
satisfying $G\simeq\Pi_1(\G^\prime,\Gamma,v_0)$. By
\cite[Proposition~3.2 (a)]{WEZ-16}, $\G^\prime(e)$ is finite and in fact $|\G^\prime(e)|\leq [G:H]$ for
every edge $e$ of $\Gamma$, since this is the case in every $G_U$. Moreover, as $K/\tU$ is elliptic for each subgroup $K\in \K$ of $G$, $K$ is elliptic in $\Pi_1(\G^\prime,\Gamma)$.  This yields the claim.
\end{proof}

In the proof  of Theorem \ref{relative virtfreeprod} we needed  that $\{H_i^h\mid h\in H, i=1,\ldots, k\}$ is $G$-invariant. If  $H=F\amalg H_1\amalg\cdots\amalg H_s$ is the Grushko decomposition relative to $\K$ (see the last paragraph of Section 7.2), then  this is automatic.  Thus we can state the following 

\begin{theorem}
\label{thm:virtfreeprod}
Let $G$ be a finitely generated pro-$p$ group containing an open normal subgroup
$H$ and a $G$-invariant collection $\K$ of subgroups of $H$ such that $H$ has a non-trivial Grushko decomposition as a free pro-$p$ product $H=F\amalg H_1\amalg\cdots\amalg H_s,$ relative to $\K$ (i.e. $F$ is a free pro-$p$ subgroup of rank $r$, and the $H_i$ are
$\amalg$-indecomposable relative to $\K$   and all subgroups of $\K$ are conjugate into $H_i$ for some $i$).
Then $G$ splits as the pro-$p$ fundamental group of
a finite graph $(\G, \Gamma)$ of pro-$p$ groups relative to $\K\cup \{ H_i^h\mid h\in H, i=1, \ldots,k\}$ with finite edge stabilizers whose order does not exceed $[G:H]$.
\end{theorem}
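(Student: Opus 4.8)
The plan is to deduce this directly from Theorem~\ref{relative virtfreeprod}, the only new point being that the free factors of a \emph{relative} Grushko decomposition automatically form a $G$-invariant collection. First I would note that since $H$ is open in the finitely generated pro-$p$ group $G$, the group $H$ is itself finitely generated, hence second countable, so Proposition~\ref{relative Grushko} applies to the pair $(H,\K)$. In particular the $H$-conjugacy classes of the factors $H_1,\dots,H_s$ are canonically attached to $(H,\K)$: every Grushko decomposition of $H$ relative to $\K$ has, up to conjugacy and reordering, exactly the factors $H_1,\dots,H_s$ as its non-free $\amalg$-indecomposable-relative-to-$\K$ components (and a free factor of the same rank $r$).

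Next I would use that $H\trianglelefteq G$. For $g\in G$ the map $c_g\colon h\mapsto h^{g}$ is an automorphism of $H$, and since $\K$ is $G$-invariant one has $c_g(K)=K^{g}\in\K$ for every $K\in\K$. Applying $c_g$ to the given decomposition gives
\[
H=F^{g}\amalg H_1^{g}\amalg\cdots\amalg H_s^{g},
\]
and I would check this is again a Grushko decomposition of $H$ relative to $\K$: $F^{g}$ is free pro-$p$ of rank $r$; each $H_i^{g}\cong H_i$ is $\amalg$-indecomposable relative to $\K$ (a nontrivial such splitting of $H_i^{g}$ would, via $c_{g^{-1}}$ together with $G$-invariance of $\K$, yield one of $H_i$); and each $K\in\K$ is conjugate in $H$ into some $H_i^{g}$, because $K^{g^{-1}}\in\K$ is conjugate into some $H_i$ and then one applies $c_g$. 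By the uniqueness part of Proposition~\ref{relative Grushko}, for every $i$ the subgroup $H_i^{g}$ is conjugate in $H$ to some $H_j$. Hence the collection $\{\,H_i^{h}\mid h\in H,\ 1\le i\le s\,\}$ is stable under conjugation by every $g\in G$, i.e.\ it is $G$-invariant.

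Finally I would invoke Theorem~\ref{relative virtfreeprod} with this $G$-invariant collection of subgroups of $H$: it produces precisely a decomposition of $G$ as $\Pi_1(\G,\Gamma)$ relative to $\K\cup\{\,H_i^{h}\mid h\in H,\ 1\le i\le s\,\}$ with finite edge stabilizers of order at most $[G:H]$, which is the assertion. I do not expect a genuine obstacle; the only step needing care is the clean application of the uniqueness of the relative Grushko decomposition together with the remark that both ``$\amalg$-indecomposable relative to $\K$'' and ``every member of $\K$ conjugates into a factor'' are preserved by any automorphism of $H$ stabilizing the family $\K$. Once this is in place the theorem is immediate.
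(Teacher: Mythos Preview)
Your proposal is correct and follows essentially the same approach as the paper: both arguments use Proposition~\ref{relative Grushko} to show that for $g\in G$ each $H_i^g$ is $H$-conjugate to some $H_j$, hence $\{H_i^h\mid h\in H,\,1\le i\le s\}$ is $G$-invariant, and then invoke Theorem~\ref{relative virtfreeprod}. Your write-up is simply more explicit in verifying that conjugation by $g$ carries the relative Grushko decomposition to another one.
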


\begin{proof}  Since
$H_i$ is indecomposable relative to $\K$, we deduce from Proposition \ref{relative Grushko}  
that the indecomposable non-free subgroup
$H_i^g$ of $H$ equals $H_j^h$ for some $j\in\{1,\ldots,s\}$. Thus
$\{H_i^g\mid g\in G,\ 1\leq i\leq s\,\}=\{H_i^h\mid h\in H,\ 1\leq
i\leq s\,\}$, i.e. $\{H_i^h\mid h\in H,\ 1\leq
i\leq s\,\}$ is $G$-invariant and the result follows from Theore \ref{relative virtfreeprod}.

\end{proof}

Suppose $\K$ is closed for open normal subgroups. Then we can drop the assumption of normality for $H$.

\begin{corollary} Let $G$ be a finitely generated pro-$p$ group containing an open  subgroup
$H$ and a $G$-invariant collection $\K$ of subgroups of $G$ closed for taking open normal subgroups,  such that $H$ has a non-trivial Grushko decomposition as a free pro-$p$ product $H=F\amalg H_1\amalg\cdots\amalg H_s,$ relative to $\K$.
Then $G$ splits as the pro-$p$ fundamental group of
a finite graph $(\G, \Gamma)$ of pro-$p$ groups relative to $\K\cup \{ H_i^h\mid h\in H, i=1, \ldots,k\}$ with finite edge groups of order not exceeding $[G:H]$.

\end{corollary}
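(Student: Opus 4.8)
The plan is to reduce this corollary to Theorem~\ref{thm:virtfreeprod} by replacing $H$ with a suitable open \emph{normal} subgroup, using the hypothesis that $\K$ is closed under taking open normal subgroups. First I would pass to the normal core $N=\bigcap_{g\in G}H^g$, which is an open normal subgroup of $G$ contained in $H$. By the pro-$p$ version of the Kurosh subgroup theorem (Theorem~\ref{KST}), applied to the decomposition $H=F\amalg H_1\amalg\cdots\amalg H_s$ and the subgroup $N\leq H$, we obtain a decomposition of $N$ as a free pro-$p$ product of a free pro-$p$ group together with groups of the form $N\cap H_i^{h}$. The collection $\K$ is $G$-invariant; since each $K\in\K$ is conjugate into some $H_i$ and $N$ is normal in $G$ (hence in $H$), each $K\cap N$ (which lies in $\K$ by the closure hypothesis on $\K$) is conjugate inside $N$ into some $N\cap H_i^{h}$. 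Passing to the Grushko decomposition of $N$ relative to $\K\cap N$, which exists since $N$ is second countable (being a finitely generated pro-$p$ group), we get $N=F'\amalg N_1\amalg\cdots\amalg N_t$ with each $N_j$ $\amalg$-indecomposable relative to $\K\cap N$ and all groups of $\K\cap N$ conjugate into some $N_j$; this decomposition is non-trivial because $N$ is a proper free factor situation inherited from $H$ (if it were trivial, $N$ would be freely indecomposable relative to $\K\cap N$ and one checks, using Proposition~\ref{relative Grushko}, that this forces the original $H$-decomposition to be trivial, contradicting the hypothesis).

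Next I would apply Theorem~\ref{thm:virtfreeprod} to the pair $N\triangleleft G$ with the $G$-invariant collection $\K\cap N$ (it is $G$-invariant because both $\K$ and $N$ are, and it is indeed a collection of subgroups of $N$). The theorem then yields a splitting of $G$ as the fundamental pro-$p$ group of a finite graph of pro-$p$ groups $(\G,\Gamma)$ relative to $(\K\cap N)\cup\{N_j^{n}\mid n\in N, j=1,\dots,t\}$, with finite edge groups of order at most $[G:N]$. The remaining work is bookkeeping: I must upgrade ``relative to $\K\cap N$'' to ``relative to $\K$'', and replace the $N$-factors $N_j$ by the original $H$-factors $H_i$, at the cost of possibly enlarging edge groups up to $[G:H]$.

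For the ellipticity upgrade, note that each $K\in\K$ contains $K\cap N$ with finite index (since $N$ is open); as $K\cap N\in\K\cap N$ is elliptic in the standard pro-$p$ tree $S$ of $(\G,\Gamma)$, the group $K$ is virtually elliptic and hence elliptic by Corollary~\ref{fixed vertex}. Thus the splitting is automatically relative to $\K$. For the factors, each $H_i$ contains $N\cap H_i$ as an open (in $H_i$) subgroup, which is a free product of finitely many conjugates of the $N_j$'s by Theorem~\ref{KST} again; since each of those $N_j^{n}$ is elliptic in $S$, the group $N\cap H_i$ is elliptic (being generated by elliptic subgroups sitting in a free-product decomposition, one uses Proposition~\ref{ellipticity} together with the fact that a free pro-$p$ product of elliptic groups over the trivial group acting on $S$ with all free factors elliptic is elliptic---or more directly, $N\cap H_i$ has finite index issues handled by Corollary~\ref{fixed vertex} once one knows it is virtually elliptic). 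Hence $H_i$ itself is virtually elliptic, so elliptic by Corollary~\ref{fixed vertex}, i.e. the splitting is relative to $\{H_i^{h}\mid h\in H,\ i=1,\dots,k\}$ as claimed. Finally, $[G:N]$ may be larger than $[G:H]$, so to get the bound $|\G(e)|\leq[G:H]$ I would re-run the proof of Theorem~\ref{relative virtfreeprod} directly with $H$ in place of $N$ at the point where $\tU$ is formed, observing that in Step~2 one invokes \cite[Proposition 3.2 (a)]{WEZ-16} with the open subgroup $H$ (not necessarily normal, but one only needs $\G_U(e)\cap H/\widetilde U=1$ inside the \emph{normal} quotient $G/\widetilde U$, and $H/\widetilde U$ need not be normal there); alternatively, and more cleanly, note that the edge groups of the graph of groups obtained for $N$ intersect $H/\widetilde U$ trivially as well since $N\leq H$, so in fact $|\G(e)|\le[G:H]$ comes for free. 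I expect the main obstacle to be precisely this last point: verifying carefully that the edge-group order bound descends from $[G:N]$ to $[G:H]$, which requires re-examining the non-normality of $H/\widetilde U$ in $G/\widetilde U$ and checking that the relevant intersection argument of \cite[Proposition 3.2 (a)]{WEZ-16} only uses normality of the kernel $\widetilde U$, not of $H$.
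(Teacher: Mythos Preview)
Your core strategy---pass to the normal core $N=\bigcap_{g}H^{g}$, apply the Kurosh subgroup theorem to decompose $N$, collect into a relative Grushko decomposition, and invoke Theorem~\ref{thm:virtfreeprod}---is exactly what the paper does. The paper's proof is two sentences: replace $H$ by its core, apply Kurosh for open subgroups, refine to a relative Grushko decomposition, and apply Theorem~\ref{thm:virtfreeprod}. Crucially, the paper then simply \emph{renames} the core as $H$; the $H_i$ and the index $[G:H]$ appearing in the conclusion refer to the core's data, not to the original $H$. So the paper does not attempt the two upgrades that occupy the second half of your proposal.

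Those upgrades, as you wrote them, contain genuine gaps. First, your argument that $N\cap H_i$ is elliptic because it is a free pro-$p$ product of elliptic pieces $N_j^{n}$ is not valid: a free pro-$p$ product of groups each fixing a vertex in $S$ need not itself fix a vertex (take two conjugate vertex stabilisers at distinct vertices). Proposition~\ref{ellipticity} does not help here, and your fallback to ``virtually elliptic via Corollary~\ref{fixed vertex}'' is circular---you are trying to establish ellipticity of $N\cap H_i$ in the first place. Second, your claim that the edge-group bound sharpens from $[G:N]$ to $[G:H]$ ``for free'' has the inclusion the wrong way round: from $N\leq H$ you get $N/\widetilde U\leq H/\widetilde U$, so $\G(e)\cap N/\widetilde U=1$ is \emph{weaker} than $\G(e)\cap H/\widetilde U=1$, not stronger. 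If you actually need the original $H_i$ to be elliptic and the bound $[G:H]$ with the original $H$, that would require a separate argument (or a reformulation of the statement); the paper sidesteps both by working entirely with the core.
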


\begin{proof} By replacing $H$ by the core of $H$ in $G$ and applying the
Kurosh subgroup theorem for open subgroups (cf. \cite[Thm.~9.1.9]{RZ-10}),
we may assume that $H$ is normal in $G$. Refining the free decomposition if necessary
and collecting free factors isomorphic to $\Z_p$ we obtain a Grushko free decomposition relative to $\K$
\begin{equation}
\label{eq:Hdeco}
H=F\amalg H_1\amalg\cdots\amalg H_s,
\end{equation}
i.e. a decomposition where $F$ is a free subgroup of rank $r$, and the $H_i$ are
$\amalg$-indecomposable relatively to $\K$ finitely generated subgroups which are not
isomorphic to $\Z_p$. Then the result follows from Theorem \ref{thm:virtfreeprod}.

\end{proof}

\subsection{Virtually free pro-$p$ groups}

We start with  free pro-$p$ products with cyclic amalgamation.

\begin{theorem}\label{over cyclic and free product} Let $G=G_1\amalg_C G_2$ be a (non-fictitious) free product with infinite cyclic amalgamation. Suppose $G$ is finitely generated virtually free pro-$p$. Then either $G_1$ or $G_2$ splits as a free pro-$p$ product relative to $C$.  Moreover, $C$ is a free factor of $G_1$ or $G_2$.

\end{theorem}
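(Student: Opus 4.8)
The plan is to combine the virtual freeness of $G$ with the cyclic accessibility machinery already developed in the paper, and then reduce to the case of a free amalgam of free pro-$p$ groups handled in Proposition \ref{pro-p}. Since $G$ is finitely generated and virtually free pro-$p$, by the main result of \cite{HZ-13} it is the fundamental pro-$p$ group of a finite graph of finite $p$-groups; in particular $G$ acts on a pro-$p$ tree $S$ with finite vertex (hence finite edge) stabilizers. The first step is to pass to an open normal subgroup $H\triangleleft_o G$ which is free pro-$p$ (this exists because $G$ is virtually free pro-$p$), and to understand how $H$ interacts with the splitting $G=G_1\amalg_C G_2$: by the pro-$p$ Kurosh subgroup theorem (Theorem \ref{KST}) applied to the standard tree $T$ of $G=G_1\amalg_C G_2$, we get a decomposition of $H$ as a free pro-$p$ product of a free pro-$p$ group together with groups of the form $H\cap G_i^{g}$ and $H\cap C^{g}$; but $H$ is free pro-$p$, so each $H\cap G_i^{g}$ is free pro-$p$ of finite index in $G_i^{g}$.

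\textbf{Key steps.} First I would use Theorem \ref{splitting} together with the virtual freeness: since $G$ is not $C$-elliptic, $G$ acts nontrivially on the tree $T$ of the splitting. Second, I would apply the relative virtually-free-product result, Theorem \ref{thm:virtfreeprod} (or rather Theorem \ref{relative virtfreeprod}), to the pair $(G,H)$, after first obtaining from the Kurosh decomposition of $H$ a nontrivial free pro-$p$ decomposition $H=F\amalg H_1\amalg\cdots\amalg H_s$ in which the factors $H\cap G_i^g$ are collected; one must check this is a genuinely nontrivial decomposition, which follows because $G_i$ are infinite (as the amalgam is non-fictitious, each $G_i\supsetneq C$, and $C$ is infinite cyclic) and hence $H\cap G_i$ has infinite order but the whole of $H$ is free pro-$p$ of finite rank, so it cannot be a single free factor unless $H\leq G_i^g$, which is impossible since $H$ is normal of finite index and $G$ is not generated by a conjugate of $G_i$. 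Third, the conclusion of Theorem \ref{relative virtfreeprod} gives a splitting of $G$ as the fundamental group of a finite graph of pro-$p$ groups with finite edge stabilizers, relative to the subgroups $H\cap G_i^g$; since $C$ is infinite, $C$ is not conjugate into any finite edge group, so $C$ fixes a vertex of the associated tree. Fourth, I would restrict the $G$-tree so obtained to an appropriate minimal subtree and analyze the vertex stabilizer $G_v$ containing $C$: using Corollary \ref{open subgroup} and the fact that edge groups are finite, $G_v$ is itself finitely generated virtually free pro-$p$, and the original amalgamated splitting $G=G_1\amalg_C G_2$ must, after refining, descend to a splitting of $G_v$ relative to the incident finite edge groups. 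Finally, the hard analytic input—that $C$ is actually a free factor of $G_1$ or $G_2$—I would extract by combining Theorem \ref{free pro-p over tree} (applied to a free pro-$p$ subgroup realized as the fundamental group of the restriction of the graph of groups to a suitable subtree, with cyclic edge groups) with Proposition \ref{pro-p}: passing to the free pro-$p$ normal subgroup $H$, the induced amalgam $H\cap G_1^{?}\amalg_{C'} H\cap G_2^{?}$ over a finite-index subgroup $C'$ of a conjugate of $C$ is a free amalgam of free pro-$p$ groups which splits as a free pro-$p$ product, whence by Proposition \ref{pro-p} $C'$ lies in a free factor of one of them; a standard argument pushing this back up through the finite extension (using that finite subgroups are trivial here because $H$ is torsion free and using malnormality/normality properties of $C$ analogous to Proposition \ref{normalizer}) then upgrades "$C'$ in a free factor of $H\cap G_i$" to "$C$ in a free factor of $G_i$".

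\textbf{Main obstacle.} The delicate point will be the last upgrade step: controlling how a free-factor decomposition of the free pro-$p$ group $H\cap G_i$, obtained from the splitting of $H$, descends to (or rather lifts to) a free-factor decomposition of the virtually free group $G_i$ relative to $C$. The issue is that $G_i$ may have torsion, so it is not literally free pro-$p$, and "being a free factor" must be interpreted via a splitting of $G_i$ over the trivial group relative to $C$ (as in Definition following Proposition \ref{pro-p}, and Proposition \ref{relative Grushko}). I expect to handle this by invoking Theorem \ref{relative virtfreeprod} a second time, now applied to the pair $(G_i, H\cap G_i)$ with $\K=\{C\}$: this yields a splitting of $G_i$ as a finite graph of pro-$p$ groups with finite edge groups relative to $C$, and since the Grushko-type decomposition of the free pro-$p$ group $H\cap G_i$ relative to $C$ had the conjugates of $C$ as free factors (by the cyclic-accessibility arguments, Theorem \ref{cyclic accessibility}, ensuring cofiniteness), the resulting graph of groups for $G_i$ has $C$ itself as a vertex group with trivial incident edge groups, i.e. $C$ is a free factor of $G_i$ in the relative sense; the converse direction, that if $C$ is such a free factor then $G$ decomposes, is immediate as in the "if" part of Proposition \ref{pro-p}. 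I would also need to double-check the non-fictitious hypothesis is genuinely used to guarantee $C\subsetneq G_i$ strictly, so that the Kurosh decomposition of $H$ is nontrivial; this is the one place where a fictitious edge would break the argument, consistent with the parenthetical "(non-fictitious)" in the statement.
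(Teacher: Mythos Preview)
Your overall strategy---pass to an open normal free pro-$p$ subgroup $F$, use the cyclic-amalgam results for free pro-$p$ groups to locate $C\cap F$ inside a free factor, and then invoke Theorem~\ref{relative virtfreeprod} to lift to the $G_i$ level---matches the paper's. But two concrete points separate your sketch from a working proof.

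\textbf{The detour through a splitting of $G$ is unnecessary.} Your steps 2--4 (applying Theorem~\ref{relative virtfreeprod} to $(G,H)$ to split $G$ itself over finite groups, then analysing the vertex containing $C$) do not feed into the conclusion; the paper never does this. What the paper actually does is much more direct: from the action of $F$ on the standard tree of $G=G_1\amalg_C G_2$ one gets, via Corollary~\ref{open subgroup}, a finite graph-of-groups decomposition of $F$; picking one edge gives a one-edge splitting $F=F_1\amalg_{C\cap F}F_2$ (or an HNN) with $G_1\cap F\le F_1$. Since $F$ is free pro-$p$, Corollary~\ref{cyclic amalgamation of free} (not Proposition~\ref{pro-p}) gives that $C\cap F$ is a free factor of $F_1$; by monotonicity of Frattini subgroups in free pro-$p$ groups, $C\cap F\not\le\Phi(G_1\cap F)$, so $C\cap F$ is a free factor of $G_1\cap F$. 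This is the input to Theorem~\ref{relative virtfreeprod} applied to $G_1$ (not to $G$).

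\textbf{The real gap: you are missing the iteration.} Your ``upgrade'' step asserts that after one application of Theorem~\ref{relative virtfreeprod} to $G_i$, ``the resulting graph of groups for $G_i$ has $C$ itself as a vertex group with trivial incident edge groups''. This does not follow: Theorem~\ref{relative virtfreeprod} only gives a splitting of $G_1$ over finite groups \emph{relative to} $C$, meaning $C\le G_1(w)$ for some vertex $w$, not that $C=G_1(w)$. The paper closes this gap by an induction: if $C\subsetneq G_1(w)$, one considers the amalgam $G_1(w)\amalg_C G_2$, which embeds in $G$ (via Lemma~\ref{restricted graph} applied to the refined graph of groups) and is therefore again finitely generated virtually free pro-$p$; the whole argument then repeats with $G_1$ replaced by $G_1(w)$. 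Termination is guaranteed because at each step $F\cap G_1(w)$ is a proper free factor of $F\cap G_1$, so the rank strictly drops. Without this iterative descent your argument stops at ``$C$ is elliptic in some finite-edge-group splitting of $G_1$'', which is strictly weaker than ``$C$ is a free factor of $G_1$''.
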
  

\begin{proof} Let $F$ be a maximal open normal free pro-$p$ subgroup of $G$. Then by Corollary \ref{open subgroup} $F$ is the fundamental pro-$p$ group of a finite graph of pro-$p$ groups $(\curlyF, \Delta)$  whose vertex and edge stabilizers are intersections of conjugates of $G(v)$ and  $\G(e)$ with $F$. Moreover, by Remark \ref{reduction} we may assume that $(\curlyF, \Delta)$ is reduced. In particular $F$ splits as a free pro-$p$ product with amalgamation $F_1\amalg_{C^g\cap F} F_2$ or pro-$p$ HNN-extension $HNN(F_1, C^g\cap F,t)$ over an edge group of $(\curlyF, \Delta)$, where $F_1$  is the fundamental pro-$p$ group of a subgraph $(\curlyF, \Delta_1)$ of pro-$p$ groups restricted to a subgraph $\Delta_1$ of $\Delta$ (and a similar situation happens with $F_2$).  Assume w.l.o.g that this edge group is $F\cap C$ over the edge $e$ that connects $G_1\cap F$ with $G_2\cap F$. Then by Corollary \ref{free}  $C\cap F$ is a free factor of $F_1$ or $F_2$ (in the case of amalgamation) or by Corollary \ref{HNNfree} either $C\cap F$ or $C^t\cap F$ is a free factor of $F_1$ (in the case of HNN-extension). Suppose w.l.o.g. $C\cap F$ is a free factor of $F_1$ and note that $G_1\cap F\leq F_1$. Then $C\cap F\not\leq \Phi(F\cap G_1)$ and so $G_1\cap F$ can not be cyclic as our edge $e$ is not fictitious. Note also that the edge groups of  $G_1\cap F$ are $G_1$-invariant, so by  Theorem \ref{relative virtfreeprod}   $G_1$ is the fundamental group of a finite graph $(\G_1,\Gamma_1)$ of pro-$p$ groups relative to $C$ with finite edge groups.

Let $w$ be a vertex of $\Gamma_1$ such that  $C\leq G_1(w)$. If $G_1(w)\neq C$ we can apply the same argument to $G_1(w)\amalg_C  G_2$. Then repeating the process, i.e. refining the graph of groups $(\G_1,\Gamma_1)$ this way,  we arrive eventually at situation when $C$ is equal to a vertex group of $(\G_1,\Gamma_1)$ and so is a free factor of its fundamental group (note that the process stops after finitely many steps because $F\cap G_1$ is finitely generated and $F\cap G_1(w)$ is a free factor of it). Thus $C$ is a free factor of $G_1$ or $G_2$. 
\end{proof}

Now we deal with HNN-extension.

\begin{theorem}\label{HNN over cyclic and free product} Let $G=HNN(G_1,C, t)$ be an HNN-extension with infinite cyclic associated subgroup.  Suppose $G$ is finitely generated having open  free pro-$p$ group $H$. Then   $C$ or $C^t$ is a free factor of $G_1$.  
\end{theorem}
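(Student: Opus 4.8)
The strategy parallels the proof of Theorem~\ref{over cyclic and free product}, adapting the amalgamated-product argument to the HNN situation. First I would take a maximal open normal free pro-$p$ subgroup $F$ of $G$. By Corollary~\ref{open subgroup}, $F$ is the fundamental pro-$p$ group of a finite graph of pro-$p$ groups $(\curlyF,\Delta)$ whose vertex and edge stabilizers are intersections with $F$ of conjugates of $G_1$ and $C$; by Remark~\ref{reduction} we may assume $(\curlyF,\Delta)$ is reduced. Since $G=HNN(G_1,C,t)$, the subgroup $F$ inherits a splitting over an edge group of $(\curlyF,\Delta)$; choosing the edge $e$ coming from the stable letter, $F$ splits either as a free amalgamated pro-$p$ product $F_1\amalg_{C^g\cap F}F_2$ or as a pro-$p$ HNN-extension $HNN(F_1, C^g\cap F, t')$ with $F_1=\Pi_1(\curlyF,\Delta_1)$ for a subgraph $\Delta_1$.

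Next I would apply the splitting results for free pro-$p$ groups. In the amalgamated case, Proposition~\ref{free} (or Corollary~\ref{cyclic amalgamation of free}) shows $C\cap F$ is a free factor of $F_1$ or $F_2$; in the HNN case, Proposition~\ref{HNNfree} (or Corollary~\ref{cyclic free}) shows that either $C\cap F$ or $C^t\cap F$ is a free factor of $F_1$. W.l.o.g.\ say $C\cap F$ (or $C^t\cap F$) is a free factor of a vertex-type subgroup containing $G_1\cap F$. Then $C\cap F\not\leq\Phi(G_1\cap F)$, so $G_1\cap F$ is not cyclic (the relevant edge is non-fictitious), and the collection of $G_1$-conjugates of the edge groups of $G_1\cap F$ is $G_1$-invariant. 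Hence Theorem~\ref{relative virtfreeprod} applies: $G_1$ is the fundamental group of a finite graph of pro-$p$ groups $(\G_1,\Gamma_1)$ relative to $C$ (respectively $C^t$) with finite edge stabilizers.

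Finally I would run the same refinement loop as at the end of the proof of Theorem~\ref{over cyclic and free product}. Pick a vertex $w$ of $\Gamma_1$ with $C\leq G_1(w)$ (resp.\ $C^t\leq G_1(w)$). If $G_1(w)\neq C$, replace $G_1$ by $G_1(w)$ in the HNN-extension $HNN(G_1(w),C,t)$ — noting that $G_1(w)$ still has an open free pro-$p$ subgroup since $G_1(w)\cap F$ is a free factor of $F\cap G_1$, hence free — and repeat. Because $F\cap G_1$ is finitely generated and at each step we pass to a proper free factor of the free part, the process terminates after finitely many steps with $C$ (resp.\ $C^t$) equal to a vertex group of $(\G_1,\Gamma_1)$, hence a free factor of its fundamental group $G_1$.

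\textbf{Main obstacle.} The delicate point is the HNN bookkeeping: in the HNN case $F$ may itself split as an HNN-extension rather than an amalgam, and one must track which of $C$, $C^t$ becomes a free factor and ensure the hypothesis $C\Phi(F_1)\neq C^{t'}\Phi(F_1)$ of Corollary~\ref{cyclic free} is available (or argued to be inessential for the conclusion). One also must verify carefully that the refinement loop stays inside the HNN-extension format — i.e.\ that at each stage $G_1(w)$ genuinely contains the relevant copy of $C$ or $C^t$ as the associated subgroup — rather than degenerating. Handling the case where the two copies of $C$ coincide mod Frattini (so the HNN becomes, after a change of stable letter, a direct factor by a free generator) is where Example~\ref{cyclicHNNfree} warns that the naive statement can fail, so the termination argument must be set up to yield only the weaker "$C$ or $C^t$ is a free factor" conclusion, not a splitting of $G_1$ relative to both.
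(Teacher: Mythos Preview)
Your proposal takes a genuinely different route from the paper, and unfortunately the iteration step has a real gap.

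The paper's proof is a two-line reduction: inside $G=HNN(G_1,C,t)$ one has the subgroup $\langle G_1, G_1^{t^{-1}}\rangle = G_1\amalg_C G_1^{t^{-1}}$, which is a finitely generated amalgam over $C$ sitting inside the virtually free group $G$, hence itself virtually free. Theorem~\ref{over cyclic and free product} then gives that $C$ is a free factor of $G_1$ or of $G_1^{t^{-1}}$; in the second case, conjugating by $t$ shows $C^t$ is a free factor of $G_1$. No HNN bookkeeping is needed at all.

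Your direct adaptation of the amalgam argument breaks at the refinement loop. After applying Theorem~\ref{relative virtfreeprod} you obtain a splitting $(\G_1,\Gamma_1)$ of $G_1$ relative to $C$ (or to $C^t$), and you pick a vertex $w$ with $C\leq G_1(w)$. You then want to replace $G_1$ by $G_1(w)$ and iterate with $HNN(G_1(w),C,t)$. But an HNN-extension over $C$ with base $G_1(w)$ requires \emph{both} associated copies $C$ and $C^t$ to lie in $G_1(w)$, whereas your relative splitting only controls one of them; there is no reason for $C^t$ to land in $G_1(w)$ (even up to conjugacy). So $HNN(G_1(w),C,t)$ is not in general a well-defined subgroup of $G$, and even as an abstract HNN-extension you have no way to ensure it is virtually free pro-$p$, which is exactly the hypothesis you need to recurse. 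This is precisely the difficulty you flagged as your ``main obstacle'', and it does not evaporate: Example~\ref{cyclicHNNfree} shows that controlling one associated copy is not enough in the HNN setting.

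The paper sidesteps this by immediately passing to the amalgam $G_1\amalg_C G_1^{t^{-1}}$, where both factors visibly contain $C$ and the refinement loop of Theorem~\ref{over cyclic and free product} runs without obstruction.
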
  

\begin{proof} Note that $\langle G_1, G_1^{t^{-1}}\rangle=G_1\amalg_C G_1^{t^{-1}}$. This subgroup satisfies the premises of Theorem \ref{over cyclic and free product} and so $C$ is a free factor of $G_1$ or $G_1^{t^{-1}}$. In the latter case $C^t$ is a free factor of $G_1$.
\end{proof}

\subsection{General case}

The next lemma  uses \cite{GJ} to show that being a free factor is preserved by the inverse limits.

\begin{lemma}\label{free factor} Let $G$ be a finitely generated pro-$p$ group and $H$ a finitely generated subgroup of $G$. Let $G=\varprojlim_i G_i$ be a decomposition of $G$ a surjective inverse limit and $H_i$ is the image of $H$ in $G_i$. If $H_i$ is a free factor of $G_i$ for all $i$ then so is $H$ of $G$.
\end{lemma}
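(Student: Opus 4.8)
The statement is that being a finitely generated free factor passes to inverse limits, and the natural tool is the cited paper \cite{GJ} which (as the phrasing ``uses \cite{GJ}'' signals) should provide a criterion for a finitely generated subgroup $H\le G$ to be a free factor in terms of a retraction or in terms of the behaviour of the Frattini quotient. The plan is to reformulate ``$H$ is a free factor of $G$'' as the existence of a continuous retraction $\rho\colon G\to H$ (equivalently, $G = H \amalg K$ with $K = \ker\rho$), or more flexibly, in terms of the condition that $d(G) = d(H) + d(G/\langle\langle H\rangle\rangle_?)$ type of rank additivity that \cite{GJ} characterizes; then show this condition is closed under the inverse limit.

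\textbf{Key steps.} First I would fix, for each $i$, the data witnessing that $H_i$ is a free factor of $G_i$: a complement $K_i$ with $G_i = H_i \amalg K_i$, equivalently a retraction $\rho_i\colon G_i \to H_i$ with $\ker\rho_i = K_i$. The issue is that these retractions need not be compatible with the bonding maps $G_j \to G_i$ of the inverse system a priori, so one cannot simply take the inverse limit of the $\rho_i$. The way around this — and the place where \cite{GJ} enters — is to pass to a \emph{cofinal} subsystem and choose the retractions coherently: the set of retractions $G_i \to H_i$ is a finite set once we know $G_i, H_i$ are finitely generated (here one uses that finitely generated pro-$p$ groups have finitely many continuous homomorphisms to a fixed finitely generated pro-$p$ group modulo... actually $\mathrm{Hom}(G_i,H_i)$ is a profinite space, and the subset of retractions is closed), so that $\{\text{retractions } G_i\to H_i\}$ forms a surjective (by hypothesis, nonempty for each $i$) inverse system of nonempty profinite (in fact, after a reduction, finite) sets; its inverse limit is nonempty, yielding a coherent family $(\rho_i)$ and hence $\rho = \varprojlim_i \rho_i \colon G \to H$. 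This $\rho$ is a continuous retraction, so $G = H \amalg \ker\rho$ and $H$ is a free factor.

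\textbf{The main obstacle.} The delicate point is precisely the coherence of the retractions, i.e.\ ensuring that the inverse system $\{\mathrm{Ret}(G_i, H_i)\}$ of sets of retractions is a genuine (surjective, or at least nonempty-limit-having) inverse system. A bonding map $\phi_{ji}\colon G_j \to G_i$ carrying $H_j$ onto $H_i$ need not carry a given retraction of $G_j$ to a retraction of $G_i$ unless one is careful; the role of \cite{GJ} (together with the finite generation of both $G$ and $H$, which forces the relevant Hom-sets to be compact and the reduction steps to terminate) is to supply either the finiteness that makes $\varprojlim$ of nonempty sets nonempty, or a canonical/functorial choice of complement. Once that is in hand the rest is formal. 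An alternative, if \cite{GJ} instead gives a homological or Frattini-quotient criterion (``$H$ is a free factor iff the map $H_1(H,\mathbb{F}_p)\to H_1(G,\mathbb{F}_p)$ is split injective and the induced decomposition lifts''), then I would instead verify that criterion for $G$ by taking inverse limits of the corresponding splittings of $H_1(H_i,\mathbb{F}_p)\to H_1(G_i,\mathbb{F}_p)$, again using compactness of the space of splittings; the obstacle is the same — coherence of the chosen splittings across the system — and is resolved the same way.
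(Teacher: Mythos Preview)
Your approach has a genuine gap. The central claim ``$\rho$ is a continuous retraction, so $G = H \amalg \ker\rho$'' is false: in the pro-$p$ category a retraction $\rho\colon G\to H$ yields only a semidirect product $G \cong \ker\rho \rtimes H$, not a free pro-$p$ product. For instance $G=\mathbb{Z}_p^2$ retracts onto each direct factor $H\cong\mathbb{Z}_p$, yet $H$ is certainly not a free factor. So even if you succeed in producing a coherent family of retractions and pass to the limit, you are not done; and there is no evident way to upgrade ``retraction coming from a free-product projection at each finite level'' to ``free-product projection in the limit'' without already knowing the conclusion. Your fallback Frattini-quotient criterion fails for the same reason: split injectivity of $H_1(H,\mathbb{F}_p)\to H_1(G,\mathbb{F}_p)$ is necessary but nowhere near sufficient (same counterexample).

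The paper's route is entirely different and sidesteps any choice of complement. The input from \cite{GJ} (Proposition~3.1 of Garrido--Jaikin) is a \emph{counting} characterization: for finitely generated pro-$p$ groups, $H$ is a free factor of $G$ if and only if, for every finite $p$-group $K$, the number $h(G,H,\gamma,K)$ of extensions of $\gamma\in\mathrm{Hom}(H,K)$ to a homomorphism $G\to K$ is independent of $\gamma$. Since $G$ and $H$ are finitely generated and $K$ is finite, the sets $\mathrm{Hom}(G,K)$ and $\mathrm{Hom}(H,K)$ are finite and every $G\to K$ factors through some $G_i$; hence for $i$ far enough along one has $h(G,H,\gamma,K)=h(G_i,H_i,\gamma_i,K)$ for all $\gamma$ simultaneously. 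Applying the criterion to $H_i\le G_i$ (where it holds by hypothesis) gives equality of all these numbers, hence the criterion holds for $H\le G$. No coherence of complements or retractions is required: the test is intrinsic and finitary, which is exactly what makes it pass to the inverse limit.
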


\begin{proof} We use \cite[Proposition 3.1]{GJ}.  Let $K$ be a finite  $p$-group. Since $G$ is finitely generated, $Hom(G,K)$ is finite and therefore  so is the number $h(G,H,\gamma, K)$ of extensions of  a homomorphism $\gamma:H\longrightarrow K$ to a homomorphism $G\longrightarrow K$. Let  $H_i$ be the image of $H$ in $H_i$. As any homomorphism $G\longrightarrow K$ factors through some $G_i$ (and consequently $\gamma$ factors through some $\gamma_i:H_i\rightarrow K$) we achieve the equalities $$|Hom(G,K)|=|Hom(G_i,K)|\  {\rm and}\   h(G,H,\gamma, K)=h(G_i,H_i, \gamma_i,K)$$  for some $i$. As $H_i$ is a free factor of $G_i$, by \cite[Proposition 3.1]{GJ}    $$h(G_i,H_i, \gamma_i,K)=h(G_i,H_i, \kappa_i,K)$$ for every $\kappa_i\in Hom(H_i, K)$. Hence $h(G,H, \gamma,K)=h(G,H, \kappa,K)$ for every $\kappa\in Hom(H,K)$.   Since $K$ was chosen arbitrary, by \cite[Proposition 3.1]{GJ} $H$ is a free factor of $G$.
\end{proof}

\begin{theorem}\label{one edge} Let $G=G_1\amalg_{C} G_2$ be a finitely generated  free pro-$p$ product with cyclic amalgamation. Then $G$  splits as a free pro-$p$ product if and only if $C$  belongs to a free factor of $G_1$  or $G_2$. 
\end{theorem}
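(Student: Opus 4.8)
The ``if'' direction is easy and essentially identical to the ``if'' part of Proposition \ref{pro-p}: if $C$ lies in a free factor of, say, $G_1$, write $G_1 = K_1 \amalg L_1$ with $C \leq L_1$; then $G = K_1 \amalg (L_1 \amalg_C G_2)$ exhibits $G$ as a non-trivial free pro-$p$ product. So the substance is the ``only if'' direction, and here the plan is to run a two-layer argument: first pass to an open free (or virtually free) subgroup and apply the results of the previous subsections, then lift back up using Lemma \ref{free factor}.

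\textbf{Main line of argument.} Assume $G = G_1 \amalg_C G_2$ splits as a non-trivial free pro-$p$ product. If $G$ itself is free pro-$p$, the claim is exactly Corollary \ref{cyclic amalgamation of free} (or Proposition \ref{pro-p}), so we are done. Otherwise, by the structure of vertex stabilizers of the standard pro-$p$ tree of a free pro-$p$ product (the argument in Proposition \ref{pro-p}, using \cite[2.7]{ZM-88} and \cite[Theorem 9.1.12]{RZ-00}), at least one free factor $H_1$ of $G$ meets a conjugate of $C$ non-trivially, and since $C$ is procyclic and centralizers in $G_i$ are ``malnormal enough,'' in fact $C \leq H_1$ up to conjugacy; moreover $F\cap H_1$ is a free factor of $F$ for each vertex group. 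At this point I would \emph{not} try to finish by hand in $G$; instead, let $F$ be a maximal open normal free pro-$p$ subgroup of $G$ (which exists precisely because $G$ is virtually free pro-$p$ in the only non-trivial remaining case — note that $G = G_1\amalg_C G_2$ with $G$ a non-trivial free pro-$p$ product and not free forces $G$ to be virtually free pro-$p$ via Theorem \ref{over cyclic and free product}'s hypotheses, or one reduces to that case). Then $F = \Pi_1(\curlyF,\Delta)$ is the fundamental group of a finite reduced graph of pro-$p$ groups by Corollary \ref{open subgroup} and Remark \ref{reduction}, with an edge $e$ of $\Delta$ whose group is $F\cap C^g$ separating $F\cap G_1$ from $F\cap G_2$; applying Corollary \ref{cyclic amalgamation of free} (i.e.\ Corollary \ref{free}) or Corollary \ref{HNNfree} to this edge shows $F\cap C$ (or $F \cap C^t$, absorbed by conjugation) is a free factor of the sub-fundamental group on one side. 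Then, exactly as in the proof of Theorem \ref{over cyclic and free product}, $G_1 \cap F$ cannot be procyclic (the edge $e$ is non-fictitious), so by Theorem \ref{relative virtfreeprod} $G_1$ is the fundamental group of a finite graph of pro-$p$ groups relative to $C$ with finite edge groups, and a finite refinement process — pushing $C$ down to a vertex group, which terminates because $F \cap G_1$ is finitely generated — makes $C$ equal to a vertex group, hence a free factor of $G_1$. The point where Lemma \ref{free factor} enters is in the refinement step / in passing from ``$F\cap C$ is a free factor of $F\cap G_1$ at every finite level'' to ``$C$ is a free factor of $G_1$'': $G_1 = \varprojlim (G_1/\widetilde U)$, each image of $C$ is a free factor of the image of the relevant vertex group, and $C$, the vertex group are finitely generated, so the inverse-limit statement of Lemma \ref{free factor} gives the conclusion.

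\textbf{Where the difficulty is.} The easy reductions (``if'' direction, the $G$-free case, the initial location of $C$ in a free factor $H_1$ of $G$) are routine given the cited results. The genuine obstacle is bookkeeping in the descent-and-ascent: ensuring that the edge group on which $F$ splits really is (a conjugate of) $F\cap C$ and that the corresponding edge is non-fictitious, so that Theorem \ref{relative virtfreeprod} applies to $G_1$ relative to $C$; and then verifying that the refinement loop terminates and that at the last stage the relevant hypotheses of Lemma \ref{free factor} (finite generation of both $C$ and the ambient vertex group, surjectivity of the inverse system) are met. This is precisely the argument already carried out in Theorem \ref{over cyclic and free product}, so the cleanest write-up simply invokes Theorem \ref{over cyclic and free product} to conclude ``$C$ is a free factor of $G_1$ or $G_2$'' once we have reduced to the virtually free case, and uses Lemma \ref{free factor} only where the earlier proof was informal about passing to the limit; I expect the whole ``only if'' direction to be short modulo those two citations.
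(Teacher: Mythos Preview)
Your ``if'' direction and the opening of the ``only if'' direction are correct: once $G$ is not free you locate a non-free indecomposable factor $H_1$ of the Grushko decomposition, conclude $C \leq H_1$ (up to conjugacy), and observe via Kurosh that $G_i \cap H_1$ is a free factor of $G_i$. But then you take a wrong turn.

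The claim that $G$ is virtually free pro-$p$ ``in the only non-trivial remaining case'' is false: $G_1$ and $G_2$ are arbitrary finitely generated pro-$p$ groups, so $G = G_1 \amalg_C G_2$ has no reason to contain an open free subgroup (think of $G_1$, $G_2$ Demushkin, or any $PD^n$ groups). Theorem~\ref{over cyclic and free product} has virtual freeness as a \emph{hypothesis}, not a consequence, and nothing you have written forces it. So the entire descent to an ``open normal free pro-$p$ subgroup $F$'' collapses.

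More importantly, the detour is unnecessary. At the point where you have $C \leq H_1$ and $G_i \cap H_1$ a free factor of $G_i$, the paper finishes in one line: write $G_i = (G_i \cap H_1) \amalg L_i$; if $L_1 = L_2 = 1$ then $G = \langle G_1, G_2 \rangle \leq H_1$, contradicting that $H_1$ is a proper free factor of $G$. Hence one $L_i$ is non-trivial and $C$ sits inside the proper free factor $G_i \cap H_1$. That is all the theorem asks for --- ``$C$ belongs to a free factor,'' not ``$C$ \emph{is} a free factor.''

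The paper does include a second case (all $H_i$ miss every conjugate of $C$), and there the argument genuinely needs virtually free pro-$p$ groups --- but applied to the \emph{quotients} $G_U = G/\widetilde U$, which are virtually free because $U/\widetilde U$ acts freely on the tree of the Grushko decomposition. Theorem~\ref{over cyclic and free product} is applied to each $G_U = G_{1U} \amalg_C G_{2U}$, and Lemma~\ref{free factor} pulls the conclusion back through the inverse limit. Your sketch gestures at this inverse-limit step, but you set it up as a passage to an open subgroup rather than to a tower of quotients, and that is the essential distinction you missed. (In fact this second case is arguably vacuous: any non-free $H_i$ acting on the Bass--Serre tree of $G_1 \amalg_C G_2$ must have a non-trivial edge stabilizer, hence meets a conjugate of $C$.)
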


\begin{proof} If  $G$ is free pro-$p$, the result is the subject  of Proposition \ref{pro-p}. Otherwise, consider the Grushko decomposition    $G=\coprod_{i=1}^n H_i\amalg F$ as a free pro-$p$ product  of non-cyclic $\amalg$-indecomposable pro-$p$ groups $H_i$ and a free pro-$p$ group $F$.  

Case 1. At least one of the conjugates of  $H_i$, say $H_1$ intersects a conjugate of $C$.  Then w.l.o.g. we may assume that $H_1\cap C\neq 1$.  Since the centralizer of any element of $H_1$ is in $H_1$ (cf. \cite[Theorem 9.1.12]{RZ-00}),  $C\leq H_1$. Note that $G_i\cap H_1$ is a free factor of $G_i$ by the pro-$p$ version of the Kurosh subgroup theorem (see Theorem \ref{KST}). Write $G_i=(G_i\cap H_1)\amalg L_i$. If  $L_1=L_2=1$, then $G\leq H_1$, a contradiction. Therefore  $L_1$ or $L_2$ is non-trivial and therefore $C$ is in a non-trivial free factor of $G_1$ or $G_2$.

\medskip
Case 2.	
	Suppose now that all $H_i$s intersect conjugates of $C$ trivially. Then $C$ is infinite cyclic and acts freely on the standard pro-$p$ tree $S=S(G)$ corresponding to these free pro-$p$ product.   Since $H_i$ acts on $S$ with trivial edge stabilizers, by \cite[Theorem 9.6.1]{R} $H_i$ spits as a free pro-$p$ product of its vertex stabilizers and a free pro-$p$ group, so from the assumption on $H_i$ one deduces that all  $H_i$ are conjugate into $G_1$ or $G_2$. Let  $U$ be an open normal subgroup of $G$ and $\widetilde U$ the normal subgroup generated by the intersections $U\cap H_i^g, i=1,\ldots, n$. Then  $G_U:=G/\widetilde U=\coprod_{i=1}^n H_i\widetilde U/\widetilde U$ and for some $U$ the image of $C$ in $G_U$ is infinite cyclic and acts freely on $S/\widetilde U$. As every $H_i$ is conjugate into $G_1$ or $G_2$, so are the intersections $U\cap H_i^g, i=1,\ldots, n$, therefore denoting the image of $C$ in $G_U$ by the same letter we can write $G_U=G_{1U}\amalg_C G_{2U}$, where $G_{iU}=G_i\widetilde U/\widetilde U$.    Since $G_U$ is virtually free pro-$p$ ($U/\widetilde U$ is free pro-$p$), by Theorem \ref{over cyclic and free product} $C$ is a free factor of $G_{iU}$   for $i=1$ or 2, say $C$ is a free factor of $G_{1U}$.  
	Since $G=\varprojlim_U G_U$,  by Lemma \ref{free factor} $C$ is a free factor of $G_1$. 

\medskip	
	Conversely,	if $C$ is in the free factor $L_i$ of $G_i$, i.e. $G_i=K_i\amalg L_i$ then   $G=K_i\amalg  (L_i\amalg_C G_2)$ is a free pro-$p$ product.
\end{proof}

 \begin{corollary} Let $G=G_1\amalg_{C} G_2$ be a finitely generated  free pro-$p$ product  with  cyclic amalgamation $C$. If $C\leq \Phi(G_1)\cap \Phi(G_2)$  then $G$ does not split as a free pro-$p$ product.\end{corollary}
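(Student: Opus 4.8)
The plan is to deduce the corollary directly from Theorem \ref{one edge}. That theorem states that $G = G_1 \amalg_C G_2$ (with $C$ cyclic and $G$ finitely generated) splits as a non-trivial free pro-$p$ product if and only if $C$ belongs to a free factor of $G_1$ or of $G_2$. So the corollary will follow once I verify the contrapositive: if $C \leq \Phi(G_1) \cap \Phi(G_2)$, then $C$ cannot be contained in any proper free factor of $G_1$ or of $G_2$.

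The key observation is that the Frattini subgroup behaves well with respect to free pro-$p$ decompositions. Suppose $G_1 = K \amalg L$ is a free pro-$p$ product with $C \leq L$. I would argue that $\Phi(G_1) \cap L = \Phi(L)$, or at least that $\Phi(G_1)$ projects into $\Phi(L)$ under the retraction $G_1 \to L$ killing $K$. Indeed, $G_1/\Phi(G_1) = (K/\Phi(K)) \oplus (L/\Phi(L))$ as $\F_p$-vector spaces (this is the standard computation of the Frattini quotient of a free pro-$p$ product, cf. the use of \cite[Lemma 2.8.7(b)]{RZ-00} in the proof of Lemma \ref{generation}), so the natural retraction $\pi : G_1 \to L$ induces the projection onto the $L/\Phi(L)$ summand and in particular $\pi(\Phi(G_1)) \subseteq \Phi(L)$. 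Since $C \leq L$, we have $\pi|_C = \mathrm{id}_C$, hence $C = \pi(C) \subseteq \pi(\Phi(G_1)) \subseteq \Phi(L)$ whenever $C \leq \Phi(G_1)$. But if $C \leq \Phi(L)$ and $L = C \amalg (\text{something})$ were a proper decomposition with $C$ a free factor, that would force $C \leq \Phi(C)$, i.e. $C = 1$ (as $C$ is cyclic pro-$p$, $\Phi(C)$ is proper in $C$ unless $C$ is trivial), a contradiction. More directly: if $C$ were itself a free factor of $G_1$, say $G_1 = M \amalg C$, then the image of $C$ in $G_1/\Phi(G_1)$ would be a non-zero direct summand, contradicting $C \leq \Phi(G_1)$.

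So the argument runs: assume for contradiction that $G$ splits as a non-trivial free pro-$p$ product. By Theorem \ref{one edge}, $C$ lies in a free factor of $G_1$ or $G_2$; say $C \leq L$ where $G_1 = K \amalg L$ with the decomposition reflecting (via the Kurosh-type structure, Theorem \ref{KST}, or just by refining) that $C$ is actually contained in an indecomposable-or-cyclic free factor. Refining further, we may take $C$ to be a free factor of $G_1$ itself — but that contradicts $C \leq \Phi(G_1)$ since a non-trivial free factor has non-trivial image in the Frattini quotient. The same applies with $G_2$. Hence $G$ does not split as a free pro-$p$ product.

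I expect the only mildly delicate point to be the reduction from ``$C$ belongs to a free factor'' to ``$C$ is a free factor'': Theorem \ref{one edge} as phrased says $C$ belongs to a free factor $L$ of $G_1$, not that $C = L$. But $C \leq \Phi(G_1)$ together with $\Phi(G_1) \cap L \subseteq \Phi(L)$ (from the direct sum decomposition of Frattini quotients) gives $C \leq \Phi(L)$, and since $\Phi(L)$ is a proper subgroup of $L$ whenever $L \neq 1$, and any non-trivial procyclic free factor of $L$ has non-trivial image in $L/\Phi(L)$, it follows that $C$ cannot be contained in $\Phi(L)$ and simultaneously be a free factor of $L$ — unless $L$ (and hence $C$) is trivial. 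Since $C$ is non-trivial (it is infinite cyclic in the relevant case, or at worst we treat $C=1$ separately where $G$ trivially does split, but then the hypothesis $C \leq \Phi(G_1)\cap\Phi(G_2)$ is vacuous and the statement as a non-split claim is simply not asserting anything — so we may assume $C \neq 1$), we reach the desired contradiction. This last paragraph is the crux; everything else is a direct citation.
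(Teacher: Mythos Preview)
Your overall strategy—deduce the corollary directly from Theorem~\ref{one edge}—is exactly the paper's implicit approach (the corollary is stated without proof, immediately after the theorem). However, the point you yourself flag as ``mildly delicate'' is in fact a genuine gap, and your attempt to close it does not succeed. Theorem~\ref{one edge} only yields that $C$ lies in some \emph{proper} free factor $L$ of $G_1$ (or $G_2$); it does not assert that $C$ \emph{is} a free factor. Your computation that $C\leq\Phi(G_1)$ and $C\leq L$ force $C\leq\Phi(L)$ is correct, but to derive a contradiction from this you then need $C$ to be a free factor of $L$, and nothing provides that. The sentence ``refining further, we may take $C$ to be a free factor of $G_1$ itself'' is the step that fails.

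Indeed, no argument along these lines can work, because the corollary as stated is false. Take $G_1=F(a,b)$ and $G_2=F(c,d)$ free pro-$p$ of rank~$2$, and amalgamate along $C=\langle a^p\rangle$ identified with $\langle c^p\rangle$. Then $C\leq\Phi(G_1)\cap\Phi(G_2)$, yet
\[
G \;=\; G_1\amalg_C G_2 \;=\; \langle b\rangle \amalg \bigl(\langle a\rangle \amalg_C G_2\bigr)
\]
is a nontrivial free pro-$p$ product. (Here $\langle a\rangle$ is exactly the free factor $G_1\cap H_1$ produced in Case~1 of the proof of Theorem~\ref{one edge}.) What \emph{would} make the corollary go through is the stronger conclusion ``$C$ is a free factor of $G_1$ or $G_2$,'' which is what Theorem~\ref{over cyclic and free product} gives in the virtually free case and what Case~2 of Theorem~\ref{one edge} proves; but Case~1 only gives containment in a proper free factor, and that is not enough.
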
 

\begin{corollary} Let $G_1\amalg_{C} G_2$  be a finitely generated   free pro-$p$ product  with infinite virtually cyclic amalgamation $C$. If $G$ splits as an amalgamated free pro-$p$ product or pro-$p$ HNN-extension over a finite $p$-group $K$, then    $G_1$ or $G_2$ splits as an amalgamated free pro-$p$ product or pro-$p$ HNN-extension over a finite $p$-group relative to $C$. 
	
\end{corollary}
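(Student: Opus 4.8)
The plan is to reduce the statement to Theorem~\ref{one edge} by passing to a well-chosen open subgroup and applying the virtually-free-product machinery. Write $G = G_1 \amalg_C G_2$ with $C$ infinite virtually cyclic, and suppose $G$ splits over a finite $p$-group $K$, so by Theorem~\ref{splitting} we may write $G = A \amalg_K B$ or $G = \mathrm{HNN}(A,K,t)$. First I would observe that $C$ contains an open (hence finite-index, hence infinite cyclic) subgroup $C_0 \cong \Z_p$. The key point is that a finite $p$-group is universally elliptic (Corollary~\ref{fixed vertex}), so the splitting of $G$ over $K$ is a $(G,\E)$-tree with $\E$ the finite $p$-groups, and in particular $C$ is \emph{not} forced to be elliptic — which is exactly the situation we want to exploit.

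The main step is: since $G$ splits over the finite group $K$ but $G = G_1 \amalg_C G_2$ with $C$ infinite, and $C$ (being infinite, virtually $\Z_p$) is elliptic in the $K$-splitting by Corollary~\ref{fixed vertex} applied to the open cyclic subgroup of $C$ (a finite-by-cyclic group acting on a pro-$p$ tree has the cyclic part elliptic or the whole group is virtually cyclic acting as a GBS piece). So $C$ fixes a vertex of the standard tree $S$ of the $K$-splitting, hence $C$ is conjugate into $A$ (or into $A$ in the HNN case). I would then use the Mayer--Vietoris / Kurosh-type argument as in the proof of Theorem~\ref{one edge}: consider the Grushko-type decomposition, and pass to open normal subgroups $U$ with $\widetilde U$ killing the relevant pieces, so that $G/\widetilde U$ becomes finitely generated virtually free pro-$p$. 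Applying Theorem~\ref{over cyclic and free product} (or its HNN analogue Theorem~\ref{HNN over cyclic and free product}) to $G/\widetilde U = G_{1U}\amalg_C G_{2U}$ shows $C$ is a free factor of $G_{1U}$ or $G_{2U}$; then Lemma~\ref{free factor} lifts this back to show $C$ is contained in a free factor of $G_1$ or $G_2$ — which is the hypothesis needed to invoke Theorem~\ref{one edge}, giving that $G$ itself splits as a free pro-$p$ product.

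Having established that $G$ splits as a free pro-$p$ product, I would combine this with the original $K$-splitting. Using Corollary~\ref{refining Grushko} (refining a decomposition over finite groups using JSJ decompositions of vertex groups, all relative to $C$), the free-product splitting of $G$ and the finite-group splitting both refine to a common decomposition, and since $C$ must be elliptic in the finite-group splitting, $C$ lands in one vertex group, which is (a conjugate of a free factor related to) $G_1$ or $G_2$. The flexible/rigid analysis of Corollary~\ref{flexible or rigid} then forces that $G_1$ or $G_2$ carries a splitting over a finite $p$-group relative to $C$, because the $K$-edge of the refined decomposition must be incident to that vertex (the other vertices being the free factors on which $C$ acts freely).

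The hard part will be the bookkeeping in the case distinction: handling the HNN case versus the amalgam case for the $K$-splitting simultaneously, and in particular verifying that $C$ (merely virtually cyclic, not cyclic) behaves well under the quotients $G \to G/\widetilde U$ — one must ensure the finite normal subgroup of $C$ does not interfere, which I expect requires first replacing $C$ by its cyclic part $C_0$, running the whole argument for $C_0$, and then noting that $C_0$ being in a free factor of $G_i$ together with $C/C_0$ finite implies the desired relative splitting of $G_i$ over a finite $p$-group (the free factor containing $C_0$ together with the finite quotient gives the edge group). A secondary subtlety is checking the \enquote{non-fictitious} / reducedness hypotheses needed to apply Theorems~\ref{over cyclic and free product} and~\ref{HNN over cyclic and free product}; this should follow by first reducing $(\G,\Gamma)$ as in Remark~\ref{reduction}, but it needs to be stated carefully.
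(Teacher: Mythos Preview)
Your proposal has two genuine gaps and ultimately aims at the wrong target.

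First, the claim that $C$ is elliptic in the $K$-splitting is not justified and is in general false. Corollary~\ref{fixed vertex} only says that a group with an open normal \emph{elliptic} subgroup is elliptic; you have not shown that the open cyclic $C_0\cong\Z_p$ is elliptic in the $K$-tree, and indeed an infinite procyclic group can act hyperbolically on a pro-$p$ tree with finite (even trivial) edge stabilizers. So the ``main step'' collapses at the outset.

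Second, your argument is organised around proving that $G$ itself splits as a free pro-$p$ product, and then using refinement machinery (Corollaries~\ref{refining Grushko}, \ref{flexible or rigid}) to push the splitting down to $G_1$ or $G_2$. But splitting over a finite $p$-group $K$ does \emph{not} imply splitting as a free pro-$p$ product, so this intermediate goal need not hold. Moreover, your appeal to the argument of Theorem~\ref{one edge} via quotients $G/\widetilde U$ does not go through: in that proof the quotients are virtually free pro-$p$ because $G$ was \emph{assumed} to be a free pro-$p$ product; here you only have a splitting over $K$, and no mechanism is given to make $G/\widetilde U$ virtually free. Finally, $C$ is only virtually cyclic, so Theorem~\ref{one edge} does not apply to $G=G_1\amalg_C G_2$ directly.

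The paper's route avoids all of this by working with an \emph{open subgroup} rather than quotients. Choose an open normal $H\trianglelefteq G$ with $H\cap K=1$ and $C\cap H\cong\Z_p$. By Corollary~\ref{open subgroup} applied to the $K$-splitting, $H$ is the fundamental group of a finite graph of pro-$p$ groups with trivial edge groups, hence $H=H_1\amalg H_2$ is a genuine free pro-$p$ product. Kurosh (Theorem~\ref{KST}) then forces one of $G_i\cap H$ to split nontrivially as a free pro-$p$ product, and Theorem~\ref{one edge} applied to $(G_1\cap H)\amalg_{C\cap H}(G_2\cap H)$ (now with cyclic amalgamation and sitting inside the free product $H$) gives that $C\cap H$ lies in a free factor of $G_1\cap H$ or $G_2\cap H$. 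The conjugacy class of $C\cap H$ inside $G_i\cap H$ is $G_i$-invariant, so Theorem~\ref{relative virtfreeprod} lifts this to a splitting of $G_i$ over finite $p$-groups relative to $C$, which is exactly the desired conclusion. The key idea you are missing is this passage to an open subgroup avoiding $K$; it replaces both your ellipticity claim and your quotient construction in one stroke.
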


 \begin{proof}  Let $H$ be a maximal open normal subgroup of $G$ intersecting $K$ trivially such that $C\cap H$  is infinite cyclic. Then by  Corollary \ref{open subgroup} $H =H_1\amalg H_2$ is a non-trivial free pro-$p$ product. As $H_1\cap H_2^g=1$ for any $g\in G$ (see \cite[Theorem 9.1.12]{RZ-10}) we deduce that either $G_1\cap H$ or $G_2\cap H$, say $G_1\cap H$ is not contained in the conjugate of $H_i$ for $i=1,2$ and so by the Kurosh subgroup theorem (Theorem \ref{KST}) $G_1\cap H$ splits as a non-trivial free pro-$p$ product. Now considering $(G_1\cap H)\amalg_{(C\cap H)} (G_2\cap H)$ we can apply Theorem \ref{one edge} to deduce that $C\cap H$ is a free factor of $G_1\cap H$ and clearly $\{(H\cap C)^h\mid h\in H\cap G_1\}$ is $G_1$-invariant (since $G_1$ leaves invariant the set of conjugates of $C$).  Then by Theorem \ref{relative virtfreeprod} $G_1$ splits as a finite graph of pro-$p$ groups with finite edge groups relative to $C$.    
  
\end{proof}

We finish with considering an HNN-extension.

\begin{theorem}\label{one loop} Let $G=HNN(G_1,C, t)$  be a finitely generated  HNN-extension with cyclic associated subgroup $C$.
If $G$  splits as a free pro-$p$ product $G=H_1\amalg H_2$ then $C$ or $C^t$ belongs to a free factor of $G_1$. 
\end{theorem}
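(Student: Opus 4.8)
The strategy mirrors the proof of Theorem \ref{one edge} for the amalgamated case. First, if $G$ is itself free pro-$p$ then the statement is exactly Proposition \ref{Hnn pro-p}, so we may assume $G$ is not free pro-$p$. Consider the Grushko decomposition $G=\coprod_{i=1}^n H_i\amalg F$ as a free pro-$p$ product of non-cyclic $\amalg$-indecomposable pro-$p$ groups $H_i$ together with a free pro-$p$ group $F$; since $G$ is not free, $n\geq 1$. As usual the argument splits into two cases according to whether some conjugate of one of the $H_i$ meets a conjugate of $C$ non-trivially.

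In the first case, after conjugating we may assume $H_1\cap C\neq 1$. Since centralizers of non-trivial elements of $H_1$ lie inside $H_1$ (by \cite[Theorem 9.1.12]{RZ-00}), we get $C\leq H_1$. Now one uses the Kurosh subgroup theorem (Theorem \ref{KST}) applied to $H_1\leq G=HNN(G_1,C,t)$: viewing $G$ as acting on its standard pro-$p$ tree and intersecting $H_1$ with the conjugates of $G_1$ yields that $G_1\cap H_1$ (more precisely, each relevant intersection $G_1^g\cap H_1$) is, up to conjugacy in $H_1$, a free factor of $G_1$, with a free pro-$p$ complement. If both these free complements were trivial then $G_1\leq H_1$ and $C=G_1\cap G_1^{t^{-1}}\leq H_1\cap H_1^{t^{-1}}$, forcing $t\in H_1$ and hence $G\leq H_1$, contradicting that the free product decomposition is non-trivial; therefore $C$ (or $C^t$, depending on which conjugate of $G_1$ one lands in) lies in a proper free factor of $G_1$.

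In the second case, every $H_i$ intersects every conjugate of $C$ trivially, so $C$ is infinite cyclic and acts freely on the standard pro-$p$ tree $S=S(G)$ of the Grushko decomposition; moreover each $H_i$ acts on $S$ with trivial edge stabilizers, hence by \cite[Theorem 9.6.1]{R} splits as a free pro-$p$ product of vertex stabilizers and a free pro-$p$ group, and $\amalg$-indecomposability forces each $H_i$ to be conjugate into $G_1$. Passing to quotients: for an open normal $U\triangleleft_o G$ let $\widetilde U=\langle U\cap H_i^g\rangle$, so that $G_U:=G/\widetilde U=\coprod_i H_i\widetilde U/\widetilde U$ is virtually free pro-$p$ and, for $U$ small enough, the image of $C$ remains infinite cyclic and acts freely on $S/\widetilde U$. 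Since each $H_i$ is conjugate into $G_1$, we may write $G_U=HNN(G_{1U},C,t)$ with $G_{1U}=G_1\widetilde U/\widetilde U$, and apply Theorem \ref{HNN over cyclic and free product} to conclude that $C$ or $C^t$ is a free factor of $G_{1U}$. Finally, since $G=\varprojlim_U G_U$ (and $G_1=\varprojlim_U G_{1U}$), Lemma \ref{free factor} promotes this to: $C$ or $C^t$ is a free factor of $G_1$.

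\textbf{Main obstacle.} The delicate point is the reduction to the virtually free quotient $G_U$ in the second case: one must choose $U$ simultaneously so that the image of $C$ stays infinite cyclic, acts freely on $S/\widetilde U$, and so that the $HNN$ structure $HNN(G_{1U},C,t)$ is genuinely inherited (i.e. the intersections $U\cap H_i^g$ are controlled well enough that the conjugators into $G_1$ survive modulo $\widetilde U$). One also needs to be mildly careful that ``$C$ or $C^t$ a free factor of $G_{1U}$ for each $U$'' really assembles to the same alternative in the limit — this is handled by the usual compactness/pigeonhole argument (only finitely many of the relevant data per $U$), so that Lemma \ref{free factor} applies to whichever of $C$, $C^t$ occurs cofinally. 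The first case is comparatively routine once the Kurosh theorem is invoked, the only subtlety there being the bookkeeping of which conjugate of $G_1$ contains $C$, which produces the $C^t$ alternative.
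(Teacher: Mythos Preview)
Your approach is essentially correct but takes a different, and longer, route than the paper. You redo the whole proof of Theorem~\ref{one edge} in the HNN setting (Grushko decomposition, two cases, pass to virtually free quotients, apply Theorem~\ref{HNN over cyclic and free product}, assemble via Lemma~\ref{free factor}). The paper instead reduces \emph{directly} to the amalgamated case already proved: one observes that $K=\langle G_1,G_1^{t^{-1}}\rangle=G_1\amalg_C G_1^{t^{-1}}$ sits inside $G=H_1\amalg H_2$, argues that $K$ cannot lie in a single conjugate of a free factor (since $\langle G_1^{t^n}:n\in\Z\rangle$ is dense in the normal closure of $G_1$, and free factors are self-normalized), and then the Kurosh theorem gives a non-trivial free product decomposition of $K$; now Theorem~\ref{one edge} applied to $K=G_1\amalg_C G_1^{t^{-1}}$ yields that $C$ lies in a proper free factor of $G_1$ or of $G_1^{t^{-1}}$, the latter giving $C^t$ in a proper free factor of $G_1$. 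This is a few lines rather than a case analysis with inverse limits.

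Two minor issues in your write-up. In Case~1 you say you apply Kurosh ``to $H_1\leq G=HNN(G_1,C,t)$'' and conclude that $G_1\cap H_1$ is a free factor of $G_1$; but Theorem~\ref{KST} is the Kurosh theorem for free pro-$p$ products, and the conclusion you need comes from applying it to $G_1$ as a subgroup of the free product $\coprod_i H_i\amalg F$, not to $H_1$ inside the HNN extension. The argument you then give (if $G_1\leq H_1$ then $C\leq H_1\cap H_1^{t^{-1}}$ forces $t\in H_1$) is fine once this is corrected. In Case~2 your pigeonhole remark about ``the same alternative $C$ or $C^t$ cofinally'' is necessary and should be made explicit; this is a genuine (if small) extra step compared with the amalgamated case, and one more reason the paper's reduction is cleaner.
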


\begin{proof} Note that $\langle G_1, G_1^{t^{-1}}\rangle=G_1\amalg_C G_1^{t^{-1}}$. The normal closure of $G_1$ in $G$ can not belong to a free factor $H_i$ of  $G$ since a free factor is sef-normalized (see \cite[Theorem 9.1.12]{RZ-10}). Hence there exists $n\in \Z$ such that $\langle G_1^{t^n},G_1^{t^{n+1}}\rangle$  not in a free factor $H_i$ of $G$ (indeed $\langle G_1^n\mid n\in\Z\rangle$ is dense in the normal closure $<<G_1>>$ of $G_1$ in $G$). Hence $\langle G_1, G_1^{t^{-1}}\rangle=G_1\amalg_C G_1^{t^{-1}}$ does not belong to a free factor $H_i$ of $G$. 

Thus by the pro-$p$ version of the Kurosh subgroup theorem (see Theorem \ref{KST}) this subgroup satisfies the premises of Theorem \ref{over cyclic and free product} and so  $C$ is a free factor of $G_1$ or $G_1^{t^{-1}}$. In the latter case $C^t$ is a free factor of $G_1$.
\end{proof} 

\begin{corollary} Let $G=HNN(G_1,C,t)$ be a finitely generated  HNN-extension with cyclic associated subgroup $C$. If $C, C^t\leq \Phi(G_1)$  then $G$ does not split as a free pro-$p$ product.\end{corollary}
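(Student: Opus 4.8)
The statement is the contrapositive companion to Theorem~\ref{one loop}: if both $C$ and $C^t$ lie in $\Phi(G_1)$, then in particular neither $C$ nor $C^t$ can belong to a free factor of $G_1$. Indeed, a proper free factor $L$ of $G_1$ is a free factor of the Frattini quotient in the sense that $\Phi(G_1)\cap L=\Phi(L)$ and $L/\Phi(L)$ is a nonzero direct summand of $G_1/\Phi(G_1)$; hence any nontrivial subgroup of $G_1$ contained in $\Phi(G_1)$ cannot generate a free factor (a free factor $L$ containing a nontrivial $c\in\Phi(G_1)$ would have $c\in\Phi(G_1)\cap L=\Phi(L)$, so $c$ would not be part of a basis of $L$, which is fine, but more to the point $\langle c\rangle$ being a free factor of $L$ would force $c\notin\Phi(L)$, a contradiction). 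So the plan is simply: assume for contradiction that $G=HNN(G_1,C,t)$ does split as a nontrivial free pro-$p$ product, apply Theorem~\ref{one loop} to conclude that $C$ or $C^t$ belongs to a free factor of $G_1$, and then derive a contradiction with the hypothesis $C,C^t\leq\Phi(G_1)$.

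The one careful point is to make precise the elementary fact that a nontrivial subgroup sitting inside $\Phi(G_1)$ cannot ``belong to a free factor'' in the sense used in Theorem~\ref{one loop}. Here ``$C$ belongs to a free factor of $G_1$'' means there is a decomposition $G_1 = K \amalg L$ with $C\leq L$ and $L$ a \emph{proper} free factor (otherwise the statement would be vacuous, as $G_1$ is always a free factor of itself, and Theorem~\ref{one loop} would say nothing). Write $G_1=K\amalg L$ with $C\leq L$, $L\neq G_1$. Then by \cite[Lemma 2.8.7]{RZ-00}-type reasoning, $G_1/\Phi(G_1)\cong K/\Phi(K)\oplus L/\Phi(L)$ and the image of $\Phi(G_1)$ in $L$ is exactly $\Phi(L)$ — more precisely $\Phi(G_1)\cap L=\Phi(L)$ because the retraction $G_1\to L$ killing $K$ sends $\Phi(G_1)$ onto $\Phi(L)$ and restricts to the identity on $L$. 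Since $C\leq\Phi(G_1)$ we get $C\leq\Phi(G_1)\cap L=\Phi(L)$, so $C$ lies in the Frattini subgroup of the free factor $L$; but a nontrivial subgroup of $\Phi(L)$ cannot itself be a free factor of $L$ (a free factor $L'$ with $C\leq L'$ would again put $C$ in $\Phi(L')$, and iterating, or just noting that the only free factor of a free pro-$p$ group contained in the Frattini subgroup is trivial, gives $C=1$). This contradicts $C\neq 1$, which holds because $C$ is infinite cyclic (it is the associated subgroup of a genuine, nontrivial HNN-extension).

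The same argument applies verbatim with $C^t$ in place of $C$. Hence under the hypothesis $C,C^t\leq\Phi(G_1)$ neither alternative of Theorem~\ref{one loop} can occur, so $G$ cannot split as a nontrivial free pro-$p$ product. I expect no serious obstacle here; the only thing requiring a line of care is the bookkeeping in the previous paragraph showing $\Phi(G_1)\cap L=\Phi(L)$ for a free factor $L$, which follows from the existence of the retraction $G_1\twoheadrightarrow L$ and the behaviour of Frattini subgroups under retractions, exactly as in the proofs of Corollaries~\ref{cyclic free} and~\ref{cyclicHNNfree} where similar ``$C\Phi(F)$'' computations are performed. So the proof is genuinely a two-line deduction from Theorem~\ref{one loop} plus this standard Frattini fact.
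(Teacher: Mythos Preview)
Your strategy—take the contrapositive of Theorem~\ref{one loop}—is exactly what the paper intends, but your execution has a genuine gap. You interpret ``$C$ belongs to a free factor of $G_1$'' as ``$C$ is contained in some proper free factor $L$ of $G_1$'' and then try to argue that $C\leq\Phi(G_1)$ rules this out. It does not: take $G_1=F(a,b)$ and $C=\langle a^p\rangle$; then $C\leq\Phi(G_1)$, yet $C$ sits inside the proper free factor $\langle a\rangle$. Your ``iteration'' (pass to a smaller free factor $L'\supseteq C$, get $C\leq\Phi(L')$, repeat) breaks down immediately here, since $\langle a\rangle\cong\Z_p$ has no proper nontrivial free factor, and at no stage have you produced a contradiction—$C\leq\Phi(L)$ is perfectly compatible with $C\leq L$. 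So the implication you claim is simply false.

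The fix is to use the stronger conclusion that the proof of Theorem~\ref{one loop} actually delivers: it applies Theorem~\ref{over cyclic and free product} to $G_1\amalg_C G_1^{t^{-1}}$ and concludes that $C$ \emph{is} a free factor of $G_1$ or of $G_1^{t^{-1}}$ (equivalently, $C^t$ is a free factor of $G_1$). With this reading the corollary is one line: if $G_1=C\amalg K$ then $G_1/\Phi(G_1)\cong C/\Phi(C)\oplus K/\Phi(K)$, so $C\cap\Phi(G_1)=\Phi(C)=C^p\neq C$, contradicting $C\leq\Phi(G_1)$; likewise for $C^t$. Your Frattini bookkeeping $\Phi(G_1)\cap L=\Phi(L)$ is correct but aimed at the wrong target—what you need is not that $C$ avoids every proper free factor, only that $C$ cannot itself \emph{be} one.
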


	

  


\bigskip
 \href{https://www.mat.unb.br/pessoa/152/Pavel-Zalesski}
{Pavel A. Zalesski}, 

Universidade de Brasília, 

Departamento de Matemática, 

70910-900 Brasília DF, Brazil. 

{\it Email address}: \href{mailto:pz@mat.unb.br}{pz@mat.unb.br}.

\end{document}